\documentclass{article}

\usepackage[utf8]{inputenc}
\usepackage{graphicx,amsmath,amsfonts,amsthm,physics,mathrsfs,enumerate,tikz-cd}
\usepackage{amssymb}
\usepackage{xcolor}
\usepackage[margin=1in]{geometry}
\usepackage{titlesec}
\titleformat{\subsubsection}[runin]
  {\normalfont\normalsize\bfseries}
  {\thesubsubsection}
  {1em}
  {}

\usepackage{tcolorbox}

\usepackage[colorlinks,citecolor=cyan,linkcolor=teal]{hyperref}

\graphicspath{ {figures/} }

\newcommand{\cev}[1]{\reflectbox{\ensuremath{\vec{\reflectbox{\ensuremath{#1}}}}}}

\DeclareMathOperator{\Aut}{Aut}
\DeclareMathOperator{\Hom}{Hom}

\DeclareMathOperator{\End}{End}

\DeclareMathOperator{\PSL}{PSL}
\DeclareMathOperator{\PGL}{PGL}
\DeclareMathOperator{\SL}{SL}

\DeclareMathOperator{\hull}{hull}

\DeclareMathOperator{\diag}{diag}
\DeclareMathOperator{\supp}{supp}

\renewcommand{\P}{\mathbb{P}}
\DeclareMathOperator{\SO}{SO}
\DeclareMathOperator{\RP}{\mathbb{R}\textrm{P}}

\newcommand{\N}{\mathbb{N}}
\newcommand{\Z}{\mathbb{Z}}
\newcommand{\R}{\mathbb{R}}
\newcommand{\Q}{\mathbb{Q}}

\newcommand{\A}{\mathbb{A}}
\newcommand{\inverse}{^{-1}}

\newcommand{\proj}{\textrm proj}

\newcommand{\cT}{\mathcal{T}}

\newcommand{\sP}{\mathscr{P}}

\DeclareMathOperator{\GL}{GL}
\DeclareMathOperator{\Aff}{Aff}

\DeclareMathOperator{\T}{T}

\DeclareMathOperator{\im}{im}

\DeclareMathOperator{\diam}{diam}

\renewcommand{\proj}{\mathrm{proj}}
\newcommand{\uu}{\underline}

\newcommand{\sC}{\mathscr{C}}

\newcommand{\Tension}{\mathrm{Tension}}

\theoremstyle{definition}
\newtheorem{theorem}{Theorem}[section]
\newtheorem{maintheorem}{Main Theorem}
\newtheorem{maincorollary}[maintheorem]{Main Corollary}
\newtheorem*{thmrepeated}{Theorem \ref{thm: co-stable norm and optimal best maps}}

\newtheorem{definition}[theorem]{Definition}
\newtheorem{lemma}[theorem]{Lemma}

\newtheorem{corollary}[theorem]{Corollary}
\newtheorem{proposition}[theorem]{Proposition}
\newtheorem{remark}[theorem]{Remark}
\newtheorem*{remarks}{Remarks}
\newtheorem{question}[theorem]{Question}
\newtheorem{claim}[theorem]{Claim}

\newtheorem{hypothesis}{}

\newcommand{\Addresses}{{
  \bigskip
  \footnotesize
  \noindent Marit Bobb, \textsc{Max Planck Institute for Mathematics in the Sciences, Leipzig}\par\nopagebreak
  \textit{E-mail address}: \texttt{mbobbmath@gmail.com}
  
  \noindent James Farre, \textsc{Max Planck Institute for Mathematics in the Sciences, Leipzig}\par\nopagebreak
  \textit{E-mail address}: \texttt{james.farre@mis.mpg.de}
  }}

\DeclareMathOperator{\Lip}{Lip}

\newcommand{\cas}{\mathcal{X}^{d+1}}
\newcommand{\cag}{\textrm{Aff}^*(d+1,\mathbb R)}

\usepackage{comment}
\newcommand{\defeq}{\overset{\mathrm{def}}{=}}
\newcommand{\s}{\mathscr S}

\title{Boundary of Anosov coaffine representations}
\author{Marit Bobb and James Farre}
\date{}

\begin{document}

\maketitle

\begin{abstract}
    Let $\Gamma \le \SL(d+1,\R)$ be a torsion-free hyperbolic group dividing a properly and strictly convex domain $\Omega \subset \RP^d$, and let $\rho: \Gamma \to \SL(d+1,\R)$ be the inclusion.  We study representations of $\Gamma$ into the \emph{coaffine group} that are deformations of $\rho\oplus 1 \in \Hom(\Gamma, \GL(d+2,\R))$ and that lie on the boundary of the locus of projective Anosov representations.  In this setting, we identify a canonical limit set $\Lambda$ for the action and study its fine geometric features which are, in a suitable sense, stable.  We prove the existence of an  \emph{ending lamination} that encodes the failure of the Anosov condition and the non-existence of a Cannon--Thurston map. Under certain conditions, we compute the Hausdorff dimension and show that the corresponding measure is positive.
\end{abstract}

\setcounter{tocdepth}{1}
\tableofcontents

\section{Introduction}

Anosov representations form a rich class of discrete representations of hyperbolic groups into Lie groups, with robust geometric and dynamical properties. Their degenerations, however, remain poorly understood outside a few settings.  In this article, we study a special family of \emph{coaffine representations} on the boundary of the projective-Anosov locus, whose limit sets exhibit both similarities to and striking differences from those of geometrically infinite Kleinian groups.

Deformations and degenerations of finitely generated Kleinain groups have been extensively studied, resulting in a mature theory that has driven major advances in low-dimensional topology over the last half-century. Especially important are degenerations of \emph{convex cocompact} Kleinian surface groups and the corresponding complete hyperbolic manifolds, which played a central role in Thurston’s Geometrization Program for understanding the topology of 3-manifolds \cite{Thurston:bulletin_3-manifolds}.

These groups are examples of the much more general \emph{strongly convex cocompact} subgroups $\Gamma\le \PGL(d+1,\R)$ acting on $\RP^d$ introduced by Danciger, Guéritaud, and Kassel \cite{DGK}, which are projective-Anosov when $\Gamma$ is Gromov hyperbolic.
Benoist studied the special case that $\Gamma$ acts cocompactly, i.e., \emph{divides}, a properly and strictly convex domain $\Omega \subset \RP^d$ \cite{Benoist_CDI,Benoist:CDII,Benoist_CDIII}, which gave expansive new insight into compact manifolds admitting a properly convex projective structure.

A projective-Anosov representation $ \rho: \Gamma \to \GL(d+1,\R)$ admits continuous equivariant \emph{boundary maps} $\xi: \partial^\infty \Gamma \to \RP^{d}$ and $\xi^*: \partial^\infty \Gamma \to (\RP^d)^*$ that satisfy compatibility, transversality, and dynamical properties.
Both boundary maps are topological embeddings. The image $\im \xi$ identifies the \emph{proximal limit set} $\Lambda$ for the $\rho$-action of $\Gamma$ on $\RP^d$.  For a convex cocompact Kleinian surface group, $\Lambda$ is a Jordan curve in $\partial^\infty \mathbb H^3$.

At the boundary of the Anosov locus, limit sets can be remarkably rich. 
For a Kleinian surface group representation $\rho:\Gamma\to\mathrm{PSL}(2,\mathbb{C})$ whose conjugacy class is invariant under a pseudo-Anosov mapping class $\phi$,  
Cannon and Thurston gave an astoundingly beautiful description of $\Lambda$ \cite{CT}. Indeed, $\Lambda = \partial^\infty \mathbb H^3$ is the quotient of the circle $\partial^\infty\Gamma$ via an equivalence relation that collapses leaves of the two invariant laminations of $\phi$ to points \cite{CT}. Consequently, there is a group equivariant bounded-to-one Peano curve $\partial^\infty\Gamma \to \Lambda$.   
Such Cannon--Thurston maps for Kleinian (surface) groups have been analyzed extensively since this seminal work (cf. \cite{Mahan:CTsurface,Mahan:ICM}). 
\medskip

We study coaffine representations of groups dividing a strictly and properly convex domain in $\RP^d$ that lie on the boundary of the locus $\mathcal A \subset \Hom(\Gamma, \GL(d+2, \R))$ of projective-Anosov representations.  In this context, we describe the structure of a natural limit set $\Lambda$ and establish the following phenomena: 

\begin{itemize}
    \item \emph{Structural stability of $\Lambda$}: there are two distinct behaviors that are stable under suitable perturbations.
    \item \emph{Non-existence of Cannon--Thurston maps}: there is no continuous equivariant map $\partial \Gamma \to \Lambda$.
    \item \emph{Hausdorff dimension and measure of $\Lambda$}: in many cases, the Hausdorff dimension is as large as possible and the corresponding measure is positive.
    \item \emph{Fine structure of $\Lambda$}: for ``positive'' representations, $\Lambda$ is the epi-graph of a lower-semi-continuous height function $\mathcal H: \partial \Gamma \to \R\cup \{+\infty\}$ that is bounded below.
    \item \emph{Existence of ending laminations}: An oriented geodesic lamination  witnesses the failure of the Anosov condition, and the set of its positive ideal endpoints lies in $\mathcal H\inverse (\{+\infty\})$.
\end{itemize}
See Main Theorems \ref{mainthm: ending lamination intro}, \ref{mainthm: equality of limit sets intro},  \ref{mainthm: structure of limit sets}, \ref{mainthm: structure of hull of Lambda intro}, \ref{mainthm: Lebesgue  intro} and Main Corollaries \ref{maincor: no CT map} and \ref{maincor: hasudorff intro}.

Although there does not seem to be an established notion of ``geometrical infiniteness'' for representations of finitely generated discrete groups into an arbitrary Lie group, our investigation reveals some delicate new phenomena that may help develop this notion further. 

\subsection{Coaffine representations on the boundary of Anosov}\label{subsec: coaffine intro}

In order to get to our main results we first introduce some of the main players and important technical tools.

In this article, $\Gamma$ is always a torsion-free Gromov-hyperbolic group and $\rho: \Gamma \to \SL(d+1,\R)$ is an injective representation dividing a properly and strictly convex domain $\Omega\subset \RP^d$ with ideal boundary $\partial^\infty\Omega\cong \partial^\infty \Gamma$.   Let $M =\Omega/\rho$ be the corresponding convex projective manifold.  

We then study deformations of $\rho\oplus 1$ in the \emph{general coaffine group} 
\[\cag\defeq \left \{ \begin{pmatrix}
    A & \mathbf 0\\
    \alpha& 1
\end{pmatrix} : A \in \GL(d+1,\R), \alpha \in \left(\R^{d+1}\right)^*\right\}< \GL(d+2,\R),\]
which is isomorphic to the stabilizer of a point $[e_{d+2}]$ in $\RP^{d+1}$.
The \emph{linear part} is the element of $\GL(d+1,\R)$ in the upper left-hand block.  
The \emph{coaffine space}  is \[\cas \defeq \RP^{d+1}\setminus[e_{d+2}],\]
and  \emph{coaffine geometry} is the geometry of this pair.  
See \S\ref{subsec: coaffine space} and \S\ref{section: geometry of coaffine domain}.
\medskip

The coaffine space admits a canonical foliation by \textit{vertical} affine lines, those incident to $[e_{d+2}]$. The representation $\rho\oplus1$ preserves the set $\sC\Omega \subset \cas$ of vertical lines  passing through $\Omega\subset \RP^d\subset \cas$.  $\sC\Omega$ is naturally a trivial affine line bundle over $\Omega$, and is convex in any affine chart for $\RP^{d+1}$ that contains it. The vertical foliation of $\sC\Omega$ extends to the boundary $\partial^\infty \sC\Omega \cong \partial^\infty \Omega \times \R$.  
See Figure \ref{fig: intro_cones}.

Let $\chi: \Gamma \to \R_{>0}$ be a character of $\Gamma$, i.e., $\chi = e^{-\varphi}$ where $\varphi \in H^1(\Gamma,\R)$. A representation $\eta: \Gamma \to \cag$ which has linear part $L(\eta): \Gamma \to \GL(d+1,\R)$ equal to the product $\rho\chi$ preserves $\sC\Omega$. Thinking of $\rho$ as fixed, we will be exclusively interested in representations $\eta$ with linear part $\rho\chi$.

\begin{figure}[h]
    \centering
    \includegraphics[width=.7\linewidth]{figures/intro_cones.pdf}
    \caption{The cone $\sC \Omega$ in two affine charts. On the right, the exceptional point $[e_{d+2}]$ is at infinity. The case $d=2$ and $\sC \Omega \subset \RP^3$ is shown.}  
    \label{fig: intro_cones}
\end{figure}

Our first result supplies an \emph{ending geodesic lamination} for such coaffine representations in $\partial \mathcal A$.  A \emph{geodesic lamination} $\lambda \subset \Omega$ is a closed set that is foliated by complete projective line segments.  If $\lambda$ is $\rho$-invariant, then it descends to a geodesic lamination $\uu\lambda \subset M$.

For $\eta \in \Hom(\Gamma,\GL(d+2,\R))$ and $\gamma \in \Gamma$, let $\lambda_i^\eta(\gamma)$ be the logarithm of the modulus of the $i$th largest eigenvalue of $\eta(\gamma)$.

\begin{maintheorem}[Existence of ending laminations]\label{mainthm: ending lamination intro}
    Suppose $\eta: \Gamma \to \cag$ lies in $\partial \mathcal A$ and has linear part $L(\eta)$ preserving $\Omega$, as above.  Then, there is a non-empty oriented chain recurrent geodesic lamination $\uu \lambda \subset M$ that witnesses the failure of the $1$-Anosov condition in a strong sense:
    there is a sequence of oriented closed curves $\gamma_n\subset M$ such that 
    \begin{equation}\label{eqn: length to zero}
        (\lambda_1^\eta - \lambda_2^\eta)(\gamma_n) \to 0, \text{ as } n\to \infty,
    \end{equation}
     every limit point of any sequence satisfying \eqref{eqn: length to zero} 
     is contained in $\uu\lambda$  compatibly oriented,
     and $\uu\lambda$ is the union of such limit points.  Convergence is in the Hausdorff topology on closed subsets of $M$.
\end{maintheorem}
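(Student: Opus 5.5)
The plan is to reduce everything to a one-dimensional dynamical problem over the geodesic flow of $M$. Since $L(\eta)=\rho\chi$ with $\chi=e^{-\varphi}$, the eigenvalues of $\eta(\gamma)$ are those of $e^{-\varphi(\gamma)}\rho(\gamma)$ together with $1$. So the log-moduli are $\lambda_i^\rho(\gamma)-\varphi(\gamma)$ for $1\le i\le d+1$, plus $0$. Because $\rho$ divides a strictly convex $\Omega$, it is $1$-Anosov, and $\lambda_1^\rho-\lambda_2^\rho$ grows uniformly linearly in word length. Hence the only way the $1$-Anosov gap of $\eta$ can degenerate is through a collision of the eigenvalue $1$ with the top eigenvalue $e^{\lambda_1^\rho(\gamma)-\varphi(\gamma)}$. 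Concretely, I expect that for all $\gamma$ outside a finite set of conjugacy classes,
\[(\lambda_1^\eta-\lambda_2^\eta)(\gamma)=\bigl|\lambda_1^\rho(\gamma)-\varphi(\gamma)\bigr| .\]
I would then use the earlier characterization of $\partial\mathcal A$ among coaffine deformations (via the co-stable norm / optimal best maps, Theorem \ref{thm: co-stable norm and optimal best maps}). By that characterization, $\eta\in\partial\mathcal A$ should mean that $\lambda_1^\rho-\varphi\ge 0$ on all of $\Gamma$, and that $\inf_\gamma(\lambda_1^\rho-\varphi)(\gamma)/\ell(\gamma)=0$ without being uniformly positive.

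Next I would realize $\lambda_1^\rho$ as the period function of a positive Hölder potential $F$ on the unit tangent bundle of $M$, for its Hilbert geodesic flow (Sambarino-type reparametrization). I would also represent $\varphi$ by a closed $1$-form $\omega$, viewed as a potential $\omega(\dot g)$. Set $G=F-\omega$. The boundary condition then says: $G$ has nonnegative periods, and the minimum over flow-invariant probability measures of $\int G$ is $0$. Following the Mañé/Thurston strategy, which I expect is what the "optimal best maps" provide, I would produce a Hölder sub-action $u$ such that $\tilde G=G+X u\ge 0$ is cohomologous to $G$. The candidate lamination $\uu\lambda$ is then the maximal flow-invariant set on which $\tilde G\equiv 0$ and which is chain recurrent (the Aubry set), projected to $M$ with the flow orientation. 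It is nonempty because a minimizing measure exists and is supported in $\{\tilde G=0\}$.

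The convergence statements would go as follows. If $(\lambda_1^\eta-\lambda_2^\eta)(\gamma_n)\to 0$, then $\int_{\gamma_n}\tilde G\to 0$. Since $\tilde G\ge 0$ is continuous, any Hausdorff limit of the periodic orbits $\gamma_n$ must consist of orbits along which $\tilde G$ vanishes identically. Such a limit is chain recurrent because it is a limit of closed orbits, so it lies in $\uu\lambda$ with compatible orientation. Conversely, to show $\uu\lambda$ is exhausted by such limit points, I would apply the Anosov closing lemma to long $\epsilon$-pseudo-orbits in the Aubry set. This should give closed orbits $\gamma_n$ that shadow the set, with $\int_{\gamma_n}\tilde G$ controlled by the Hölder constant times the closing errors. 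Choosing the closing points where the pseudo-orbit returns very closely makes these integrals tend to $0$, and the shadowing orbits can be made to approximate any prescribed orbit of $\uu\lambda$.

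The main obstacle is proving that the projection of this zero set to $M$ is a geodesic lamination, i.e.\ that distinct leaves do not cross transversally; the set is closed by continuity. I would argue as in Thurston's maximally stretched lamination. Suppose two orbits in $\{\tilde G=0\}$ cross at a point of $M$. Lifting to $\Omega$, I would build a broken path that follows one leaf in and the other leaf out. Strict convexity of $\Omega$ should make $\lambda_1^\rho$ strictly subadditive under the corner cut, via a Hilbert-metric or Busemann-type comparison for the potential $F$, while $\omega$ is exact on lifts. Closing this corner-cut path would give a closed curve with $\lambda_1^\rho-\varphi<0$, contradicting the boundary condition. Making this strict inequality quantitative for $F$, which is not literally the Hilbert length, is where I expect the real work to be.
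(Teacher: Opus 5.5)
You correctly reduce to $\lambda_1^\rho\ge\varphi$, and small gap does force gap $=\lambda_1^\rho-\varphi$. Your formula $|\lambda_1^\rho-\varphi|$ off finitely many conjugacy classes is wrong, though. The gap is $\min\{\lambda_1^\rho-\lambda_2^\rho,\ \lambda_1^\rho-\varphi\}$. For example, if $\gamma$ represents a closed leaf of $\uu\lambda$, then $\gamma^{-n}$ has gap $n(\lambda_d^\rho-\lambda_{d+1}^\rho)(\gamma)$, which is strictly less than $(\lambda_1^\rho-\varphi)(\gamma^{-n})=n(\lambda_1^\rho-\lambda_{d+1}^\rho)(\gamma)$. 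Your argument that Hausdorff limits of closed orbits with vanishing gap lie in $\{\tilde G=0\}$ and are chain recurrent is fine.

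The genuine gap is the step you flag yourself: showing that the projection to $M$ is a geodesic lamination with a consistent orientation. As sketched, this does not go through.
\begin{itemize}
\item Your sub-action $u$ is a function on $T^1M$. The calibration identities along two orbits through the same point $p\in M$ therefore involve $u$ at two different tangent vectors over $p$, and they cannot be spliced into an identity for a broken path.
\item $F$ is a reparametrization cocycle defined only along flow lines, so there is no ``$F$-length'' of a broken path to compare with the corner-cut one.
\item Convexity of the Hilbert metric does not fill this in. It controls the Hilbert translation length $\tfrac12(\lambda_1^\rho-\lambda_{d+1}^\rho)$, not $\lambda_1^\rho$.
\item ``Closing the corner-cut path'' needs closed orbits of nearly zero cost shadowing both crossing orbits near $p$, with closing errors smaller than the corner gain. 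You have not established this for non-recurrent leaves. A pseudo-orbit with $N$ jumps costs roughly $N\epsilon^\alpha$, and chain recurrence alone does not bound $N$.
\end{itemize}
What is missing is a uniform strict inequality of the form $\lambda_1^\rho(c_1c_2)\le\lambda_1^\rho(c_1)+\lambda_1^\rho(c_2)-\kappa(\theta)$ for closed geodesics crossing at angle at least $\theta$.

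The paper avoids all of this by changing the geometry rather than the potential. It uses the domain shape metric $d_\rho$ (\S\ref{sec: domain shape}), an asymmetric $C^1$ Finsler metric on $\Omega$ itself. It is uniquely geodesic with projective segments as geodesics, so the triangle inequality is strict off lines, and its minimal translation length is exactly $\lambda_1^\rho$ (Theorem \ref{thm: rhodist}).

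The analogue of your sub-action then lives on the base. It is an optimal $\varphi$-equivariant $1$-forward Lipschitz map $\tau:\Omega\to\R$ (Theorem \ref{thm: co-stable norm and optimal best maps}, built from the co-stable norm, a McShane-type extension, and averaging), and $\uu\lambda=\Tension(\tau)/\rho$. Non-crossing is then a one-line argument that needs no closing. If $[x,p]$ and $[p,y]$ are both mapped isometrically by $\tau$ and $p\notin[x,y]$, then $\tau(y)-\tau(x)=d_\rho(x,p)+d_\rho(p,y)>d_\rho(x,y)$, contradicting the $1$-forward Lipschitz property (Lemma \ref{lem: lines dont cross}). A similar comparison rules out nearby antiparallel leaves, which gives the orientation.

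The convergence statements are then handled with slack. The slack of a segment depends only on its endpoints, is $2$-Lipschitz in them (Lemma \ref{lem: slack continuous finite}), and equals $\lambda_1^\rho-\varphi$ on a fundamental domain of an axis. This replaces your H\"older shadowing estimates. To repair your proposal, replace the Sambarino potential with the $d_\rho$-geodesic flow, and the sub-action $u$ on $T^1M$ with such a $\tau$ on $\Omega$.
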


An orientation of $\uu\lambda$ is a continuous orientation of its leaves.  
Chain recurrence is a dynamical condition that constrains the behavior of $\uu\lambda$.  That $\uu\lambda$ is chain recurrent is equivalent to being approximated in the Hausdorff topology by unions of closed geodesics; see \S\ref{subsec: geodesic laminations}.  
\begin{remarks}\label{rmk: first theorem}\leavevmode\par
    \begin{enumerate}
        \item $\lambda_1^\eta(\gamma) = \lambda_1^{L(\eta)}(\gamma)$ and $\lambda_2^\eta(\gamma) =\max\{0, \lambda_2^{L(\eta)}(\gamma)\}$, so the failure of the Anosov condition depends only on the linear part of $\eta$; this is related to the fact that $\cag$ is not semi-simple.
        \item If $\im \rho \le \SO_\circ(d,1)$, then $(\lambda_1^\rho-\lambda_2^\rho)(\gamma)$ is the length of $\gamma$ in $M$ with its hyperbolic metric.
        \item $\uu\lambda$ need not be connected, and it need not carry a transverse measure of full support. 
        \item We have not asked that $\gamma_n$ be primitive, so $\gamma_n = \gamma^n$ for some $\gamma \in \Gamma$ is one possibility.   
        \item $\uu\lambda$ must satisfy a homological positivity condition.\footnote{Namely, that every almost loop in $\uu\lambda$ is non-zero in homology.}  See Remark \ref{rmk: construct tight maps} for a realization result in $d=2$.
    \end{enumerate}   
\end{remarks}

We highlight an analogy to Kleinian groups, from which we have borrowed the notion of an ending lamination.
The ends of a complete hyperbolic $3$-manifold with finitely generated fundamental group are homeomorphic to a surface times an interval and are either \textit{geometrically finite} or \textit{simply degenerate} \cite{Thurston:notes, Bonahon:ends, Agol:tameness, CG}. Simply degenerate ends admit a sequence of closed curves of bounded length exiting the end. The \textit{ending lamination} is a limit of any such sequence of curves in the measure topology \cite{Thurston:notes}, and Kleinian surface groups are completely classified by their end invariants \cite{ELTI,ELTII}.

See \S\ref{sec: failure of anosov} for another version of Main Theorem \ref{mainthm: ending lamination intro} in terms of the measure topology on curves that makes the analogy between this setting and Kleinian groups stronger. 

\subsection{Best forward Lipschitz maps and the co-stable norm}\label{subsec: best lipschitz intro}
There is a simple criterion for $\eta$ to be in $\partial \mathcal A$. We found Main Theorem \ref{mainthm: ending lamination intro}  by interpreting it geometrically. Here is the criterion, which is very closely related to \cite{Lahn:I,Lahn:II} and an observation made in \cite{Barbot:stablenorm}; see Corollary \ref{cor: failure of anosov} for a  proof.

\begin{proposition}\label{prop: stable conorm anosov intro}
    Let $\eta$ be a coaffine representation  with linear part $L(\eta) = \rho \chi$, where $\chi = e^{-\varphi}$.  Then  $\eta$ is $1$-Anosov if and only if 
    \[\|\varphi\|_\rho\defeq \sup_{\gamma \in \Gamma}\frac{\varphi(\gamma)}{\lambda_1^\rho(\gamma)}<1.\]
\end{proposition}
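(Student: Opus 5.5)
The plan is to reduce the $1$-Anosov condition for $\eta$ to a uniform exponential gap between $\lambda_1^\eta$ and $\lambda_2^\eta$. By Remark \ref{rmk: first theorem}(1), this gap depends only on the linear part $\rho\chi$ together with the extra eigenvalue $1$. We use the characterization of projective Anosov representations of hyperbolic groups by eigenvalue gaps: $\eta$ is $1$-Anosov if and only if there are $c,C>0$ with $(\lambda_1^\eta-\lambda_2^\eta)(\gamma)\ge c\,\ell(\gamma)-C$ for all $\gamma$, where $\ell$ is the translation length in a Cayley graph. This is the Kassel--Potrie / Guéritaud--Guichard--Kassel--Wienhard criterion. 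For the standing $\rho$, which divides $\Omega$ and is therefore $1$-Anosov, $\lambda_1^\rho$ and $\ell$ are comparable, so we may replace $\ell$ by $\lambda_1^\rho$.

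\textbf{Compute the gap.} The eigenvalues of $\eta(\gamma)$ are those of $\chi(\gamma)\rho(\gamma)$ together with $1$. Hence
\[
\lambda_1^\eta(\gamma)=\lambda_1^\rho(\gamma)-\varphi(\gamma),\qquad
\lambda_2^\eta(\gamma)=\max\{0,\;\lambda_2^\rho(\gamma)-\varphi(\gamma)\}.
\]
Strictly, this gives $\lambda_1^\eta=\max(\lambda_1^\rho-\varphi,0)$. If $\|\varphi\|_\rho<1$, the first entry is positive. If $\|\varphi\|_\rho\ge1$, the degeneration is exactly what we want to detect. Therefore
\[
(\lambda_1^\eta-\lambda_2^\eta)(\gamma)=\min\{\lambda_1^\rho(\gamma)-\varphi(\gamma),\;(\lambda_1^\rho-\lambda_2^\rho)(\gamma)\}.
\]
The second term is already at least $c\,\lambda_1^\rho(\gamma)-C$ because $\rho$ is Anosov. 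So $\eta$ is $1$-Anosov exactly when $\lambda_1^\rho-\varphi\ge c'\lambda_1^\rho-C'$ uniformly.

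\textbf{Forward direction ($\Leftarrow$).} If $\|\varphi\|_\rho=s<1$, then $\lambda_1^\rho(\gamma)-\varphi(\gamma)\ge(1-s)\lambda_1^\rho(\gamma)$, which gives the required linear lower bound with $C'=0$.

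\textbf{Converse ($\Rightarrow$).} Suppose $\eta$ is Anosov, so $\lambda_1^\rho-\varphi\ge c'\lambda_1^\rho-C'$. Apply this to $\gamma^n$. Both $\varphi$ and $\lambda_1^\rho$ are homogeneous under powers, so after dividing by $n$ and letting $n\to\infty$ we get $\varphi(\gamma)\le(1-c')\lambda_1^\rho(\gamma)$ for every $\gamma$. Hence $\|\varphi\|_\rho\le 1-c'<1$.

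\textbf{Main obstacle.} The delicate point is justifying the eigenvalue-gap characterization of Anosov representations when the target is the non-semisimple group $\cag$. The criterion is stated for $\GL(d+2,\R)$, and $\cag$ embeds there, so I would invoke it in $\GL(d+2,\R)$. I would also check that the growth with respect to $\ell$ versus $\lambda_1^\rho$ is comparable: $\lambda_1^\rho(\gamma)$ is bounded above and below by multiples of stable word length, since $\rho$ is Anosov and faithful. Finally, the supremum defining $\|\varphi\|_\rho$ must be over nontrivial $\gamma$. It is finite because $\varphi$ is Lipschitz in word length.
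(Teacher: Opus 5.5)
Your argument is correct whenever $\|\varphi\|_\rho\le 1$. In that range $\varphi(\gamma)\le\lambda_1^\rho(\gamma)$ for every $\gamma$, so your gap formula holds for all $\gamma$, and your ``if'' direction is the paper's. Your passage to powers $\gamma^n$ then settles the ``only if'' direction at $\|\varphi\|_\rho=1$ more cheaply than the paper does. The paper instead invokes Theorem \ref{thm: failure of anosov along lamination}, which builds closed curves shadowing the tension lamination with $\eta$-gap tending to $0$, using the optimal tight map and Anosov closing. Your route needs only that the supremum equals $1$, though it says nothing about where the witnesses live.

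What the proposal does not handle is $\|\varphi\|_\rho>1$, and the sentence ``the degeneration is exactly what we want to detect'' does not address it. If $\varphi(\gamma)>\lambda_1^\rho(\gamma)$, then $\lambda_1^\eta(\gamma)=0$ and $\lambda_2^\eta(\gamma)=\lambda_1^\rho(\gamma)-\varphi(\gamma)$. So the true gap is $\varphi(\gamma)-\lambda_1^\rho(\gamma)>0$, not $\min\{\lambda_1^\rho-\varphi,\lambda_1^\rho-\lambda_2^\rho\}(\gamma)$, which would be negative. Consequently the step ``$\eta$ Anosov, so $\lambda_1^\rho-\varphi\ge c'\lambda_1^\rho-C'$'' fails. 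In general the $\eta$-gap is only bounded above by $|\lambda_1^\rho-\varphi|$, so the Anosov inequality gives just $|\lambda_1^\rho-\varphi|(\gamma)\ge a\|\gamma\|_\infty-b$. After passing to powers you learn only that $(\lambda_1^\rho-\varphi)/\lambda_1^\rho$ avoids a neighborhood of $0$, which is no contradiction element by element.

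The missing idea is a mixing argument showing that this ratio cannot jump over $0$. Take $\gamma_0$ with $\varphi(\gamma_0)>\lambda_1^\rho(\gamma_0)$. Let $\gamma_1$ be a conjugate of $\gamma_0^{-1}$ whose fixed points are disjoint from those of $\gamma_0$. Then $A=(\lambda_1^\rho-\varphi)(\gamma_0)<0<B=(\lambda_1^\rho-\varphi)(\gamma_1)$. By Lemma \ref{lem: rank one projectors and Anosov} and transversality of the boundary maps,
\[\lambda_1^\rho(\gamma_0^p\gamma_1^q)=p\lambda_1^\rho(\gamma_0)+q\lambda_1^\rho(\gamma_1)+O(1),\]
while $\varphi$ is exactly additive. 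Choose $p,q\to\infty$ with $pA+qB=O(1)$. This gives elements of unbounded stable length whose $\eta$-gap is $O(1)$, so $\eta$ is not Anosov. This is the same mechanism as the $\gamma^n\beta^m$ products in Proposition \ref{prop: found a ray}. Alternatively, apply the intermediate value theorem on the convex set of flow-invariant measures together with density of periodic-orbit measures. Note that the paper's deduction from Theorem \ref{thm: failure of anosov along lamination}, and the eigenvalue formula you borrowed from its introduction, are likewise written only under $\varphi\le\lambda_1^\rho$. So this case must be supplied in either proof.
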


The geometric point of view arises by considering a certain $\rho$-invariant asymmetric Finsler metric $d_\rho$ on $\Omega$ called the \emph{domain shape} metric.\footnote{There is really a family of such metrics, but we choose one.} 
It is ``half of the Hilbert metric.''  Domain shape metrics were introduced by Danciger and Stecker and used by Reid in studying the asymptotic geometry of Hitchin representations into $\SL(3,\R)$ \cite{Reid:finsler}.
It has the following important features:
\begin{itemize}
    \item $d_\rho$ is uniquely geodesic, and the geodesics are (directed) projective line segments in $\Omega$.
    \item The minimal translation length of a non-trivial element $\gamma \in \Gamma$ is $\lambda^\rho_1(\gamma)$.
    \item The geodesic flow $g_t: T^1\Omega \to T^1\Omega$ is well defined, $C^1$,  and Anosov.  
\end{itemize}
If $\partial \Omega$ is an ellipsoid, then $d_\rho$ is just the hyperbolic metric.  See \S\ref{sec: domain shape}.

The relationship between $\|\cdot \|_\rho$ as a function on $H^1(\Gamma, \R)$ and $d_\rho$ is:  $\|\cdot\|_\rho$ is an (asymmetric) norm called the \emph{co-stable norm} with respect to $d_\rho$.  The co-stable norm and the dual stable norm on $H_1(\Gamma,\R)$ were first studied independently by Gromov and Federer (see \cite{Bangert:equiv}).   They are useful in a variety of contexts, e.g., \cite{Bangert:minimalgeodesics, AL:stablenorm, McSR}.  

The co-stable norm supplies us with a powerful tool, which we summarize here. Theorem \ref{thm: co-stable norm and optimal best maps} gives more details.
When dealing with asymmetric metric spaces, it is important to specify a \emph{directionality} for Lipschitz maps.
A map $f: \Omega \to \R$ is $L$-forward Lipschitz if for all $x, y \in \Omega$,
\[f(y) - f(x)\le Ld_\rho(x,y).\]

\begin{theorem}\label{thm: tight map and tension intro}
    Let $d_\rho$ be the domain shape metric on $\Omega$ and let $\varphi \in H^1(\Gamma,\R)$ with $\|\varphi \|_\rho =1$.  There is a $1$-forward Lipschitz $\varphi$-equivariant map 
    \[\tau: \Omega \to \R\] 
    such that the set of directed lines in $\Omega$ along which $\tau$ is isometric is a non-empty $\rho$-invariant geodesic lamination $\lambda$. 
    
    Furthermore, $\tau$ is \emph{best} in the sense that it minimizes the forward Lipschitz constant for any $\varphi$-equivariant map $\Omega \to \R$, and $\tau$ is \emph{optimal} in the sense that no line segment that is not in $\lambda$ is mapped isometrically to $\R$ by $\tau$.  
\end{theorem}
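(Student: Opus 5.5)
This is a Thurston-style ``best Lipschitz map'' argument, adapted to an asymmetric Finsler metric and to real-valued targets. We produce $\tau$ in three stages: a compactness argument gives a best map, an averaging argument makes it optimal, and the domain shape geodesic flow identifies the stretch locus.

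\emph{Step 1: the best constant equals $\|\varphi\|_\rho$.}
\begin{itemize}
\item \emph{Lower bound.} Let $f:\Omega\to\R$ be $L$-forward Lipschitz and $\varphi$-equivariant, so $f(\gamma x)=f(x)+\varphi(\gamma)$. Take $x$ on the axis of $\gamma$. Then
\[\varphi(\gamma)=f(\gamma x)-f(x)\le L\,d_\rho(x,\gamma x)=L\lambda_1^\rho(\gamma).\]
Hence $L\ge \|\varphi\|_\rho=1$.
\item \emph{Existence of a minimizer.} Start from a smooth equivariant map, which exists via a partition of unity on the compact quotient $M$. Normalize $f(x_0)=0$. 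The equivariant forward-Lipschitz maps with constant at most $L_0$ form an equicontinuous, pointwise bounded family on compact sets, using that $d_\rho$ is comparable to the Hilbert metric. Arzel\`a--Ascoli then gives a minimizer with constant $L_{\min}$.
\item \emph{Upper bound.} To see $L_{\min}\le 1$, I would use the duality between the co-stable norm and the stable norm (Federer/Gromov, Bangert). Concretely, $\inf_f \Lip^+(f)$ equals the supremum of $\varphi(\mu)/\ell(\mu)$ over invariant probability measures $\mu$ of the geodesic flow on $T^1M$. This follows by a Hahn--Banach/minimax argument applied to the function $v\mapsto df(v)$ with $f=f_0+u$, where $u$ ranges over functions on $M$; equivalently, this is Ma\~n\'e's critical value theory. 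Ergodic measures are approximated by closed orbits, because the flow is Anosov. So the supremum equals $\sup_\gamma \varphi(\gamma)/\lambda_1^\rho(\gamma)=1$.
\end{itemize}

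\emph{Step 2: optimality.}
\begin{itemize}
\item For a best map $f$, define the stretch set $\lambda_f$: the directed lines along which $f$ is isometric, i.e.\ $f$ increases at unit rate in $d_\rho$. This set is closed and $\rho$-invariant.
\item It is a geodesic lamination: two stretched directed lines cannot cross. If they did, we could concatenate the incoming half of one with the outgoing half of the other. Since $f$ is forward $1$-Lipschitz and the concatenation is not a geodesic, strict convexity gives a strict triangle inequality, which contradicts isometry. Strict convexity of $\Omega$ makes $d_\rho$ strictly geodesic.
\item It is non-empty: if it were empty, compactness of $M$ and a local smoothing/averaging would lower the constant below $1$, contradicting Step 1.
\item To get optimality, take countably many best maps $f_i$ that are dense in the (convex) set of best maps. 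Set $\tau=\sum 2^{-i}f_i$. This is still $\varphi$-equivariant and $1$-forward Lipschitz. A segment is stretched by $\tau$ if and only if it is stretched by every $f_i$, and hence by every best map. So $\lambda=\lambda_\tau=\bigcap_f\lambda_f$.
\item The remaining claim is that no segment outside $\lambda$ is stretched by $\tau$. This holds once we know that any segment stretched by all best maps extends to a complete line that they all stretch, a fact that follows from the same concatenation argument.
\end{itemize}

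\emph{Main obstacle.} I expect the hardest step to be non-emptiness together with the claim that isometric directions assemble into complete lines. This requires a local improvement lemma: if a best map is not isometric on any full line through a compact region, one can perturb it to be strictly less than $1$-Lipschitz there. The asymmetry of $d_\rho$ complicates the usual symmetric arguments. I would attack this via weak KAM theory. Regard $\tau$ as a subsolution of $\sup_{v}(d\tau(v)-\|v\|_\rho)\le 0$ on $M$ twisted by $\varphi$. The Aubry set of this Finsler Lagrangian at critical value $1$ is then exactly the set of calibrated curves. Fathi's theorem supplies a subsolution that is strict away from the Aubry set, and this gives both optimality and the lamination structure (the Mather graph property).
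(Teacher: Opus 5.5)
\paragraph{Gap: extension of stretched segments and non-emptiness (Step 2).} This is the step you flag as the main obstacle, and neither of your two treatments closes it.

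In the last bullet you assert that a segment stretched by every best map extends to a complete stretched line ``by the same concatenation argument.'' It does not. Concatenation plus the strict triangle inequality for $d_\rho$ only shows that if stretched segments $[x,p]$ and $[p,y]$ \emph{both exist}, then $x,p,y$ are collinear. That gives uniqueness and non-crossing. It cannot produce a stretched continuation past the endpoint $q$ of a maximal stretched segment. Indeed, a general best map can be isometric on a segment without being isometric on any line through it. This is exactly where optimality must enter, through a local improvement.

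Your non-emptiness argument has the same hole. ``No complete stretched line'' does not imply ``local forward Lipschitz constant $<1$ at every point,'' and the latter is what smoothing or averaging would need.

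The fallback to weak KAM does not repair this as stated. Fathi's strict subsolutions and Mather's graph property are theorems about Tonelli Lagrangians. Here $F_\rho$ is only $C^1$ and asymmetric, and the relevant Lagrangian $F_\rho-\omega$ is $1$-homogeneous rather than superlinear. This is precisely why the paper declines to quote the smooth literature.

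\paragraph{Secondary issue: the upper bound in Step 1.} Your minimax over $f=f_0+u$ with $u\in C^\infty(M)$ produces \emph{closed} (holonomic) measures on $T^1M$, not flow-invariant ones. Passing from one to the other is again a Tonelli-type result (Ma\~n\'e). The paper avoids this. It builds a best map directly as the Busemann-type function $p\mapsto\liminf_{\|h\|_\rho\to\infty}\widehat d_\rho(o,h.p)-\varphi(h)$ on the universal abelian cover. This function is finite by the Burago/Ammann--L\"oh estimate $f([\gamma])\le\|[\gamma]\|_\rho+D$.

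\paragraph{What the paper uses instead.} Two elementary arguments supply the missing pieces.

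\emph{Non-emptiness.} For any best map $F$, set $L_\delta(v)=(F(g_\delta\tilde v)-F(\tilde v))/\delta\le 1$ on $T^1M$. Take closed geodesics $\gamma_n$ with $\varphi(\gamma_n)/\lambda_1^\rho(\gamma_n)\to 1$. Then $\int L_\delta\,d\mu_{\gamma_n}=\varphi(\gamma_n)/\lambda_1^\rho(\gamma_n)$. Hence any weak-$*$ limit $\mu$ has $L_\delta\equiv 1$ on the flow-invariant set $\supp\mu$, which therefore consists of complete stretched lines.

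\emph{Extension.} Let $p$ be a point where the optimal $\tau$ has local constant $1$ (in your setting, the endpoint $q$). Compare $\tau$ on the forward ball $\vec B_r(p)$ and the backward ball $\cev B_r(p)$; the forward Lipschitz property gives $\max_{\cev B_r(p)}\tau-\min_{\vec B_r(p)}\tau\le 2r$.
\begin{itemize}
\item If this oscillation is strictly less than $2r$, define a function equal to $\tau$ on $\partial B$ and equal to the midpoint value on a small ball about $p$. It is still $1$-forward Lipschitz. Extend it by the forward-Lipschitz McShane formula $x\mapsto\sup_y g(y)-d(x,y)$ and patch it in equivariantly. The result is a best map that is constant near $p$, contradicting optimality. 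In your formulation, it fails to stretch $[x,q]$ near $q$.
\item Otherwise the oscillation equals $2r$. This produces stretched segments $[x',p]$ and $[p,y']$. Only now does your concatenation argument apply, placing $p$ in the interior of a stretched segment. Together with closedness of the stretch locus, this yields complete leaves.
\end{itemize}
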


 The quotient $\uu\lambda \subset M$ is the lamination from Main Theorem \ref{mainthm: ending lamination intro}.   We call $\lambda$ the \emph{tension locus} or \emph{stretch locus} of $\tau$, and we refer to $\tau$ as an \emph{optimal tight map}.

\begin{remark}\label{rmk: construct tight maps}
    Using the tools of \cite{CF,CFII}, for hyperbolic surfaces, it is possible to construct tight maps with prescribed tension locus (if you are willing to change the hyperbolic metric).  See \cite[\S5]{FLM:I} and \cite[\S7]{FLM:II}.
\end{remark}

A consequence of Theorem \ref{thm: tight map and tension intro} is that for all $\varphi \in H^1(\Gamma,\R)$, $\|\varphi\|_\rho$ measures the minimum forward Lipschitz constant for a $\varphi$-equivariant map $\Omega \to \R$.  Compare with \cite[Theorem 8.5]{Thurston:stretch} and \cite[Theorem 1.3]{GK:stretch}.


\subsection{Main results: limit sets and stability}\label{subsec: main results}

We take $\rho: \Gamma \to \SL(d+1, \R)$ dividing $\Omega \subset \RP^d$ and $\varphi\in H^1(\Gamma,\R)$ satisfying $\|\varphi\|_\rho=1$.  Main Theorem  \ref{mainthm: ending lamination intro} gives us a chain recurrent oriented geodesic lamination $\uu \lambda \subset M = \Omega/\rho$.  
We work with two sets of hypotheses:
\begin{hypothesis}\label{hyp A}
    $d\ge 2$ and $\uu\lambda$ is a multicurve.\footnote{Our methods also apply to the case that $d\ge 2$ and $\uu\lambda$  has finitely many leaves but not all closed.  We do not treat this case in the text. We note that the setting of hypotheses \ref{hyp A} forms an open subset of the sphere $\{\|\cdot\|_\rho=1\}\subset H^1(\Gamma,\R)$.}
\end{hypothesis}
\begin{hypothesis}\label{hyp B}
  $d=2$ and $\uu\lambda$ has an infinite leaf.  
\end{hypothesis}

The coaffine representations that we consider in this paper are of the form
\[\eta(\gamma) = \begin{pmatrix}
    \rho(\gamma)\chi(\gamma) & \mathbf 0\\ Z(\gamma) &1
\end{pmatrix},\]
where $Z: \Gamma \to (\R^{d+1})^*$ is a function satisfying a cocycle condition.  The coaffine representations with linear part $\rho\chi$ are parameterized by the vector space $Z^1(\Gamma, (\R^{d+1})_{\rho\chi}^*)$ of such cocycles.  The cohomology $H^1(\Gamma, (\R^{d+1})_{\rho\chi}^*)$ parametrizes coaffine representations with linear part $\rho\chi$ up to conjugation by the translational subgroup of $\cag$; see \S\ref{section: geometry of coaffine domain}.

\begin{definition}\label{def: cone intro}
    The \textbf{positive cone} $\sP_{\rho\chi} \subset H^1(\Gamma,(\R^{d+1})^*_{\rho\chi})$ is an open convex cone cut out by  linear inequalities associated to the minimal components of $\uu\lambda$.  See Definitions \ref{def: positive cone and cocylces} (hypotheses \ref{hyp A}) and  \ref{def: positive cone} (hypotheses \ref{hyp B}).
    
    We write $\eta \in \sP_{\rho\chi}$ if $L(\eta) =\rho\chi$ and if the cohomology class of the cocycle defining $\eta$ lies in $\sP_{\rho\chi}$.
\end{definition}

 If $\uu\lambda$ is a mulitcurve with $k$-components, then $\sP$ is a polyhedral cone with at most $k$-faces, i.e., it is the  intersection of $k$ linear half-spaces. 
 If $\uu \lambda$ is not finite, then this cone is an intersection of finitely many families of Cantor sets worth of linear functionals.   
With the exception of the case that $\uu\lambda$ is a multi curve with a small number of components, it is not clear if $\sP_{\rho\chi}$ is non-empty.
\medskip 

To a coaffine representation $\eta$ and point $o\in \sC\Omega$, we associate the \textbf{orbital limit set}
\[\Lambda_{orb}^o \defeq \overline{\eta(\Gamma).o}\cap \partial^\infty\sC\Omega. \]
 An element $\gamma\in \Gamma$ is $\eta$-proximal if $\lambda_1^\eta(\gamma) > \lambda_2^\eta(\gamma)$.  The unique attracting eigenline of $\eta(\gamma)$ is denoted $\eta(\gamma)^+$.  Define the \textbf{proximal limit set}
\[\Lambda_{prox} \defeq \overline{\{\eta(\gamma)^+\in\RP^{d+1} \colon \gamma\in \Gamma \textrm{ is  $\eta$-proximal} \}} \subset \partial^\infty\sC\Omega.\]

By an easy dynamical argument, we always have $\Lambda_{prox} \subset \Lambda_{orb}^o$ for all $o \in \sC\Omega$. Equality holds for Anosov representations, but need not otherwise \cite{DGK}.
The proof of the following result is surprisingly involved.  
\begin{maintheorem}\label{mainthm: equality of limit sets intro}
    Under hypotheses \ref{hyp A} or \ref{hyp B}, if  $\eta \not \in \partial (\sP_{\rho\chi}\cup - \sP_{\rho\chi})$, then for all $o \in \sC\Omega$, we have 
    \[\Lambda_{prox}=\Lambda_{orb}^o.\]
\end{maintheorem}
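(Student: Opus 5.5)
The plan is to prove the nontrivial inclusion $\Lambda_{orb}^o\subset\Lambda_{prox}$ by describing both sets explicitly in terms of the optimal tight map $\tau$ of Theorem \ref{thm: tight map and tension intro} and the cocycle $Z$. The reverse inclusion is the easy dynamical argument already noted.

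\textbf{Step 1: coordinates and orbits.} Use $\sC\Omega\cong\Omega\times\R$, with the vertical coordinate given by pairing with $Z$. In these coordinates $\eta(\gamma)$ acts as $\rho(\gamma)$ on the base. On the fibre it acts by the affine map $t\mapsto \chi(\gamma)^{-1}t+ c_\gamma(x)$, or its analogue after normalizing. Thus the height of $\eta(\gamma).o$ over $\rho(\gamma)x_0$ is a sum $\sum e^{\varphi(\cdot)}Z(\cdot)$ along a word for $\gamma$.

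\textbf{Step 2: reduce to controlled sums along geodesics.} Using the tight map $\tau$, we have $\varphi(g_1\cdots g_k)\le d_\rho(x_0,\gamma x_0)+C$, with near-equality exactly along segments fellow-travelling $\lambda$. Hence, away from $\lambda$, the weights $e^{\varphi}$ decay exponentially relative to $\lambda_1^\rho$. This is the content of $\|\varphi\|_\rho=1$ together with optimality. Consequently, any accumulation point of $\eta(\gamma_n).o$ whose base point $\xi\in\partial\Omega$ is approached by geodesics spending bounded total time near $\lambda$ has a finite, well-defined height $\mathcal H(\xi)$, given by a convergent series. Such points are limits of attracting fixed points of proximal elements $\gamma$ with $\gamma^+\to\xi$. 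Proximality in $\eta$ holds whenever $\lambda_1^\rho(\gamma)>\varphi(\gamma)$, which is true off $\lambda$, and the attracting eigenline sits at height given by the same series for the periodic word. A continuity estimate from the exponential decay then gives $\Lambda_{orb}^o\cap\{\text{finite height at such }\xi\}\subset\Lambda_{prox}$.

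\textbf{Step 3: the main obstacle, orbits along $\lambda$.} When $\gamma_n x_0$ tracks a leaf of $\lambda$ for a long time, the weights do not decay, and the heights accumulated there are sums of the form $\sum_{k}Z(\text{loop})$ around a component of $\uu\lambda$. The positive cone is designed so that these sums have a definite sign. This holds for each closed component under hypotheses \ref{hyp A}, and for the Cantor family of functionals indexed by minimal components under hypotheses \ref{hyp B}. So if $\eta\in\sP_{\rho\chi}$, or $-\sP_{\rho\chi}$, the heights diverge monotonically to $+\infty$ (respectively $-\infty$) along $\lambda$. Orbit limit points over endpoints of $\lambda$ are then either the point at infinity of the vertical line, or obtained as limits of heights that are realized by proximal elements which wind around $\lambda$ a controlled number of times before exiting. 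If $\eta\notin\overline{\sP\cup-\sP}$, some loop sums are positive and others negative, and the wrapping produces full vertical lines, with every height realized by proximal elements. Both cases require constructing explicit proximal approximants $\gamma$ with prescribed winding, for instance $\gamma=\alpha\delta^{m}\beta$ with $\delta$ a loop of $\uu\lambda$ (or a closed curve shadowing a long leaf segment under \ref{hyp B}), and checking proximality via $\lambda_1^\rho>\varphi$ with a uniform gap. I expect the hardest part to be the hypothesis \ref{hyp B} case, where the leaves are not closed. There I would use chain recurrence and the Anosov closing lemma for the $d_\rho$-geodesic flow to produce the needed closed curves, with error terms controlled by the exponential decay of Step 2. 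The boundary of the cone is excluded precisely because this is where sums of zero sign make the orbit heights oscillate without proximal witnesses.
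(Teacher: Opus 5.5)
\textbf{There is a genuine gap.} Your plan tries to read off each point of $\Lambda_{orb}^o$ from the geodesic toward its base point, and the picture this relies on is false.

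\textbf{The single-height claim fails.} In Step 2 you assert that an orbital accumulation point over $\xi\in\partial\Omega$ has a single finite height, given by a series along the ray to $\xi$. But where $\eta(\gamma_n).o$ lands in $Q^{-1}(\xi)$ depends on the whole segments $[o,\gamma_n o]$, not only on their limiting endpoint. Take $\gamma$ $\eta$-proximal, $\beta$ following a positive leaf of $\lambda$, and $m_n e^{-n\s(\gamma)}\to t$. Then $\eta(\gamma^n\beta^{m_n}).o$ converges to the point over $\gamma^+$ at height $h(\eta(\gamma)^+)+\kappa t$, for a fixed $\kappa>0$ and every $t>0$. So fibres of $\Lambda_{orb}^o$ are rays or lines, and $\mathcal H$ in Main Theorem \ref{mainthm: structure of limit sets} is only the bottom of each fibre.

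\textbf{Your case analysis is not exhaustive.} The sequences above spend unboundedly long near $\lambda$, but only after accumulating large slack. They therefore fall in neither of your regimes: bounded time near $\lambda$ (Step 2), or tracking a leaf with non-decaying weights (Step 3). A general orbit segment is an arbitrary interleaving of high- and low-slack pieces, as in the Partition Lemmas.

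\textbf{The approximation by proximal fixed points falls short.} If $\eta(\gamma_n)/\|\rho\chi(\gamma_n)\|$ converges to a rank-$\le 2$ matrix, natural proximal approximants such as $\gamma_n g$ have attracting points converging to images of points of $\partial\Omega$. By contrast, $\lim\eta(\gamma_n).o$ is the image of the interior point $Q(o)$, which only lies in the vertical segment spanned by such images. You would still need those vertical segments to lie in $\Lambda_{prox}$.

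\textbf{The missing idea is this ``no gaps'' statement, which the paper obtains without describing $\Lambda_{orb}^o$ point by point.}
\begin{itemize}
\item For $o\in C_{min}=\hull\Lambda_{prox}$ one has $\Lambda_{prox}\subset\Lambda_{orb}^o\subset\partial^\infty C_{min}$, so it suffices to prove $\partial^\infty C_{min}\subset\Lambda_{prox}$.
\item The paper finds one vertical ray (or line) in $\Lambda_{prox}$ over $\gamma^+$ by the $\gamma^n\beta^m$ construction. Under hypotheses \ref{hyp B} this uses a generic point, Corollary \ref{cor: limit points of averages}, and bounded return times to control $Z(\beta^{m_j})/m_j$.
\item It spreads this ray to every proximal point by dynamics and closure (Lemma \ref{lem: sufficient to find a ray}).
\item The finite endpoint of each ray of $\partial^\infty C_{min}$ is an extreme point of $\hull\Lambda_{prox}$, hence lies in $\Lambda_{prox}$ (Lemma \ref{lem: proximal rays, proximal endpoints}).
\end{itemize}

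\textbf{Base-point independence is a separate step you do not address.}
\begin{itemize}
\item Every vertical line meets $C_{min}$.
\item Height differences along a vertical line are contracted by a factor $\asymp e^{-\s[o,\gamma o]}$.
\item When this slack stays bounded, the segments converge to a finite-slack ray whose height tends to $+\infty$ (Corollary \ref{cor: finite slack implies infinite height}, which rests on the Bounded Height Theorems).
\item When $\eta\notin\overline{\sP_{\rho\chi}\cup-\sP_{\rho\chi}}$, one has $C_{min}=\sC\Omega$ and nothing further is needed.
\end{itemize}
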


 Therefore, we say that $\eta$ is \textbf{exceptional} if $\eta \in  \partial (\sP_{\rho\chi}\cup - \sP_{\rho\chi})$ and \textbf{unexceptional} otherwise.  For unexceptional $\eta$, Main Theorem \ref{mainthm: equality of limit sets intro} allows us to make the following definition:
\[\Lambda \defeq \Lambda_{prox} = \Lambda_{orb}^o.\]

\begin{remark}
    Under hypotheses \ref{hyp A}, if $\eta$ is exceptional, then the \emph{full orbital limit set} $\bigcup_{o\in \sC\Omega} \Lambda_{orb}^{o}$ is equal to $\partial^\infty \sC\Omega$; see also Remark \ref{rmk: characterizing exceptional with a ray or line}.
    Under hypotheses \ref{hyp B}, if $\eta$ is exceptional, we do not know if the full orbital limit set is \emph{ever} different from $\Lambda_{prox}$.
\end{remark}

Denote by $\lambda^+\subset \partial^\infty\Omega$ the forward endpoints of the directed lines constituting $\lambda$.  

\begin{maintheorem}\label{mainthm: structure of limit sets}
    Under hypotheses \ref{hyp A} or \ref{hyp B}, for unexceptional $\eta$ with $L(\eta) =\rho\chi$, we have the following alternatives:
    \begin{enumerate}  
        \item If $\eta \not \in \overline{(\sP_{\rho\chi}\cup - \sP_{\rho\chi})}$ then we have an equality between the limit set and the boundary of the cone
        \[\Lambda=\partial^\infty \sC\Omega.\]  
        \item If $\eta \in \sP_{\rho\chi}$ then  there is a lower-semi-continuous ``height'' function $\mathcal H: \partial^\infty \Omega \to \R\cup \{+\infty\}$, which is bounded below, such that
        \[\Lambda= \operatorname{epi-graph}(\mathcal{H}). \]
        Furthermore, $\mathcal H\inverse(+\infty) \supset \lambda^+$.
        \item If $\eta \in -\sP_{\rho\chi}$ then there is an upper-semi-continuous ``height'' function $\mathcal H: \partial^\infty \Omega \to \R\cup \{-\infty\}$, which is bounded above, such that
        \[\Lambda= \operatorname{hypo-graph}(\mathcal{H}). \]
        Furthermore, $\mathcal H\inverse(-\infty) \supset \lambda^+$.
    \end{enumerate}
\end{maintheorem}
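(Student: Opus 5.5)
The plan is to work in the trivialization $\sC\Omega \cong \Omega\times\R$ and its boundary $\partial^\infty\sC\Omega\cong\partial^\infty\Omega\times\R$. In this picture $\eta(\gamma)$ acts as $(x,t)\mapsto(\rho(\gamma)x,\ \chi(\gamma)\inverse t + c_\gamma(x))$, up to the normalization of the affine coordinate, with $c_\gamma$ determined by $Z$. Equivalently, and more usefully, I would use a $Z$-equivariant section: choose a smooth $\eta$-equivariant map $s:\Omega\to\R$, so that the graph of $s$ is an $\eta$-invariant hypersurface. Subtracting $s$ turns the action into a fiberwise rescaling by $e^{\varphi(\gamma)}$ along the base action. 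Since $\Lambda=\Lambda_{orb}^o$ for unexceptional $\eta$ by Main Theorem \ref{mainthm: equality of limit sets intro}, I only need to analyse orbit closures $\overline{\eta(\Gamma).o}$. Moreover $\Lambda$ is closed and $\eta(\Gamma)$-invariant, and its projection to $\partial^\infty\Omega$ is onto, because $\rho$ is minimal on $\partial^\infty\Omega$. The three alternatives then come from determining the vertical slices $\Lambda\cap(\{\xi\}\times\R)$.

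\textbf{Step 1 (vertical rays are saturated).} First I show that along each fiber, $\Lambda$ is closed upward, closed downward, or both. The input is the tight map $\tau$ of Theorem \ref{thm: tight map and tension intro} together with the eigenvalue formula of Remark \ref{rmk: first theorem}(1). Take a closed curve $\gamma$ freely homotopic into (or close to) $\uu\lambda$, so that $\varphi(\gamma)\approx\lambda_1^\rho(\gamma)$. Then the $\eta(\gamma)$-fixed vertical line over $\rho(\gamma)^+$ is acted on by an affine map of $\R$ whose dilation is nearly neutral, and the translation part is governed by the sign of the linear functional defining $\sP_{\rho\chi}$ evaluated on that component. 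Iterating $\eta(\gamma)^n$ on $o$ then pushes the orbit to $\rho(\gamma)^+\times\{\pm\infty\}$ in a controlled way, and by the attracting dynamics the limit points sweep out a vertical ray. The direction of the ray is the sign of the functional. In case (1), the functionals take both signs on different minimal components (or on different leaves, under hypotheses \ref{hyp B}), so both upward and downward rays appear. Combined with invariance and density of $\{\rho(\gamma)^+\}$, this forces $\Lambda=\partial^\infty\sC\Omega$. In case (2) all signs are positive and only upward rays appear. Case (3) follows from case (2) by the symmetry $Z\mapsto -Z$, i.e.\ conjugating by $\diag(1,\dots,1,-1)$.

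\textbf{Step 2 (height function in case (2)).} Define $\mathcal H(\xi)=\inf\{t:(\xi,t)\in\Lambda\}$. Step 1 shows that each nonempty slice is upward closed, so $\Lambda=\operatorname{epi-graph}(\mathcal H)$ provided every slice is nonempty; this holds by surjectivity. Closedness of $\Lambda$ is exactly lower semicontinuity of $\mathcal H$. For the lower bound, I would use $\eta\in\sP_{\rho\chi}$ to build an invariant lower barrier: a $\varphi$-twisted equivariant function bounded below in the normalized coordinate. I would construct it by combining $s$ with $-\tau$, using that $\tau$ is $1$-forward Lipschitz so the rescaling $e^{\varphi}$ never beats $e^{\lambda_1}$ from below. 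The positivity inequalities then ensure the orbit of $o$ stays above a fixed level in the boundary compactification. Finally, for $\xi\in\lambda^+$, follow the leaf of $\lambda$ ending at $\xi$. Because $\tau$ is isometric along that leaf and the functional is positive there, the height of $\eta(\gamma_n).o$ along approximating group elements tends to $+\infty$, giving $\mathcal H(\xi)=+\infty$.

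\textbf{Main obstacle.} I expect the hardest part to be the lower bound on $\mathcal H$, i.e.\ showing that nothing escapes to $-\infty$ when $\eta\in\sP_{\rho\chi}$. The tight map controls only the directions in $\lambda$, while away from $\lambda$ one needs a uniform gap $\varphi(\gamma)\le(1-\epsilon)\lambda_1^\rho(\gamma)$ along segments that are not near $\lambda$. Under hypotheses \ref{hyp B} one also needs an infinite family of positivity constraints to hold uniformly. I would first try a shadowing argument: decompose long geodesics into pieces fellow-travelling $\lambda$, where the positive functional gives upward drift, and pieces far from $\lambda$, where the strict contraction makes the translation contributions summable.
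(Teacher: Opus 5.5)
\textbf{Gap in Step 1: a single element cannot produce a vertical ray.} This step carries the main new ingredient of the proof (Proposition \ref{prop: found a ray}), and the mechanism you describe fails. Iterating one element on $o$ gives only one limit point.

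If $\uu\gamma\subset\uu\lambda$, then $\eta(\gamma)$ acts on the vertical line over $\rho(\gamma)^+$ by a unipotent translation. So $\eta(\gamma)^n.o\to[e_{d+2}]$ whenever the translation is non-zero; in $\RP^{d+1}$ both ``ends'' $\rho(\gamma)^+\times\{\pm\infty\}$ are this single global fixed point. This case only arises under hypotheses \ref{hyp A}; under hypotheses \ref{hyp B} there may be no closed leaves at all. If instead $\s(\gamma)>0$, then $\eta(\gamma)$ is proximal and $\eta(\gamma)^n.o\to\eta(\gamma)^+$, a single point at height $Z(\gamma)(\gamma_0^+)/(e^{\s(\gamma)}-1)$ (Lemma \ref{lem: compute height}).

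The missing idea is a two-parameter family that balances exponential contraction against linear drift. Fix $\gamma$ with $\s(\gamma)>0$, let $\beta^m$ follow a leaf of a minimal component for $m$ steps, and consider $e^{-n\s(\gamma)}\eta(\gamma^n\beta^m)$. Its cotranslation part is $\frac{Z(\gamma)P_\gamma\,\rho\chi(\beta^m)}{e^{\s(\gamma)}-1}+e^{-n\s(\gamma)}Z(\beta^m)+o(1)$. Here $Z(\beta^m)$ grows linearly in $m$, with sign dictated by positivity on that component. Choosing $m_n e^{-n\s(\gamma)}\to t$ forces the attracting fixed points of $\eta(\gamma^n\beta^{m_n})$ to converge to the point over $\gamma^+$ at height $h(\eta(\gamma)^+)+td/c$, with $d/c>0$, for every $t>0$.

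Under hypotheses \ref{hyp B}, $Z(\beta^m)/m$ has no single limit. It can only be controlled along subsequences, using Corollary \ref{cor: limit points of averages} at a $\nu$-generic point together with a bounded-return-time argument that gives $\sigma^{m_j}x\to y$ and $m_je^{-n_j\s(\gamma)}\to t$ simultaneously. Since these points are limits of attracting fixed points, the ray lies in $\Lambda_{prox}$. Only then does your propagation by invariance and density (Lemma \ref{lem: sufficient to find a ray}) apply. In alternative 1, running the construction on components of both signs gives an upward and a downward ray, hence full lines.

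\textbf{Two further problems.}

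First, the projection of $\Lambda$ to $\partial^\infty\Omega$ is not onto in alternative 2. $\Lambda$ is closed only in $\cas$, orbit points may escape to $[e_{d+2}]$, and minimality only gives a dense projection. In fact the fibers over the dense set $\lambda^+$ are empty, which is exactly your own later claim that $\mathcal H=+\infty$ there, so the two assertions contradict each other. The epigraph description does not need surjectivity, since $\mathcal H=+\infty$ is allowed. In alternative 1, surjectivity follows from closedness once all non-empty fibers are full lines.

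Second, the lower bound is where essentially all the work lies, and the ``invariant lower barrier'' is not yet an argument. What is needed is that the height $h$ of the $\eta$-equivariant section $p\mapsto[\Psi(p):A(o,p)\Psi(p)]$ is bounded below along every ray from $o$ (Theorems \ref{thm: heights bounded below scc} and \ref{thm: heights bounded below general}). The orbit of the section point over $o$ lies on this section, so its limit points are then bounded below.

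Your shadowing scheme is the right one, but it must incorporate the key quantitative feature:
\begin{itemize}
\item each low-slack segment contributes a \emph{positive} term of size about $M_i e^{-\s[o,q_{i+1}]}$, where $M_i$ is its number of turns;
\item high-slack segments contribute terms dominated by a geometric series in the accumulated slack;
\item under hypotheses \ref{hyp B}, low-slack stretches that hop between leaves give only a constant positive lower bound (Lemma \ref{lem: zig zag}).
\end{itemize}

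Likewise, to get $\mathcal H(\xi)=+\infty$ for $\xi\in\lambda^+$, you must show that \emph{every} orbit sequence whose projections converge to $\xi$ escapes to $[e_{d+2}]$. Exhibiting approximating group elements along the leaf is not enough. This uses the positivity of every low-slack term (Corollary \ref{cor: finite slack implies infinite height}) together with basepoint independence (Proposition \ref{prop: positive orbital sets coincide}).
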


\begin{remarks}\leavevmode\par
\begin{enumerate}
    \item The function $\mathcal H$ depends on a choice of trivialization $\partial^\infty\sC\Omega\cong \partial^\infty \Omega\times \R$.  It is not $\Gamma$-invariant. 
    \item In alternative 2 of Main Theorem \ref{mainthm: structure of limit sets},  $\mathcal H\inverse (+\infty)$ properly contains $\lambda^+$. It is always uncountable. 
    See Remarks \ref{rmk: lots of rays going to infinity} and \ref{rmk: infty is more than lambda} and Question \ref{q: Hinverse}. 
\end{enumerate}
\end{remarks}

 We emphasize that openness of $\sP_{\rho\chi}$ gives that all three alternatives are \emph{stable} under perturbing the cocycle defining $\eta$. We will see that a combination of robust hyperbolic dynamics and parabolic dynamics are responsible for this phenomenon. Compare with the notion of a \emph{blender} from smooth dynamical systems \cite{Blender}.
\medskip

Alternatives 2 and 3 are especially interesting.  We focus on the second: $\Lambda$ is a closed set that looks like the cone over a very erratic height function.   Indeed, the semi-continuity property gives that for any sequence of points $x_n \to x \in \lambda^+$, $\mathcal H(x_n) \to +\infty$.  
Since  $\lambda^+$ is $\Gamma$-invariant, it is a dense set in $\partial^\infty \Omega$.

\begin{maincorollary}\label{maincor: no CT map}
    Under hypotheses \ref{hyp A} or \ref{hyp B}, if $\eta$ is unexceptional, then there is no continuous equivariant surjection $\partial^\infty \Omega \to \Lambda$, i.e., there is no Cannon--Thurston map.
\end{maincorollary}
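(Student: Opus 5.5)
We argue by contradiction, splitting into the three alternatives of Main Theorem \ref{mainthm: structure of limit sets}, which apply because $\eta$ is unexceptional. Suppose $f:\partial^\infty\Omega\to\Lambda$ is a continuous, $\Gamma$-equivariant surjection, where $\Gamma$ acts on $\partial^\infty\Omega$ through $\rho$ and on $\Lambda\subset\partial^\infty\sC\Omega$ through $\eta$. The plan is to obtain a contradiction from topology in alternatives 2 and 3, and from dynamics at attracting fixed points in alternative 1.

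\textbf{Alternatives 2 and 3.} We treat $\eta\in\sP_{\rho\chi}$; the case $-\sP_{\rho\chi}$ is symmetric. Here $\Lambda=\operatorname{epi-graph}(\mathcal H)$, with $\mathcal H$ lower-semi-continuous and bounded below. The key observation is that $\Lambda$ is not compact in $\partial^\infty\sC\Omega\cong\partial^\infty\Omega\times\R$: it contains the vertical rays $\{x\}\times[\mathcal H(x),\infty)$ for every $x$ with $\mathcal H(x)<\infty$. Such points exist because $\mathcal H$ is not identically $+\infty$; this holds, for instance, because $\Lambda=\Lambda_{prox}$ is nonempty. On the other hand, $f(\partial^\infty\Omega)$ is the continuous image of a compact set, so it is compact. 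It therefore cannot equal $\Lambda$, which gives the contradiction. The only point to check is that $\Lambda$ is really taken inside $\partial^\infty\sC\Omega$ and not in its closure in $\RP^{d+1}$. This is how $\Lambda_{orb}^o$ and $\Lambda_{prox}$ are defined, since they are intersected with or contained in $\partial^\infty\sC\Omega$.

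\textbf{Alternative 1.} Here $\Lambda=\partial^\infty\sC\Omega\cong\partial^\infty\Omega\times\R$, which is again non-compact. The compactness argument above applies verbatim, since there is no continuous surjection from the compact space $\partial^\infty\Omega$ onto a non-compact space.

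\textbf{Expected obstacle and backup.} The risk is that the intended notion of "limit set" includes the points at vertical infinity, meaning that $\Lambda$ is regarded as compactified. In that case the compactness argument breaks, and we would fall back on a dynamical argument. Take $\gamma\in\Gamma$ such that its axis is a leaf of $\lambda$, or more generally such that $\rho(\gamma)^+\in\lambda^+$. Equivariance forces $f(\rho(\gamma)^+)$ to be fixed by $\eta(\gamma)$, and continuity plus attraction of $\rho(\gamma)^+$ forces $f(\rho(\gamma)^+)$ to be an attracting fixed point for $\eta(\gamma)$ on $f(\partial^\infty\Omega)$. Because $(\lambda_1^\eta-\lambda_2^\eta)(\gamma)=0$ along $\lambda$ in the relevant sense, $\eta(\gamma)$ acts on the vertical fiber over $\rho(\gamma)^+$ as a translation or parabolic-like map. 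Then $f(\rho(\gamma)^+)$ would have to be the vertical endpoint at infinity, and nearby surjectivity onto $\Lambda$ contradicts that points with finite height accumulate (semi-continuity gives heights $\to+\infty$ only near $\lambda^+$). Pinning down this dynamical argument is the step I expect to be delicate. The primary compactness argument should already suffice under the paper's definitions.
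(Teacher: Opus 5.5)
Your primary argument is correct, and it takes a genuinely different and much more elementary route than the paper. With the paper's definitions, $\Lambda$ sits inside $\partial^\infty\sC\Omega\cong\partial^\infty\Omega\times\R$, which omits $[e_{d+2}]$: the closure defining $\Lambda_{prox}$ is taken in $\cas$, and $\Lambda_{orb}^o$ is intersected with $\partial^\infty\sC\Omega$. In each alternative $\Lambda$ contains a vertical ray or line. In alternatives 2 and 3 this is because some fiber has finite height, e.g.\ over $\gamma^+$ for $\s(\gamma)>0$ by Lemma \ref{lem: compute height}. So $\Lambda$ accumulates at $[e_{d+2}]$, is not closed in $\RP^{d+1}$, and cannot be a continuous image of the compact sphere $\partial^\infty\Omega$.

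Note that this uses neither equivariance nor any dynamics of $\eta$; it shows the statement, read literally, is a point-set triviality. The paper argues dynamically instead:
\begin{enumerate}
\item continuity, equivariance and surjectivity force $f(\gamma^+)=\eta(\gamma)^+$ for $\eta$-proximal $\gamma$;
\item since $\mathcal H$ is lower semi-continuous with $\mathcal H\equiv+\infty$ on $\lambda^+$, proximal $\gamma_n$ with $\gamma_n^+\to x\in\lambda^+$ satisfy $\eta(\gamma_n)^+\to[e_{d+2}]$;
\item hence $f\equiv[e_{d+2}]$ on the dense set $\lambda^+$, so $f$ is constant.
\end{enumerate}
That argument never uses $[e_{d+2}]\notin\Lambda$. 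It therefore also rules out continuous equivariant surjections onto the compact set $\Lambda\cup\{[e_{d+2}]\}\subset\RP^{d+1}$. This is the version that genuinely parallels Cannon--Thurston maps for Kleinian groups, and your compactness argument says nothing about it.

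Your backup sketch for that compactified version can be completed under hypotheses \ref{hyp A}, but the ending should use minimality rather than ``nearby surjectivity''. Take a closed leaf $\delta_i$ with $Z(b_i)(\delta_i^+)\neq 0$, as in the proof of Proposition \ref{prop: found a ray}. The unipotent block then gives $\eta(b_i)^n p\to[e_{d+2}]$ for every $p\in\Lambda\cup\{[e_{d+2}]\}$ not lying over $\delta_i^-$, so $f(\delta_i^+)=[e_{d+2}]$. Equivariance and density of the orbit $\rho(\Gamma)\delta_i^+$ then make $f$ constant.

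Under hypotheses \ref{hyp B} the backup fails in general. If $\rho(\gamma)^+$ were the forward endpoint of a leaf of $\lambda$, pushing that leaf by $\rho(\gamma)^{-n}$ shows the axis of $\gamma$ is itself a leaf. So when $\uu\lambda$ has no closed leaf, no such $\gamma$ exists. There you need the paper's device: approximate $\lambda^+$ by attracting fixed points of proximal elements and use the semi-continuity of $\mathcal H$.
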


See \S \ref{subsec: proof of corollary} for the uncomplicated proof.
\medskip

In alternative 2 of Theorem \ref{mainthm: structure of limit sets}, the convex hull of $\Lambda$ is particularly interesting. Combining Proposition \ref{prop: topology of Cmin},  previous work of the authors in \cite{BF}, and the lower bound on $\mathcal H$ in Theorem \ref{mainthm: structure of limit sets} gives the following result.

\begin{maintheorem}\label{mainthm: structure of hull of Lambda intro}
    In alternative 2, the convex set $\hull(\Lambda)$ in $\sC\Omega$ is properly convex.   Also, $\hull(\Lambda)$ has a single boundary component on which $\eta$ acts cocompactly.   This boundary component accumulates onto all of $\Lambda$ (and to $[e_{d+2}]$ in $\RP^{d+1}$).  When $d=2$, this invariant surface is \emph{bent} along its \emph{bending geodesic lamination} according to an \emph{affine transverse measure}.
\end{maintheorem}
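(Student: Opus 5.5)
We would prove Main Theorem \ref{mainthm: structure of hull of Lambda intro} in three stages: proper convexity of $\hull(\Lambda)$, then the structure of its boundary, then the bending description.

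\textbf{Proper convexity.} By Main Theorem \ref{mainthm: structure of limit sets}(2), fix a trivialization $\partial^\infty\sC\Omega\cong\partial^\infty\Omega\times\R$. Then $\Lambda$ is the epi-graph of a lower-semi-continuous function $\mathcal H$ that is bounded below, say $\mathcal H\ge -C$. In an affine chart adapted to the trivialization, where $[e_{d+2}]$ sits at infinity in the vertical direction, $\Lambda$ is contained in the closed region above a graph over $\overline\Omega$. That graph lies at height $\ge -C$ over the boundary.

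\begin{itemize}
\item The convex hull of $\Lambda$ is therefore contained in the convex set $\{(x,t): x\in\overline\Omega,\ t\ge -C'\}$. This set contains no full affine line except the vertical ones.
\item Vertical lines are also excluded: since $\Lambda$ omits everything below height $-C$, no complete vertical line lies in $\overline{\hull(\Lambda)}$.
\end{itemize}

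To get proper convexity in the projective sense, we pass to a chart in which $[e_{d+2}]$ is at finite distance, as in the right-hand picture of Figure \ref{fig: intro_cones}. There, $\overline{\hull(\Lambda)}$ is a closed cone-like set with apex $[e_{d+2}]$, bounded away from the lower part of $\partial\sC\Omega$. We then check that it misses a projective hyperplane through a point below level $-C$, and conclude it is properly convex.

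\textbf{Boundary structure.} Next we invoke Proposition \ref{prop: topology of Cmin} and the results of \cite{BF}, which we expect describe the \emph{lower} boundary of an $\eta$-invariant properly convex set of this type. We would identify $\partial\hull(\Lambda)\cap\sC\Omega$ with the graph of a convex, $\eta$-equivariant function over $\Omega$, its lower envelope, so it is a single component homeomorphic to $\Omega$.

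\begin{itemize}
\item \emph{Cocompactness.} Vertical projection $\partial\hull(\Lambda)\cap\sC\Omega\to\Omega$ is an $\eta$-to-$\rho$ equivariant homeomorphism, and $\rho$ divides $\Omega$. The $\eta$-action on this component is therefore cocompact.
\item \emph{Accumulation onto $\Lambda$.} For each $x\in\partial^\infty\Omega$, the lower envelope tends to $\mathcal H(x)$ at $x$. Vertical rays above points of $\Lambda$ lie in $\hull(\Lambda)$, which also forces accumulation onto the whole vertical segment above $\mathcal H(x)$. At points of $\mathcal H\inverse(+\infty)\supset\lambda^+$ the surface escapes to $[e_{d+2}]$.
\end{itemize}

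\textbf{Main obstacle.} The hardest step is showing there is exactly one boundary component, with no second ``upper'' face inside $\sC\Omega$. For this we would use that $\Lambda$ is an epi-graph, so it is upward-closed. Hence $\hull(\Lambda)$ contains every vertical ray going upward from any of its points, and its closure contains $[e_{d+2}]$. The only boundary in the interior of $\sC\Omega$ is therefore the lower graph. We also need the lower envelope to be finite everywhere over $\Omega$, which again uses the lower bound $-C$.

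\textbf{The case $d=2$.} The bending statement comes from applying \cite{BF}: a cocompact locally convex surface in $\RP^3$ that is the boundary of a convex set is pleated along a geodesic lamination. Its transverse bending data is an affine transverse measure, whose twisting is governed by $\chi$.
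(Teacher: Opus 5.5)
Your route matches the paper's, which combines three things: the lower bound on $\mathcal H$ gives proper convexity; Proposition \ref{prop: topology of Cmin} in the positive-rays case gives a single lower boundary disk, the graph of a convex function $g$ over $\Omega$; and \cite{BF} gives the bending. Cocompactness via the equivariant projection $Q$ is also fine.

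Three small corrections before the main point:
\begin{itemize}
\item Proposition \ref{prop: topology of Cmin} needs $\eta$ non-semi-simple. This holds because $0\notin\sP_{\rho\chi}$.
\item That $g<\infty$ everywhere comes from Lemma \ref{lemma: C_min is a nontrivial fibration}, not from the lower bound.
\item Your justification for bending is false as stated: the boundary of a convex set need not be pleated. What is used is that the surface bounds the hull of a set lying outside $\sC\Omega$, so it has no extreme points in $\sC\Omega$ (Lemma \ref{lem: 0-faces of convex sets}).
\end{itemize}

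The genuine gap is the accumulation step. Your claim that $g(y)\to\mathcal H(x)$ as $y\to x$ is false at every $x$ with $\mathcal H(x)<\infty$. Also, the fact that vertical rays above points of $\Lambda$ lie in $\hull(\Lambda)$ does not force the lower surface to accumulate on those rays.

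Your argument would apply verbatim to the epi-graph of a function that is finite and continuous near $x$, where the conclusion fails. Work in a chart where vertical lines are parallel. For $t>\mathcal H(x)$, $\Lambda$ would contain $I\times[a,\infty)$ for some neighborhood $I$ of $x$ in $\partial^\infty\Omega$ and some $a<t$. Then $\hull(\Lambda)\supset\hull(I)\times[a,\infty)$, so $g\le a$ near $x$, and the lower surface would accumulate only on the graph of $\mathcal H$, never on $(x,t)$.

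The missing input is that $\mathcal H\inverse(+\infty)\supset\lambda^+$ is dense (Corollary \ref{cor: epigraph}). The argument then runs as follows.
\begin{itemize}
\item Over any $x'\in\lambda^+$, the fiber of $\overline{\hull(\Lambda)}$ in $\partial^\infty\sC\Omega$ is empty, so $g(y)\to+\infty$ as $y\to x'$.
\item By density, $\limsup_{y\to x}g(y)=+\infty$ at every $x\in\partial^\infty\Omega$.
\item $\liminf_{y\to x}g(y)\le\mathcal H(x)$, because $(x,\mathcal H(x))$ is a limit of points of $\hull(\Lambda)\cap\sC\Omega$ (take a chord to another point of $\Lambda$).
\item $g$ is continuous, and $U\cap\Omega$ is connected for small convex neighborhoods $U$ of $x$. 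The intermediate value theorem then gives $y_n\to x$ with $g(y_n)\to t$ for every $t\ge\mathcal H(x)$.
\end{itemize}
Equivalently: $\partial\overline{\hull(\Lambda)}$ is a $d$-sphere equal to $\mathrm{graph}(g)\sqcup\Lambda\sqcup\{[e_{d+2}]\}$. Since $\Lambda$ has empty interior in $\partial^\infty\sC\Omega$, the graph is dense in this sphere.
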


See  \cite{BF} for precise notions of affine measure and bending lamination. 
We remark that our previous work implies that the affine transverse measure actually varies continuously for $\eta\in \sP_{\rho\chi}$ \cite{BF}.

\medskip

These domains bear some resemblance to Kleinian surface groups with exactly one simply degenerate end, which also have one convex core boundary component that is bent along a measured geodesic lamination.
The limit set of such a Kleinian surface group is topologically the dendrite obtained as the quotient of the circle by the equivalence relation that identifies pairs of endpoints of leaves of the ending lamination \cite{Mahan:CTsurface}; compare with the statement that $\mathcal H\inverse(\pm\infty)\supset \lambda^+$.  The dendrite in the Kleinian setting has Hausdorff dimension 2, but vanishing $2$-dimensional Lebesgue measure \cite{Canary:ends,Agol:tameness, CG}. 
\medskip

The sphere $\partial^\infty \Omega$ is a $C^1$-submanifold of $\RP^d$, preserved by the smooth action of $\rho$, so there is a $(d-1)$-dimensional measure in the class of Lebesgue on $\partial^\infty \Omega$ which is quasi-invariant under the action of $\rho$.  Similarly, there is a $d$-dimensional measure in the Lebesgue class on $\partial^\infty \sC \Omega$ which is quasi-invariant by the action of  $\eta$.  
The limit set $\Lambda$ is quite large, at least under hypotheses \ref{hyp A}: 

\begin{maintheorem}\label{mainthm: Lebesgue  intro}
    Under hypotheses \ref{hyp A}, when $\eta$ is unexceptional, the $d$-dimensional Lebesgue measure of $\Lambda$ is positive.
\end{maintheorem}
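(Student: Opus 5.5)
By Main Theorem \ref{mainthm: structure of limit sets}, we split into the three alternatives. In alternative 1, $\Lambda=\partial^\infty\sC\Omega$ and there is nothing to prove. Alternative 3 is obtained from alternative 2 by the involution of $\sC\Omega$ that reverses the vertical coordinate, which conjugates $\eta$ to a representation whose cocycle is $-Z$. So the plan reduces to alternative 2, where $\Lambda=\operatorname{epi-graph}(\mathcal H)$. Using the trivialization $\partial^\infty\sC\Omega\cong\partial^\infty\Omega\times\R$ and Fubini, the $d$-dimensional Lebesgue measure of $\Lambda$ is positive as soon as $\{x\in\partial^\infty\Omega : \mathcal H(x)<+\infty\}$ has positive $(d-1)$-dimensional Lebesgue measure. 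Each point of that set contributes a vertical ray, so the fibre over it has infinite length. The goal is therefore to show that $\mathcal H^{-1}(+\infty)$ is not of full Lebesgue measure in $\partial^\infty\Omega$.

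Under hypotheses \ref{hyp A}, $\uu\lambda$ is a multicurve $\gamma_1\cup\dots\cup\gamma_k$. The points where $\mathcal H=+\infty$ should be those boundary points $x$ for which the geodesic ray from a basepoint to $x$ spends an unbounded amount of \emph{forward} time fellow-travelling lifts of the $\gamma_i$ without compensation. Outside $\lambda$, the optimal tight map $\tau$ of Theorem \ref{thm: tight map and tension intro} is strictly contracting. Concretely, the height over $x$ should be controlled by a Busemann-type cocycle: $\mathcal H(x)\approx\limsup$ along the ray $r_x$ of $t-\tau(r_x(t))$, corrected by the cocycle term. Positivity of the class in $\sP_{\rho\chi}$ should make excursions along the $\gamma_i$ push the height up, while excursions in the complement push it down, with uniform negative drift per unit time.

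Take the Bowen--Margulis / Patterson--Sullivan measure, or better the Lebesgue (SRB-type) class for the $C^1$ Anosov geodesic flow of $d_\rho$ from \S\ref{sec: domain shape}. A Lebesgue-typical ray equidistributes to a measure $\mu$ that is not supported on $\uu\lambda$, because $\uu\lambda$ is a finite union of closed orbits and the SRB measure has full support. By optimality of $\tau$, $\int (1-d\tau)\,d\mu>0$. The ergodic theorem then gives linear drift of $t-\tau(r_x(t))$ toward $-\infty$ for Lebesgue-a.e.\ $x$. Together with the estimates underlying Main Theorem \ref{mainthm: structure of limit sets}, this yields $\mathcal H(x)<\infty$ for a.e.\ $x$, and in fact for a full-measure set, which is more than enough.

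The main obstacle is the middle step: deriving from the (fairly implicit) description of $\Lambda$ in Main Theorem \ref{mainthm: structure of limit sets} an explicit upper bound for $\mathcal H(x)$ in terms of the ergodic sums along $r_x$. Near the parabolic-like dynamics around the lifts of $\gamma_i$, which is where $\mathcal H$ blows up, one must show that finitely long visits only add a bounded amount of height, proportional to the visit length. I would first try to bound $\mathcal H(x)$ by $\sup_t$ of a cocycle sum plus a constant, using $\eta$-orbit points near the ray, and then apply Birkhoff's theorem. If equidistribution in the Lebesgue class is problematic because the flow is only $C^1$, a fallback is to work with a shadowing argument directly. Lebesgue-a.e.\ ray, by quasi-invariance and ergodicity of the $\Gamma$-action on $\partial^\infty\Omega$, visits the complement of a neighbourhood of $\uu\lambda$ a positive fraction of the time. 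That already gives the drift, and it suffices to show a positive-measure set with finite height.
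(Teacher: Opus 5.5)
Your overall architecture matches the paper's. You reduce to $\eta\in\sP_{\rho\chi}$, with your involution $\diag(I_{d+1},-1)$ handling alternative 3. You reduce positivity of $m_d(\Lambda)$ to $m_{d-1}(\mathcal H^{-1}(+\infty))=0$. You then dispose of that set with the SRB (Bowen--Ruelle) measure $\nu$ of the Anosov geodesic flow. The paper does the second reduction by dilating with $I_t$ and passing to the one-sided cone $\sC F$; your direct Fubini over the epigraph works equally well.

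In the SRB step you should average the continuous function $1-L_\delta$ rather than $1-d\tau$. The map $\tau$ is only Lipschitz and $\nu$ is in general singular to Lebesgue, so $d\tau$ need not be defined $\nu$-almost everywhere. The paper uses $\s[\uu v,\uu g_t\uu v]=\int_0^t(1-L_\delta)(\uu g_s\uu v)\,ds+O(1)$.

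The genuine gap is the step you yourself call the main obstacle: showing that linear slack growth along $r_x$ forces $\mathcal H(x)<\infty$. This is the heart of the theorem and is where the multicurve hypothesis is really used; the SRB step only needs $T^1_+\uu\lambda$ to be a proper closed set (cf.\ Remark \ref{rmk: why multicurve}). The mechanism you sketch for this step is wrong.

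\emph{Why the sketch fails.} $t-\tau(r_x(t))$ equals $\s[o,r_x(t)]$ up to an additive constant, so it is non-negative and non-decreasing and cannot drift to $-\infty$. Read literally, $\mathcal H(x)\approx\limsup_t\bigl(t-\tau(r_x(t))\bigr)$ would make $\mathcal H(x)$ finite exactly when $r_x$ has finite slack. Corollary \ref{cor: finite slack implies infinite height} says the opposite: finite slack forces infinite height. Likewise, high-slack stretches do not lower the height at a uniform rate. If they did, typical heights would tend to $-\infty$, contradicting Theorem \ref{thm: heights bounded below scc}. So no additive Birkhoff sum with negative drift controls $\mathcal H$.

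\emph{What does control it} is the upper half of \eqref{eqn: upper and lower height bound}. For the partition of Lemma \ref{lemma: partition lemma multicurve case},
\[
h(r(T))\le O(1)+C\sum_{i\ \mathrm{odd}} M_i\,e^{-\s[o,q_{i+1}]}.
\]
That is, all high-slack segments together contribute $O(1)$, by Lemma \ref{lemma: high-slack lemma} plus a geometric series. A visit winding $M_i$ times around a curve of $\uu\lambda$ raises the height by at most a constant times $M_i$ (Lemma \ref{lem: main low slack cor}), damped by $e^{-(\text{slack accumulated so far})}$. Slack along the ray is non-decreasing and $M_i\preceq\ell(L_i)$, so the sum is at most $C'\int_0^T e^{-\s[o,r(t)]}\,dt$. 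This stays bounded when $\liminf_t \s[o,r(t)]/t>0$. Bounded height along $r_x$ then yields a point of $\Lambda$ over $x$ at finite height. The paper proves exactly this, in contrapositive form, as Proposition \ref{prop: average slack and height} via Claim \ref{clm: AI claim}.

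Your fallback does not close the gap either. Even granting ergodicity of the boundary action, that only gives a zero--one law for the invariant set $\mathcal H^{-1}(+\infty)$. It does not give a positive frequency of time spent away from $\uu\lambda$; that genuinely needs the SRB theorem.
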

This immediately implies the following.
\begin{maincorollary}\label{maincor: hasudorff intro}
    Under hypotheses \ref{hyp A}, when $\eta$ is unexceptional, the Hausdorff dimension of $\Lambda$ is $d$. 
\end{maincorollary}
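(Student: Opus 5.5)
The Hausdorff dimension is squeezed between two bounds. For the upper bound, $\Lambda \subset \partial^\infty \sC\Omega \cong \partial^\infty\Omega \times \R$. This is a $C^1$ submanifold of $\RP^{d+1}$ of dimension $(d-1)+1 = d$, because $\partial^\infty\Omega$ is a $C^1$ $(d-1)$-sphere in $\RP^d$. Hausdorff dimension is computed with respect to any Riemannian metric on $\RP^{d+1}$, and all such metrics are bi-Lipschitz equivalent on compact sets. Since $\partial^\infty \sC\Omega$ is a countable union of compact pieces of a $C^1$ $d$-manifold, and $C^1$ charts are locally bi-Lipschitz, we get $\dim_H \partial^\infty\sC\Omega = d$, hence $\dim_H \Lambda \le d$.

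For the lower bound, invoke Main Theorem \ref{mainthm: Lebesgue  intro}. Under hypotheses \ref{hyp A} with $\eta$ unexceptional, $\Lambda$ has positive $d$-dimensional Lebesgue measure, meaning positive measure for a smooth measure class on the $C^1$ $d$-manifold $\partial^\infty\sC\Omega$. By countable additivity, $\Lambda$ then meets some compact coordinate chart $K$, with $C^1$ (hence bi-Lipschitz) parametrization $\psi : U \subset \R^d \to K$, in a set of positive measure. Therefore $\psi^{-1}(\Lambda\cap K)\subset \R^d$ has positive Lebesgue measure, i.e. positive $d$-dimensional Hausdorff measure $\mathcal H^d$, since on $\R^d$ these agree up to a constant. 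Bi-Lipschitz maps distort $\mathcal H^d$ by at most a bounded factor, so $\mathcal H^d(\Lambda)>0$ for the Riemannian metric, and hence $\dim_H\Lambda \ge d$.

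The only point needing care, which I expect to be the sole (minor) obstacle, is matching the measure class in Main Theorem \ref{mainthm: Lebesgue  intro} with $\mathcal H^d$. In the intro that measure is described as "in the Lebesgue class" on the $C^1$ manifold $\partial^\infty\sC\Omega$. Lebesgue-null sets are preserved by $C^1$ diffeomorphisms between charts, so this measure class is intrinsic, and it agrees with the class of $\mathcal H^d$ restricted to the submanifold. The two bounds together give $\dim_H \Lambda = d$.
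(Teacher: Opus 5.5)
Your proof is correct and follows the paper's argument: the paper obtains the corollary in one line, with the upper bound coming from $\Lambda$ lying in the $d$-dimensional $C^1$ submanifold $\partial^\infty\sC\Omega$ of $\RP^{d+1}$ and the lower bound from the positivity of the $d$-dimensional Lebesgue measure of $\Lambda$. Your checks that the Lebesgue class matches $d$-dimensional Hausdorff measure through bi-Lipschitz $C^1$ charts, and your countable-union handling of the noncompact cone, supply details the paper leaves implicit.
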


Obviously, when $\Lambda = \partial ^\infty \sC \Omega$, the $d$-dimensional Lebesgue measure of $\Lambda$ is positive and it thus has Hausdorff dimension $d$.
\begin{remark}
    We expect the conclusion of Main Theorem \ref{mainthm: Lebesgue  intro} to hold under hypotheses \ref{hyp B} and to be accessible using the techniques developed in this paper. However, such a proof appears to require substantial additional technical work and record-keeping, which we do not pursue here.
\end{remark}

\subsection{Context and relation to other works}

\subsubsection{The Barbot component}

Barbot examined a class of Anosov representations of surface groups with image in $\SL(3,\R)$ obtained as small deformations of $\mu \oplus 1$ where $\mu:\pi_1S\to \SL(2,\R)$ is a lift of a hyperbolization of a closed surface  into $\PSL(2,\R)$  \cite{Barbot:stablenorm}. 
The limit curve of a Barbot representation is contained in no affine chart in $\RP^2$. 
A subclass of Barbot's representations are coaffine, in particular, those with image in $\operatorname{Aff}^*(2,\R)$, see in particular \S 4.1.2 of \cite{Barbot:stablenorm}, wherein he discusses the stable norm.

The main difference between Barbot's setting and the present work is the absence of the cone $\mathscr C \Omega$. We use heavily that $\partial \mathscr C \Omega \cong \partial \Omega \times \R$, the trivial line bundle over $\partial \Omega \cong S^1$. 
The analogous statement for a Barbot coaffine representation is that the circle $\partial \pi_1 S$ is identified via the representation $\mu$ with $\RP^1$, and the quotient $Q: \RP^2 \setminus [e_3]\to \RP^1 \cong \partial \pi_1 S$ induces the structure of the twisted affine line bundle $S^1 \widetilde{\times}~ \R$, the M\"obius bundle.

As a result of the non-orientability of this bundle, positivity is not defined in this context. However, one may still, by way of the vanishing or non-vanishing of the linear conditions $Z(\gamma)(\mu(\gamma)_+)$ define a notion of exceptional and unexceptional representations.

It is possible to follow the techniques from \S\S \ref{sec: heights bounded multicurve}-\ref{sec: prox is orb} to produce rays in the proximal limit set.  Minimality of the action on the circle and non-orientability of the bundle then produce \emph{lines} in the proximal limit set.  Thus any unexceptional boundary Anosov coaffine Barbot representation has $\Lambda = \RP^2$.

\subsubsection{Convex cocompact surface subgroups of $\PGL(4,\R)$}

There are three classical settings of convex cocompact surface groups acting on homogeneous (pseudo)-Riemannian $3$-manifolds, all of which are sub-geometries of $(\SL(4,\R), \RP^3)$:
hyperbolic space by isometries in $\SO_\circ(3,1)$;
co-Minkowski space by isometries in $\SO_\circ(2,1)\ltimes (\R^{3})^*$; and  anti-de Sitter space by isometries in $\SO_\circ (2,2)$.
All three are of independent interest.

Kleinian surface groups admit interesting degenerations, while those in the anti-de Sitter setting do not \cite{Mess,Nots_on_Mess,Barbot:deformations}.  
The present article and our previous work \cite{BF} constitute the authors' first attempts to understand the structures and phenomena that arise for projective Anosov subgroups of $\PGL(4,\R)$ beyond these classical geometries.

\subsubsection{Other behaviors on the boundary of Anosov representations}
Besides finitely generated Kleinian groups, discussed above, there are some other contexts where the geometry of the limit sets of representations on the boundary of the locus of Anosov representations has been studied.

In another rank-one setting, Schwartz identified representations of triangle reflection groups into $\mathrm{SU}(2,1)$ that lie on the boundary of the Anosov locus.  He studied the complement of the limit set in the $3$-sphere at infinity in the complex hyperbolic plane and proved that some hyperbolic manifolds can be realized as the quotient \cite{Schwartz:GPconj, Schwartz:whitehead}; see also \cite{GP,FP:moduli}.

In \cite[\S7.5]{GGKW17}, the authors give an example of a free group mapping into $\SO(2,1)\times \SO(2,1)$ on the boundary of the projective-Anosov locus that has a perfectly nice continuous, equivariant, transverse boundary map.  

Lee--Lee--Stecker examined certain triangle group representations into $\SL(3,\R)$ which are deformations of the reducible representations into $\SL(2,\R)\oplus 1$ \cite{SL:triangle}. The Anosov representations form a pair of rays, the endpoints of which are discrete and faithful, but not Anosov. In this case, they found that the representations on the boundary of Anosov do admit equivariant limit curves, but those curves fail to be transverse.

In \cite{Barbot-Lee-Valerio, Valerio, Schwartz_Pappus}, the authors study actions of the modular group on $\RP^2$ and the symmetric space $\SL(3,\R) / \SO(3)$ which are on the boundary of Anosov. In the most recent iteration, Schwartz identifies the closure of a component of Anosov representations and gives a satisfying description of its boundary in terms of ``Pappus representations''. In these cases there is a continuous equivariant (but not injective) map from the boundary of the group into the space of flags in $\R^3$.

The boundary of a component of Anosov representations has been studied or identified in a number of settings; see \cite{Coxeter,weakly_maximal,BP:kpositive,Lahn:I,Lahn:II, Barbot:stablenorm}.

\subsubsection{Slack in other contexts}
In recent and forthcoming work of Danciger, Gu\'eritaud, and Kassel \cite{DGK:slackcalculus,DGK:slacksurface}, a notion of ``slack'' corresponding to crossing curves in a surface is used to study a different extremal problem: that of understanding the shape of the  boundary of the Benoist limit cone.  Our notion of slack relates curves to a function, while their notion relates two curves to each other.  Both quantities can be imagined by the excess length of a strand of rope if the (ideal) endpoints are fixed: hence the name slack.

In recent work of the second named author together with  Dal'bo, Landesberg,  and Minsky \cite{FLM:I, FLM:II, FLM:III, DFLM}, equivariant $1$-Lipschitz maps and the geometry of their stretch laminations are an essential tool to understand horocycle orbit closures (and leaves of the horospherical foliation) on certain geometrically infinite hyperbolic surfaces (and higher dimensional hyperbolic manifolds).  
It is curious that this common tool has played such an important role in these investigations, which do not seem closely related to one another.

\subsection{Outline}
\S\ref{sec: ingredients proof} outlines some of the ideas for the proofs of our Main Theorems and provides the proof of Main Corollary \ref{maincor: no CT map}.  It also serves as a guide to some of the more technical sections of the paper.

The first part of the paper, \S\S\ref{sec: background}--\ref{sec: sections and height}, assembles many basic ingredients and establishes existence of ending laminations.
In \S\ref{sec: background}, we recall some preliminaries on linear algebra, convex projective geometry, coaffine space, Anosov representations, and geodesic laminations.
\S\ref{section: geometry of coaffine domain} describes some basic structural features of $\sC\Omega$, its boundary, the minimal convex invariant domain, and the various limit sets.
In \S\ref{sec: domain shape}, we introduce the domain shape metric, recall its basic properties, and prove a result (Proposition \ref{prop: singular values and DS metric}) relating the distance between points in a $\Gamma$-orbit to singular values.  \S\ref{sec: stable norm} introduces the co-stable norm on cohomology and optimal tight maps.  Proofs are deferred to Appendix \ref{appendix}.
In \S\ref{sec: slack}, we develop some basic continuity and finiteness properties of slack as well as quantify its growth away from the tension lamination.  In \S\ref{sec: failure of anosov}, we prove Main Theorem \ref{mainthm: ending lamination intro} and some related results using slack and geometric features of $\tau$.  In \S\ref{sec: sections and height}, we construct an $\eta$-equivariant surface in $\sC\Omega$ and define height.

The second part, \S\S\ref{sec: heights bounded multicurve}--\S\ref{sec: measure}, is the heart of the paper in which we prove the remaining Main Theorems.  
Under hypotheses \ref{hyp A}, \S\ref{sec: heights bounded multicurve} defines the positive cone $\sP_{\rho\chi}$ and bounds the height of the invariant surface from below when $\eta \in \sP_{\rho\chi}$.  
We then repeat these steps in \S\S\ref{sec: positive cone lamination}--\ref{sec: heights bounded below general} to prove the same theorem under hypotheses \ref{hyp B}; these sections are more technical.  \S\ref{sec: positive cone lamination} defines the positive cone in this setting, while \S\ref{sec: heights bounded below general} uses this as input to prove that heights are bounded below in the positive cone.  
In \S\ref{sec: prox is orb}, we revisit the structure of the minimal convex domain and prove Main Theorems \ref{mainthm: equality of limit sets intro} and \ref{mainthm: structure of limit sets} by finding a ray in the proximal limit set.  
Finally, in \S\ref{sec: measure}, we prove Main Theorem \ref{mainthm: Lebesgue  intro}.  
We collect some questions in \S\ref{sec: questions}.

\subsection*{Acknowledgments}
The authors would like to thank Teddy Weisman and Jeff Danciger for several enlightening conversations related to this work.  The authors also thank Fabi Lander and Steve Trettle for software that helped us visualize the limit sets studied in the paper.

This material is based partially upon work supported by the National Science Foundation under Grant No. DMS-1928930, while the second named author was in residence at the Simons Laufer Mathematical Sciences Institute in Berkeley, California, during the Spring of 2026.
The second named author acknowledges the support of the Institut Henri Poincaré (UAR 839 CNRS-Sorbonne Université) and LabEx CARMIN (ANR-10-LABX-59-01).

\subsection*{AI disclosure}
The vast majority of the work was done before the rupture in mathematics brought about by LLMs took place.
Generative AI is not responsible for any of the content of this article apart from speeding the proof of Claim \ref{clm: AI claim} and a helpful suggestion for the proof of Proposition \ref{prop: found a ray} during the editing process, which it helped expedite.   Additionally, the first named author obtained some poor estimates on shipping volume from an LLM, and the second used the same LLM to compare puppy food to purchase, among other mundane tasks.

\section{About the proofs}\label{sec: ingredients proof}

We now outline some of the main ingredients and ideas underlying the proofs of our Main Theorems. This section serves as a guide to the technical sections that follow, and we invite the reader to return to it when useful for understanding how the various arguments fit together.
\medskip

Recall from Theorem \ref{thm: tight map and tension intro} that the domain shape asymmetric metric $d_\rho$ on $\Omega$ endows $H^1(\Gamma,\R)$ with the co-stable norm $\|\cdot \|_\rho$.  Given $\varphi \in H^1(\Gamma,\R)$ with $\|\varphi\|_\rho=1$, the geometry of a $\varphi$-equivariant \emph{optimal tight map} $\tau: \Omega \to \R$ from  the same Theorem provides an indispensable tool throughout the paper.  

\begin{definition}[Slack]\label{def: slack intro}
    Given a rectifiable directed path $\alpha : [s,t] \to \Omega$, the \textbf{$\tau$-slack} of $\alpha$ is 
\[\s(\alpha)\defeq \ell(\alpha) - (\tau(\alpha(t))-\tau(\alpha(s))),\]
where $\ell(\alpha)$ is its length in $\Omega$. 
\end{definition}

Since $\tau$ is $1$-forward Lipschitz, $\ell(\alpha) +\ell(\overline \alpha) \ge\s(\alpha)\ge0$ for all paths $\alpha$. Slack can be defined for lines or rays and take values in $[0,+\infty]$ by taking a supremum over finite segments.  

One indication that slack is an important quantity is given by the following result (proved in \S\ref{subsec: equivariant section of cone}), which is used constantly for the proof of Main Theorem \ref{mainthm: structure of limit sets}.
For two functions $f$ and $g$, we use the notation $f \asymp g$ to mean that there is a constant $C$ such that $\frac fC \le g \le Cf$.  

\begin{proposition}\label{prop: basic slack estimate intro}
    Choose a point $o\in \Omega$.  For any $\gamma \in \Gamma$, we have that 
    \[\|\rho\chi(\gamma)\| \asymp e^{\s[o,\gamma o]},\]
    where $[o,\gamma o]\subset \Omega$ is the directed projective line segment joining $o$ to $\gamma.o$.  The implicit constant depends on a choice of norm $\|\cdot \|$ for $\R^{d+1}$ and the point $o \in \Omega$, but does not depend on $\gamma \in \Gamma$.  
\end{proposition}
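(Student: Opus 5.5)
We want $\|\rho\chi(\gamma)\| \asymp e^{\s[o,\gamma o]}$. Since $\chi(\gamma)=e^{-\varphi(\gamma)}$, we have $\log\|\rho\chi(\gamma)\| = \log\|\rho(\gamma)\| - \varphi(\gamma)$. Up to an additive constant, we will identify each term with the corresponding term in
\[\s[o,\gamma o] = d_\rho(o,\gamma o) - (\tau(\gamma o)-\tau(o)).\]

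\emph{Step 1: the equivariance term.} Since $\tau$ is $\varphi$-equivariant (Theorem \ref{thm: tight map and tension intro}), $\tau(\gamma o) = \tau(o)+\varphi(\gamma)$. Hence $\tau(\gamma o)-\tau(o)=\varphi(\gamma)$ exactly, with no error. We should check the sign convention for $\varphi$-equivariance against $\chi=e^{-\varphi}$. The natural convention, the one making the co-stable norm comparison $\varphi(\gamma)\le \lambda_1^\rho(\gamma)$ consistent with $1$-forward Lipschitz, gives $\varphi(\gamma)=\tau(\gamma o)-\tau(o)\le d_\rho(o,\gamma o)$, which is what we want.

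\emph{Step 2: the metric term.} We show $\big|\log\|\rho(\gamma)\| - d_\rho(o,\gamma o)\big|\le C$, with $C$ uniform in $\gamma$. This is the content of Proposition \ref{prop: singular values and DS metric}, which relates $d_\rho$-distances in an orbit to singular values, and we invoke it: $d_\rho(o,\gamma o) = \log\sigma_1(\rho(\gamma)) + O(1)$. Changing the norm on $\R^{d+1}$ changes $\log\|\cdot\|$ only by a bounded amount, so $\log\|\rho(\gamma)\|=\log\sigma_1(\rho(\gamma))+O(1)$. Moving the basepoint $o$ changes $d_\rho(o,\gamma o)$ by at most $d_\rho(o,o')+d_\rho(o',o)$ by the (asymmetric) triangle inequality. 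This explains why the constant depends on the norm and on $o$.

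If one instead proves Step 2 directly, the plan is the following. Write $d_\rho$ as ``half of Hilbert'', of the form $d_\rho(x,y)=\log\frac{|a-x||b-y|}{\ldots}$ on the line through $x,y$, i.e., it involves only the forward endpoint. Lift $o$ to a vector $v$ and $\Omega$ to the properly convex cone. The key is to compare $\log\|\rho(\gamma)v\|/\|v\|$ with the distance. The ratio $\|\rho(\gamma)v\|/\|v\|$ is $\asymp \sigma_1(\rho(\gamma))$, because $v$ lies in the interior of the cone and $\rho(\gamma)$ preserves the cone; a compactness argument via the dual cone gives $\langle\alpha,\rho(\gamma)v\rangle\ge c\|\rho(\gamma)\|$ for a fixed functional $\alpha$ in the interior of the dual cone. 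This identifies the domain-shape distance with a log of a linear functional evaluated on $\rho(\gamma)v$, normalized so that it is invariant.

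\emph{Step 3: combine.} Subtracting Step 1 from Step 2 gives
\[\log\|\rho\chi(\gamma)\| = \s[o,\gamma o] + O(1),\]
and exponentiating gives the claimed $\asymp$.

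The main obstacle is Step 2: showing that the asymmetric domain shape metric, not the Hilbert metric (which is comparable to $\log(\sigma_1/\sigma_{d+1})$), matches $\log\sigma_1$ with bounded additive error. We rely on the cone–dual cone compactness argument, or on the cited proposition; everything else is bookkeeping.
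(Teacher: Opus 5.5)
Your proposal is correct and is essentially the paper's own argument (the proof of Lemma~\ref{lemma: basic slack estimates}): write $\|\rho\chi(\gamma)\| = e^{\sigma_1(\rho(\gamma))-\varphi(\gamma)}$, replace $\sigma_1(\rho(\gamma))$ by $d_\rho(o,\gamma.o)$ up to $O(1)$ via Proposition~\ref{prop: singular values and DS metric}, and replace $\varphi(\gamma)$ by $\tau(\gamma.o)-\tau(o)$ using $\varphi$-equivariance of $\tau$. Your optional direct sketch of Step~2 is not needed, but be aware that a fixed functional $\alpha$ in the interior of the dual cone does not suffice there: the functional in the definition of $d_\rho(o,\gamma.o)$ supports $\Omega$ at the endpoint beyond $o$ (not the forward endpoint), depends on $\gamma$, and lies on the boundary of the dual cone. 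You would also need $\gamma.o$ to stay uniformly away from that supporting hyperplane, which is the key step in the paper's proof of the Proposition.
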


\subsection{The positive cone}\label{subsec: positive cone expl}

Let $Q : \sC \Omega \cup \partial^\infty\sC\Omega \to \Omega \cup \partial^\infty \Omega$ be the natural projection.  Its fibers are vertical affine lines.
We define a height function on these lines that gives a trivialization $\sC \Omega \cup \partial^\infty \sC\Omega 
\cong 
\left(\Omega \cup \partial^\infty \Omega \right)\times \R$.  The important property is that it detects when sequences of points go toward $[e_{d+2}]$ in one direction or the other.

For illustration, let $d=2$. Suppose that the tension lamination $\uu \lambda$ contains a simple closed curve $\delta$. In this case, up to conjugating the representation within $\Aff^*(3,\R)$, $\eta(\delta)$ may be assumed to have the form 
\begin{equation}\label{eqn: matrix unipotent intro}
    \eta(\delta) = 
\begin{pmatrix}
    1 & 0 & 0 & 0\\
    0 & e^{\lambda_2^\rho(\delta)-\varphi(\delta)} & 0 & 0 \\
    0 & 0 & e^{\lambda_3^\rho(\delta)-\varphi(\delta)} & 0\\
    r & 0 & 0 & 1
\end{pmatrix}.
\end{equation}

\begin{figure}[h]
    \centering
    \includegraphics[width=.35\linewidth]{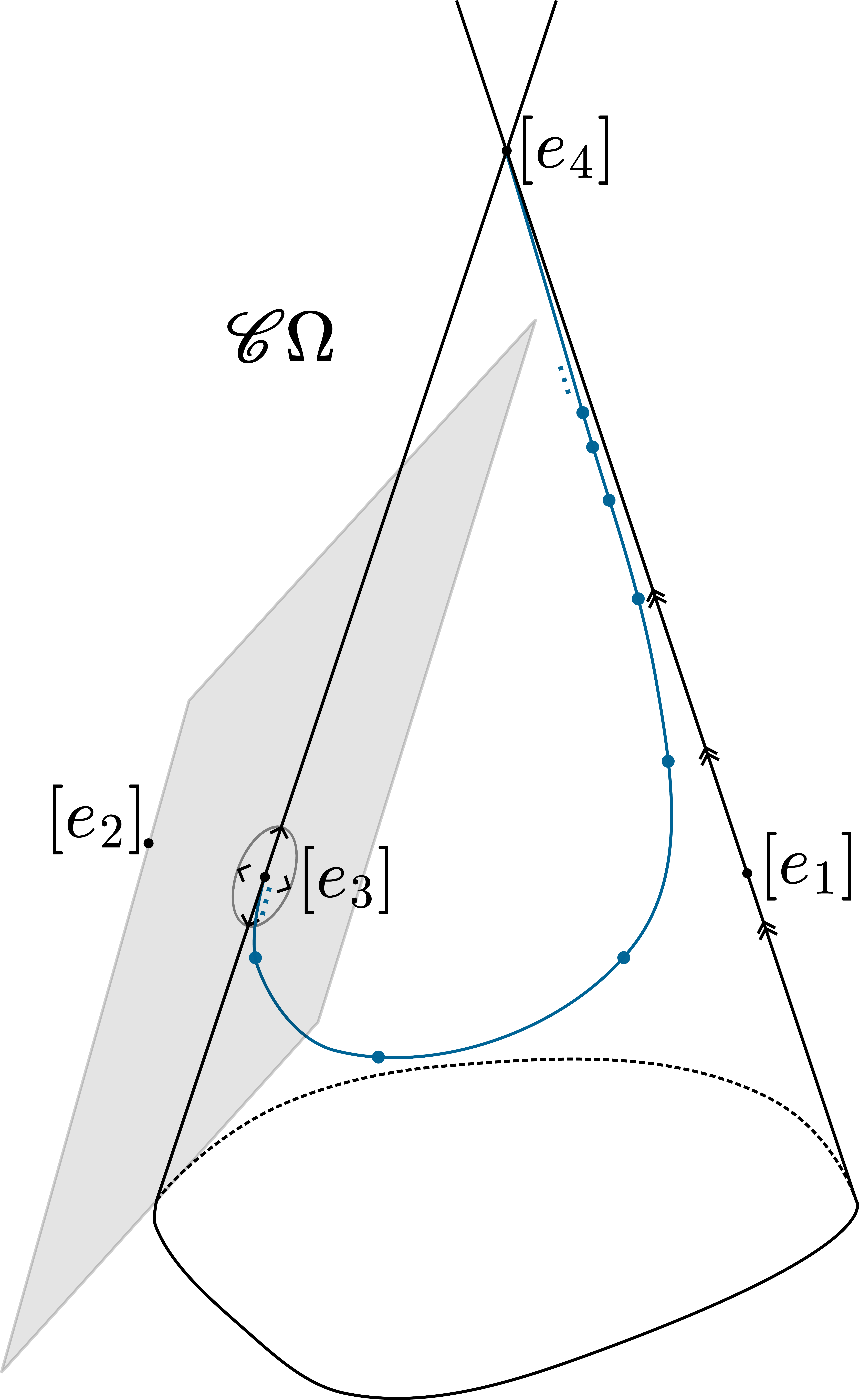}
    \caption{Mixed dynamics of the element $\eta(\delta)$ from Equation \ref{eqn: matrix unipotent intro}. The repelling plane is in grey, the line $Q\inverse(\rho(\delta)^+) = \langle [e_1],[e_4]\rangle$ on which the unipotent acts is on the right, and the orbit of a typical point in $\mathscr C \Omega$ is in blue. A basis compatible with the matrix representation is shown.}  
    \label{fig: mixed_dynamics}
\end{figure}

In particular, $Q\inverse(\rho(\delta)^+) = \langle [e_1],[e_4]\rangle$ in these coordinates, and the (1, 4)-block is a unipotent, where $\rho(\delta)^+$ is the attracting fixed point of $\rho(\delta)$ in $\partial \Omega \subset (\RP^d)$. The number $r$ is not an invariant quantity, but its sign is. That is, the unipotent block acts on the affine fiber of $Q$, $\langle [e_1],[e_4]\rangle \setminus [e_4]$ as a translation, and since there is a coherent orientation on these fibers, it makes sense to say that translation is positive or negative. 

Let $Z$ be the cocycle defining $\eta$ with linear part $\rho\chi$.  The useful coordinate-free formulation of the sign of $r$ (as a function of $Z$) is
\[
\operatorname{sign} (r) =\operatorname{sign}\left( Z(\delta)(\rho(\delta)^+_0) \right), 
\]
where the subscript $0$ is a chosen, continuous section $\overline \Omega \to \R^{d+1}\setminus \{0\}$. The sign of this linear functional is well-defined on cohomology (Lemma \ref{lemma: positivity lemma}).

Under hypotheses \ref{hyp A}, the positive cone $\mathscr P_{\rho\chi} \subset H^1(\Gamma,(\R^{d+1})^*_{\rho \chi})$ is exactly the intersection of the $k$ linear half-spaces defined in this way. It is clear that this is a convex open cone with at most $k$ faces.
For $\eta \in \mathscr P_{\rho\chi}$,  every element of $\eta(\Gamma)$ is either proximal, with attracting fixed point in $\partial \mathscr C \Omega$, or is conjugate to a transformation of the form \eqref{eqn: matrix unipotent intro} with positive unipotent translation (when $d>2$ the middle block may be more complicated). This is the intuition for the lower bound on the height of an invariant domain: the only novel dynamics in this case, compared to the convex cocompact case, is the presence of some \textit{positive} unipotent translations.
\medskip

Under hypothesis \ref{hyp B}, it is less clear that the story will come out this way. In dimension $2$, the structure theorem for geodesic laminations gives that $\uu \lambda$ has finitely many minimal components $\uu \mu_i$ and finitely many leaves spiraling between them. In the case that none of the $\uu \mu_i$ are closed, then \textit{every} element of $\eta(\Gamma)$ is proximal --- no element has a unipotent block, regardless of the cocycle $Z$.

The analogue of the positivity condition is to examine averages of the cocycle $Z$ along the leaves of $\uu \mu_i$ and collect the accumulation points of the analogous linear functionals. In this way, for each minimal component and invariant measure we obtain a Cantor set of accumulations --- one for each leaf of $\mu_i \subset \Omega$ that meets a small transversal arc (where $\mu_i$ is the full preimage of $\uu\mu_i$ in $\Omega$). This is the content of
Proposition \ref{prop: limit points of averages}, which occupies essentially all of \S \ref{sec: positive cone lamination}.

The positive cone in this case is defined as the simultaneous positivity of these linear conditions. This definition reduces to the previously defined positive cone under hypothesis \ref{hyp A} in dimension $2$, when $\uu \lambda$ is a multicurve.

\subsection{Heights bounded from below}

The first significant step in proving Main Theorem \ref{mainthm: structure of limit sets} is to show that the height of an invariant hypersurface in $\mathscr C \Omega$ is bounded below for $\eta \in \sP_{\rho\chi}$.
In spite of the differences between hypotheses \ref{hyp A} and \ref{hyp B}, we were able to unify the structure of the proof of bounded height for representations in the positive cone.

We first construct an $\eta$-equivariant hypersurface inside of $\sC \Omega$ in \S\ref{subsec: integrating forms and equivariant surfaces} by integrating a closed one-form  representing $Z$ valued in the flat bundle
\[E_{\rho\chi}^* \defeq \Omega\times_{\rho\chi}(\R^{d+1})^*.\]

The structure of the proof of the Bounded Height Theorems \ref{thm: heights bounded below scc} and \ref{thm: heights bounded below general}  is to examine the height achieved by rays in this invariant hypersurface. For instance, leaves of the lamination $ \lambda$ meeting a given compact set have height which increases at a uniform linear rate -- this is clear for closed leaves due to the invariance of the hypersurface.  This is the content of the Low Slack Lemmas \ref{lem: main low slack cor} and \ref{lem: low slack minimal}.

In contrast, the High Slack Lemma \ref{lemma: high-slack lemma} says that rays that accumulate slack at a definite linear rate have a uniformly bounded height. For example, a ray that stays at a definite distance from $\lambda$ (or crosses it at a definite angle) has this behavior.

Using the Partition Lemmas \ref{lemma: partition lemma multicurve case} and \ref{lem: partition lemma general}, we break an arbitrary ray into pieces that are ``low-slack" (rays that fellow travel with a leaf of $\lambda$) and ``high-slack'' segments, and combine quantitative estimates on the two behaviors to get uniform lower bounds on the height of every ray. This shows that our equivariant surface (hence $\Lambda_{orb}$) is bounded below.  This approach also yields useful upper and lower bounds on the height of every ray, throughout its entire life. We apply these estimates liberally.  These estimates constitute the heart of the paper. 

The main subtlety of the proof is that along a given low-slack segment, the height increases linearly relative to its arc length. However, the \textit{rate} of this linear increase \textit{decreases} exponentially relative to the total accumulated slack up to that point.

\subsection{Proximal and orbital limit sets coincide}

The Bounded Height Theorems \ref{thm: heights bounded below scc} and \ref{thm: heights bounded below general} and the estimates used to prove them permit, in \S \ref{sec: prox is orb}, another round of geometric analysis of invariant domains for representations in the positive cone. 

There is a minimal preserved convex subset of $\sC \Omega$:
\[
C_{min} = \hull \Lambda_{prox}
\]
(see Lemma \ref{lemma: minimal domain}).

For any point $o \in C_{min}$, the $\eta$-orbit of $o$ accumulates onto $\Lambda_{prox}$, and we therefore have inclusions
\[
\Lambda_{prox} \subset \Lambda_{o}\subset \partial^\infty C_{min}.
\]
There are two interesting points that we prove about these limit sets when $\eta$ is in the positive cone. 
\medskip

Firstly, that the orbital limit set is independent of $o \in \sC \Omega$. The first step is that this limit set is the same for a pair of points in $C_{min}$, which is basic convex geometry. Then, to upgrade this equality to all of $\sC \Omega$, we first show that $C_{min}$ is incident to every vertical line in $\mathscr C \Omega$, i.e.\ for every $p\in \mathscr C \Omega$, there exists $q\in C_{min}$ such that $p$ and $q$ share the same vertical line (Lemma \ref{lemma: C_min is a nontrivial fibration}). Then we consider sequences of height differences
\[
h(\eta(\gamma_n).p)-h(\eta(\gamma_n).q))
\]
for $\gamma_n\in\Gamma$ a divergent sequence.
The key observation is that in the case that the slack $\s[Q(q),\rho(\gamma_n).Q(q)]$ tends to infinity, the height differences above tend to zero, and the two sequences have a common accumulation in $\partial^\infty \sC \Omega$. In the case that the slack along these rays is bounded, the height of the ray tends to infinity, and hence both $\gamma_n.p$ and $\gamma_n.q$ tend to $[e_{d+2}]$, the global fixed point. This proves the required independence (Proposition \ref{prop: positive orbital sets coincide}).
\medskip

The second interesting point about limit sets is that for unexceptional $\eta$, $\Lambda_{prox} = \partial^\infty C_{min}$, i.e.\ it is ``as big as possible''. Due to the definition of $C_{min}$ as the hull of $\Lambda_{prox}$, this amounts to showing that if $x,y\in \Lambda_{prox}$ are distinct and are in the same vertical fiber of $\partial^\infty \sC \Omega$, then the segment $[x,y]$ is completely contained in  $\Lambda_{prox}$ as well.

In fact we prove a structural result (Proposition \ref{prop: found a ray}) that implies this `no gaps' condition:

\begin{proposition}
    For every unexceptional coaffine representation, either there is a vertical ray in $\Lambda_{prox}$  over every proximal limit point or there is a vertical line over every proximal limit point.  
\end{proposition}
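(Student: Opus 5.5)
The plan is to build a single vertical ray (or line) in $\Lambda_{prox}$ over one well-chosen proximal limit point, and then spread it to every proximal limit point using equivariance, minimality, and closedness of $\Lambda_{prox}$.

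I would start from a leaf of the tension lamination. Under hypotheses \ref{hyp A}, take a closed leaf $\delta$ of $\uu\lambda$. After conjugation, $\eta(\delta)$ has the form \eqref{eqn: matrix unipotent intro}, with a unipotent $(1,4)$-block acting on the vertical fiber $Q\inverse(\rho(\delta)^+)$ by translation by $r$. Since $\eta$ is unexceptional, $r\neq 0$ for every component. I would pick a proximal element $g$ whose attracting point $x_0=\eta(g)^+$ lies on this fiber. Such a $g$ can be obtained by composing $\delta^n$ with a transverse element: the product has strictly positive slack, so it is $\eta$-proximal by Proposition \ref{prop: basic slack estimate intro}. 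The points $\eta(\delta^n)x_0$ lie in $\Lambda_{prox}$, and they march along the fiber by $nr$. To fill in the gaps, I would use elements $\delta^n g^m$ and vary the ratio $n/m$. Their attracting points lie on vertical fibers converging to $Q\inverse(\rho(\delta)^+)$, and their heights sweep out a dense set of values going off to $+\infty$ (if $r>0$) or $-\infty$ (if $r<0$). This uses the slack estimates: along low-slack segments that fellow-travel $\delta$, height grows linearly in $n$, while a single high-slack excursion contributes a bounded amount, by the High Slack Lemma \ref{lemma: high-slack lemma}. Taking closures gives a vertical ray in $\Lambda_{prox}$ over $\rho(\delta)^+$.

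Under hypotheses \ref{hyp B} there are no unipotents, so the same argument has to run with long segments of leaves of a minimal component $\mu_i$ in place of $\delta^n$. Closing lemmas produce proximal elements that fellow-travel leaves for long times. By Proposition \ref{prop: limit points of averages}, the averaged cocycle along such a segment converges to a nonzero linear functional, and nonvanishing is exactly the unexceptional hypothesis. So the heights of the attracting points increase roughly linearly in the fellow-travelling length, with a definite sign. The Low Slack Lemma \ref{lem: low slack minimal} quantifies this growth.

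Next, I would spread the ray. $\Lambda_{prox}$ is closed and $\eta$-invariant, and $\rho(\Gamma)$ acts minimally on $\partial^\infty\Omega$. Fix any proximal limit point $y$ and pick $\gamma_k$ with $\rho(\gamma_k)\rho(\delta)^+\to Q(y)$. The translated rays $\eta(\gamma_k)R$ are vertical rays on fibers over points converging to $Q(y)$. I must control where their base points go. If the base heights stay bounded, a subsequence converges to a vertical ray over $Q(y)$. If they go to $-\infty$, the same argument yields a full vertical line. I would also show that a line at one point propagates to lines everywhere, and that the ray alternative forces rays (not lines) everywhere.

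The difficulty is that $\eta(\gamma_k)$ does not act as a vertical translation by a bounded amount. By Proposition \ref{prop: basic slack estimate intro} the fiber coordinates can be distorted by $e^{\s}$, so the image of a ray may be squashed toward $[e_{d+2}]$. I expect this to be the main obstacle. To handle it, I would choose $\gamma_k$ along rays with bounded slack where possible, and otherwise use the estimate that for high slack, height differences tend to zero, as in the orbital-limit argument. In the high-slack case I would rechoose the base ray, taking it to pass through the point of $\Lambda_{prox}$ above $y$, so that the base point is pinned while the vertical extent is kept. This needs the quantitative upper and lower height bounds along every ray from the Partition Lemmas. The same point is where the dichotomy enters: if the distortion pushes base points to $-\infty$ along some subsequence, we get lines, and openness of the sign conditions keeps this consistent.
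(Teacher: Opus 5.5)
There is a genuine gap in how you produce the first ray: you aim at the wrong fiber, and your mechanism cannot fill in heights. No $\eta$-proximal $g$ has $Q(\eta(g)^+)=\rho(\delta)^+$. In the torsion-free hyperbolic group $\Gamma$, an element whose attracting point is $\rho(\delta)^+$ is a positive power of the primitive root of $\delta$, and such elements have $\lambda_1^\eta=\lambda_2^\eta=0$.

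Worse, when $\eta\in\pm\mathscr P_{\rho\chi}$ the fiber of $\Lambda_{prox}$ over $\rho(\delta)^+$ is \emph{empty}. Suppose $r>0$ and $p$ lies there. Then the points $\eta(\delta)^{-n}p\in\Lambda_{prox}$ lie on the same fiber at heights $h(p)-nr\to-\infty$. But $\Lambda_{prox}\subset\Lambda^{o'}_{orb}$ for $o'$ on the equivariant section, and the section's heights are bounded below by Theorem \ref{thm: heights bounded below scc}. This is Remark \ref{rmk: fibers empty}, and also $\lambda^+\subset\mathcal H\inverse(+\infty)$ in Corollary \ref{cor: epigraph}.

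Consistently with this, your products do not sweep out heights. Since $Z(\delta^ng^m)=Z(\delta^n)\rho\chi(g^m)+Z(g^m)$, the linear term $Z(\delta^n)\approx nr(\delta^-)^*$ is multiplied by $\rho\chi(g^m)$. That factor has the same size $e^{m\s(g)}$ as the top eigenvalue you divide by in Lemma \ref{lem: compute height}, so nothing damps the linear growth. For fixed $n$, $\eta(\delta^ng^m)^+\to\eta(\delta)^n\eta(g)^+$, which sits at height $nr+O(1)$. As $n\to\infty$ the attracting points escape to $[e_{d+2}]$, whatever $m$ is. Your closing-lemma elements under hypotheses \ref{hyp B} behave the same way: linear growth alone only shows that heights escape.

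The missing idea is to put the transverse proximal element on the \emph{outside} and build the ray over its attracting point, not over a leaf endpoint. Take $\gamma$ with $\s(\gamma)>0$ and consider $\gamma^n\beta^m$. Now $Z(\gamma^n\beta^m)=Z(\gamma^n)\rho\chi(\beta^m)+Z(\beta^m)$. The leaf term $Z(\beta^m)\approx mZ(\beta)P_\beta$ enters with no $\rho\chi$ factor, while the normalizing eigenvalue is $\approx ce^{n\s(\gamma)}$.

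Choose $m_n$ with $m_n/e^{n\s(\gamma)}\to t$. The attracting points then converge to the point over $\gamma^+$ at height $h(\eta(\gamma)^+)+td/c$, where $d=Z(\beta)P_\beta(\gamma^+_0)$ has the sign of the component. Letting $t$ range over $(0,\infty)$ gives the ray; compare $\overline{\{me^{-n}\}}=\R_{\ge0}$.

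Under hypotheses \ref{hyp B} you must additionally arrange $\sigma^{m_j}x\to y$ while $m_j\sim te^{n_j\s(\gamma)}$, using bounded return times from minimality. Then Corollary \ref{cor: limit points of averages} gives $Z(\beta^{m_j})/m_j\to L_\nu(y)$. A positive and a negative component give two opposite rays over $\gamma^+$ from the same point, hence a line.

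Finally, the distortion you worry about in the spreading step is harmless. $\eta(g)$ maps $[p,[e_{d+2}]]$ onto $[\eta(g)p,[e_{d+2}]]$, so iterating a proximal $\eta(g)$ on a translate of the ray gives a ray based at $\eta(g)^+$. Approximating a proximal limit point by such attracting points pins the base point. Together with Lemma \ref{lem: rays or lines}, this is Lemma \ref{lem: sufficient to find a ray}.
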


To produce a vertical ray in $\Lambda_{prox}$ (a key step in the proof), we explicitly compute the height of the attracting fixed point of an $\eta$-proximal element (Lemma \ref{lem: compute height}).  Then we use a specific $\N^2$-indexed family of elements $\gamma^n\beta_m\in \Gamma$, where $\gamma$ is not in $\uu\lambda$ and $\beta_m$ follows a leaf of $\uu\lambda$ for time proportional to $m$.  
Then $\gamma^n$ provides exponential contraction, while $\beta_m$ supplies linear growth in the formula for height.   The analogy is that the set $\{e^{-n}m: n, m>0\}$ accumulates to $\R_{\ge0}$.

From this we obtain Main Theorem \ref{mainthm: equality of limit sets intro}, that for all unexceptional representations
\[
\Lambda_{prox} = \Lambda_{orb} = \partial^\infty C_{min}.
\]
Existence of the function
\[\mathcal H : \partial^\infty \Omega\to \R\cup \{\pm\infty\}\]
from Main Theorem \ref{mainthm: structure of limit sets} follows.
\medskip

Since the closed curves in $\uu\lambda$ are exactly the $\eta$-non-proximal ones, under hypotheses \ref{hyp B} where $\uu\lambda$ has no closed leaves, \emph{every} non-trivial element of $\Gamma$ is $\eta$-proximal.  This contributes significantly to the difficulty of the proof with hypotheses \ref{hyp B}.  However, the authors were surprised that the \emph{conclusions} of Main Theorem \ref{mainthm: structure of limit sets} coincided under the two sets of hypotheses --- rather, we expected to have interesting qualitative differences in the structure of $\Lambda$.  

\subsection{Proof of Main Corollary \ref{maincor: no CT map}}\label{subsec: proof of corollary}

We now show that no Cannon--Thurston map can exist.  Recall that lower-semi-continuity of $\mathcal H$ for $\eta \in \sP_{\rho\chi}$ gives: for any sequence $x_n \in \partial^\infty \Omega$ tending to a point $x \in \lambda^+$ (the positive endpoints of the leaves of $\lambda$), we have $\mathcal H(x_n) \to +\infty$.

    \begin{proof}
    First consider alternative 2 from Main Theorem \ref{mainthm: structure of limit sets}.
    For dynamical reasons, any equivariant map would have to send the attracting fixed point $\gamma^+\in \partial^\infty\Gamma$ of an $\eta$-proximal element to $\eta(\gamma)^+ \in \partial^\infty\sC\Omega$.  By the semi-continuity of $\mathcal H$, any sequence of $\eta$-proximal elements $\gamma_n \in \Gamma$ with $\gamma_n^+ \to x \in \lambda^+$, has $\eta(\gamma_n)^+ \to [e_{d+2}] \in \RP^{d+1}$ as $n\to \infty$.  So any such continuous equivariant map is constant on a dense set of $\partial^\infty\Gamma$, hence constant.  This is a contradiction, since there are plenty of proximal elements $\gamma\in \Gamma$ with $\eta(\gamma)^+\not=[e_{d+2}]$.
    The situation in alternative 3 is analogous.  
    
    The proof of alternative 1 from Main Theorem \ref{mainthm: structure of limit sets} also furnishes that any sequence $\gamma_n \in \Gamma$ with $\gamma_n^+ \to x \in \lambda^+$ has $\eta(\gamma_n)^+ \to [e_{d+2}]$, so we conclude in the same way (see Theorems \ref{thm: heights bounded below scc} and \ref{thm: heights bounded below general}). 
    \end{proof}

\subsection{Measure of the limit set}

For unexceptional $\eta$ not in the positive or negative cone, $\Lambda = \partial^\infty \sC\Omega$, which clearly has dimension $d$ under hypotheses \ref{hyp A} or \ref{hyp B}.

Under hypotheses \ref{hyp A}, for $\eta$ in the positive cone, we observe from the Bounded Heights Theorem \ref{thm: heights bounded below scc} that rays with unbounded height are relatively rare.  Indeed, we show that if $x \in \mathcal H\inverse (+\infty)$, then the corresponding ray $r_x$ must spend a very high proportion of its life near the leaves of $\lambda$, at least at infinitely many points in time (Proposition \ref{prop: average slack and height}).  
The intuition behind this is as follows: 
suppose $r_x: [0,\infty) \to \sC\Omega$ wants to have \emph{both} its height increase without bound \emph{and} make infinitely many deviations from leaves of $\lambda$.
The estimates obtained in the Bounded Heights Theorem \ref{thm: heights bounded below scc} then say that $r_x$ must fellow travel near a leaf of $\lambda$ for an \emph{exponentially long} period of time relative to the total slack accumulated by its excursions to make a unit increase of height.  

\begin{remark}
    The lower bound in the Bounded Heights Theorem \ref{thm: heights bounded below general} is not  sharp, and is precisely the barrier to obtaining Main Theorem \ref{mainthm: Lebesgue  intro} under hypotheses \ref{hyp B}.  The lower bound can be made sharp, but requires  tedious bookkeeping.  
\end{remark}

A dynamical argument using the Sinai--Ruelle--Bowen measure for the (Anosov) geodesic flow on $T^1M$ for the domain shape metric allows us to conclude that the behavior of such rays is indeed null for the Lebesgue measure class on $\partial^\infty \Omega$.  Applying Fubini's Theorem then gives the desired result.
\medskip

To reduce the question of positivity of Lebesgue measure of $\Lambda$ to nullity of $\mathcal H\inverse (+\infty)$, we apply a pleasing ``zooming in'' argument (Lemmas \ref{lem: properties of the union} and \ref{lem: measure of union}) to the point $[e_{d+2}]$.  Indeed, we use the global dilation 
\[I_t = \diag(I_{d+1}, e^{-t})\]
of every vertical ray and a nesting property 
to show that $\Lambda$ has positive Lebesgue measure if and only if $\mathcal H\inverse (\R) \times [0,\infty)$ has positive Lebesgue measure.

\section{Preliminaries}\label{sec: background}

We now collect some basic background material.

\subsection{Vector spaces and projective space}\label{subsec: vector spaces}

We work with finite-dimensional vector spaces over $\R$ throughout. 

\subsubsection{Vector spaces}
For a vector space $V$, the dual space is denoted $V^*$.
For $A\in \GL(V)$ and $\alpha\in V^*$, we denote by 
\[A.\alpha = \alpha \circ A\inverse\]
the unique action compatible with the evaluation pairing $V \times V^* \to \R$.

Given a representation $r:\Gamma \rightarrow \GL(V)$, there is thus an action of $\Gamma$ on both $V$ and $V^*$. We denote these vector spaces with their $\Gamma$-actions as $V_r$ and $(V^*)_r$ respectively.

\subsubsection{Projective space}

We define the \textbf{real projective space} $\RP^d = \P(\R^{d+1})$ as usual to be the quotient of $\R^{d+1}\setminus \{0\}$ by the center of $\GL(d+1,\R)$ (that is, scalar transformations). The dual projective space is $(\RP^d)^* = \P(\R^{d+1})^*$. For $v \in \R^{d+1}\setminus \{0\}$, we write its projective class $[v]$, and more generally $[W]$ for the image of any vector subspace of $\R^{d+1}$ in $\RP^d$.

There is a correspondence between the points of the dual projective space $(\RP^d)^*$ and the projective hyperplanes in $\RP^d$. Namely for each $[\alpha] \in (\RP^d)^*$, the hyperplane $[\ker \alpha]$ consists of the projective class of every vector on which $\alpha$ vanishes. The hyperplane and the point in the dual space identify one another uniquely. The natural action of $\GL(d+1,\R)$ described above is compatible with this identification. This is one aspect of what is generally referred to as projective duality-- the hyperplanes in a projective space may be viewed as points in a dual projective space. 

An element of the general linear group $A \in \GL(V)$ acts on $\P(V)$ (and its dual) via
\[
A.[v] = [Av], \qquad  A.[\alpha] = [A.\alpha] = [\alpha\circ A\inverse]
\]
which factors through the projective general linear group $\PGL(V).$

\subsubsection{Singular values and eigenvalues}\label{subsub: sing and eig}

Choose an inner product on $V$ with associated norm $\|\cdot\|$, and let $A \in \GL(V)$.  Let $K < \GL(V)$ be the subgroup preserving this inner product.  The \textbf{singular values}
\[0< e^{\sigma_{d+1}(A)}\le \ldots \le e^{\sigma_1(A)}\]
of $A$ with respect to $\|\cdot \|$ are the square roots of the eigenvalues of the positive definite symmetric matrix $A^t A$.  It is standard in linear algebra that there are elements $k_1, k_2 \in K$ such that 
\[A = k_2 \diag (e^{\sigma_1(A)}\dots e^{\sigma_{d+1}(A)}) k_1,\]
where the diagonal matrix is given in terms of an ordered orthonormal basis $e_1, \dots, e_{d+1}$ for the $(d+1)$-dimensional vector space $V$.  
In particular, $\|A\| = e^{\sigma_1(A)}$ and $\|A\inverse\| = e^{-\sigma_{d+1}(A)}$. 
\medskip

For $A\in \GL(V)$ with $\sigma_1(A)>\sigma_2(A)$, define $U(A)\le V$ to be the line spanned by $k_2e_1$.  This is the major axis of the ellipsoid $\{Av: \|v\|=1\} \subset V$.  It is also the direction in $V$ whose norm decreases by the largest proportion after applying the map $A\inverse$.  
Also define $S(A)$ to be the hyperplane spanned by $k_1\inverse \langle e_2, ..., e_{d+1}\rangle$.  This is the hyperplane in $V$ for which the norm of any element under $A$ changes by at most $e^{\sigma_2(A)}$.  Compare \cite[\S2.2]{BPS19}.

\begin{figure}[h]
    \centering
    \includegraphics[width=.6\linewidth]{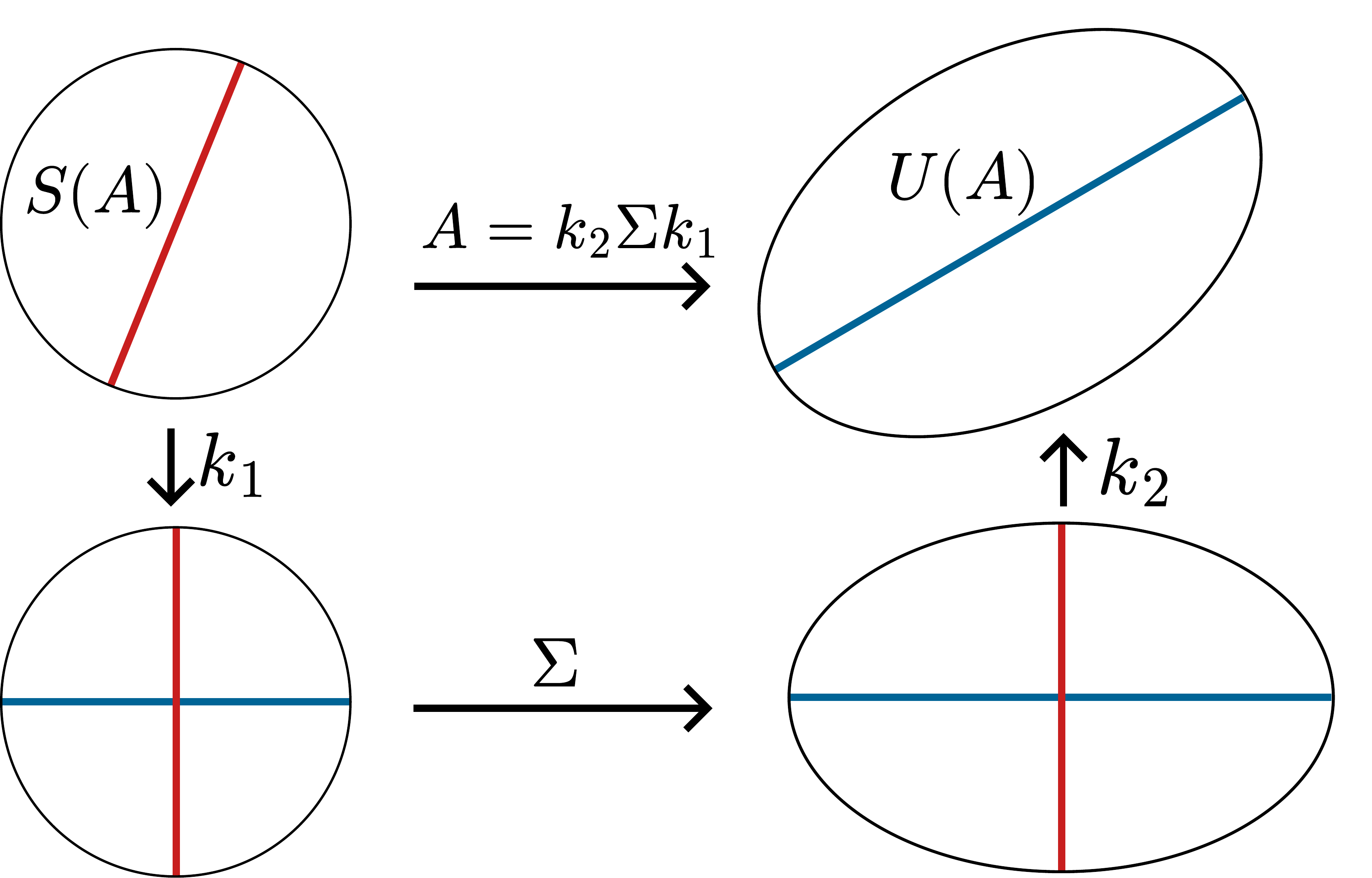}
    \caption{For a proximal element $A$, the stable and unstable subspaces are shown in red and blue in the top of the figure. In the bottom left, the blue axis is the span of the first basis vector, and the red is its orthogonal complement.}  
    \label{fig: S and U subspaces}
\end{figure}

The moduli of the eigenvalues of $A$ are denoted $\{e^{\lambda_i(A)}\}$ and are listed in non-increasing order, i.e., $e^{\lambda_{d+1}(A)}\le ... \le  e^{\lambda_{1}(A)}$.  Recall first that an element $A\in \GL(V)$ is \textbf{proximal} if $\lambda_1(A)>\lambda_2(A)$.

\subsection{Coaffine space}\label{subsec: coaffine space}
In this paper, we will be interested in coaffine structures fibering over $d$-manifolds with $1$-dimensional fiber.

The \textbf{(general) coaffine group} is\footnote{This is actually the index-two subgroup preserving the orientations on foliation by vertical lines.}
\[
\cag =
\left\{
\begin{pmatrix}
    A & 0 \\
    \tau & 1 
\end{pmatrix}
\colon
    A \in \GL_+(d+1,\R),
    \tau \in (\R^{d+1})^*
\right\}<\GL_+(d+2,\R)
\]
which is by definition the group of orientation preserving automorphisms of 
the \textbf{coaffine space}
\[\cas \defeq \left(\RP^{d+1}\setminus [e_{d+2}]\right)\subset \RP^{d+1}.\]
We define the \textbf{linear part}
\[
L
\begin{pmatrix}
    A & 0 \\
    \tau & 1 
\end{pmatrix}
=
A
\]
and the \textbf{cotranslation part} to be $\tau$. The kernel of $L$ is the subgroup of cotranslations. 

The vector space quotient
\[
\hat Q : \R^{d+2}\to  \R^{d+2} / [e_{d+2}]\cong \R^{d+1}
\]
obtained by forgetting the last coordinate passes to a projection 
\[
\begin{aligned}
Q : \cas &\to \RP^{d} \\
\begin{bmatrix}
v\\
1
\end{bmatrix}
&\mapsto
\begin{bmatrix}
v
\end{bmatrix}
\end{aligned}
\]
which interacts with $\cag$ via the linear part
\[
Q\circ
g
=
L(g) \circ Q.
\]

That is, the coaffine group acts on the quotient by $Q$ by the linear part. The linear part's action on $\RP^d$ factors through $\PGL_+(d+1,\R)$, which we omit from the notation.

Geometrically the image of $Q$ is the projective space formed by the \textbf{vertical lines}, the one-dimensional projective subspaces of $\RP^{d+1}$ which are incident to $[e_{d+2}]$.

We identify 
\[
\im \hat Q = \R^{d+1} \textrm{ and } \im Q = \RP^{d}.
\]

We choose to view the group $\cag$ as a subgroup of $\GL(d+2,\R)$ as it makes the eigenvalue decomposition straightforward. Note that its action on $\cas$ factors through its image in $\PGL(d+2,\R)$, but that it is also isomorphic to its image.

\subsection{Convex and divisible domains in projective space}\label{subsec: convex and divisibles}

\subsubsection{Convex sets}
We review some of the essential background about convex sets in projective space. For a general reference see \cite{Rockafellar}.

An \textbf{affine chart} in the projective space $\RP^d$ is the complement of a hyperplane 
\[
\A_\alpha = \RP^d \setminus [\ker \alpha]
\]
where $[\alpha] \in (\RP^d)^*$. Each affine chart is isomorphic to the standard, real, $d$-dimensional affine space and hence has a notion of convexity, i.e., for every pair of points $p,q \in \A_\alpha$ there exists a unique projective segment
\[
[p,q]\subset \A_\alpha
\]
with endpoints $p$ and $q$.

A set $X$ in real projective space $\RP^d$ is \textbf{convex} if there exists an affine chart $\A_\alpha$ such that $X \subset \A_\alpha$ and $X$ is convex in $\A_\alpha$. It is \textbf{properly convex} if further there is $\A_\alpha$ such that the closure $\overline X\subset A_\alpha$. 
The definition of (proper) convexity is independent of the chart, so long as $\A_\alpha$ contains $X$ (or $\overline X$). 
\medskip
 
Every closed convex set $X\subset \RP^d$ decomposes into a partition of open faces. An \textbf{open face} of a convex subset $X$ is a class of the equivalence relation
$x\sim y \iff$ there exists an open segment $(x',y')\subset X$ such that $x,y \in (x',y').$ In short, two points are in the same face if the segment between them is extensible in $X$ in both directions. The closed faces (or just \textbf{faces} for brevity) of $X$ are the closures of the open faces. The (closed) faces form a poset under inclusion. If $X$ is not closed, we define its faces to be the faces of its closure, $\overline X$. Being convex sets themselves, faces have an intrinsic dimension, which is the dimension of the projective subspace which they span under arbitrary convex combination.

This definition of a face is desirable because, firstly, every face $K\subset X$ is convex, and furthermore if $L\subset K\subset X$, and $L$ is a face of $K$ and $K$ is a face of $X$, then $L$ is a face of $X$ (being a face is transitive).

We will also use the following elementary fact later in \S \ref{sec: prox is orb}. As usual, a convex hull is defined for us either as an intersection of half-spaces containing a set, or as a union of points obtained via finite convex combination.

\begin{lemma}[\cite{Rockafellar}]\label{lem: 0-faces of convex sets}
    Let $S$ be a closed subset of affine space $\A$, and let $\hull (S)$ be its convex hull. Then the set of $0$-dimensional faces of $\hull(S)$ is a subset of $S$.
\end{lemma}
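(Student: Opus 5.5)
The statement is the classical fact that the extreme points of the convex hull of a closed set lie in that set. I would argue by contrapositive. Suppose $p$ is a $0$-dimensional face of $\hull(S)$. Here faces are taken for the closure $\overline{\hull(S)}$, so I read $p$ as an extreme point: it lies in no open segment contained in the closed convex set. I want to show that $p\in S$.

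First, the case where $p \in \hull(S)$ itself. By definition, $p$ is a finite convex combination $p=\sum_{i=1}^k t_i s_i$ with $s_i\in S$, each $t_i>0$, and $\sum_i t_i = 1$. If $k\ge 2$ and the $s_i$ are not all equal to $p$, pick some $s_j\neq p$. Write $p = t_j s_j + (1-t_j) q$, where $q$ is the normalized combination of the remaining points, so $q\in\hull(S)$. Since $p\neq s_j$, we also have $q\neq p$, and then $p$ lies in the open segment $(s_j,q)\subset \hull(S)$. This contradicts extremality. Hence every $s_i$ equals $p$, and $p\in S$.

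The main obstacle is the case $p\in\overline{\hull(S)}\setminus\hull(S)$, which can occur when $S$ is unbounded or when $\hull(S)$ fails to be closed. In the bounded (compact) case, Carathéodory's theorem shows that $\hull(S)$ is the continuous image of the compact set $S^{d+1}\times\Delta^d$, so it is compact and this case does not arise. In general I would take a limit. Write $p=\lim p_n$ with $p_n=\sum_{i=0}^{d} t_i^n s_i^n$, using Carathéodory to bound the number of terms by $d+1$, and pass to a subsequence so that $t_i^n\to t_i$.

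For indices with $t_i>0$ and $s_i^n$ bounded, pass to limits $s_i\in S$; these limits lie in $S$ because $S$ is closed. If some weighted terms escape to infinity, the limiting relation expresses $p$ as a convex combination of points of $S$ plus recession directions. Adding a recession direction places $p$ in the interior of a ray or segment inside $\overline{\hull(S)}$, so extremality forces that contribution to vanish. Once that is done, the argument of the previous paragraph gives $p\in S$.

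This limiting step is the delicate one. In the paper's application $S=\Lambda_{prox}$ sits inside a properly convex set, so $S$ is compact in a suitable affine chart, and only the compact case is actually needed. I would state and prove the compact case carefully and cite \cite{Rockafellar} for the general closed case.
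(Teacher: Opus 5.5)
The paper gives no argument for this lemma beyond the citation to \cite{Rockafellar}, so your proposal is a self-contained alternative rather than a reconstruction. Two parts of it are correct and complete. The first is the case of an extreme point of $\overline{\hull(S)}$ that already lies in $\hull(S)$: such a point must be one of the $s_i$. The second is the compact case: Carath\'eodory exhibits $\hull(S)$ as the continuous image of $S^{d+1}\times\Delta^d$, so it is closed and every extreme point falls under the first case. Your final plan (prove this, cite the general case) is therefore at least as complete as the paper.

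The sketched limiting step for unbounded $S$ does not work as written. When the weighted escaping terms $t_i^n s_i^n$ blow up, they can cancel. Then nothing of the form ``convex combination of points of $S$ plus recession directions'' survives in the limit.

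For example, take $S=\{(\pm n,1+\tfrac1n):n\ge 1\}\subset\R^2$. Then $\tfrac12(n,1+\tfrac1n)+\tfrac12(-n,1+\tfrac1n)\to(0,1)\notin\hull(S)$. There is no bounded part, and $(0,1)$ is not a recession direction of $C=\overline{\hull(S)}\subset\{1\le y\le 2\}$.

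Here is a repair.
\begin{itemize}
\item First, if $s^n\in S$, $|s^n|\to\infty$ and $s^n/|s^n|\to u$, then $u\in\mathrm{rec}(C)$. Indeed, $(1-t/|s^n|)c+(t/|s^n|)s^n\to c+tu$ for every $c\in C$ and $t\ge 0$.
\item Let $J$ be the escaping indices and $m_n=\max_{i\in J}t_i^n|s_i^n|$.
\item If $m_n$ stays bounded, then $t_i^n\to 0$ on $J$. Passing to limits gives $p=q+w$ with $q\in\hull(S)$, and $w=\lim\sum_{J}t_i^ns_i^n$ is a nonnegative combination of the limiting directions $u_i$, hence $w\in\mathrm{rec}(C)$. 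Extremality forces $w=0$, and you are back in the first case.
\item If $m_n\to\infty$, divide the convergent sum $\sum_{J}t_i^ns_i^n=p_n-\sum_{i\notin J}t_i^ns_i^n$ by $m_n$. This gives $\sum_J c_iu_i=0$ with $c_i\ge 0$ and some $c_{i_0}=1$. Then $\pm u_{i_0}\in\mathrm{rec}(C)$, so $C$ contains the line through $p$ in direction $u_{i_0}$, and $p$ is not extreme.
\end{itemize}

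Your remark about the application needs a small correction. $\Lambda_{prox}$ is closed in $\cas$. However, when $\partial^\infty C_{min}$ consists of rays, its closure in $\RP^{d+1}$ contains $[e_{d+2}]$, so it is not compact in any affine chart.

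What works is to apply the compact case to $\Lambda_{prox}\cup\{[e_{d+2}]\}$ in a chart containing $\overline{C_{min}}$. Such a chart exists because closedness in $\cas$ together with the ray structure forces the heights on $\Lambda_{prox}$ to be bounded below. This shows that every extreme point other than $[e_{d+2}]$ lies in $\Lambda_{prox}$, which is all that Lemma~\ref{lem: proximal rays, proximal endpoints} and Proposition~\ref{prop: topology of Cmin} use.
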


\subsubsection{Divisible convex sets}

For a more complete overview of the rich landscape of divisible domains, we direct the reader to a pair of nice survey papers, \cite{Benoist_survey} and \cite{Marquis_survey}.

If $X\subset \RP^d$ is a properly convex set, then its (projective) automorphism group is 
\[
\Aut(X) \defeq \{g\in \PGL(d+1,\R) \colon g.X = X.\}
\]
Generically, $\Aut(X) = 1$. If $X$ is a round ball, then 
$\Aut(X) \cong \mathrm{PO}(d,1)$; this is the projective (Klein) model of the real hyperbolic space.

Benoist, following Benz\'ecri, studied properly convex subsets of $\RP^d$ admitting compact manifold (or orbifold) quotients.
\begin{definition}[Divisible convex domain, \cite{Benoist_CDI}]\label{def: convex divisibles}
    An open properly convex subset $\Omega\subset \RP^d$ is a \textbf{convex divisible domain} if there exists a discrete subgroup
    $\Gamma <\Aut(\Omega)$ such that the quotient
    $\Gamma \backslash \Omega$ is compact.
    In this case, $\Gamma$ \textbf{divides} $\Omega$.
\end{definition}

The fact that $\Gamma \backslash \Omega$ may be assumed to be a manifold (or orbifold) follows from the fact that properly convex sets are metrizable by an $\Aut(\Omega)$-invariant metric, in particular via the Hilbert metric. See also \S\ref{sec: domain shape}.

Benoist also gives the following result. A convex set is $C^1$ if every point in the boundary has a unique supporting hyperplane, and it is strictly convex if its boundary contains no segments. 

\begin{lemma}[\cite{Benoist_CDI}]\label{lem: strictly con and C1 is hyperbolic}
    A divisible domain $\Omega \subset \RP^d$ has $C^1$ and strictly convex boundary if and only if every group $\Gamma$ dividing $\Omega$ is Gromov hyperbolic.
\end{lemma}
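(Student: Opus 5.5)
The plan is to pass through the Hilbert metric $d_\Omega$ and the Švarc--Milnor lemma, and to use Benzécri compactness together with projective duality to control the boundary regularity. Since $\Gamma$ acts properly and cocompactly by isometries on the proper geodesic space $(\Omega, d_\Omega)$, Švarc--Milnor makes any orbit map $\Gamma \to \Omega$ a quasi-isometry. Gromov hyperbolicity is a quasi-isometry invariant among geodesic spaces. Hence \emph{$\Gamma$ is Gromov hyperbolic if and only if $(\Omega, d_\Omega)$ is Gromov hyperbolic}, for any (equivalently, every) $\Gamma$ dividing $\Omega$. So the statement reduces to showing that $(\Omega,d_\Omega)$ is Gromov hyperbolic if and only if $\partial\Omega$ is $C^1$ and strictly convex.

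For ``hyperbolic $\Rightarrow$ strictly convex'', I will argue by contraposition. Suppose $\partial\Omega$ contains a nontrivial segment $[a,b]$. Choose $c \in \Omega$ and points $a_n \to a$, $b_n \to b$ in $\Omega$ along the lines $[c,a]$ and $[c,b]$. Using the cross-ratio formula for $d_\Omega$, I will show that the midpoint $m_n$ of $[a_n,b_n]$ stays at distance tending to infinity from $[c,a_n]\cup[c,b_n]$. The reason is that the lines through $m_n$ exit $\Omega$ near the flat piece $[a,b]$, so the cross-ratios blow up in the same way they do in the flat-simplex model. This contradicts uniform $\delta$-thinness of geodesic triangles. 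For ``hyperbolic $\Rightarrow C^1$'', I will use duality. $\Gamma$ (via the dual action) divides the dual domain $\Omega^*$, and a non-$C^1$ point of $\partial\Omega$, i.e.\ a point with two supporting hyperplanes, gives a segment in $\partial\Omega^*$. Since the same hyperbolic group divides $\Omega^*$, the previous case gives a contradiction.

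For the converse, assume $\partial\Omega$ is strictly convex and $C^1$ but $d_\Omega$ is not hyperbolic. Then there are geodesic triangles (projective segments) $T_n$ with points $p_n$ on one side at distance $\ge n$ from the other two sides. By cocompactness, I translate by $\gamma_n\in\Gamma$ so that $\gamma_n p_n$ lies in a fixed compact fundamental set. In Benzécri's space of pointed properly convex domains modulo $\PGL(d+1,\R)$, the orbit of $(\Omega,\cdot)$ is compact here because $\Gamma$ divides $\Omega$. So after passing to a subsequence and applying the translations, the triangles converge to a triangle in the same domain $\Omega$ with a vertex-configuration degenerating to the boundary. In the limit I obtain two boundary points $x \ne y$ and a third boundary point $z$ with the segment $[x,y]$ lying at infinite distance from $[x,z]\cup[y,z]$. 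Strict convexity forces $(x,y)\subset\Omega$ and the $C^1$ condition forces ideal triangles with distinct vertices to be thin (lines sharing an endpoint are asymptotic in $d_\Omega$ when there is a unique supporting hyperplane there). These two facts give a contradiction.

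The main obstacle is this last compactness step. I have to check carefully that the limit configuration is nondegenerate, with three distinct vertices, and that $C^1$ together with strict convexity really makes ideal triangles uniformly thin near the limit. I would first try Benoist's approach of bounding the Gromov product by cross-ratios, relying on the fact that the cross-ratio is continuous in the Benzécri topology.
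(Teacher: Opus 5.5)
The paper gives no proof of this lemma; it is quoted from Benoist, so I compare your plan with the standard argument. Your architecture is the usual one: \v{S}varc--Milnor to reduce to hyperbolicity of $(\Omega,d_\Omega)$, fat triangles near flat boundary pieces, projective duality for the $C^1$ property, and cocompact recentering for the converse. The reduction and the duality step are fine, provided you cite Vinberg's $\Gamma$-equivariant homeomorphism $\Omega\to\Omega^*$ to know that $\Gamma$ divides $\Omega^*$.

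\textbf{The gap.} In ``hyperbolic $\Rightarrow$ strictly convex'', the claim that $d_\Omega(m_n,[c,a_n]\cup[c,b_n])\to\infty$ is false for an arbitrary boundary segment $[a,b]$. The flat-simplex heuristic you invoke actually refutes it. Take the following data in an affine chart:
\begin{itemize}
\item $\Omega$ is the open triangle with vertices $(0,0)$, $(1,0)$, $(0,1)$;
\item $a=(\tfrac14,0)$, $b=(\tfrac34,0)$, $c=(\tfrac13,\tfrac13)$;
\item $a_n=(1-t_n)a+t_nc$ and $b_n=(1-t_n)b+t_nc$, with $t_n\to 0$.
\end{itemize}
Then $a_n$ and the midpoint $m_n$ lie on the line $v=t_n/3$. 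This line exits $\Omega$ at $u=0$ and $u=1-t_n/3$. The cross-ratio defining $d_\Omega(a_n,m_n)$ tends to $3$, so $d_\Omega(m_n,[c,a_n])$ stays bounded. Blow-up only happens when the endpoints of the segment do not lie in the open face of its interior points.

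\textbf{How to repair it.} Take $[a,b]$ to be the whole intersection of its line $\ell$ with $\overline\Omega$, and let $m_n$ be the affine midpoint, so that $m_n\to m\in(a,b)$. Then prove the following estimate. Suppose $x_n\to x$ and $y_n\to y$ with $x\neq y$, and $d_\Omega(x_n,y_n)\le C$. Let $\xi_n$ be the exit point of the line $x_ny_n$ beyond $x_n$. Then $|\xi_nx_n|$ is bounded below by a positive multiple (depending on $C$) of $|x_ny_n|$. The reason is that $|\xi_ny_n|/|\xi_nx_n|$ is one of the two factors of the cross-ratio, and both factors are at least $1$.

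Now let $q_n\in[c,a_n]$ realize the distance to $m_n$. There are two cases.
\begin{itemize}
\item If $q_n$ subconverges into $\Omega$, the distance diverges, because $m_n$ leaves every compact set.
\item If $q_n\to a$, the estimate applied to $(q_n,m_n)$ produces a point of $\ell\cap\overline\Omega$ strictly beyond $a$. This contradicts maximality of $[a,b]$.
\end{itemize}

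\textbf{The converse.} The ``main obstacle'' you name is misdiagnosed. You need neither $C^1$, nor thin ideal triangles, nor Benz\'ecri compactness, since the $\gamma_n\in\Gamma$ already preserve $\Omega$. After recentering and passing to a subsequence, $x_n\to x$, $y_n\to y$, $z_n\to z$, and $p_n\to p\in\Omega\cap[x,y]$.

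Suppose some $q\in[x,z]$ lay in $\Omega$. Then points $q_n\in[x_n,z_n]$ with $q_n\to q$ would give $d_\Omega(p_n,q_n)\to d_\Omega(p,q)<\infty$, which is impossible. Hence $[x,z]$ and $[z,y]$ lie in $\partial\Omega$. Strict convexity forces $x=z=y$. Then $[x_n,y_n]$ collapses to $x\in\partial\Omega$, contradicting $p\in\Omega$.

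So the contradiction comes precisely from the limit configuration being degenerate, and there is no nondegeneracy to check. This argument also shows that strict convexity alone already yields hyperbolicity, as in Benoist.
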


Indeed, in this case the Hilbert metric is hyperbolic. 
\medskip

In dimension $2$, the story of strictly and properly convex divisible domains is particularly nice. Dividing groups come from Hitchin representations of surface groups \cite{Choi-Goldman}, that is, $\Gamma$ dividing $\Omega \subset \RP^2$ is a deformation of a discrete surface group in $\SO(2,1)$.

The following fact is useful when working with convex divisible domains.

\begin{lemma}[\cite{Benoist_CDI}]\label{lem: lift to SL}
    If $\Gamma < \PGL(d+1,\R)$ divides $\Omega\subset \RP^d$, a properly convex set, then, up to finite index, $\Gamma$ is torsion-free and admits a lift to $\SL(d+1,\R)$.
\end{lemma}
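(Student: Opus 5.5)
The statement to prove is Lemma \ref{lem: lift to SL}: if $\Gamma<\PGL(d+1,\R)$ divides a properly convex $\Omega$, then up to finite index $\Gamma$ is torsion-free and lifts to $\SL(d+1,\R)$. The plan is to handle the two conclusions separately. Torsion-freeness comes from Selberg's lemma. The lift comes from the canonical linear cone over $\Omega$.

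\textbf{Torsion-free.} First I check that $\Gamma$ is finitely generated. It acts properly discontinuously and cocompactly on $\Omega$, which is a proper geodesic metric space for the Hilbert metric. The \v{S}varc--Milnor lemma then gives finite generation. Selberg's lemma applies to finitely generated subgroups of $\GL$ over a field of characteristic zero. I apply it to the full preimage $\widetilde\Gamma\le\SL^{\pm}(d+1,\R)$ of $\Gamma$, which is a $\{\pm1\}$-extension of $\Gamma$ and hence also finitely generated. Selberg produces a torsion-free finite-index subgroup $\widetilde\Gamma'\le\widetilde\Gamma$. Since $-I$ has order two, $-I\notin\widetilde\Gamma'$. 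Therefore $\widetilde\Gamma'$ maps isomorphically onto its image $\Gamma'\le\Gamma$. This image has finite index in $\Gamma$ and is torsion-free.

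\textbf{Lift via the cone.} The key geometric input, and the only non-formal step, is the cone over $\Omega$. Since $\Omega$ is properly convex, its preimage in $\R^{d+1}\setminus\{0\}$ has exactly two components $\pm C$, where $C$ is a sharp open convex cone. For each $g\in\Gamma$, I pick the representative $\hat g\in\SL^{\pm}(d+1,\R)$ satisfying $\hat g(C)=C$. Exactly one of the two representatives $\pm$ has this property, because any representative maps $C$ to either $C$ or $-C$. Uniqueness then shows that $g\mapsto\hat g$ is a homomorphism.

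\textbf{Reaching $\SL$.} It remains to go from $\SL^{\pm}$ to $\SL$. The homomorphism $\det\colon\hat\Gamma\to\{\pm1\}$ has a kernel of index at most $2$. Intersecting this kernel with the torsion-free subgroup $\Gamma'$ gives a finite-index subgroup that is torsion-free and lifts to $\SL(d+1,\R)$. When $d+1$ is odd this step is not even needed, since one may rescale by $-1$ to fix the determinant (at the cost of possibly losing the cone-preserving property, which is not required).

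I do not expect a real obstacle anywhere in this argument. The only point needing care is the choice of the cone-preserving sign, which gives a canonical section and makes the homomorphism property automatic.
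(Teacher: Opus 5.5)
Your proof is correct. It follows the route the paper indicates when it cites Benoist: Selberg's lemma (applied to the $\{\pm I\}$-extension $\widetilde\Gamma\le\SL^{\pm}(d+1,\R)$) for torsion-freeness, the sharp cone $C$ over $\Omega$ for a canonical lift to $\SL^{\pm}(d+1,\R)$, and $\ker\det$ to land in $\SL(d+1,\R)$.

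Your Selberg step already gives a lift on its own, since $\widetilde\Gamma'$ maps isomorphically onto $\Gamma'$, so the cone is not needed for the lemma as stated. What the cone buys is a lift of all of $\Gamma$ (no finite index) that preserves $C$, and hence, after restricting to $\ker\det$, an $\SL$-lift that preserves $C$. The paper tacitly uses this later, e.g.\ for the $\rho$-equivariant section $\Phi\colon\Omega\to\widehat\Omega$.
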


Note that the removal of torsion and the lift are different claims, the former follows immediately from Selberg's Lemma, the latter is less instantaneous.

\subsection{Projective Anosov representations}\label{subsec: projective Anosov}

Anosov representations were introduced by Labourie \cite{Lab06} and developed for general hyperbolic groups by Guichard--Wienhard \cite{GW12}.
We follow the more recent approaches, due to Kapovich--Leeb--Porti \cite{KLP17,KLP18a, KLP18b},  Guéritaud–Guichard–Kassel–Wienhard \cite{GGKW17}, and Bochi--Potrie--Sambarino \cite{BPS19}, which have the advantage of having simple, quantitative formulations that also have direct consequences for the action of an Anosov subgroup/representation on suitable flag manifolds.  Thus our exposition is ahistorical.  

In this paper, we will only be interested in \emph{projective-Anosov} (or \emph{$1$-Anosov}) subgroups of  $\GL(d+1,\R)$ (or equivalently, the corresponding faithful representations) in terms of singular values (with respect to an inner product on $\R^{d+1}$).

\begin{definition}[Projective Anosov representation]\label{def: Anosov rep}
    Let $\Gamma$ be a finitely generated group, and choose a finite symmetric generating set giving rise to a word metric $| \cdot |$ on $\Gamma$.  A representation $\rho : \Gamma \to \GL(d+1,\R)$ is \textbf{projective Anosov} if there are constants $a, b >0$ such that
    \[\sigma_1(\rho(\gamma)) - \sigma_2(\rho(\gamma))\ge a|\gamma| -b,\]
    for all $\gamma \in \Gamma$.  
\end{definition}

A projective Anosov representation comes with a useful package of information describing the the dynamics of the action on projective space and the dual projective space.   The relationship between the following Theorem and Definition \ref{def: Anosov rep} is contained in the papers cited above. 

\begin{theorem}\label{thm: anosov limit curve}
    Let $\rho : \Gamma \to \GL(d+1,\R)$ be a projective Anosov representation.  Then $\Gamma$ is Gromov hyperbolic, the kernel of $\rho$ is finite, and there are continuous, $\rho$-equivariant maps $\xi: \partial^\infty \Gamma \to \RP^d$ and $\xi^*: \partial^\infty \Gamma\to (\RP^d)^*$ satisfying
    \begin{itemize}
        \item (compatibility) for all $x \in \partial^\infty \Gamma$, $\xi(x)\le \ker \xi^*(x)$
        \item (transversality) for all $x \not=y \in \partial^\infty \Gamma$, we have $\xi(x) \not\in \ker \xi^*(y)$
        \item (dynamics preserving) for any infinite order element $\gamma \in \Gamma$ with attracting fixed point $\gamma_+ \in \partial ^\infty \Gamma$, $\xi(\gamma_+)$ and $\xi^*(\gamma_+)$ are the attracting fixed points of the natural action of $\rho(\gamma)$ on $\RP^d$ and $(\RP^d)^*$, respectively.  
    \end{itemize}
    Moreover, the (projective) Anosov property is an open condition in $\Hom(\Gamma, \GL(d+1,\R))$.  
\end{theorem}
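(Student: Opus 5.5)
The plan is to follow the singular-value approach of Bochi--Potrie--Sambarino \cite{BPS19} and Kapovich--Leeb--Porti \cite{KLP17,KLP18a}. Everything will be extracted from the uniform gap $\sigma_1-\sigma_2\ge a|\gamma|-b$ together with the elementary geometry of the subspaces $U(A)$ and $S(A)$ from \S\ref{subsub: sing and eig}.

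\textbf{Finite kernel.} If $\rho(\gamma)=\mathrm{id}$ then $\sigma_1-\sigma_2=0$, so $|\gamma|\le b/a$. Only finitely many elements satisfy this.

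\textbf{Hyperbolicity.} Since $\sigma_1(\rho(\gamma))-\sigma_{d+1}(\rho(\gamma))\ge a|\gamma|-b$, the orbit map of $\Gamma$ into the symmetric space of $\GL(d+1,\R)$ is a quasi-isometric embedding. Its image is moreover uniformly regular in the direction of the first simple root. I would then invoke the higher-rank Morse lemma of \cite{KLP18a}: uniformly regular quasi-geodesics fellow travel diamonds (Weyl cones), and this yields thin triangles for geodesics in $\Gamma$. This is the step I expect to be the main obstacle. It is genuinely hard, and in a paper like this one I would cite it rather than reprove it; BPS give an alternative route via dominated splittings.

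\textbf{Boundary maps.} The key linear-algebra estimate is the following: for $A\in\GL(V)$ and $B$ ranging over a fixed compact set,
\[
d\big(U(AB),U(A)\big)\le C\,e^{-(\sigma_1-\sigma_2)(A)},
\]
and the analogous bound holds for $S$ under $A\mapsto BA$. Fix a geodesic ray $(\gamma_n)$ in $\Gamma$ converging to $x\in\partial^\infty\Gamma$. Consecutive terms differ by a generator, and the gap grows linearly, so $U(\rho(\gamma_n))$ is Cauchy with exponentially summable increments. I define $\xi(x)$ as its limit. Any two rays to $x$ eventually fellow travel within bounded distance, which holds by hyperbolicity, so the limit is independent of the ray. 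The same exponential control, applied to rays that agree on long initial segments, gives continuity. Equivariance follows because $U(\rho(g)\rho(\gamma_n))$ and $\rho(g)U(\rho(\gamma_n))$ are exponentially close, since $\rho(g)$ is a fixed element. I define $\xi^*$ the same way from $\rho(\gamma)^{-t}$, equivalently from the limits of $S(\rho(\gamma_n)^{-1})$. Compatibility then comes from orthogonality of $U$ and the complementary hyperplane in the same singular value decomposition.

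\textbf{Transversality.} Given $x\ne y$, take a bi-infinite geodesic through (a bounded neighborhood of) the identity, with one end at $x$ and the other at $y$. Write $\gamma_n\to x$ and $\gamma'_m\to y$, so that $\gamma_n^{-1}\gamma'_m$ is uniformly quasi-geodesic through the identity. The uniform gap for $\rho(\gamma_n^{-1}\gamma'_m)$ forces $U(\rho(\gamma'_m))$ to stay a definite angle away from the hyperplane associated with $\gamma_n$. This is again a Morse-lemma-type statement, handled as in \cite{BPS19} Lemma~2.5 or via the dominated splitting. Passing to the limit gives $\xi(y)\notin\ker\xi^*(x)$.

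\textbf{Dynamics preserving.} Apply the construction to the ray $\gamma^n\to\gamma_+$. Since $U(\rho(\gamma)^n)$ converges to the top eigenline, and the gap of $\rho(\gamma^n)$ grows linearly, $\rho(\gamma)$ is proximal with attracting point $\xi(\gamma_+)$. The dual statement follows in the same way.

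\textbf{Openness.} The inequality only needs to be checked on a finite ball, by the local-to-global principle of \cite{KLP18a}. Equivalently, via \cite{BPS19}, a dominated splitting of the associated linear cocycle over the flow space persists under small perturbations by cone-field stability. Finitely many conditions on generators are open in $\Hom(\Gamma,\GL(d+1,\R))$.
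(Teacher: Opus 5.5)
Your outline is correct and takes the same route as the paper: the paper gives no proof of its own and defers the theorem to \cite{KLP17,KLP18a,GGKW17,BPS19}. Your explicit construction of $\xi,\xi^*$ is the standard one from \cite{BPS19}, and you rightly cite those works for the hard steps (hyperbolicity, transversality and openness). One caution: transversality does not follow from the gap of a single product $\rho(\gamma_n^{-1}\gamma'_m)$, which can be linear even when the line lies in the hyperplane; it follows from the gap holding for all $n,m$ simultaneously, i.e.\ along all subwords of the bi-infinite geodesic, and that is exactly what the cited Morse-lemma and dominated-splitting arguments use.
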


Another characterization of the projective Anosov property for Gromov hyperbolic groups in terms of a uniform eigenvalue gap, due to Kassel--Potrie \cite{KP:eigen}, is given at the beginning of \S\ref{sec: failure of anosov}.

Denote by $\sigma_i^\rho(\gamma) \defeq \sigma_i(\rho(\gamma))$ and $\lambda_i^\rho(\gamma) \defeq \lambda_i(\rho(\gamma))$ the logarithms of the $i$th largest singular value and eigenvalue (\S\ref{subsub: sing and eig}), respectively.  By Definition \ref{def: Anosov rep}, 
for any element $\gamma \in \Gamma$ of sufficiently large word length, $\sigma_1^\rho(\gamma) >\sigma_2 ^\rho(\gamma)$.  So, for all but finitely many elements $\gamma$, the major axis $U(\rho(\gamma))$ of the ellipsoid $\{\rho(\gamma)v: \|v\|=1\}$ is well defined.

Consider the complete metric  $d_{\mathbb P}([v],[w]) =  |\sin \angle (v,w)|$ on projective space.

The boundary maps $\xi$ and $\xi^*$ for an Anosov representation $\rho : \Gamma \to \GL(d,\R)$ can be defined as a limit of the stable subspaces for elements of $\Gamma$.

\begin{lemma}[{\cite[Lemma 4.7]{BPS19}}]\label{lem: uniform convergence to boundary map}
    Given a $1$-Anosov representation $\rho: \Gamma \to \GL(d,\R)$ and constants $L\ge 1$ and $C\ge 0$, there are positive $c$ and $d$ such that the following holds.  Suppose $\gamma \in \Gamma$ is large enough for $U(\rho(\gamma))$ and $S(\rho(\gamma))$ to be defined and is on a $(L,C)$-quasi-geodesic ray from the identity $id$ to $x \in \partial^\infty \Gamma$.  Then
    \[d_{\mathbb P} (U(\rho(\gamma)), \xi(x)) \le de^{-c|\gamma|}.\]
    Furthermore, if $\gamma\inverse$ is on a $(L,C)$-quasi-geodesic ray from the identity to $y\in \partial^\infty \Gamma$, then 
    \[d_{\mathbb P}(S(\rho(\gamma)), \xi^*(y))\le de^{-c|\gamma|}.\]
\end{lemma}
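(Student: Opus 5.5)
The plan is to prove the statement (Lemma 4.7 of \cite{BPS19}) directly from the singular value gap of Definition \ref{def: Anosov rep}, combined with a Cauchy-sequence argument along quasi-geodesic rays. The basic linear-algebra input is the following estimate, for $A,B\in\GL(d,\R)$ with $\sigma_1(A)>\sigma_2(A)$ and $\sigma_1(AB)>\sigma_2(AB)$:
\[
d_{\mathbb P}\bigl(U(AB),U(A)\bigr)\le \|B\|\,\|B\inverse\|\, e^{\sigma_2(A)-\sigma_1(A)}.
\]
We obtain it from the singular value decomposition $A=k_2\diag(e^{\sigma_i(A)})k_1$. The image of the unit sphere under $AB$ is sandwiched between the images of spheres of radii $\|B\inverse\|^{-1}$ and $\|B\|$ under $A$. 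Hence the major axis of $AB$ can only deviate from $U(A)$ by an angle controlled by the ratio of the second and first axes of $A$, rescaled by the distortion of $B$.

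The argument then proceeds in three steps.

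\begin{enumerate}
\item Let $\gamma$ lie on an $(L,C)$-quasi-geodesic ray $(g_k)$ from $id$ to $x$. Consecutive points satisfy $g_{k+1}=g_k s_k$ with $|s_k|$ bounded in terms of $(L,C)$. Therefore $\|\rho(s_k)\|\,\|\rho(s_k)\inverse\|$ is bounded by a constant $K$ depending only on $\rho$ and $(L,C)$.
\item Applying the estimate with $A=\rho(g_k)$ and the Anosov gap $\sigma_1-\sigma_2\ge a|g_k|-b\ge a'k-b'$ gives
\[
d_{\mathbb P}\bigl(U(\rho(g_{k+1})),U(\rho(g_k))\bigr)\le K e^{b'-a'k}.
\]
This sequence is summable, so $U(\rho(g_k))$ is Cauchy, and its limit $\xi'(x)$ satisfies
\[
d_{\mathbb P}\bigl(U(\rho(g_k)),\xi'(x)\bigr)\le K' e^{-a'k}\le d\,e^{-c|\gamma|}.
\]
In the last inequality we use $|g_k|\le Lk+C$.
\item We must identify the limit $\xi'(x)$ with the boundary map $\xi$ of Theorem \ref{thm: anosov limit curve}. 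For this we first show independence of the chosen ray. Two $(L,C)$-quasi-geodesic rays to the same point are at uniformly bounded Hausdorff distance by Morse stability in the hyperbolic group $\Gamma$. So we can compare $U(\rho(g_k))$ and $U(\rho(g'_{k'}))$ with $g'_{k'}=g_k h$, $|h|$ bounded, using the same estimate, and their limits agree. Equivariance of $\xi'$ is immediate from $U(\rho(\eta)A)=\rho(\eta)U(A)$ up to a bounded error, by the same estimate applied on the left. Next, for an infinite order element $\gamma_0$, the powers $\gamma_0^n$ form a quasi-geodesic to $\gamma_0^+$, and $U(\rho(\gamma_0)^n)$ converges to the attracting eigenline. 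Thus $\xi'$ and $\xi$ agree on the dense set of attracting fixed points. Finally, $\xi'$ is continuous, since rays with long common initial segments give exponentially close limits, so $\xi'=\xi$.
\end{enumerate}

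The statement for $S$ and $\xi^*$ follows by applying the above to $\gamma\inverse$ and the dual (or transpose-inverse) representation. The reason is that $S(A)$ is the orthogonal complement of $U(A\inverse{}^t)$, or equivalently of $U$ of the adjoint action of $A\inverse$ on $V^*$. Likewise, $\xi^*$ is the boundary map of the dual representation, which is again $1$-Anosov because $\sigma_1-\sigma_2$ of $(A\inverse)^t$ equals $\sigma_{d-1}-\sigma_d$ of $A$. For this last point one should really use the Anosov gap for the dual, which holds since $1$-Anosov is equivalent to $(d-1)$-Anosov.

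The step I expect to be most delicate is the identification in step 3, namely independence of the quasi-geodesic and matching with the $\xi$ of Theorem \ref{thm: anosov limit curve}. The perturbation estimate must be applied with bounded multiplications on both sides, and one needs uniformity of the constants in $(L,C)$ only. The rest is routine summation.
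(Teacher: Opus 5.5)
Your proposal is correct and is essentially the argument from \cite{BPS19}, which the paper quotes rather than reproves. That argument runs as follows: the bound $d_{\mathbb P}(U(AB),U(A))\le\|B\|\,\|B\inverse\|\,e^{\sigma_2(A)-\sigma_1(A)}$ is summed along the quasi-geodesic; the limit is identified with $\xi$ using Morse stability, equivariance and density of attracting fixed points; and the hyperplane statement follows by passing to $\gamma\inverse$ and the dual representation.

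There are two slips to fix.
\begin{enumerate}
\item $S(A)$ is the orthogonal complement of $U(A^t)$, which is the major axis of the dual action $\alpha\mapsto\alpha\circ A$ of $A\inverse$. It is not the complement of $U\bigl((A\inverse)^t\bigr)$: for $A=\diag(e^s,e^{-s})$ that complement is $\langle e_1\rangle\neq S(A)$.
\item In the equivariance step, the error from multiplying on the left by $\rho(\eta)$ is at most $\|\rho(\eta)\|\,\|\rho(\eta)\inverse\|\,e^{(\sigma_2-\sigma_1)(\rho(g_k))}$. You need this to tend to $0$, which it does; calling it merely bounded is not enough.
\end{enumerate}
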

One very weak but useful consequence of Lemma \ref{lem: uniform convergence to boundary map} is that $\im \xi$ is well approximated by the family $\{U(\rho(\gamma)): \gamma \in \Gamma\}$ in the following sense. 
For every $\epsilon>0$, there is an $N$ such that if $|\gamma|>N$, then 
\begin{equation}\label{eqn: uniform convergence of xi}
    d_{\mathbb P}(U(\rho(\gamma)), \im \xi)
    <\epsilon
    \text{ and } 
    d_{\mathbb P }(S(\rho(\gamma)), \im \xi^*)
    <\epsilon.  
\end{equation}

\medskip

The following states that $\rho(\gamma)$ is, up to scale, very close to a projection.  It is quite easily deduced from the definitions and the fact that moduli of the eigenvalues and singular values are comparable for elements $\gamma$ whose ideal endpoints vary in compact sets of $\partial^\infty\Gamma\times \partial^\infty\Gamma\setminus \Delta$.  Compare with, e.g.,  \cite[\S2.1]{BCLS}.

\begin{lemma}\label{lem: rank one projectors and Anosov}
    Let $\rho: \Gamma \to \GL(d,\R)$ be $1$-Anosov and suppose $U, V \subset \partial^\infty\Gamma$ are such that $\overline U \cap \overline V = \emptyset$. There are $c,d >0$ such that the following holds.  For every infinite order $\gamma \in \Gamma$ with $\gamma_+ \in U$ and $\gamma_- \in V$, consider the rank one projection $P_\gamma$ to $\xi(\gamma_+)$ parallel to $\xi^*(\gamma_-)$.  Then 
    \[ \left\|\frac{1}{e^{\lambda_1^\rho(\gamma)}}\rho(\gamma) -P_\gamma\right\|\le de^{-c|\gamma|}.\]
\end{lemma}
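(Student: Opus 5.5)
We prove the estimate by factoring $\rho(\gamma)$ through its singular value decomposition and comparing with the rank one projection $P_\gamma$, using Lemma \ref{lem: uniform convergence to boundary map} and the comparability of eigenvalues and singular values.

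\textbf{Step 1: reduction to singular values.} Write $\rho(\gamma) = k_2 \diag(e^{\sigma_1},\dots,e^{\sigma_d}) k_1$. Then
\[e^{-\sigma_1}\rho(\gamma) = \Pi_\gamma + E_\gamma,\]
where $\Pi_\gamma$ is the rank one map with image $U(\rho(\gamma))$, kernel $S(\rho(\gamma))$, and norm one. The error satisfies $\|E_\gamma\|\le e^{\sigma_2-\sigma_1}$, which is at most $e^{-a|\gamma|+b}$ by Definition \ref{def: Anosov rep}.

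\textbf{Step 2: geometric control.} Since $\gamma_\pm$ lie in the separated sets $U$ and $V$, the axis of $\gamma$ passes within a uniformly bounded distance of the identity. Hence $\gamma$ lies on an $(L,C)$-quasi-geodesic ray towards $\gamma_+$, and $\gamma^{-1}$ on one towards $\gamma_-$, with $L$ and $C$ depending only on $U$ and $V$. (One can take the powers $\gamma^n$ followed by a limit, or use that $\gamma$ lies near its own axis.) Lemma \ref{lem: uniform convergence to boundary map} then gives
\[d_{\mathbb P}(U(\rho(\gamma)),\xi(\gamma_+))\le de^{-c|\gamma|},\qquad d_{\mathbb P}(S(\rho(\gamma)),\xi^*(\gamma_-))\le de^{-c|\gamma|}.\]
By transversality and compactness of $\overline U\times\overline V$, the angle between $\xi(\gamma_+)$ and $\ker\xi^*(\gamma_-)$ is bounded below uniformly. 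Consequently the norm one rank one map $\Pi_\gamma$ and the projection $P_\gamma$ differ by a unit-scale factor $s_\gamma$:
\[\|\Pi_\gamma - s_\gamma P_\gamma\|\lesssim e^{-c|\gamma|},\]
where $|s_\gamma|$ is bounded above and below uniformly, because a rank one map is Lipschitz in its image line, its kernel, and its scale, as long as the two remain transverse.

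\textbf{Step 3: fixing the scalar.} The main obstacle is to replace the normalization $e^{\sigma_1}s_\gamma$ by exactly $e^{\lambda_1}$. Set $B = e^{-\sigma_1}\rho(\gamma) = s_\gamma P_\gamma + O(e^{-c|\gamma|})$. Its top eigenvalue is $e^{\lambda_1-\sigma_1}$ times a sign. Since $P_\gamma$ is a projection with a simple eigenvalue $1$ and the remaining eigenvalues $0$, perturbation theory for a simple eigenvalue (with uniform spectral gap, since $|s_\gamma|$ is bounded below) gives
\[e^{\lambda_1-\sigma_1} = |s_\gamma| + O(e^{-c|\gamma|}).\]
The sign agrees as well: $\lambda_1$ is realized by the eigenvector $\xi(\gamma_+)$ with a real eigenvalue, and if needed one passes to $\rho(\gamma)^2$ or uses that the attracting eigenvalue is positive for $\gamma$ dividing $\Omega$. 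Dividing through yields
\[\|e^{-\lambda_1}\rho(\gamma) - P_\gamma\| = \|B/(e^{\lambda_1-\sigma_1}) - P_\gamma\|\lesssim e^{-c|\gamma|},\]
using once more the uniform lower bound on $|s_\gamma|$. This completes the estimate, after absorbing the finitely many short $\gamma$ into the constant $d$.
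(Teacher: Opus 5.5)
Your argument is correct and follows the route the paper indicates. The paper only asserts that the estimate follows from the definitions plus the comparability of $\lambda_1^\rho(\gamma)$ and $\sigma_1^\rho(\gamma)$ for $\gamma$ with endpoints in a compact subset of $\partial^\infty\Gamma\times\partial^\infty\Gamma\setminus\Delta$. Your Steps 1--3 supply exactly those details: the singular value decomposition, Lemma \ref{lem: uniform convergence to boundary map} applied to $U(\rho(\gamma))$ and $S(\rho(\gamma))$, and simple-eigenvalue perturbation. The last step proves that comparability in the sharp form $e^{\lambda_1-\sigma_1}=|s_\gamma|+O(e^{-c|\gamma|})$.

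The one thing to fix is the sign in Step 3. What you have actually shown is $e^{-\lambda_1^\rho(\gamma)}\rho(\gamma)=\operatorname{sign}(s_\gamma)\,P_\gamma+O(e^{-c|\gamma|})$, where $\operatorname{sign}(s_\gamma)$ is the sign of the top eigenvalue of $\rho(\gamma)$. Passing to $\rho(\gamma)^2$ only gives the estimate for $\gamma^2$, not for $\gamma$.

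In the stated generality the lemma is in fact false. Multiplying $\rho$ by a nontrivial homomorphism $\Gamma\to\{\pm1\}$ keeps it $1$-Anosov with the same $\xi$, $\xi^*$, $\lambda_1$ and $P_\gamma$. It replaces $e^{-\lambda_1^\rho(\gamma)}\rho(\gamma)$ by its negative on elements where the character is $-1$, so the estimate cannot hold for both representations.

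You therefore must use your other remedy. In the paper, $\rho$ preserves a properly convex cone over $\Omega$, and so does $\rho\chi$ since $\chi>0$. This forces the attracting eigenvalue to be positive, and it covers every place the lemma is applied.
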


\subsubsection{Strong convex cocompactness}\label{subsub: strong cc}

Following Danciger--Gu\'eritaud--Kassel \cite{DGK}, we have:

\begin{definition}\label{def: convex cocompact}
    Say that a faithful representation $\rho: \Gamma \to \GL(d+1,\R)$ with infinite discrete image is \textbf{strongly convex cocompact} if there is a $\rho$-invariant properly convex open set $\mathcal D \subset \RP^d$ whose boundary $\partial \mathcal D$ is of class $C^1$ and strictly convex and a point $x \in \mathcal D$ such that the action of $\rho$ on the convex hull of $\overline{\rho(\Gamma).x}\cap \mathcal D$ is cocompact. 
\end{definition} 

They prove, among other things, that strong convex cocompactness is an open condition on $\Hom(\Gamma, \GL(d+1,\R)$ and  provide the following elegant characterization when $\Gamma$ is Gromov hyperbolic.

\begin{theorem}[{\cite[Theorem 1.4]{DGK}}]
    Suppose $\rho : \Gamma \to \GL(d+1,\R)$ is an injective representation of a non-elementary hyperbolic group with discrete image that preserves a properly convex open subset of $\RP^d$.  Then $\rho$ is strongly convex cocompact if and only if $\rho$ is projective Anosov.
\end{theorem}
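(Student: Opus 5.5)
We will prove the two implications separately. For both we rely on the characterization of projective Anosov representations due to Guéritaud--Guichard--Kassel--Wienhard \cite{GGKW17}. It says that $\rho$ is projective Anosov if and only if two conditions hold:
\begin{itemize}
\item there are continuous, $\rho$-equivariant, transverse, dynamics-preserving boundary maps $\xi,\xi^*$, as in Theorem \ref{thm: anosov limit curve};
\item $\sigma_1^\rho(\gamma)-\sigma_2^\rho(\gamma)\to\infty$ as $\gamma\to\infty$ in $\Gamma$.
\end{itemize}

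\textbf{Strongly convex cocompact implies projective Anosov.} Let $\mathcal D$ be as in Definition \ref{def: convex cocompact}. Let $C$ be the convex hull of the orbital limit set $\Lambda=\overline{\rho(\Gamma).x}\cap\partial\mathcal D$, on which $\rho$ acts cocompactly.

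First, the orbit map into $(C,d_{\mathcal D})$ is a quasi-isometry by Švarc--Milnor. The Hilbert metric on $C$ is Gromov hyperbolic because $\partial\mathcal D$ is strictly convex and $C^1$. So the orbit map extends to an equivariant homeomorphism $\xi:\partial^\infty\Gamma\to\Lambda\subset\partial\mathcal D$.

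Next, set $\xi^*(x)$ to be the unique supporting hyperplane to $\mathcal D$ at $\xi(x)$; it is unique since $\partial\mathcal D$ is $C^1$. The $C^1$ property also gives continuity of $\xi^*$, and compatibility $\xi(x)\in\ker\xi^*(x)$ is immediate. Transversality holds because, by strict convexity, a supporting hyperplane at $\xi(y)$ meets $\overline{\mathcal D}$ only at $\xi(y)$. Dynamics preservation follows from north--south dynamics of infinite-order elements on $\Lambda$.

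The key step, which I expect to be the main obstacle in this direction, is the singular value gap. Suppose for contradiction that $\sigma_1-\sigma_2$ stays bounded along a sequence $\gamma_n\to\infty$. After passing to a subsequence, $\rho(\gamma_n)/\|\rho(\gamma_n)\|$ converges to an endomorphism $T$ of rank at least $2$. Points of the open set $\mathcal D$ avoid $[\ker T]$ generically. So the orbits $\rho(\gamma_n).p$ converge to $[T p]$, and these limits fill an open subset of a projective subspace of dimension at least $1$. Since $\gamma_n\to\infty$ and the action on $\mathcal D$ is properly discontinuous, these limits lie in $\partial\mathcal D$. Hence $\partial\mathcal D$ contains a nontrivial segment, contradicting strict convexity. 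With this gap in hand, the cited characterization gives that $\rho$ is projective Anosov.

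\textbf{Projective Anosov implies strongly convex cocompact.} Fix an invariant properly convex open set $\Omega_0$. Every proximal attracting point lies in $\partial\Omega_0$, so $\xi(\partial^\infty\Gamma)\subset\partial\Omega_0$.

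Define $\Omega_{\max}$ to be the connected component of $\RP^d\setminus\bigcup_{x}[\ker\xi^*(x)]$ containing $\Omega_0$. By compatibility and transversality, $\Omega_{\max}$ is an invariant properly convex open set. Let $C$ be the convex hull of $\xi(\partial^\infty\Gamma)$ in $\overline{\Omega_{\max}}$.

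Transversality implies $\overline C\cap\partial\Omega_{\max}=\xi(\partial^\infty\Gamma)$. Indeed, a segment between distinct limit points cannot lie in a hyperplane $\ker\xi^*(z)$ unless both endpoints do, which transversality forbids. Then cocompactness of the action on $C$ follows by a compactness argument using proper discontinuity on $\Omega_{\max}$ and cocompactness on triples of $\partial^\infty\Gamma$.

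Finally, we produce $\mathcal D$. Take a uniform $d_{\Omega_{\max}}$-neighborhood $C_\epsilon$ of $C$ and then smooth it equivariantly, for instance by averaging convex defining functions against a partition of unity on the compact quotient. The delicate point in this direction is making the boundary $C^1$ and strictly convex at the ideal points $\xi(\partial^\infty\Gamma)$. There I would use Lemma \ref{lem: rank one projectors and Anosov} to show that the neighborhood pinches to the single tangent hyperplane $\xi^*(x)$ near $\xi(x)$. The resulting $\mathcal D$, together with any point $x\in C$, witnesses Definition \ref{def: convex cocompact}.
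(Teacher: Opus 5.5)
The paper gives no proof of this statement; it only quotes it from \cite{DGK}. So I am judging your argument on its own terms.

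Your forward direction works. The rank-$\ge 2$ limit argument is a correct way to get $\sigma_1^\rho-\sigma_2^\rho\to\infty$ from strict convexity of $\partial\mathcal D$, and then \cite{GGKW17} applies. Two small points need fixing there.
\begin{enumerate}
\item Strict convexity plus $C^1$ does \emph{not} make a Hilbert metric Gromov hyperbolic in general. You do not need that claim: $\Gamma$ is hyperbolic by hypothesis, and \v{S}varc--Milnor makes $(C,d_{\mathcal D})$ quasi-isometric to $\Gamma$.
\item Dynamics preservation requires $\rho(\gamma)$ to be proximal on $\RP^d$. North--south dynamics on $\Lambda$ alone does not give this.
\end{enumerate}

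\textbf{The genuine gap is in the converse.} It sits at the sentence ``by compatibility and transversality, $\Omega_{\max}$ is an invariant properly convex open set.'' Here $\Omega_{\max}$ is the open dual of the cone generated by lifts of the covectors $\xi^*(x)$. So it is properly convex only if these covectors span $(\R^{d+1})^*$, and nothing forces that when $\rho$ is reducible.

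The representations of this paper are counterexamples. Take $\rho\oplus1$ with $\rho$ dividing $\Omega$.
\begin{itemize}
\item It meets every hypothesis. It is projective Anosov by Corollary~\ref{cor: failure of anosov} with $\varphi=0$. It preserves the properly convex open set $\{p\in\sC\Omega: H(p)>0\}$, the open cone over $\Omega$ with apex $[e_{d+2}]$.
\item Let $\beta$ be the attracting covector of $g=(\rho\oplus1)(\gamma)$. Then $\beta(e_{d+2})=\beta(ge_{d+2})=\mu_{\min}\beta(e_{d+2})$, where $\mu_{\min}=e^{\lambda^\rho_{d+1}(\gamma)}\neq 1$. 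So $\beta(e_{d+2})=0$.
\item Hence every $\ker\xi^*(x)$ contains $[e_{d+2}]$, and $\Omega_{\max}=\sC\Omega$. This set is convex but not properly convex.
\end{itemize}
The same holds for every projective Anosov coaffine $\eta$ with linear part $\rho\chi$.

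Everything after this step uses $d_{\Omega_{\max}}$, so it breaks. The Hilbert distance of $\sC\Omega$ vanishes along vertical lines, and $C$ meets every vertical line. So $C_\epsilon=\sC\Omega$, and no properly convex $\mathcal D$ can come out of the smoothing step. You cannot fall back on the Hilbert metric of your $\Omega_0$ either: here $C$ is the copy of $\overline\Omega$ in $[\ker e^{d+2}]$, which is a face of $\partial\Omega_0$.

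What is missing is a $\Gamma$-invariant properly convex open set $\Omega'$ with $C\setminus\Lambda\subset\Omega'$. One way to build it:
\begin{itemize}
\item Let $\Omega'$ be the convex hull of $\Omega_0\cup\rho(\Gamma)B$, where $B$ is a small neighborhood of a compact fundamental domain for $C\setminus\Lambda$.
\item To check proper convexity, pick $[\alpha]$ in the dual domain $\Omega_0^*$. Its orbit accumulates only on $\xi^*(\partial^\infty\Gamma)$. Every point of the closure of that orbit is positive on $C\setminus\Lambda\subset\Omega_{\max}$.
\item By Anosov contraction, $\overline{\rho(\Gamma)B}\subset\rho(\Gamma)\overline B\cup\Lambda$. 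So once $B$ is small, $[\ker\alpha]$ avoids $\overline{\rho(\Gamma)B}\cup\overline{\Omega_0}$.
\end{itemize}
Only after this can you take uniform neighborhoods and smooth, using $d_{\Omega'}$ in place of $d_{\Omega_{\max}}$.

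When $\rho$ is irreducible, the $\xi^*(x)$ do span the dual space, and your step is fine as written.
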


Although it follows from this theorem, it can be deduced from work of Benoist \cite{Benoist_CDI} that the natural inclusion of a group dividing a properly convex domain with $C^1$ boundary in $\RP^d$ is projective  Anosov.  
Similarly,  if $\rho: \Gamma \to \GL(d+1,\R)$ is $1$-Anosov, then every element $\gamma \in \Gamma$ of infinite order has that the action of $\rho(\gamma)$ on $\RP^d$ is proximal.

\subsection{Geodesic laminations and currents}\label{subsec: geodesic laminations}
\subsubsection{Basic definitions}
Let $M$ be a closed $d$-manifold equipped with a sufficiently regular Finsler metric so that every unit tangent vector $v\in T^1M$ is tangent to a unique complete geodesic in $M$.  Then the geodesic flow $g_t : T^1M \to T^1M$ is defined.\footnote{The \emph{domain shape} metric of \S\ref{sec: domain shape} is such a Finsler metric.}  

\begin{definition}[Geodesic lamination]
    A \textbf{geodesic lamination} $\lambda \subset M$ is a closed set foliated by complete geodesics called its \textbf{leaves}. A sublamination $\lambda'\subset \lambda$ is a closed union of leaves that inherits its foliation structure from $\lambda$. 
\end{definition}

Geodesic laminations play an important role in the theory of surfaces, where they enjoy a good structure theory, but the definition makes sense in any dimension $d\ge 2$.

\begin{remark}
    In dimension $2$, the condition that two complete geodesics do not intersect is quite restrictive, while in higher dimensions $d \ge 3$, generic geodesics do not intersect.  Thus, we do not expect geodesic laminations in dimension $d\ge 3$ to have a good structure theory.  
\end{remark}

A geodesic lamination $\lambda$ is \textbf{connected} if it is connected as a subspace of $M$, and $\lambda$ is called \textbf{orientable} if its leaves admit a continuous global orientation.
If $\lambda$ is connected, another way to say this is that the set $T^1\lambda \subset T^1M$ of unit vectors tangent to the leaves of $\lambda$ has two connected components if and only if $\lambda$ is orientable.  In this case, a choice of orientation determines a canonical map $\lambda \to T^1M$ which is a homeomorphism onto its image of points $T^1_+\lambda$ that are forward tangent to leaves of $\lambda$.  Note that the geodesic flow preserves $T^1\lambda$ and $T^1_+\lambda$ if $\lambda$ is oriented.

\subsubsection{Chain recurrence}
In the context of geodesic laminations, the following definition of chain recurrence was adapted from Conley by Thurston. For this, choose a metric on $T^1M$ that generates the topology.  

We do not use this somewhat technical definition directly in the body of the text, so the uninterested reader can safely skip it, if desired.  

\begin{definition}[Chain recurrence]
    A geodesic lamination is called \textbf{chain recurrent} if for every $\epsilon>0$ and every point $x \in T^1\lambda$, there is a closed $\epsilon$-trajectory of $\lambda$ through $x$, i.e., a closed oriented loop $\gamma$ in $M$ such that for every length one segment in $\gamma$, there is a length one segment of an oriented leaf of $\lambda$ such that the canonical lifts are $\epsilon$-close in $T^1M$.  
\end{definition}

Chain recurrence is a closed condition on geodesic laminations (in any dimension).
Suppose also the geodesic flow $g_t:T^1M\to T^1M$ is Anosov.  In dimension $2$, any chain recurrent geodesic lamination can be approximated by a simple multicurve in the Hausdorff topology on closed sets \cite{Thurston:stretch}. 
A chain recurrent geodesic lamination in any dimension can be approximated by multicurves (perhaps not simple ones) in the Hausdorff topology---this is essentially a consequence of the definition and the Anosov Closing Lemma. 

\subsubsection{The measure topology}
Since $M$ is compact, any closed flow invariant subset supports a flow invariant probability Borel measure, and the space of such measures is compact in the weak-$*$ topology.  For a closed oriented geodesic $\gamma \subset M$, the canonical lift $T^1_+\gamma$ is the support of a unique flow invariant probability measure, $\delta_\gamma$ on $T^1M$.  

Say that a sequence of oriented closed geodesics $\gamma_n$ accumulates to $\lambda$ \emph{in the measure topology} if all weak-$*$ accumulation points of $\{\delta_{\gamma_n}\}$ in the space of probability measures on $T^1M$ have support contained in $T^1\lambda$.  If $\lambda$ is oriented, it also makes sense to ask that all weak-$*$ accumulation points of $\{\delta_{\gamma_n}\}$ have support contained in $T_+^1\lambda$.  
Compare with Bonahon's theory of geodesic currents \cite{Bonahon:ends}.

\section{Basic geometry of coaffine representations}\label{section: geometry of coaffine domain}

We  give the data of a coaffine representation and define and study some basic properties of associated limit sets and convex invariant domains.  

\subsection{Coaffine representations}\label{subsec: coaffine reps}

The coaffine representations that we study in this paper are of the following specific form. 
We need three kinds of data:

\begin{enumerate}
    \item Let $\rho: \Gamma \to \SL(d+1,\R)$ be an injective representation of a torsion-free group dividing a properly and strictly convex domain $\Omega \subset \RP^d = Q(\cas)$.   The quotient $M \defeq \Omega/\rho$ is a manifold.\footnote{this does not further restrict the choice of $\Omega$ following Lemma \ref{lem: lift to SL}.} 

    \item Let $\varphi \in \Hom(\Gamma, \R)$ and $\chi \defeq e^{-\varphi}$.  
    \item Let $Z$ be a \textbf{$\rho\chi$-twisted cocycle}, i.e., a function $Z: \Gamma \to (\R^{d+1})^*$ that satisfies
\[Z(\gamma_1 \gamma_2) 
=
Z(\gamma_1)\rho(\gamma_2)\chi(\gamma_2) + Z(\gamma_2), \forall \gamma_1,\gamma_2 \in\Gamma.\]
We let $Z^1(\Gamma, (\R^{d+1})^*_{\rho\chi})$ denote the vector space of such $\rho\chi$-twisted cocycles.
\end{enumerate}

Notationally, $\rho\chi: \Gamma \to \GL(d+1,\R)$
is defined by $\rho\chi(\gamma) = \rho(\gamma)\chi(\gamma)$, the second factor lying in the center of $\GL(d+1,\R)$.

Finally, given all of this chosen data, let 
\[
\eta = \eta(\rho,\chi,Z):\Gamma \to \cag
\]
be the coaffine representation of $\Gamma$ defined by
\begin{equation}\label{eqn: eta}
    \eta(\gamma) = 
\begin{pmatrix}
    \rho(\gamma)\chi(\gamma) & \textbf{0}\\
    Z(\gamma) & 1
\end{pmatrix}
\end{equation}
the final column being filled with $0$'s with the exception of the diagonal $1$.

\begin{tcolorbox}[
    colback=blue!5,
    colframe=black!50,
    boxrule=0.5pt,
    arc=2pt
]
In short, we choose:
\begin{itemize}
    \item $\rho:\Gamma\to \SL(d+1,\R)$ a representation dividing a properly and strictly convex domain $\Omega$.
    \item $\chi=e^{-\varphi}$ a character of $\Gamma$; $\varphi\in H^1(M,\R)$.
    \item $Z\in Z^1(\Gamma,(\R^{d+1})^*_{\rho\chi})$ a twisted cocycle.
\end{itemize}
Then  $\eta=\eta(\rho,\chi,Z)$ is the representation of $\Gamma$ to $\cag$ as in Equation \ref{eqn: eta}.
\end{tcolorbox}

A cocycle $Z$ is called a \textbf{coboundary} if there exists $\alpha \in (\R^{d+1})^* $ such that $Z(\gamma) = \alpha \circ (\rho\chi(\gamma)-I)$. The space of coboundaries is written
$
B^1(\Gamma, (\R^{d+1})^*_{\rho\chi})
$, and it is a vector subspace of the cocycles; the quotient is the \textbf{twisted cohomology }
\[
H^1(\Gamma, (\R^{d+1})_{\rho\chi}^*) \defeq 
Z^1(\Gamma, (\R^{d+1})^*_{\rho\chi}) / 
B^1(\Gamma, (\R^{d+1})^*_{\rho\chi}))
\]

Conjugation of $\eta$ by a cotranslation affects the representation by adding a coboundary to $Z$.
Thus $\eta$ preserves a hyperplane $\ker(\alpha)$ which does not intersect $[e_{d+2}]$ if and only if $Z$ is a coboundary. See \S 2.2 of \cite{BF} for details.

We will say that $\eta$ not preserving any hyperplane (equivalently, defined with $Z$ not a coboundary) is \textbf{non-semi-simple}.

We remark that our convention for the cocycle property differs slightly from the standard definition. The resulting spaces are isomorphic, and this condition is the one that makes $\eta$, as defined, a representation.

\subsection{The cone $\mathscr C \Omega$}

Under the assumption that $\rho$ preserves a properly convex domain $\Omega \subset \RP^d$, the geometry of $\cas$ becomes consequently richer as a fiber bundle over $\RP^d$.

Let 
\[
\mathscr C \Omega = Q\inverse (\Omega) \subset \cas.
\]

The domain $\mathscr C \Omega \subset \cas$ is the main geometric setting of the paper.
We direct the reader to \cite{BF} for the essential properties of $\mathscr C \Omega$ and their proofs, but list the most salient here
\begin{lemma}\label{lem: cone basic props}
    The domain $\mathscr C \Omega$ enjoys the following properties
    \begin{enumerate}
        \item It is an affine line bundle over $\Omega$ via the projection $Q$; the fibers are \textbf{vertical lines}
        \item It is convex: for all $p,q\in \mathscr C \Omega$ distinct, there exists a unique projective segment $[p,q]\subset \mathscr C\Omega$ between them.
    \end{enumerate}
\end{lemma}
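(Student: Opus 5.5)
The approach is to work in coordinates adapted to the vertical direction. Take the affine chart $\A_\alpha$ of $\RP^{d+1}$ in which $[e_{d+2}]$ lies at infinity, where $\alpha = e_{d+2}^*$. In this chart the vertical lines are the lines parallel to $e_{d+2}$ (right-hand picture of Figure \ref{fig: intro_cones}).

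First I fix a linear functional $\beta \in (\R^{d+1})^*$ that is positive on the cone $C_\Omega \subset \R^{d+1}$ over $\overline\Omega$. Such a $\beta$ exists because $\Omega$ is properly convex. I then extend it to $\R^{d+2}$ by zero on $e_{d+2}$. The hyperplane $[\ker\beta]$ contains $[e_{d+2}]$ but meets no point of $Q\inverse(\overline\Omega)$. So every $p \in \sC\Omega$ has a unique representative $(v,t)$ with $v\in C_\Omega$, $\beta(v)=1$, $t\in\R$. This gives a homeomorphism
\[\sC\Omega \;\cong\; \{v : \beta(v)=1\}\cap C_\Omega \times \R \;\cong\; \Omega\times\R,\]
and $Q$ becomes projection to the first factor. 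Each fiber $Q\inverse(x)$ is $\{(v_x,t): t\in\R\}$, which is exactly the projective line through $[v_x]$ and $[e_{d+2}]$ with $[e_{d+2}]$ removed, hence an affine line. This proves item 1. The transition maps between such trivializations are affine in $t$: changing $\beta$ rescales $v$ and shifts $t$ linearly. So the bundle is an affine line bundle, and it is trivial.

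For convexity (item 2), I work in the affine chart $\{\beta = 1\}$ of $\RP^{d+1}$, which contains all of $\sC\Omega$. In that chart, $\sC\Omega$ is the set $\{(v,t) : v\in \Omega_\beta,\ t\in\R\}$, where $\Omega_\beta$ is the convex affine slice of $\Omega$. This is a product of a convex set with $\R$, hence convex in the chart. For distinct $p,q$, the affine segment between them lies in $\sC\Omega$. It is the unique projective segment contained in $\sC\Omega$, because the complementary arc of the projective line through $p,q$ crosses the hyperplane $\{\beta=0\}$, which is disjoint from $\sC\Omega$.

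The only subtle step is confirming that $\{\beta=0\}$ misses $\sC\Omega$ even though it contains $[e_{d+2}]$. This is immediate since $\beta(v)>0$ for $v \in C_\Omega$. Note also that $\sC\Omega$ is convex but not properly convex, since its closure contains $[e_{d+2}]\in\{\beta=0\}$. The lemma only claims convexity, so this causes no difficulty.
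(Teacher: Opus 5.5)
Your proof is correct and is essentially the argument the paper intends: the paper gives no proof and defers to \cite{BF}, but its picture is yours, namely that in an affine chart with $[e_{d+2}]$ at infinity $\sC\Omega$ is the cylinder $\Omega_\beta\times\R$, which gives both the trivialization and convexity. One slip to fix: the chart $\A_\alpha$ with $\alpha=e_{d+2}^*$ in your first paragraph puts $[e_{d+2}]$ at a finite point and does not even contain $\sC\Omega$, since the points $[v:0]$ with $[v]\in\Omega$ lie on $[\ker\alpha]$. The correct chart is the one cut out by $\beta$ extended by zero on $e_{d+2}$, which is the one you actually use.
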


The proofs of the claimed properties are not difficult.

\subsection{Limit sets and invariant domains in $\mathscr C \Omega$}

Here we explore some of the essential geometry of coaffine representations and the domains on which they act, given in general what we know about projective and coaffine geometry, convex domains, and the definitions.

 Recall that an element $g\in \GL_+(d+2,\R)$ is \textbf{proximal} if its action on $\RP^{d+1}$ has a unique attracting fixed point $\gamma_+$. It is standard that for every non-identity element $\gamma\in \Gamma\le \GL_+(d+2,\R)$, the transformation $\rho(\gamma)$ is proximal when $\rho$ divides a strictly convex projective domain, such as $\Omega$ \cite{Benoist_CDI}.  As usual, we define the \textbf{proximal limit set}
\[\Lambda_{prox}= \overline {\{\eta(\gamma)^+\in\RP^{d+1} \colon \gamma\in \Gamma \textrm{ $\eta$-proximal}\}},\]
where the closure is taken in $\cas$.  $\Lambda_{prox}$ is invariant.

\begin{remark}
    A common definition of the proximal limit set is that it is the minimal closed invariant subset of a representation acting on $\RP^{d+1}$. In this setting, that would be the singleton $[e_{d+2}]$. If one instead considers the action of $\eta$ on $\cas$, then one has the desired property: $\Lambda_{prox}$ is the minimal closed subset under the action of $\eta$.
\end{remark}
In the setting at hand, it is not hard to see that
\[
Q(\eta(\gamma)_+) = \rho(\gamma)_+
\]
for all $\gamma$ which are $\eta$-proximal.
Therefore,
\[\Lambda_{prox}\subset \partial \mathscr C \Omega.\]

We also consider limit sets obtained from orbits of points. For a point $o\in \mathscr C \Omega$, define
\[\Lambda_{orb}^o = \overline{\eta(\Gamma). o } \cap \partial \mathscr C \Omega. \]
In the case of convex cocompact actions of hyperbolic groups (for example, when $\eta$ is Anosov) this set is independent of the point $o$ \cite{DGK}.
\medskip

The containment of the proximal limit set in every orbital limit set is straightforward:

\begin{lemma}\label{lem: proximal contained in orbital}
    For every $o\in \mathscr C\Omega$,
    \[
        \Lambda_{prox}\subset \Lambda_{orb}^o.
    \]
\end{lemma}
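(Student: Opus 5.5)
Since $\Lambda_{orb}^o$ is closed, it suffices to show that for each $\eta$-proximal $\gamma\in\Gamma$ with $\eta(\gamma)^+\in\partial\sC\Omega$, the point $\eta(\gamma)^+$ lies in $\Lambda_{orb}^o$. The natural candidate sequence is $\eta(\gamma^n).o$. Proximality gives a unique attracting line $\eta(\gamma)^+$ and an $\eta(\gamma)$-invariant complementary hyperplane $H_\gamma$ (the sum of the other generalized eigenspaces). For every $[v]\notin H_\gamma$ we have $\eta(\gamma)^n.[v]\to\eta(\gamma)^+$ in $\RP^{d+1}$. So the plan is to verify $o\notin H_\gamma$, and then observe that the limit lies in $\partial\sC\Omega$ (already noted above), so it belongs to $\overline{\eta(\Gamma).o}\cap\partial\sC\Omega$.

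To check $o\notin H_\gamma$, we use the block structure of $\eta(\gamma)$. Since $\lambda_1^\eta(\gamma)=\lambda_1^{\rho\chi}(\gamma)>\lambda_2^\eta(\gamma)=\max\{0,\lambda_2^{\rho\chi}(\gamma)\}$, the top eigenvalue belongs to the linear block $\rho\chi(\gamma)$. The subspace $\langle e_{d+2}\rangle$ is invariant with eigenvalue $1$, which is not the top one, so $e_{d+2}\in H_\gamma$. Projecting along $\hat Q$ intertwines $\eta(\gamma)$ with $\rho\chi(\gamma)$. Hence $\hat Q(H_\gamma)$ is the $\rho\chi(\gamma)$-invariant complement of the top eigenline, i.e. the repelling hyperplane of $\rho(\gamma)$, and $H_\gamma=\hat Q^{-1}(\hat Q(H_\gamma))$. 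Therefore $o\in H_\gamma$ if and only if $Q(o)$ lies in the repelling hyperplane of $\rho(\gamma)$ in $\RP^d$.

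The key input is that this repelling hyperplane is $\ker\xi^*(\gamma_-)$, the supporting hyperplane of $\Omega$ at $\rho(\gamma)^-$, and it is disjoint from the open set $\Omega$ by convexity. Since $Q(o)\in\Omega$, we get $o\notin H_\gamma$, and thus $\eta(\gamma^n).o\to\eta(\gamma)^+$.

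The only genuinely delicate step is this identification of the repelling hyperplane of $\rho(\gamma)$ with a supporting hyperplane of $\Omega$. It follows from Benoist's work, or from the dual statement in Theorem \ref{thm: anosov limit curve} for the projective Anosov representation $\rho$, which supplies the attracting point $\xi^*(\gamma_+)$ for the action on the dual. Applied to $\gamma^{-1}$, it says the dual attracting point of $\rho(\gamma)^{-1}$ is $\xi^*(\gamma_-)$, which is exactly the repelling hyperplane of $\rho(\gamma)$. The scalar $\chi(\gamma)$ does not change eigenspaces. Finally, taking closures over all proximal $\gamma$ yields $\Lambda_{prox}\subset\Lambda_{orb}^o$.
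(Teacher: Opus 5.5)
Your proof is correct and follows the same approach as the paper: iterate the proximal element $\eta(\gamma)$ on $o$, use that its repelling hyperplane misses $\sC\Omega$ to get $\eta(\gamma)^n.o\to\eta(\gamma)^+$, then take closures. The paper simply asserts that disjointness, whereas you justify it by showing that the repelling hyperplane is $\hat Q^{-1}$ of the repelling hyperplane of $\rho(\gamma)$, which is the supporting hyperplane of $\Omega$ at $\rho(\gamma)^-$.
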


\begin{proof}
    Let $\gamma\in\Gamma$ be such that $\eta(\gamma)$ is proximal, and
    let $\eta(\gamma)^+$ denote its attracting fixed point. The repelling hyperplane of $\eta(\gamma)$ is
    disjoint from $\mathscr C\Omega$. Consequently,
    \[
        \eta(\gamma)^n.o\longrightarrow \eta(\gamma)_+
    \]
    for every $o\in\mathscr C\Omega$. Thus every attracting fixed point
    of a proximal element belongs to $\Lambda_{orb}^o$.

    Since $\Lambda_{orb}^o$ is closed, it contains the closure of the
    set of attracting fixed points of proximal elements, namely
    $\Lambda_{prox}$.
\end{proof}

The following is standard in the setting of a group with proximal elements acting on a convex space. 
\begin{lemma}\label{lemma: minimal domain}
    There exists a minimal non-empty relatively closed convex $\eta$-invariant subset $C_{min} \subset \mathscr C \Omega$, and 
    \[C_{min} = \hull \Lambda_{prox}.\]
    Additionally, if $o\in C_{min}$, then 
    \[\hull \Lambda_{orb}^o = C_{min}.\]
\end{lemma}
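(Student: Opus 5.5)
The plan is to show first that $\hull\Lambda_{prox}$ is a non-empty, relatively closed, convex, $\eta$-invariant subset of $\sC\Omega$ (assuming, as the statement implicitly does, that $\eta$ has a proximal element). Then show that every non-empty relatively closed convex $\eta$-invariant $C\subset\sC\Omega$ contains it. Both uniqueness and minimality follow from these two steps.

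For the first step, work in an affine chart of $\RP^{d+1}$ containing $\overline{\sC\Omega}$ minus $[e_{d+2}]$, or use convexity of $\sC\Omega$ (Lemma \ref{lem: cone basic props}). Here $\hull\Lambda_{prox}$ means the convex hull taken inside $\overline{\sC\Omega}\setminus[e_{d+2}]$, intersected with $\sC\Omega$. Invariance is immediate, since $\Lambda_{prox}$ is $\eta$-invariant and $\eta$ acts projectively, so it preserves segments. Non-emptiness needs two distinct proximal fixed points lying over distinct points of $\partial\Omega$. Take proximal $\gamma$ and conjugate it by an element $\beta$ with $\rho(\beta)\rho(\gamma)^+\neq\rho(\gamma)^+$. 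The open segment between the two fixed points then projects under $Q$ to an open segment of $\Omega$, because $\Omega$ is strictly convex. So it lies in $\sC\Omega$.

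For the second step, let $C$ be such a set and fix $o\in C$. By Lemma \ref{lem: proximal contained in orbital}, every attracting point $\eta(\gamma)^+$ is a limit of $\eta(\gamma)^n.o\in C$, so $\Lambda_{prox}\subset\overline C$, where the closure is taken in $\cas$. Since $C$ is convex and relatively closed in $\sC\Omega$, the set $\overline C\cap(\sC\Omega\cup\partial^\infty\sC\Omega)$ is convex. Hence the hull of $\Lambda_{prox}$, intersected with $\sC\Omega$, is contained in $\overline C\cap\sC\Omega=C$. This gives $C_{min}=\hull\Lambda_{prox}$, and minimality. Relative closedness of the hull needs a small argument, since hulls of closed sets need not be closed. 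If necessary I would define $C_{min}$ as the relative closure of the hull; the same argument shows that it is still contained in every such $C$.

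For the last claim, take $o\in C_{min}$. Lemma \ref{lem: proximal contained in orbital} gives $\Lambda_{prox}\subset\Lambda^o_{orb}$, hence $C_{min}\subset\hull\Lambda^o_{orb}$. Conversely, $\eta(\Gamma).o\subset C_{min}$, so $\Lambda^o_{orb}\subset\overline{C_{min}}\cap\partial^\infty\sC\Omega$. By convexity, $\hull\Lambda^o_{orb}\cap\sC\Omega\subset C_{min}$.

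I expect the main obstacle to be the boundary bookkeeping. $\overline{\sC\Omega}$ in $\RP^{d+1}$ contains $[e_{d+2}]$ and is not properly convex, so "closure" and "hull" must be taken in $\cas$ with care, and one must check that the hull of boundary points meets $\sC\Omega$ rather than lying inside $\partial^\infty\sC\Omega$. I would handle this by projecting with $Q$: a segment between two points of $\partial^\infty\sC\Omega$ over distinct base points projects into $\Omega$ by strict convexity, while vertical segments stay in the boundary and add nothing to the interior.
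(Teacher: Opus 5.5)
Your proposal is correct and follows the paper's proof. Any relatively closed convex invariant $C$ contains an orbit whose accumulation set contains $\Lambda_{prox}$, so $C$ contains the hull; and for $o\in C_{min}$ the orbit stays in $C_{min}$, which gives the second claim. You also check points the paper leaves implicit: non-emptiness via two proximal fixed points over distinct points of $\partial\Omega$, the choice of affine chart, and the relative-closure fallback. These refine the same argument rather than change its route.
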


\begin{proof}
    If $C$ is a relatively closed  non-empty convex  $\Gamma$-invariant subset of $\mathscr C \Omega$, it contains a point $p\in \mathscr C \Omega$. The orbit of this point is both contained in $C$ by invariance, and accumulates to $\Lambda_{prox}$ by Lemma \ref{lem: proximal contained in orbital}. Since $C$ is closed and convex, it must therefore contain $C_{min}$.

    For the second statement, note that the orbit $\eta(\Gamma).o\subset C_{min}$ since $o\in C_{min}$ and $C_{min}$ is $\eta(\Gamma)$-invariant. The claim follows.
\end{proof}

The conclusion of the following lemma is that the minimal domain $C_{min}$ (and as a consequence any invariant convex domain) is a fibration over $\Omega$ with fibers that are intervals.

\begin{lemma}\label{lemma: C_min is a nontrivial fibration}
    Every vertical line in $\mathscr C \Omega$ intersects $C_{min}$ nontrivially. For any $o\in \Omega$, the set $Q\inverse o \cap C_{min}$ is a closed convex subset of the line $Q\inverse o$ (i.e.\ an interval, ray, or the entire line).
\end{lemma}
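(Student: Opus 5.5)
The second assertion is immediate, so I will dispose of it first. $C_{min}$ is convex and relatively closed in $\sC\Omega$, and $Q\inverse(o)$ is a vertical affine line contained in $\sC\Omega$. Hence $Q\inverse(o)\cap C_{min}$ is a convex subset of an affine line that is closed in that line. It is therefore a point, a closed interval, a closed ray, the whole line, or empty. The substance of the lemma is that it is never empty.

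For non-emptiness I plan to show that $Q(C_{min})=\Omega$. Pick any point $p\in C_{min}$, which is non-empty by Lemma \ref{lemma: minimal domain}. Its image $Q(p)\in\Omega$ has the orbit $\rho\chi(\Gamma).Q(p)=\rho(\Gamma).Q(p)$, since $Q\circ\eta(\gamma)=L(\eta(\gamma))\circ Q$ and scalars act trivially on $\RP^d$. The set $Q(C_{min})$ has the following properties:
\begin{itemize}
\item it is convex in $\Omega$, because $Q$ maps projective segments in $\sC\Omega$ not contained in a vertical line onto projective segments, and vertical ones onto points;
\item it is $\rho(\Gamma)$-invariant;
\item it contains $\rho(\Gamma).Q(p)$.
\end{itemize}

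The key step is to show that the convex hull of any $\rho(\Gamma)$-orbit in $\Omega$ is all of $\Omega$. Since $\Gamma$ divides the strictly convex domain $\Omega$, the orbit closure in $\RP^d$ accumulates on all of $\partial^\infty\Omega$, because the limit set of a dividing group is the full boundary. Any point $x\in\Omega$ lies in the interior of a small simplex whose vertices lie near points of $\partial\Omega$. Pick the vertices in $\partial\Omega$ so that $x$ is interior to their hull; this is possible because $\Omega$ is open and the hull of $\partial\Omega$ is $\overline\Omega$. Then approximate each vertex by an orbit point. For close enough approximations $x$ remains in the hull, by continuity of barycentric coordinates. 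So $x\in Q(C_{min})$.

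Finally, I will show that $Q(C_{min})\ni x$ means exactly that the vertical line $Q\inverse(x)$ meets $C_{min}$, which gives the claim. The one point needing care is that the segments used in the hull lie in $\sC\Omega$ and stay in $C_{min}$. The approximating points of $Q(C_{min})$ are images of points of $C_{min}$, and convex combinations of those lifts lie in $C_{min}$ by convexity, since finite convex combinations lie in $\hull$. The main obstacle I anticipate is therefore only the density and convex-hull step, which is standard. An alternative route goes through $\Lambda_{prox}$: $Q(\Lambda_{prox})\supseteq\{\rho(\gamma)^+:\gamma\ \eta\text{-proximal}\}$, and this set is dense in $\partial\Omega$, since non-proximal elements are confined to a lamination. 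I prefer the orbit argument because it avoids any proximality input.
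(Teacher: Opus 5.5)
Your proof is correct, but it takes a different route from the paper's.

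\textbf{The paper's route.} The paper argues by contradiction. If a vertical line $L$ misses $C_{min}$, a Hahn--Banach separation gives a hyperplane containing $L$ with $C_{min}$ on one side. This hyperplane contains $[e_{d+2}]$, so it projects to a hyperplane through $Q(L)\in\Omega$. That produces a non-empty open subset of $\partial\Omega$ over which no point of $\Lambda_{prox}\subset\overline{C_{min}}$ lies. Invariance of $\Lambda_{prox}$ and minimality of the $\rho(\Gamma)$-action on $\partial\Omega$ then force $\Lambda_{prox}=\emptyset$, which contradicts the existence of proximal elements.

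\textbf{Your route.} You argue directly. $Q(C_{min})$ is a non-empty convex $\rho(\Gamma)$-invariant subset of $\Omega$, so it contains the convex hull of an orbit. That hull is all of $\Omega$, because orbits of a dividing group accumulate on all of $\partial\Omega$. This is the hull-side dual of the paper's supporting-half-space argument, with orbit accumulation replacing ``minimality plus $\Lambda_{prox}\neq\emptyset$.''

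\textbf{What each approach buys.}
\begin{itemize}
\item Your argument never uses that $C_{min}$ is closed or equal to $\hull\Lambda_{prox}$. So it shows directly that \emph{every} non-empty convex $\eta$-invariant subset of $\sC\Omega$ meets every vertical line; the paper gets this only for closed sets, via minimality of $C_{min}$.
\item Your argument avoids the separation step. That step needs some care here, because $C_{min}$ need not be properly convex and the projective closure of a vertical line passes through $[e_{d+2}]$.
\item Proximality is not entirely absent from your argument: it enters through the non-emptiness of $C_{min}$, which you quote.
\item The paper's supporting-hyperplane setup is reused immediately in Proposition \ref{prop: topology of Cmin} to rule out singleton fibers.
\end{itemize}

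\textbf{Two points to tidy.}
\begin{itemize}
\item The simplex step should read as follows. Choose a simplex $S\subset\Omega$ with $x$ in its interior, and write each vertex of $S$ as a convex combination of points of $\partial\Omega$ (Carath\'eodory). Perturbing those boundary points to nearby orbit points, with the same weights, moves the vertices of $S$ only slightly, so $x$ stays interior. ``Continuity of barycentric coordinates'' only makes sense relative to a fixed non-degenerate simplex.
\item Work in the affine chart with $[e_{d+2}]$ at infinity. There $\sC\Omega$ is a cylinder and $Q$ is an affine projection, so convex combinations of lifts in $C_{min}$ project to convex combinations with the same weights.
\end{itemize}
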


\begin{proof}
   We recall a consequence of the (finite-dimensional) Hahn-Banach theorem, which is that if $P\subset \mathbb \RP^{d+1}$ is a projective subspace and $C$ is a closed properly convex domain satisfying $P\cap C = \emptyset$, then there exists a half-space $H$ such that $C\subset H$ and $P\subset \partial H$. 

    To prove the lemma, we suppose that some vertical line $L$ is disjoint from $C_{min}$ and apply Hahn-Banach to obtain a hyperplane $\partial H$ containing $L$ and bounding $C_{min}$ on one side. Because $C_{min}$ is the hull of the proximal limit set, we have an open subset of $\partial \Omega$ with the property that the $Q$-fibers over these points are disjoint from the proximal limit set. Since the proximal limit set is $\eta(\Gamma)$-invariant and the action of $\Gamma$ on $\partial\Omega$ is minimal, we have that $\Lambda_{prox}$ is empty in this case. This is a contradiction-- a representation of the form under consideration has abundant proximal elements, see for example, Benoist \cite{Benoist_CDI}.

    This shows that every vertical line intersects $C_{min}$ (and hence any non-empty closed invariant convex set) non-trivially. The result follows.
\end{proof}

\subsection{Structure of canonical invariant domains}

It is not immediately clear what the boundary of the minimal domain looks like when the representation is not Anosov. Recall from \cite{BF} that if $\Gamma$ is a surface group and $\eta$ is Anosov and not semi-simple, then $C_{min}$ fibers over $\Omega$ with fiber a closed interval; its two boundary components are disks equipped with piecewise-projective homeomorphisms to $\Omega$ and carry affine measured laminations. In the case studied in the previous paper, since $\eta$ is projective Anosov, the proximal limit set is a topological circle in $\partial \mathscr C \Omega$.
\medskip

We begin by giving the following structural result for $C_{min}$ which requires no special technology.

\begin{lemma}\label{lem: rays or lines}
    Let $\partial^\infty C_{min}=\partial C_{min}\cap \partial \mathscr C \Omega$. The non-empty pre-images of the map $Q\vert_{\partial^\infty C_{min}}$ are all simultaneously (for a given representation) exactly one of:
    \begin{itemize}
        \item closed positive rays,
        \item closed negative rays,
        \item vertical lines (the entire pre-image of the projection), or
        \item closed segments 
    \end{itemize}
\end{lemma}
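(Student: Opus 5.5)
The plan is to study the fibers $F_x \defeq Q\inverse(x)\cap \partial^\infty C_{min}$ for $x\in\partial^\infty\Omega$. First I would show that each non-empty $F_x$ is a closed convex subset of the vertical projective line $\overline{Q\inverse(x)}$, taken inside the affine line $Q\inverse(x)\setminus [e_{d+2}]$. Closedness comes from $C_{min}$ being relatively closed in $\sC\Omega$, so its closure meets $\partial^\infty\sC\Omega$ in a closed set. Convexity holds because $\overline{C_{min}}$ is convex and its intersection with the vertical line is convex. Since the point $[e_{d+2}]$ is removed, each non-empty fiber is one of four types: a point or closed segment, a positive closed ray, a negative closed ray, or the whole line. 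I would treat a point as a degenerate segment.

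The content of the lemma is that all non-empty fibers have the \emph{same} type. To see this, attach to $F_x$ two invariants: whether it is unbounded above and whether it is unbounded below. In projective terms, being unbounded above means $[e_{d+2}]$ lies in the closure of $F_x$, approached from the positive side.

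Unboundedness is equivariant. Each $\eta(\gamma)$ maps $Q\inverse(x)$ to $Q\inverse(\rho(\gamma)x)$ by an affine map of the fiber. Since $\cag$ is defined as the index-two subgroup preserving the orientation of the vertical foliation, this affine map has positive linear coefficient. Hence the sets
\[A^+ \defeq \{x : F_x \text{ unbounded above}\}, \qquad A^- \defeq \{x : F_x \text{ unbounded below}\}\]
are $\rho(\Gamma)$-invariant subsets of $Q(\partial^\infty C_{min})$. The same holds for $A^+\cap A^-$ and for the set of $x$ with $F_x$ bounded.

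Next I would use convexity of $\overline{C_{min}}$ near $[e_{d+2}]$ to show that $A^+$ is either empty or all of $Q(\partial^\infty C_{min})$, and likewise for $A^-$. Suppose $x\in A^+$, so a positive vertical ray over $x$ lies in $\overline{C_{min}}$. Let $y$ be any other point with $F_y\neq\emptyset$, and pick $p\in F_y$. The closed convex set $\overline{C_{min}}$ contains segments from $p$ to points going up the ray over $x$. In the limit these segments converge to the segment from $p$ to $[e_{d+2}]$ approached from above, and that segment is the positive vertical ray from $p$ over $y$. It lies in $\partial\sC\Omega$ since $y\in\partial\Omega$, so $F_y$ is unbounded above. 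The orientation of the side from which $[e_{d+2}]$ is approached must be tracked carefully here. I would do this in an affine chart containing $\overline{\sC\Omega}$ minus a neighborhood of the opposite direction, for instance the right-hand chart in Figure \ref{fig: intro_cones}, where vertical rays become parallel half-lines and the limiting argument is the standard fact that the recession cone of a closed convex set is independent of the base point.

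This step is the main obstacle. In the chart where $[e_{d+2}]$ is at infinity, the positive and negative vertical directions are distinct points at infinity only if one works in the sphere $S^{d+1}$ rather than $\RP^{d+1}$. So I would lift to the double cover and use the recession-cone argument there. With that in hand, $A^\pm$ are each all-or-nothing, and the four cases are: both unbounded (lines), only $A^+$ (positive rays), only $A^-$ (negative rays), neither (segments). Notably, this argument does not even need minimality of the $\Gamma$-action on $\partial\Omega$.
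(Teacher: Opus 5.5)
Your proof is correct and is essentially the paper's argument: the paper likewise notes that the fibers are closed and convex. It then takes segments $[p_n,q]\subset\overline{C_{min}}$ with $p_n$ running up a positive ray over one boundary point, observes that they accumulate onto the positive ray over any other point with non-empty fiber, and argues symmetrically for negative rays. Your equivariance step is not needed, as you yourself note, and neither is the lift to the sphere: in the affine chart containing $\sC\Omega$ with $[e_{d+2}]$ at infinity, $e_{d+2}$ and $-e_{d+2}$ are already distinct recession directions of the closure of $C_{min}$ in that chart.
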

\begin{proof}
    Convexity of $C_{min}$ gives that the fibers are connected and they are closed and convex by definition. All that remains is the exclusivity of the four cases. Suppose that $p$ and $q$ are points in $\partial C_{min}\cap \partial \mathscr C \Omega$, and that the positive ray $[p,[e_{d+2}])\subset \partial C_{min}$. Consider a sequence of points $p_n$ on this ray tending to $[e_{d+2}]$, and observe that the segment $[p_n,q]\subset \overline{C_{min}}$ by convexity. These segments accumulate to the positive ray $[q,[e_{d+2}])$ which must therefore be a subset of $\partial C_{min}$. Thus if there exists any point in $\partial C_{min}$ with a positive ray over it, then in fact \emph{every} point with non-empty fiber has this property. The symmetric fact about negative rays completes the proof. 
\end{proof}

Using the lemma above, we may describe the topology of $C_{min}$ and the part of its boundary in $\mathscr C \Omega$.  

\begin{proposition}\label{prop: topology of Cmin}
    When $\eta$ is not semi-simple, then 
    \begin{itemize}
        \item 
        if $\partial^\infty C_{min}$ is a union of positive (resp. negative) rays, the closure of the minimal domain $\overline{C_{min}}\subset \RP^{d+1}$ is homeomorphic to a $(d+1)$-ball and the boundary $\partial C_{min}\cap \mathscr C \Omega$ is a single disk.
        \item 
        if 
        $\partial^\infty C_{min} = \partial \mathscr C \Omega$,
        then 
        $C_{min} = \mathscr C \Omega$, and the boundary $\partial C_{min}\cap \mathscr \mathscr C \Omega$ is empty.
        \item 
        if $\partial^\infty C_{min}$ is a union of (possibly degenerate) segments, then $\overline{C_{min}}$ is homeomorphic to a $(d+1)$-ball and the boundary $\partial C_{min}\cap \mathscr C \Omega$ is a union of two open $d$-disks.
    \end{itemize}
    In the first and last cases the boundary disks have no zero-faces (as subsets of $\partial C_{min}$, considered as a convex set) and when $(d=2)$, the set of $1$- and $2$-faces form the leaves and complementary $2$-strata of a geodesic lamination on $\Omega$. The projection $Q$ restricted to any stratum of this lamination is the restriction of a projective isomorphism, i.e., the disks carry a `stratified projective structure', which we do not make precise.
\end{proposition}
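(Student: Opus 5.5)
The plan is to read off the topology of $C_{min}$ from a description of it as a region between two "graphs" over $\Omega$, using Lemma \ref{lemma: C_min is a nontrivial fibration} and Lemma \ref{lem: rays or lines}. Fix a trivialization $\sC\Omega\cup\partial^\infty\sC\Omega\cong(\Omega\cup\partial^\infty\Omega)\times\R$ by a height coordinate $h$, and define
\[u(x)=\sup h\bigl(Q\inverse(x)\cap \overline{C_{min}}\bigr),\qquad \ell(x)=\inf h\bigl(Q\inverse(x)\cap \overline{C_{min}}\bigr)\]
for $x\in\overline\Omega$. By Lemma \ref{lemma: C_min is a nontrivial fibration}, each fiber over $\Omega$ is a nonempty closed interval, so $-\infty\le \ell\le u\le+\infty$ on $\Omega$. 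Convexity of $C_{min}$ shows that $u$ is concave and $\ell$ is convex along projective segments in the affine chart containing $\sC\Omega$; this is the statement that the part of $\overline{C_{min}}$ between two points is the hull of its fibers. Hence each of $u$ and $\ell$ is either identically infinite on $\Omega$ or finite and continuous there, since a concave function that is $+\infty$ at one interior point is $+\infty$ everywhere. Closedness of $C_{min}$ gives semicontinuity at $\partial\Omega$.

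The cases then follow from the fiber types over $\partial^\infty\Omega$ in Lemma \ref{lem: rays or lines}. If $u$ were finite on $\Omega$ but the boundary fibers were positive rays, take a point $p$ over the top of a boundary fiber and a point of $C_{min}$ over $x\in\Omega$. Convexity puts segments from interior points to points $p_n\to[e_{d+2}]$ inside $\overline{C_{min}}$, and these segments force $u(x)=+\infty$. So $u\equiv+\infty$ exactly when the boundary fibers are positive rays or lines, and $\ell\equiv-\infty$ exactly when they are negative rays or lines.

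In the line case, both $u\equiv+\infty$ and $\ell\equiv-\infty$, so $C_{min}=\sC\Omega$ and the relative boundary is empty. In the segment case, $u$ and $\ell$ are finite and continuous, so $\partial C_{min}\cap\sC\Omega$ is the union of the graphs of $u$ and $\ell$, each an open $d$-disk via $Q$. I must check that $u\ne\ell$ somewhere. If $u=\ell$ at one interior point, concavity of $u-\ell\ge0$ gives $u\equiv\ell$, so $C_{min}$ is a convex hypersurface, hence a hyperplane section, which is invariant; that makes $\eta$ semi-simple, contrary to hypothesis. The closure is then a region between two graphs over a closed ball meeting along $\partial\Omega$ wherever boundary fibers are degenerate, and it is a $(d+1)$-ball by a straight-line homotopy. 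In the ray case, one graph is a single disk, and $\overline{C_{min}}$ is the one-point compactification at $[e_{d+2}]$ of a region $\{h\ge \ell\}$. Equivalently, it is a closed properly convex set in a chart in which $[e_{d+2}]$ is a boundary point, so it is again a ball.

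For the absence of $0$-faces, I use Lemma \ref{lem: 0-faces of convex sets} in a chart containing $\overline{C_{min}}$ (or $\overline{C_{min}}\setminus\{[e_{d+2}]\}$ in the ray case), where $\overline{C_{min}}=\hull(\overline{\Lambda_{prox}}$, possibly together with $[e_{d+2}])$. Any extreme point lies in $\Lambda_{prox}\subset\partial\sC\Omega$ or at $[e_{d+2}]$, hence not on the boundary disks inside $\sC\Omega$. So every point of a boundary disk lies in the relative interior of a face of dimension at least $1$. When $d=2$, faces of a convex surface are disjoint relatively open convex pieces. Their $1$-dimensional ones project under $Q$ to disjoint segments whose endpoints lie in $\partial\Omega$, since they have no extreme points inside, so they are complete geodesics. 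The union of the $1$-faces and the boundaries of $2$-faces is closed, by upper semicontinuity of faces, and projects to a geodesic lamination. On each face, $Q$ is a restriction of a linear projection from a projective subspace not containing $[e_{d+2}]$, which is a projective isomorphism.

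I expect the main obstacle to be the closedness of the face lamination together with the ball claims near $\partial\Omega$, where fibers degenerate and the two graphs meet. I would handle this with the standard fact that a compact convex body with nonempty interior in an affine chart is a ball, after checking that $C_{min}$ has interior, which follows from non-semi-simplicity.
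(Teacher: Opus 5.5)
Your proposal is correct and follows the same skeleton as the paper: fibers over $\Omega$ are nonempty intervals (Lemma \ref{lemma: C_min is a nontrivial fibration}), the ray/line/segment trichotomy (Lemma \ref{lem: rays or lines}), boundary components are graphs of concave/convex functions, and Lemma \ref{lem: 0-faces of convex sets} excludes $0$-faces.

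The one step you argue genuinely differently is the non-degeneracy of fibers in the segment case.

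\begin{itemize}
\item The paper supposes a singleton fiber $\{p\}$ and takes limits of Hahn--Banach half-spaces from above and below, obtaining two supporting hyperplanes at $p$. It then appeals to an argument ``similar to'' Lemma \ref{lemma: C_min is a nontrivial fibration}. The unstated step is that if these hyperplanes differed, they would cross over part of $\Omega$ and leave empty fibers there. Hence they coincide and contain $C_{min}$.
\item You package the same phenomenon as a one-line fact: the nonnegative concave function $u-\ell$ vanishing at one interior point vanishes identically. This makes $u$ affine and $C_{min}$ a non-vertical hyperplane section. You thus get directly an invariant hyperplane missing $[e_{d+2}]$, i.e.\ a coboundary.
\end{itemize}

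The paper's version stays in the Hahn--Banach style used elsewhere and needs no choice of coordinates. Yours is shorter and self-contained. You also supply what the paper only asserts or defers to \cite{BF}: the ball claims via compact convex bodies, and the closedness of the face lamination.

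Two points to make explicit:
\begin{itemize}
\item $u$ and $\ell$ must be defined with an affine height, in a chart whose hyperplane at infinity contains $[e_{d+2}]$. For example, normalize by a linear functional positive on $\overline\Omega$. The unit-norm height $H$ of \S\ref{subsec: height} is not affine along segments, so concavity would fail for it.
\item Boundedness in the segment case and proper convexity in the ray case both need a short justification. A concave (resp.\ convex) function on the bounded set $\Omega$ is bounded above (resp.\ below) by an affine function, so $\overline{C_{min}}$ is compact in the chart, or lies in a truncated cone over $\overline\Omega$ with apex $[e_{d+2}]$.
\end{itemize}
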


\begin{proof}
    In the second case $C_{min} = \mathscr C\Omega$ and there is nothing to prove. 

    In the first and third cases, we have already shown in Lemma \ref{lemma: C_min is a nontrivial fibration} that every vertical line intersects the minimal domain.
    Suppose that there were a vertical line $L$ for which $L\cap C_{min} = \{p\}$, a singleton. Then consider a pair of sequences in $L$, $x_n$ and $y_n$ tending to $p$ from above and below, respectively. Since $x_n$ and $y_n$ are projective subspace of $\RP^3$ disjoint from $C_{min}$, there must exist half-spaces $H^x_n$ and $H^y_n$ containing $x_n$ and $y_n$ in their boundaries and containing $C_{min}$ (by Hahn-Banach). These sequences of half-spaces subconverge to $H_+$ and $H_-$ both of which are halfspaces with $p$ in their boundary and $C$ in their intersection. We obtain, similar to the argument in the proof of Lemma \ref{lemma: C_min is a nontrivial fibration}, that either $C_{min}$ is empty (a contradiction), or that $\eta$ preserves a hyperplane, i.e.\ that it is semisimple, which it is assumed not to be. We conclude that in the final case of the Proposition, every vertical line meets $C_{min}$ in a closed segment which is not a singleton. The endpoints of these segments form the pair of disks required by the lemma. They are disks because they are the graphs of convex (or concave) functions $\Omega \to \R$, after identifying the vertical lines in $\mathscr C \Omega$ with $\R$.

    In the first case, we similarly obtain that every vertical line intersects $C_{min}$ in a ray-- the endpoints of these rays inside of $\mathscr C \Omega$ form a disk as required. 

    To see that there are no zero faces on the boundary disks, we recall the following fact from convex geometry: if $S$ is a closed subset of affine space, then the set of extreme points ($0$-faces) of $\hull S$ is itself a subset of $S$ \cite{Rockafellar}.
    
    This essentially proves the claims about laminations on the boundary disks, and we refer the reader to \cite{BF} for further details.
\end{proof}

We require the following technical lemma in \S\ref{sec: prox is orb} in order to show a refined relationship between proximal and orbital limit sets.

\begin{lemma}\label{lem: proximal rays, proximal endpoints}
   Suppose that $\partial^\infty C_{min}$ is a union of vertical rays. Then the boundary points of the rays are a subset of $\Lambda_{prox}$.
\end{lemma}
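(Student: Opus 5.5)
Write each fiber of $\partial^\infty C_{min}$ as a closed positive ray $[p_x,[e_{d+2}])$ over $x\in Q(\partial^\infty C_{min})$; the negative-ray case is symmetric. The plan is to show that every endpoint $p_x$ is an extreme point of the closed convex set $\overline{C_{min}}$, and then apply Lemma \ref{lem: 0-faces of convex sets}. Since $C_{min}=\hull\Lambda_{prox}$ by Lemma \ref{lemma: minimal domain}, that lemma says every $0$-face of the hull lies in $\Lambda_{prox}$.

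\emph{Choosing a chart.} A positive ray ends at $[e_{d+2}]$, so no chart containing $\overline{C_{min}}$ is available. I would instead choose an affine chart $\A$ containing $\overline{\sC\Omega}\setminus\{[e_{d+2}]\}$, so that the vertical lines become parallel lines; the chart on the right of Figure \ref{fig: intro_cones} is of this kind. In $\A$, $\Lambda_{prox}$ is a closed subset, and $\hull\Lambda_{prox}$ computed in $\A$ agrees with $C_{min}$ together with its ideal boundary in $\partial\sC\Omega$. The positive rays are unbounded in the vertical direction, which is consistent with Lemma \ref{lem: rays or lines}. The lemma is stated for closed subsets of affine space, so it applies in $\A$.

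\emph{Extremality of $p_x$.} Suppose $p_x$ lies in the open segment $(a,b)$ with $a,b\in\partial^\infty C_{min}$, where $a,b$ are points of $\overline{C_{min}}$ inside $\A$.
\begin{enumerate}
\item If $Q(a)\neq Q(b)$, then $Q$ maps $[a,b]$ to a nondegenerate segment through $x\in\partial\Omega$ with endpoints in $\overline\Omega$. This contradicts the strict convexity of $\Omega$: a point of $\partial\Omega$ cannot lie in the interior of a segment in $\overline\Omega$ unless the segment lies in $\partial\Omega$, and $\partial\Omega$ contains no segments.
\item Otherwise $a$ and $b$ lie on the vertical line over $x$. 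Then one of them lies strictly below $p_x$, which contradicts the fact that the fiber over $x$ is exactly the ray starting at $p_x$.
\end{enumerate}
Points of $\overline{C_{min}}$ inside $\sC\Omega$ cannot contribute to a segment through $p_x$ unless its other endpoint lies outside $\overline{\sC\Omega}$, so the two cases above are exhaustive. Hence $p_x$ is a $0$-face, and therefore $p_x\in\Lambda_{prox}$.

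\emph{Main obstacle.} The main difficulty is justifying that the hull of the closed set $\Lambda_{prox}$ taken in $\A$ is closed, or at least that its extreme points lie in $\Lambda_{prox}$ rather than in its closure. The hull of a closed unbounded set need not be closed, and an extreme point of the closure of the hull might be a limit of points of $\Lambda_{prox}$ escaping to infinity. This is where Lemma \ref{lem: 0-faces of convex sets} could fail. I would address it by using the fact that the escaping directions are vertical, since $\Lambda_{prox}\subset\partial\sC\Omega$ and $\partial\Omega$ is compact. Concretely, given a sequence of convex combinations of points of $\Lambda_{prox}$ converging to $p_x$, Carathéodory allows at most $d+2$ terms. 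Any term escaping vertically upward with nonvanishing weight forces the limit to be unbounded. Terms escaping downward are impossible because the fibers are positive rays; this needs a uniform lower bound on the heights of the fibers over the compact set $\partial\Omega$, which follows from closedness of $\partial^\infty C_{min}$ in $\A$ together with its ray structure. With this in hand one can pass to a convergent subsequence in $\Lambda_{prox}$, so the limit is a genuine convex combination of points of $\Lambda_{prox}$, and extremality then gives $p_x\in\Lambda_{prox}$.
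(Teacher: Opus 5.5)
Your proof is correct and is essentially the paper's. The paper notes that the ray over $x$ is cut out of $\overline{C_{min}}$ by the preimage under $Q$ of the supporting hyperplane to $\Omega$ at $x$. That hyperplane meets $\overline{\sC\Omega}$ only over $x$ by strict convexity, which is exactly your Case 1. So $p_x$ is a face of a face, hence an extreme point, and the paper then applies Lemma \ref{lem: 0-faces of convex sets} as you do.

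The closure worry in your last paragraph is unnecessary. With the paper's convention that faces of a non-closed convex set are those of its closure, that lemma already holds for unbounded closed $S$: exposed points of $\overline{\hull S}$ lie in $S$, and by Straszewicz's theorem they are dense among the extreme points. If you keep your Carath\'eodory patch, one case is missing. Terms whose weights tend to $0$ can still escape upward and leave a vertical vector of length $c\ge 0$ in the limit. You can rule this out because it would put a point of $\overline{C_{min}}$ over $x$ strictly below $p_x$.
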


\begin{proof}
    Without loss of generality, suppose that $\partial^\infty C_{min}$ is a union of positive rays. Each ray $[p,[e_{d+2}])$ is realized as the intersection of a hyperplane with $C_{min}$ (the hyperplane being the preimage under the projection $q$ of a support plane to $\Omega$), and hence is a face of $C_{min}$. Since the point $p$ is itself a face of this face, the singleton $p$ is also a face of $C_{min}$ (see \S\ref{subsec: convex and divisibles}), i.e.\ it is an extreme point. We conclude that $p\in \Lambda_{prox}.$
\end{proof}

\section{The domain shape metric}\label{sec: domain shape}
Let $\Omega\subset \RP^d$ be a strictly and properly convex domain.
We recall a construction for a complete, asymmetric $\rho$-invariant Finsler metric $d_\rho$ on $\Omega$ that behaves in many ways like a metric of negative curvature.  We follow \cite[Section 2]{Reid:finsler}, although the construction is due to Jeff Danciger and Florian Stecker.
One of the main features that interests us about this metric is that 
\begin{equation}\label{eqn: periods are lambda1}
    \inf_{p \in \Omega} d_\rho(p,\gamma.p) = \lambda_1^\rho(\gamma)
\end{equation}
for all $\gamma \in \Gamma$, where $e^{\lambda_1^\rho(\gamma)}$ is the largest eigenvalue of $\rho(\gamma)$. This agrees with the hyperbolic metric when $\Omega$ is an ellipsoid. 

In order to construct $d_\rho$, we first choose a smooth equivariant convex lift $f: \Omega \to \R^{d+1}$, which is asymptotic to $\Omega$ in the sense that the line $\overline{f(p)f(q)}$ is never in $\overline{\Omega}$.  The \emph{hyperbolic affine sphere} is an example; see \cite{Cheng-Yau}.
Now, for points $p,q\in \Omega$, let $r\in \partial^\infty \Omega$ be the point where the projective line starting at $q$ through $p$ first meets $\partial^\infty\Omega$.  Let $[\beta]\in (\RP^2)^*$ be such that $[\ker\beta]$ supports $\overline{\Omega}$ at $r$.  
Then define
\begin{equation}\label{eqn:def metric}
    d_\rho(p,q) = \log\frac{\beta(f(q))}{\beta(f(p))}.
\end{equation}

The function $d_\rho$ depends on the chosen lift $f$, but we suppress this from the notation.
The following theorem summarizes the features we need.
\begin{theorem}\label{thm: rhodist}
    The function $d_\rho$ defined in \eqref{eqn:def metric} is a complete uniquely geodesic asymmetric metric.  In addition: 
    \begin{enumerate}
        \item The geodesics are projective line segments.
        \item $d_\rho$ is the path metric for a $C^1$ Finsler metric $F_\rho: T\Omega \to \R_{\ge0}$.  The unit sphere in $T_p\Omega$ is $C^1$ and strictly convex.  
        \item If $p\in \Omega$ is a point on the projective line left invariant by $\rho(\gamma)$, then $d_\rho(p,\gamma.p) = \lambda_1^\rho(\gamma)$.
        
    \end{enumerate} 
\end{theorem}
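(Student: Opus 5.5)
Theorem \ref{thm: rhodist} gathers several properties. The plan is to first rewrite $d_\rho$ as a Funk-type quantity attached to the lift $f$, and then derive the listed items from that description.

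\textbf{Step 1: geodesics are projective segments.} Fix $p,q$ and let $r\in\partial^\infty\Omega$ be the exit point of the ray from $q$ through $p$. Let $\beta$ support $\overline\Omega$ at $r$. On the projective segment $[p,q]$ the function $x\mapsto \log\beta(f(x))$ is restricted along a line whose forward exit point $r$ is the same for every pair of points on $[p,q]$ (same direction, same line). Hence for $x\in[p,q]$ we get
\[d_\rho(p,x)+d_\rho(x,q)=\log\tfrac{\beta(f(x))}{\beta(f(p))}+\log\tfrac{\beta(f(q))}{\beta(f(x))}=d_\rho(p,q).\]
So projective segments are geodesics, and the triangle equality holds along them. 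This is item 1, once uniqueness is settled in Step 3.

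\textbf{Step 2: the triangle inequality and positivity.} I would express $d_\rho(p,q)$ as a supremum,
\[d_\rho(p,q)=\sup_{[\alpha]}\log\frac{\alpha(f(q))}{\alpha(f(p))},\]
with the supremum taken over functionals $\alpha$ positive on $\Omega$, suitably normalized. The intended normalization is that $\alpha(f(\cdot))$ takes the same value at both endpoints on $\ker\alpha$-parallel data. The maximizer should be exactly the supporting functional at $r$. This is where convexity of $f$ enters, together with the asymptotic condition that the line through $f(p),f(q)$ misses $\overline\Omega$.

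Granting the supremum formula, the triangle inequality follows immediately: a supremum of differences of a potential is subadditive. Positivity for $p\ne q$ and completeness then come from comparing with the Hilbert/Funk metric: $d_\rho$ differs from the Funk metric by the bounded term $\log$ of a ratio of values of $f$ on compact sets, and it is uniformly comparable to it by cocompactness.

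I expect this step to be the main obstacle. Pinning down the correct class of $\alpha$ so that the supremum is attained exactly at the supporting functional requires the precise convexity of the lift $f$.

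\textbf{Step 3: Finsler structure and uniqueness.} Differentiating at $q=p$ along a direction $v$ gives
\[F_\rho(p,v)=\frac{d}{dt}\Big|_{t=0}\log\beta_{r(v)}(f(p+tv)),\]
where $r(v)$ is the backward exit point. The map $v\mapsto r(v)$ and the supporting functional $r\mapsto\beta_r$ are both $C^1$: the first by strict convexity, the second because $\partial\Omega$ is $C^1$ (Lemma \ref{lem: strictly con and C1 is hyperbolic}). Together with smoothness of $f$, this shows $F_\rho$ is $C^1$. Strict convexity of the unit ball comes from strict convexity of $\partial\Omega$ and of $f$. A strictly convex Finsler norm makes the length-minimizers unique, so Step 1 yields unique geodesics and $d_\rho$ is the associated path metric. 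This gives item 2.

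\textbf{Step 4: translation length (item 3).} Let $p$ lie on the axis of $\rho(\gamma)$, going from $\gamma^-$ to $\gamma^+$. The relevant exit point is $r=\gamma^-$, and $\beta$ is the supporting functional there. This $\beta$ is the dual eigenvector with $\beta\circ\rho(\gamma)^{-1}=e^{-\lambda_1}\beta$, i.e.\ $\beta(\rho(\gamma)w)=e^{\lambda_1}\beta(w)$. By equivariance $f(\gamma p)=\rho(\gamma)f(p)$, so
\[d_\rho(p,\gamma p)=\log\frac{\beta(\rho(\gamma)f(p))}{\beta(f(p))}=\lambda_1^\rho(\gamma).\]
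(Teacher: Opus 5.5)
Your Step 4 is the paper's proof of item 3. Using directly that the supporting functional at $\gamma^-$ satisfies $\beta\circ\rho(\gamma)=e^{\lambda_1^\rho(\gamma)}\beta$ is slightly cleaner than the paper's expansion of $f(p)$ in an eigenbasis. The paper does not prove the other assertions; it cites Reid's Lemmas 2.7--2.8. Your reproof of those has genuine gaps.

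\textbf{Step 2: the supremum formula and positivity.} The formula you flag as the main obstacle is elementary and needs neither a normalization nor convexity of $f$. Let $C$ be the cone over $\Omega$ containing $f(\Omega)$, and let $\beta$ range over nonzero functionals nonnegative on $C$. Then $\beta(f(q))\le c\,\beta(f(p))$ for all such $\beta$ iff $cf(p)-f(q)\in\overline C$. As $c$ decreases from $+\infty$, the point $[cf(p)-f(q)]$ moves away from $p$ along the ray from $q$ through $p$ and leaves $\overline\Omega$ at $r$. So the least admissible $c$ satisfies $\beta_r(cf(p)-f(q))=0$. Hence $d_\rho(p,q)=\log\max_\beta \beta(f(q))/\beta(f(p))$ for \emph{any} section $f$, and the triangle inequality follows. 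Moreover, a supporting hyperplane at $r$ meets $\overline\Omega$ only at $r$. So $\beta_r$ can be a maximizer for both $(p,x)$ and $(x,q)$ only if $x\in[p,q]$, which gives the strict triangle inequality and unique geodesics directly.

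The real problem is positivity, which your argument does not establish. $d_\rho$ differs from the (reverse) Funk metric of a chart $\{\ell=1\}$ by $u(q)-u(p)$, where $u=\log\ell\circ f$. This correction is unbounded: by equivariance, $\|f(\gamma.p)\|\to\infty$. In any case, an additive comparison can never give $d_\rho(p,q)>0$ for nearby $p\neq q$. Comparison with $d_H$ by cocompactness presupposes $F_\rho>0$; it is fine for completeness once that is known. Positivity is exactly where the asymptotic hypothesis on $f$ enters. By the formula, $d_\rho(p,q)>0$ iff $f(p)-f(q)\notin\overline C$, and for a general section $d_\rho$ can be negative.

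\textbf{Step 3: the $C^1$ claim and the unit ball.} The map $r\mapsto\beta_r$ is the Gauss map of $\partial\Omega$. A $C^1$ boundary makes it only continuous (in general merely H\"older), so your chain-rule argument does not show $F_\rho$ is $C^1$. Also, $v\mapsto r(v)$ is $C^1$ because $\partial\Omega$ is $C^1$ and transverse to rays from $p$, not because of strict convexity.

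What works is an envelope argument. Along the line through $p$ in direction $v$ the exit point is constant, so $d_\rho(p,p+tv)=\log \beta_r(f(p+tv))/\beta_r(f(p))$ exactly. Comparing with the other $\beta$ in the max formula gives $F_\rho(p,v)=\max_\beta \beta(df_pv)/\beta(f(p))$. The maximizer $\beta_{r(p,v)}$ is unique (since $\partial\Omega$ is $C^1$) and continuous in $(p,v)$. Danskin's theorem then gives $F_\rho\in C^1$ off the zero section.

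The same formula shows that $F_\rho(p,v)\le 1$ iff $f(p)-df_p(v)\in\overline C$. So the unit ball at $p$ is affinely the slice of $\overline C$ by the affine tangent hyperplane $f(p)+df_p(T_p\Omega)$. This slice is a copy of $\overline\Omega$ provided $df_p(T_p\Omega)\cap\overline C=\{0\}$, which is the infinitesimal asymptotic condition and also what makes $F_\rho>0$. That is where the $C^1$ regularity and strict convexity of the unit sphere come from, not from strict convexity of $f$.
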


Reid calls $d_\rho$ the \textbf{domain shape metric}.  The function $F_\rho$ has the explicit description
\[F_\rho(p,x) = \frac{\beta(df_p(x))}{\beta(f(p))},\]
where $[\ker \beta]$ supports $\overline{\Omega}$ at the point where the line starting at $p$ in the direction $-x$ meets $\partial \Omega$.

\begin{proof}
     $\Omega$ is strictly convex, so \cite[Lemmas 2.7 and 2.8]{Reid:finsler} give all but item 3 of the Theorem.  
     
     Every element $\gamma \in \Gamma$ of infinite order is proximal, so there is a splitting of $\R^{d+1} = [e_1] \oplus V \oplus [e_{d+1}]$, where $\rho(\gamma)e_i = e^{\lambda_i^\rho(\gamma)}e_i$, for $i = 1$ and $d+1$, while $\rho(\gamma)V = V$ is the span of all of the other generalized eigenspaces.   
    Let $\beta \not=0 \in (\R^{d+1})^*$ be such that $\beta( [e_{d+1}]\oplus V) = 0$.  Then $[\ker \beta ]$ supports $\overline\Omega$ at the repelling fixed point $[e_d]\in \partial^\infty\Omega$ of $\rho(\gamma)$, whereas $[e_1] \in \partial^\infty \Omega$ is the attracting fixed point.  
     
     Then $f(p) = ae_1+be_{d+1}$ for some non-zero $a, b \in \R$, and $f(\gamma.p) = \rho(\gamma)f(p) = e^{\lambda_1^\rho(\gamma)}ae_1 + e^{\lambda_{d+1}^\rho(\gamma)}be_{d+1}$. Thus
     \[d_\rho(p,\gamma.p) = \log\frac{\beta (e^{\lambda_1^\rho(\gamma)}ae_1 +e^{\lambda_{d+1}^\rho(\gamma)}be_{d+1})}{\beta(ae_1+be_{d+1})} = \lambda_1^\rho(\gamma),\]
     which is what we wanted to show.
\end{proof}

The following result relates the largest singular value of $\rho$ with $d_\rho$ along the orbit of a point.  Compare with \cite[Proposition 10.1]{DGK}.
\begin{proposition}\label{prop: singular values and DS metric}
For all $p\in \Omega$, there is a number $C$ such that for all $\gamma\in \Gamma$,
    \[\abs{d(p,\gamma p)-\sigma_1(\rho(\gamma))}<C\]
\end{proposition}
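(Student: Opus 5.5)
We need to prove that $|d_\rho(p,\gamma p)-\sigma_1(\rho(\gamma))|$ is bounded uniformly in $\gamma$. The plan is to compute $d_\rho(p,\gamma p)$ directly from the definition \eqref{eqn:def metric}:
\[d_\rho(p,\gamma p)=\log\frac{\beta(\rho(\gamma)f(p))}{\beta(f(p))},\]
where $[\ker\beta]$ supports $\overline\Omega$ at the point $r_\gamma\in\partial^\infty\Omega$ where the ray from $\gamma p$ through $p$ exits $\Omega$. Here we used equivariance $f(\gamma p)=\rho(\gamma)f(p)$.

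Fix a norm and normalize $\|\beta\|=1$. Since $f(p)$ lies in the open cone over $\Omega$ and $\ker\beta$ only touches $\overline\Omega$ on the boundary, compactness of $\partial^\infty\Omega$ and continuity of the supporting hyperplane map give $c\le \beta(f(p))\le C$. The constants depend only on $p$, because $\beta(f(p))>0$ for every supporting functional. So the problem reduces to showing
\[\log\beta(\rho(\gamma)v)=\sigma_1(\rho(\gamma))+O(1),\qquad v=f(p).\]

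The upper bound is immediate: $\beta(\rho(\gamma)v)\le\|\rho(\gamma)\|\|v\|$.

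For the lower bound, use the Cartan decomposition $\rho(\gamma)=k_2\,\diag(e^{\sigma_i})\,k_1$. Then
\[\beta(\rho(\gamma)v)\ge e^{\sigma_1}\,|\langle k_1v,e_1\rangle|\,|\beta(k_2e_1)|-e^{\sigma_2}\|v\|.\]
Two facts are needed, uniformly in $\gamma$ of large word length (finitely many $\gamma$ only affect $C$).

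First, $|\langle k_1 v,e_1\rangle|$ is bounded below. This is the distance from $[v]=p$ to $S(\rho(\gamma))$. By \eqref{eqn: uniform convergence of xi}, $S(\rho(\gamma))$ is close to $\im\xi^*$, whose hyperplanes are supporting hyperplanes of $\overline\Omega$. These stay a definite distance from the interior point $p$.

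Second, $|\beta(k_2e_1)|$ is bounded below, where $k_2e_1$ spans $U(\rho(\gamma))$. I expect this to be the main obstacle: $\beta$ supports $\Omega$ at $r_\gamma$, a point "behind" $p$ as seen from $\gamma p$. Meanwhile $U(\rho(\gamma))$ is close to $\xi(x)$, where $x$ is near the attracting direction, roughly where $\gamma p$ tends. I would argue that $\gamma p$ and $U(\rho(\gamma))$ are uniformly close for large $|\gamma|$. This follows from Lemma \ref{lem: uniform convergence to boundary map}, or directly from $\rho(\gamma)[v]$ being close to $U$ once $[v]$ is far from $S$, which is the first fact. The segment from $\gamma p$ through $p$ then exits at $r_\gamma$, which is separated from $\gamma p$ by a definite amount: for any $\gamma p$ near $\partial\Omega$, the exit point lies on the far side of the compact ball around $p$. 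By strict convexity together with the $C^1$ boundary, $\ker\beta_{r_\gamma}$ meets $\overline\Omega$ only at $r_\gamma$, so $|\beta(u)|$ is bounded below for $u$ near $\gamma p$. A compactness argument over pairs (point near the boundary, exit point at definite distance) gives the uniform bound.

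Finally, Anosov implies $\sigma_1-\sigma_2\to\infty$, so the error term $e^{\sigma_2}\|v\|$ is absorbed. This gives $\beta(\rho(\gamma)v)\ge c'e^{\sigma_1}$, and taking logarithms finishes the proof.
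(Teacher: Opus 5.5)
Your proposal is correct and follows essentially the same route as the paper. Both arguments write $d_\rho(p,\gamma p)=\log\bigl(\beta(\rho(\gamma)f(p))/\beta(f(p))\bigr)$ and take the upper bound from $\|\rho(\gamma)\|$. Both get the lower bound from the Cartan decomposition, using two facts: $p$ stays a definite distance from $S(\rho(\gamma))$, because $S(\rho(\gamma))$ is close to $\im\xi^*$; and $\ker\beta$ at the exit point stays away from $U(\rho(\gamma))\approx\gamma p$, by compactness and strict convexity. Your compactness argument over pairs $(\gamma p, r_\gamma)$ separated by $p$ is more explicit than the paper's appeal to the ``antipodal map induced by $o$.''
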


\begin{proof}
    Let $e_1,\dots, e_{d+1}$ be an orthonormal basis, and $e^1,\dots, e^{d+1}$ the standard orthonormal dual basis. 
    
    Let $\epsilon$ be small enough that $d_{\mathbb P}(o, \im \xi)>2\epsilon$.  For all but finitely many $\gamma \in \Gamma$, we have $\sigma_1^\rho(\gamma) >\sigma_2^\rho(\gamma)$.  By Lemma \ref{lem: uniform convergence to boundary map}, we know that for all but a finite set of  $\gamma \in \Gamma$, $d_{\mathbb P} (o, U(\rho(\gamma)))> \epsilon$ and $d_{\mathbb P} (o, S(\rho(\gamma)))>\epsilon$.  

    So, for $\gamma \in \Gamma$ not in this finite set, we write
    \[\rho(\gamma) = k_2 \diag(e^{\sigma_1}, ..., e^{\sigma_{d+1}}) k_1,\]
    where we abbreviate $\sigma_i = \sigma_i^\rho(\gamma)$.  

    Write
    \[f(o) = \sum_{i = 1}^{d+1}a_i k_1\inverse e_i,\]
    and note that the coefficients $a_i$ depend on $\gamma$. 
    Then 
    \begin{align*}
        f(\gamma.o) = \rho(\gamma)f(o) &= e^{\sigma_1} k_2 \sum_{i=1}^{d+1}a_i e^{\sigma_i-\sigma_1}e_i\\
        & = e^{\sigma_1} k_2 \left( a_1e_1 +O\left(e^{-c|\gamma|}\right)\right).
    \end{align*}
    Since $o$ is not close to $S(\rho(\gamma))$, $a_1$ has size that is bounded below by some $\delta>0$, independently of $\gamma$ (once $\gamma$ is large enough); see also \cite[Lemma A.3]{BPS19}.
    This proves, in particular, that $\gamma.o$ is close to $\langle k_2e_1\rangle = U(\rho(\gamma))$.  

\begin{figure}[h]
    \centering
    \includegraphics[width=.6\linewidth]{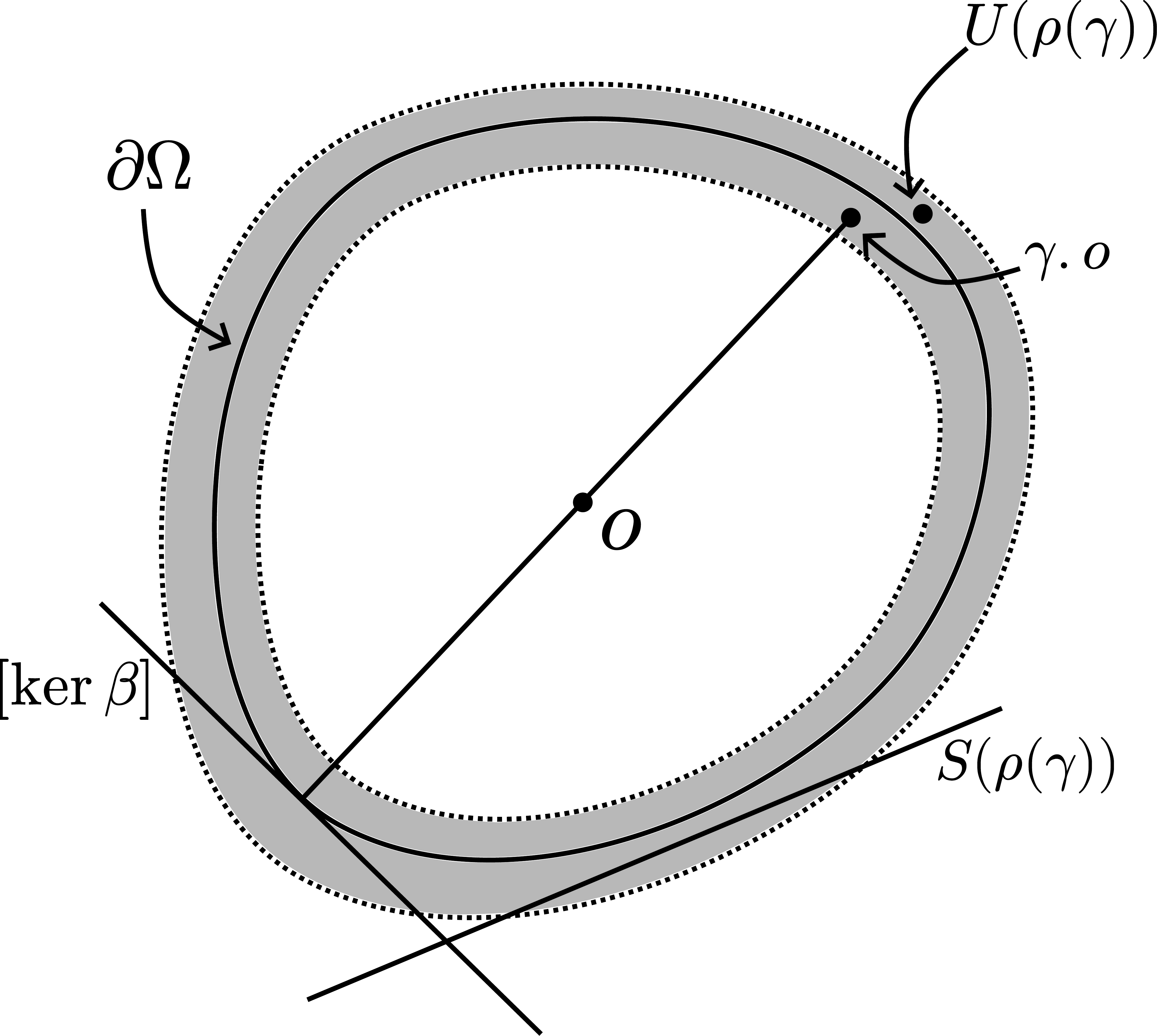}
    \caption{The domain and a neighborhood of its boundary in grey. The key pieces of the argument are labeled.}  
    \label{fig: sigma 1 prop}
\end{figure}

    Continuing the segment from $\gamma.o$ to $o$, we encounter a point $p \in \partial^\infty \Omega$, which has a unique supporting hyperplane $[\ker \beta]$; observe that $\gamma.o$ is not close to $[\ker\beta]$, by construction; see Figure \ref{fig: sigma 1 prop}.  Choose $\beta$ so that $\|\beta\|=1$ and $\beta(f(\Omega))>0$. 

    Then 
    \[\log\beta(f(\gamma.o)) = \sigma_1+ \log \beta\left(k_2a_1e_1 +O(e^{-c|\gamma|}) \right).\]
    Because $[\ker \beta]$ is antipodal to $U(\rho(\gamma)) = \langle k_2e_1\rangle$, and $|a_1|>\delta$, we conclude that $\beta(a_1k_2e_2)> \delta'$, where $\delta'>0$ is independent of (large enough) $\gamma$. This claim follows from the compactness of $\partial \Omega$ and continuity of the antipodal map induced by $o$.

    Since $\|\beta\|=1$, we have 
    \[\beta(k_2a_1e_1 +O(e^{-c|\gamma|})) \ge \delta' + O(e^{-c|\gamma|}).\]
    Thus
    \[\log\beta(f(\gamma.o)) \ge \sigma_1 + \log\delta' +O(e^{-c|\gamma|}/\delta'). \]

    On the other hand, using that $\|\beta\|=1$ and $\|f(\gamma.o)\|\le e^{\sigma_1}\|f(o)\|$, 
    \[\log\beta(f(\gamma.o))\le \sigma_1 + \log \|f(o)\|.\]

    Combining these estimates gives
    \[\left| \log\frac{\beta(f(\gamma.o))}{\beta(f(o)))} -\sigma_1\right|\le 2|\log \|f(o)\|| + \log(\delta') + O(e^{-c|\gamma|}/\delta') = O(1),\]
    where the constant term does not depend on $\gamma$, once $\gamma$ is large enough.

    Since there are only finitely many other small elements of $\Gamma$, the Proposition follows.
\end{proof}

\subsection{Properties of the geodesic flow and relation to the Hilbert metric}
Since our metric is complete, uniquely geodesic and $C^1$, the geodesic flow $g_t: T^1\Omega \to T^1\Omega$ is a well defined $C^1$ homeomorphism for all $t\in \R$.  
By invariance, the flow descends to the quotient
\[\uu g_t: T^1\Omega/\rho \to T^1\Omega/\rho.\]
Basic properties of geodesic flows in Hilbert and Finsler geometries can be found in the survey \cite{Crampon:Hilbertflow}.
The following is due to Benoist.

\begin{theorem}\label{thm: flow Anosov}
    The geodesic flow is an Anosov flow.
\end{theorem}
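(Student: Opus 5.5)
The statement to prove is that the geodesic flow $g_t$ of the domain shape metric $d_\rho$, on $T^1\Omega/\rho$, is Anosov. The plan is to reduce it to Benoist's theorem that the Hilbert geodesic flow of a strictly convex divisible domain is Anosov \cite{Benoist_CDI}. The key observation is that the two flows have the same orbits: by Theorem \ref{thm: rhodist} the $d_\rho$-geodesics are the directed projective line segments of $\Omega$, exactly as for the Hilbert metric. So the unit tangent bundles (for either Finsler norm) are identified by radially rescaling tangent vectors. Under this identification $g_t$ is a time change of the Hilbert flow $\phi_t$. Concretely, $g_t(v)=\phi_{\theta(v,t)}(v)$, where the cocycle $\theta$ is obtained by integrating a continuous positive $\rho$-invariant function $r(v)=F_{\mathrm{Hilb}}(v)/F_\rho(v)$. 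This function descends to the compact quotient, so it is bounded above and below.

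The main step is to pass the Anosov property through this time change. I would use the characterization of Anosov flows in terms of the orbit foliation and the boundary. The space of $d_\rho$-geodesics is $\partial^\infty\Omega\times\partial^\infty\Omega\setminus\Delta$ for both flows, and the weak stable and weak unstable foliations are the same for both. Leaves of the weak stable foliation are the geodesics sharing a forward endpoint, and leaves of the weak unstable foliation are those sharing a backward endpoint. Uniform exponential contraction along strong stable leaves for $g_t$ then follows from that for $\phi_t$, because $\theta(v,t)\ge t/C$ uniformly.

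The real issue is the \emph{strong} stable foliation. A reparametrization generally changes it: the new strong stable leaf through $v$ is the graph of the function $w\mapsto \lim_{t\to\infty}(\theta(w,t)-\theta(v,t))$ over the old one. This limit converges exponentially fast because $r$ is H\"older along the flow, and $r$ is $C^1$ by item 2 of Theorem \ref{thm: rhodist} together with the $C^1$ regularity of $f$. So the new strong stable leaves are continuous graphs, and the distance between their flowlines still contracts exponentially, with the rate divided by $\max r$. The strong unstable leaves are handled symmetrically. This gives a continuous invariant splitting with uniform rates, i.e.\ the standard fact that a H\"older time change of an Anosov flow is Anosov. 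Alternatively, one can see the contraction directly: for two rays with the same forward endpoint $r$, the definition \eqref{eqn:def metric} via the functional $\beta$ supporting at $r$ makes the difference of $\log\beta\circ f$ along them converge.

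The hardest step I expect is the regularity: justifying that $g_t$ is $C^1$ and that the time change preserves the Anosov property at the level of $C^1$ tangent splittings, not merely topologically. I would handle this by citing Crampon's treatment of Finsler and Hilbert flows \cite{Crampon:Hilbertflow} for the $C^1$ structure, and otherwise only claim the topological (metric) Anosov property, which is what later uses (SRB measures, closing lemma) require.
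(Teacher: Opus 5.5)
Your plan is the paper's proof: Benoist's theorem for the Hilbert flow, plus the observation that the domain shape flow is a $C^1$ time change of it (same oriented orbits, positive $C^1$ speed ratio). For the time-change step, which you sketch by hand, the paper simply cites \cite[Proposition 17.4.5]{KH:dynamicalsystems}. One caution: your fallback of claiming only the topological Anosov property would not support the Bowen--Ruelle/SRB argument of \S\ref{sec: measure} as written, since that argument needs the differentiable structure; keep the $C^1$ version, which your observation that $r$ is $C^1$ already provides.
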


\begin{proof}
    Benoist proved that the Hilbert geodesic flow on a strictly and properly convex divisible domain is Anosov \cite{Benoist_CDI}.  The geodesic flow for the domain shape metric is a $C^1$-reparameterization of the Hilbert geodesic flow, and is hence Anosov (e.g. \cite[Proposition 17.4.5]{KH:dynamicalsystems}).
\end{proof}

\section{The stable norm and geometry of best forward Lipschitz maps}\label{sec: stable norm}
As in \S\ref{subsec: coaffine reps}, assume $\rho: \Gamma \to \SL(d+1,\R)$ divides a strictly and properly convex domain $\Omega \subset \RP^d$ with quotient manifold $M =\Omega /\rho$, and let $d_\rho$  be a domain shape metric on $\Omega$ from Section \ref{sec: domain shape}.  
Let $T_\rho^1M$ be the tangent vectors of unit length for the (asymmetric) Finsler norm $F_\rho$ corresponding to $d_\rho$; see Theorem \ref{thm: rhodist}.

The goal of this section is to  use \emph{equivariant best forward Lipschitz} maps to understand the geometry of a cohomology class $ \varphi \in H^1(M, \R)$ with respect to $d_\rho$.  This is an indispensable tool for the rest of the article.  

For this, we study the \emph{stable norm} and its dual \emph{co-norm} on homology and cohomology, respectively.  If the metric on $M$ of interest were smooth Riemannian or Finsler, then our main Theorem \ref{thm: co-stable norm and optimal best maps} could be obtained by combining existing results in the literature.  
However, since the domain shape metric is neither smooth nor symmetric in general, special care is needed to deduce the correct statements and adapt relevant arguments from the literature.  For this reason we have chosen to give the (mostly elementary and self-contained) proof of Theorem \ref{thm: co-stable norm and optimal best maps} in Appendix \ref{appendix} so that it does not distract from the main focus of this article.

First we discuss some preliminary material on directional Lipschitz maps and the notion of chain recurrence for geodesic laminations.

\subsection{Forward Lipschitz maps}
We first recall some preliminaries on directional Lipschitz maps.

For an asymmetric metric space $(X,d)$, a map $f: X \to \R$ is called \textbf{$L$-forward Lipschitz}\footnote{There is also a notion of \emph{backward Lipschitz continuity}, but we will not need it.} if for all $x,y\in X$, we have
\[f(y)-f(x) \le Ld(x,y).\]
For a forward Lipschitz function $f: X \to \R$, the \textbf{forward Lipschitz constant} is defined
\[\vec \Lip(f) \defeq \sup_{x\not= y}\frac{f(y)-f(x)}{d(x,y)}.\]
For $x \in X$ and $\epsilon>0$, let $B_\epsilon(x) = \{y : d(x,y)<\epsilon \text{ and } d(y,x)<\epsilon\}$, and define the \text{local forward Lipschitz constant} by 
\[\vec\Lip_f(x) \defeq \lim_{\epsilon\searrow 0}\vec\Lip\left(f|_{B_{\epsilon}(x)}\right).\]
The function $x \mapsto \vec\Lip_f(x)$ is upper semi-continuous being the decreasing limit of upper semi-continuous functions, i.e., $\vec\Lip_f\inverse [a, \infty)$ is closed for all $a \in \R$. 

The \textbf{tension locus} of a $1$-forward Lipschitz map $f: X \to \R$ is
\begin{equation}\label{eqn: def tension}
    \Tension(f) \defeq \{x\in X : \vec\Lip_f(x) =1\}.
\end{equation}
By upper semi-continuity, $\Tension(f) \subset X$ is a closed set.

\subsection{Cohomology classes and best forward Lipschitz maps}

Let $\varphi \in H^1(M,\R)$ and define
\begin{equation}\label{eqn: def costable}
    \|\varphi\|_\rho \defeq \sup_{\gamma \in \Gamma} \frac{\varphi(\gamma)}{\lambda_1^\rho(\gamma)},
\end{equation}
and let 
\[\mathcal B_\rho \defeq \{\varphi \in H^1(M,\R) : \|\varphi\|_\rho \le 1\}.\]
One goal we have is to relate $\|\varphi\|_\rho$ with the smallest forward Lipschitz constant in a certain class of $\varphi$-equivariant maps $\Omega \to \R$, i.e., 
those forward Lipschitz maps $f: \Omega \to \R$ that satisfy 
\begin{equation}\label{eqn: varphi equiv}
    f(\gamma.p) = f(p)+\varphi(\gamma)
\end{equation}
for all $p \in \Omega$ and $\gamma \in \Gamma$.  

\begin{lemma}\label{lem: lower bound lip}
We have \[\|\varphi\|_\rho\le \min_{f\sim \varphi} \vec\Lip(f)\] 
where $f\sim\varphi$ means that $f: \Omega \to \R$ is a $\varphi$-equivariant forward Lipschitz map.
\end{lemma}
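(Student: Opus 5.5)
The inequality follows by evaluating an arbitrary $\varphi$-equivariant forward Lipschitz map along a well-chosen segment and comparing with the translation length formula in Theorem \ref{thm: rhodist}(3). Fix a $\varphi$-equivariant forward Lipschitz map $f:\Omega\to\R$ with constant $L=\vec\Lip(f)$. It suffices to show that
\[
\varphi(\gamma)\le L\,\lambda_1^\rho(\gamma)\qquad\text{for every nontrivial }\gamma\in\Gamma.
\]
Indeed, since $\lambda_1^\rho(\gamma)>0$ for every nontrivial $\gamma$ (each such element is proximal and $\rho(\gamma)\in\SL(d+1,\R)$), dividing by $\lambda_1^\rho(\gamma)$ and taking the supremum over $\gamma$ gives $\|\varphi\|_\rho\le L$. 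Taking the infimum over $f$ then gives the claim; here ``$\min$'' should be read as an infimum, or as a minimum once the existence of a best map is established later.

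To prove the displayed inequality, fix a nontrivial $\gamma$. Since $\rho(\gamma)$ is proximal, it has an attracting fixed point and a repelling fixed point in $\partial^\infty\Omega$. The projective line joining them meets $\Omega$ in a $\rho(\gamma)$-invariant segment. Choose a point $p\in\Omega$ on this axis. By Theorem \ref{thm: rhodist}(3), we have $d_\rho(p,\gamma.p)=\lambda_1^\rho(\gamma)$. Then equivariance and the forward Lipschitz property give
\[
\varphi(\gamma)=f(\gamma.p)-f(p)\le L\,d_\rho(p,\gamma.p)=L\,\lambda_1^\rho(\gamma).
\]

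There is essentially no obstacle, but two points of care are needed. First, the direction of the asymmetric metric must be correct: the forward Lipschitz inequality is applied with $x=p$ and $y=\gamma.p$, which matches $d_\rho(p,\gamma.p)$ in Theorem \ref{thm: rhodist}. Second, the quantity $\varphi(\gamma)$ may be negative. In that case the inequality $\varphi(\gamma)\le L\lambda_1^\rho(\gamma)$ holds trivially, because $L\ge 0$ whenever $\varphi\neq 0$; this follows from the same computation applied to some $\gamma$ with $\varphi(\gamma)>0$.

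If one preferred to avoid item 3, an alternative is to use $\varphi(\gamma^n)=n\varphi(\gamma)\le L\,d_\rho(o,\gamma^n o)$ together with Proposition \ref{prop: singular values and DS metric}. This gives $d_\rho(o,\gamma^n o)=\sigma_1(\rho(\gamma^n))+O(1)$, and since $\sigma_1(\rho(\gamma^n))/n\to\lambda_1^\rho(\gamma)$, dividing by $n$ and letting $n\to\infty$ yields the same bound.
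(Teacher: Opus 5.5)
Your proof of the key inequality $\varphi(\gamma)\le L\,\lambda_1^\rho(\gamma)$ is the same as the paper's: evaluate a $\varphi$-equivariant $L$-forward Lipschitz map at a point $p$ on the axis of $\rho(\gamma)$, then apply item 3 of Theorem \ref{thm: rhodist}. Your case discussion on the sign of $\varphi(\gamma)$ is unnecessary. For every $f$ we have $\vec\Lip(f)\ge 0$, and $f(y)-f(x)\le \vec\Lip(f)\,d_\rho(x,y)$ holds for all pairs by definition, so the bound holds for every $\gamma$ at once.

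What your proposal does not prove is the content carried by the word ``$\min$''. The statement asserts that $\varphi$-equivariant forward Lipschitz maps exist, and that the infimum of $\vec\Lip$ over them is attained. The paper proves both inside this lemma.

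\emph{Non-emptiness.} Take a smooth closed $1$-form $\omega$ on $M$ representing $\varphi$. Then $f_\omega(p)=\int_o^p\omega$ is $\varphi$-equivariant and $\|\omega\|_\infty$-forward Lipschitz.

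\emph{Attainment.} Normalize a minimizing sequence so that $f_n(o)=0$; adding a constant preserves $\varphi$-equivariance. By cocompactness, $d_\rho(p,q)\le K\,d_\rho(q,p)$ uniformly, so the $f_n$ are equi-Lipschitz and locally bounded for the symmetrized metric. Arzel\`a--Ascoli then gives a subsequence converging uniformly on compact sets. Both $\varphi$-equivariance and the inequality $f(y)-f(x)\le (\lim_n \vec\Lip(f_n))\,d_\rho(x,y)$ pass to the limit.

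Your fallback of invoking the later existence result is not circular. Theorem \ref{thm: best Lipschitz achieved}, applied to $\varphi/\|\varphi\|_\rho$ when $\varphi\neq 0$ (constants handle $\varphi=0$), constructs its map directly without using this lemma. So that route does yield the statement, but only once that theorem is available. The short compactness argument above is what makes the lemma self-contained where it appears.
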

\begin{proof}
    First we show that the set over which the infimum is taken  is non-empty.  
    For this, let $\omega$ be a closed smooth $1$-form on $M$ representing $\varphi$, i.e., $\varphi$ is the cohomology class determined by integrating Lipschitz $1$-cycles against $\omega$.
Define 
\[\|\omega\|_\infty \defeq \max_{v \in T_\rho^1M} \omega(v).\]
Choose a point
$o \in \Omega$ and define a function $f_\omega : \Omega \to \R$ by the rule
    \[f_\omega(p) =\int_o^p\omega.\]
A direct computation involving properties of the integral and the definitions verifies that $f_\omega$ is $\|\omega\|_\infty$-forward Lipschitz and satisfies \eqref{eqn: varphi equiv}.

Now, if $f: \Omega \to \R$ is a $\varphi$-equivariant $L$-forward Lipschitz map, observe that 
\[\varphi(\gamma) = f(\gamma.p)-f(p) \le Ld_\rho(p,\gamma.p)\]
holds for all $p\in \Omega$.  Using item 3 of Theorem \ref{thm: rhodist}, we see that
\begin{equation}\label{eqn: norm less than lipschitz}
    \varphi(\gamma) \le L\lambda_1^\rho(\gamma).
\end{equation}

Up to adding constants,  $L$-forward Lipschitz maps are compact in the topology of uniform convergence on compact sets.  The infimum is thus attained as a minimum, proving the Lemma.  
\end{proof}

The following theorem shows, in particular, that there is an equivariant forward Lipschitz function whose Lipschitz constant achieves this lower bound and also gives some geometric information about any such extremal map. Recall the definition of the tension locus from \eqref{eqn: def tension}.
\begin{theorem}\label{thm: co-stable norm and optimal best maps}
    Let $\varphi\in \partial \mathcal B_\rho \subset H^1(M,\R)$, i.e., $\|\varphi\|_\rho =1$.
    There is a $\varphi$-equivariant $1$-forward Lipschitz map $\tau:\Omega \to \R$ with the following properties.
    \begin{enumerate}

        \item $\Tension (\tau)\subset \Omega$ is a non-empty $\rho$-invariant geodesic lamination, that descends to an oriented chain recurrent geodesic lamination $\Tension(\tau)/\rho\subset M$.
        \item $\tau$ is \textbf{optimal} in the sense that 
        \[\Tension (\tau) = \Tension (\varphi) \defeq \bigcap_{f} \Tension(f),\]
        where the intersection is taken over all $\varphi$-equivariant $1$-forward Lipschitz maps. 
        \item If $\{\gamma_n\}$ is any sequence in $\Gamma$ satisfying 
        \[\lim_{n \to \infty} \frac{\varphi(\gamma_n)}{\lambda_1^\rho(\gamma_n)} =1,\]
        then every accumulation point of $\{\gamma_n\}$ in the measure topology has support that is contained in $\Tension(\varphi)/\rho \subset M$.  
    \end{enumerate}
    In particular $\|\varphi\|_\rho = \min_{f\sim \varphi}\vec \Lip(f) = \vec\Lip(\tau)$.
\end{theorem}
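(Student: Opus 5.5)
The plan follows the Thurston / Guéritaud--Kassel strategy for best Lipschitz maps, adapted to the asymmetric, $C^1$ Finsler metric $d_\rho$. By Lemma \ref{lem: lower bound lip}, minimizers $f\sim\varphi$ exist with $\vec\Lip(f)\ge 1$.

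\emph{Step 1: the minimal constant equals $1$.} I will show that the minimal forward Lipschitz constant $L_0$ is $1$ and that every minimizer has non-empty tension locus. Let $f$ be a minimizer. If $\vec\Lip_f(x)<L_0$ at every point, then by upper semicontinuity and cocompactness there is a uniform $\delta$ with $\vec\Lip_f<L_0-\delta$ on a neighborhood of any compact fundamental domain. Since geodesics are projective segments, $\vec\Lip(f)$ is computed along segments, where local constants control global ones. This would give $\vec\Lip(f)<L_0$, a contradiction, so $\Tension(f)\neq\emptyset$.

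To get $L_0=1$, I take a point $x$ with $\vec\Lip_f(x)=L_0$ and extend the locally stretched direction to a maximally stretched segment. By a standard convexity argument, the set of directed segments along which $f$ increases at rate exactly $L_0$ is closed and invariant under concatenation at interior points. Arguing as in Guéritaud--Kassel, a maximally stretched segment through a point of $\Tension(f)$ extends to a complete directed line $\ell$ with $f(\ell(t))-f(\ell(s))=L_0(t-s)$. Projecting $\ell$ to $M$ and applying the Anosov closing lemma for the geodesic flow (Theorem \ref{thm: flow Anosov}) to recurrent pieces of $\ell$ produces closed geodesics $\gamma_n$. Their $f$-increments satisfy
\[\varphi(\gamma_n)\ge L_0\lambda_1^\rho(\gamma_n)-o(\lambda_1^\rho(\gamma_n)).\]
Here Theorem \ref{thm: rhodist}(3) identifies the translation length with $\lambda_1^\rho$. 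Hence $\|\varphi\|_\rho\ge L_0$, and combined with Lemma \ref{lem: lower bound lip} this gives $L_0=\|\varphi\|_\rho=1$.

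\emph{Step 2: the tension locus is a lamination.} I will show that for any $1$-forward Lipschitz $\varphi$-equivariant $f$, $\Tension(f)$ is a union of complete directed lines along which $f$ is isometric, and that these lines do not cross. Non-crossing is the step I expect to be the \emph{main obstacle}. In the symmetric Riemannian setting, crossing stretched geodesics are ruled out by a triangle inequality with strict inequality in the shortcut. Here I would use strict convexity of the unit spheres of $F_\rho$ (Theorem \ref{thm: rhodist}(2)). Suppose two stretched directed lines meet at $p$ with distinct directions. Concatenate an incoming segment of one with an outgoing segment of the other, and compare the result with the straight segment joining their endpoints. Strict convexity makes the straight segment strictly shorter, while $f$ still increases by the full concatenated length. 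This violates the $1$-forward Lipschitz property. Orientation consistency of the leaves follows in the same way, since distinct directions at $p$ are excluded. Chain recurrence then follows because leaves are limits of near-closed stretched orbits: every point of the quotient is approximable by the closing-lemma curves of Step 1.

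\emph{Step 3: an optimal map and the measure-topology statement.} To obtain an optimal $\tau$, I average. Take countably many minimizers $f_i$ whose tension loci realize the intersection $\Tension(\varphi)$; this is possible by separability and the closedness of tension loci. Set $\tau=\sum 2^{-i}f_i$, suitably normalized. Then $\tau$ is $\varphi$-equivariant and $1$-forward Lipschitz, and a segment is stretched by $\tau$ only if it is stretched by every $f_i$. This gives item 2.

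For item 3, suppose $\varphi(\gamma_n)/\lambda_1^\rho(\gamma_n)\to1$. I integrate the slack $1-d\tau(\dot g)$ along the closed geodesics $\gamma_n$. The slack is a non-negative function that is continuous in a suitable averaged sense, and it is positive away from $T^1_+\Tension(\tau)$ by upper semicontinuity. Its average against $\delta_{\gamma_n}$ tends to $0$, so every weak-$*$ limit of $\delta_{\gamma_n}$ is supported on $T^1_+\Tension(\tau)/\rho$.
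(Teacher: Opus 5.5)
Your Step~1 and the opening claim of Step~2 rely on an assertion that is false for the maps you apply it to. For an \emph{arbitrary} minimizer $f$, a point with $\vec\Lip_f(x)=L_0$ need not lie on any segment along which $f$ is isometric. A one-variable model is $t\mapsto t-t|t|^{1/2}$ at $t=0$: its derivative $1-\tfrac32|t|^{1/2}$ equals $1$ only at $0$. Likewise $\Tension(f)$ need not be a union of complete stretched lines: a McShane modification of an optimal map away from $\lambda$ gives a minimizer with a short stretched segment that does not extend. Neither a ``convexity argument'' nor non-crossing produces stretched segments.

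What is true is that every point of the \emph{optimal} locus $\Tension(\varphi)$ lies in the interior of a stretched segment. This is the real heart of the lamination structure (Lemma~\ref{lem: exists line}). For an optimal $\tau$, compare the maximum of $\tau$ on $\cev B_r(p)$ with its minimum on $\vec B_r(p)$.
\begin{itemize}
\item If the difference is less than $2r$, redefine $\tau$ to be constant on a small ball around $p$, keeping it on the boundary. Extend by the forward McShane Theorem~\ref{thm: extension} and then $\varphi$-equivariantly. This gives a competitor that is not tense at $p$, contradicting $p\in\Tension(\varphi)$.
\item If the difference equals $2r$, you read off stretched segments $[x,p]$ and $[p,y]$, which are collinear by Lemma~\ref{lem: lines dont cross}.
\end{itemize}
So the averaging of your Step~3 must come \emph{first}, and this modification lemma is the missing step. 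Non-crossing, which you single out as the main obstacle, is the easy part. As written, the hard inequality $\min_{f\sim\varphi}\vec\Lip(f)\le\|\varphi\|_\rho$, which you derive from the complete line $\ell$, is not established.

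Your chain-recurrence argument also does not work. Closing-lemma curves built from recurrent pieces of a stretched line only approximate the recurrent part of that line. A leaf spiralling from one closed leaf onto another, with no way back, is approximated by none of them, and nothing in your argument excludes such a leaf from $\Tension(\tau)$. Ruling it out needs a further optimality argument: modify $\tau$ near the non-chain-recurrent part to lower its local constant there. The paper imports this argument in the proof of Theorem~\ref{thm: optimal map}. Item~3 is the paper's $L_\delta$ argument and is fine.

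Once repaired, your route to $L_0=\|\varphi\|_\rho$ is genuinely different from the paper's, and more elementary. The paper gets the hard inequality from the stable norm, the Burago/Ammann--L\"oh estimate (Theorem~\ref{thm: doubling}), and a Busemann-type function on the universal abelian cover (Theorem~\ref{thm: best Lipschitz achieved}). Your version goes as follows.
\begin{itemize}
\item Averaging and the modification lemma work at level $L_0$ after replacing $d_\rho$ by $L_0 d_\rho$.
\item Your local-to-global argument then gives a point where $\vec\Lip_\tau=L_0$. This differs from Lemma~\ref{lem: Tension is non-empty}, which presupposes a map with constant exactly $\|\varphi\|_\rho$.
\item A complete $L_0$-stretched line gives $\|\varphi\|_\rho\ge L_0$ with no closing lemma. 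Pick $g\in\Gamma$ with $g.\ell(s)$ within bounded distance of $\ell(t)$. Then $\varphi(g)\ge L_0(t-s)-C$, while $\lambda_1^\rho(g)\le d_\rho(\ell(s),g.\ell(s))\le t-s+C$.
\end{itemize}
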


See \S\ref{subsec: geodesic laminations} for further explanation of what item (3) means.   

\medskip

Parts of the Theorem are well-known in some more familiar settings, e.g., $C^2$--Finsler metrics or negatively curved Riemannian metrics.  
We provide a fairly self-contained proof in Appendix \ref{appendix} adapting arguments from the literature, namely from \cite{McShane,GK:stretch, Bangert:equiv, Burago:periodic, BIK:stablenorm,AL:stablenorm, FLM:II}.

\section{Slack of paths}\label{sec: slack}

Assume $\rho: \Gamma \to \SL(d+1,\R)$ divides a strictly and properly convex domain $\Omega \subset \RP^d$ with quotient manifold $M =\Omega /\rho$.
Let $\varphi \in H^1(M,\R)$ be such that $\|\varphi\|_\rho =1$.  Using  Theorem \ref{thm: co-stable norm and optimal best maps}, we find a $\varphi$-equivariant $1$-forward Lipschitz optimal tight map 
\[\tau: \Omega \to \R\]
with tension locus $\Tension(\tau) = \Tension (\varphi)$.  
Recall that $\Tension(\tau)$ is an oriented geodesic lamination and $\Tension(\tau)/\Gamma \subset M$ is chain recurrent.

The goal of this section is to define a useful quantity called \textbf{slack} associated to paths, following \cite{FLM:II}.  Let $\alpha : [a,b] \to \Omega$ be any rectifiable path, where $a<b\in \R$, and define
\[\s(\alpha) \defeq \ell(\alpha) - (\tau(\alpha(b)) - \tau(\alpha(a))),\]
where $\ell(\alpha)\ge 0$ is the \emph{directed} length of $\alpha$.
We record here two basic properties of slack that follow directly from the definitions and properties of $\tau$.  
\begin{lemma}\label{lem: slack non-negative}
    For any $\alpha: [a,b] \to \Omega$, we have $\s(\alpha)\ge0$ with equality if and only if $\alpha$ is a constant map or $\im(\alpha)\subset \Tension (\tau)$ and $\alpha$ is a non-decreasing parameterization of its image in the forward orientation, i.e., $\alpha$ is a (directed) reparameterization of the (directed) leaf of $\Tension(\tau)$ containing it.
\end{lemma}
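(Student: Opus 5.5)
Non-negativity comes straight from $\tau$ being $1$-forward Lipschitz together with the definition of length. For equality I split into the two directions: first show that constant maps and forward reparameterizations of tension leaves have zero slack, then show that zero slack forces one of these two forms.

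For non-negativity, take any partition $a=t_0<\dots<t_n=b$. Forward Lipschitz continuity gives
\[\tau(\alpha(t_{i}))-\tau(\alpha(t_{i-1}))\le d_\rho(\alpha(t_{i-1}),\alpha(t_i)).\]
Summing and using $\sum_i d_\rho(\alpha(t_{i-1}),\alpha(t_i))\le \ell(\alpha)$ yields $\tau(\alpha(b))-\tau(\alpha(a))\le \ell(\alpha)$, i.e. $\s(\alpha)\ge 0$.

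For the "if" direction, a constant map has $\ell=0$ and no change in $\tau$, so its slack is $0$. Suppose instead that $\alpha$ is a non-decreasing parameterization of a segment $[x,y]$ of a directed leaf of $\Tension(\tau)$. Then $\ell(\alpha)=d_\rho(x,y)$, because projective segments are the $d_\rho$-geodesics (Theorem \ref{thm: rhodist}). It remains to see that $\tau$ is isometric along directed leaves of the tension lamination: this is the content of Theorem \ref{thm: co-stable norm and optimal best maps} (equivalently, Theorem \ref{thm: tight map and tension intro}, which describes $\Tension(\tau)$ as the set of directed lines along which $\tau$ is isometric). Hence $\tau(y)-\tau(x)=d_\rho(x,y)$ and $\s(\alpha)=0$.

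For the converse, suppose $\s(\alpha)=0$ and $\alpha$ is not constant. The chain of inequalities above consists of two inequalities, the length comparison and the Lipschitz bound, and their sum is an equality, so each must be an equality. This has three consequences.
\begin{enumerate}
\item Applying the same reasoning to every subpath (slack is additive and non-negative), equality of length with the sum of distances shows that $\alpha$ realizes $d_\rho(\alpha(s),\alpha(t))$ for all $s<t$. Unique geodesicity (Theorem \ref{thm: rhodist}) then makes $\alpha$ a monotone, non-decreasing parameterization of the projective segment $[\alpha(a),\alpha(b)]$.
\item The Lipschitz bound is attained: $\tau(q)-\tau(p)=d_\rho(p,q)$ for all $p,q$ on the segment in forward order.
\item Consequently the local forward Lipschitz constant equals $1$ at every point of the segment, so the image lies in $\Tension(\tau)$ by \eqref{eqn: def tension}.
\end{enumerate}

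The main obstacle is upgrading consequence (3) to the claim that the segment is contained in a single leaf of the lamination, traversed in the forward orientation. A segment lying inside the closed set $\Tension(\tau)$ could a priori cross leaves or run backwards along one. My first approach is to use the characterization in Theorem \ref{thm: tight map and tension intro}, which says the lamination is exactly the set of directed lines along which $\tau$ is isometric. Since $\tau$ is isometric along our directed segment, I would extend it maximally in both directions while keeping $\tau$ isometric, and argue that the resulting complete line is a leaf with matching orientation; the appendix's proof of Theorem \ref{thm: co-stable norm and optimal best maps} should provide this extension property. If that fails, I would argue directly. Take a point $p$ in the segment and the leaf $\ell$ through $p$. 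If the segment were transverse to $\ell$, or ran along $\ell$ in the reverse direction, then combining isometry along $\ell$ with isometry along the segment would give $\tau(z)-\tau(w)=d_\rho(w,x)+d_\rho(x,z)$ for suitable points $w$, $x$, $z$ with $w,x$ on one of the two lines and $x,z$ on the other. Strict convexity of the unit spheres of $F_\rho$ (so the triangle inequality is strict off geodesics) makes this exceed $d_\rho(w,z)$, contradicting the $1$-forward Lipschitz property.
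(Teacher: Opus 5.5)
Your proof is correct. It is the natural unpacking of what the paper means when it says this lemma follows directly from the definitions and the properties of $\tau$; the paper gives no written proof.

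In particular, your fallback ``direct'' argument for the only-if step is exactly Lemma \ref{lem: lines dont cross} from the appendix. You can cite that lemma directly and drop the vaguer maximal-extension route.
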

Clearly, slack is additive.  That is, if $\alpha = \alpha_1*\alpha_2$, then $\s(\alpha) = \s(\alpha_1) +\s(\alpha_2)$.  
For two points $p,q \in \Omega$, we write $\s[p,q]$ to be the slack of the directed geodesic segment $[p,q]$ joining $p$ to $q$ in $\Omega$.

We extend the definition to (bi)-infinite paths as follows.  If $I \subset \R$ is connected and $\alpha: I \to \Omega$ is rectifiable, then we define
\[\s(\alpha)\defeq \sup_{J\subset I}\s(\alpha|_J)\in [0,\infty].\]
By additivity and non-negativity, if $\alpha: \R\to \Omega$, we have  
\[\s(\alpha) = \lim_{t\to \infty} \ell(\alpha|_{[-t,t]}) - \tau(\alpha(t)) +\tau(-t).\]
We record the following easy continuity property of slack. 
\begin{lemma}\label{lem: slack continuous finite}
    Let $A, B \subset \Omega$ be subsets with $\diam A \le \epsilon_1$ and $ \diam B \le \epsilon_2$.\footnote{By $\diam A\le \epsilon_1$, we mean that $d_\rho(a_1, a_2)\le \epsilon_1$ for all $a_1, a_2 \in A$.}  For any points $a_1, a_2 \in A$ and $b_1, b_2 \in B$, we have 
    \[\s[a_1, b_1]-\s[a_2, b_2] \le 2(\epsilon_1+\epsilon_2).\]
\end{lemma}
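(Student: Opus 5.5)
Because slack is the directed length minus the increase of $\tau$, I will split the difference $\s[a_1,b_1]-\s[a_2,b_2]$ into a length part and a $\tau$ part and bound each by the triangle inequality for $d_\rho$ or by the forward Lipschitz property of $\tau$. Since geodesics of $d_\rho$ are projective segments (Theorem \ref{thm: rhodist}), $\ell[p,q]=d_\rho(p,q)$, so
\[
\s[a_1,b_1]-\s[a_2,b_2] = \bigl(d_\rho(a_1,b_1)-d_\rho(a_2,b_2)\bigr) + \bigl(\tau(a_1)-\tau(a_2)\bigr) + \bigl(\tau(b_2)-\tau(b_1)\bigr).
\]

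For the length term I apply the asymmetric triangle inequality along the path $a_1\to a_2\to b_2\to b_1$. This gives $d_\rho(a_1,b_1)\le d_\rho(a_1,a_2)+d_\rho(a_2,b_2)+d_\rho(b_2,b_1)$, hence
\[
d_\rho(a_1,b_1)-d_\rho(a_2,b_2)\le d_\rho(a_1,a_2)+d_\rho(b_2,b_1)\le \epsilon_1+\epsilon_2 .
\]
The diameter hypothesis as stated in the footnote controls $d_\rho$ in both orders, so the orientation of each term does not matter.

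For the $\tau$ terms I use that $\tau$ is $1$-forward Lipschitz, meaning $\tau(y)-\tau(x)\le d_\rho(x,y)$. With $x=a_2$, $y=a_1$ this gives $\tau(a_1)-\tau(a_2)\le d_\rho(a_2,a_1)\le\epsilon_1$. With $x=b_1$, $y=b_2$ it gives $\tau(b_2)-\tau(b_1)\le d_\rho(b_1,b_2)\le \epsilon_2$.

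Summing the three bounds yields $\s[a_1,b_1]-\s[a_2,b_2]\le 2(\epsilon_1+\epsilon_2)$. There is no genuine obstacle. The only point needing care is that each step uses $d_\rho$ in the order compatible with the asymmetry, and the two-sided diameter assumption makes every such order permissible.
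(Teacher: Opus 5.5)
Your proof is correct and is essentially the paper's argument. It uses the same three-term decomposition of $\s[a_1,b_1]-\s[a_2,b_2]$, the triangle inequality for the length difference, and the $1$-forward Lipschitz property of $\tau$ for the two $\tau$-differences. Your explicit chain $a_1\to a_2\to b_2\to b_1$, together with the remark that the two-sided diameter hypothesis also controls $d_\rho(b_2,b_1)$, spells out a step the paper compresses into one line.
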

\begin{proof}
    By hypothesis that $d_\rho(a_1, a_2)\le \epsilon_1$ and $d_\rho(b_1, b_2) \le \epsilon_2$, by the triangle inequality, we have $\ell[a_1,b_1]  - \ell[a_2,b_2] = d_\rho(a_1, b_1) -d_\rho(a_2, b_2) \le \epsilon_1 +\epsilon_2$.  Since $\tau$ is $1$-forward Lipschitz, we have $\tau(a_1) - \tau(a_2) \le d(a_2,a_1)\le \epsilon_1$ and $\tau(b_2)-\tau(b_1)\le d_\rho(b_1, b_2)\le \epsilon_2$.  Thus
    \[\s[a_1, b_1]-\s[a_2,b_2] = \ell[a_1,b_1] -\ell[a_2,b_2] +(\tau(b_2)-\tau(b_1)) + (\tau(a_1) -\tau(a_2)) \le 2(\epsilon_1+\epsilon_2),\]
    completing the proof.
\end{proof}

If $x, y \in \partial\Omega$ are distinct, then it makes sense to talk about $\s[x,y]\in [0,\infty]$.
Indeed, consider the set $\mathcal G$ of pairs of leaves of $\Tension(\tau)$.  

\begin{lemma}\label{lem: slack finite and continuous}
    If $x,y \in \partial \Omega$ are such that $x$ is the backward endpoint of a leaf $l_x$ of $\Tension(\tau)$ and $y$ is the forward endpoint of a leaf $l^y$ of $\Tension(\tau)$, then $\s[x,y]<\infty$.  Moreover, the function $(l_x,l^y) \in \mathcal G \mapsto \s[x,y]$ is continuous.
\end{lemma}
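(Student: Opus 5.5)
The plan is to define $\s[x,y]$ for ideal points as the supremum (equivalently, the increasing limit) of $\s[p,q]$ over finite subsegments $[p,q]\subset [x,y]$. Finiteness then reduces to showing that the slack of long initial and terminal pieces of $[x,y]$ stays bounded.

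The key geometric input is that $[x,y]$ is forward asymptotic to $l^y$ and backward asymptotic to $l_x$. Both are projective lines in the strictly convex domain $\Omega$, and geodesics of $d_\rho$ sharing an ideal endpoint converge exponentially fast. This follows from the Anosov property of the geodesic flow (Theorem \ref{thm: flow Anosov}), or directly from $C^1$ strict convexity of $\partial\Omega$ and the comparison of $d_\rho$ with the Hilbert metric.

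With this in hand, parametrize $[x,y]$ by arclength as $c(t)$. Choose points $q_t \in l^y$ with $d_\rho(c(t), q_t)$ and $d_\rho(q_t,c(t))$ at most $Ce^{-\kappa t}$ for $t \ge 0$, and similarly $p_s\in l_x$ close to $c(-s)$. For $s,t$ large, split
\[[c(-s),c(t)] = [c(-s),c(0)]\ast[c(0),c(t)].\]
Compare $[c(0),c(t)]$ with the path $[c(0),q_0]\ast[q_0,q_t]\ast[q_t,c(t)]$. Its middle piece lies in the tension locus, so it has zero slack by Lemma \ref{lem: slack non-negative}.

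To bound the difference, compare lengths: by the triangle inequality $\ell[c(0),c(t)] \le d_\rho(c(0),q_0)+d_\rho(q_0,q_t)+d_\rho(q_t,c(t))$. The $\tau$-increments agree up to the endpoints, whose differences are controlled by the $1$-forward Lipschitz property. This gives
\[\s[c(0),c(t)] \le 2\big(d_\rho(c(0),q_0)+d_\rho(q_0,c(0))\big) + O(e^{-\kappa t}).\]
So $\s[c(0),c(t)]$ is uniformly bounded in $t$, and it is monotone in $t$ by additivity and non-negativity of slack. The symmetric argument bounds $\s[c(-s),c(0)]$. Hence $\s[x,y] < \infty$.

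For continuity on $\mathcal G$, write $\s[x,y]$ as the limit $\lim_{s,t\to\infty}\s[c(-s),c(t)]$. The argument above shows that the tail contributions
\[\s[c(t),c(T)] \quad\text{for } T>t\]
are bounded by the distance between $c(t)$ and $l^y$, up to the exponential error. This tail bound must be uniform over pairs of leaves $(l_x,l^y)$ near a given pair $(l_{x_0},l^{y_0})$. If it is, each truncated slack $\s[c(-s),c(t)]$ is continuous by Lemma \ref{lem: slack continuous finite}, since the endpoints move continuously with $(x,y)$. A uniform limit of continuous functions is continuous, which gives the claim.

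The uniform exponential convergence is the main obstacle. I would obtain it by restricting to a compact neighborhood in the space of pairs of leaves and applying cocompactness: translate by $\rho(\Gamma)$ so that the relevant points lie in a fundamental domain. Closedness of $\Tension(\tau)$ guarantees that nearby leaves vary continuously. One must also take care that for pairs with $x=y$, or with $x$ equal to the forward endpoint of $l^y$, the segment $[x,y]$ degenerates; these cases are excluded implicitly, and near them the argument should be restricted to the set where $x\neq y$.
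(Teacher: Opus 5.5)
Your proposal is correct and follows essentially the same route as the paper: you bound the slack of the initial and terminal pieces of $[x,y]$ by comparing them, via Lemma~\ref{lem: slack continuous finite}, with zero-slack segments of $l_x$ and $l^y$ that they asymptotically fellow travel, and you obtain continuity from the continuity of the slack of a compact middle segment together with small tail contributions for nearby pairs. You are, if anything, more explicit than the paper that these tail estimates must be uniform over nearby pairs of leaves; the paper's argument uses this implicitly.
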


\begin{proof}
    First we prove the finiteness statement.
    Given $\epsilon>0$, we claim that there are points $a_1, b_1 \in [x,y]$, $a_2 \in l_x$ and $b_2 \in l^y$ such that the following hold.  Every point on $[x,a_1]\subset [x,y]$ is distance at most $\epsilon$ from $[x,a_2]\subset l_x$, and vice versa.  Also, every point on $[b_1,y]\subset [x,y]$ is distance at most $\epsilon$ from $[b_2, y]\subset l^y$, and vice versa.  
    Indeed, as in \S\ref{sec: domain shape}, the Hilbert geodesic flow on $\Omega$ is Anosov, so asymptotic lines have distance tending to zero in the Hilbert metric.  Since $d_\rho(a,b) \le 2d_H(a,b)$ holds for all $a,b \in \Omega$, this implies the claim.

    By Lemma \ref{lem: slack non-negative}, we have $\s[x,a_2]=0$ and $\s[b_2,y] = 0$.
    For any $c_1 \in (x, a_1)$, there is a $c_2\in (x,a_2)$ within distance $\epsilon$ of $c_1$.  Using Lemma \ref{lem: slack continuous finite} we have  
    \[\s[c_1,a_1] \le \s(c_2,a_2) + 4\epsilon = 4\epsilon.\]
    Thus $\s[x,a_1]\le 4\epsilon$.  A similar argument gives $\s[b_1,y]\le 4\epsilon$.
    Putting this all together, we have \[\s[x,y] = \s[x,a_1]+\s[a_1,b_1]+\s[b_1,y] \le 8\epsilon +\s[a_1,b_1]<\infty,\]
    proving the finiteness claim.
    \medskip
    
    Now we prove continuity.
    The topology on geodesics is such that $l$ and $l'$ are close if they are close in the Hausdorff topology on large metric balls around some basepoint $o\in \Omega$.
    
    Let  $(l_x, l^y)\in \mathcal G$ and $\epsilon>0$ be given. Consider the geodesic $[x,y]$ and find points $a_1 , b_1 \in [x,y]$ and points $a_2 \in l_x$ and $b_2 \in l^y$, 
    as in  the first part of the proof.  If $x'$ is close enough to $x$ and $y'$ is close enough to $y$, then the geodesic $[x',y']$ is $\epsilon$-close to $[x,y]$  in a large ball around $o$ that contains $[a_1,b_1]\subset [x,y]$. 
    In particular, there are $a_1', b_1' \in [x',y']$ such that $\diam\{a_1, a_1'\}\le \epsilon$ and $\diam \{b_1, b_1'\}\le \epsilon$.  
    
    Now, if $l_{x'}$ is close enough to $l_x$ and $l^{y'}$ is close enough to $l^y$, then  also there are points $a_2' \in l_{x'}$ and $b_2' \in l^{y'}$ such that $\diam\{a_1', a_2'\}\le \epsilon$ and $\diam\{b_1', b_2'\}\le \epsilon$.
    We have 
    \[\s[x,y] = \s[a_1,b_1]+O(\epsilon),\]
    and
    \[\s[x',y'] = \s[a_1',b_1']+O(\epsilon).\]
    Applying Lemma \ref{lem: slack continuous finite} gives 
    \[\s[a_1,b_1]=\s[a_1',b_1']+O(\epsilon).\]
    Thus, $\s[x,y] = \s[x',y']+O(\epsilon)$ if $(l_x,l^y)$ is close enough to $(l_{x'}, l^{y'})$ in $\mathcal G$.    
\end{proof}

For notational convenience, we ignore the difference between $\tau$ and $\pi^*\tau$, where $\pi: T^1\Omega \to \Omega$ is the projection.  Also, we use $\lambda$ to denote $\Tension (\tau) \subset \Omega$ and $\underline \lambda$ to denote $\Tension(\tau)/\rho\subset M$.  Using its orientation, let $T^1_+\lambda\subset T^1M$ be the directions tangent to the leaves of $\lambda$ in the forward direction.
Choose a distance $d$ on $T^1M$ for which lifts of geodesics from $M$ have the same $d$-length and the projection is $1$-Lipschitz.

Given $\delta>0$, recall from \eqref{eqn: def L_delta} the (continuous) function 
\[L_\delta: T_\rho^1M \to \R \]
defined with respect to $\tau$ by
\[L_\delta(x) = \frac{\tau(g_\delta\tilde x) - \tau(\tilde x)}{\delta},\]
where $\tilde x \in T^1_\rho\Omega$ is any point in the preimage of $x\in T^1M$.
By optimality of $\tau$, the set $L_\delta\inverse(1)$ is exactly the set $T^1_+\underline \lambda \subset T^1M$.
The following is a fairly straightforward consequence of the definitions and continuity.

\begin{lemma}\label{lem: slack grows}
    For any $\delta>0$ and any $\epsilon>0$, there is a $\vartheta>0$ such that if $T\ge 2\delta$ and  $\alpha: [0,T] \to \Omega$ is a directed geodesic segment for which $\alpha': [0,T] \to T^1\Omega$ avoids the $\epsilon$-neighborhood of $T^1_+\lambda$, then 
    \[\s(\alpha)\ge \vartheta\ell(\alpha).\]
\end{lemma}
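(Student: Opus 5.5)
We argue by compactness, discretizing $\alpha$ into pieces of length roughly $\delta$ and bounding each piece's slack from below with the function $L_\delta$. The key observation is that for a directed geodesic segment of length $\delta$ starting at $\tilde x\in T^1_\rho\Omega$, its slack is exactly
\[
\delta - (\tau(g_\delta \tilde x)-\tau(\tilde x)) = \delta\,(1-L_\delta(x)),
\]
where $x$ is the image of $\tilde x$ in $T^1_\rho M$. Indeed, the segment is a $d_\rho$-geodesic, so its directed length is $\delta$.

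First consider the compact set $K_\epsilon = T^1_\rho M\setminus N_{\epsilon/2}(T^1_+\uu\lambda)$, the complement of the open $\epsilon/2$-neighborhood. It is compact because $M$ is closed. The function $L_\delta$ is continuous, satisfies $L_\delta\le 1$ since $\tau$ is $1$-forward Lipschitz, and $L_\delta\inverse(1)=T^1_+\uu\lambda$ by optimality. Hence $m\defeq \max_{K_\epsilon}L_\delta<1$, and every length-$\delta$ geodesic segment whose initial vector lies in $K_\epsilon$ has slack at least $\delta(1-m)$.

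Next, let $\alpha:[0,T]\to\Omega$ be as in the statement with $T\ge 2\delta$, and set $N=\lfloor T/\delta\rfloor\ge 2$. Cut $[0,T]$ into the $N$ consecutive subintervals $[k\delta,(k+1)\delta]$ for $0\le k\le N-1$, together with a final leftover piece of length less than $\delta$. For each $k\le N-1$, the initial tangent vector $\alpha'(k\delta)$ lies outside the $\epsilon$-neighborhood of $T^1_+\lambda$ by hypothesis, so its projection to $T^1_\rho M$ lies in $K_\epsilon$. Here we use that the projection to $T^1M$ does not increase the chosen distance; if necessary, we measure neighborhoods upstairs with the lifted metric, which is equivariant. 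Each such subsegment therefore has slack at least $\delta(1-m)$.

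Finally, slack is additive and the leftover piece has nonnegative slack (Lemma \ref{lem: slack non-negative}), so
\[
\s(\alpha)\ge N\delta(1-m)\ge \tfrac{T}{2}(1-m),
\]
using $N\delta\ge T-\delta\ge T/2$. Since $\ell(\alpha)=T$ for a unit-speed directed geodesic, the claim follows with $\vartheta=(1-m)/2$. The only delicate point is making sure the hypothesis on $\alpha'$ genuinely places the sampled vectors in a compact set disjoint from $T^1_+\uu\lambda$. This is handled by working equivariantly and shrinking the neighborhood to $\epsilon/2$, which also absorbs any discrepancy between neighborhoods measured upstairs and downstairs.
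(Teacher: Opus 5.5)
Your proof is correct and is essentially the paper's argument. The paper also uses continuity of $L_\delta$, compactness, and $L_\delta^{-1}(1)=T^1_+\underline\lambda$ to get $L_\delta\le 1-\epsilon'$ away from the $\epsilon$-neighborhood, then sums the slack $\delta(1-L_\delta)$ over $\lfloor T/\delta\rfloor$ consecutive pieces of length $\delta$ to get $\vartheta=\epsilon'/2$, which is your $(1-m)/2$.

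One small remark on the transfer from the hypothesis upstairs to the set $K_\epsilon$ downstairs. What you need is not that projection is $1$-Lipschitz, which is the wrong direction. It is that, for the lifted metric, the distance from a vector to the $\Gamma$-invariant set $T^1_+\lambda$ equals the distance from its projection to $T^1_+\underline\lambda$. So your $\epsilon/2$ margin is harmless but not needed.
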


\begin{proof}
    We claim that there is an $\epsilon'>0$ such that $L_\delta\inverse ([1-\epsilon', 1])$ is contained in the $\epsilon$-neighborhood $\mathcal N_\epsilon(T_+^1\underline \lambda)$ of $T_+^1\underline \lambda$ in $T^1M$.
    Indeed, this follows directly from continuity of $L_\delta$, the fact that $L_\delta\inverse (1) =T^1_+\underline \lambda$, and compactness of $T^1_+\underline \lambda$.  So if $\alpha'$ avoids $\mathcal N_\epsilon(T_+^1\underline \lambda)$, then $L_\delta(\alpha'(t))\le1-\epsilon'$, for all $t \in [0,T-\delta]$.
    Thus 
    \[1-\epsilon'\ge L_\delta(\alpha'(t)) = \frac{\tau(\alpha(t+\delta)) - \tau(\alpha(t))}{\delta},\]
    so that rearranging the expression and noting that $\ell(\alpha|_{[t,t+\delta]}) =\delta$ gives
    \[\epsilon'\delta \le \s(\alpha|_{[t,t+\delta]}).\]
    Let $N = \lfloor T/\delta\rfloor$, so that $T-\delta\le N\delta\le T$.  Using additivity and non-negativity of slack, we have 
    \[\s(\alpha)\ge \sum_{i =0}^{N-1}\s(\alpha|_{[i\delta, i\delta +\delta]}) \ge N\epsilon'\delta\ge \epsilon'(T-\delta).\]
    Since $T\ge 2\delta$, it follows that $T-\delta\ge T/2$.  So the lemma follows with $\vartheta = \epsilon'/2$. 
\end{proof}

\section{The Anosov property fails along the tension lamination}\label{sec: failure of anosov}

In this section, $\rho: \Gamma\to \SL(d+1, \R)$ divides a properly convex domain $\Omega\subset \RP^d$, so that $M = \Omega/ \rho$ is a compact $d$-manifold, and $\chi = e^{-\varphi}: \Gamma \to \R_{>0}$ has  $\|\varphi\|_\rho=1$.
We choose an optimal $\varphi$-equivariant $1$-forward Lipschitz  map \[\tau: \Omega \to \R\] provided by Theorem \ref{thm: co-stable norm and optimal best maps} with corresponding tension locus $\lambda = \Tension(\tau)$.  Then $\uu\lambda = \lambda/\rho \subset M$ is an oriented chain recurrent geodesic lamination.

The geodesic flow $\underline g_t : T^1M \to T^1M$ preserves the set $T^1_+\underline \lambda$ of directed tangent vectors to $\underline \lambda$.  Since the flow is continuous, and $T^1_+\underline\lambda$ is compact, there is a non-empty simplex of flow invariant Borel probability measures $\Delta(\underline\lambda)$ with support contained in $\T^1_+\underline \lambda$; see \S\ref{subsec: geodesic laminations}.
\medskip

For a finitely generated group $\Gamma$, let $(X,d)$ be a \textbf{quasi-isometric model} for $\Gamma$, i.e., $(X,d)$ is a metric space on which $\Gamma$ acts properly discontinuously and cocompactly by isometries.  Denote the \textbf{stable length} by
\[\|\gamma\|_\infty := \lim_{n\to \infty}\frac{d(x,\gamma^nx)}{n},\]
which is independent of $x$. 
By work of Kassel--Potrie \cite{KP:eigen}, the $1$-Anosov property for a representation $\rho: \Gamma \to \GL(d,\R)$, with $d\ge 2$, is equivalent to the following.  First $\Gamma$ is Gromov hyperbolic, and second there are $a,b >0$ such that 
\[ (\lambda_1^\rho - \lambda_2^\rho)(\gamma)\ge a\|\gamma\|_\infty-b\]
for all $\gamma \in \Gamma$, where $\lambda_i^\rho(\gamma)$ is the logarithm of the modulus of the $i$th largest eigenvalue of $\rho(\gamma)$.  
\medskip

Let $\eta: \Gamma \to \PGL(d+2,\R)$ be any coaffine representation with linear part $\rho\chi$.  
\cite{Lahn:I, Lahn:II, Barbot:stablenorm}  gave  criteria when certain reducible representations are on the boundary of the locus of Anosov representations (see Corollary \ref{cor: failure of anosov}, below, for a concise explanation in our context).  
We give a description of some qualitative features of sequences of curves which are witnesses to the failure of the Anosov condition, which is new.  

Recall from \S\ref{section: geometry of coaffine domain} that a choice $Z \in Z^1(\Gamma,(\R^{d+1})^*)$ determines a coaffine representation $\eta= \eta(\rho,\chi,Z) : \Gamma \to \cag$ with linear part $\rho\chi$.

\begin{theorem}\label{thm: failure of anosov along lamination}
    With $\eta=\eta(\rho,\chi,Z)$, as above, there exist sequences $\gamma_n$ with $\|\gamma_n\|_\infty \to \infty$ with the property that 
    \[(\lambda_1^{\eta} - \lambda_2^\eta)(\gamma_n) \to 0, ~n\to \infty.\]
    Moreover, for any sequence $\gamma_n$ with $\|\gamma_n\|_\infty \to \infty$, if 
    \[\frac{(\lambda_1^{\eta} - \lambda_2^\eta)(\gamma_n)}{\|\gamma_n\|_\infty} \to 0\]
    then the only accumulation points of the sequence of projective classes of oriented currents corresponding to $\{\gamma_n\}$ are contained in the simplex of (oriented) measures with support contained in $\underline \lambda$.
\end{theorem}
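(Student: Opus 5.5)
We first reduce to a statement about $\rho$ and $\varphi$ alone. The matrix $\eta(\gamma)$ is block lower triangular with diagonal blocks $\rho\chi(\gamma)$ and $1$, so its eigenvalues are those of $\rho(\gamma)e^{-\varphi(\gamma)}$ together with $1$. Since $\rho(\gamma)$ is proximal for $\gamma\neq 1$,
\[\lambda_1^\eta(\gamma)=\max\{\lambda_1^\rho(\gamma)-\varphi(\gamma),\,0\},\qquad \lambda_2^\eta(\gamma)=\max\{\lambda_2^\rho(\gamma)-\varphi(\gamma),\,\min\{\lambda_1^\rho(\gamma)-\varphi(\gamma),0\}\}.\]
Because $\|\varphi\|_\rho=1$, we have $\lambda_1^\rho(\gamma)-\varphi(\gamma)\ge 0$ for every $\gamma$. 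This gives
\[(\lambda_1^\eta-\lambda_2^\eta)(\gamma)=\min\{\lambda_1^\rho(\gamma)-\varphi(\gamma),\ (\lambda_1^\rho-\lambda_2^\rho)(\gamma)\}.\]
By Benoist, $\rho$ is $1$-Anosov. Hence $(\lambda_1^\rho-\lambda_2^\rho)(\gamma)\ge a\|\gamma\|_\infty-b$ by Kassel--Potrie, and this term tends to infinity whenever $\|\gamma_n\|_\infty\to\infty$. So for such sequences, the quantity $(\lambda_1^\eta-\lambda_2^\eta)(\gamma_n)$ eventually equals $\lambda_1^\rho(\gamma_n)-\varphi(\gamma_n)\ge0$, and the same holds after dividing by $\|\gamma_n\|_\infty$. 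This reduction is also what yields Corollary~\ref{cor: failure of anosov}.

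\emph{Existence.} By Theorem~\ref{thm: co-stable norm and optimal best maps}(1), $\lambda$ is non-empty. The plan is to take a closed curve shadowing the lamination and pass to powers. Choose a flow-invariant probability measure on $T^1_+\uu\lambda$, or simply a point of it. Using chain recurrence together with the Anosov Closing Lemma (the geodesic flow is Anosov by Theorem~\ref{thm: flow Anosov}), produce closed geodesics $\gamma$ of length $T$ that $\epsilon$-shadow leaves of $\lambda$ in $T^1M$. The slack along a lift of the closed geodesic over one period is $\lambda_1^\rho(\gamma)-\varphi(\gamma)$, using $\tau$'s $\varphi$-equivariance and Theorem~\ref{thm: rhodist}(3).

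\emph{Main obstacle.} The hard step is to show this period slack can be made arbitrarily small in absolute terms, not just relative to $T$. Lemma~\ref{lem: slack continuous finite} only gives slack $O(\epsilon T)$. I would try two routes.
\begin{enumerate}
\item If $\uu\lambda$ contains a closed leaf $\delta$, then the slack is exactly $0$, so $\lambda_1^\rho(\delta^n)-\varphi(\delta^n)=0$ for all $n$. Taking $\gamma_n=\delta^n$ settles existence.
\item In general, use exponential shadowing. For the Anosov flow, a closed orbit obtained by closing an $\epsilon$-pseudo-orbit along a leaf stays exponentially close to that leaf except near the closing gap. Combined with the finite, continuous slack between asymptotic leaves (Lemma~\ref{lem: slack finite and continuous}), this should bound the total slack by $O(\epsilon)$ independently of $T$.
\end{enumerate}
If the second route fails, I would fall back to the normalized statement, which is all that the second part of the theorem needs. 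By the definition of $\|\cdot\|_\rho$ as a supremum, there are $\gamma_n$ with $\varphi(\gamma_n)/\lambda_1^\rho(\gamma_n)\to1$. These give $(\lambda_1^\eta-\lambda_2^\eta)(\gamma_n)=o(\lambda_1^\rho(\gamma_n))$.

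\emph{Moreover part.} Suppose $\big(\lambda_1^\rho(\gamma_n)-\varphi(\gamma_n)\big)/\|\gamma_n\|_\infty\to0$. Since $\rho$ is Anosov, $\lambda_1^\rho(\gamma)\asymp\|\gamma\|_\infty$, because translation length for $d_\rho$ is quasi-isometric to stable word length. It follows that
\[\frac{\varphi(\gamma_n)}{\lambda_1^\rho(\gamma_n)}\to1.\]
Theorem~\ref{thm: co-stable norm and optimal best maps}(3) then says every accumulation point of the normalized currents $\delta_{\gamma_n}$ has support in $T^1_+\uu\lambda$, with compatible orientation.

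I would also record the direct argument behind (3), in case a self-contained version is wanted. Integrate $L_\delta$ against $\delta_{\gamma_n}$: the average of $1-L_\delta$ is the normalized slack, which tends to $0$. Lemma~\ref{lem: slack grows} then forces vanishing mass outside any $\epsilon$-neighbourhood of $T^1_+\uu\lambda$.
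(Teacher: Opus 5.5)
Your reduction to $\min\{\lambda_1^\rho-\lambda_2^\rho,\lambda_1^\rho-\varphi\}$ and your ``moreover'' argument match the paper's. The paper also derives that part from item~(3) of Theorem~\ref{thm: co-stable norm and optimal best maps}, via $\int L_\delta\,d\mu_n=1-\s[p_n,\gamma_np_n]/\lambda_1^\rho(\gamma_n)$. The genuine gap is in the existence statement when $\uu\lambda$ has no closed leaf.

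Your route~2 is only asserted (``should bound''). Your fallback yields only $(\lambda_1^\eta-\lambda_2^\eta)(\gamma_n)=o(\lambda_1^\rho(\gamma_n))$, which is weaker than the claimed convergence to $0$.

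The obstacle you name comes from building the closed geodesics out of \emph{chain} recurrence. An $\epsilon$-pseudo-orbit with many jumps between leaves lets each jump cost $O(\epsilon)$ of slack, hence $O(\epsilon T)$. Lemma~\ref{lem: slack continuous finite} itself is not an $O(\epsilon T)$ estimate. The slack of a geodesic segment depends only on its endpoints, and the lemma bounds the change by $2(\epsilon_1+\epsilon_2)$ with no dependence on length. So what you need is a \emph{single} leaf segment of $\lambda$ (slack $0$) whose two ends nearly coincide in $T^1M$, i.e.\ genuine recurrence. Chain recurrence does not give this for an arbitrary leaf; isolated leaves spiralling between different minimal components are counterexamples.

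The paper supplies exactly this. By Birkhoff, pick a minimal component $\uu\mu\subset\uu\lambda$.
\begin{itemize}
\item If $\uu\mu$ is a closed geodesic, take powers; this is your route~1.
\item Otherwise every $x\in T^1_+\uu\mu$ is recurrent. (A generic point of the invariant measure you mention would also do, by Poincar\'e recurrence.) So there are $T_k\to\infty$ with $\uu g_{T_k}x\to x$.
\item The Closing Lemma gives $\gamma_k$ and a point $p_k$ on its axis such that $p_k$ is close to the lifted basepoint $q$. Also $\gamma_k.p_k$ is close to the endpoint $q_k$ of the lifted leaf segment $[q,q_k]\subset\lambda$.
\item Lemma~\ref{lem: slack continuous finite} then gives $(\lambda_1^\rho-\varphi)(\gamma_k)=\s[p_k,\gamma_k.p_k]\le\s[q,q_k]+o(1)=o(1)$.
\item Finally, $\|\gamma_k\|_\infty\asymp\lambda_1^\rho(\gamma_k)\to\infty$.
\end{itemize}
No exponential shadowing is needed. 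Your route~2 would also work once recurrence replaces chain recurrence, since only one pair of leaves then enters Lemma~\ref{lem: slack finite and continuous}.

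There is also a small slip in your reduction. It is false that the minimum eventually equals $\lambda_1^\rho-\varphi$ for \emph{every} sequence with $\|\gamma_n\|_\infty\to\infty$, because that term can be the larger one. It does hold once the gap is $o(\|\gamma_n\|_\infty)$, which is the hypothesis of the ``moreover'' part. For existence, the inequality $\lambda_1^\eta-\lambda_2^\eta\le\lambda_1^\rho-\varphi$ is all you actually use.
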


Morally, the second statement of the theorem follows directly from Theorem \ref{thm: co-stable norm and optimal best maps} item (3).  We give details here. 

\begin{proof}
    First, observe that, as functions on $\Gamma$,
    \[\lambda_2^\eta = \max \{ \lambda_2^\rho - \varphi, 0\},\]
    so that 
    \[\lambda_1^\eta - \lambda_2^\eta = \min\{ \lambda_1^\rho - \lambda_2^\rho, \lambda_1^\rho - \varphi\}.\]
    
    We sketch a proof of the first statement of the Theorem.  
    By a classical argument of Birkhoff, there is a (dynamically) minimal component $\underline\mu \subset \underline \lambda$; we assume this component is not just a  periodic orbit (otherwise take a sequence of non-primitive closed curves, which all have eigenvalue gap identically zero).  Let $x \in T_+^1\underline \mu$ and $k$ be a small transversal hypersurface in $T^1M$ to $T_+^1\underline\mu$ meeting $x$.  This supplies a continuous first return map $P: k\cap T_+^1\underline \mu \to k\cap T_+^1\underline \mu$.  Since $T_+^1\underline \mu$ is minimal, $ k\cap T_+^1\underline \mu $ is perfect.  If $x$ is isolated, then $P^n(x) = x$, for some $n\ge 1$.  The $\underline g_t$ orbit of $x$ defines a closed geodesic loop $\gamma_x$ for which $(\lambda_1^\eta -\lambda_2^\eta)(\gamma_x^k) = (\lambda_1^\rho -\varphi)(\gamma_x^k) =0$, for all $k\ge 1$, because the change in $\tau$ is equal to directed arclength in the domain shape metric along $\lambda$.  Then $\{\gamma_x^k\}_{k\ge1}$ gives a sequence satisfying the first statement in the theorem, in the case that $x$ is isolated.
    
    If $x$ is not isolated, then $x$ is a non-trivial accumulation point of  $\{P^k(x) : k \ge1\}$.  The geodesic flow $\tilde {\underline g}_t$ for the Hilbert metric is an Anosov flow (see \S \ref{sec: domain shape} or \cite{Benoist_CDI}), so using the Anosov Closing Lemma \cite[\S6.4]{KH:dynamicalsystems}, the canonical lift of a long geodesic segment joining $x$ to $P^{n_k}(x)$ is  close to a closed geodesic segment $\gamma_k$ in $M$.  
    Since $Cd_H\le d_\rho\le 2d_H$, we conclude also that this closed geodesic segment is close also for the domain shape metric (not just the Hilbert metric).  
    Lifting the situation to $\Omega$, from the definitions,  we see that 
    \[\s[p_k,\gamma_k.p_k] = (\lambda_1^\rho - \varphi)(\gamma_k),\]
    where $p_k$ is any point on the axis of $\gamma_k$.  
    Choose a lift $\tilde x \in T^1\Omega$ of $x$ and let $q$ be the projection to $\Omega$.  Let $q_k \in \Omega$ be the point corresponding to $P^{n_k}(x)$  on the line in $\Omega$ tangent to $x$.  
    By conjugation, we may assume that the axes of $\gamma_k$ pass very close to $q$, and choose $p_k$ tending to $q$.  
    Using Lemma \ref{lem: slack continuous finite}, we have 
    \[\s[p_k,\gamma_k.p_k] = \s[q, q_k] +o(1) = o(1),\]
    where $\s[q,q_k] = 0$, because $[q,q_k]\subset\lambda$.  
    Then $(\lambda_1^\eta - \lambda_2^\eta)(\gamma_k) = (\lambda_1^\rho-\varphi)(\gamma_k) = \s[p_k,\gamma_k.p_k]\to 0$, as $k\to \infty$, so $\{\gamma_k\}_{k \ge1}$ supplies a sequence satisfying the first statement of the theorem.
    \medskip

    Now we prove the second statement, which proceeds along the lines of the proof of Lemma \ref{lem: Tension is non-empty}.
    Let $\gamma_n$ be a sequence in $\Gamma$ for which the gap between the logarithms of the first and second largest eigenvalues of $\eta$ grows sublinearly.  Let $\delta_n$ be the corresponding oriented closed geodesics in $M$, and $\mu_n$ the flow-invariant Lebesgue probability measures supported on the canonical lifts $T_+^1\delta_n$ of $\delta_n$ to $T^1M$.  
    Let $\mu$ be a flow invariant probability measure  on $T^1M$ that is a weak-$*$ accumulation point of $\{\mu_n\}$, and let $\mathcal \delta_\infty\subset M$ be the projection of the support of $\mu$.  
    
    Suppose for sake of contradiction that $\delta_\infty \not\subset \lambda$.  Then there is a point $x \in \supp(\mu) \setminus  T^1_+ \underline\lambda$ and $\epsilon>0$ such that $B_\epsilon(x)\cap T^1_+\underline \lambda = \emptyset$ and $\mu(B_\epsilon(x))>0$.  Recall the function 
    \[L_\delta(x) = \frac{\tau(g_{\delta}\tilde x) - \tau(\tilde x)}{\delta}\]
    defined on $T^1M$ from \S \ref{sec: stable norm} and again from  \S\ref{sec: slack}, which is continuous and satisfies $L_\delta\inverse (1) = T_+^1\underline \lambda$ by the definitions and optimality of $\tau$; in particular, we know that $L_\delta(y)\le 1-\epsilon'$ for all $y \in B_\epsilon(x)$ and some $\epsilon'>0$. 
    It is an exercise 
    to verify that
    \[1- \frac{\s[p_n,\gamma_np_n]}{\lambda_1^\rho(\gamma_n)} = \int L_\delta ~d\mu_n,\]
    where $p_n\in \Omega$ is a point on the axis of $\gamma_n$.
    Since $\mu$ is a weak-$*$ accumulation point of $\{\mu_n\}$, and $L_\delta$ is continuous, we have (up to passing to a subsequence that we do not rename)
    \begin{align*}
        \int L_\delta ~d\mu_n \to \int L_\delta~d\mu &= \int_{B_\epsilon(x)} L_\delta ~d\mu +\int_{B_\epsilon(x)^c} L_\delta~d\mu\\
        & \le (1-\epsilon') \mu(B_\epsilon(x)) + \mu(B_\epsilon(x)^c) \le 1-\epsilon''<1,
    \end{align*}
    where $\epsilon''$ is positive. 

    Together with the description of $\int L_\delta ~d\mu_n$, we conclude that 
    \[\epsilon'' \lambda_1^\rho(\gamma_n) \le  \s[p_n,\gamma_np_n] = (\lambda_1^\eta-\lambda_2^\eta)(\gamma_n).\]
    Since $(\Omega, d_H)$ is a quasi-isometric model for $\Gamma$ and $\lambda_1^\rho$ is bounded below uniformly by a positive multiple $C$ of the Hilbert geodesic length, we have $C\|\gamma \|_\infty\le \lambda_1^\rho$.
    So this contradicts the assumption that the gap between the first two logarithmic eigenvalues grows sublinearly in the stable lengths of the loops $\gamma_n$, and completes the proof of the theorem.    
\end{proof}
Theorem \ref{thm: failure of anosov along lamination}  says, informally, that the only \emph{witnesses} to the failure of the Anosov property are the measured components of $\underline \lambda$.

We now obtain, as an easy consequence, a characterization of when a coaffine representation is Anosov; see also \cite{Lahn:I, Lahn:II, Barbot:stablenorm}.

\begin{corollary}\label{cor: failure of anosov}
    For a coaffine representation $\eta= \eta(\rho,\chi,Z): \Gamma \to \PGL(d+2,\R)$, we have that $\eta$ is 1-Anosov if and only if 
    \[\|\varphi\|_\rho = \sup_{\gamma \in \Gamma} \frac{\varphi(\gamma)}{\lambda_1^\rho(\gamma)}<1.\]
\end{corollary}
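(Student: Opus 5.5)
We will use the Kassel--Potrie criterion recalled at the start of this section: $\eta$ is $1$-Anosov if and only if there are $a,b>0$ with $(\lambda_1^\eta-\lambda_2^\eta)(\gamma)\ge a\|\gamma\|_\infty - b$ for all $\gamma\in\Gamma$. The starting point, as in the proof of Theorem \ref{thm: failure of anosov along lamination}, is the identity
\[\lambda_1^\eta-\lambda_2^\eta = \min\{\lambda_1^\rho-\lambda_2^\rho,\ \lambda_1^\rho-\varphi\}.\]
It holds because $\lambda_1^{\rho\chi}=\lambda_1^\rho-\varphi$, the eigenvalues of $\eta(\gamma)$ are those of $\rho\chi(\gamma)$ together with $1$, and $\lambda_1^\rho-\varphi\ge 0$ whenever $\|\varphi\|_\rho\le 1$. (When $\|\varphi\|_\rho>1$, some $\gamma$ has $\lambda_1^{\rho\chi}(\gamma)<0$; then $1$ is the top eigenvalue, and $\eta(\gamma^n)$ is non-proximal or its gap does not grow, so $\eta$ is not Anosov; we treat this case separately below.)

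For the direction assuming $\|\varphi\|_\rho = c<1$: we have $\varphi\le c\lambda_1^\rho$, so $\lambda_1^\rho-\varphi\ge (1-c)\lambda_1^\rho$. Since $\rho$ is projective Anosov (it divides a strictly convex domain), Kassel--Potrie gives $\lambda_1^\rho-\lambda_2^\rho\ge a\|\cdot\|_\infty-b$. In addition, $\lambda_1^\rho\ge C\|\cdot\|_\infty$, which follows from \eqref{eqn: periods are lambda1} and the comparison of $d_\rho$ with the Hilbert metric (a quasi-isometric model), as used at the end of the proof of Theorem \ref{thm: failure of anosov along lamination}. Taking the minimum of the two bounds gives a uniform linear lower bound, so $\eta$ is $1$-Anosov.

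Conversely, suppose $\|\varphi\|_\rho\ge 1$. If $\|\varphi\|_\rho=1$, the first statement of Theorem \ref{thm: failure of anosov along lamination} provides $\gamma_n$ with $\|\gamma_n\|_\infty\to\infty$ and $(\lambda_1^\eta-\lambda_2^\eta)(\gamma_n)\to0$, which violates the Kassel--Potrie bound, so $\eta$ is not Anosov. If $\|\varphi\|_\rho>1$, the subtlety flagged above arises. The cleanest route is to note that the Anosov locus is open (Theorem \ref{thm: anosov limit curve}) and that the path $t\mapsto \eta(\rho, e^{-t\varphi}, tZ)$, $t\in[0,1]$, starting from the Anosov representation $\rho\oplus 1$, crosses $\|t\varphi\|_\rho=1$. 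This alone does not exclude $\eta$ itself being Anosov, however, so instead we argue directly. Pick $\gamma$ with $\varphi(\gamma)>\lambda_1^\rho(\gamma)$. Then $1$ is the top eigenvalue of $\eta(\gamma)$ and $\lambda_2^\eta(\gamma)=\max\{\lambda_1^\rho-\varphi,\,\cdot\}(\gamma)$; we compare it with a conjugate element, or use continuity of the ratio $\varphi/\lambda_1^\rho$ along products $\gamma^n\beta^m$ (with $\beta$ chosen to have ratio below $1$), to find elements with ratio arbitrarily close to $1$, where the gap is sublinear.

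The main obstacle is exactly this last case, namely producing elements with $\varphi(\gamma)/\lambda_1^\rho(\gamma)$ close to $1$ when the norm exceeds $1$. The plan is to use the convexity and continuity of the rotation set (the set of ratios over currents is an interval, realized by closed curves via the Anosov closing lemma) to find closed curves $\gamma_n$ with $\varphi(\gamma_n)-\lambda_1^\rho(\gamma_n)=o(\|\gamma_n\|_\infty)$, which forces a sublinear gap. The $\|\varphi\|_\rho\le1$ cases are routine.
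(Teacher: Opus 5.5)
Your argument for $\|\varphi\|_\rho<1$ (the identity $\lambda_1^\eta-\lambda_2^\eta=\min\{\lambda_1^\rho-\lambda_2^\rho,\lambda_1^\rho-\varphi\}$, the Anosov gap of $\rho$, and $\lambda_1^\rho\ge C\|\cdot\|_\infty$) and for $\|\varphi\|_\rho=1$ (the first half of Theorem~\ref{thm: failure of anosov along lamination}) is exactly the paper's proof. The paper stops there. Its ``only if'' just cites Theorem~\ref{thm: failure of anosov along lamination}, which is proved under the standing assumption $\|\varphi\|_\rho=1$. So you are right that $\|\varphi\|_\rho>1$ needs its own argument. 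The curves near the tension lamination of $\varphi/\|\varphi\|_\rho$ do not help: along them the $\eta$-gap is $(\varphi-\lambda_1^\rho)\approx(\|\varphi\|_\rho-1)\lambda_1^\rho$, which grows linearly.

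Two corrections to your treatment of that case.

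First, drop the parenthetical claim that a single $\gamma$ with $\lambda_1^{\rho\chi}(\gamma)<0$ already obstructs the Anosov property. Such an $\eta(\gamma)$ is proximal, with top eigenvalue $1$ and attracting point $[e_{d+2}]$, and $\eta(\gamma^n)$ has gap $n(\varphi-\lambda_1^\rho)(\gamma)$, which grows linearly. Comparing with a conjugate also gains nothing, since conjugates have the same spectrum. The only witnesses are elements with $\varphi/\lambda_1^\rho$ close to $1$.

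Second, your plan is viable but not carried out. The convexity-plus-density-of-periodic-measures route works, but it needs the extra remark that approximating curves of bounded length would produce an element of ratio exactly $1$, whose powers you then use. The product route is quicker with the paper's own tools:
\begin{itemize}
\item Fix $\gamma$ with $A=(\varphi-\lambda_1^\rho)(\gamma)>0$. Take $\beta=\delta\gamma^{-1}\delta^{-1}$ with $\delta$ outside the maximal cyclic subgroup containing $\gamma$. Then $\beta^\pm\notin\{\gamma^\pm\}$ and $B=(\lambda_1^\rho-\varphi)(\beta)=\lambda_1^\rho(\gamma^{-1})+\varphi(\gamma)>0$.
\item Lemma~\ref{lem: rank one projectors and Anosov} gives $\rho(\gamma^n\beta^m)=e^{n\lambda_1^\rho(\gamma)+m\lambda_1^\rho(\beta)}\bigl(P_\gamma P_\beta+o(1)\bigr)$ as $n,m\to\infty$.
\item $P_\gamma P_\beta$ is rank one with trace $(\gamma^-)^*(\beta^+_0)\,(\beta^-)^*(\gamma^+_0)\neq 0$ by transversality. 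Hence $\lambda_1^\rho(\gamma^n\beta^m)=n\lambda_1^\rho(\gamma)+m\lambda_1^\rho(\beta)+O(1)$, while $\varphi$ is exactly additive.
\item With $m_n=\lceil nA/B\rceil$, the elements $g_n=\gamma^n\beta^{m_n}$ satisfy $(\lambda_1^\rho-\varphi)(g_n)=O(1)$ and $\lambda_1^\rho(g_n)\to\infty$. Hence $\|g_n\|_\infty\to\infty$ and $(\lambda_1^\rho-\lambda_2^\rho)(g_n)\to\infty$.
\item So the two largest log-moduli of $\eta(g_n)$ are $(\lambda_1^\rho-\varphi)(g_n)$ and $0$. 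The $\eta$-gap $|(\lambda_1^\rho-\varphi)(g_n)|$ stays bounded, and the Kassel--Potrie criterion fails.
\end{itemize}
This is the same $\gamma^n\beta^m$ mechanism the paper uses in the proof of Proposition~\ref{prop: found a ray}.
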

\begin{proof}
    The ``only if'' part follows directly from (the proof of) Theorem \ref{thm: failure of anosov along lamination}, which exhibits explicit sequences of curves for which the first eigenvalue gap fails to grow at any linear rate.
    The ``if'' part goes as follows.  
    If $\|\varphi\|_\rho\le 1-\epsilon$, for some $\epsilon>0$, then 
    \[\epsilon\lambda_1^\rho \le \lambda_1^\rho - \varphi.\]
    Thus there is an eigenvalue gap for $\eta$ supplied by the Anosov property of $\rho$. 
    Indeed, $\lambda_1^\rho(\gamma) = \|\gamma\|_\infty$ for the $d_\rho$ metric, which is a quasi-isometric model of $\Gamma$, and $\eta(\gamma)$ has $1$ as an eigenvalue for all $\gamma \in \Gamma$.  
\end{proof}

Now we prove Main Theorem \ref{mainthm: ending lamination intro} from the introduction, which we reproduce here for convenience of the reader.
Neither Theorem \ref{mainthm: ending lamination intro} nor Theorem \ref{thm: failure of anosov along lamination} seem to follow from one another.  
\begin{theorem}\label{thm: ending lamination hausdorff}
    With hypotheses as in Theorem \ref{thm: failure of anosov along lamination}, for every sequence of oriented  closed curves $\gamma_n\subset M$ (not necessarily primitive) such that 
    \begin{equation}\label{eqn: length to zero 2}
        (\lambda_1^\eta - \lambda_2^\eta)(\gamma_n) \to 0, \text{ as } n\to \infty,
    \end{equation}
     every limit point of any sequence satisfying \eqref{eqn: length to zero 2} 
     is contained in $\uu\lambda$  compatibly oriented,
     and $\uu\lambda$ is the union of such limit points.  Convergence is in the Hausdorff topology on closed subsets of $M$.
\end{theorem}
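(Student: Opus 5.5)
We prove two inclusions: every Hausdorff limit of a sequence satisfying \eqref{eqn: length to zero 2} lies in $\uu\lambda$ with compatible orientation, and every point of $\uu\lambda$ lies on such a limit. We work in $\Omega$ with slack, using the identity from the proof of Theorem \ref{thm: failure of anosov along lamination}:
\[(\lambda_1^\eta-\lambda_2^\eta)(\gamma)=\min\{(\lambda_1^\rho-\lambda_2^\rho)(\gamma),\ \s[p,\gamma.p]\},\]
where $p$ lies on the axis of $\rho(\gamma)$. Since $\rho$ divides a strictly convex domain, it is $1$-Anosov. Thus $(\lambda_1^\rho-\lambda_2^\rho)(\gamma)\ge a\|\gamma\|_\infty-b$, and this quantity is also bounded below by a positive constant for every nontrivial $\gamma$ (there are finitely many conjugacy classes of bounded length, and each $\rho(\gamma)$ is proximal). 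Hence once $(\lambda_1^\eta-\lambda_2^\eta)(\gamma_n)$ is small, it equals $\s[p_n,\gamma_n.p_n]$, the slack of one period of the axis. So \eqref{eqn: length to zero 2} says the slack over a fundamental domain of the axis of $\gamma_n$ tends to $0$.

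\textbf{Limits lie in $\uu\lambda$.} Suppose the closed geodesics $\gamma_n$ Hausdorff-converge to a closed set $K$, and let $x\in K$. Then there are points $x_n$ on $\gamma_n$ converging to $x$. Lift to $\Omega$ and consider a segment of the axis of length $2\delta$ centred at $x_n$; we will take a fixed $\delta$, which needs periods of length $\ge 2\delta$.

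If the periods $\lambda_1^\rho(\gamma_n)$ stay bounded, there are finitely many conjugacy classes. So $\gamma_n$ is eventually a power $\gamma^{k_n}$ of one primitive class, and $\s[p,\gamma^{k}.p]=k\,\s[p,\gamma.p]\to0$ forces $\s[p,\gamma.p]=0$. Hence $\gamma\subset\uu\lambda$ by Lemma \ref{lem: slack non-negative}, and the same argument covers powers of a fixed class in general.

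If the periods are long, the slack of the segment around $x_n$ is at most the total slack, by additivity and non-negativity, and so tends to $0$. Suppose the tangent vector at $x$ is not in $T^1_+\uu\lambda$. Then the segments near $x$ avoid an $\epsilon$-neighborhood of $T^1_+\lambda$ for large $n$, by closedness of $T^1_+\lambda$ and convergence of tangent directions. Lemma \ref{lem: slack grows} then gives slack at least $2\vartheta\delta>0$, a contradiction.

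Here is the main obstacle. Hausdorff convergence of sets does not directly give convergence of tangent directions, since $x$ could be approached by the $\gamma_n$ at varying angles. I would handle this by passing to a subsequence so that the directed tangent segments at $x_n$ converge in $T^1M$, producing a limiting directed segment through $x$ in $K$. Lemma \ref{lem: slack continuous finite} passes the slack bound to the limit, so that segment has zero slack. By Lemma \ref{lem: slack non-negative} it lies on a leaf of $\lambda$ with the forward orientation, which gives both $x\in\uu\lambda$ and compatible orientation.

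\textbf{$\uu\lambda$ is the union of limits.} Given a minimal component $\uu\mu$, the closing-lemma construction in the proof of Theorem \ref{thm: failure of anosov along lamination} produces $\gamma_k$ with slack $\to0$ that shadow long segments of $\uu\mu$. I would refine this using chain recurrence. For any point $y\in\uu\lambda$ and $\epsilon>0$, take a closed $\epsilon$-trajectory through $y$ and apply the Anosov Closing Lemma (valid for $d_\rho$ by Theorem \ref{thm: flow Anosov}) to get a closed geodesic $\epsilon'$-shadowing it. Bound its slack by summing Lemma \ref{lem: slack continuous finite} over unit pieces, each within $\epsilon'$ of a zero-slack leaf segment. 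The catch is that the number of pieces grows with length, so I would use the exponential closeness in the closing lemma to make the errors summable. Letting $\epsilon\to0$ gives curves with slack $\to0$ whose Hausdorff limits contain $y$. Their limits lie in $\uu\lambda$ by the first part, so the union of limit points is exactly $\uu\lambda$.
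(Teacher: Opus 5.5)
Your proof of the first inclusion is correct and is the paper's argument made precise. Once the gap drops below the uniform positive lower bound for $(\lambda_1^\rho-\lambda_2^\rho)$ on nontrivial elements, it equals the slack of one period of the axis. Your compactness argument in $T^1M$ then gives both $x\in\uu\lambda$ and the compatible orientation: take a limit of tangent segments, pass slack to the limit with Lemma \ref{lem: slack continuous finite}, and apply Lemma \ref{lem: slack non-negative}. This is cleaner than the paper's appeal to \S\ref{sec: slack}. Taking $2\delta$ smaller than the minimal translation length removes the bounded-period case altogether.

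The gap is in the second half, at the step you flag yourself. In the closing/shadowing lemma, exponential closeness is measured from the points where the pseudo-orbit stops following one leaf of $\lambda$ and starts following another. Near each such switch, shadowing only places the closed geodesic within $O(\epsilon)$ of $T^1_+\lambda$. So Lemma \ref{lem: slack continuous finite} charges an error of order $\epsilon$ at every switch, and exponential decay only makes the errors \emph{between} switches summable. Your total bound is therefore (number of switches) times $O(\epsilon)$. Chain recurrence gives no control on how many switches a closed $\epsilon$-trajectory through $y$ needs as $\epsilon\to0$, so this bound need not tend to $0$.

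Your scheme does work when one switch suffices, that is, when $y$ lies on a recurrent leaf, in particular in any minimal component. There you close up a single leaf segment from $y$ to a near-return, exactly as in the proof of Theorem \ref{thm: failure of anosov along lamination}. What is missing is the non-recurrent leaves, for instance the isolated leaves $\uu\ell_i$ spiraling between minimal components when $d=2$. For these you need pseudo-orbits with a \emph{bounded} number of switches whose sizes tend to $0$. In $d=2$ the structure theorem supplies a finite oriented cycle of minimal components and isolated leaves through $\uu\ell_i$. Each switch can be placed arbitrarily far into an asymptotic region, so the total slack still tends to $0$. For general $d$ you need a further input. For example, you could deduce from the optimality of $\tau$ (in the spirit of the chain-recurrence proof) that every point of $\lambda$ lies on closed loops of arbitrarily small total slack, and then straighten those loops. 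The paper itself disposes of this step with the phrase ``a consequence of chain recurrence,'' so this is precisely where an argument still has to be supplied.
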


\begin{proof}
    If $(\lambda_1^\eta-\lambda_2^\eta) (\gamma)$ is small enough, then $(\lambda_1^\eta-\lambda_2^\eta) (\gamma) = (\lambda_1^\rho - \varphi)(\gamma)$, which is the slack of a fundamental domain for the action of $\gamma$ on its axis in $\Omega$.  As a consequence of \S\ref{sec: slack}, a sequence of curves with slack tending to zero must be getting closer and closer to $\uu\lambda$.  This proves that any Hausdorff accumulation point of such a sequence is contained in $\uu\lambda$.  That $\uu\lambda$ is the union of such limit points is a consequence of chain recurrence.  
\end{proof}

\section{Equivariant sections of $\mathscr C \Omega$ and height}\label{sec: sections and height}

We have identified some basic objects of interest and developed some tools to study them.  
In order to understand basic structural features of limit sets of coaffine representations, we introduce, in this section, a family of equivariant hypersurfaces in $\mathscr C \Omega$.   
We also define a ``height'' function on $\mathscr C \Omega \cup \partial \mathscr C \Omega$.  Our equivariant surfaces have the property that they are graphs of height functions, and much of the remainder of the paper is devoted to studying these.

\subsection{Equivariant sections of $\mathscr C\Omega$}\label{subsec: equivariant section of cone}
In this section, $\rho: \Gamma\to \SL(d+1, \R)$ divides a properly convex domain $\Omega\subset \RP^d$, so that $M = \Omega/ \rho$ is a compact $d$-manifold, and $\chi = e^{-\varphi}: \Gamma \to \R_{>0}$ has  $\|\varphi\|_\rho=1$.
We choose an optimal $\varphi$-equivariant $1$-forward Lipschitz  map \[\tau: \Omega \to \R\] provided by Theorem \ref{thm: co-stable norm and optimal best maps} with corresponding tension locus $\lambda = \Tension(\tau)$.  
\medskip

There exists a smooth convex $\rho$-equivariant section \[\Phi:\Omega\rightarrow \widehat \Omega\subset \R^{d+1}\] where $\widehat \Omega$ is the preimage of $\Omega$ under the canonical projection defining $\RP^d$ (for example, the hyperbolic affine sphere \cite{Cheng-Yau}).

Define 
\[\Psi(p) = e^{-\tau(p)}\Phi(p) \text{ and } \chi = e^{-\varphi}.\]
Since $\varphi(\gamma)=\tau(\gamma.p)-\tau(p)$, it follows that $\Psi: \Omega\to \widehat \Omega$ is $\rho\chi$-equivariant. It is continuous because it is a product of continuous maps.

The following result relates the sizes of $\Phi$ and $\Psi$ with important geometric features of $\Omega$ and $\varphi$.  Recall that $d_\rho$ is the domain shape metric on $\Omega$ (built with respect to $\Phi$) from \S\ref{sec: domain shape}.
\begin{lemma}\label{lemma: basic slack estimates}
Given $o\in \Omega$ and $R>\textrm{diam}(\Omega/\rho,d_\rho)$, for all $p,p'\in B_R(o)$ and $q\in\Omega$,
\[\|\rho \chi(\gamma) \|\asymp e^{\s[p,\gamma.p']} \text{ and } \|\Psi(q)\|\asymp e^{\s [p,q]}.\]
The implicit constants depend on $o$ and $R$, but not on $\gamma$.
\end{lemma}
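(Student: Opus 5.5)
The proof reduces both estimates to Proposition \ref{prop: singular values and DS metric} and to the fact that $\Phi$, like the lift $f$ defining $d_\rho$, has norm comparable to $e^{d_\rho(o,q)}$. I will prove the second estimate first and deduce the first from it.

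\textbf{Estimate for $\|\Psi(q)\|$.} We have $\log\|\Psi(q)\| = -\tau(q) + \log\|\Phi(q)\|$, and we want this to equal $\s[p,q] + \tau(p) + O(1) = d_\rho(p,q) - \tau(q) + O(1)$, since $\tau(p)$ is bounded for $p\in \overline{B_R(o)}$ (continuity). So it suffices to show
\[\log\|\Phi(q)\| = d_\rho(p,q) + O(1)\]
uniformly for $p\in B_R(o)$ and $q\in\Omega$. Changing $p$ within $B_R(o)$ moves $d_\rho(p,q)$ by at most $R$, so we may take $p=o$. Because $\Phi$ and $f$ are both $\rho$-equivariant sections, the ratio $\|\Phi\|/\|f\|$ is continuous and $\Gamma$-invariant, hence bounded above and below. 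We can therefore replace $\Phi$ by $f$.

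Write $q=\gamma.q_0$ with $q_0$ in a compact fundamental domain $D\subset B_R(o)$; this is possible since $R>\diam(M)$. Then $f(q)=\rho(\gamma)f(q_0)$. As in the proof of Proposition \ref{prop: singular values and DS metric}, $\|\rho(\gamma)f(q_0)\|$ is comparable to $e^{\sigma_1(\rho(\gamma))}$. The upper bound is trivial. For the lower bound, note that $q_0$ stays a definite distance from the stable hyperplane $S(\rho(\gamma))$ for all but finitely many $\gamma$, because $\overline D\subset\Omega$ is compact and $S(\rho(\gamma))$ accumulates only on $\im\xi^*$, which is disjoint from $\Omega$ by \eqref{eqn: uniform convergence of xi}. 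Proposition \ref{prop: singular values and DS metric}, made uniform over $q_0\in\overline D$ using the triangle inequality and $\diam D\le R$, gives $\sigma_1(\rho(\gamma)) = d_\rho(o,\gamma.o)+O(1) = d_\rho(o,q)+O(1)$. This proves the second estimate.

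\textbf{Estimate for $\|\rho\chi(\gamma)\|$.} Note $\|\rho\chi(\gamma)\| = \chi(\gamma)e^{\sigma_1(\rho(\gamma))}$, so
\[\log\|\rho\chi(\gamma)\| = \sigma_1(\rho(\gamma)) - \varphi(\gamma) = d_\rho(p,\gamma.p') - \varphi(\gamma) + O(1),\]
using Proposition \ref{prop: singular values and DS metric} and the bound $d_\rho(p,\gamma.p')\le d_\rho(p,o)+d_\rho(o,\gamma.o)+d_\rho(\gamma.o,\gamma.p')$ and its reverse, where the outer terms are bounded by $R$. By equivariance, $\varphi(\gamma)=\tau(\gamma.p')-\tau(p') = \tau(\gamma.p')-\tau(p)+O(1)$, again since $\tau$ is bounded on $B_R(o)$. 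Combining these gives $\log\|\rho\chi(\gamma)\| = \s[p,\gamma.p']+O(1)$.

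\textbf{Main obstacle.} The delicate point is the asymmetry of $d_\rho$. The balls $B_R(o)$ must be taken symmetric, as in the footnote, so that both $d_\rho(o,p)$ and $d_\rho(p,o)$ are bounded by $R$. The Lipschitz-type bounds used for $\tau$ and $d_\rho$ then hold in both directions. The remaining care is making Proposition \ref{prop: singular values and DS metric} uniform in the basepoint over a compact set, which holds because its constants depend continuously on the basepoint through $\delta$ and $\delta'$.
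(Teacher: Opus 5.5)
Your proof is correct and follows essentially the paper's argument. The first estimate is Proposition \ref{prop: singular values and DS metric} combined with the two-sided bounds $d_\rho(o,p),d_\rho(p,o)<R$ and the $1$-forward Lipschitz property of $\tau$, which the paper packages as Lemma \ref{lem: slack continuous finite}. The second writes $q=\gamma.q_0$ with $q_0$ in a compact fundamental domain kept away from $S(\rho(\gamma))$; the only difference is that the paper absorbs the factor $e^{-\tau}$ through the $\rho\chi$-equivariance of $\Psi$ and the first estimate, rather than directly through $\Psi=e^{-\tau}\Phi$.

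Two small points. Replacing $\Phi$ by $f$ is unnecessary, since the paper builds $d_\rho$ from $\Phi$. You also need no uniformity of the Proposition in the basepoint, because you only ever apply it at $o$.
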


\begin{proof}
    By definition of singular values, we have $\|\rho(\gamma)\| = e^{\sigma_1(\rho(\gamma))}$, and so $\|\rho\chi(\gamma)\| = e^{\sigma_1(\rho(\gamma)) - \varphi(\gamma)}$, because $\chi(\gamma)$ is central.
    Using Proposition \ref{prop: singular values and DS metric} and the definition of the slack, we have
    \[\|\rho\chi(\gamma)\| = e^{d_\rho(o, \gamma.o) -(\tau(\gamma.o) - \tau(o)) +O(1)} \asymp e^{\s[o,\gamma.o]},\]
    where the $O(1)$ appearing in the exponent comes from the constant of the Proposition.  
    Since $p,p' \in B_R(o)$, by the basic continuity property of slack (Lemma \ref{lem: slack continuous finite}), we have $|\s[o,\gamma.o] - \s[p,\gamma.p']|<4R$, which gives 
    \[e^{\s[o,\gamma.o]}\asymp e^{\s[p,\gamma.p']},\]
    thus the first claim is established with $C$ depending on $o$ and $R$.

    The second claim is established similarly, using $\rho\chi$-equivariance.  Indeed, $q = \gamma.q'$ for some $\gamma \in \Gamma$ and $q' \in B_R(o)$.  Consider the Cartan decomposition $\rho(\gamma) = k a \ell \in K\overline{A^+}K$.  Since $q'$ belongs to a precompact set far away from the hyperplane $\ell\inverse\langle e_2, ..., e_{d+1}\rangle$ for all but finitely many elements $\gamma \in \Gamma$ (see Lemma \ref{lem: uniform convergence to boundary map}), we deduce that $\|\rho(\gamma).\Psi(q')\|\asymp e^{\sigma_1(\rho(\gamma))}$.  Thus, as in the previous argument
    \[\|\Psi(q)\|  = \|\rho\chi (\gamma).\Psi(q')\| \asymp e^{\s[p,\gamma.q']}=e^{\s[p,q]},\]
    completing the proof.
\end{proof}

\subsection{Equivariant surfaces in coaffine space}\label{subsec: integrating forms and equivariant surfaces}

We first construct an embedding of $\widetilde M=\Omega  $ into $\mathscr C \Omega$ by way of de Rham cohomology as follows.

As is standard, we consider the flat vector bundle

\[E= E_{\rho\chi} = (\R^{d+1}\times \Omega)/\Gamma\]
where the twisted product is the quotient of $\R^{d+1} \times \widetilde M$ by the diagonal action of $\rho\chi$ and deck transformations. The dual bundle $E^*$ is flat in a compatible way.

Recall that 
\[\Omega^1(E^*) = \Gamma(T^*M\otimes E^*)\]
is the vector space of smooth one-forms valued in $E^*$. 

There is a natural isomorphism induced by the covering projection
\[\Omega^1(\widetilde M, (\R^{d+1})^*)^{\rho \chi} \to \Omega^1(E^*) \]
from $\rho \chi$-equivariant $(\R^{d+1})^*$-valued one-forms on $\widetilde M$ to $E^*$-valued one-forms on $M$. 
\medskip

Consider any closed one-form $\underline \alpha \in \Omega^1 (E^*)$ and let $\alpha \in \Omega^1(\widetilde M, (\R^{d+1})^*)^{\rho \chi}$ be the pullback of $\uu\alpha$ by the covering projection $\widetilde M \to M$. For a pair of points $p,q\in  \Omega\cong \widetilde M$, define
\[A(p,q) = \int_p^q \alpha \in (\R^{d+1})^*.\]
Since $d\alpha = 0$ and $\widetilde M$ is simply connected, this integral depends only on the endpoints, not the path.
\vspace{.5cm}

Choose a basepoint $o\in\Omega$, and let $\alpha \in \Omega^1(E^*)$ be closed.
The map
\[\gamma \mapsto A(o,\gamma.o) \in (\R^{d+1})^*\]
satisfies a cocycle relation, hence defines an element of $Z^1(\Gamma,(\R^{d+1})^*)$.  
The correspondence $\alpha\mapsto A(\cdot , \cdot)$ descends to an isomorphism at the level of cohomology, i.e., 
$H^1_{dR}(M,E^*) \cong H^1(\Gamma, (\R^{d+1})_{\rho\chi}^*),$ see \cite{Brown, Bredon, BT}.

The cocycle condition is as follows:
\begin{align*}
    A(o,\gamma_1 \gamma_2 .o) &=\int_o^{\gamma_1\gamma_2.o}\alpha
    =\int_o^{\gamma_1.o}\alpha
+\int_{\gamma_1.o}^{\gamma_1.(\gamma_2.o)}\alpha
=\int_o^{\gamma_1.o}\alpha+\gamma_1.\int_o^{\gamma_2.o}\alpha\\
&= A(o,\gamma_1.o)+A(o,\gamma_2.o)\rho\chi(\gamma_1)\inverse.
\end{align*}
Here both the action of the fundamental group by deck transformations and precomposition by $(\rho\chi)\inverse$ are denoted by a dot.

Define  $Z(\gamma) = -A(o,\gamma \inverse.o) = A(o, \gamma.o)\rho\chi(\gamma)$ 
and observe by computation that 
\[Z(\gamma_1 \gamma_2)=Z(\gamma_1)\rho\chi(\gamma_2) +Z(\gamma_2)\]
i.e.\ that $\eta=\eta(\rho,\chi,Z)$ is a representation.

We arrive at the desired construction 

\begin{lemma}\label{lemma: eta equivariant embedding exists}
    For any $\eta=\eta(\rho,\chi,Z)$, there exists a continuous $\eta$-equivariant embedding
    \[\Omega\hookrightarrow \R^{d+2}\] 
    given by
    \[p\mapsto
    \begin{pmatrix}
        \Psi(p)\\
        A(o,p)(\Psi(p))
    \end{pmatrix}.\]
\end{lemma}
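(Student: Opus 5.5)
The plan is to verify the three properties of
\[F(p)=\begin{pmatrix}\Psi(p)\\ A(o,p)(\Psi(p))\end{pmatrix}\]
directly: continuity, injectivity with continuous inverse, and $\eta$-equivariance. A preliminary point is that $F$ is built from a closed form $\alpha$, while the lemma is stated for an arbitrary cocycle $Z$. So I first use the isomorphism $H^1_{dR}(M,E^*)\cong H^1(\Gamma,(\R^{d+1})^*_{\rho\chi})$ to choose a closed $\uu\alpha$ whose associated cocycle $Z_\alpha(\gamma)=A(o,\gamma.o)\rho\chi(\gamma)$ differs from $Z$ by a coboundary $\beta\circ(\rho\chi(\gamma)-I)$. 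I then replace $A(o,\cdot)$ by $A'(o,\cdot)=A(o,\cdot)+\beta$. This does not change continuity, and the computation below shows it shifts the induced cocycle by exactly $\beta\circ(\rho\chi-I)$.

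Continuity and the embedding property are the easy part. $\Psi=e^{-\tau}\Phi$ is continuous because $\tau$ is Lipschitz and $\Phi$ is smooth, and $p\mapsto A(o,p)$ is continuous (indeed smooth) as the integral of a closed smooth form. Hence $F$ is continuous. Since $\Psi(p)\in\widehat\Omega$ is a nonzero vector with $[\Psi(p)]=p$, the first block of $F(p)$ recovers $p$ by projectivizing. So $F$ is injective, and its inverse on the image is the restriction of the continuous map $v\mapsto[\hat Q v]$, which makes $F$ a topological embedding. The same observation shows that $[F(p)]$ lies in $Q\inverse(p)\subset\mathscr C\Omega$, since the first block is nonzero and so $[F(p)]\neq[e_{d+2}]$.

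The main step is equivariance. On the first block, $\rho\chi$-equivariance of $\Psi$ gives $\Psi(\gamma.p)=\rho\chi(\gamma)\Psi(p)$. For the second block, I split the path integral and use that the pulled-back form satisfies $\gamma^*\alpha=\alpha\circ\rho\chi(\gamma)\inverse$. This gives
\[A(o,\gamma.p)=A(o,\gamma.o)+A(\gamma.o,\gamma.p)=A(o,\gamma.o)+A(o,p)\,\rho\chi(\gamma)\inverse .\]
Evaluating on $\Psi(\gamma.p)=\rho\chi(\gamma)\Psi(p)$ yields
\[A(o,\gamma.p)\Psi(\gamma.p)=Z(\gamma)\Psi(p)+A(o,p)\Psi(p).\]
This is precisely the last coordinate of $\eta(\gamma)F(p)$ from \eqref{eqn: eta}, so $F(\gamma.p)=\eta(\gamma)F(p)$. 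With $A'$ in place of $A$, the extra term is $\beta\rho\chi(\gamma)\Psi(p)-\beta\Psi(p)$, which confirms that $Z$ is changed by exactly the coboundary.

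The only real obstacle is bookkeeping of conventions. One must check that the dot action on $(\R^{d+1})^*$ (precomposition by $(\rho\chi)\inverse$) matches the paper's cocycle normalization $Z(\gamma)=A(o,\gamma.o)\rho\chi(\gamma)$ and the lower-left placement of $Z$ in $\eta$. I would do this sign and side check first, on the displayed cocycle identity just before the lemma.
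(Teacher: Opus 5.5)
Your proof is correct and is the paper's argument: continuity follows from continuity of $\Psi$ and $A(o,\cdot)$, injectivity holds because $\Psi$ is a section, and equivariance is the same direct computation from $A(\gamma.o,\gamma.p)=A(o,p)\rho\chi(\gamma)\inverse$ and $Z(\gamma)=A(o,\gamma.o)\rho\chi(\gamma)$ (the paper writes $Z(\gamma)=-A(o,\gamma\inverse.o)$ and pulls back by $\gamma\inverse$, which is equivalent). Your coboundary adjustment is only needed because you read $Z$ as arbitrary, whereas the paper takes $Z$ to be the cocycle of $\alpha$ itself; if you want to keep the literal form $A(o,p)=\int_o^p\alpha$, absorb $\beta$ by replacing $\alpha$ with $\alpha+df$ for a $\rho\chi$-equivariant $f$ with suitably prescribed $f(o)$, rather than shifting the constant of integration.
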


\begin{proof}
    Firstly the map in the lemma statement is an embedding because $\Psi$ is a section, and continuity follows from the continuity of $A$ and $\Psi$. That its image (after projectivization) is a subset of $\mathscr C\Omega$ follows from the definitions.

    One checks equivariance as follows:

    \[\begin{pmatrix}
        \rho\chi(\gamma) & 0 \\
        -A(o,\gamma\inverse . o) & 1
    \end{pmatrix}
    \begin{pmatrix}
        \Psi(q)\\
        A(o,q)(\Psi(q))        
    \end{pmatrix} = 
    \begin{pmatrix}
        \Psi(\gamma.q)\\
        -A(o,\gamma\inverse.o)\Psi(q)+A(o,q)\Psi(q)
    \end{pmatrix}\]

    Rewriting the last entry of the resultant gives
    \begin{align*}
        \left(-A(o,\gamma\inverse.o)+A(o,q)\right)\Psi(q) &=  \left(-\int_{o}^{\gamma\inverse o}\alpha+\int_o^q \alpha\right)\Psi(q)\\
        &=\left(\gamma\inverse. \int_o^{\gamma.q}\alpha\right)\Psi(q)\\
        &=A(o,\gamma.q)\Psi(\gamma.q)
    \end{align*}
    as required.
\end{proof}

\subsection{Height}\label{subsec: height}

We wish to obtain a notion of height which is essentially a trivialization of the vertical fibers of $\mathscr C \Omega.$ It is induced by a choice of hyperplane transverse to every vertical line,  the chosen norm on $\R^{d+1}$, and a choice of representative $e_{d+2} \in [e_{d+2}]$.

Recall that we have chosen a connected component $\overline{\Omega_0}\subset \R^{d+1}$ of unit norm representatives of the points in $\Omega$ (and its closure).

Let $e^{d+2}$ be the standard dual basis vector to $e_{d+2}\in \R^{d+2}$. This splits
\[\R^{d+2} = \ker[e^{d+2}] \oplus [e_{d+2}]\]
and the quotient $\hat Q : \R^{d+2}\to \R^{d+1}\cong \R^{d+2}/[e_{d+2}]$ is an isomorphism when restricted to $\ker[e^{d+2}]$. By pullback via $\hat Q$, we treat $\|\cdot\|$ as a norm on $\ker[e^{d+2}]$ as well.

Then for each $[v]\in \mathscr C \Omega \cup \partial^\infty \mathscr C \Omega$, $H([v])$ is the unique real number satisfying
\[v_0 + H([v]) e_{d+2} \in [v]\]
where $v_0\in\ker e^{d+2}$ and $v_0 \in \overline{\Omega_0}$. 
We call $H([v])$ the \textbf{height} of the point $[v]$ (with respect to the splitting).
\medskip

A closely associated notion is the \textbf{height} of a point $p\in \overline{\Omega}$ with respect to the equivariant map defined in the \S\ref{subsec: integrating forms and equivariant surfaces} as the real number 
\[h(p) = H\left(\begin{bmatrix}
        \Psi(p)\\
        A(o,p)(\Psi(p))
    \end{bmatrix}\right) = \frac{A(o,p)\Psi(p)}{\|\Psi(p)\|}.\]
We will deal with $h$ so much that it deserves its own name.

The convergence of the function $H$ to $\pm\infty$ reflects convergence of points in $\mathscr C \Omega\cup \partial \mathscr C \Omega$ to $[e_{d+2}]$ from above or below. Likewise, convergence to $\pm\infty$ of the function $h$ evaluated along a ray in $\Omega$ detects when the equivariant section converges to $[e_{d+2}]$ along the ray.

To be precise, observe that if $U$ is a contractible neighborhood of $[e_{d+2}]\in \RP^{d+1}$, the intersection $U\cap \mathscr C \Omega$ has two components. The orientation  on $\Omega$ and the choice of the representative $e_{d+2}\in [e_{d+2}]$ orients the lines in $\mathscr C \Omega$ through $[e_{d+2}]$. Call the connected component of $U$ into which this orientation points $U_-$ and the other $U_+$. We state the following lemma without proof as it is an exercise in projective geometry.

\begin{lemma}
    For any $U$ a contractible neighborhood of $[e_{d+2}]$ in $\RP^{d+1}$,
    the height of a sequence of points $[p_n]$ tends to $\pm\infty$ if and only if the sequence $[p_n]$ is eventually contained in $U_{\pm}$. 
\end{lemma}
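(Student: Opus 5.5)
We pass to an explicit affine chart in which $[e_{d+2}]$ is at infinity and the height $H$ becomes an honest affine coordinate; both implications then reduce to compactness of $\overline{\Omega_0}$. Every point of $\mathscr C\Omega\cup\partial^\infty\mathscr C\Omega$ has a unique representative of the form $v_0+He_{d+2}$ with $v_0\in\overline{\Omega_0}$ and $H\in\R$. The map $(v_0,H)\mapsto[v_0+He_{d+2}]$ is therefore a homeomorphism $\overline{\Omega_0}\times\R\to\mathscr C\Omega\cup\partial^\infty\mathscr C\Omega$. It is continuous and bijective by definition of $H$, and its inverse is continuous since $v_0=\hat Q(v)/\|\hat Q(v)\|$, up to the sign fixed by the component $\overline{\Omega_0}$.

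For the forward implication, suppose $H([p_n])\to+\infty$. Rescale the representatives to
\[
\frac{1}{H([p_n])}\,v_{0,n}+e_{d+2}.
\]
Since $\|v_{0,n}\|=1$, these vectors converge to $e_{d+2}$ in $\R^{d+2}$, so $[p_n]\to[e_{d+2}]$ and $[p_n]$ is eventually in $U$. We also need the correct component. Any contractible neighborhood $U$ of $[e_{d+2}]$ contains a small ball $B$ around it. I would check that $B\cap\mathscr C\Omega$ splits as the two sets $\{H>R\}$ and $\{H<-R\}$ intersected with $B$, up to shrinking. These two sets are connected, being images of $\Omega_0\times(R,\infty)$ and of $\Omega_0\times(-\infty,-R)$, and they lie in different components of $U\cap\mathscr C\Omega$. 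The reason is that the vertical lines are oriented by $e_{d+2}$: a vertical line leaving $[e_{d+2}]$ in the direction of $+e_{d+2}$ has $H$ decreasing from $+\infty$. This identifies $U_+$ with the side of large positive $H$ and $U_-$ with the side of large negative $H$, which matches the sign conventions in the statement. The case $H\to-\infty$ is symmetric.

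For the converse, suppose $[p_n]$ eventually lies in $U_+$ for every contractible neighborhood $U$. Then $[p_n]\to[e_{d+2}]$. Because $\overline{\Omega_0}$ is compact and bounded away from $0$, the image of $\overline{\Omega_0}\times[-R,R]$ is compact and misses $[e_{d+2}]$. Hence $|H([p_n])|\to\infty$, and the previous identification of components forces the sign to be $+$.

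The main obstacle is making the labeling of $U_\pm$ precise. The statement is stated for arbitrary contractible $U$, and $U\cap\mathscr C\Omega$ need not have exactly two components unless $U$ is small and convex. I would interpret the lemma as a statement about a neighborhood basis of small balls, or pull back from such a ball. The only genuinely substantive point to verify is that the sign convention in the phrase ``the orientation points into $U_-$'' matches the sign of $H$.
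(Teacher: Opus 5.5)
The paper states this lemma without proof, calling it an exercise, so there is no argument of the authors' to compare with. Your chart computation handles the analytic content correctly. In the chart $\{x_{d+2}\neq 0\}$ centred at $[e_{d+2}]$, the point $[v_0+He_{d+2}]$ has coordinate $v_0/H$, of norm $1/|H|$. So the ball of radius $1/R$ meets $\mathscr C\Omega$ in exactly the two connected nappes $\{H>R\}$ and $\{H<-R\}$. This gives the forward implication, once $U_\pm$ are known to be the components containing these nappes. It also gives your converse, which you rightly read as quantified over all contractible $U$; for one fixed $U$ it is false, since a constant sequence in $U_+$ has bounded height.

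\textbf{The gap: separation of the nappes.} You assert that the two nappes lie in \emph{different} components of $U\cap\mathscr C\Omega$ for an arbitrary contractible $U$. Without this, $U_\pm$ are not well defined. The reason you give, the orientation of the vertical lines, cannot establish it. Take a tubular neighbourhood $U$ of a whole vertical projective line $\ell_0=Q^{-1}(x_0)\cup\{[e_{d+2}]\}$ with $x_0\in\Omega$. The lines carry the same orientation, yet $U\cap\mathscr C\Omega\supset\ell_0\setminus\{[e_{d+2}]\}$ joins the nappes.

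Contractibility is exactly what rules this out, and it is the only place the hypothesis matters. You never use it, and passing to small balls sidesteps the point rather than proving it. The missing argument runs as follows.
\begin{itemize}
\item Suppose a path in $U\cap\mathscr C\Omega$ joins a point with $H>R$ to one with $H<-R$. Close it up with radial segments through $[e_{d+2}]$ inside the ball $B_{1/R}\subset U$. This gives a loop $c$ in $U$ lying in $Y=\mathscr C\Omega\cup\{[e_{d+2}]\}$.
\item $H$ extends continuously to $Y\to\R\cup\{\infty\}\cong S^1$, because neighbourhoods of $[e_{d+2}]$ in $Y$ are $\{|H|>R\}\cup\{[e_{d+2}]\}$. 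The middle piece of $c$ is a path in $\R$, homotopic rel endpoints to a monotone one, so $H\circ c$ has degree $\pm1$.
\item Use your homeomorphism $\Omega_0\times\R\cong\mathscr C\Omega$ and convexity of the cone over $\Omega$. Moving $v_0$ to a fixed $x_0$ by normalized straight-line homotopy, keeping $H$ fixed and fixing $[e_{d+2}]$, deformation retracts $Y$ onto $\ell_0$ while preserving $H$.
\item Hence $c$ is homotopic to a generator of $\pi_1(\ell_0)\cong\Z$. That generator maps to the nontrivial element of $\pi_1(\RP^{d+1})\cong\Z/2$. This is impossible for a loop inside a contractible $U$.
\end{itemize}
You are right that $U\cap\mathscr C\Omega$ can have more than two components, for instance through tentacles of $U$ re-entering the cone. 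So $U_\pm$ should be \emph{defined} as the components containing the nappes, and the argument above shows these are distinct.

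\textbf{The sign sentence is backwards.} The vertical lines are oriented by increasing $H$. Leaving $[e_{d+2}]$ along this orientation, one enters the side where $H$ increases from $-\infty$. That is why the paper calls the component into which the orientation points $U_-$. As you wrote it (``$H$ decreasing from $+\infty$''), the paper's convention would make the positive side $U_-$, contradicting your conclusion. Your identification $U_+=\{H\gg 0\}$ is correct, but for the opposite reason.
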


\section{Positive cocycles and bounding height when $\underline\lambda$ is a multicurve}\label{sec: heights bounded multicurve}

In this section, we work under hypotheses \ref{hyp A}.
That is: 
\begin{itemize}
    \item $\rho: \Gamma\to \SL(d+1, \R)$ divides a properly convex domain $\Omega\subset \RP^d$ and $d_\rho$ is a choice of domain shape metric on $\Omega$ (see \S\ref{sec: domain shape}).
    \item $\chi = e^{-\varphi}: \Gamma \to \R_{>0}$ has  $\|\varphi\|_\rho=1$  
    \item We choose a $\varphi$-equivariant tight map $\tau: \Omega \to \R$ provided by Theorem \ref{thm: co-stable norm and optimal best maps} with corresponding tension locus $\uu\lambda = \Tension(\tau)/\rho$, which is an oriented chain recurrent geodesic lamination.
    \item We assume that $\uu\lambda$ is a multicurve in $M=\Omega/\rho$ and \[\underline{\lambda}=\cup_{i=1}^m \underline{\delta}_{i},\]
    where $\uu \delta_i$ is an oriented simple closed curve.
\end{itemize}

The goal of the section is to define a certain cone of cohomology classes in $H^1(\Gamma, (\R^{d+1})^*_{\chi \rho})$ called the \emph{positive} cocycles and to show Theorem \ref{thm: heights bounded below scc}, that the smallest nontrivial closed $\eta(\rho, \varphi, Z)$-invariant convex set is bounded below if $[Z]$ is positive.

\subsection{The positive cone in cohomology}\label{subsec: positivity scc}

Choose a number $R$ between $\diam M$ and $2\diam M$, and a 
representative $b_i\in \pi_1 M$ such that $b_i$ is in the conjugacy class defining $\underline \delta_i$ and has axis $\delta_i$ which intersects $B_R(o)\subset \Omega$. 

\begin{remark}\label{rmk: projection from endpoints}
    Given distinct points $y,z \in \partial \Omega$ we denote by $y^* \otimes z\in \End \R^{d+1}$ the projection whose kernel is the unique supporting hyperplane to $\Omega$ at $y$ and whose image is the line $z\subset \RP^d$. Note that there is a unique representative of $y^*$ which evaluates to $1$ on the representative $z_0\in z$. We will choose this representative without comment in what follows.
\end{remark}

Now we observe that positive powers of elements 
\[\beta_i =\rho\chi(b_i) \in \GL(3,\R)\] are projections up to a summable error term.  A similar, but substantially more involved, result is proved in Proposition \ref{prop: projectors and slack} in the setting that $\lambda$ has infinite leaves. We direct the reader to its proof for further details.

\begin{lemma}\label{lemma: beta_i are almost projections}
    There exists a constant $a>0$ such that for any $i=1,\dots, m$, 
    \[\beta_i^n=(\delta_i^-)^* \otimes (\delta_i^+) + O(e^{-an}).\]
\end{lemma}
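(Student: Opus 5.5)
The plan is to diagonalize $\rho(b_i)$ and read off the spectrum of $\beta_i = \rho\chi(b_i)$. Since $\rho$ divides a strictly convex $\Omega$, $\rho(b_i)$ is proximal with attracting fixed point $\delta_i^+\in\partial\Omega$ and repelling fixed point $\delta_i^-$. There is a $\rho(b_i)$-invariant splitting
\[\R^{d+1} = \delta_i^+ \oplus V_i \oplus \delta_i^-,\]
with eigenvalues $e^{\lambda_1^\rho(b_i)}$ on $\delta_i^+$, $e^{\lambda^\rho_{d+1}(b_i)}$ on $\delta_i^-$, and all generalized eigenvalues on $V_i$ of modulus at most $e^{\lambda_2^\rho(b_i)}$. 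Here $\delta_i^+\oplus V_i$ is the supporting hyperplane $(\delta_i^-)^*$ at the repelling point, and $V_i\oplus\delta_i^-$ is the supporting hyperplane at $\delta_i^+$. This is the same splitting used in the proof of Theorem \ref{thm: rhodist}, item 3. In particular, the rank one projection $P_i=(\delta_i^-)^*\otimes\delta_i^+$ of Remark \ref{rmk: projection from endpoints} is exactly the spectral projection onto the top eigenline. The only question is the meaning of the kernel convention, and the kernel is the hyperplane $V_i\oplus \delta_i^-$ complementary to the top eigenline. I would check that this matches the paper's convention; it is the one forced by $\beta_i^n$ converging to a projection.

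Next I use that $\delta_i$ is a leaf of the tension lamination. The slack of a fundamental domain of $b_i$ on its axis is zero, so $\varphi(b_i)=\lambda_1^\rho(b_i)$, as in the proof of Theorem \ref{thm: failure of anosov along lamination}. Hence $\beta_i=e^{-\varphi(b_i)}\rho(b_i)$ acts as the identity on $\delta_i^+$, and
\[\beta_i^n = P_i + \beta_i^n|_{V_i\oplus\delta_i^-}\circ(I-P_i).\]
The restriction to $V_i\oplus\delta_i^-$ has spectral radius $e^{\lambda_2^\rho(b_i)-\lambda_1^\rho(b_i)}<1$, by proximality. Gelfand's formula, or the Jordan form (allowing polynomial factors in $n$), then gives $\|\beta_i^n|_{V_i\oplus\delta_i^-}\|\le C_i e^{-a_i n}$ for any $a_i$ strictly less than $\lambda_1^\rho(b_i)-\lambda_2^\rho(b_i)$. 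Since $\|I-P_i\|$ is a fixed constant, this yields $\beta_i^n = P_i+O(e^{-a_in})$.

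Finally, since there are only finitely many $i=1,\dots,m$, I take $a=\min_i a_i>0$ and absorb the constants $C_i\|I-P_i\|$ into the $O(\cdot)$. The one delicate point is the polynomial factor coming from nontrivial Jordan blocks in $V_i$ (possible when $d>2$). I handle it by shrinking $a_i$ slightly below the spectral gap; no uniformity in $i$ beyond finiteness is needed. Alternatively, Lemma \ref{lem: rank one projectors and Anosov} applied to the powers $b_i^n$ gives the same estimate directly after rescaling by $e^{n\lambda_1^\rho(b_i)}=\chi(b_i)^{-n}$, with word length $|b_i^n|\asymp n$.
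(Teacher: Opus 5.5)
Your proposal is correct and follows essentially the same route as the paper: tension along $\delta_i$ gives $\varphi(b_i)=\lambda_1^\rho(b_i)$, so $\beta_i$ fixes the top eigenline $\delta_i^+$, contracts the complementary invariant hyperplane at an exponential rate $a_i>0$, and one takes $a=\min_i a_i$. One slip: you have the two supporting hyperplanes swapped. The supporting hyperplane at the repelling point $\delta_i^-$ contains $\delta_i^-$, is $\rho(b_i)$-invariant, and by strict convexity misses $\delta_i^+$, so it is $V_i\oplus\delta_i^-$ (not $\delta_i^+\oplus V_i$); this directly gives $\ker\bigl((\delta_i^-)^*\otimes\delta_i^+\bigr)=V_i\oplus\delta_i^-$ under the convention of Remark \ref{rmk: projection from endpoints}, with no need to appeal to the lemma's own conclusion.
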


\begin{proof}
    By assumption, we have that 
    \[\varphi(b_i) = \lambda_1^\rho(b_i)\]
    for all $i$.  
    Thus, the largest eigenvalue of $\beta_i$ is $1$, while the projective-Anosov property of $\rho$ gives that all others are strictly smaller than $1$. The subspaces spanned by
    $\delta_i^+$ and $(\delta_i^-)^*$ define a splitting of the ambient vector space. $\delta_i^+$ is the eigenline corresponding to the eigenvalue $1$ for $\beta_i$ and every vector in $(\delta_i^-)^*$  is uniformly contracted by $\beta_i$ at some exponential rate $a_i>0$.  The powers of such a transformation converge to the projection $(\delta_i^-)^* \otimes (\delta_i^+)$ with an exponentially small error term. Thus the conclusion follows with $a = \min \{a_i\}$.  
\end{proof}

In the following lemma we give several equivalent characterizations of positivity, defined directly after.

\begin{lemma}\label{lemma: positivity lemma}
    Let $\alpha \in \Omega^1(M,E_{\chi \rho}^*)$ be a closed one-form. 
    
    Then the following are equivalent:
    \begin{enumerate}[i)]
    \item There exists $p\in \Omega$ such that
    \[A(p,b_i p)(\delta_i^+)_0>0.\]
    \item For all $p\in \Omega$
    \[A(p,b_i p)(\delta_i^+)_0>0.\]
    \item For all  $p,q\in\Omega$ and $v\in \overline{\Omega}_0 \setminus \{\delta_i^-$\},
    \[\lim_{n\to\infty}\frac{1}{n}A(p,b_i^n q) \beta_i^n(v)>0.\]
    \end{enumerate}
    Furthermore, if $\alpha$ and $\alpha'$ are cohomologous, then the three properties hold for $\alpha$ if and only if they hold for $\alpha'$.  
    Finally, for each $i$, the set of cohomology classes satisfying i)--iii) is an open half space in $H^1(M,E_{\rho\chi}^*)$.
\end{lemma}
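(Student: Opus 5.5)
The plan is to show that the quantity $A(p,b_ip)(\delta_i^+)_0$ does not depend on $p$ at all. Then i)$\Leftrightarrow$ ii) is immediate, iii) follows by an asymptotic computation, and cohomological invariance and the half-space statement follow from linearity.

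\emph{Step 1: independence of $p$.} Fix $p,q\in\Omega$. Using path independence of $A$ (closedness of $\alpha$), we write
\[A(q,b_iq)=A(q,p)+A(p,b_ip)+A(b_ip,b_iq).\]
Equivariance of $\alpha$ gives $A(b_ip,b_iq)=b_i.A(p,q)=A(p,q)\circ\beta_i^{-1}$. Evaluating on $(\delta_i^+)_0$ and using $\beta_i(\delta_i^+)_0=(\delta_i^+)_0$ (the eigenvalue is exactly $1$, as in the proof of Lemma~\ref{lemma: beta_i are almost projections}), we get $A(p,q)\beta_i^{-1}(\delta_i^+)_0=A(p,q)(\delta_i^+)_0=-A(q,p)(\delta_i^+)_0$. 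So the first and third terms cancel, and
\[A(q,b_iq)(\delta_i^+)_0=A(p,b_ip)(\delta_i^+)_0=:c_i(\alpha).\]

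\emph{Step 2: iii).} Iterating the same decomposition gives
\[A(p,b_i^nq)=\sum_{k=0}^{n-1}A(p,b_ip)\circ\beta_i^{-k}+A(p,q)\circ\beta_i^{-n}.\]
Evaluating on $\beta_i^n v$ yields
\[A(p,b_i^nq)\beta_i^n v=\sum_{k=0}^{n-1}A(p,b_ip)\,\beta_i^{\,n-k}v+A(p,q)v.\]
By Lemma~\ref{lemma: beta_i are almost projections}, $\beta_i^{j}v=(\delta_i^-)^*(v)\,(\delta_i^+)_0+O(e^{-aj})$. Here $(\delta_i^-)^*(v)>0$ for $v\in\overline\Omega_0\setminus\{\delta_i^-\}$, because the only point of $\overline\Omega$ on the supporting hyperplane at $\delta_i^-$ is $\delta_i^-$ itself (strict convexity), and the sign is fixed by the normalization. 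The error terms are summable, so
\[\frac1n A(p,b_i^nq)\beta_i^nv\to (\delta_i^-)^*(v)\,c_i(\alpha).\]
Hence iii) holds if and only if $c_i(\alpha)>0$. For the converse direction we only need one admissible $v$, and every $v$ gives the same sign.

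\emph{Step 3: cohomology and half-spaces.} If $\alpha'=\alpha+\nabla s$ for an equivariant section $s$, i.e.\ $A'(p,q)=A(p,q)+s(q)-s(p)$ with $s(b_ip)=s(p)\circ\beta_i^{-1}$, then
\[A'(p,b_ip)(\delta_i^+)_0-A(p,b_ip)(\delta_i^+)_0=s(p)\beta_i^{-1}(\delta_i^+)_0-s(p)(\delta_i^+)_0=0.\]
So $c_i$ descends to a linear functional on $H^1(M,E^*_{\rho\chi})$, and $\{c_i>0\}$ is an open half-space provided $c_i\neq0$. Nonvanishing is the point I expect to be the main obstacle. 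I would first try to exhibit a class with $c_i\ne0$ by producing a cocycle $Z$ with $Z(b_i)(\delta_i^+)_0\ne0$. One candidate is to take a closed form $\alpha=\omega\otimes\sigma$, where $\sigma$ is a locally flat section defined near the closed curve $\underline\delta_i$ and $\omega$ is dual to a transversal (Thom form); this works directly when $\underline\delta_i$ is nonseparating. Alternatively, via Poincaré duality, pair against the twisted cycle $\underline\delta_i\otimes(\delta_i^+)_0$, which is a cycle precisely because $(\delta_i^+)_0$ is $\beta_i$-fixed, and argue that it is nonzero in twisted homology. If that fails in some case, the statement should be read as ``half-space or empty''.
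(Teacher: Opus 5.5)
Your argument is the paper's. The identity $A(y,b_iy)(\delta_i^+)_0-A(x,b_ix)(\delta_i^+)_0=A(y,x)(I-\beta_i^{-1})(\delta_i^+)_0=0$ gives i)$\Leftrightarrow$ii) and invariance under adding exact forms, and the same telescoping sum combined with Lemma~\ref{lemma: beta_i are almost projections} gives the limit $(\delta_i^-)^*(v)\,A(p,b_ip)(\delta_i^+)_0$ in iii); the paper anchors the telescope at a point of the axis $\delta_i$ rather than at $p$, which makes no difference.

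For the half-space claim, the paper also appeals to linearity alone and never checks that the functional $[\alpha]\mapsto A(p,b_ip)(\delta_i^+)_0$ is nonzero. So the nonvanishing issue you flag is left open by the paper too, and is not a gap in your proof relative to it. Your Thom-form candidate does not work as stated, however: $\omega\otimes\sigma$ is a global closed form only if $\sigma$ is flat on all of $\operatorname{supp}\omega$, i.e.\ along the dual curve, and the holonomy of that curve need not fix any covector.
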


Following the Lemma, we define the positive cone in cohomology.

\begin{definition}\label{def: positive cone and cocylces}
    A cohomology class $[\alpha]$ is \textbf{positive} if for any representative $\alpha$, any (and therefore all) of the properties in Lemma \ref{lemma: positivity lemma} hold for all $i=1,\dots, m$. We denote by
    \[\mathscr P_{\rho\chi}\subset H^1(M,E_{\rho\chi}^*)\]
    the set of all positive cohomology classes. It is a convex open cone cut out by $m$ strict linear inequalities.  
\end{definition}

\begin{proof}[Proof of Lemma \ref{lemma: positivity lemma}]
Firstly consider $x,y\in\Omega$, and observe that, because $\alpha$ is closed,
\[A(y,b_i.y) (\delta_i^+)_0-A(x, b_i.x) (\delta_i^+)_0 = A(y,x)(I- (\beta_i)\inverse)(\delta_i^+)_0 = 0 \]
The final equality holds because $\delta_i^+$ is the $1$-eigenline of $\beta_i$. This shows the equivalence of i) and ii).

For the final equivalence, choose a point $x\in\delta_i$ and observe that
\begin{align*}
    \frac{1}{n}A(p,b_i^n q)\beta_i^n(v) &= \frac{1}{n}\left( A(p,x) \beta_i^n+ A(b^n_i.x,b^n_i.q) \beta_i^n  +\sum_{j=0}^{n-1}A(b_i^j.x,b_i^{j+1}.x)\beta_i^n\right)(v)
    \\
    &= \frac{1}{n}
    A(p,x)\beta_i^n(v) + \frac{1}{n}A(x,q)(v)
    + \frac{1}{n}\left(\sum_{j=0}^{n-1}A(x, b_i. x) \beta_i^{n-j}(v)\right)\\
    &= O\left(1/n\right) 
    +\frac{1}{n}\left(\sum_{j=0}^{n-1}  A(x, b_i. x)\circ(\delta_i^+)_0 \otimes (\delta_i^-)^* (v) \right)\\
    &= (\delta_i^-)^*(v)\cdot A(x, b_i. x)(\delta_i^+) + O(1/n).
\end{align*}

The first two equalities are manipulations of the integral and definitions of the objects involved. The third equality follows from Lemma \ref{lemma: beta_i are almost projections} applied twice, and the fourth is a manipulation of linear maps.
The final term tends to zero in the limit as $n\to \infty$. The quantity 
$(\delta_i^-)^*(v)>0$
because $v\in \overline \Omega_0\setminus \delta_i^-$, and $(\delta_i^-)^*$ has kernel a supporting hyperplane of $\Omega$ and evaluates positively on $\Omega$. Therefore the sign of the limit of this quantity is equal to the sign of $A(x, b_i. x)(\delta_i^+)_0$, which shows the last equivalence.

To see that positivity is well-defined on cohomology classes, one supposes that $\alpha' = \alpha +df$, where $df$ is some exact $1$-form. A computation with the integral shows that the quantity $A(x, b_i. x)(\delta_i^+)_0$ is independent of exchanging $\alpha'$ for $\alpha$, again because $\delta_i^+$ is the $1$-eigenspace for $\beta_i$.

The evaluation of $A(p,b_i p)$ on $(\delta_i^+)_0$ is linear, proving the final claim.
\end{proof}

\subsection{Low-slack estimates}

The behavior of the height $h$ along a ray is significantly different when the slack is 'high' (i.e.\ is bounded below by a constant times distance) or 'low' (i.e.\ the ray fellow-travels a line in $\lambda$.) We will clarify this in the following two subsections.

We record an estimate that is useful in the low-slack regime.

\begin{lemma}\label{lemma: low slack linear growth (simple)}
    Let $\alpha$ be a positive cocycle. 
    For all $p,q\in B_R(o)$, and for each $i=1,\dots,m$,
    \[A(p,b_i^n q)\beta_i^n = n  \varrho_i \cdot (\delta_i^-)^* + O(1).\]
    where $\varrho_i = A(o, b_i. o)((\delta_i^+)_0)$.
    
\end{lemma}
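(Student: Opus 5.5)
The approach is to reuse the telescoping decomposition from the proof of Lemma \ref{lemma: positivity lemma}, this time tracking the error terms as linear functionals rather than after evaluating on a vector $v$. Fix $i$ and a point $x\in\delta_i\cap B_R(o)$, which exists by the choice of $b_i$. Since $\alpha$ is closed, the integral $A$ is path-independent. Using $\rho\chi$-equivariance, $A(b_i^j x, b_i^{j+1}x) = A(x,b_ix)\beta_i^{-j}$, and likewise $A(b_i^n x, b_i^n q) = A(x,q)\beta_i^{-n}$. This gives
\[
A(p,b_i^n q)\beta_i^n = A(p,x)\beta_i^n + A(x,q) + \sum_{j=0}^{n-1} A(x,b_ix)\beta_i^{n-j}.
\]

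The first two terms are $O(1)$ uniformly in $p,q\in B_R(o)$ and $n$. The reason is that $A$ is continuous on the compact set $\overline{B_R(o)}\times\overline{B_R(o)}$, and Lemma \ref{lemma: beta_i are almost projections} shows that $\|\beta_i^n\|$ is bounded uniformly in $n$.

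For the sum, I would apply Lemma \ref{lemma: beta_i are almost projections} to each term, writing $\beta_i^{k} = (\delta_i^-)^*\otimes\delta_i^+ + O(e^{-ak})$. Summing the errors $O(e^{-a(n-j)})$ over $j$ gives a geometric series bounded independently of $n$. The main terms give
\[
n\, A(x,b_ix)\circ\bigl((\delta_i^-)^*\otimes\delta_i^+\bigr) = n\, A(x,b_ix)\bigl((\delta_i^+)_0\bigr)\,(\delta_i^-)^*,
\]
with the normalization of Remark \ref{rmk: projection from endpoints}.

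It then remains to replace $x$ by $o$. The computation in the proof of Lemma \ref{lemma: positivity lemma} (i)$\Leftrightarrow$(ii) shows that $A(y,b_iy)(\delta_i^+)_0$ is independent of $y$, because $\delta_i^+$ is the $1$-eigenline of $\beta_i$. Hence $A(x,b_ix)(\delta_i^+)_0 = \varrho_i$.

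There is no serious obstacle here. The points needing care are to check that every implied constant is uniform over $p,q\in B_R(o)$ and over the finitely many indices $i$, and to keep the left/right composition conventions for the dual action consistent with the cocycle identity $A(\gamma y,\gamma z)=A(y,z)\rho\chi(\gamma)^{-1}$. Positivity of $\alpha$ is not actually needed for the estimate. It only ensures that $\varrho_i>0$, which is presumably how the lemma is used later.
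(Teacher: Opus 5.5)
Your argument is correct and essentially matches the paper's proof: telescope $A(p,b_i^nq)$ along a $b_i$-orbit, use equivariance to write each piece as $A(\cdot,b_i\cdot)\beta_i^{n-j}$, and apply Lemma \ref{lemma: beta_i are almost projections} termwise with a summable geometric error. The only difference is that the paper telescopes along the orbit of $o$ itself, so $\varrho_i=A(o,b_io)((\delta_i^+)_0)$ appears directly; your final step of moving the base point from the axis point $x$ to $o$ is valid (by the (i)$\Leftrightarrow$(ii) computation) but not needed.
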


\begin{proof}
    For the proof we omit the subscript $i$, and proceed as in the previous lemma.
    \begin{align*}
        A(p,b^n q)\beta^n &=  \left (A(p,o)+ A(b^no, b^nq)\right) \beta^n + \sum_{j=0}^{n-1}A(b^j.o, b^{j+1}.o)\beta^n\\
        &= O(1) + \sum_{j=0}^{n-1}A(o,b.o) \beta^{n-j} \\
        &= O(1) + n A(o,b.o) \circ (\delta^-)^* \otimes (\delta^+)
    \end{align*}
    The first equality is a manipulation of the integral defining $A(\cdot,\cdot)$, the second is true because the form $\alpha$ is bounded on $B_R(o)$, and the operator norm of  $(\beta^n-I)$ is bounded (uniformly) following Lemma \ref{lemma: beta_i are almost projections}. The rest follows from simplifying and again applying Lemma \ref{lemma: beta_i are almost projections}. The quantity in the lemma is a simplification of the composition of the linear functional $A(o, b_i. o)$ with the endomorphism $(\delta_i^-)^* \otimes (\delta_i^+)$.
\end{proof}

The following is a direct corollary and will be applied in the main argument to handle segments of a ray which fellow-travel a line in $ \lambda$.

\begin{lemma}[Low-slack Lemma]\label{lem: main low slack cor}
    For all $p,q\in B_R(o)$, and any (sufficiently small) open sets $\mathcal U\subset \RP^d$ containing $\{(\delta_i)^-\}$, there exists $\varrho>0$ such that for all $v \in \overline \Omega \setminus \mathcal U$ and $n$ large enough, 
    \[\frac{1}{\varrho} n\ge A(p,b^n_i q)\beta_i^n(v_0) \ge  \varrho n.\]
\end{lemma}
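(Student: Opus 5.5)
The plan is to apply Lemma \ref{lemma: low slack linear growth (simple)} to the vector $v_0$. The positivity hypothesis on the cocycle is implicit here, since the Low-slack Lemma is stated directly after that lemma. For $p,q\in B_R(o)$ the lemma gives
\[A(p,b_i^n q)\beta_i^n(v_0) = n\varrho_i\,(\delta_i^-)^*(v_0) + O(1),\]
where the $O(1)$ term is an operator-norm bound independent of $n$, $p$, $q$ (for $p,q$ in the compact ball). Since $\|v_0\|=1$, the same bound also holds after evaluating at $v_0$. Positivity (Definition \ref{def: positive cone and cocylces}, via Lemma \ref{lemma: positivity lemma}) gives $\varrho_i>0$ for every $i$.

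The main step is a uniform two-sided bound on $(\delta_i^-)^*(v_0)$ for $v\in\overline\Omega\setminus\mathcal U$. As in Remark \ref{rmk: projection from endpoints}, $(\delta_i^-)^*$ is a linear functional whose kernel is the supporting hyperplane of $\Omega$ at $\delta_i^-$, normalized positive on $\overline\Omega_0$.

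By strict convexity of $\Omega$, this kernel meets $\overline\Omega$ only at $\delta_i^-$. Hence $(\delta_i^-)^*(v_0)>0$ for all $v\in\overline\Omega\setminus\{\delta_i^-\}$. The set $\overline\Omega\setminus\mathcal U$ is compact, and $v\mapsto v_0$ is continuous on $\overline\Omega$. So $(\delta_i^-)^*(v_0)$ attains a positive minimum $c_i$ there. It is also at most $\|(\delta_i^-)^*\|$, since $\|v_0\|=1$.

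To finish, take $\varrho$ smaller than $\tfrac12\min_i \varrho_i c_i$ and also smaller than $\big(2\max_i\varrho_i\|(\delta_i^-)^*\|\big)^{-1}$. Then for $n$ larger than an explicit multiple of the $O(1)$ constant, the additive error is absorbed into the linear term, and both inequalities hold uniformly in $i$, $v$, $p$, $q$.

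The only delicate point is checking that the $O(1)$ in Lemma \ref{lemma: low slack linear growth (simple)} is uniform over $p,q\in B_R(o)$. It is: it comes from bounding $A(p,o)$ and $A(b^n o,b^n q)\beta^n = A(o,q)$ by equivariance, both of which are continuous on a compact set, together with the uniformly bounded $\beta_i^n$ from Lemma \ref{lemma: beta_i are almost projections}. The phrase ``sufficiently small'' for $\mathcal U$ only serves to ensure that $\overline\Omega\setminus\mathcal U$ is nonempty, which is harmless.
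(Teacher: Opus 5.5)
Your proposal is correct and follows the same route as the paper: expand via Lemma~\ref{lemma: low slack linear growth (simple)}, then use strict convexity and compactness of $\overline\Omega\setminus\mathcal U$ to bound $(\delta_i^-)^*(v_0)$ uniformly above and below. You also make explicit two points the paper leaves tacit. First, positivity of the class is what gives $\varrho_i>0$. Second, the $O(1)$ error is uniform in $p,q\in B_R(o)$ and in $v$. These are clarifications of the paper's argument, not a different one.
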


\begin{proof}
    It is sufficient to prove the lemma for any $i\in\{1,\dots, m\}$ since this is a finite collection.
    Lemma \ref{lemma: low slack linear growth (simple)} gives that
    \[A(p,b^n_i q)\beta_i^n(v_0) = n \varrho_i (\delta_i^-)^*(v_0) +O(1).\]
    Because $\mathcal U$ contains $\delta_i^-$ there are uniform bounds on the evaluation $(\delta_i^-)^*(v_0)$ (recall that $\Omega$ is strictly convex and $(\delta_i^-)^*$ is a supporting hyperplane at $\delta_i^-$). The result follows
    \end{proof}

\subsection{Notation and high slack}

The following Lemma is a manipulation which occurs frequently, so we package it here for convenience; see Figure \ref{fig: notation lemma }.

\begin{lemma}[Notation Lemma]\label{lemma: notation lemma}
    Let $r:[0,T]\to \Omega$ be a geodesic ray with $r(0)\in B_R(o)$, and consider 
    \[0=t_0<\dots<t_i<\dots<t_N =T.\]
    Then there exist $\gamma_i\in \Gamma$ and $p_i\in B_R(o)$ such that for each $i=0,\dots, N$
    \[(\gamma_1\cdots\gamma_{i}).p_i=q_i \defeq r(t_i).\]

    Furthermore, 

    \[A(q_0,q_N)\rho\chi (\gamma_1\cdots \gamma_N) = \sum_{i=0}^{N-1}A(p_i,\gamma_{i+1} p_{i+1})\rho\chi (\gamma_{i+1}\cdots\gamma_N)\]
    and
    \[A(q_0,q_N)\Psi(q_N) = \sum_{i=0}^{N-1}A(p_i,\gamma_{i+1} p_{i+1})\rho\chi (\gamma_{i+1}\cdots\gamma_N)\Psi(p_N).\]
\end{lemma}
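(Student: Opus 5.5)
The plan is a direct construction followed by telescoping the integral $A$ and using the equivariance of $\alpha$ and $\Psi$.

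\emph{Construction.} Since $R>\diam(M,d_\rho)$, the $\rho(\Gamma)$-translates of $B_R(o)$ cover $\Omega$. For each $i$, I choose $g_i\in\Gamma$ and $p_i\in B_R(o)$ with $g_i.p_i=q_i=r(t_i)$. Because $r(0)\in B_R(o)$, I take $g_0=\mathrm{id}$ and $p_0=q_0$. Then I define $\gamma_i\defeq g_{i-1}\inverse g_i$ for $i=1,\dots,N$. By telescoping, $\gamma_1\cdots\gamma_i=g_i$, which gives the first assertion $(\gamma_1\cdots\gamma_i).p_i=q_i$. The empty product for $i=0$ is the identity.

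\emph{Equivariance of $A$.} The key identity is
\[A(g.x,g.y)=A(x,y)\,\rho\chi(g)\inverse\qquad (g\in\Gamma,\ x,y\in\Omega).\]
This follows from $\rho\chi$-equivariance of the pulled-back form $\alpha$, exactly as in the computation preceding the definition of $Z$: there, $\int_{\gamma.x}^{\gamma.y}\alpha=\gamma.\int_x^y\alpha$, where the dot denotes precomposition with $(\rho\chi(\gamma))\inverse$. Because $\alpha$ is closed, $A$ is path independent and additive, so
\[A(q_0,q_N)=\sum_{i=0}^{N-1}A(q_i,q_{i+1})=\sum_{i=0}^{N-1}A\bigl(g_i.p_i,\,g_i\gamma_{i+1}.p_{i+1}\bigr)=\sum_{i=0}^{N-1}A(p_i,\gamma_{i+1}.p_{i+1})\,\rho\chi(g_i)\inverse .\]
Right-multiplying by $\rho\chi(g_N)=\rho\chi(g_i)\,\rho\chi(\gamma_{i+1}\cdots\gamma_N)$ gives the first displayed formula. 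For the second, $\Psi$ is $\rho\chi$-equivariant, so $\Psi(q_N)=\rho\chi(g_N)\Psi(p_N)$. Applying the first formula to the vector $\Psi(p_N)$ then yields the second formula.

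There is no real obstacle here. The only points needing care are the convention for the dual action (precomposition by $\rho\chi(g)\inverse$, consistent with the cocycle computation in \S\ref{subsec: integrating forms and equivariant surfaces}) and the ordering of the products $\gamma_{i+1}\cdots\gamma_N$. I will check both on the case $N=1$, where the formula reduces to $A(q_0,q_1)\rho\chi(\gamma_1)=A(p_0,\gamma_1.p_1)\rho\chi(\gamma_1)$, which holds trivially.
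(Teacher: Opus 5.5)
Your argument is correct and is exactly the ``manipulation of the definitions'' that the paper has in mind; the paper omits the proof. You choose $g_i\in\Gamma$ and $p_i\in B_R(o)$ with $g_i.p_i=q_i$ and $g_0=\mathrm{id}$, set $\gamma_i=g_{i-1}^{-1}g_i$, telescope $A(q_0,q_N)=\sum_i A(q_i,q_{i+1})$ using $A(g.x,g.y)=A(x,y)\rho\chi(g)^{-1}$, and finish with $\Psi(q_N)=\rho\chi(g_N)\Psi(p_N)$. The one input you take on faith is that translates of the two-sided ball $B_R(o)$ cover $\Omega$. That is the paper's standing choice of $R$, though for an asymmetric metric it really needs $R$ somewhat larger than $\diam M$, not just $R>\diam M$.
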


\begin{figure}[h]
    \centering
    \includegraphics[width=.6\linewidth]{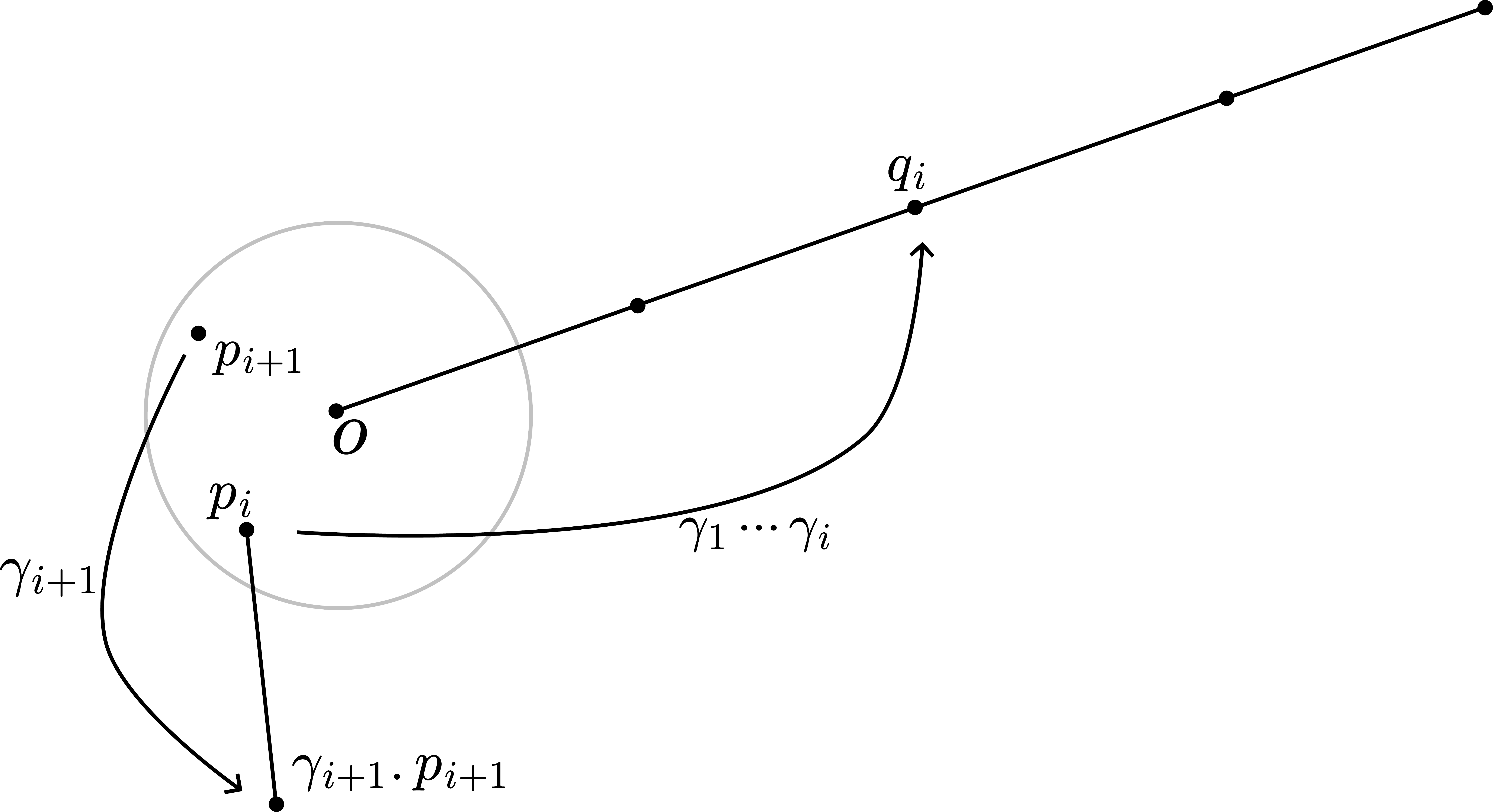}
    \caption{Setup of the Notation Lemma \ref{lemma: notation lemma}}  
    \label{fig: notation lemma }
\end{figure}

We leave the Notation Lemma without proof as it is a manipulation of the definitions and properties of the integral $A$ and the map $\Psi$.
\medskip

To handle segments of the ray $r$ with slack which is at least proportional to their length, we will make use of the following general fact.

\begin{lemma}\label{lemma: geometric series bound for high slack}
    Suppose $a_0,\dots, a_N$ are real numbers and that there exists some $\varepsilon>0$ satisfying $a_i>\epsilon$ for all $i$. Then 
    \[\sum_{i=0}^{N-1}\exp (\sum_{j=0}^i a_j) < \frac{e^\varepsilon}{e^\varepsilon-1}\exp (\sum_{j=0}^N a_j) \]
\end{lemma}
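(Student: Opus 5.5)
The plan is to compare the sum term by term with the final exponential, using the lower bound $a_j>\varepsilon$ on the omitted increments. Write $S_i \defeq \sum_{j=0}^i a_j$ for the partial sums. For each $0\le i\le N-1$ we have
\[S_N - S_i = \sum_{j=i+1}^{N} a_j > (N-i)\varepsilon,\]
since each of the $N-i$ terms exceeds $\varepsilon$. Hence $\exp(S_i) < e^{-(N-i)\varepsilon}\exp(S_N)$ for every $i\le N-1$.

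Summing over $i$ and reindexing by $k=N-i\in\{1,\dots,N\}$ gives
\[\sum_{i=0}^{N-1}\exp(S_i) < \exp(S_N)\sum_{k=1}^{N}e^{-k\varepsilon} < \exp(S_N)\sum_{k=1}^{\infty}e^{-k\varepsilon} = \frac{e^{-\varepsilon}}{1-e^{-\varepsilon}}\exp(S_N) = \frac{1}{e^{\varepsilon}-1}\exp(S_N).\]
Since $e^\varepsilon>1$, we have $\frac{1}{e^\varepsilon-1}<\frac{e^\varepsilon}{e^\varepsilon-1}$, so the claimed bound follows, with room to spare. The strict inequality holds provided $N\ge 1$; if $N=0$ the left side is an empty sum equal to $0$, which is still strictly less than the positive right side.

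There is no real obstacle here. The only points to watch are the indexing: the final term $a_N$ enters $S_N$ but none of the summands, which is exactly what supplies the gap of at least one factor $e^{-\varepsilon}$. The hypothesis $a_j>\varepsilon$ is used only for the indices $j\ge 1$. The role of the lemma in the paper is to turn per-segment slack lower bounds from Lemma \ref{lem: slack grows} into a geometric-series control on the Notation Lemma sums, dominated by the last term.
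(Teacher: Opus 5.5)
Your proof is correct and is essentially the paper's argument: the paper also multiplies through by $\exp(-\sum_{j=0}^N a_j)$, rewrites the sum as $\sum_{i=0}^{N-1}\exp(-\sum_{j=i+1}^N a_j)$, and bounds this by a geometric series. You carry out the geometric-series step explicitly, which gives the slightly sharper constant $1/(e^\varepsilon-1)$, and you also handle the degenerate case $N=0$.
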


\begin{proof}
    One examines the quantity
    \[\exp (\sum_{j=0}^N -a_j)\sum_{i=0}^{N-1}\exp (\sum_{j=0}^i a_j) = \sum_{i=0}^{N-1} \exp(-\sum_{j=i+1}^N a_j)\]
    and applies standard facts about geometric series.
\end{proof}

The following Lemma will be used to handle high-slack segments.

\begin{lemma}[High-slack Lemma]\label{lemma: high-slack lemma}
Given $\epsilon>0$, $R>0$, and $\alpha$, there is a $C>0$ such that the following holds:

Suppose that $[a,b]\subset \Omega$ is a geodesic segment with a partition into $N$-subsegments defined by their endpoints $q_i$, where $q_0=a$ and $q_N=b$. Choose $\gamma\in \Gamma$ satisfying $\gamma.p_N=b$ for some point $p_N\in B_R(o)$. 
Suppose that $\s [q_i,q_{i+1}]>\epsilon$ and that $d_\rho(q_i,q_{i+1})<R$ for all $i$. 
Then
\[\|A(a,b) \rho\chi (\gamma) \|\le C e^{\s[a,b]}.\]
\end{lemma}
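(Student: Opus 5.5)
We will apply the Notation Lemma \ref{lemma: notation lemma} to the partition $q_0=a,\dots,q_N=b$ of the segment, with the convention that $a\in B_R(o)$ after translating by an element of $\Gamma$. If $a$ is not in $B_R(o)$, we first replace everything by $g.$ for a suitable $g$. Equivariance of $A$ gives $A(g.x,g.y)=A(x,y)\rho\chi(g)\inverse$, and slack is $\Gamma$-invariant, so both sides transform compatibly up to replacing $\gamma$ by $g\inverse\gamma$. This produces $\gamma_1,\dots,\gamma_N\in\Gamma$ and $p_i\in B_R(o)$ with $(\gamma_1\cdots\gamma_i).p_i=q_i$, and $\gamma=\gamma_1\cdots\gamma_N$ (possibly up to a fixed bounded element, since $p_N$ and the given point both lie in $B_R(o)$). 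The Notation Lemma then expresses
\[A(a,b)\rho\chi(\gamma)=\sum_{i=0}^{N-1}A(p_i,\gamma_{i+1}p_{i+1})\,\rho\chi(\gamma_{i+1}\cdots\gamma_N).\]

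Next we bound each summand. Since $d_\rho(q_i,q_{i+1})<R$ and $p_i,p_{i+1}\in B_R(o)$, the elements $\gamma_{i+1}$ range over a finite subset of $\Gamma$, because $d_\rho(o,\gamma_{i+1}o)\le 3R$. Hence $\|A(p_i,\gamma_{i+1}p_{i+1})\|\le C_1$ uniformly, using that $\alpha$ is continuous and equivariant and the relevant paths stay in a compact set. For the linear factor, Lemma \ref{lemma: basic slack estimates} gives
\[\|\rho\chi(\gamma_{i+1}\cdots\gamma_N)\|\asymp e^{\s[p_i,\gamma_{i+1}\cdots\gamma_N p_N]}=e^{\s[q_i,q_N]},\]
after translating by $(\gamma_1\cdots\gamma_i)\inverse$ and using $\Gamma$-invariance of slack. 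Since $[q_i,q_N]$ is a subsegment of the geodesic $[a,b]$, additivity gives $\s[q_i,q_N]=\sum_{j=i}^{N-1}\s[q_j,q_{j+1}]$.

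Finally, set $s_j=\s[q_j,q_{j+1}]>\epsilon$. We then have
\[\|A(a,b)\rho\chi(\gamma)\|\le C_1C_2\sum_{i=0}^{N-1}\exp\Big(\sum_{j=i}^{N-1}s_j\Big).\]
Reindexing from the right, so that $a_k=s_{N-1-k}$, this is exactly the sum in Lemma \ref{lemma: geometric series bound for high slack}. It is therefore bounded by $\frac{e^\epsilon}{e^\epsilon-1}e^{\sum_j s_j}=\frac{e^\epsilon}{e^\epsilon-1}e^{\s[a,b]}$. A direct geometric-series bound works equally well: the largest term is $e^{\s[a,b]}$ and each later term is smaller by a factor of at least $e^{-\epsilon}$. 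This gives the desired bound with $C$ depending on $\epsilon$, $R$, and $\alpha$.

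The main obstacle is the bookkeeping in the first step. We must make sure that the given $\gamma$ agrees with $\gamma_1\cdots\gamma_N$ up to a uniformly bounded element. We must also make sure that the implicit constants in Lemma \ref{lemma: basic slack estimates} are uniform when both endpoints vary in $B_R(o)$, which that lemma does allow. In addition, the uniform bound on $A(p_i,\gamma_{i+1}p_{i+1})$ requires finiteness of the set of possible $\gamma_{i+1}$, which follows from proper discontinuity.
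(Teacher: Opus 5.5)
Your proposal is correct and follows the paper's own proof: the Notation Lemma \ref{lemma: notation lemma} decomposition, a uniform bound on $\|A(p_i,\gamma_{i+1}p_{i+1})\|$, Lemma \ref{lemma: basic slack estimates} to control $\|\rho\chi(\gamma_{i+1}\cdots\gamma_N)\|$, and then a geometric-series bound as in Lemma \ref{lemma: geometric series bound for high slack}. Two differences are immaterial. You estimate with $e^{\s[q_i,q_N]}$ where the paper uses $e^{\s[q_{i+1},q_N]}$, and these differ by the bounded factor $e^{\s[q_i,q_{i+1}]}$. Your preliminary reductions (translating $a$ into $B_R(o)$ by equivariance, and identifying $\gamma$ with $\gamma_1\cdots\gamma_N$) simply spell out bookkeeping that the paper leaves implicit.
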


\begin{proof}
    Using notation from Lemma \ref{lemma: notation lemma}, we write
    \[A(q_0,q_N)\rho\chi (\gamma_1\cdots \gamma_N) = \sum_{i=0}^{N-1}A(p_i,\gamma_{i+1} p_{i+1})\rho\chi (\gamma_{i+1}\cdots\gamma_N)\]

    Note that $\|A(p_i,\gamma_{i+1} p_{i+1})\|$ is uniformly bounded above by the assumption on $d(q_i,q_{i+1})$.
    Finally apply Lemma \ref{lemma: basic slack estimates} to see that
    \[\|\rho\chi (\gamma_{i+1}\cdots\gamma_N)\|\le  const \cdot  e^{\s[q_{i+1},q_N]}\]
    and Lemma \ref{lemma: geometric series bound for high slack} using the assumption that $\s [q_i,q_{i+1}]>\epsilon$. This completes the lemma.
\end{proof}

\subsection{The height along a ray}

In the previous subsection we saw that low-slack segments of a ray exhibit positive linear growth, and that high-slack segments exhibit behavior like a geometric series. In this subsection we find a partition of the ray into high- and low-slack segments and apply both estimates to uniformly bound the height function  from below along the ray for cocycles in the positive cone.

By a partition of a ray $r$ we mean a countable subset of the domain and the complementary subsets (considered up to their endpoints).

\begin{lemma}[Partition Lemma]\label{lemma: partition lemma multicurve case}
    Given $N\ge 1$ there is an $\epsilon_N>0$ such that the following holds.
    Let $r:[0,\infty)\to \Omega$ be a geodesic ray. Then there exists a countable partition of $r$ into non-trivial subsegments labeled $H$ and $L$ (`high' and `low') such that 
    \begin{itemize}
        \item There exist $\ell_0>0$ and $\vartheta>0$ such that every $H$-subsegment $J$ satisfies $\ell(J)>\ell_0$, and for every subsegment $[a,b]\subset J$ with $b\ge a+\ell_0/4$, it holds that $\s[r(a), r(b)]\ge\vartheta (b-a)$.
        \item For every $L$-subsegment $J=[a,b]$, there exists $i$ such that $r(J)$ is contained in $\mathcal N_{\epsilon_N} (\delta_i)$ and 
        there are $p,q \in B_R(o)$ and $\gamma \in \Gamma$ such that
        \[r(a) =\gamma.p \text{ and } r(b) = \gamma b_i^n.q\]
        where $n\ge N$ for some  $i \in \{1, ..., m\} $.
    \end{itemize}
\end{lemma}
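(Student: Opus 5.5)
The plan is to define the $L$-segments first, as maximal stretches where the ray fellow travels some leaf $\delta_i$ (a lift of a closed curve of $\uu\lambda$) for at least $N$ periods, and to declare the complement to be the $H$-segments. The slack estimate for $H$-segments will then come from Lemma \ref{lem: slack grows}. Because $\uu\lambda$ is a finite union of closed geodesics, its lifts $\lambda$ form a locally finite family of lines with positive pairwise separation away from common endpoints. We choose $\epsilon_N$ small enough that the $\epsilon_N$-neighborhoods of distinct lifts are disjoint in $T^1\Omega$ apart from asymptotic tails, whose effect is controlled.

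\textbf{Step 1 (choosing the low pieces).} Fix $\epsilon>0$ small and consider the set of times $t$ with $r'(t)\in\mathcal N_{\epsilon}(T^1_+\lambda)$. Its connected components are intervals on which $r$ stays $\epsilon$-close, in the forward direction, to a single leaf $\delta_i'=\gamma\delta_i$. For each such interval $[a',b']$ of length at least $(N+2)\max_i\lambda_1^\rho(b_i)+2R$, we use cocompactness (the choice $R>\diam M$) to pick $\gamma$ with $r(a')\in\gamma B_R(o)$ and $\gamma\delta_i=\delta_i'$. We choose $p\in B_R(o)$ on or near the axis so that $r(a)=\gamma.p$. Then we take the largest $n$ such that $\gamma b_i^n B_R(o)$ still meets $r$ inside the interval, and define $b$ by $r(b)=\gamma b_i^n.q$ with $q\in B_R(o)$. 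Trimming the ends of the interval by a bounded amount yields $[a,b]$ with $n\ge N$. This interval is an $L$-segment, and $r([a,b])\subset\mathcal N_{\epsilon_N}(\delta_i')$ once $\epsilon\ll\epsilon_N$, using the convexity of neighborhoods of lines under the Hilbert metric together with the comparison $d_\rho\le 2d_H$.

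\textbf{Step 2 (high pieces).} The complementary intervals are the $H$-segments. On such an interval, the tangent vector is either outside $\mathcal N_\epsilon(T^1_+\lambda)$, or it lies inside on a stretch too short to count, meaning fewer than $N$ periods plus a bounded trim. To obtain the linear slack bound for subsegments of length at least $\ell_0/4$, we argue by compactness. By Lemma \ref{lem: slack non-negative}, a geodesic segment of length $\ell$ has slack $0$ only if it lies in $\lambda$. A limiting argument using Lemma \ref{lem: slack continuous finite} then shows that any segment of length $\ge\ell_0/4$ that is not $\epsilon$-tracking $\lambda$ along a definite fraction of its length has slack $\ge\vartheta\ell$. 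Short tracking stretches (bounded by $C_N$) are absorbed by choosing $\ell_0\gg C_N$ and using additivity together with Lemma \ref{lem: slack grows} on the non-tracking parts. To guarantee $\ell(J)>\ell_0$, we merge any $H$-segment shorter than $\ell_0$ into an adjacent $L$-segment. Such a merge only changes the number of periods $n$ by a bounded amount and keeps the piece inside a slightly larger neighborhood; this is allowed because $\epsilon_N$ is chosen after $\ell_0$. Alternatively, the short segment can be reabsorbed by trimming the $L$-segment.

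The main obstacle is Step 2. Tracking stretches that are shorter than $N$ periods but arbitrarily long in absolute terms, for large $N$, must not destroy the uniform linear slack rate. The way around this is to note that $\vartheta$ and $\ell_0$ are allowed to depend on $N$. A segment tracking $\lambda$ for at most $N$ periods followed by divergence accrues at least a fixed slack $\epsilon'$ from the divergence, by Lemma \ref{lem: slack grows} applied at the exit. Taking $\ell_0\asymp N\max_i\lambda_1^\rho(b_i)$ therefore gives $\vartheta\asymp \epsilon'/\ell_0>0$. The degenerate cases to handle separately are a ray eventually tracking a single leaf forever, which gives a terminal $L$-segment with $n=\infty$ treated as a limit of finite pieces, and the start of the ray.
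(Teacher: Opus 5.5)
Your construction fails on the requirement that one and the same $\ell_0$ bounds every $H$-segment from below \emph{and} sets the scale $\ell_0/4$ above which slack must grow linearly.

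\textbf{Why $\ell_0$ must be large.} Your $H$-segments contain time during which $\dot r$ closely tracks a leaf: the tracking stretches shorter than $C_N\approx (N+2)\max_i\lambda_1^\rho(b_i)+2R$, and the trimmed ends of the long ones. On such time the slack can be essentially zero, so the first bullet forces $\ell_0/4>C_N$, as you note.

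\textbf{Why the length bound then fails.} Take a ray that follows a lift $\gamma\delta_i$ for many more than $N$ turns, then crosses over to a different lift $\gamma'\delta_j$ and follows it for many turns. It produces an $H$-segment (trim, crossing, trim) whose length is bounded independently of $N$. Once $N$ is large, that segment is shorter than $\ell_0$.

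\textbf{Why the repairs fail.} Merging this $H$-segment into the preceding $L$-segment moves the endpoint to a point near $\gamma'\delta_j$, at least the separation between distinct lifts away from $\gamma\delta_i$. The added piece does not run along $\gamma\delta_i$ at all, so this is not a change of ``a bounded number of periods''. The new endpoint is in general not of the form $\gamma b_i^nq$ with $q\in B_R(o)$. Enlarging $R$ (or $\epsilon_N$) to absorb it is circular: the $N$ used in the proof of Theorem~\ref{thm: heights bounded below scc} is the threshold of Lemma~\ref{lem: main low slack cor}, which depends on $R$, while your $R$ would have to grow with $\ell_0\asymp N$. Trimming the $L$-segment instead puts a tracking piece of length about $\ell_0$ into $H$, whose subsegments of length $\ell_0/4$ have slack close to $0$. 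Separately, your fixed slack $\epsilon'$ ``at the exit'' needs the tangent to stay outside the neighbourhood for a definite time after leaving. Lemma~\ref{lem: slack grows} does not provide that by itself.

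\textbf{The missing idea.} Let $N$ determine the \emph{size of the neighbourhood}, not a length threshold for sorting tracking stretches, so that $H$ contains no tracking time at all. The paper takes the $H$-segments to be the maximal intervals on which $\dot r$ avoids the $\epsilon_N$-neighbourhood of $T^1_+\uu\lambda$, and the $L$-segments to be the complementary components. It chooses $\epsilon_N$ so small that:
\begin{itemize}
\item a tangent vector in this neighbourhood winds at least $N$ times around a single component before leaving, and
\item the neighbourhoods of different components have disjoint closures.
\end{itemize}
Lemma~\ref{lem: slack grows}, with $\delta$ a fixed fraction of $\ell_0$, then gives linear slack on every subsegment of an $H$-segment of length at least $\ell_0/4$. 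Here $\ell_0$ is just a lower bound for the time needed to pass between tubes. It is unrelated to $N\max_i\lambda_1^\rho(b_i)$, so a single $\ell_0$ serves both bullets.

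Two refinements help if you write this out in detail. First, to make ``every visit winds $N$ times'' literally true for tangents that merely graze the neighbourhood, use an entry radius smaller than the exit radius. The $H$-segments are then at least as long as the time needed to cross between the two radii. Second, the normalisation $r(a)=\gamma p$, $r(b)=\gamma b_i^nq$ needs no trimming if $R$ is fixed once and for all large enough in terms of the finitely many $b_i$.
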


In the present case, it is not difficult to find a suitable partition of the ray. The analogue when the tension locus is not a multicurve, (Lemma \ref{lem: partition lemma general}) is more involved.

\begin{proof}
    Choose $\epsilon_N$ such that any vector $v \in T^1M$ that is $\epsilon_N$-close to $T^1_+\underline \lambda$ wraps around the corresponding component of $\underline \lambda$ at least $N$ times under the forward geodesic flow before exiting this neighborhood in $T^1M$.
    Additionally choose $\epsilon_N$ to ensure that $\epsilon_N$ neighborhoods of the components of $\uu\lambda$ have disjoint closure.  Thus there is a uniform lower bound $\ell_0>0$ such that any geodesic ray connecting these tubes has length at least $\ell_0$.  
    
    Define $H$-subsegments to be those maximal subintervals $J$ for which $\dot r(J)\subset N_{\epsilon_N}(T^1_+\underline \lambda)^c$, and $L$-subintervals to be the connected components of $[0,\infty)\setminus \cup \{J\colon \textrm{$J$ an $H$-subinterval}\}.$ The required property of $L$-subintervals is clear due to the choice of $\epsilon_N$, and Lemma \ref{lem: slack grows} (with $\ell_0/4$ playing the role of $\delta$) shows the required linear slack growth property for subsegments of $H$-subsegments.
\end{proof}

We are now ready to prove the main result of this section that the height of any ray is uniformly bounded from below for a given cocycle in the positive cone.

\begin{theorem}[Bounded Height]\label{thm: heights bounded below scc}
    Given $\alpha\in \mathscr P_{\rho\chi}$, there is a constant $C(\alpha)>0$ such that for all geodesic rays $r: [0,\infty) \to \Omega$ with $r(0) =o$ and all $T\ge 0$, we have 
    \[h(r(T))\ge -C(\alpha).\]
\end{theorem}

\begin{proof}
    Recall that 
    \[h(r(T)) = \frac{A(o,r(T))\Psi(r(T))}{\|\Psi(r(T))\|}.\]
    Using Lemma \ref{lemma: basic slack estimates}, we have 
    \[h(r(T))\succeq \frac{A(o,r(T))\Psi(r(T))}{e^{\s[o,r(T)]}}.\]
    Our goal for the proof will be to control the size of this quantity.
    
    Choose $N$ large enough so that Lemma \ref{lem: main low slack cor} holds for all $n\ge N$, and apply the Partition Lemma \ref{lemma: partition lemma multicurve case} for this $N$ to the ray $r$. The Partition Lemma gives us a sequence $ 0=t^0<t^1 < ...$, (where it's possible that the $t^i$'s are $\infty$ for all $i\ge i_0 \ge 1$).  Given $T>0$, there is a largest $n_T$ such that $t^{n_T-1}\le T$.  We set 
    \[q_i = \begin{cases} r(t^i), & 0\le i <n_T,\\
    r(T), & i = n_T.
    \end{cases}\]
    Now, we apply the Notation Lemma \ref{lemma: notation lemma} to obtain $p_i \in B_R(o)$ and $\gamma_i \in \Gamma$ such that 
    \[q_i = \gamma_1 \cdots\gamma_i.p_i,\]
    and
    \begin{align*}
        A(o,r(T))\Psi(r(T))= \sum_{i = 0}^{n_T-1} A(p_i,\gamma_{i+1}p_{i+1})\rho\chi(\gamma_{i+1})\rho\chi(\gamma_{i+2}\cdots\gamma_{n_T})\Psi(p_{n_T}).
    \end{align*}

    We may assume that the $H$-subsegments from the Partition Lemma have even index and the $L$-subsegments have odd index and handle even and odd terms separately.
    The $L$-segments correspond to geodesic segments of the form $[q_i, q_{i+1}]$ where $i$ is odd, and each one also comes with an integer $M_i\ge N$;
     we may assume that 
    \begin{equation}\label{eqn:M_i defining property}
        \gamma_{i+1} =b_{j_i}^{M_i},
    \end{equation}
    where $j_i \in \{1, ..., m\}$.
    \medskip

    \noindent\textbf{1) Even terms and high-slack estimates:}    Consider an even term, $i\ge2$ corresponding to an $H$-segment.
    We can cut $[p_i, \gamma_{i+1}p_{i+1}]$ into pieces whose length is in $[\ell_0/4, \ell_0/2]$.  As a consequence of the Partition Lemma \ref{lemma: partition lemma multicurve case}, the slack of each piece is bounded below by $\vartheta \ell_0/4$.  Using Lemma \ref{lemma: high-slack lemma}, we have 
    \[(*) \defeq \left| A(p_i,\gamma_{i+1} p_{i+1})\rho\chi(\gamma_{i+1})\rho\chi(\gamma_{i+2}\cdots\gamma_{n_T})\Psi(p_{n_T}) \right| \prec   e^{\s[q_i, q_{i+1}]}\| \rho\chi(\gamma_{i+2}\cdots\gamma_{n_T})\| \cdot \|\Psi(p_{n_T})\|.\]
    Since $p_{n_T} \in B_R(o)$ and $\Psi$ is continuous, we know that $\|\Psi(p_{n_T})\|$ is of size $O(1)$, while Lemma \ref{lemma: basic slack estimates} gives $\| \rho\chi(\gamma_{i+2}\cdots\gamma_{n_T})\|\asymp e^{\s [q_{i+1}, r(T)]}$.  Using additivity of slack, we therefore obtain the bound 
    $(*)\prec e^{\s[q_i, r(T)]}$.

    Summing all of the even terms and taking the absolute value, we thus obtain
    \begin{align*}
    \left| \frac{1}{e^{\s[o,r(T)]}} \sum_{i \text{ even} } A(p_i,\gamma_{i+1}p_{i+1})\rho\chi(\gamma_{i+1})\rho\chi(\gamma_{i+2}\cdots\gamma_{n_T})\Psi(p_{n_T})\right| 
   &\prec  \frac{1}{e^{\s[o,r(T)]}}\sum_{i \text{ even} } e^{\s[q_{i},r(T)]}\\
   & = \sum_{i \text{ even} } e^{-\s[o,q_i]}=O(1) 
   \end{align*}
    In the last inequality, we observe that the lower bounds on the slack of $H$-segments bound the sum in terms of a geometric series.  Indeed, the Partition Lemma (\ref{lemma: partition lemma multicurve case}) furnishes positive constants $\ell_0, \vartheta$ such that
    \[\s[0,q_i]\ge i\ell_0\vartheta/2,\]
    since the segment $[0,q_i]$ contains $(i/2)$ $H$-segments by construction. This proves the claim.
    \medskip

    \noindent\textbf{2) Odd terms and low-slack estimates:}
    Consider an odd term
    \[A(p_i,\gamma_{i+1} p_{i+1})\rho\chi(\gamma_{i+1})\rho\chi(\gamma_{i+2}\cdots\gamma_{n_T})\Psi(p_{n_T})\]
    corresponding to an $L$ segment.
    
    We claim that $\rho\chi(\gamma_{i+2}\cdots\gamma_{n_T})\Psi(p_{n_T})$ has size $ \asymp e^{\s[q_{i+1}, r(T)]}$.
    The size estimate is given by Lemma \ref{lemma: basic slack estimates} after observing that $\Psi(p_{n_T})$ lies in a compact set and is thus bounded away from the hyperplane corresponding to small singular values of any sufficiently large non-trivial element of $\Gamma$.

    Additionally, we claim that $[\rho\chi(\gamma_{i+2}\cdots\gamma_{n_T})\Psi(p_{n_T})]$ lies far away from a neighborhood of $\cup_{i=1}^m\delta_i^-$.
    This follows from  our choice of $\epsilon_N$ from the Partition Lemma \ref{lemma: partition lemma multicurve case}:
    the tangent vector to $r$ at the end of this $L$-segment is still close to the corresponding line in $T^1_+\lambda$, and so fellow travels with that line for some time.  This bounds the position in $\Omega$ far away from any of the backward endpoints of the lines $\cup_{i=1}^m \delta_i$ (Figure \ref{fig: bounded_ht_fig1}).

    \begin{figure}[h]
    \centering
    \includegraphics[width=.35\linewidth]{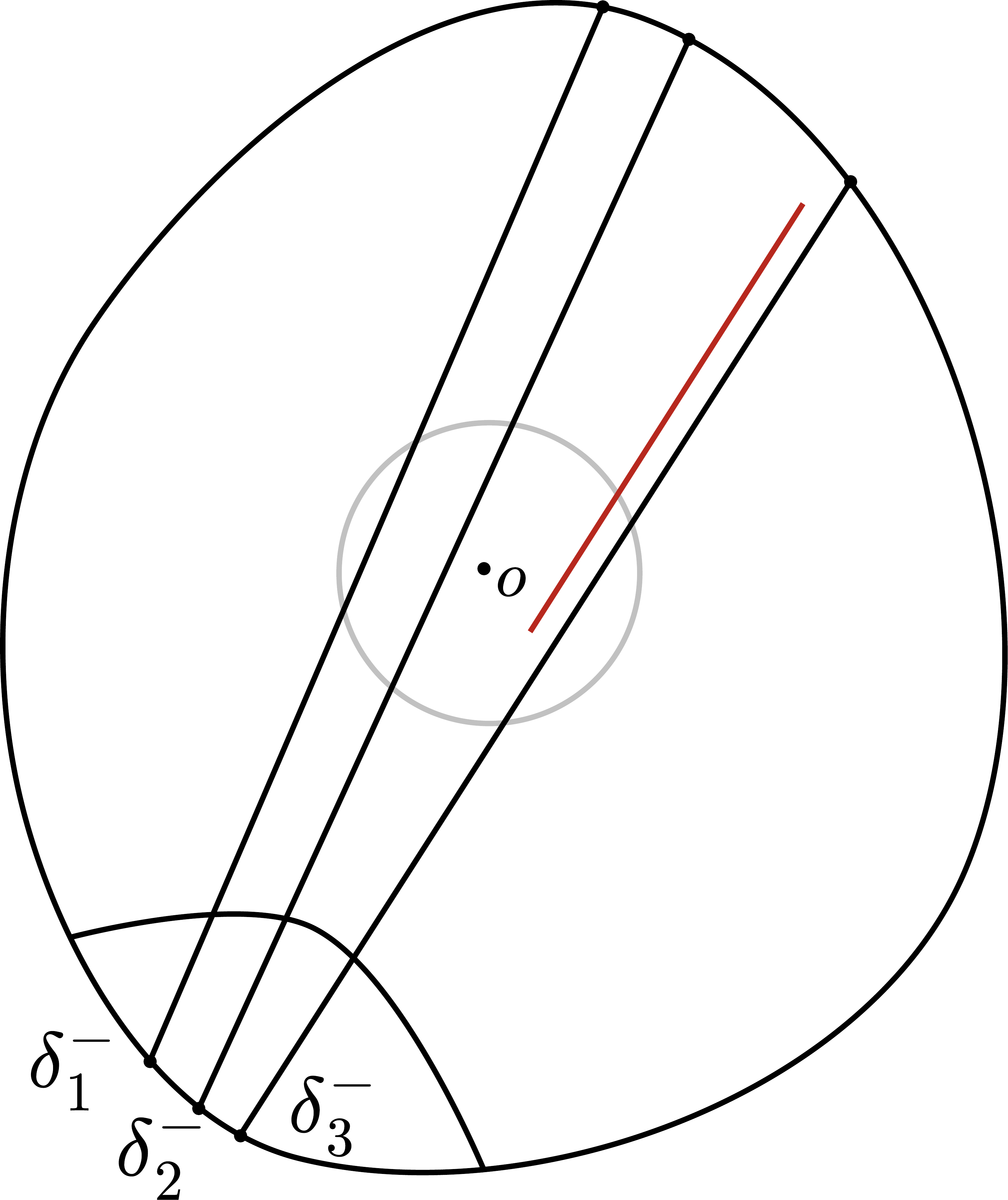}
    \caption{A ray based in $B_R(o)$ fellow-traveling the curves $\delta_i$ has forward endpoint bounded away from $\delta_i^-$.}  
    \label{fig: bounded_ht_fig1}
    \end{figure}
    
    Given the observations made in the previous two paragraphs, the Low Slack Lemma \ref{lem: main low slack cor}, gives
    \[\frac{1}{\rho}M_i\succ A(p_i,\gamma_{i+1} p_{i+1})\rho\chi(\gamma_{i+1})\left(\frac{\rho\chi(\gamma_{i+2}\cdots\gamma_{n_T})\Psi(p_{n_T})}{e^{\s[q_{i+1}, r(T)]}}\right)  \succ \varrho M_i >0. \]

    Summing all of the odd terms, we thus obtain
    \begin{align*}
     \frac{1}{e^{\s[0,r(T)]}} \sum_{i \text{ odd} } A(p_i,\gamma_{i+1}p_{i+1})\rho\chi(\gamma_{i+1})\rho\chi(\gamma_{i+2}\cdots\gamma_{n_T})\Psi(p_{n_T}) 
     &\asymp  
     \frac{1}{e^{\s[0,r(T)]}}\sum_{i \text{ odd} } M_i e^{\s[q_{i+1},r(T)]}\\
    & = 
    \sum_{i \text{ odd} } M_i e^{-\s[0,q_{i+1}]},
    \end{align*}
    where we have used the additivity of slack in the equality.  
    \medskip

    \noindent\textbf{3) Combining the estimates:} Putting it together gives
    \begin{equation}\label{eqn: upper and lower height bound}
        \left(O(1)+ const \sum_{i \text{ odd} }^{n_T-1} M_i e^{-\s[0,q_{i+1}]}\right) 
    \ge
    h(r(T))\succ\frac{A(o,r(T))\Psi(r(T))}{e^{\s[o,r(T)]}}
    \ge
    \left(O(1)+ const' \sum_{i \text{ odd} }^{n_T-1} M_i e^{-\s[0,q_{i+1}]}\right),
    \end{equation} 
    where the constants are positive and don't depend on $T$ or $r$.    
    Since each odd term is positive, we thus achieve a uniform lower bound on height for every ray $r$, proving the theorem.
\end{proof}

\begin{remark}\label{rmk: upper height bound}
    We will use the uniform upper bound from \eqref{eqn: upper and lower height bound} in Proposition \ref{prop: average slack and height} to analyze the average slack of rays with unbounded height.  
\end{remark}

\begin{remark}\label{rmk: lots of rays going to infinity}
    It follows from \eqref{eqn: upper and lower height bound} that a ray that has infinitely many $H$-subsegments with sufficiently long $L$-segments between (in fact, longer and longer) them will accumulate infinite height. 
\end{remark}

\section{The positive cone in dimension two}\label{sec: positive cone lamination}

In this section, we essentially work with hypotheses \ref{hyp B}, i.e., $d=2$ and $\uu\lambda$ is not necessarily a multicurve.  
We define a notion of positivity for elements of $H^1(\Gamma, (\R^3)_{\rho\chi}^*)$, which is significantly more involved than in the case that $\Tension(\varphi)$ is a multicurve.  
After defining positivity, we prove an analogue of the Low Slack Lemma, which is one of the main new ingredients needed in \S\ref{sec: heights bounded below general} to prove that heights are bounded below for cocycles in the positive cone.  
\medskip

The hypotheses of and notation for this section are:
\begin{itemize}
    \item $\Gamma = \pi_1S$ is the fundamental group of a closed, oriented surface $S$ of genus at least $2$.  
    \item $\rho: \Gamma \to \SL(3,\R)$ is a Hitchin representation, dividing a strictly and properly convex domain $\Omega\subset \RP^2$. 
    \item $\varphi \in H^1(S,\R)$ has $\|\varphi\|_\rho=1$ and we write $\chi = e^{-\varphi}$.
\end{itemize}

We also make a choice of basepoint $o \in \Omega$, a domain shape metric $d_\rho$ on $\Omega$ (see \S\ref{sec: domain shape}), and an $R\ge \diam(\Omega/\rho)$, so that 
$B_R(o) = \vec B_R(o)\cap \cev B_R(o)$ contains a fundamental domain for the action of $\rho$ on $\Omega$.
The natural projection is denoted $\pi: \Omega \to \Omega/\rho$.

\begin{remark}\label{rmk: why not higher dimensions}
    Many of the tools in the next two sections work just as well in some higher-dimensional contexts.  The only two features of dimension $2$ that we use are (1) that the minimal sublaminations are isolated from one another and (2) that there are only finitely many leaves that do not belong to minimal sublaminations.  The proofs go through unchanged in higher dimensions under these hypotheses. 

    There are examples of hyperbolic manifolds in dimension $3$ where the tension locus corresponding to an equivariant tight map is known to be a multicurve \cite{Rudd}. We are not aware of any hyperbolic examples where the hypotheses from the previous paragraph do not hold. 
\end{remark}

\subsection{Laminations and dynamics}\label{subsec: laminations and dynamics}

Using Theorem \ref{thm: co-stable norm and optimal best maps}, there is a $\varphi$-equivariant $1$-forward Lipschitz map 
\[\tau: \Omega \to \R\]
with optimal tension locus.
That is, $\Tension(\tau) =\Tension(\varphi) \subset \Omega$ is $\rho$-invariant and projects to an oriented, chain recurrent geodesic lamination 
\[\underline \lambda \defeq \pi(\Tension(\tau))\subset \Omega/\rho.\]

The structure theory for chain recurrent geodesic laminations on closed (negatively curved) surfaces gives that 
\[ \underline \lambda =  \underline \delta_1 \cup ... \cup  \underline \delta_{m} \cup  \underline \mu_1 \cup ...  \underline \mu_{m'} \cup  \underline \ell_1 \cup ...  \underline \ell_{m''},\]
where, for each $i$, $ \underline \delta_i$ is a simple closed geodesic, $ \underline \mu_i$ is a minimal geodesic lamination with an infinite leaf, and $ \underline \ell_i$ is an infinite isolated leaf that is forward and backward asymptotic to minimal sublaminations of $ \underline \lambda$, and all are (continuously) oriented.
\medskip

Let $\underline  \mu \in \{\underline  \mu_1, ...,  \underline \mu_{m'}\}$.
The natural map $\underline  \mu \to T^1\Omega/\rho$ taking a point to the tangent vector pointing in the direction of the orientation of $\underline  \mu$ is a homeomorphism onto its image, on which the geodesic flow $\{\underline g_t\}$ is defined and continuous.  By abuse of notation, the geodesic flow is thus defined on $\underline  \mu$.

\begin{definition}
    An oriented geodesic segment $\underline  k$ transverse to the leaves of $\underline  \mu$ is called a \textbf{train track transversal} if $\underline  k$ meets the leaves of $\underline  \mu$ transversely with a consistent orientation, does not meet any other minimal component of $\underline  \lambda$, and if the length of $ \underline k$ is smaller than the injectivity radius of $\Omega/\rho$.
\end{definition}
Let $\underline k$ be a train track transversal to $\underline \mu$, and let $\underline X = \underline \mu \cap \underline  k$.  Let $\cT: \underline X \to \R_{>0}$ be the first return time for the geodesic flow, i.e.,
\[\cT(\underline x) = \min\{ t> 0 : \underline g_t \underline x \in  \underline X\}.\]
By continuity of the geodesic flow, minimality of $\underline \mu$, and compactness of $\underline X$, $\cT$ is a positive continuous function bounded away from $0$ and $\infty$. 
Thus, the first return map
\begin{align*}
     \underline \sigma :  \underline X &\to  \underline X\\
    \underline x&\mapsto\underline  g_{\cT(\underline x)}\underline x
\end{align*}
is a homeomorphism.  The $n$th return time is $\cT^n(\underline x) = \cT(\underline \sigma^{n-1}\underline x) + ...+ \cT(\underline x)$. For all $\underline x \in  \underline X$,  we have $\uu g_{\cT^n(\underline x)}(\underline x) = \underline\sigma^n\underline x$.
\medskip

Now, we lift the situation to $\Omega$, as follows.
Let $\mu = \pi\inverse(\underline \mu)$, choose a lift $k \subset B_R(o)$ of $\underline k$, and let $X = k\cap \mu$.  Let $\sigma: X \to X$ be defined by lifting the dynamics from $\underline X$, i.e., $\sigma(x)\defeq \pi\inverse (\underline \sigma\pi(x))\cap X$. 
The geodesic flow on $T^1\Omega$ and $\mu$ at time $t$ is denoted by $g_t$.  For $x \in X$, $\cT(x) = \cT(\underline x)$ is just the first return time to the translates of $X$ by $\rho$. 

For $x\in X$ and $n\in \N$ define
    \[G^n(x) \defeq g_{\cT^n(x)}(x),\]
which is an element of $\mu$ (or more precisely $T^1_+\mu$).

The following lemma follows from the properties of train track transversal $k$ and the definitions; its proof is left to the reader.
\begin{lemma}\label{lemma: train track branches from returns}
    For all $x \in X$, there is a unique $b(x) \in \pi_1S$ such that $\beta(x) \defeq \rho\chi(b(x))$ satisfies
    \[G^1(x)= \beta(x)\sigma x.\]
    Moreover, $b(x)\not=id$.
    For $n\ge1$ $b^n(x) \defeq b(x) b(\sigma x) \dots b(\sigma^{n-1}x)$ and $\beta^n(x) \defeq \beta(x)\beta(\sigma x) \dots \beta(\sigma^{n-1}x) = \rho\chi(b^n(x))$.
    Then, for all $n\ge1$, we have
    \[G^n x = \beta^n(x) \sigma^nx.\]
\end{lemma}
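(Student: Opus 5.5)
The argument uses covering space theory for the lift $X\subset k\subset B_R(o)$, together with the fact that $\sigma x$ is defined as the unique point of $X$ lying over $\underline\sigma\pi(x)$.

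\textbf{Existence and uniqueness of $b(x)$.} Fix $x\in X$ and flow forward for time $\cT(x)$. The vector $G^1(x)=g_{\cT(x)}(x)$ lies in $\mu$. Its projection $\underline g_{\cT(x)}\pi(x)=\underline\sigma\pi(x)$ lies in $\underline X=\underline\mu\cap\underline k$. Hence $G^1(x)$ lies over a point of $\underline k$, so it belongs to $\gamma.X$ for some deck transformation $\gamma\in\pi_1S$. Since $\pi(\gamma\inverse G^1(x))=\underline\sigma\pi(x)$, the point $\gamma\inverse G^1(x)$ lies in $X$ over $\underline\sigma\pi(x)$.

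\textbf{Uniqueness of lifts in $X$.} Lifts of a point of $\underline k$ to $k$ are unique, because $\underline k$ is shorter than the injectivity radius. Therefore $k\to\underline k$ is injective, and $\gamma\inverse G^1(x)=\sigma x$.

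\textbf{Uniqueness of $\gamma$.} Suppose $\gamma.\sigma x=\gamma'.\sigma x$. Then $\gamma\inverse\gamma'$ fixes a point of $\Omega$. The action is free, since $\Gamma$ is torsion-free and acts properly, so $\gamma=\gamma'$. Set $b(x)=\gamma$.

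\textbf{Notation for the action.} The phrase ``$\beta(x)\sigma x$'' denotes the action of $b(x)$ through $\rho$ on $\Omega$, or on $T^1\Omega$. The factor $\chi$ is a central scalar and acts trivially on $\RP^2$, so $\rho\chi(b(x))$ and $\rho(b(x))$ give the same projective action.

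\textbf{Nontriviality.} Suppose $b(x)=id$. Then $G^1(x)\in X\subset k$, so the leaf of $\mu$ through $x$ meets the geodesic segment $k$ at two distinct points: $x$, and the point reached at time $\cT(x)>0$. In $\Omega$, two distinct projective lines meet in at most one point. The leaf is not contained in $k$, since $k$ is transverse to $\mu$, so this is impossible. Hence $b(x)\neq id$.

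\textbf{Iteration.} We induct on $n$, using the fact that deck transformations commute with the geodesic flow, $\gamma g_t=g_t\gamma$. The time-$\cT^n$ relation is
\[
G^{n+1}(x)=g_{\cT(\sigma^n x)}\bigl(G^n x\bigr)=g_{\cT(\sigma^n x)}\bigl(\beta^n(x)\sigma^n x\bigr)=\beta^n(x)\,g_{\cT(\sigma^nx)}(\sigma^n x)=\beta^n(x)\beta(\sigma^nx)\sigma^{n+1}x.
\]
Here we used $\cT^{n+1}(x)=\cT^n(x)+\cT(\sigma^nx)$, together with the case $n=1$ applied at the point $\sigma^nx$. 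This gives $\beta^{n+1}(x)=\beta^n(x)\beta(\sigma^n x)$. That product is $\rho\chi(b^n(x)b(\sigma^nx))=\rho\chi(b^{n+1}(x))$, because $\rho\chi$ is a homomorphism.

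\textbf{Main obstacle.} The only delicate point is the bookkeeping of the return times. One must check that $\cT(\sigma^nx)$, the first return of $\sigma^nx$, coincides with the next return of $G^nx$ to the translates of $X$. This holds because returns are invariant under deck transformations: $G^nx$ and $\sigma^nx$ differ by the deck transformation $\beta^n(x)$, which commutes with the flow and preserves the union of translates of $X$.
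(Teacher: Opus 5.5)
Your argument is correct and is exactly the routine verification the paper leaves to the reader. You lift $\underline\sigma\pi(x)$ through the covering, and get existence and uniqueness of $b(x)$ from injectivity of $\pi|_k$ (the length of $\underline k$ is below the injectivity radius) and freeness of the $\Gamma$-action. You rule out $b(x)=id$ because a leaf, being a projective line, cannot cross the transverse segment $k$ twice, and you induct using that deck transformations commute with $g_t$.

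The point you flag as the main obstacle is actually immediate. $\cT^n$ is defined downstairs as a sum of first-return times, so $\cT^{n+1}(x)=\cT^n(x)+\cT(\sigma^nx)$ holds by definition once you note $\pi(\sigma^n x)=\underline\sigma^n\pi(x)$.
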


Note that the following cocycle relation holds
\[\beta^n(x) = \beta^{i}(x) \beta^{n-i}(\sigma^ix)\]
for all $0\le i\le n$, if we define $\beta^0(x) = id$.

\subsection{Projections and slack}

Now we identify the behavior of $\beta^n(x)$.
For a point $x \in X$, $\langle x_+\rangle \subset \R^3$ corresponds to $\lim_{t\to \infty} g_tx \in \partial^\infty \Omega$, while $\langle x_-^*\rangle \subset (\R^3)^*$ has $[\ker x_-^*]$ tangent to $\partial \Omega $ at $\lim_{t\to -\infty}g_tx$.
Recall from \S\ref{sec: slack} that for two oriented leaves $\ell, \ell' \subset \Tension (\tau) \subset \Omega$, if $[\ell_-, \ell_{+}']$ is the directed line backward asymptotic to $\ell$ and forward asymptotic to $\ell'$, that $\s[\ell_-,\ell_+'] <+\infty$.

\begin{lemma}\label{lem: continuity of Pi}
    The function 
    \begin{align*}
        \Pi: X \times X & \to \End(\R^3)\\
        (x,y)&\mapsto  e^{\s[x_-,y_+]}x_-^*\otimes y_+
    \end{align*}
    is continuous.  Here,  $x_-^*(y_+) =1$, i.e., $x_-^* \otimes y_+$ is the projection onto the line $\langle y_+\rangle$ with kernel equal to the plane $\ker x_-^*$; see Remark \ref{rmk: projection from endpoints}. 
\end{lemma}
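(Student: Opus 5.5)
The function $\Pi$ is a product of a scalar factor and a matrix factor. The plan is to show each factor depends continuously on $(x,y)\in X\times X$ and then conclude that the product does too.

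\textbf{The matrix factor.} For $x\in X$, the backward endpoint $x_-=\lim_{t\to-\infty}g_tx$ and forward endpoint $x_+$ depend continuously on $x$. This is because $X\subset k$ consists of points of leaves of $\mu$ crossing the compact transversal $k$, and the leaf through a point of $X$ varies continuously in the Hausdorff topology on compacta. As a lamination is closed, the endpoint maps into $\partial^\infty\Omega$ are continuous.

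Since $\partial\Omega$ is $C^1$, the supporting hyperplane map $\partial^\infty\Omega\to(\RP^2)^*$ is continuous. So $[x_-^*]$ varies continuously. We fix the continuous section $(y_+)_0\in\overline{\Omega}_0$ and normalize $x_-^*$ by $x_-^*((y_+)_0)=1$. This normalization is legitimate and continuous as long as $x_-\neq y_+$, because strict convexity gives $x_-^*(v)>0$ for $v\in\overline\Omega_0\setminus\{x_-\}$.

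\textbf{Ruling out $x_-=y_+$.} The leaf through $x$ and the leaf through $y$ are oriented leaves of the same oriented lamination $\mu$. If the backward endpoint of one equaled the forward endpoint of the other, two leaves of a chain recurrent oriented lamination would be asymptotic with opposite orientations. Since $d=2$ and leaves are disjoint, this would force the two leaves to bound a region spiraling in an incompatible way, contradicting the continuity of the orientation on the minimal lamination $\mu$. I would argue this carefully, or else note that the normalization in Remark \ref{rmk: projection from endpoints} presupposes distinct endpoints. Hence the rank-one projection $x_-^*\otimes y_+$ is a continuous function of $(x,y)$.

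\textbf{The scalar factor.} The factor $e^{\s[x_-,y_+]}$ is handled by Lemma \ref{lem: slack finite and continuous}. Take $l_x$ to be the leaf of $\lambda$ through $x$, which has backward endpoint $x_-$, and $l^y$ the leaf through $y$, which has forward endpoint $y_+$. That lemma gives $\s[x_-,y_+]<\infty$ and shows the slack depends continuously on the pair $(l_x,l^y)\in\mathcal G$. The map $X\times X\to\mathcal G$, $(x,y)\mapsto(l_x,l^y)$, is continuous by the same leaf-continuity used above. Composing gives continuity of the scalar factor.

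\textbf{Main obstacle.} The delicate point is the distinctness $x_-\neq y_+$ when $x$ and $y$ may lie on the same leaf or on leaves asymptotic to each other. Such a coincidence would make the slack infinite or the normalization undefined. I expect the orientation and chain recurrence of $\underline\lambda$, together with $\s[x_-,y_+]<\infty$ from Lemma \ref{lem: slack finite and continuous}, to exclude it: a directed segment $[x_-,y_+]$ must join two distinct points for its slack to be defined at all.
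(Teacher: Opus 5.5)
Your proposal is correct and follows essentially the same route as the paper: continuity of the endpoint and supporting-hyperplane maps gives continuity of the factor $x_-^*\otimes y_+$, and Lemma \ref{lem: slack finite and continuous} gives continuity of the factor $e^{\s[x_-,y_+]}$. You raise the distinctness $x_-\neq y_+$, which the paper leaves implicit; it is justified more cleanly than in your sketch by two observations. First, if $x=y$, the two endpoints of a single leaf are distinct. Second, leaves sharing an ideal endpoint are asymptotic, so uniform continuity of the orientation of $\uu\mu$ forces them to be coherently oriented relative to that endpoint.
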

\begin{proof}
    By continuity of the geodesic flow, the maps $x \mapsto \langle x_-^*\rangle $ and $y\mapsto \langle y_+\rangle $ are continuous.  So the map $(x,y) \mapsto x_-^*\otimes y_+$ is also continuous.  
    From \S\ref{sec: slack}, $(x,y)\mapsto \s[x_-,y_+]$ is finite and continuous (Lemma \ref{lem: slack finite and continuous}).  Combining these ingredients gives that $\Pi$ is continuous.
\end{proof}

The following generalizes Lemma \ref{lemma: beta_i are almost projections}, but the slack of long closed loops fellow traveling the tension locus enters the computation in a nontrivial way that did not appear before.

 \begin{proposition}\label{prop: projectors and slack}
    There is a constant $a>0$ such that for all $x \in X$ we have
    \[\beta^n(x) = \Pi(\sigma^nx, x) +O(e^{-an}) \in \End(\R^3),\]
     The implicit multiplicative constant is independent of $x$.  
\end{proposition}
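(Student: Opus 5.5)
The plan is to split $\beta^n(x)$ along the decomposition of $\R^3$ that it carries into the target decomposition, handle the rank-one part by an exact computation, and show the rest is exponentially small. The basic geometric input is Lemma \ref{lemma: train track branches from returns}. Since $G^nx=\beta^n(x)\sigma^nx$, the element $\rho(b^n(x))$ maps the oriented leaf of $\mu$ through $\sigma^nx$ onto the oriented leaf through $x$. It therefore sends $(\sigma^nx)_\pm$ to $x_\pm$, and, by uniqueness of supporting lines ($\partial\Omega$ is $C^1$), it sends the supporting plane $\ker(\sigma^nx)_-^*$ to $\ker x_-^*$.

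For $v\in\R^3$, write
\[v=(\sigma^nx)_-^*(v)\,(\sigma^nx)_+ + w,\qquad w\in\ker(\sigma^nx)_-^*,\]
using the normalizations of Remark \ref{rmk: projection from endpoints}. Then
\[\beta^n(x)v=c_n(x)\,(\sigma^nx)_-^*(v)\,x_+ + \beta^n(x)w,\]
where $c_n(x)$ is defined by $\beta^n(x)(\sigma^nx)_+=c_n(x)x_+$, and $\beta^n(x)w\in\ker x_-^*$. Two things remain to prove:
\begin{enumerate}
\item $c_n(x)=e^{\s[(\sigma^nx)_-,x_+]}+O(e^{-an})$;
\item $\|\beta^n(x)|_{\ker(\sigma^nx)_-^*}\|=O(e^{-an})$.
\end{enumerate}
Continuity of $\Pi$ (Lemma \ref{lem: continuity of Pi}) and compactness of $X\times X$ keep every normalization uniformly bounded.

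For (2), I would use singular values. Because $\cT$ is bounded below, the word length satisfies $|b^n(x)|\succeq n$, so the Anosov property gives $\sigma_1-\sigma_2\ge a'n-b'$ for $\rho(b^n(x))$. By Lemma \ref{lemma: basic slack estimates},
\[\|\beta^n(x)\|\asymp e^{\s[\sigma^nx,\,G^nx]}=e^{\s[\sigma^nx,\,\beta^n(x)\sigma^nx]}.\]
This slack vanishes: the basepoints lie on one leaf of $\mu$ and are related by flowing, so Lemma \ref{lem: slack non-negative} applies. Hence $e^{\sigma_1-\varphi}=O(1)$ and $e^{\sigma_2-\varphi}=O(e^{-a'n})$. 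It then suffices to show that $\ker(\sigma^nx)_-^*$ is exponentially close to the contracting hyperplane $S(\rho(b^n(x)))$, since the restriction of $\beta^n(x)$ to $S$ has norm $e^{\sigma_2-\varphi}$, and a hyperplane at distance $\theta$ from $S$ picks up an extra term of size at most $\theta e^{\sigma_1-\varphi}$.

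To get this closeness, note that $b^n(x)^{-1}$ sends the point $G^nx$, at distance $\cT^n(x)\asymp n$ along the leaf of $x$, back into $B_R(o)$. So $b^n(x)^{-1}.o$ lies within bounded distance of the point at time $-\cT^n(x)$ on the leaf through $\sigma^nx$. The path $id\to b^n(x)^{-1}$ is therefore a uniform quasi-geodesic toward a point of $\partial\Gamma$ exponentially close to $(\sigma^nx)_-$; this uses contraction along the Anosov flow to compare the tail with the actual endpoint. Lemma \ref{lem: uniform convergence to boundary map} then gives $d_{\mathbb P}(S(\rho(b^n(x))),(\sigma^nx)_-^*)=O(e^{-cn})$, and symmetrically $U(\rho(b^n(x)))$ is exponentially close to $x_+$. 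Uniformity in $x$ comes from the uniform quasi-geodesic constants.

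For (1), I would compute $c_n(x)$ exactly with the lift $\Phi$ and the definition \eqref{eqn:def metric} of $d_\rho$. Pick points $p_t$ on the line $[(\sigma^nx)_-,x_+]$. The functional $x_-^*$ has kernel supporting $\Omega$ at $x_-$, so evaluating on $\Phi$ expresses $d_\rho$ along leaves through that endpoint. Comparing $\beta^n(x)=e^{-\varphi(b^n)}\rho(b^n)$ against the $\tau$-increments, via $\varphi(b^n)=\tau(G^nx)-\tau(\sigma^nx)$, and passing to the ideal endpoints as in Lemma \ref{lem: slack finite and continuous}, should produce exactly the factor $e^{\ell-\Delta\tau}=e^{\s[(\sigma^nx)_-,x_+]}$. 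The errors come from replacing the bi-infinite line by its asymptotic leaves and are exponentially small in $n$ by the asymptotic estimate in the proof of Lemma \ref{lem: slack finite and continuous}.

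I expect this identification of the scalar with the slack to be the main obstacle. The normalization conventions ($x_-^*(y_+)=1$, unit representatives in $\overline\Omega_0$) have to be tracked carefully, and the error has to be shown exponentially small rather than merely $o(1)$. If the exact computation becomes unwieldy, my fallback is to obtain $c_n(x)\asymp e^{\s}$ from Lemma \ref{lemma: basic slack estimates} and then upgrade to an asymptotic equality using the cocycle relation $\beta^n(x)=\beta^i(x)\beta^{n-i}(\sigma^ix)$ together with continuity of $\Pi$.
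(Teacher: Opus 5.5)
\textbf{Step (2) is sound and takes a different route from the paper.} The paper never splits $\beta^n(x)$ along $(\sigma^nx)_+\oplus\ker(\sigma^nx)_-^*$. Instead it uses the Anosov closing lemma to place the axis of $b^n(x)$ exponentially close to $\ell_n=[(\sigma^nx)_-,x_+]$, then writes $\beta^n(x)=e^{(\lambda_1^\rho-\varphi)(b^n(x))}P_n+O(e^{-cn})$ by Lemma \ref{lem: rank one projectors and Anosov}. Finally it identifies the top eigenvalue with $e^{\s(\ell_n)}$ and $P_n$ with $(\sigma^nx)_-^*\otimes x_+$, each up to an exponentially small error. Your singular-value argument for (2) works, with one slip. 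The points $\sigma^nx$ and $G^nx$ lie on \emph{different} leaves of $\mu$ in $\Omega$, so $\s[\sigma^nx,G^nx]$ need not vanish. Take $p=x$, $p'=\sigma^nx$ in Lemma \ref{lemma: basic slack estimates} instead, since $[x,G^nx]$ is a leaf segment. Also, $b^n(x)^{-1}x=g_{-\cT^n(x)}\sigma^nx$ lies exactly on the leaf through $\sigma^nx$, so you do not need the ``exponentially close endpoint''.

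\textbf{The gap is step (1).} All the content about the scalar lives there, and you only assert it. As written it is also off by a normalization. Your decomposition forces $(\sigma^nx)_-^*((\sigma^nx)_+)=1$, while $\Pi$ uses $(\sigma^nx)_-^*(x_+)=1$. The correct target is therefore $\beta^n(x)(\sigma^nx)_+=\Pi(\sigma^nx,x)(\sigma^nx)_+$, and $c_n(x)$ differs from $e^{\s[(\sigma^nx)_-,x_+]}$ by a positive continuous function of $(\sigma^nx,x)$ that is not $1+O(e^{-an})$. Neither proposed mechanism closes this:
\begin{itemize}
\item the proof of Lemma \ref{lem: slack finite and continuous} gives no rates;
\item there is no $n$-dependent comparison along $\ell_n$ from which an $O(e^{-an})$ error could come;
\item a bounded ratio $c_n\asymp e^{\s}$ cannot be upgraded to an asymptotic equality using only the cocycle relation and continuity of $\Pi$.
\end{itemize}

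\textbf{What is missing is an exact computation of $c_n(x)$.} Here is one that fits your scheme. Let $\beta_r\in(\R^3)^*$ denote the supporting functional at $r\in\partial\Omega$, positive on $\Omega$.
\begin{itemize}
\item Since $\Psi=e^{-\tau}\Phi$ is $\rho\chi$-equivariant, $\beta^n(x)\Psi(g_t\sigma^nx)=\Psi(g_{t+\cT^n(x)}x)$ for all $t$.
\item By \eqref{eqn:def metric}, if the directed segment $[p,q]$ has backward endpoint $r$, then $\s[p,q]=\log\beta_r(\Psi(q))-\log\beta_r(\Psi(p))$.
\item Hence $\beta_{y_-}\circ\Psi$ is constant along each leaf $y$ of $\lambda$, and $\Psi(g_ty)$ converges to a nonzero vector $\Psi_\infty(y)\in\langle y_+\rangle$. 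Letting $t\to\infty$ in the first bullet gives the exact identity $\beta^n(x)\Psi_\infty(\sigma^nx)=\Psi_\infty(x)$.
\item For every $r$, the function $\log\beta_r\circ\Phi$ is $1$-forward Lipschitz; compare coefficients in the plane spanned by the endpoints of $[p,q]$. So $\log\beta_r\circ\Psi$ is Lipschitz for the symmetrized metric, and its limits at the two ends of $\ell_n$ equal its limits along the asymptotic leaves.
\item Taking $r=(\sigma^nx)_-$ gives $e^{\s(\ell_n)}=\beta_r(\Psi_\infty(x))/\beta_r(\Psi_\infty(\sigma^nx))$. This is precisely the statement $\Pi(\sigma^nx,x)\Psi_\infty(\sigma^nx)=\Psi_\infty(x)$.
\end{itemize}
Thus $\beta^n(x)-\Pi(\sigma^nx,x)$ vanishes exactly on $(\sigma^nx)_+$. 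Your step (2), together with the uniform transversality of $X_+$ and $X_-$, then finishes the proof. Alternatively, you can obtain the scalar as the paper does, from the closing lemma and the top eigenvalue.
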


\begin{proof}
    We claim that,  near $k$, the axis of $\beta^n(x)$ is \emph{exponentially close}  to the geodesic line $\ell_n = [(\sigma^nx)_-,x_+]$.  More precisely, we claim the following.
\begin{claim}\label{clm: geodesic segments converge exponentially}
    There is a $b>0$ such that the following holds.  For each $x\in X$ and $n \ge1$, there is a point $p_n$ on the axis of $\beta^n(x)$ such that the segment $[p_n, \beta^n(x)p_n]$ passes through $k$ and 
\[d_\rho(p_n, \ell_n)= O(e^{-bn}) \text{ and } d_\rho(\beta^n(x)p_n,\ell_n)=O(e^{-bn}),\]
and 
\[\min\{d_\rho(p_n,k), d_\rho(\beta^n(x)p_n, k) , d_\rho(k,p_n), d_\rho(k, \beta^n(x)p_n)\} \ge bn.\]
\end{claim}

\begin{proof}[Proof of Claim \ref{clm: geodesic segments converge exponentially}]
    Theorem \ref{thm: flow Anosov} gives that the geodesic flow on $T^1\Omega/\rho$ for the domain shape metric is an Anosov flow.
    The claim then follows from applying the Anosov Closing Lemma (see \cite[\S6.4.c]{KH:dynamicalsystems}) --- the closed orbit of the geodesic flow obtained by closing up $\pi(g_{[0, \cT^n(x)]}x)$ using a path over $\uu k$ fellow travels the trajectory of $\uu x$ for a long time going forward and the backward trajectory of $\underline{\sigma^nx}$ in the past for a long time.  
\end{proof}

Now, $\ell_n$ is asymptotic to $[x,x_+]$ in the future and asymptotic to $[(\sigma^nx)_-,\sigma^nx]$ in the past.
Apply Lemmas \ref{lem: slack continuous finite} and \ref{lem: slack finite and continuous}  to see that 
\begin{equation}\label{eqn: approximate slack of leaf}
    \s(\ell_n) = \s[(\sigma^nx)_-,x_+] = \s[p_n, \beta^n(x)p_n] +O(e^{-bn}).
\end{equation}
Since $[p_n,\beta^n(x)p_n]$ is a fundamental domain for the action of $\beta^n(x)$, the  slack of the segment $[p_n, \beta^n(x)p_n]$ is just the logarithm of the largest eigenvalue of $\beta^n(x)$, i.e., 
\begin{equation}\label{eqn: slack is eigenvalue difference}
    \s[p_n, \beta^n(x)p_n] = (\lambda_1^\rho - \varphi)(b^n(x)).
\end{equation}
This quantity is uniformly bounded, by basic properties of slack (\S\ref{sec: slack}), becuase the axis of $\beta^n(x)$ spends most of its time near a leaf of $\lambda$.
\medskip

Let $P_n$ be projection onto $\beta^n(x)_+$ parallel to $\beta^n(x)_-^*$.
Since $X_+$ and $X_-$ are compact subsets of $\partial ^\infty \Omega$ that do not intersect, Lemma \ref{lem: rank one projectors and Anosov} and the Anosov property of $\rho\chi$ gives us a $c>0$ such that 
    \[\frac{1}{e^{(\lambda_1^\rho-\varphi)(b^n(x))}}\beta^n(x) =  P_n +O(e^{-cn}),\]
where we have used that  $|b_n|\asymp n$.
Since $(\lambda_1^\rho-\varphi)(b^n(x))$ is uniformly bounded, we get
\begin{equation}\label{eqn: beta and projector}
    \beta^n(x) =  e^{(\lambda_1^\rho-\varphi)(b^n(x))}P_n +O(e^{-cn}).
\end{equation}

On the other hand, a consequence of Claim \ref{clm: geodesic segments converge exponentially} and Lemma \ref{lem: uniform convergence to boundary map} is that
\begin{equation}\label{eqn: projector comes from a leaf}
    P_n = (\sigma^nx)_-^* \otimes x_+ +O(e^{-dn})
\end{equation}
for some $d>0$.

Combining \eqref{eqn: approximate slack of leaf}, \eqref{eqn: slack is eigenvalue difference}, \eqref{eqn: beta and projector},  and \eqref{eqn: projector comes from a leaf} gives the Proposition.
\end{proof}

\subsection{Integrating along a leaf of $\mu$}
The goal of this subsection is to define \emph{positivity} for a closed $1$-form $\underline \alpha \in \Omega^1(E_{\rho\chi}^*)$. Let $\alpha =\pi^*\alpha$ and recall (\S\ref{subsec: integrating forms and equivariant surfaces}) that for $p,q\in \Omega$ 
\[A(p,q) \defeq \int_p^q\alpha \in (\R^3)^*\]
satisfies the equivariance relation $A(\gamma.p,\gamma.q) = \rho\chi(\gamma).A(p,q) = A(p,q) \rho\chi(\gamma)\inverse$.
We will analyze the asymptotic behavior of 
\begin{equation}\label{eqn: averages}
    \frac1nA(x,G^nx)\beta^n(x) \in (\R^3)^*.
\end{equation}
Define 
\begin{align*}
    L: X &\to (\R^3)^*\\
    x & \mapsto A(x, G(x)).
\end{align*}
Since $\alpha$ is closed and smooth, the function $(p,q)\mapsto A(p,q)$ is continuous on $\Omega\times \Omega$.  So by continuity of $\cT$ and the geodesic flow, $L$ is also continuous.  

The following is a straightforward computation that follows from Lemma \ref{lemma: train track branches from returns} and the cocycle property of $A$.  The proof is left to the reader.
\begin{lemma}\label{lem: notation lemma laminations}
    Let $x \in X$ and $n\ge 1$. 
    Then
    \[A(x,G^nx)\beta^n(x) = \sum_{i =0}^{n-1}L(\sigma^ix) \beta^{n-i}(\sigma^ix).\]
\end{lemma}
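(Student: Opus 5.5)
Since $A(x,G^nx)$ is obtained by integrating the closed form $\alpha$, I will break the path from $x$ to $G^nx$ at the successive return points $G^ix$ and use the cocycle identities from Lemma \ref{lemma: train track branches from returns}; this is the same manipulation as in the Notation Lemma \ref{lemma: notation lemma}. Additivity of the integral gives
\[A(x,G^nx)=\sum_{i=0}^{n-1}A(G^ix,G^{i+1}x).\]

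To rewrite each summand, I use $G^{i+1}x = G^i x$ flowed for time $\cT(\sigma^ix)$. Lemma \ref{lemma: train track branches from returns} gives $G^ix=\beta^i(x)\sigma^ix$ and $G^{i+1}x=\beta^i(x)\,G^1(\sigma^ix)$. The first identity is the statement of the lemma. The second follows from $\beta^{i+1}(x)=\beta^i(x)\beta(\sigma^ix)$ together with $G^1(\sigma^ix)=\beta(\sigma^ix)\sigma^{i+1}x$, and it matches the fact that the geodesic flow commutes with deck transformations. Here $\beta^i(x)$ acts on $\Omega$ through $\rho(b^i(x))$, since $\chi$ is a positive scalar and acts trivially on $\RP^2$.

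Equivariance, $A(\gamma.p,\gamma.q)=A(p,q)\rho\chi(\gamma)^{-1}$, then gives
\[A(G^ix,G^{i+1}x)=A(\sigma^ix,G^1\sigma^ix)\,\beta^i(x)^{-1}=L(\sigma^ix)\,\beta^i(x)^{-1}.\]
I multiply on the right by $\beta^n(x)$ and apply the cocycle relation $\beta^n(x)=\beta^i(x)\beta^{n-i}(\sigma^ix)$. Each term becomes $L(\sigma^ix)\beta^{n-i}(\sigma^ix)$, and summing over $i$ gives the claimed formula.

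The only place I expect care to be needed is bookkeeping of conventions. The dot action on $(\R^3)^*$ is $\alpha\mapsto\alpha\circ A^{-1}$, and lifts of $G^1$ must be the ones fixed by the choice of $b(x)$. I would verify both signs on the case $n=1$ first, where the formula reduces to $A(x,G^1x)\beta(x)=L(x)\beta(x)$.
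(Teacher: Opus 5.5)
Your proof is correct and is the computation the paper has in mind; the paper leaves it to the reader as a direct consequence of Lemma \ref{lemma: train track branches from returns} and the equivariance of $A$. Splitting at the return points $G^ix$, pulling each piece back by $b^i(x)$ to get $L(\sigma^ix)\beta^i(x)^{-1}$, and then using $\beta^n(x)=\beta^i(x)\beta^{n-i}(\sigma^ix)$ is exactly the intended argument, and your bookkeeping of the conventions is right.
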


Using Lemma \ref{lem: notation lemma laminations}, \eqref{eqn: averages} is starting to look like an average of some function along an orbit of $\sigma$, and we are ready to analyze it once again.

\begin{lemma}\label{lem: averages revisited}
    For all $x \in X$ and $n\ge 1$, we have 
    \[\frac1nA(x,G^nx)\beta^n(x) = \frac1n\left(\sum_{i=0}^{n-1} L(\sigma^ix)\Pi(\sigma^nx,\sigma^ix)
    \right)+O(1/n).\]
    The implicit multiplicative constant is independent of $x$.  
\end{lemma}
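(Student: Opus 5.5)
The plan is to start from the identity of Lemma \ref{lem: notation lemma laminations},
\[A(x,G^nx)\beta^n(x) = \sum_{i=0}^{n-1}L(\sigma^ix)\,\beta^{n-i}(\sigma^ix),\]
and replace each matrix $\beta^{n-i}(\sigma^ix)$ by the projector coming from Proposition \ref{prop: projectors and slack}. Applying that Proposition at the point $\sigma^ix$ with exponent $n-i$ gives
\[\beta^{n-i}(\sigma^ix) = \Pi(\sigma^{n-i}\sigma^ix,\sigma^ix) + O(e^{-a(n-i)}) = \Pi(\sigma^nx,\sigma^ix)+O(e^{-a(n-i)}).\]
The implicit constant is uniform in the point, so it does not depend on $i$ or $x$. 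Substituting this gives
\[A(x,G^nx)\beta^n(x) = \sum_{i=0}^{n-1} L(\sigma^ix)\Pi(\sigma^nx,\sigma^ix) + \sum_{i=0}^{n-1} L(\sigma^ix)\,O(e^{-a(n-i)}).\]

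Next I bound the error sum. Since $L$ is continuous on the compact set $X$, we have $\|L\|\le C_L$ uniformly. The error terms are then dominated by
\[C_L\sum_{i=0}^{n-1}e^{-a(n-i)}\le C_L\frac{e^{-a}}{1-e^{-a}},\]
which is a constant independent of $n$ and $x$. Dividing the whole identity by $n$ yields the stated formula with error $O(1/n)$, and the multiplicative constant depends only on $C_L$, $a$ and the constant of Proposition \ref{prop: projectors and slack}.

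The only point needing care is that Proposition \ref{prop: projectors and slack} is stated for $n\ge1$, and here we need it for every exponent $n-i\in\{1,\dots,n\}$. These exponents are at least $1$ because $i\le n-1$. For small exponents the estimate holds trivially once the constant is chosen large enough: $\beta^{m}$ for bounded $m$ and $\Pi$ both vary continuously on the compact set $X$ (Lemma \ref{lem: continuity of Pi}), and hence are uniformly bounded. So the real content lies in the uniformity in $x$ of Proposition \ref{prop: projectors and slack}, which I take as given. The argument is otherwise the same geometric-series summation as in Lemma \ref{lemma: low slack linear growth (simple)}.
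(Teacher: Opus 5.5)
Your proposal is correct and matches the paper's proof: substitute Proposition~\ref{prop: projectors and slack} at the point $\sigma^ix$ with exponent $n-i$ into the identity of Lemma~\ref{lem: notation lemma laminations}, bound the accumulated error by $\|L\|_\infty$ times a convergent geometric series, and divide by $n$. Your extra discussion of small exponents is not needed, since that Proposition already holds uniformly for every exponent $n-i\ge 1$.
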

\begin{proof}
    Since $L$ is continuous and $X$ is compact, we have $\|L\|_\infty<\infty$.  Using Proposition \ref{prop: projectors and slack} and the definitions, we have for $i = 0, ..., n-1$
    \[ \beta^{n-i}(\sigma^{i}x) = \Pi(\sigma^nx, \sigma^ix) +O(e^{-b_0(n-i)}).\]
    Since $(x,y) \mapsto \s[x_-,y_+]<\infty$ is continuous (Lemma \ref{lem: slack finite and continuous}) and $X \times X$ is compact, we have $\|\Pi\|_\infty<\infty$.  
    Then 
    \[L(\sigma^ix) \beta^{n-i}(\sigma^ix) = L(\sigma^ix)\Pi(\sigma^nx, \sigma^ix) +O\left(\|\Pi\|_\infty \|L\|_\infty e^{-b_0(n-i)}\right).\]
    Summing over $i$ and dividing by $n$ gives the desired bound, as $\|\Pi\|_\infty  \|L\|_\infty\sum_{i=0}^\infty e^{-b_0i}<\infty$.
\end{proof}

There is a non-empty, finite dimensional simplex $\Delta(\sigma)$ of $\sigma$-invariant Borel probability measures on $X$.  For $\nu \in \Delta(\sigma)$ define
\begin{align*}
     L_\nu: X &\to (\R^3)^*\\
    x&\mapsto \int_X L(y) \Pi(x,y)~d\nu(y). 
\end{align*}

    For each $x\in X$, the linear functional $L_\nu$ is a $\nu$-weighted average of projections post-composed with $L_y$.

\begin{lemma}\label{lem: integral jointly continuous}
    The function 
    \[(x,\nu) \in X\times \Delta(\sigma) \mapsto L_\nu(x)\in (\R^3)^*\]
    is continuous.  Consequently, 
    \[ \mathcal L\defeq \{ L_\nu (x) : x \in X \text{ and } \nu \in \Delta(\sigma)\}\] is compact in $(\R^3)^*$.
\end{lemma}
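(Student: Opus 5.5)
The plan is to treat $(x,\nu)\mapsto L_\nu(x)$ as the pairing of the weak-$*$ convergent measure $\nu$ against a continuous family of continuous integrands, and to get the compactness statement from the first part. Define
\[F: X\times X \to (\R^3)^*,\qquad F(x,y) \defeq L(y)\Pi(x,y),\]
so that $L_\nu(x) = \int_X F(x,y)\,d\nu(y)$. By Lemma \ref{lem: continuity of Pi} the map $\Pi$ is continuous on $X\times X$. The map $L$ is continuous on $X$, as observed just before Lemma \ref{lem: notation lemma laminations}. Hence $F$ is continuous. Since $X\times X$ is compact, $F$ is moreover uniformly continuous and bounded, say $\|F\|_\infty<\infty$.

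Now fix $(x,\nu)$ and take $(x',\nu')$ nearby. I will split the difference as
\[L_{\nu'}(x') - L_\nu(x) = \int_X \big(F(x',y)-F(x,y)\big)\,d\nu'(y) + \Big(\int_X F(x,y)\,d\nu'(y) - \int_X F(x,y)\,d\nu(y)\Big).\]
For the first term, uniform continuity of $F$ gives $\sup_y \|F(x',y)-F(x,y)\|\to 0$ as $x'\to x$. Because $\nu'$ is a probability measure, this term is bounded by that supremum, uniformly in $\nu'$.

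For the second term, $y\mapsto F(x,y)$ is a fixed continuous $(\R^3)^*$-valued function on $X$. Its three coordinates, taken with respect to a fixed basis, are continuous real functions. Weak-$*$ convergence $\nu'\to\nu$ in $\Delta(\sigma)$, which is the topology on $\Delta(\sigma)$, therefore makes each coordinate of the second term tend to $0$.

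Together these give joint continuity. Since $\Delta(\sigma)$ is a weak-$*$ closed subset of the probability measures on the compact metrizable space $X$, it is compact by Banach--Alaoglu/Prokhorov, so sequences suffice. It then remains to note that $X$ is compact and $\Delta(\sigma)$ is compact. Thus $X\times\Delta(\sigma)$ is compact, and $\mathcal L$, as its image under a continuous map, is compact in $(\R^3)^*$.

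The only point needing care is the uniformity in the first term. This is exactly where compactness of $X\times X$, and hence uniform continuity of $F$, is used, rather than mere pointwise continuity. The genuinely nontrivial input, continuity of $\Pi$ (which rests on the finiteness and continuity of slack in Lemma \ref{lem: slack finite and continuous}), is already available from Lemma \ref{lem: continuity of Pi}.
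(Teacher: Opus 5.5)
Your proof is correct and is the paper's argument: continuity of $(x,y)\mapsto L(y)\Pi(x,y)$ on a compact domain gives uniform convergence of the integrands as $x_n\to x$, weak-$*$ convergence then gives $\int f_n\,d\nu_n\to\int f\,d\nu$, and compactness of $\mathcal L$ follows from compactness of $X\times\Delta(\sigma)$. Your explicit two-term splitting simply spells out the step the paper leaves as ``not difficult to show.''
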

\begin{proof}
    Assume $x_n \to x$ in $X$ as $n\to \infty$.
    Using continuity of all functions involved in the definitions and compactness of the domain, it is an exercise in elementary analysis that the functions $f_n(y)= L(y)\Pi(x_n, y)$ converge uniformly to $f(y)= L(y)\Pi(x,y)$ as $n\to \infty$. 

    For weak-$*$ convergent Borel probability measures $\nu_n \to \nu$, and uniformly convergent functions $f_n \to f$, it is not difficult to show that $\int f_n ~d\nu_n \to \int f~d\nu$ as $n\to \infty$.  It follows that the function $(x,\nu)\mapsto L_\nu(x)$ is continuous.
    The second statement now follows from compactness of the space $X\times \Delta(\sigma)$.
\end{proof}

For a sequence $(f_n)$, let $\omega\text{-}\lim f_n$ denote the set of its accumulation points.
\begin{proposition}\label{prop: limit points of averages}
    For all $x \in X$, we have 
    \[\omega\text{-}\lim  \frac1n A(x, G^nx)\beta^n(x)\subset\mathcal L\]
    \emph{uniformly}.
    In other words, for any neighborhood $U$ of $\mathcal L$ in $(\R^3)^*$ there is an $N$ such that for all $x \in X$ and $n\ge N$, 
    \[\frac1n A(x, G^nx)\beta^n(x)\in U.\] 
    Moreover, $\ker L_\nu(y)\supset \ker y_-^*$ for all $y \in X$ and $\nu \in \Delta(\sigma)$.  
\end{proposition}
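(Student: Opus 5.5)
By Lemma \ref{lem: averages revisited}, it suffices to study the averages
\[
F_n(x)\defeq \frac1n\sum_{i=0}^{n-1} L(\sigma^ix)\,\Pi(\sigma^nx,\sigma^ix),
\]
since the error term is $O(1/n)$ uniformly in $x$. The difficulty is that the first argument $\sigma^nx$ of $\Pi$ moves with $n$, so $F_n(x)$ is not literally a Birkhoff average. I would handle this by viewing $F_n(x)$ as an integral against the empirical measure. Set
\[
\nu_{n,x}\defeq\frac1n\sum_{i=0}^{n-1}\delta_{\sigma^ix}\quad\text{and}\quad z_n\defeq\sigma^nx .
\]
Then $F_n(x)=\int_X L(y)\Pi(z_n,y)\,d\nu_{n,x}(y)$. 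Define $\Phi(z,\nu)\defeq\int L(y)\Pi(z,y)\,d\nu(y)$ on $X\times\mathcal M(X)$, where $\mathcal M(X)$ is the space of Borel probability measures with the weak-$*$ topology. The argument of Lemma \ref{lem: integral jointly continuous} shows that $\Phi$ is jointly continuous on this compact space, because $L$ and $\Pi$ are continuous on the compact sets $X$ and $X\times X$. Thus $F_n(x)=\Phi(z_n,\nu_{n,x})$.

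For the uniform statement, I would argue by contradiction. Suppose $U\supset\mathcal L$ is open and there are $n_k\to\infty$ and $x_k\in X$ with $\frac1{n_k}A(x_k,G^{n_k}x_k)\beta^{n_k}(x_k)\notin U$. Pass to a subsequence with $z_{n_k}\to z$ and $\nu_{n_k,x_k}\to\nu$. The standard Krylov--Bogolyubov estimate gives
\[
\left|\int f\circ\sigma\,d\nu_{n,x}-\int f\,d\nu_{n,x}\right|\le \frac{2\|f\|_\infty}{n}
\]
for continuous $f$, uniformly in $x$. Since $\sigma$ is a homeomorphism, the limit $\nu$ lies in $\Delta(\sigma)$. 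By joint continuity, $F_{n_k}(x_k)\to\Phi(z,\nu)=L_\nu(z)\in\mathcal L$, and the $O(1/n_k)$ error vanishes in the limit. This contradicts the choice of the sequence, since its terms stay in the closed set $(\R^3)^*\setminus U$. The pointwise $\omega$-limit statement then follows immediately.

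The kernel statement is direct. For each $y'$, the functional $L(y')\Pi(y,y')=e^{\s[y_-,y'_+]}\,L(y')(y'_+)\,y_-^*$ is a scalar multiple of $y_-^*$, because $\Pi(y,y')=e^{\s[y_-,y'_+]}y_-^*\otimes y'_+$. So $L_\nu(y)=c\,y_-^*$, where $c=\int e^{\s[y_-,y'_+]}L(y')(y'_+)\,d\nu(y')$, and hence $\ker L_\nu(y)\supset\ker y_-^*$.

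The main obstacle is the moving first argument $\sigma^nx$ in $\Pi$. Treating it as a separate compact parameter $z$, and using the joint continuity of $\Phi$ over all of $\mathcal M(X)$ rather than only over $\Delta(\sigma)$, resolves it. The remaining checks are that the invariance of limits of empirical measures holds uniformly in the starting point, and that Lemma \ref{lem: integral jointly continuous} extends from $\Delta(\sigma)$ to $\mathcal M(X)$ with the same proof.
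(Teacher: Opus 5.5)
Your proposal is correct and follows essentially the same route as the paper. Both reduce via Lemma \ref{lem: averages revisited}, argue by contradiction, pass to a subsequence with $\sigma^{n_k}x_k\to z$ and empirical measures converging to some $\nu$, and use continuity to land on $L_\nu(z)\in\mathcal L$; your joint continuity of $\Phi$ combines the paper's two triangle-inequality steps, and the kernel claim is read off from $\ker\Pi(y,x)=\ker y_-^*$ in both. Your explicit Krylov--Bogolyubov argument that the limit measure lies in $\Delta(\sigma)$ fills in a step the paper only attributes to compactness.
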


\begin{proof}
    By Lemma \ref{lem: averages revisited}, it suffices to understand the limit points of 
    \[ \frac1n\sum_{i=0}^{n-1} L(\sigma^ix)\Pi(\sigma^nx,\sigma^ix).\]

    Suppose for sake of contradiction that for all $N$ there are $n_N\ge N$ and $x_N\in X$ such that 
    \[\frac1{n_N} \sum_{i =0}^{n_N-1} L(\sigma^ix_N)\Pi(\sigma^{n_N}x_N, \sigma^ix_N) \not\in U.\]
    Passing to a subsequence (which we do not rename), we can assume by compactness of $X$ and $\Delta(\sigma)$ that 
    \[\sigma^{n_N}x_N \to y \in X ~\text{ and } ~ \frac 1{n_N} \sum_{i =0}^{n_N-1}\delta_{\sigma^ix_N} \to \nu \in \Delta(\sigma), \text{ as } N \to \infty.\]
    Using uniform continuity of $\Pi$ and boundedness of $\|L\|$, we have 
    \[
        \left\| \frac{1}{n_N}\sum_{i=0}^{n_N-1} L(\sigma^ix_N)\Pi(\sigma^{n_N}x_N, \sigma^ix_N) - \frac{1}{n_N}\sum_{i=0}^{n_N-1} L(\sigma^ix_N)\Pi(y, \sigma^ix_N) \right\| \to 0, \text{ as } N \to \infty.
    \]
    By continuity, the weak-$*$ convergence gives
    \[ \left\| \frac{1}{n_N}\sum_{i=0}^{n_N-1} L(\sigma^ix_N)\Pi(y, \sigma^ix_N) - \int L(x)\Pi(y,x)~d\nu(x) \right\| \to 0. \]
    By the triangle inequality, we conclude
    \[\left\| \frac1{n_N} \sum_{i =0}^{n_N-1} L(\sigma^ix_N)\Pi(\sigma^{n_N}x_N, \sigma^ix_N) - \int L(x) \Pi(y,x)~d\nu(x) \right\| \to 0, \text{ as } N \to \infty.\]
    This is a contradiction, because the integral is just $L_\nu(y) \in \mathcal L\subset U$.
    Thus the uniformity is proved.

    To see that $\ker L_\nu(y)\supset \ker y_-^*$, we simply observe that 
    \[L_\nu(y) = \int_X L(x) \Pi(y,x)~d\nu(x),\]
    and that $\ker \Pi(y,x) = \ker y_-^*$ for all $x \in X$ since $\Pi(y,x)$ is proportional to projection to $\langle x_+\rangle$ parallel to $y_-^*$, verifying the assertion.  
\end{proof}

An immediate corollary of the proof is the following
\begin{corollary}\label{cor: limit points of averages}
    Let $x \in X$ be generic for a $\sigma$-invariant ergodic probability measure $\nu \in \Delta(\sigma)$, and let $y \in X$.  
    For any sequence $n_j$ tending to infinity such that $\sigma^{n_j}x \to y$, we have 
    \[\lim_{j\to \infty}\frac{1}{n_j} A(x,G^{n_j}x)\beta^{n_j}(x) =L_\nu(y).\]
\end{corollary}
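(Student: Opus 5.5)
The plan is to run the proof of Proposition \ref{prop: limit points of averages} again, with the extra information that $x$ is $\nu$-generic, so that no compactness extraction is needed. First apply Lemma \ref{lem: averages revisited}. It reduces the problem to showing that
\[\frac1{n_j}\sum_{i=0}^{n_j-1} L(\sigma^ix)\,\Pi(\sigma^{n_j}x,\sigma^ix)\longrightarrow \int_X L(z)\,\Pi(y,z)\,d\nu(z)=L_\nu(y),\]
since the error term $O(1/n_j)$ tends to zero.

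The second step is to replace the first argument $\sigma^{n_j}x$ of $\Pi$ by its limit $y$. By Lemma \ref{lem: continuity of Pi}, $\Pi$ is continuous on the compact set $X\times X$, hence uniformly continuous. Together with $\|L\|_\infty<\infty$, this gives
\[\sup_{z\in X}\bigl\|L(z)\Pi(\sigma^{n_j}x,z)-L(z)\Pi(y,z)\bigr\|\to 0 \quad\text{as } j\to\infty.\]
So replacing $\sigma^{n_j}x$ by $y$ changes the Birkhoff-type average by $o(1)$, uniformly in the summation index.

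The third step identifies the limit of the remaining average
\[\frac1{n_j}\sum_{i=0}^{n_j-1} f(\sigma^ix), \qquad f(z)\defeq L(z)\Pi(y,z).\]
The function $f$ is a fixed continuous $(\R^3)^*$-valued function on $X$, so each coordinate is a real continuous function. I interpret ``$x$ generic for $\nu$'' in the standard topological-dynamics sense: the empirical measures $\frac1n\sum_{i<n}\delta_{\sigma^ix}$ converge weak-$*$ to $\nu$. By the Birkhoff ergodic theorem applied to a countable dense family in $C(X)$, $\nu$-almost every point has this property. Weak-$*$ convergence then gives convergence of the averages of $f$ to $\int f\,d\nu = L_\nu(y)$ along the full sequence, and hence along the subsequence $n_j$. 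Combining the three steps by the triangle inequality proves the corollary.

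The only point needing care is the meaning of ``generic''. If it were meant only in the almost-everywhere sense for the one function $L$, the argument would not go through as written, because the integrand $f$ depends on the limit point $y$. This is why I use weak-$*$ convergence of the empirical measures, which handles every continuous $f$ at once. Uniformity in $y$ is not needed, since $y$ is fixed before $j\to\infty$.
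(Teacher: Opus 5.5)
Your proposal is correct and follows essentially the same route as the paper, which presents this corollary as an immediate consequence of the proof of Proposition \ref{prop: limit points of averages}: reduce via Lemma \ref{lem: averages revisited}, replace $\sigma^{n_j}x$ by $y$ using uniform continuity of $\Pi$, and identify the limit $\int_X L(z)\Pi(y,z)\,d\nu(z)=L_\nu(y)$ by weak-$*$ convergence of the empirical measures. Your reading of ``generic'' as weak-$*$ convergence of $\frac1n\sum_{i<n}\delta_{\sigma^ix}$ to $\nu$ is exactly what replaces the compactness extraction of $\nu$ in that proof.
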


\subsection{Defining the positive cone}

We are finally able to give the general notion of positivity for the minimal sublamination $\underline \mu \subset \underline \lambda$.

\begin{definition}\label{def: positive for minimal}
    For a closed $1$-form $\uu\alpha \in \Omega^1(E_{\rho\chi }^*)$ and minimal sublamination $\underline \mu \in \{\underline \mu_1, ..., \underline \mu_{m'}\}$ with an infinite leaf, $\uu \alpha$ is $\underline \mu$-\emph{positive} if 
    \[L_\nu(x)(p_0)>0\]
    for all $x \in X$, for all $\nu \in \Delta(\sigma)$, for some/any $p \in \Omega$.
\end{definition}

That the definition is agnostic to the choice of $p\in \Omega$ follows from the fact that either  $\ker L_\nu(x) \cap \Omega = \emptyset$ or $L_\nu(x)\equiv  0$ by Proposition \ref{prop: limit points of averages}.

This definition descends cleanly to cohomology classes.
\begin{lemma}\label{lem: mu-positivity is well defined}
    A closed $1$-form $\uu\alpha\in \Omega^1(E_{\rho\chi }^*)$ is $\underline \mu$-positive if and only if the class $[\uu \alpha]\in H^1(S,E_{\rho\chi }^*)$ consists entirely of $1$-forms that are $\underline \mu$-positive.  
\end{lemma}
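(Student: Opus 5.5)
It suffices to show that replacing $\uu\alpha$ by a cohomologous form $\uu\alpha' = \uu\alpha + \nabla f$ does not change the functionals $L_\nu(x)$ at all; then $\underline\mu$-positivity of one representative is equivalent to that of every representative. Here $f$ is a section of $E^*_{\rho\chi}$, whose lift is a $\rho\chi$-equivariant map $F:\Omega\to(\R^3)^*$ with $F(\gamma.p) = F(p)\rho\chi(\gamma)\inverse$. We then have $A'(p,q) = A(p,q) + F(q) - F(p)$. I would compare the averages $\frac1n A'(x,G^nx)\beta^n(x)$ and $\frac1nA(x,G^nx)\beta^n(x)$ directly, rather than the integrals $L_\nu$, and then pass to $L_\nu$ via Corollary \ref{cor: limit points of averages}.

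First, using $G^nx = \beta^n(x)\sigma^nx$ (Lemma \ref{lemma: train track branches from returns}) and equivariance,
\[(F(G^nx)-F(x))\beta^n(x) = F(\sigma^n x) - F(x)\beta^n(x).\]
Since $\sigma^nx \in X\subset k\subset B_R(o)$ and $F$ is continuous, the first term is uniformly bounded. By Proposition \ref{prop: projectors and slack}, $\beta^n(x) = \Pi(\sigma^nx,x)+O(e^{-an})$ with $\|\Pi\|_\infty<\infty$, so the second term is also uniformly bounded. Hence the difference of the two averages is $O(1/n)$ uniformly in $x$.

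Next I would identify each $L_\nu(y)$ as a limit of such averages. By ergodic decomposition, $\nu\mapsto L_\nu(y)$ is affine and continuous (Lemma \ref{lem: integral jointly continuous}), so it is enough to treat ergodic $\nu$ and then pass to convex combinations and integrals. For ergodic $\nu$, minimality of $\underline\mu$ makes $\sigma$ minimal on $X$, so a $\nu$-generic $x$ has a dense orbit, and there are $n_j\to\infty$ with $\sigma^{n_j}x\to y$. Corollary \ref{cor: limit points of averages} then gives $L_\nu(y)$ as the limit of $\frac1{n_j}A(x,G^{n_j}x)\beta^{n_j}(x)$, and the same holds with $A'$ and $L'_\nu(y)$. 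By the $O(1/n)$ estimate these limits agree, so $L'_\nu(y) = L_\nu(y)$ for every $y\in X$ and every ergodic $\nu$, and hence for every $\nu\in\Delta(\sigma)$.

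The main obstacle is the bookkeeping at the second step. The decomposition needs care because $L_\nu(y)$ depends linearly on $\nu$ through the integral $\int L\,\Pi(y,\cdot)\,d\nu$, and for the same reason $L'_\nu - L_\nu$ is affine in $\nu$ and vanishes on the ergodic measures, which are the extreme points. One could instead try computing $\int (L'-L)\Pi(y,\cdot)\,d\nu$ directly as a telescoping coboundary; this works as well, since $(L'-L)(z) = F(\sigma z)\beta(z)\inverse - F(z)$ and $\Pi(y,z) = \beta(z)\Pi(y,\sigma z)$ up to the normalizations. However, it requires checking that identity exactly, whereas the limit argument only needs bounds.
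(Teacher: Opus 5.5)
Your proof is correct and follows the paper's argument. The paper also reduces to showing $\frac1n\bigl(A(x,G^nx)-A_f(x,G^nx)\bigr)\beta^n(x)\to 0$ uniformly, using the identity $(f(x)-f(G^nx))\beta^n(x)=f(x)\beta^n(x)-f(\sigma^nx)$ and uniform boundedness of $\beta^n(x)$; it gets that bound from Lemma \ref{lemma: basic slack estimates} and zero slack along $\lambda$ rather than from Proposition \ref{prop: projectors and slack}. Your extra step, passing from the averages to the functionals $L_\nu(y)$ via Corollary \ref{cor: limit points of averages} and ergodic decomposition, spells out what the paper leaves implicit in its ``it suffices to show''.
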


\begin{proof}
    Let $f: \Omega \to (\R^3)^*$ be a $\rho\chi$-equivariant smooth function, i.e., $f\in \Omega^0(E_{\rho\chi}^*)$, so that $\alpha +df \in [\alpha]$.  Define 
    \begin{align*}
        A_f(p,q) = \int_p^q\alpha+df = A(p,q) +f(q) -f(p)
    \end{align*}
    It suffices to show that 
    \[\left\|\frac1n (A(x,G^nx) - A_f(x,G^nx))\beta^n(x)\right\| \to 0\]
    as $n\to \infty$, independently of $x$.
    We compute 
    \begin{align*}
         \left(A(x,G^nx) - A_f(x,G^nx)\right)\beta^n(x)  &=\left(f(x) - f(G^nx)\right)\beta^n(x)\\
         &=\left(f(x) -f(\beta^n(x)\sigma^nx)\right)\beta^n(x)\\
         &= f(x)\beta^n(x) - f(\sigma^nx).
    \end{align*}
    Since $X$ is compact, $\|f|_X\|_\infty<\infty$.  By Lemma \ref{lemma: basic slack estimates}, we know that $\|\beta^n(x)\| \preceq e^{\s[x, G^nx]} =1$, since the directed segment $[x, G^nx]$ lies in $\Tension(\tau)$.
    Thus $\|f(\sigma^nx)-f(x)\beta^n(x)\|<\infty$, and so dividing by $n$ makes this quantity tend to zero.  Thus positivity is an invariant of the cohomology class.
    \end{proof}

We now have the desired general definition of positivity.

\begin{definition}\label{def: positive cone}
    The \textbf{positive cone} $\mathscr P_{\rho\chi}\subset H^1(S, E_{\rho\chi}^*)$ is defined as the intersection of the positive loci for all minimal sublaminations as in Definitions \ref{def: positive for minimal} and \ref{def: positive cone and cocylces}.
\end{definition}

\begin{remark}
    It can be shown that the notion of positivity does not depend on the other choices we made for the definition, i.e., the choice of train track transversal $\underline k$ to $\underline \mu$ and the lift $k$ to $\Omega$. After \S\ref{sec: prox is orb}, it also follows that the positive cocycles are exactly those for which the limit set $\Lambda(\eta)$ is bounded below and for which this property is open amongst cocycles. That is, the positive cone is identifiable by the resulting boundedness condition and by the stability of this condition for such cocycles. 
\end{remark}

\begin{remark}
    At this point, the authors would like to point out that it is not clear whether or not $\mathscr P_{\rho\chi}$ is non-empty, when $\uu\lambda$ contains a minimal component with an infinite leaf.  
\end{remark}

\subsection{Linear growth along leaves in the positive cone}
Here is the analogue of the Low Slack Lemma \ref{lem: main low slack cor} for positive $1$-forms.
\begin{lemma}[Basic Low Slack Lemma]\label{lem: low slack minimal}
    Suppose $\uu\alpha$ is $\underline \mu$-positive, and let all notation be as above.  Let $X_-\subset \partial \Omega$ be the collection of backward endpoints of leaves of $\mu$ in $X$, and let $\mathcal U$ be a small open neighborhood of $X_-$ in $\overline \Omega$. There is a positive integer $N$ and a $\varrho>0$ such that if $n \ge N$ and $x \in X$, then 
    \[\frac{1}{\varrho}n\ge A(x, G^nx)\beta^n(x)v_0 \ge \varrho n\]
    for all $v \in \overline\Omega\setminus \mathcal U$.  Recall that $v_0 \in [v]$ is the unit vector in $\Omega_0 \subset \{\|\cdot \| =1\}$.  
\end{lemma}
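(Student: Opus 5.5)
The plan is to combine the uniform convergence of averages (Proposition \ref{prop: limit points of averages}) with a compactness argument on the positive functionals in $\mathcal L$. First I will show that positivity gives a uniform two-sided bound on $\mathcal L$ evaluated away from $X_-$. Each $L_\nu(y)\in\mathcal L$ has kernel containing $\ker y_-^*$ (Proposition \ref{prop: limit points of averages}), so $L_\nu(y)=c_\nu(y)\, y_-^*$ for a scalar $c_\nu(y)$, where $y_-^*$ is normalized as in Remark \ref{rmk: projection from endpoints} (or by unit norm). $\underline\mu$-positivity (Definition \ref{def: positive for minimal}) says $c_\nu(y)>0$ for every pair $(y,\nu)$. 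By Lemma \ref{lem: integral jointly continuous} and continuity of $y\mapsto y_-^*$, the map $(y,\nu)\mapsto c_\nu(y)$ is continuous on the compact set $X\times\Delta(\sigma)$, hence $c_\nu(y)\in[c,C]$ with $0<c\le C$. Next, for $v\in\overline\Omega\setminus\mathcal U$ and $y\in X$, the point $v$ stays a definite distance from the tangency point $y_-\in X_-\subset\mathcal U$. Strict convexity of $\Omega$ and compactness of $X_-$ and of $\overline\Omega\setminus\mathcal U$ then give $y_-^*(v_0)\in[c',C']$ with $c'>0$, uniformly in $y$. It follows that every $\ell\in\mathcal L$ satisfies $\ell(v_0)\in[cc',CC']$ for all such $v$.

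Next I pass to a neighborhood. The set $K=\{v_0: v\in\overline\Omega\setminus\mathcal U\}$ is compact, so the map $(\ell,v_0)\mapsto \ell(v_0)$ is uniformly continuous on bounded sets. I can therefore choose a neighborhood $U$ of $\mathcal L$ (say an $\epsilon$-neighborhood, with $\epsilon$ small relative to $cc'$) such that every $\ell\in U$ satisfies $\ell(v_0)\in[cc'/2,\,2CC']$ for all $v_0\in K$. The uniform clause of Proposition \ref{prop: limit points of averages} supplies an $N$ such that $\frac1nA(x,G^nx)\beta^n(x)\in U$ for all $x\in X$ and $n\ge N$. Multiplying by $n$ gives the claimed bounds with $\varrho=\min\{cc'/2,\,1/(2CC')\}$.

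The step I expect to need the most care is the first one: it requires positivity to be a strictly positive constant multiple of $y_-^*$ with a uniform lower bound. The definition only gives pointwise positivity, so I will lean on the compactness of $\Delta(\sigma)$ and the joint continuity from Lemma \ref{lem: integral jointly continuous}. I will also need to check that $\mathcal U$ is taken small enough that the relevant tangency points lie inside it. The rest is routine.
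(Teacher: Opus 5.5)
Your proof is correct and follows the paper's argument. The paper likewise sets $\varrho$ to be half the minimum of $L_\nu(y)(v_0)$ over $v\in\overline\Omega\setminus\mathcal U$ and $L_\nu(y)\in\mathcal L$, then uses the uniform clause of Proposition~\ref{prop: limit points of averages} to land within $\varrho$ of $\mathcal L$. Your factorization $L_\nu(y)=c_\nu(y)\,y_-^*$ only makes explicit a step the paper leaves implicit: it shows why positivity, which is defined at interior points $p\in\Omega$, yields a uniform positive lower bound on all of $\overline\Omega\setminus\mathcal U$, including its boundary points.
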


\begin{proof}
    Let 
    \[\varrho = \frac 12 \min \left\{ L_\nu(y)(v_0): v \in \overline \Omega \setminus \mathcal U \text{ and } L_\nu(y) \in \mathcal L\right\}\]
    Positivity of $\alpha$ implies that $\varrho>0$.  

    By Proposition \ref{prop: limit points of averages},  $\{\frac1n A(x,G^nx)\beta^n(x) : n\ge 1 \text{ and } x\in X\}$ all approach $\mathcal L$ uniformly, so that  there is an $N$ such that for $n\ge N$, we have 
    \[\max_{x\in X, n\ge N}\min_{L_\nu(y) \in \mathcal L}\left\|\frac1nA(x,G^nx)\beta^n(x) - L_\nu(y)\right\|<\varrho. \]
    In other words, for all $x$ and $n\ge N$, there is a pair $y \in X$ and $\nu \in \Delta(\sigma)$ such that
    \[\left\|\frac1nA(x,G^nx)\beta^n(x) - L_\nu(y)\right\|<\varrho.\]
    By the previous inequality, and because $ L_\nu(y)(v_0)\ge 2\varrho$ where $v_0$ is a unit vector, we have 
    \[\varrho n= (2\varrho-\varrho)n\le nL_\nu(y)v_0-n\varrho\le A(x,G^nx)\beta^n(x)v_0.\]
    The upper bound is obtained similarly.
    This is what we wanted to show. 
\end{proof}

\section{Heights bounded from below in the positive cone}\label{sec: heights bounded below general}

In this section, our hypotheses are identical to those of the previous \S\ref{sec: positive cone lamination}.  The main result is Theorem \ref{thm: heights bounded below general}, which is analogous to Theorem \ref{thm: heights bounded below scc}.

\subsection{Banner systems}
Recall our notation that 
\[ \underline \lambda =  \underline \delta_1 \cup ... \cup  \underline \delta_{m} \cup  \underline \mu_{m+1} \cup ...  \underline \mu_{m+m'} \cup  \underline \ell_{m+m'+1} \cup ...  \underline \ell_{m+m'+m''},\]
where, $ \underline \delta_i$ is a simple closed geodesic, $ \underline \mu_i$ is a minimal geodesic lamination with an infinite leaf, and $ \underline \ell_i$ is an infinite isolated leaf that is forward and backward asymptotic to minimal sublaminations of $ \underline \lambda$, and all are (continuously) oriented.

There is a definite positive distance $d_0$ between each of the minimal components $\{\underline \delta_i, \underline\mu_{j}\}_{i,j}$ of $\underline \lambda$.  
Choose a system $\{\underline k_i\}_{i = 1}^{m+m'}$ of train track transversals such that each $\underline k_i$ is contained in the $d_0/3$-neighborhood of the corresponding minimal component.
Let $\epsilon_0 = 4\max \diam k_i$.
Now, for $i = 1, ..,, m$, let $\underline X_i =\underline k_i \cap \underline \delta_i$, for $i = m+1, ..., m+m'$, define $\underline X_i = \underline k_i\cap \underline \mu_i$, and define $\underline Y_i = \underline k_i \cap \underline \lambda$.  Note that $\underline X_i\subset \underline Y_i$ and there is equality when $\uu\lambda$ has no isolated infinite leaves.

Recall that $\pi: \Omega \to \Omega/\rho$ denotes the natural projection by taking $\Gamma$ orbits and that $B_R(o)\subset \Omega$ projects onto all of $\Omega/\rho$.
For each $i$, choose a component $k_i$ of $\pi\inverse (\uu k_i) \cap B_R(o)$.  Define $X_i = k_i \cap \pi\inverse(\uu X_i)$, $Y_i = k_i\cap \pi\inverse(\uu Y_i)$,
\[X = \bigcup_{i = 1}^{m+m'} X_i, \text{  } Y = \bigcup_{i = 1}^{m+m'} Y_i, \text{ and } k = \bigcup_{i=1}^{m+m'}k_i.\]

As $\lambda$ is oriented, it makes sense to consider each $X_i$ and $Y_i$ (as well as $\underline X$ and $\underline Y$) as a subset of $T^1\Omega$ (respectively $T^1\Omega/\rho$).
Note that for $i =1, ..., m$, $ X_i$ consists of a single point, while for $i = m+1, ..., m+m'$, $X_i$ is a Cantor set.

Abusing notation, we also use $\pi$ for the map $T^1\Omega \to T^1\Omega/\rho$ taking orbits via the $\rho$-action.  
Let $\proj: T^1\Omega/\rho \to \Omega/\rho$ be the  tangent projection --- we do not distinguish between the tangent projections $T^1\Omega \to \Omega$ and $T^1\Omega/\rho \to \Omega/\rho$.  Then $\pi$ commutes with $\proj$, i.e., $\pi \circ \proj (v) =\proj\circ\pi(v)$ for $v \in T^1\Omega$.
We also choose a Finsler metric $d$ on $T^1\Omega$ that agrees with $d_\rho$ on paths that lift directed geodesics.

For a set $Z \subset T^1\Omega$, denote by $Z_+$ the set of points in $\partial \Omega$ obtained as forward ideal endpoints of directed lines determined by points in $Z$, and let $Z_-$ be the set of backward ideal endpoints.

\begin{definition}\label{def: banner}
    A \textbf{banner} $ K_i \subset T^1\Omega$ over $k_i$ is a relatively open neighborhood of $Y_i$ in $\proj\inverse(k_i)$ that is transverse to the geodesic foliation of $T^1\Omega$.
    In addition, we require that $\overline{(K_i)_-}\cap \overline{(K_i)_+} = \emptyset$, i.e., the forward and backward endpoints at infinity have closures which don't intersect.

    A \textbf{complete banner system} $K = \bigcup_{i = 1}^{m+m'} K_i$ is a union of banners that additionally satisfies
    $\overline {(K_i)_-} \cap \overline{(K_j)_+} = \emptyset$ for all $i$ and $j$.
\end{definition}
We denote by $K_{\geq 0}$ the set
\[K_{\geq 0}=\overline{\{\proj(g_t v) \in \Omega \colon v\in K \textrm{ and } t\geq 0\}}\subset \overline \Omega.\]

In practice, we will be interested in banners that are small metric neighborhoods of $Y_i$ in $\proj\inverse(k_i)$.  Since there is definite separation between the minimal components of $\uu\lambda$ in $\Omega/\rho$, and the isolated leaves of $\uu\lambda$ are forward and backward asymptotic to minimal components of $\uu\lambda$, it is easy to see that such a small enough neighborhood of $Y$ in $\proj\inverse(k)$ satisfies the (technical) endpoint property.  See Figure \ref{fig: banner}

\begin{figure}[h]
    \centering
    \includegraphics[width=.5\linewidth]{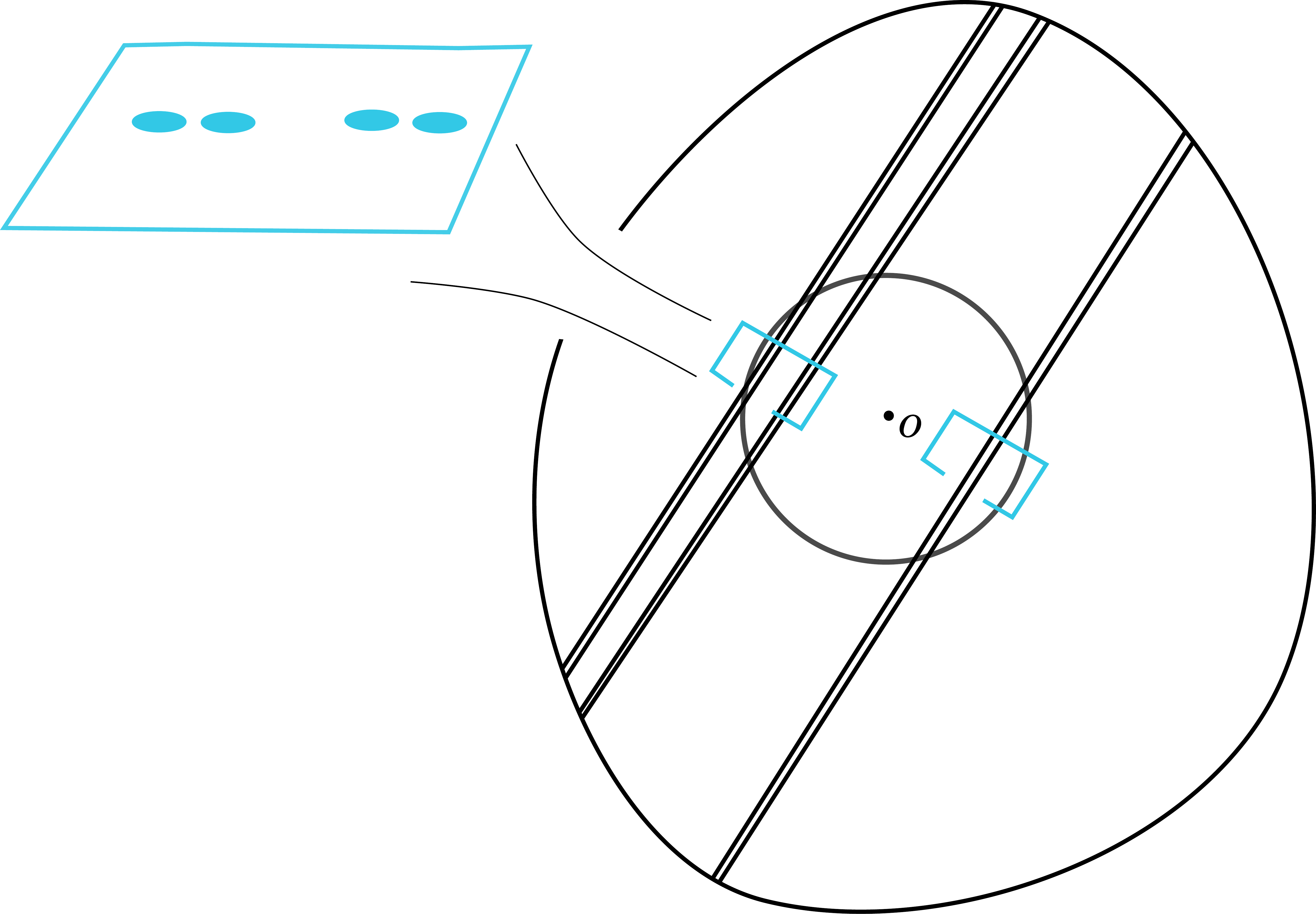}
    \caption{A schematic of a complete banner system.  The blue disks constitute $K$ and lie in the fiber over $k$.}  
    \label{fig: banner}
    \end{figure}
\medskip

From now on, we fix a complete banner system $K$, and let $\underline K = \pi(K) \subset T^1 \Omega/\rho$.  
For $\underline v \in \underline K$, let 
\[\cT^n(\underline v) \in (0, \infty]\]
denote the $n$th return time to $\underline K$.\footnote{Clearly $\cT$ (as well as $G$, $b$, and $\sigma$) depend on $K$, although the notation does not reflect that.  }
For each $\underline v \in \underline K$ and $n\ge 1$ with $\cT^n(\underline v)<\infty$, define $\sigma^n\underline v = \underline g_{\cT^n(\underline v)}\underline v  \in K$ and define $b^n(v)\in \Gamma$ by 
\[G^nv\defeq g_{\cT^n(v)}v = b^n(v).\sigma^n v.\]
Notationally, in the previous line, $v \in K$ satisfies $\pi(v) = \underline v$, we take $\cT^n(v) = \cT^n(\underline v)$, and $\sigma^nv\in K$ defined by the property that $\pi(\sigma^nv) = \sigma^n\underline v$.
Note that for $x \in X$, our definitions of $\sigma$, $b$, and $G$ extend our earlier definitions given in \S\ref{subsec: laminations and dynamics}. As before we take $\beta^n(v) \defeq \rho\chi(b^n(v))$ as well.

Note the following cocycle relation (compare with Lemma \ref{lemma: train track branches from returns}), which holds for all $v\in K$:
\begin{equation}\label{eqn: cocycle general}
    b^{n+m}(v) = b^n(v)b^m(\sigma^nv).
\end{equation}

\begin{lemma}\label{lem: forward cones nest}
    Given a complete banner system $K\subset  T^1\Omega$, there is an $N_0\ge1$ such that for all $y \in Y$ and $n\ge N_0$, we have
    \[\beta^n(y)K_{\ge0}\subset K_{\ge 0 }.\]
\end{lemma}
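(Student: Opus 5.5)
The idea is to read $\beta^n(y)$ dynamically, using the approximation by the projector of Proposition \ref{prop: projectors and slack}. Rather than compare images of points, I would compare images of \emph{lines}, since $K_{\ge0}$ is a union of forward rays.

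Recall $\beta^n(y)=\rho\chi(b^n(y))$. Since $\chi$ is a scalar, its projective action on $\overline\Omega$ is that of $\rho(b^n(y))$. Also $G^ny=b^n(y).\sigma^ny$. So $b^n(y)$ carries the banner $K$, which sits over $k\subset B_R(o)$ near $\sigma^ny$, onto a banner $b^n(y)K$ sitting over a translate of $k$ at the point $G^ny$. That point lies at distance $\cT^n(y)\asymp n$ forward along the leaf of $\lambda$ through $y$. The proof of Proposition \ref{prop: projectors and slack}, extended from $X$ to $Y$ by the same Anosov closing argument (Claim \ref{clm: geodesic segments converge exponentially}), gives the following description of $\rho(b^n(y))$. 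Up to a bounded scalar, it equals the projector $(\sigma^ny)_-^*\otimes y_+$ plus an error $O(e^{-an})$. In particular its attracting point is exponentially close to $y_+$, and its repelling hyperplane is exponentially close to the supporting line at $(\sigma^ny)_-\in\overline{K_-}$.

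\textbf{Step 1 (uniform separation).} Show that $K_{\ge0}\subset\overline\Omega$ stays a definite distance from the repelling points $\overline{K_-}$ and their supporting lines. The set $K_{\ge0}$ consists of $k$ (which is compact in $\Omega$) together with forward rays whose ideal endpoints lie in $\overline{K_+}$. The completeness condition $\overline{(K_i)_-}\cap\overline{(K_j)_+}=\emptyset$ of Definition \ref{def: banner}, together with strict convexity of $\Omega$, then gives the separation. Consequently, for $n\ge N_0$, every point of $K_{\ge0}$ and every ideal endpoint of a ray in $K_{\ge0}$ is moved by $\beta^n(y)$ into an $O(e^{-an})$-neighborhood of $y_+$.

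\textbf{Step 2 (rays go to rays).} Fix $v\in K$ and let $r_v$ be its forward ray. Its image $b^n(y)r_v$ is the forward ray of $b^n(y)v$. This ray starts over $b^n(y)k$, near $G^ny$, in a direction close to the leaf direction, and by Step 1 its forward endpoint is exponentially close to $y_+$. I then need to show that $b^n(y)r_v$ is a subray of $r_w$ for some $w\in K$. Concretely, the full line containing $b^n(y)r_v$ must, when followed backward, cross $k$ near $y$ with tangent inside the open set $K$.

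The plan for Step 2 is to compare this line with the leaf $\ell_y$ through $y$, using hyperbolicity of the Hilbert metric and the comparison $Cd_H\le d_\rho\le 2d_H$. The line passes within $O(\diam K)$ of $\ell_y$ at $G^ny$, and its forward endpoint is $O(e^{-an})$-close to $y_+=(\ell_y)_+$. By thinness of triangles and strict convexity, the line then fellow-travels $\ell_y$ along the whole segment from $k$ to $G^ny$. At the crossing with $k$ it is within a distance, and an angle, that tends to $0$ as $n\to\infty$. Since $K$ is a relatively open neighborhood of $Y\ni y$ in $\proj\inverse(k)$, the crossing vector lies in $K$ once $n\ge N_0$. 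Finally, taking closures passes the inclusion from the union of rays to $K_{\ge0}$, because $\beta^n(y)$ is continuous on $\overline\Omega$. Uniformity of $N_0$ in $y\in Y$ comes from compactness of $Y$ and the uniform constants in the closing-lemma estimate.

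The main obstacle is Step 2, specifically controlling the \emph{backward} behaviour of the image line. Pure Anosov-flow reasoning is unfavourable here: the stable component of the deviation of $b^n(y)v$ from $G^ny$ expands under backward flow. So I would not argue through the flow. Instead I would use the ideal-endpoint description: pin down the forward endpoint via the projector estimate, then use Hilbert-metric thin triangles between that endpoint, $G^ny$, and $\ell_y$ to force exponential fellow-travelling back to $k$. If the direction at $G^ny$ is not controlled well enough for this, the fallback is to shrink the banner, replacing $K$ by a smaller neighborhood of $Y$, which the paper allows since only small metric neighborhoods are used.
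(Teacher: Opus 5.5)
Your Step 1 is sound and is essentially the paper's argument. For $n$ large, the attracting and repelling points of $\beta^n(y)$ lie near $y_+\in K_+$ and $(\sigma^ny)_-\in K_-$. Completeness of the banner system keeps the compact set $K_{\ge0}$ uniformly away from the repelling hyperplanes. The projector estimate then puts $\beta^n(y)K_{\ge0}$ inside an $O(e^{-an})$-neighborhood of $y_+$.

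\textbf{The gap is Step 2, which aims at a false claim.} The line through $b^n(y)r_v$ has endpoints $b^n(y)v_-$ and $b^n(y)v_+$. Whenever $v_-$ stays a definite distance from $(\sigma^ny)_-$, the same projector estimate puts $b^n(y)v_-$ within $O(e^{-an})$ of $y_+$, not just $b^n(y)v_+$. So the whole image line shrinks into a small neighborhood of $y_+$ in $\overline\Omega$ and misses $k$. It therefore cannot lie on the line of any $r_w$ with $w\in K$, since such a line passes through $\proj(w)\in k$.

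Concretely, suppose $y$ is tangent to a closed leaf with stabilizer $b_i$, so $b^n(y)=b_i^n$. For any $v\in K$ whose line crosses the axis transversally, both $b_i^nv_\pm\to y_+$. Such $v$ exist in every banner, so shrinking $K$ does not help. This is also why the thin-triangle step fails. Knowing that the line passes near $G^ny$ with forward endpoint near $y_+$ says nothing about its backward behaviour. In the example it diverges from $\ell_y$ after a bounded distance depending only on $v$, instead of fellow-travelling it for the distance $\asymp n$ back to $k$.

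\textbf{The missing observation} is that $K_{\ge0}$ is only a set of points, so the image ray never needs to be one of the defining rays. It suffices that $K_{\ge0}$ contains a uniform neighborhood of $Y_+$ in $\overline\Omega$. If $z\in\overline\Omega$ is close to $y_+$, the unit vector at $\proj(y)$ pointing toward $z$ is close to $y$. It therefore lies in $K$, because $K$ is relatively open in $\proj\inverse(k)$ and contains $y$. Hence $z$ lies on (the closure of) a ray defining $K_{\ge0}$. Compactness of $Y$ gives $\epsilon>0$ with $\mathcal N_\epsilon(y_+)\cap\overline\Omega\subset K_{\ge0}$ for all $y\in Y$. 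Combine this with Step 1 and the fact that $\beta^n(y)$ preserves $\overline\Omega$, and take $N_0$ so that the $O(e^{-an})$ bound is below $\epsilon$ for $n\ge N_0$; this proves the lemma. Points of $b^n(y)r_v$ do belong to $K_{\ge0}$, but via rays emanating from $\proj(y)$, not via the ray $b^n(y)r_v$ itself.
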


\begin{proof}
    The proof of Proposition \ref{prop: projectors and slack} implies that there exists a positive number $M$ such that for all $m\geq M$ and all $v\in Y$, $\beta^m(v)_\pm \in K_\pm\subset \partial \Omega$. 
    
    Furthermore, because of the lower bound on return times to $K$, the translation lengths of $\beta^m(v)$ have a uniform positive lower bound. 
    The strict separation of $K_-$ and $K_+$, implies that the compact set $K_{\geq 0}\subset \overline \Omega$ is uniformly bounded away from the repelling hyperplanes of $\beta^m(v)$, when $m\geq M$. This result follows.
\end{proof}

Now we construct banner systems for each $N\ge N_0$ that get smaller as $N$ gets larger, where $N_0$ comes from Lemma \ref{lem: forward cones nest}.

\begin{lemma}\label{lem: construct N banner}
    For every $N\ge1$, there is a complete banner system $K_N\subset K$ such that for every point $v \in K_N$, there is a $y \in Y$ such that $b^n(v) = b^n(y)$ for all $1\le n \le N$.  
\end{lemma}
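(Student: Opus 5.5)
We take $K_N$ to be the set of vectors of $K$ that stay close to $\lambda$ for the first $N$ returns. This should work because the combinatorial data $b^n(v)$, for $n\le N$, is locally constant near $Y$ on the set where the first $N$ return times are finite.

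The key step is a stability statement for the first return. Fix $y\in Y$. By definition $\cT(y)<\infty$, and the flow line $g_t y$ for $0<t<\cT(y)$ is a compact segment that avoids the translates $\rho(\Gamma)K$. At time $\cT(y)$ it hits $\rho(b(y))K$ transversally, at a point of $\rho(b(y))Y$. Since banners are transverse to the geodesic foliation, the flow is continuous, and only finitely many translates $\rho(\gamma)K$ meet a compact neighborhood of the segment $\proj(g_{[0,\cT(y)+1]}y)$ (proper discontinuity), there is a neighborhood $W_y$ of $y$ in $K$ with two properties. First, for $v\in W_y$ the first hit of $\bigcup_\gamma \rho(\gamma)K$ occurs on the same translate $\rho(b(y))K$. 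Second, the hitting point $\sigma v$ lies as close to $\sigma y$ as we like.

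We plan to establish this by transversality and the implicit function theorem for $C^1$ flows. More elementarily, we can shrink the banners: since $\proj\inverse(k_i)$ has a product structure and the geodesic foliation is transverse to it, the holonomy along the flow is a local homeomorphism between transversals. We also need that no other translate is hit earlier. This holds because the segment $g_{[0,\cT(y))}y$ has positive distance from the closed set $\bigcup_\gamma \rho(\gamma)\overline{K'}$ for a slightly smaller closed banner system $K'\subset K$ containing $Y$. So we first pass to such a $K'$ and work with relatively open sub-banners throughout.

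Next we iterate. We define $W^{(N)}_y$ inductively as the set of $v$ in a neighborhood of $y$ with $\sigma v\in W^{(N-1)}_{\sigma y}$, where $\sigma y\in Y$ because $Y$ is flow invariant modulo $\Gamma$. The cocycle relation \eqref{eqn: cocycle general} then gives $b^n(v)=b^n(y)$ for all $n\le N$ and all $v\in W^{(N)}_y$. Continuity of $\sigma$ near $y$ makes $W^{(N)}_y$ relatively open. We then set
\[K_N=\bigcup_{y\in Y} W^{(N)}_y .\]
This is a relatively open neighborhood of $Y$ inside $K$. It is transverse to the geodesic foliation because it is contained in $K$, and it satisfies the endpoint disjointness conditions of Definition~\ref{def: banner} automatically, since $(K_N)_\pm\subset K_\pm$. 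Decomposing along the components $k_i$ gives the banners $(K_N)_i$, so $K_N$ is a complete banner system. If nestedness in $N$ is desired, we replace $K_N$ by $K_1\cap\dots\cap K_N$.

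We expect the main obstacle to be the uniformity hidden in the first-return stability near isolated leaves $\ell_i$. For these leaves the return times are finite but not uniformly bounded over $Y$: points of $Y_i$ on an isolated leaf may travel a long time before returning. This is harmless here, because the construction is pointwise in $y$ and needs no uniformity, and the union over $y$ is still open. The only subtle point is to ensure that a nearby $v$ does not hit a banner tangentially or at its boundary. Working with the shrunken closed sub-banner $K'$ handles this.
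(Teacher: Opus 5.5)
Your overall strategy is the paper's. The first $N$ steps of return combinatorics are locally constant near each $y\in Y$ by continuity of the geodesic flow, and $K_N$ is the union of these neighbourhoods. The paper says this in two lines and also records $d(G^Nv,G^Ny)\le\epsilon_0/2$, which is used later in Lemma \ref{lem: low slack general case}; your construction gives that too.

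The step that does not work as written is your justification that a nearby $v$ hits no banner translate before a time close to $\cT(y)$. The data $\cT,\sigma,b$ in the statement are defined by returns to the fixed system $K$, so you need $v$ to avoid every $\rho(\gamma)K$ in the middle of its trajectory. Knowing that the trajectory of $y$ stays away from $\bigcup_\gamma\rho(\gamma)\overline{K'}$ does not stop $v$ from crossing some $\rho(\gamma)(K\setminus\overline{K'})$. That would be an earlier return to $K$ and would change $b(v)$. If you instead ``pass to $K'$'' and compute returns to $K'$, you have proved the statement for a different banner system. Those are not the return data used in the statement or in the Partition Lemma \ref{lem: partition lemma general}. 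Also, the distance from $g_{[0,\cT(y))}y$ to a set containing $y$ is zero, so you must work on $[\epsilon,\cT(y)-\epsilon]$ and use transversality near the ends.

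What is missing is one observation, and it makes the shrinking unnecessary. The trajectory of $y\in Y$ consists of forward unit tangents to a single leaf of $\lambda$. So whenever $\proj(g_ty)$ lies on a translate $\rho(\gamma)k_j$, the vector $g_ty$ is the oriented tangent to $\lambda$ at a point of $\rho(\gamma)k_j$. That is, $g_ty\in\rho(\gamma)Y_j\subset\rho(\gamma)K_j$, and $t$ is a return time.

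Hence for $0<t<\cT(y)$ the trajectory avoids the closed set $\proj\inverse(\rho(\Gamma)k)$, which contains every translate of $\overline K$. On $[\epsilon,\cT(y)-\epsilon]$ it is therefore at positive distance from that set. By continuity of the flow, nearby $v$ then:
\begin{enumerate}
\item meet no translate of $K$ in that time window;
\item cross $\rho(b(y))k_j$ transversally near $G^1y$, at a point of $\rho(b(y))K_j$, since $K_j$ is a relatively open neighbourhood of $Y_j$;
\item by transversality, do not meet $\proj\inverse(\rho(\Gamma)k)$ again for small $t>0$.
\end{enumerate}
With this in place of $K'$, your induction and union prove the lemma for the given $K$. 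This is exactly what the paper's appeal to continuity of $g_t$ tacitly relies on.
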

\begin{proof}
    Construct $K_N\subset K$ as follows. 
    By continuity of $g_t$, given $ y \in Y$, every point in a small enough neighborhood $U(N, y) \subset K$ has the property that 
\[b^n(v) = b^n(y)~ \text{for all $1\le n \le N$,}\]
and $d(G^Nv, G^Ny)\le\epsilon_0/2$.  
In other words, we ask that $v$ is so close to $y$ that they meet the same sequence of transversals in $T^1\Omega$ for at least $N$ combinatorial steps in the future.  
We take $K_N$ to be the union over $y \in Y$ of these neighborhoods $U(N,y)$, which is a subsurface of $K$, hence a complete banner system. 
\end{proof}

\subsection{Partition Lemma and high slack estimates}\label{subsec: partition and high slack surface}

We now consider a geodesic ray $r:[0,\infty)\to \Omega$ with $r(0)=o\in\Omega$. 
We produce a partition of the ray $r$ into high- and low-slack subsegments to which we will apply separate estimates in Theorem \ref{thm: heights bounded below general}.

By a partition of $r$ we mean a countable subset of the domain and the complementary subsets (considered up to their endpoints). In the lemma we require that the basepoint of the ray is not contained in the system of transversals, which is purely for simplicity.

\begin{lemma}[Partition Lemma]\label{lem: partition lemma general}
    There is a positive constant $T_H$ such that the following holds.  
    
    Given $N\ge N_0$, obtain a complete banner system $K_N\subset K$ as in Lemma  \ref{lem: construct N banner}, and let $\uu K_N = \pi(K_N)$.  
    Let $r:[0,\infty)\to \Omega$ be a geodesic ray with $r(0)=o\in\Omega\setminus \mathcal N_\epsilon(\uu \lambda)$.
    Then there exists a partition $0=t^0< t^1< t^2<\ldots$ of the domain of $r$ into non-trivial subsegments colored $H$ and $L$ such that the following properties hold:
    \begin{itemize}
        \item Let $v_i =\dot r (t^i)$ and $\uu v_i = \pi(\dot r(t^i))$.  If $[t^i,t^{i+1}]$ is an $L$-subsegment, then there exists a number
        $M_i\geq 1$  such that 
        \[t^{i+1}= t^i + \cT^{M_iN}(v_i) \]
        and for all $0\le m\leq M_i-1,$
        \[\uu g_{\cT ^{mN}(\uu v_i)}(\uu v_i) \in \uu K_N.\]
        \item If $[t^i,t^{i+1}]$ is an $H$-subsegment, then either $t^i=0$ or $\uu v_i\in \uu K\setminus \uu K_N$,  $ \uu v_{i+1}\in \uu K_N$, and $\pi (\dot r(t^i,t^{i+1}))\cap \uu K_N=\emptyset$. Furthermore, $t^{i+1} - t^i\ge T_H$.
    \end{itemize}
\end{lemma}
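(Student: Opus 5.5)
The partition will be built greedily by scanning the ray forward in time and alternating between two modes, in the same way as Lemma \ref{lemma: partition lemma multicurve case}, except that "being near $\uu\lambda$" is now detected combinatorially through visits of $\dot r$ to the banner systems $\uu K \supset \uu K_N$.

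\textbf{Step 1: the constant $T_H$.} First I will fix $T_H$ depending only on $K$, independently of $N$. Since $\uu K$ is a compact-closure transverse section lying over the short transversals $\uu k_i$, there is a uniform positive lower bound $T_H$ on the time a geodesic needs to leave $\uu K$ and come back to it; this is just a uniform lower bound on return times. Because $\uu K_N \subset \uu K$, the same bound controls the time from a visit to $\uu K\setminus \uu K_N$ to the next visit to $\uu K_N$. I expect the case $t^i=0$ to be handled by the hypothesis $r(0)\notin\mathcal N_\epsilon(\uu\lambda)$: after shrinking $K$, the banners lie over the $\epsilon$-neighborhood of $\uu\lambda$, so $\dot r$ needs time at least comparable to $\epsilon$ to reach $\uu K$. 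I will take $T_H$ to be the minimum of these two bounds.

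\textbf{Step 2: the inductive construction.} Set $t^0=0$ and declare the first segment $H$. It ends at the first time $s$ with $\pi(\dot r(s))\in \uu K_N$; if no such time exists, the partition is the single $H$-segment $[0,\infty)$. Given an $L$-segment starting at $t^i$ with $\uu v_i\in\uu K_N$, I will let $M_i\ge 1$ be the largest integer such that $\uu g_{\cT^{mN}(\uu v_i)}\uu v_i\in \uu K_N$ for every $0\le m\le M_i-1$. This is well defined because $m=0$ holds, and if the set of such $m$ is unbounded the $L$-segment is infinite and the construction terminates. Set $t^{i+1}=t^i+\cT^{M_iN}(v_i)$. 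By maximality, $\uu v_{i+1}=\uu g_{\cT^{M_iN}}\uu v_i$ lies in $\uu K$, since it is a return time, but not in $\uu K_N$. The next segment is then $H$, running from $t^{i+1}$ to the next time $\dot r$ hits $\uu K_N$. This satisfies all the $H$-conditions: the start point lies in $\uu K\setminus \uu K_N$, the end point lies in $\uu K_N$, and the interior avoids $\uu K_N$. Its length is at least $T_H$ because it starts on $\uu K$ and ends on $\uu K$ at a later return.

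\textbf{Step 3: the main obstacle.} The delicate point is ensuring that each $\cT^{M_iN}(v_i)$ is finite and positive and that the return bookkeeping is consistent. This means checking that a point of $\uu K_N$ really returns $N$ times to $\uu K$. I will get this from Lemma \ref{lem: construct N banner}: a point of $K_N$ shares its first $N$ itinerary steps $b^n$ with some $y\in Y$, and $y$ returns forever since it lies on $\lambda$. I also need the cocycle relation \eqref{eqn: cocycle general}, so that $\cT^{(m+1)N}(v_i)=\cT^{mN}(v_i)+\cT^N(\sigma^{mN}v_i)$ and the iterated landings in $\uu K_N$ compose correctly. Finally, the times $t^i$ are strictly increasing and cannot accumulate, since each segment has length at least $\min\{T_H,\inf\cT\}>0$. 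Together these give a countable partition covering $[0,\infty)$.
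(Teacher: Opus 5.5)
Your proposal is correct and is essentially the paper's argument: the same greedy algorithm (an initial $H$-segment up to the first visit of $\pi\circ\dot r$ to $\uu K_N$, $L$-segments with $M_i$ maximal so that they end in $\uu K\setminus\uu K_N$, then $H$-segments running to the next visit to $\uu K_N$), with $T_H$ coming from the minimal return time to $\uu K\supset\uu K_N$. The only difference is the initial segment: the paper takes the minimum with the ray-dependent time $t^1$, whereas you use the hypothesis $r(0)\notin\mathcal N_\epsilon(\uu\lambda)$ (with the banners chosen close to $\lambda$) to get a bound uniform in $r$, which is what the statement actually asks for, since there $T_H$ is fixed before $r$.
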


The first item says that the initial point $\uu v_i$ of a segment colored  $L$ is close enough to $\uu\lambda$ after its $mN$-th return to $\uu K$ that it fellow travels with a single leaf of $\uu\lambda$ for the next $N$ steps, where $m = 0 ,..., M_i-1$.  The most important consequence of the second item is that  every segment colored with an $H$ has definite length and stays a definite distance away from  $\lambda$.  

\begin{proof}
    We describe an algorithm and prove that the partition it produces satisfies the conditions of the lemma; compare with the proof of Lemma \ref{lemma: partition lemma multicurve case}.

    Let $t^1$ be the first time that the ray in the surface intersects $\uu K_N$, and call the subsegment $[0,t^1]$ an $H$-subsegment. If no such number exists, the partition is trivial and we declare $[0,\infty)$ to be an $H$-subsegment.

    Otherwise, proceed following the algorithm
    \begin{enumerate}
        \item If $\uu v_i \in \uu K_N$, let $M_i$ be the largest integer such that for all $m=1,\dots M_i-1$, the vector $g_{\cT^{mN}(\uu v_i)}(\uu v_i)\in \uu K_N$ and define $t^{i+1}=t^i+\cT^{M_iN}(v_i)$. The subsegment $[t^i,t^{i+1}]$ is an $L$-subsegment.
        \item  If $\uu v_i \in \uu K\setminus \uu K_N$, define $t^{i+1}$ to be the smallest number greater than $t^i$ satisfying $\pi(\dot r(t^{i+1}))\in \uu K_N$. The subsegment $[t^i,t^{i+1}]$ is an $H$-subsegment.
    \end{enumerate}
    It is clear that the conditions in the lemma are exclusive, so the domain of $r$ is two-colored by $H$ and $L$. It also is immediate that the property demanded of the $L$-subsegments is satisfied, as are the first conditions required of the $H$-subsegments. To see the $H$ subsegments have length with a uniform lower bound, observe, for instance, that there is a minimum return time from $\uu K$ to itself. The minimum of this bound and the number $t^1$ suffices. 
\end{proof}

In what follows, we choose a positive $\delta_0$ smaller than $T_H$ from the Partition Lemma \ref{lem: partition lemma general}.

\begin{lemma}\label{lem: slack grows in H segments}
    Given $N\ge N_0$, there is a $\vartheta>0$ such that the following holds.  
    Given a geodesic ray $r: [0,\infty) \to \Omega$ with $r(0)=o$ and an $H$-subsegment $[t^i, t^{i+1}]$ obtained from the Partition Lemma \ref{lem: partition lemma general}, for all $a,b \in [t^i, t^{i+1}]$ with $b\ge a+\delta_0$, we have 
    \[\s[r(a), r(b)]\ge\vartheta (b-a).\]
\end{lemma}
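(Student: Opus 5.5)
The plan is to reduce the statement to Lemma \ref{lem: slack grows}, exactly as in the multicurve Partition Lemma \ref{lemma: partition lemma multicurve case}. That means producing an $\epsilon>0$, depending only on $N$ and the banner systems $K\supset K_N$, such that along any $H$-subsegment the tangent vectors $\dot r(t)$, for $t\in[t^i,t^{i+1}]$, avoid the $\epsilon$-neighborhood of $T^1_+\lambda$, at least away from a short initial piece. Given this, Lemma \ref{lem: slack grows} applied with $\delta=\delta_0/2$ gives a $\vartheta'>0$ with $\s\ge\vartheta'\ell$ on every subsegment of length at least $\delta_0=2\delta$. Since geodesic segments are parametrized by $d_\rho$-arclength, $\ell[r(a),r(b)]=b-a$, and the conclusion follows.

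To find $\epsilon$, I would argue by contradiction using the defining property of $K_N$ from Lemma \ref{lem: construct N banner}. $K_N$ is an open neighborhood of $Y$ in $\proj\inverse(k)$. Every leaf of $\lambda$ crosses $k$ (up to the $\rho$-action) with bounded gaps: minimal components recur to their transversal, and the finitely many isolated leaves $\uu\ell_j$ are asymptotic to minimal components, so they return as well. Hence, by continuity of the geodesic flow and compactness of $T^1_+\uu\lambda$, there are $T_1>0$ and $\epsilon>0$ with the following property. Any $\uu w\in T^1M$ that is $\epsilon$-close to $T^1_+\uu\lambda$ has its forward flow hit $\uu K_N$ within time $T_1$; more precisely, its orbit passes through the open set $\uu K_N$ near where the nearby leaf crosses $\uu k$.

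Now suppose $\dot r(t)$ is $\epsilon$-close to $T^1_+\lambda$ for some $t\in[t^i,t^{i+1}-T_1]$. Then $\pi(\dot r)$ meets $\uu K_N$ in $(t^i,t^{i+1})$, which contradicts the $H$-property $\pi(\dot r(t^i,t^{i+1}))\cap\uu K_N=\emptyset$ from Lemma \ref{lem: partition lemma general}. So only the terminal window of length $T_1$ can be near $\lambda$, and I expect this window to be the main obstacle. Near $t^{i+1}$ the ray enters $K_N$, so it really is close to $\lambda$ there; the estimate must therefore absorb a bounded amount of low-slack length.

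I would handle this window as follows. For $b-a$ large compared with $T_1$, the linear lower bound on the part of $[a,b]$ before $t^{i+1}-T_1$ dominates. For $b-a\in[\delta_0,\,CT_1]$ a uniform positive lower bound on slack is needed. For this, note that a segment of bounded length contained in an $H$-segment cannot be a directed piece of a leaf of $\lambda$, since it ends at $t^{i+1}$ without earlier hitting the open set $K_N\supset Y$; equivalently its tangents are not in $T^1_+\lambda$. Then Lemma \ref{lem: slack non-negative}, continuity of slack (Lemma \ref{lem: slack continuous finite}), and compactness of the space of such segments modulo $\Gamma$ give $\s\ge c>0$, hence $\s\ge (c/CT_1)(b-a)$. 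One must be careful that the limit of such segments could converge into $\lambda$ while ending exactly on $\partial K_N$. To rule this out, I would first shrink $K_N$ (or replace $\uu K_N$ in the argument by a closed subset of $\uu K$ containing $Y$ in its interior) so that tangency to $\lambda$ forces an interior hit. Taking $\vartheta$ to be the minimum of the two constants finishes the proof.
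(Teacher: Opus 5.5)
Reducing the statement to Lemma \ref{lem: slack grows} with $\delta=\delta_0/2$ is the right frame, and it is what the paper does. But your treatment of the step you flag as the main obstacle does not work, and the obstacle does not actually exist. In the terminal window you need a uniform positive lower bound on slack for subsegments of length in $[\delta_0, CT_1]$. Your compactness argument relies on the segment ``ending at $t^{i+1}$ without earlier hitting $K_N$.'' That is forward-looking information, and it gives no contradiction. Nothing you invoke (the segment itself and its forward end) prevents a limit of such segments, modulo $\Gamma$, from being a directed piece of a leaf of $\lambda$ whose forward continuation reaches $Y$ exactly at the limiting time of $t^{i+1}$. Shrinking $K_N$, or replacing it by a closed subset, does not change this; in any case the $H$-segments are defined relative to the given $K_N$. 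What actually rules out proximity to $\lambda$ is the \emph{past} of the $H$-segment: it began at $\uu v_i\in\uu K\setminus\uu K_N$ and did not meet $\uu K_N$ on $[t^i,t^{i+1})$.

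The missing idea, which is the paper's Claim \ref{clm: stays away from the lamination}, is to run your recurrence argument backward in time. By compactness of $T^1_+\uu\lambda$ and continuity of the flow, there are $L,\epsilon>0$ such that every $\uu v$ that is $\epsilon$-close to $T^1_+\uu\lambda$ satisfies two properties. First, (a) $\uu g_{[-L,0]}\uu v\cap\uu K_N\neq\emptyset$. Second, (b) every point of $\uu K$ on $\uu g_{[-L,0]}\uu v$ already lies in $\uu K_N$. Now suppose $\uu g_t\uu v_i$ were $\epsilon$-close to $T^1_+\uu\lambda$ for some $t\in[0,T_i)$. If $t<L$, then (b) forces $\uu v_i\in\uu K_N$. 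If $t\ge L$, then (a) produces a hit of $\uu K_N$ at a time in $[t^i,t^{i+1})$. Both contradict the Partition Lemma \ref{lem: partition lemma general}. So the \emph{whole} $H$-segment avoids $\mathcal N_\epsilon(T^1_+\lambda)$, terminal window included (and its endpoint, by closedness). Lemma \ref{lem: slack grows} with $\delta=\delta_0/2$ then finishes at once. In particular, your premise that the ray ``really is close to $\lambda$'' near $t^{i+1}$ is false: an $H$-segment can only enter $K_N$ at a definite distance from $Y$. With this backward argument your compactness step could be repaired, by splitting according to whether $a_n-t^{i}_n$ stays bounded, but it then becomes unnecessary.
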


\begin{proof}
    Let $[t^i,t^{i+1}]$ be an $H$-segment, and let $T_i = t^{i+1}-t^i \le +\infty$.  Then the following are satisfied:
    \begin{itemize}
        \item  $\uu v_i = \pi (\dot r(t^i)) \in \uu K\setminus \uu K_N$,
        \item $\uu v_{i+1} = \pi(\dot r(t^{i+1})) \in \uu K_N$, and 
        \item $\uu g_t \uu v_i\not \in \uu K_N$ for all $t \in [0, T_i)$.
    \end{itemize}

    \begin{claim}\label{clm: stays away from the lamination}
        There is a positive $\epsilon>0$ (depending on $N$) such that $[v_i,g_{T_i}v_i]\cap \mathcal N_\epsilon(T^1_+\lambda) = \emptyset$.
    \end{claim}

\begin{proof}[Proof of Claim \ref{clm: stays away from the lamination}]

Using compactness of $T_+^1\uu \lambda$, continuity of $\uu g_t$, and construction of $\uu K_N$, we can find a positive $L>0$ such that for any point $\uu v $  sufficiently close (within distance $\epsilon>0$, say) to $\uu\lambda$, we have that 
    \begin{itemize}
        \item $\uu g_{[-L, 0]}\uu v \cap \uu K_N\not=\emptyset$, and 
        \item if $ t\in [0,L]$ is such that $\uu g_{-t}\uu v \in \uu K$, then also $\uu g_{-t}\uu v \in \uu K_N$.
    \end{itemize}

    For $\uu v_i$, suppose $t\in [0,T_i)$ satisfies that $\uu g_t\uu v_i\in \mathcal N_\epsilon(T_+^1\uu \lambda)$.  If $t<L$, then $\uu v_i \in \uu K_N$, which is a contradiction.  Otherwise, $\uu g_{t'} \uu v_i \in \uu K_N$, where $t' \in [0,T_i]$, which is also a contradiction.  This completes the proof of the claim. 
\end{proof}
Giving $\epsilon$ from Claim \ref{clm: stays away from the lamination} and our choice of positive $\delta_0 /2$ smaller than both $T_H/2$ and $R$ to Lemma \ref{lem: slack grows} provides a $\vartheta>0$ such that $\s[r(a),r(b)]\ge \vartheta(b-a)$, as $b-a\ge \delta_0$.
\end{proof}

\subsection{Low slack estimates}

The goal of this subsection is to construct a small enough complete banner system, i.e., find a large enough $N$, such that all $L$-segments from the Partition Lemma \ref{lem: partition lemma general} contribute positively to the change in height along a ray for cocycles in the positive cone $\mathscr P_{\rho\chi}$.  

We proceed in two steps.  First, we observe that the positivity condition on each minimal component of $\uu\lambda$ contributes uniformly positive growth of the cocycle $A(\cdot, \cdot)$ along segments that are (small perturbations of) sufficiently long segments of leaves of $\uu \lambda$.  This step is entirely straightforward, except for a minor technical complication introduced by the isolated leaves of $\uu\lambda$ and is carried out in Lemma \ref{lem: low slack general case}.

Second, we need to account for the behavior of $L$-segments that do not spend their entire life following one leaf of $\lambda$, but rather shadow a sequence of leaves, each for a long time, and make tiny ``jumps'' between the leaves.  We show in Lemma \ref{lem: zig zag} that such $L$-segments have a linear upper bound on the growth of $A$, but only a constant positive lower bound, in general (if $N$ is chosen large enough).  This is enough for the proof of Theorem \ref{thm: heights bounded below general}, but is not fine enough for our application in \S\ref{sec: measure}.  See Remark \ref{rmk: why multicurve}.

Both of these steps can be seen as applications of the two Basic Low Slack Lemmas \ref{lem: low slack minimal} and \ref{lem: main low slack cor}.

\begin{lemma}\label{lem: low slack general case}
    Given any complete banner system $K$ and positive class $[\alpha] \in \mathscr P_{\rho\chi}$, there exists a positive $\varrho'$ such that the following holds.  For any $N$ sufficiently large, the complete banner system $K_N$ from Lemma \ref{lem: construct N banner} satisfies $v \in K_N$ implies $\cT^N(v)<\infty$ and 
    \[ \frac{1}{\varrho'}N\ge A(v, G^Nv)\beta^N(v)(g_Tw)_0\ge \varrho' N\]
    for all $w \in K$ and $T\ge0$.
    Moreover, $0\le \s[v,G^N v]\le \epsilon_0$. 
\end{lemma}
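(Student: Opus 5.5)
The plan is to reduce to the two Basic Low Slack Lemmas (\ref{lem: main low slack cor} for closed components and \ref{lem: low slack minimal} for minimal components with infinite leaves), applied to a point $y\in Y$ that $v$ shadows, and then to transfer the estimate from $y$ to $v$ using Lemma \ref{lem: construct N banner}.

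\textbf{Step 1: reduce from $v$ to a nearby $y$.} By Lemma \ref{lem: construct N banner}, for $v\in K_N$ there is $y\in Y$ with $b^n(v)=b^n(y)$ for $n\le N$, hence $\beta^N(v)=\beta^N(y)$, and $d(G^Nv,G^Ny)\le\epsilon_0/2$. In particular $\cT^N(v)<\infty$. Both $v,y$ lie over $k\subset B_R(o)$, and $\sigma^Nv,\sigma^Ny$ lie over $k$ as well, so
\[A(v,G^Nv)\beta^N(v)=A(y,G^Ny)\beta^N(y)+\big(A(v,y)+A(G^Ny,G^Nv)\big)\beta^N(y).\]
Write $A(G^Ny,G^Nv)=\beta^N(y).A(\sigma^Ny,\sigma^Nv)$. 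The error term is then $A(v,y)\beta^N(y)+A(\sigma^Ny,\sigma^Nv)$. It is $O(1)$ because $\|\beta^N(y)\|\preceq e^{\s[y,G^Ny]}$ is bounded (Lemma \ref{lemma: basic slack estimates}, and the slack along leaves of $\lambda$ is zero). The slack bound $0\le\s[v,G^Nv]\le\epsilon_0$ follows from Lemma \ref{lem: slack continuous finite}, since $\s[y,G^Ny]=0$ and the endpoints are within roughly $\epsilon_0/4$ of each other.

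\textbf{Step 2: linear growth for $y$.} If $y$ lies on a closed component $\delta_i$, then $b^N(y)$ is a power $b_i^{N}$ (up to the fixed conjugation in $B_R(o)$), and Lemma \ref{lem: main low slack cor} gives $\varrho N\le A(y,G^Ny)\beta^N(y)u_0\le \varrho^{-1}N$ for $u$ outside a neighborhood $\mathcal U$ of the backward endpoints. If $y\in X_i$ for a minimal $\mu_i$, Lemma \ref{lem: low slack minimal} gives the same. The technical complication is $y\in Y\setminus X$ on an isolated leaf $\ell$. Such a leaf is forward asymptotic to a minimal component, so after a bounded number $n_0$ of returns (uniform by compactness of the finitely many isolated leaves), $\sigma^{n_0}y$ is exponentially close to a point of $X$. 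I would split using the cocycle relation \eqref{eqn: cocycle general} into a bounded initial piece, which contributes $O(1)$ because $\|\beta\|$ is bounded, followed by a long piece. The long piece is compared to a genuine leaf of $\mu$ by Step 1 style estimates together with the uniform convergence in Proposition \ref{prop: limit points of averages}. I expect this isolated-leaf bookkeeping, including making the constants uniform in $y$, to be the main obstacle.

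\textbf{Step 3: the test vectors.} We need $u=g_Tw$, with $w\in K$ and $T\ge0$, to avoid $\mathcal U$. The projection $[u]$ lies in $K_{\ge0}$. Its closure meets $\partial\Omega$ only in $\overline{K_+}$, which is disjoint from $\overline{K_-}\supset X_-$ by the complete banner condition, so choose $\mathcal U$ to be a neighborhood of $\overline{K_-}$ disjoint from $K_{\ge0}$. Then $(g_Tw)_0$ ranges over a compact set away from $\mathcal U$. Taking $N$ large enough that the linear term $\varrho N$ dominates the $O(1)$ errors from Steps 1 and 2, the result follows with $\varrho'=\varrho/2$.
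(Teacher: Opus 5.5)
Your Step 1, the closed and minimal cases of Step 2, and Step 3 are essentially the paper's own argument. The paper uses the same comparison
$A(v,G^Nv)\beta^N(v)=A(y,G^Ny)\beta^N(y)+A(v,y)\beta^N(y)+A(\sigma^Ny,\sigma^Nv)$, with $\|\beta^N(y)\|$ bounded because slack vanishes along leaves. It then applies the Basic Low Slack Lemmas for $y\in X$, and uses test vectors $g_Tw$ lying in $K_{\ge 0}$, hence away from a neighborhood of the backward endpoints.

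The gap is in your treatment of isolated leaves. An isolated leaf $\uu\ell$ is backward asymptotic to one minimal component and forward asymptotic to another (possibly the same one). So it crosses $\uu K$ infinitely often at \emph{both} ends, and $Y\setminus X$ contains all these crossings. They accumulate onto $X$, so $Y\setminus X$ is not compact, and no uniform $n_0$ exists; compactness of finitely many isolated leaves does not apply, since those leaves are not compact. Take $y$ to be the crossing roughly $N/2$ returns before $\uu\ell$ makes its transit from its backward component to its forward one. Such $y$ lies in $K_N$, so the estimate must hold for $v=y$. For this $y$ the transit sits in the middle of the $N$-window. After any bounded initial segment, the orbit shadows a leaf of the backward component, jumps, and then shadows a leaf of the forward component. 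So your ``long piece'' cannot be compared to a single genuine leaf of $\mu$.

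The missing idea, which is what the paper uses, is to locate the transit rather than assume it comes first. The set $\uu\ell\setminus\uu K$ has a unique component $\uu l$ that does not fellow travel a leaf of a minimal component, and $\int_l\alpha$ is bounded by compactness. A window of $N$ returns along a single leaf meets at most one translate of $l$. Split at that return using the cocycle relation \eqref{eqn: cocycle general}:
\begin{itemize}
\item The transit term is $O(1)$, since $\|\beta^n(y)\|\le C$.
\item The pieces before and after it each shadow a leaf of a minimal component on which $[\alpha]$ is positive. Each therefore contributes either $O(1)$ or an amount comparable to its number of returns.
\item At least one of these two pieces has about $N/2$ returns.
\end{itemize}
You must also check that the vector paired with the earlier piece, $\beta^{N-j}(\sigma^jy)(g_Tw)_0$, stays away from the backward endpoints with norm bounded above and below. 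This follows from Lemma \ref{lem: forward cones nest} and the projection approximation, as in Claim \ref{clm: vector position and size}. With that in place, the single additive error is absorbed into $\varrho'$ for $N$ large.
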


The proof is rather technical due to the nature of the objects involved.  We would like to emphasize, however, that we have done most of the hard work in Lemmas \ref{lem: main low slack cor} and \ref{lem: low slack minimal} already, and we are essentially just putting these pieces together to get uniform estimates.

\begin{proof}

By construction, for each $v \in K_N$, there is a $y \in Y$ such that $d(v, y)\le\epsilon_0/4 $ and $d(\proj((G^Nv)), \proj (G^Ny))\le\epsilon_0/4$.  
Using Lemma \ref{lem: slack continuous finite}, we thus obtain 
\[0\le \s[v, G^N v]=\s[v, G^N v] - \s[y,G^N y]\le 2d(v,y) + 2d(G^N v,G^N y)\le  \epsilon_0,\]
where we have used that $\s[y,g_{\cT^N(v)}y] = 0$, because the segment $[y,g_{\cT^N(v)}y]$ is contained in $T^1_+\uu\lambda$.
This is the final assertion of the lemma. 

\medskip

Consider geodesic segments $\alpha_1$ joining $\proj (y)$ to $\proj (v)$ and $\alpha_2$ joining $\proj (\sigma^N v)$ to $\proj(\sigma^Ny)$.  The lengths of both $\alpha_1$ and $\alpha_2$ are bounded by $\epsilon_0/2$.

By path independence of the integral and because $b^N(y) = b^N(v)$, we have 
\[A(y, G^N y)\beta^N(y)  = \left( \int_{\alpha_1}\alpha + A(v, G^N v) + \int_{b^N(v)\alpha_2}\alpha \right) \beta^N(v),\]
thus 
\[\left\|  A(y, G^N y)\beta^N(y) - A(v, G^N v) \beta^N(v) \right\| \le \left\|  \int_{\alpha_1}\alpha\right\| \left \| \beta^N(y) \right\|  + \left\| \int_{\alpha_2}\alpha\right\|. \]
By Lemma \ref{lemma: basic slack estimates} and because $\s[y, g_T(y)] = 0$ for all $T\ge 0$, there is a constant $C\ge 1$, such that
$\|\beta^n(y)\|\le C$, for all $n \ge 0$.
Also, since $\alpha_1$ and $\alpha_2$ lie in a compact set in $\Omega$, we have 
$\left\| \int_{\alpha_i}\alpha\right\|=O(1)$ for $i = 1,2$.
Thus 
\[A(y, G^Ny)\beta^N y  = A(v, G^Nv) \beta^N(v)+O(1).\]
Note that the $O(1)$ in the above expression is independent of $N$.

\medskip

Our goal is now to bound $A(y,G^N y)\beta^N(y)(g_Tw)_0$ from above and below, for any $w \in K$ and $T\ge0$.

There are three cases to consider: $\uu y$ is tangent to a closed curve $\uu \delta \subset \uu \lambda$, $\uu y$ is tangent to an infinite leaf of a minimal component $\uu \mu \subset \uu \lambda$, or $\uu y$ is tangent to an isolated leaf of $\uu \lambda$.  

The first two cases are essentially the same: we invoke Lemmas \ref{lem: main low slack cor} and \ref{lem: low slack minimal}. 
To do so, consider the set $\cup_{v \in K}\{g_{-t}v : t \ge 0\}$; its closure in $\overline \Omega$ contains an open neighborhood $\mathcal U$ of $Y_-$. 
By definition of the complete banner system $K$, for any $w \in K$, we know that $g_Tw \not \in \mathcal U$ for all $T\ge 0$.  
Thus, for each $i$, the hypotheses of Lemmas \ref{lem: main low slack cor} and \ref{lem: low slack minimal} are satisfied for $y \in K_i$: this gives us a $\varrho_i>0$ and $N_i$ large such that 
\[ \frac{1}{\varrho_i}N\ge A(y,G^Ny)\beta^N(y)(g_Tw)_0\ge \varrho_iN,\]
for all $N\ge N_i$, $w\in K$, and  $T\ge 0$.   
Taking $N_0'$ to be the maximum of the $N_i$'s and $\varrho'$ to be the minimum of the $\varrho_i$'s, the conclusion of the lemma holds in the first two cases.
\medskip

Now we sketch why the isolated leaves pose no additional difficulties (up to changing the constants $N_0'$ and $\varrho'$ obtained from the previous paragraph).  
Let $\uu \ell$ be (the forward tangents to) such an isolated leaf.  There is a unique component $\uu l$ of $\uu \ell \setminus \uu K$ that does not fellow travel a segment of a leaf of a minimal component of $\uu\lambda$.  Using compactness, we obtain a bound on $\int_l \alpha$, where $l$ is a lift of $\uu l$ (whose projection) meets $B_R(o)$.  
Points in $K_N$ follow a single leaf of $\lambda$ for at least $N$-combinatorial steps, so during these $N$-steps, $v$ meets at most one translate of such a compact segment of an isolated leaf. Its integral thus picks up at most one such additive ``error term.''

Using the fact that $\beta^n(y)$ is uniformly bounded for all $n\ge 0$ (Lemma \ref{lemma: basic slack estimates} and the slack of any segment contained in $\lambda$ is $0$)  and the uniform growth obtained in the previous paragraph, this additive error can be absorbed into the constant $\rho'$ to obtain $\rho$ after possibly taking $N_0'$ larger.

This sketch concludes the proof.
\end{proof}

The Partition Lemma \ref{lem: partition lemma general} gives us potentially arbitrarily long concatenations of subsegments of the type treated by Lemma \ref{lem: low slack general case}. The following lemma provides the necessary bound for subsegments of this type which will ultimately give control of the height function.  Note the asymmetry in the upper and lower bounds.  

\begin{lemma}\label{lem: zig zag}
    There is a positive $\varrho$ such that the following holds.  For $N$ large (as in Lemma \ref{lem: low slack general case}), let $K_N$ be the complete banner system obtained from Lemma \ref{lem: construct N banner}.  
    Suppose there is an $M\ge1$ such that $v \in K_N$ satisfies that 
    $\sigma^{mN}v \in K_N$
    for all $0\le m \le M-1$.  Then
    \[\frac{1}\varrho NM\ge \frac{1}{e^{\s[v,G^{NM}v]}} A(v,G^{NM}v)\beta^{NM}(v)(g_{T}w)_0\ge \varrho N\]
    for all $w\in K$ and $T\ge 0$. 
\end{lemma}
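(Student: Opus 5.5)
Write $v^m = \sigma^{mN}v$ for $0\le m\le M$, so that by the cocycle relation \eqref{eqn: cocycle general} we have $b^{NM}(v)=b^N(v^0)b^N(v^1)\cdots b^N(v^{M-1})$ and $G^{NM}v = b^N(v^0)\cdots b^N(v^{m-1}).G^{N(M-m)}(v^m)$. Applying the Notation Lemma \ref{lemma: notation lemma} (in the form of Lemma \ref{lem: notation lemma laminations}) to the partition of $[v,G^{NM}v]$ at the successive returns $G^{mN}v$, I will write
\[
A(v,G^{NM}v)\beta^{NM}(v)(g_Tw)_0=\sum_{m=0}^{M-1} A(v^m,G^Nv^m)\beta^N(v^m)\,\beta^{N(M-m-1)}(v^{m+1})(g_Tw)_0 .
\]
Set $u_m = \beta^{N(M-m-1)}(v^{m+1})(g_Tw)_0$. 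Lemma \ref{lem: forward cones nest} (with $N\ge N_0$) shows that $[u_m]$ lies in $K_{\ge 0}$, i.e.\ it is of the form $g_{T'}w'$ with $w'\in K$, $T'\ge 0$. Writing $u_m=\|u_m\|\,(u_m)_0$ and applying Lemma \ref{lem: low slack general case} to each term then gives
\[
\varrho' N\|u_m\| \le A(v^m,G^Nv^m)\beta^N(v^m)u_m\le \tfrac1{\varrho'}N\|u_m\|,
\]
and in particular every summand is positive.

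The next step is to compare $\|u_m\|$ with slack. Since $(g_Tw)_0$ lies in a compact set kept uniformly away from the repelling hyperplanes of the $\beta$'s (this is exactly the separation $\overline{K_-}\cap\overline{K_+}=\emptyset$ built into the banner system), the argument of Lemma \ref{lemma: basic slack estimates} gives $\|u_m\|\asymp e^{\s[G^{N(m+1)}v,\,G^{NM}v]}$, with constants uniform in $m,M,N$. For this I will rewrite the relevant product as $\rho\chi$ of a group element carrying a point of $B_R(o)$ to the appropriate point, and use additivity of slack. Dividing by $e^{\s[v,G^{NM}v]}$, the $m$-th term then becomes $\asymp N e^{-\s[v,G^{N(m+1)}v]}$.

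Both bounds now follow from this. For the upper bound, every exponential factor is at most $1$, so the sum is $\preceq NM$. For the lower bound, I keep only the final term $m=M-1$. Its factor is $e^{-\s[v,G^{NM}v]}$, which looks problematic, but I will instead normalize against the full sum differently: this last term has $\|u_{M-1}\|=1$, so it equals $\succeq N$ before normalization. I therefore expect the real content to be that the normalized sum should be bounded below by the term with the smallest accumulated slack. The $m=0$ term is the natural candidate, since $\s[v,G^Nv]\le\epsilon_0$ by Lemma \ref{lem: low slack general case}. Its normalized contribution is $\asymp N e^{-\s[v,G^Nv]}\ge N e^{-\epsilon_0}$, and because all terms are positive, dropping the rest gives the lower bound $\varrho N$.

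The main obstacle is the uniform comparison $\|u_m\|\asymp e^{\s}$ combined with the positivity of each summand. Lemma \ref{lem: low slack general case} is stated only for unit vectors of the form $(g_Tw)_0$, so I must verify that $[u_m]$ stays in $K_{\ge0}$. Lemma \ref{lem: forward cones nest} is formulated for $y\in Y$, whereas here we have $v^{m}\in K_N$; since $b^N(v^m)=b^N(y)$ for some $y\in Y$ by Lemma \ref{lem: construct N banner}, the nesting transfers. The remaining work is to track that the implied constants do not depend on $N$ or $M$; if this fails, I would shrink the banner system so that the nesting becomes strict and obtain uniform constants by compactness.
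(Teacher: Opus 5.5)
Your proposal is correct and follows essentially the same route as the paper. You use the Notation Lemma \ref{lemma: notation lemma} decomposition at the returns $G^{mN}v$, transfer the nesting of $K_{\ge 0}$ from $Y$ to $K_N$ via Lemma \ref{lem: construct N banner} so that each tail vector avoids $\mathcal U$ and has size $\asymp e^{\s}$ of the tail segment, apply Lemma \ref{lem: low slack general case} termwise, and get the upper bound from $e^{-\s}\le 1$; your lower bound from the $m=0$ term alone (once you drop the false start with $m=M-1$) is exactly what the paper's bound $\sum_{m} e^{-(m+1)\epsilon_0}\ge e^{-\epsilon_0}$ comes down to.
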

\begin{proof}
    Let $v_m\defeq \sigma^{mN}v$ so that 
    \[\beta^{MN}(v) = \beta^N(v)\beta^N(v_1)\dots\beta^N(v_{M-1}).\]
    Apply Lemma \ref{lemma: notation lemma}, stopping at each integer multiple of $N$ combinatorial steps in time, to write
    \begin{align*}A(v, G^{NM}v )\beta^{NM}(v)=& \sum_{m=0}^{M-1}A(v_m, \beta^{N}(v_m)v_{m+1})\beta^N(v_m) \beta^{N(M-m-1)}(v_{m+1})
     \end{align*}
    Now consider, for each $m = 0, ..., M-1$, $w \in K$ and $t\ge0$, the vector
    \begin{equation}\label{eqn: vector quantity}
        \beta^{(M-m-1)N}(v_{m+1})(g_tw)_0 \in \R^3.
    \end{equation}
    Let $\alpha_m = [v_{m+1},G^{N(M-(m+1))}v_{m+1}]$; this is the tail of the segment $[v, G^{NM}v]$ in $T^1\Omega$.
    \begin{claim}\label{clm: vector position and size specific}
    
    There is $C_0>0$ such that given $t\ge 0$ and $w \in K$, there is a $w'_0 \in \Omega_0\setminus \mathcal U$ such that 
    \[\beta^{(M-m-1)N}(v_{m+1})(g_tw)_0 = C e^{\s(\alpha_m)}w'_0 ,\]
    where $1/C_0\le C\le C_0$.  
    Furthermore, 
    \[1\le e^{\s(\alpha_m)}\le e^{\epsilon_0N(M-m-1)}.\]
    \end{claim}

    For the claimed equality to hold, $w_0'$ is necessarily a constant multiple of the vector on the left-hand side. The content of the claim is that the scale of the left hand side is controlled by the slack of the noted segment.
    
    The claim says that the size of \eqref{eqn: vector quantity} is controlled in terms of the slack of a certain (possibly long) geodesic segment, and its position is nowhere near the set $Y_-\subset \overline \Omega$ where the estimates of Lemmas \ref{lem: low slack minimal} and \ref{lem: low slack general case} do not apply.

    \begin{proof}[Proof of Claim \ref{clm: vector position and size specific}]
        This is a direct application of the next more general Claim \ref{clm: vector position and size} having kept careful track of the relevant indices and applying the slack bounds of Lemma \ref{lem: low slack general case}.
    \end{proof}
    The advantage of the following claim, is that the notation/indexing is not quite so tedious as the previous one.  
    \begin{claim}\label{clm: vector position and size}
        There is $C_0>0$ such that the following holds.  Given $L\ge1$ and $v\in K_N$ with $\sigma^{l N}v\in K_N$ for all $l =1, ...,L$, then for all $w \in K$, and for all $t \ge0$, there is a $w' \in \Omega \setminus \mathcal U$ such that 
        \[\beta^{LN}(v)(g_tw)_0 = Ce^{\s[v, G^{LN}v]}w'_0,\]
        where $1/C_0\le C\le C_0$.
    \end{claim}

    \begin{proof}[Proof of Claim \ref{clm: vector position and size}]
        By definition of $K_N$, $v$ satisfies that there are $y_1, ..., y_L \in Y$ such that 
        \[\beta^{LN}(v) = \beta^N(y_1) \beta^N(y_2) ... \beta^N(y_L).\]
        By Lemma \ref{lem: forward cones nest}, if $N$ is large enough, we have that $\beta^N(y_i)K_{\ge0}\subset K_{\ge0}$ for all $i$, and hence 
        \[\beta^{LN}(v)K_{\ge0} \subset K_{\ge0}.\]
        Since $\mathcal U \cap K_{\ge0} = \emptyset$, we conclude that 
        \[\beta^{LN}(v)(g_tw)\not\in \mathcal U\]
        for all $t\ge0$ and all $w \in K$.  

        Now, we only need to explain why the size of $\beta^{LN}(v)(g_tw)_0$ is controlled.  
        This is essentially a direct consequence of Proposition \ref{prop: projectors and slack} and the fact that the hyperplane corresponding to the smallest singular values of $\beta^{LN}(v)$ never gets close to $K$.  
    \end{proof}

    By Claim \ref{clm: vector position and size specific}, for each $m = 0, ..., M-1$, there are $C_m$  with $1/C_0\le C_m\le C_0$ and $(w_m')_0 \in \Omega\setminus \mathcal U $ such that 
    \[\beta^{(M-m-1)N}(v_{m+1})(g_tw)_0 = C_m e^{\s(\alpha_m)}(w'_m)_0.\]
    Let $\delta_m = [v, v_{m+1}]$ be the initial subsegment of $[v,G^{NM}v]$ so that $\delta_m \alpha_m = [v,G^{NM}v]$
    Thus
    \begin{align*}\frac{1}{e^{\s[v,G^{NM}v]}}A(v, G^{NM}v)\beta^{NM}(v) (g_tw)_0& = \frac{1}{e^{\s[v,G^{NM}v]}}\sum_{m=0}^{M-1}A(v_m, \beta^{N}(v_m)v_{m+1})\beta^N(v_m)C_me^{\s(\alpha_m)}(w_m')_0 \\
    &= \sum_{m=0}^{M-1}A(v_m, \beta^{N}(v_m)v_{m+1})\beta^N(v_m)C_me^{-\s(\delta_m)}(w_m')_0.
    \end{align*}

    By Lemma \ref{lem: low slack general case} and additivity of slack along subpaths, $0\le \s(\delta_m) \le (m+1)\epsilon_0$.
    With $(*)$ being the quantity from the lemma statement that we want to bound, applying Lemma \ref{lem: low slack general case} to each summand thus gives bounds
    \[\frac{C_0NM}{\rho'}\ge\frac{N}{\varrho'}\sum_{m=0}^{M-1}C_me^{-\s(\delta_m)}\ge (*)\ge \varrho'N\sum_{m=0}^{M-1}C_me^{-\s(\delta_m)}\ge \frac{\varrho'N}{C_0} \sum_{m=0}^{M-1} e^{-(m+1)\epsilon_0}. \]
    Noting that the geometric series on the right converges to a quantity independent of $N$, we can now choose $\varrho$ suitably to prove t he lemma.
\end{proof}

\subsection{Lower height bounds}

Recall from Definition \ref{def: positive cone} the positive cone $\mathscr P_{\rho\chi}$.
\begin{theorem}[Bounded Height]\label{thm: heights bounded below general}
    Given $\alpha\in \mathscr P_{\rho\chi}$, there is a constant $C(\alpha)>0$ such that for all geodesic rays $r: [0,\infty) \to \Omega$ with $r(0) =o$ and all $T\ge 0$, we have 
    \[h(r(T))\ge -C(\alpha).\]
\end{theorem}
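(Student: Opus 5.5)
We will mirror the proof of Theorem \ref{thm: heights bounded below scc}, with the Partition Lemma \ref{lem: partition lemma general} replacing Lemma \ref{lemma: partition lemma multicurve case}. By Lemma \ref{lemma: basic slack estimates}, $h(r(T))$ is comparable to $A(o,r(T))\Psi(r(T))/e^{\s[o,r(T)]}$. So it suffices to show that this normalized quantity is bounded below by a constant that depends neither on $r$ nor on $T$.

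\textbf{Setup.} Fix $N\ge N_0$ large enough that Lemmas \ref{lem: low slack general case} and \ref{lem: zig zag} hold, and take the banner system $K_N$. If $o$ is close to $\uu\lambda$, first move it by a bounded amount; this changes $h$ by $O(1)$. Partition $r$ into $H$- and $L$-segments with breakpoints $t^i$. Let $q_i=r(t^i)$, with the last point $q_{n_T}$ replaced by $r(T)$. Apply the Notation Lemma \ref{lemma: notation lemma} to write $A(o,r(T))\Psi(r(T))$ as a sum over $i$ of terms
\[A(p_i,\gamma_{i+1}p_{i+1})\rho\chi(\gamma_{i+1})\,\rho\chi(\gamma_{i+2}\cdots\gamma_{n_T})\Psi(p_{n_T}).\]
Index so that $H$-segments are even and $L$-segments are odd. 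A truncated final $L$-segment is cut at the last full return time; the leftover piece has bounded length, so its contribution is absorbed either as a bounded term or as a short $H$-like term.

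\textbf{$H$-terms.} Subdivide each $H$-segment into pieces of length between $\delta_0$ and $2\delta_0$. By Lemma \ref{lem: slack grows in H segments}, each piece has slack at least $\vartheta\delta_0$. The High-slack Lemma \ref{lemma: high-slack lemma} together with Lemma \ref{lemma: basic slack estimates} bounds each normalized $H$-term in absolute value by a constant times $e^{-\s[o,q_i]}$. Every $H$-segment has length at least $T_H$ and therefore slack at least $\vartheta T_H$, so $\s[o,q_i]$ grows at least linearly in the number of $H$-segments before index $i$. The $H$-terms thus sum to $O(1)$, exactly as in the multicurve case.

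\textbf{$L$-terms.} These are the main step. For an $L$-segment starting at $v_i$, the factor $\gamma_{i+1}$ is $b^{M_iN}(v_i)$. We need the tail vector $\rho\chi(\gamma_{i+2}\cdots\gamma_{n_T})\Psi(p_{n_T})$ to have the form $C e^{\s[q_{i+1},r(T)]}(g_Tw)_0$ with $C$ bounded above and below, and with $g_Tw\in K_{\ge0}$ lying outside the neighbourhood $\mathcal U$ of $Y_-$. The size estimate comes from Lemma \ref{lemma: basic slack estimates}. For the position, note that the ray leaves the $L$-segment tangent to some $w\in K$, so its forward trajectory stays in $K_{\ge0}$, which is disjoint from $\mathcal U$; this is where the complete banner condition $\overline{K_-}\cap\overline{K_+}=\emptyset$ enters. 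One must check that $\Psi$ of a point of $\Omega$ differs projectively from $(g_Tw)_0$ only by a bounded-distance change; Claim \ref{clm: vector position and size} handles this.

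Lemma \ref{lem: zig zag}, rescaled by $e^{\s[q_i,q_{i+1}]}$, then shows that each normalized $L$-term is at least $\varrho N C^{-1}e^{-\s[o,q_{i+1}]}>0$. It is also at most $C\,M_iN\,e^{-\s[o,q_i]}$, which gives the upper bound that Remark \ref{rmk: upper height bound} needs.

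\textbf{Conclusion.} Every $L$-contribution is positive and the $H$-contributions sum to $O(1)$, so $h(r(T))\ge -C(\alpha)$. The main obstacle is the $L$-terms: they require uniform positivity in a setting where segments zig-zag between leaves, and they require verifying that the tail vector stays inside $K_{\ge0}$ and away from $\mathcal U$ even when $T$ falls in the middle of a segment. For the latter I would first try enlarging $K$ slightly so that $\Psi(B_R(o))$-type vectors at banner times lie in the forward cone.
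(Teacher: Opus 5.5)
Your proposal is correct and follows the paper's proof essentially step for step: the same comparison $h(r(T))\succeq A(o,r(T))\Psi(r(T))/e^{\s[o,r(T)]}$, the same $H/L$ partition from Lemma~\ref{lem: partition lemma general}, the geometric-series bound for the $H$-terms, and Lemma~\ref{lem: zig zag} for the positivity of the $L$-terms. You are in fact more careful than the paper about a truncated final segment, and the obstacle you flag at the end is not a real one: by $\rho\chi$-equivariance of $\Psi$, the tail vector $\rho\chi(\gamma_{i+2}\cdots\gamma_{n_T})\Psi(p_{n_T})$ equals $\Psi\bigl((\gamma_1\cdots\gamma_{i+1})^{-1}.r(T)\bigr)$, which is exactly (not merely up to bounded error) a positive multiple of $(g_tw')_0$ with $w'=(\gamma_1\cdots\gamma_{i+1})^{-1}\dot r(t^{i+1})\in K$ and $t=T-t^{i+1}\ge 0$, and its size is $\asymp e^{\s[q_{i+1},r(T)]}$ by Lemma~\ref{lemma: basic slack estimates}, so you need neither to enlarge $K$ nor to appeal to Claim~\ref{clm: vector position and size}.
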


The proof of this Theorem is nearly identical to that of Theorem \ref{thm: heights bounded below scc}, with the Low Slack Lemma \ref{lem: low slack minimal} and its consequences (Lemma \ref{lem: low slack general case} and Lemma \ref{lem: zig zag}) used in place of Lemma \ref{lem: main low slack cor} and the new Partition Lemma \ref{lem: partition lemma general} in place of Lemma \ref{lemma: partition lemma multicurve case}.

\begin{proof}

    As before, we recall that 
    \[h(r(T)) = \frac{A(o,r(T))\Psi(r(T))}{\|\Psi(r(T))\|},\]
    and
    \[h(r(T))\succeq \frac{A(o,r(T))\Psi(r(T))}{e^{\s[o,r(T)]}}\]
    from Lemma \ref{lemma: basic slack estimates}.
    
    Given $N$ and  $K_N\subset K$  as in Lemma \ref{lem: low slack general case}, The Partition Lemma \ref{lem: partition lemma general} furnishes $ 0=t^0<t^1 < ...$, (where it's possible that the $t^i$'s are $\infty$ for all $i\ge i_0 \ge 1$).  Given $T>0$, there is a largest $n_T$ such that $t^{n_T-1}\le T$.  We set 
    \[q_i = \begin{cases} r(t^i), & 0\le i <n_T,\\
    r(T), & i = n_T.
    \end{cases}\]
    The Notation Lemma \ref{lemma: notation lemma} gives $p_i \in B_R(o)$ and $\gamma_i \in \Gamma$ such that 
    \[q_i = \gamma_1 \cdots\gamma_i.p_i,\]
    and
    \begin{align*}
        A(o,r(T))\Psi(r(T))= \sum_{i = 0}^{n_T-1} A(p_i,\gamma_{i+1}p_{i+1})\rho\chi(\gamma_{i+1})\rho\chi(\gamma_{i+2}\cdots\gamma_{n_T})\Psi(p_{n_T}).
    \end{align*}
    In fact, we may assume that $p_i \in k$ for all $i = 1, ..., n_{T}-1$.
    \medskip

    We now handle the even and odd terms (corresponding to $H$- and $L$-subsegments, respectively) separately.
    The $L$-segments correspond to geodesic segments of the form $[q_i, q_{i+1}]$ where $i$ is odd, and each one also comes with an integer $M_i\ge1$;
     we may assume that 
    \[\gamma_{i+1} = b^{NM_i}(\dot r(t^{i})).\]

    \noindent\textbf{1) Even terms and high-slack estimates:} 
    As a consequence of the Partition Lemma \ref{lem: partition lemma general} and Lemma \ref{lem: slack grows in H segments}, every $H$-segment $[p_i, \gamma_{i+1}p_{i+1}]$ can be further partitioned into pieces whose length is in the interval $[\delta_0, 2\delta_0]$ and whose slack is at lest $\vartheta 2\delta_0$.  These are the main inputs for the argument in Theorem \ref{thm: heights bounded below scc} which gives that
    \begin{align*}
    \left| \frac{1}{e^{\s[0,r(T)]}} \sum_{i \text{ even} } A(p_i,\gamma_{i+1}p_{i+1})\rho\chi(\gamma_{i+1})\rho\chi(\gamma_{i+2}\cdots\gamma_{n_T})\Psi(p_{n_T})\right| 
   &\prec  \frac{1}{e^{\s[0,r(T)]}}\sum_{i \text{ even} } e^{\s[q_{i},r(T)]}\\
   & = \sum_{i \text{ even} } e^{-\s[0,q_i]}=O(1).
   \end{align*}

    \medskip

    \noindent\textbf{2) Odd terms and low-slack estimates:}
    We claim that
    \[\rho\chi(\gamma_{i+2}\cdots\gamma_{n_T})\Psi(p_{n_T}) \asymp e^{\s[q_{i+1}, r(T)]}(g_tw')_0,\]
    where $t\ge0$, $w'\in K$.  This follows from Lemma \ref{lemma: basic slack estimates}, because $\Psi(p_{n_T})$ lives in a compact set, by the construction of our partition (Lemma \ref{lem: partition lemma general}).

    We can then apply Lemma \ref{lem: zig zag} to obtain
    \[ M_i e^{\s[q_{i},r(T)]} \succ A(p_i,\gamma_{i+1} p_{i+1})\rho\chi(\gamma_{i+1})\rho\chi(\gamma_{i+2}\cdots\gamma_{n_T})\Psi(p_{n_T}) \succ e^{\s[q_{i},r(T)]} \]
    (note that this is a weaker conclusion than the one obtained in the proof of Theorem \ref{thm: heights bounded below scc}).
    Summing all of the odd terms, we thus obtain
    \begin{align*}
     \sum_{i \text{ odd} } M_i e^{-\s[0,q_{i+1}]} &=  
     \frac{1}{e^{\s[0,r(T)]}}\sum_{i \text{ odd} } M_i e^{\s[q_{i+1},r(T)]} \\ 
      &\succ \frac{1}{\|\Psi(r(T))\|}\sum_{i \text{ odd} }A(p_i,\gamma_{i+1}p_{i+1})\rho\chi(\gamma_{i+1})\rho\chi(\gamma_{i+2}\cdots\gamma_{n_T})\Psi(p_{n_T}) 
    \succ \sum_{i \text{ odd} } e^{-\s[0,q_{i+1}]}>0.
    \end{align*}
    where we have used the additivity of slack in the equality.  Note that the last sum on the right is convergent, so the sum of the odd terms is non-negative.
    \medskip

    \noindent\textbf{3) Combining the estimates:} Putting it together gives
    \begin{equation}\label{eqn: upper and lower height bound 2}
        \left(O(1)+ const \sum_{i \text{ odd} }^{n_T-1} M_i e^{-\s[0,q_{i+1}]}\right) 
    \ge
    h(r(T))\asymp\frac{A(o,r(T))\Psi(r(T))}{e^{\s[o,r(T)]}} \ge O(1),
    \end{equation} 
    where the constants are positive and don't depend on $T$ or $r$.    
    Since each odd term is positive, we thus achieve a uniform lower bound on height for every ray $r$, proving the theorem.
\end{proof}

\begin{corollary}\label{cor: finite slack implies infinite height}
     Suppose $r: [0,\infty) \to \Omega$ is a geodesic ray with $\s(r)<\infty$.  Then $h(r(t))\to \infty$ as $t\to \infty$.
\end{corollary}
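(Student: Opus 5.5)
Since $\s(r)<\infty$ and slack is additive and nonnegative, the tail slack $\s(r|_{[T_0,\infty)})$ tends to $0$ as $T_0\to\infty$. I will combine this with the two-sided estimate \eqref{eqn: upper and lower height bound} (or \eqref{eqn: upper and lower height bound 2}) from the Bounded Height Theorems. The idea is that a finite-slack ray eventually spends essentially all of its time in low-slack $L$-segments, and these contribute linearly growing positive terms to the height, while there is no exponential slack factor to damp them out. Throughout I assume, as in the surrounding section, that the cocycle is positive, $\alpha\in\mathscr P_{\rho\chi}$.

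First I will show that the partition of $r$ given by the Partition Lemma (\ref{lemma: partition lemma multicurve case} or \ref{lem: partition lemma general}) has only finitely many $H$-segments, and that the final segment is an infinite $L$-segment. Each $H$-segment has length at least $\ell_0$ (resp.\ $T_H$) and slack bounded below by $\vartheta\ell_0/4$ (resp.\ $\vartheta\delta_0$), so only finitely many can occur. After the last $H$-segment the ray lies in a single $L$-regime. In the multicurve case this means it fellow-travels a single $\delta_i$ forever. In general it shadows leaves of $\lambda$ with $\sigma^{mN}v\in K_N$ for all $m$. I will also need the case in which the ray never enters the low-slack regime; there, Lemma \ref{lem: slack grows} forces infinite slack, a contradiction. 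Note that the integers $M_i$ attached to $L$-segments then become unbounded along the last segment as $T\to\infty$.

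Second, I will write $h(r(T))$ via the Notation Lemma as in the proofs of the Bounded Height Theorems, with $q_i$ the partition points and the final point $r(T)$. The normalization $\|\Psi(r(T))\|\asymp e^{\s[o,r(T)]}$ lies between $1$ and $e^{\s(r)}$, so it is bounded above and below. The finitely many early terms, meaning the $H$-terms and the finished $L$-terms, contribute $O(1)$. For the last, still growing, $L$-segment I will apply the lower bound of Lemma \ref{lem: main low slack cor}. That bound requires the evaluation vector to stay away from $\mathcal U$ and uses $\|\beta\|$ bounded along leaves; it gives a contribution $\succeq \varrho M(T)$, where $M(T)$ is the combinatorial length of the tail. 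Since $M(T)\to\infty$, I conclude that $h(r(T))\to\infty$.

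The main obstacle is the general lamination case. There Lemma \ref{lem: zig zag} gives only the weak lower bound $\varrho N$ rather than a bound linear in $M$, because the factors $e^{-\s(\delta_m)}$ might decay. I will exploit finite slack: along the tail the partial slacks satisfy $\s(\delta_m)\le \s(r)<\infty$, so in the proof of Lemma \ref{lem: zig zag} the terms $e^{-\s(\delta_m)}$ are bounded below by $e^{-\s(r)}$ instead of forming a geometric series. This yields a lower bound $\succeq \varrho' N M e^{-\s(r)}/C_0$, which again grows linearly in $M$. To handle intermediate times $T$ that fall inside a combinatorial block, I will absorb the partial block into the $O(1)$ term using the bounded-integral argument from Lemma \ref{lem: low slack general case}.
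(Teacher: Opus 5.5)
Your proposal is correct and follows essentially the same route as the paper. Finite slack forces the tail of the ray into a single infinite low-slack regime (the paper phrases this as the $\omega$-limit set of $\pi\circ r$ lying in $\uu\lambda$, via Lemma \ref{lem: slack grows}), and then the Partition/Notation-Lemma decomposition from the Bounded Height Theorems, together with the low-slack lemmas, gives unbounded height. Your argument is more explicit than the paper's sketch: the observation that $e^{-\s(\delta_m)}\ge e^{-\s(r)}$ upgrades the weak bound of Lemma \ref{lem: zig zag} to one linear in $M$ for fixed $N$, which is a clean way to make precise the paper's phrase ``longer and longer close to individual leaves''.
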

   
\begin{proof}
    Observe that $\s(r)<\infty$ implies that the $\omega$-limit set of the ray $\pi\circ r$ in $M$ is contained in $\uu\lambda$.  Indeed, Lemma \ref{lem: slack grows} implies that an accumulation point of $\pi\circ r$ away from $\uu\lambda$ would produce infinitely many subsegments of $r$ that contribute a definite amount of slack.
    So $r$ must (eventually) spend longer and longer close to individual leaves of $\lambda$.
    Thus the result follows using the techniques of Lemma \ref{lem: low slack general case} and Theorem \ref{thm: heights bounded below general} to establish a lower bound on $h(r(t))$ as $t$ tends to infinity. 
\end{proof}

\section{Proximal limit set is orbital limit set}\label{sec: prox is orb}

Given the lower bounds on heights proven in \S \ref{sec: heights bounded below general} and \S \ref{sec: heights bounded multicurve} when the cocycle $\alpha$ is in the positive cone $\mathscr P_{\rho\chi}$, we turn our attention to understanding the behavior of the limit sets of $\eta$ in $\partial \mathscr C\Omega$. 

The hypotheses of this section are the union of the hypotheses \ref{hyp A} from \S\ref{sec: heights bounded multicurve} and hypotheses \ref{hyp B} from \S\S\ref{sec: positive cone lamination}--\ref{sec: heights bounded below general}.  That is,
\begin{itemize}
    \item $\rho: \Gamma\to \SL(d+1, \R)$ is injective and divides a properly convex domain $\Omega\subset \RP^d$, where $d\ge 2$.
    \item $\chi = e^{-\varphi}: \Gamma \to \R_{>0}$ has  $\|\varphi\|_\rho=1$  
    \item We choose a $\varphi$-equivariant tight map $\tau: \Omega \to \R$ provided by Theorem \ref{thm: co-stable norm and optimal best maps} with corresponding tension locus $\uu\lambda = \Tension(\tau)/\rho$, which is an oriented chain recurrent lamination.
    \item If $d\ge 3$, we assume that $\uu\lambda$ is a multicurve in $M=\Omega/\rho$; see also Remark \ref{rmk: why not higher dimensions}.
\end{itemize}

For $\alpha \in \Omega^1(E_{\rho\chi}^*)$, we let $\eta$ be the corresponding coaffine representation with linear part $\rho\chi$ and with cocycle corresponding to $\alpha$.

\begin{definition}
    A coaffine representation $\eta = \eta(\rho,\chi, Z)$ is \textbf{exceptional} if the cocycle $\alpha$ defining $Z$ is in the boundary of the positive or negative cones, i.e.\
    \[
    [\alpha]\in \pm \partial\mathscr P_{\rho \chi} \subset H^1(\pi,(\R^{d+1})^*_{\rho\chi}).
    \]
    Otherwise, $\eta$ is \textbf{unexceptional}.
\end{definition}

We pick up from \S\ref{section: geometry of coaffine domain} the discussion of structure and relationship between proximal and orbital limit sets for unexceptional coaffine representations in the closure of Anosov (Theorem \ref{thm: limit sets coincide}).  We also prove Proposition \ref{prop: found a ray}, which finds a vertical ray or line in such limit sets and Corollary \ref{cor: epigraph}, which exhibits limit sets as epi- (or hypo-) graph of a lower (or upper) semi-continuous height function.

The behavior of limit sets for exceptional representations seems somewhat complicated, and we only address a few special cases of interest rather than attempting to give a full account of their behavior.
\subsection{Independence of the orbital limit set}

Recall from \S\ref{section: geometry of coaffine domain} that the orbital limit set $\Lambda_{orb}^o$  of a coaffine representation $\eta$ preserving the cone $\mathscr C \Omega$ is the set of accumulation points of $\eta(\Gamma).o$ in $\partial^\infty \mathscr C \Omega$, where $o \in \mathscr C \Omega$.   We showed the following inclusions for $o \in C_{min}$, the minimal convex $\eta$-invariant subset of $\mathscr C \Omega$:
\[ \Lambda_{prox} \subset \Lambda_{orb}^o \subset \partial^\infty C_{min}.\]

Our first result states that for unexceptional coaffine representations, the orbital limit set does not actually depend on the choice of basepoint $o$.  

\begin{proposition}
\label{prop: positive orbital sets coincide}
    Suppose that $\alpha \in \mathscr P$ is a cocycle defining $\eta$, a representation on the boundary of Anosov. Then for $o$ and $p$ in a common vertical line of $\mathscr C \Omega$, the orbital limit sets of $o$ and $p$ coincide:
    \[\Lambda_{orb}^o = \Lambda_{orb}^p.\]
    The set $\hull \Lambda_{orb}^p = C_{min}$ for any $p\in \mathscr C \Omega$. 
\end{proposition}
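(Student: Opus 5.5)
The approach follows the outline in \S\ref{sec: ingredients proof}. Write $Q$ for the projection to $\Omega$, and fix $o,p$ on a common vertical line. By Lemma \ref{lemma: C_min is a nontrivial fibration}, every vertical line meets $C_{min}$. So it suffices to prove $\Lambda_{orb}^o=\Lambda_{orb}^p$ when $p\in C_{min}$ and $o$ lies on the vertical line through $p$.

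First I would handle two points $p,p'$ both in $C_{min}$, using only convexity. Suppose $\eta(\gamma_n).p\to x\in\partial^\infty\sC\Omega$. I want to show that $\eta(\gamma_n).p'$ converges to a point on the same vertical line $Q^{-1}(Q(x))$, and that, after comparing heights, the limit is actually $x$. Once equality of orbital limit sets is known for points of $C_{min}$, the final claim follows: Lemma \ref{lemma: minimal domain} gives $\hull\Lambda_{orb}^q=C_{min}$ for $q\in C_{min}$, and this then transfers to every $p\in\sC\Omega$.

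The real work is the vertical comparison. Write the two points in the normalization of \S\ref{subsec: height} as $[v_0+H_1e_{d+2}]$ and $[v_0+H_2e_{d+2}]$ over a common base point $q_0=Q(o)\in\Omega$. Applying $\eta(\gamma)$ gives
\[
\eta(\gamma)\bigl(v_0+H e_{d+2}\bigr)=\rho\chi(\gamma)v_0+\bigl(Z(\gamma)v_0+H\bigr)e_{d+2}.
\]
Hence the two images lie on a common vertical line, and their height difference is $(H_1-H_2)/\|\rho\chi(\gamma)v_0\|$. By Lemma \ref{lemma: basic slack estimates}, this is $\asymp(H_1-H_2)e^{-\s[q_0,\gamma.q_0]}$. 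Now pass to a divergent sequence $\gamma_n$ with $\eta(\gamma_n).o\to x$ and split into two cases.

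\textbf{Case (a):} $\s[q_0,\gamma_n.q_0]\to\infty$ along a subsequence. Then the height difference tends to $0$ while both points converge horizontally to the same point of $\partial^\infty\Omega$, so both orbits accumulate at the same point of $\partial^\infty\sC\Omega$ (when the heights stay finite), or both tend to $[e_{d+2}]$ (when the heights diverge).

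\textbf{Case (b):} the slack $\s[q_0,\gamma_n.q_0]$ stays bounded. Then I must show that both heights go to $+\infty$, i.e.\ both orbits converge to $[e_{d+2}]$ and contribute nothing to $\partial^\infty\sC\Omega$. Here I would use the lower bound in \eqref{eqn: upper and lower height bound}, respectively \eqref{eqn: upper and lower height bound 2}, together with Corollary \ref{cor: finite slack implies infinite height}. Bounded slack along the segments $[q_0,\gamma_n.q_0]$ forces those segments, apart from a bounded initial piece, to be long $L$-segments. The partition lemmas then give a sum $\sum M_ie^{-\s}$ that blows up, so $h(\gamma_n.q_0)\to+\infty$. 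Adding the bounded term $H e^{-\s}$ cannot change this.

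The main obstacle is case (b). It needs a uniform statement: a divergent sequence of segments $[q_0,\gamma_n.q_0]$ with bounded slack has height tending to $+\infty$. The height bounds are stated for a single ray, and the segments here vary with $n$. I would pass to a limiting ray by compactness and use lower semicontinuity of slack, which should give $\s<\infty$ for the limit. Failing that, I would use directly that the number of $H$-pieces is bounded while the total length goes to infinity. With that, some $L$-block has $M_i\to\infty$ and bounded accumulated slack before it, which feeds the lower bound in the odd-term estimate. The negative cone is symmetric, and unexceptional classes outside both cones are not needed for this proposition.
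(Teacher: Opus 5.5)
Your proposal is correct and follows the paper's proof. It uses the same computation showing that the height difference on a common vertical line is $\asymp (H(o)-H(p))e^{-\s[o,\gamma.o]}$ by Lemma \ref{lemma: basic slack estimates}, the same split into unbounded versus bounded slack with the bounded case handled through a limiting ray of finite slack and Corollary \ref{cor: finite slack implies infinite height}, and Lemmas \ref{lemma: C_min is a nontrivial fibration} and \ref{lemma: minimal domain} for the hull statement; your preliminary convexity step comparing two points of $C_{min}$ is not needed.

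The uniformity issue you raise in case (b) is genuine, and the paper handles it only by passing to the limiting ray. Your direct count fixes it: there are boundedly many $H$-pieces, so some $L$-block has $M_i\to\infty$ with bounded slack before it. Under hypotheses \ref{hyp B}, however, the stated lower bound $\varrho N$ in Lemma \ref{lem: zig zag} does not grow with $M$. You must therefore rerun its proof using $\s(\delta_m)\le S$ to get a lower bound $\succ NMe^{-S}$.
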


We outline the proof for the first statement regarding equality of orbital limit sets for points on the same vertical line; the second statement is a direct consequence of this and Lemma \ref{lemma: minimal domain}.  The idea is that points on the same line are contracted by group elements with high slack, i.e., for which $(\sigma_1^\rho - \varphi)$ is large.  On the other hand, Theorems \ref{thm: heights bounded below scc} and Theorem \ref{thm: heights bounded below general} show that for group elements with bounded slack, the line containing $o$ and $p$ is not contracted, but their heights tend to $\pm \infty$.  In either case, this gives the desired result that the set of accumulation points in $\partial \mathscr C \Omega$ coincide.

\begin{proof}
    By definition, the points $p$ and $o$ have the same image under $Q$, call it $[v] \in \Omega$ and we may assume $v \in \Omega_0$. We write 
    \[ o= 
    \begin{bmatrix}
        v\\
        H(o)
    \end{bmatrix}
    \textrm{ and }
    p = 
    \begin{bmatrix}
        v\\
        H(p)
    \end{bmatrix}\]
    where $H(o)$ and $H(p)$ are the heights of $o$ and $p$ from subsection \ref{subsec: height}.

    By direct computation, we obtain for any $\gamma\in\Gamma$ 
    \[\eta(\gamma).o = 
    \begin{bmatrix}
        \rho\chi(\gamma) & 0\\
        Z(\gamma) & 1
    \end{bmatrix}
    \begin{bmatrix}
        v\\
        H(o)
    \end{bmatrix}
    =
    \begin{bmatrix}
        \rho\chi(\gamma)v\\
        Z(\gamma)v+H(o)
    \end{bmatrix}
    \]    
    and similarly for $p$. By definition of the height $H$, then
    \[
    H(\eta(\gamma).o) = \frac{Z(\gamma)v+H(o)}{\|\rho\chi(\gamma)v\|}
    \]
    and similarly for $p$. The difference
    \[H(\eta(\gamma).o)-H(\eta(\gamma).p) = \frac{H(o)-H(p)}{\|\rho\chi(\gamma).v\|} \asymp \frac{H(o)-H(p)}{e^{\s [o,\gamma.o]}}\]
    where the last estimate is from Lemma \ref{lemma: basic slack estimates}. Thus the effect of a group element on the \emph{differences} of heights on a given vertical line is independent of the cocycle defining $\eta$, and is determined, up to fixed multiplicative error, by the slack of the group element.

    It is sufficient to consider a sequence $\gamma_n\in\Gamma$ satisfying that the limit $\gamma_n.o$ is well-defined in $\RP^{d+1}$. Up to taking subsequences, either $\s [o,\gamma_n.o]$ remains bounded for all $n$ or it does not. In the latter case, the height difference 
    \[H(\eta(\gamma_n).o)-H(\eta(\gamma_n).p)\]
    tends to zero with $n$ by the above computation. The sequences $\eta(\gamma_n).o$ and $\eta(\gamma_n).p$ have the same limit in this case.

    In the case that $\s [o,\gamma_n.o]$ remains bounded, we have that the segments $[o,\gamma_n.o]$ subconverge to a ray of finite total slack. Invoking Corollary \ref{cor: finite slack implies infinite height} completes the proof in this case; both sequences of heights tend to infinity, and so both sequences of points tend to the exceptional point $[e_{d+2}]$.

    The final conclusions follow from Lemmas \ref{lemma: minimal domain} and \ref{lemma: C_min is a nontrivial fibration}.
\end{proof}

We make the following simple observation about the \textit{full} orbital limit set for exceptional representations in the multicurve case. 

\begin{remark}\label{rmk: exceptional and full orbital}
    If $\underline{\lambda}$ is a multicurve, and $\alpha$ is in $\partial \mathscr P_{\rho\chi}\cap \partial (-\mathscr P_{\rho\chi})$, then the \textbf{full orbital limit set} is as large as possible, i.e.,
    \[ \bigcup_{o\in \mathscr C \Omega} \Lambda^o_{orb} = \partial \mathscr C \Omega.\]
    The reason for this is that there is a line in $\lambda$ with stabilizer $\beta$ for which there is a splitting of $\R^{d+2}$ into an attracting $2$-plane and a repelling $d$-plane. The image of the $2$-plane is a vertical line in $\partial \mathscr C \Omega$ that is fixed pointwise by $\eta(\beta)$.  It is not difficult to see that this vertical line is in the full orbital limit set, and the claim follows.
\end{remark}

\subsection{Structure of $C_{min}$ revisited}

Recall our basic structural Lemma \ref{lem: rays or lines} for $\partial^\infty C_{min}$ that there are three mutually exclusive possibilities:
\begin{itemize}
    \item $\partial^\infty C_{min}$ is a union of rays, all pointing in the same direction;
    \item $\partial^\infty C_{min} = \partial^\infty\mathscr C \Omega$; or
    \item $\partial^\infty C_{min}$ is a union of closed segments, some of which may be reduced to points.
\end{itemize}

\begin{remark}\label{rmk: fibers empty}
    The preimage in $\partial^\infty C_{min}$ of a point $z\in\partial \Omega$ may be empty. In fact for those representations in the positive cone $\mathscr P_{\rho\chi}$, the forward endpoints of leaves in the lamination $\lambda$, which is a dense set, have empty pre-image (Corollary \ref{cor: finite slack implies infinite height}).
\end{remark}

\begin{remark}\label{rmk: fibers points and segments}
    There are examples of exceptional coaffine representations such that some of the $Q$-fibers in $\partial^\infty C_{min}$  are segments with non-empty interior and some are singletons.  Indeed, consider $\rho$ and $\chi$ where $\uu\lambda$ is a multicurve and  $\alpha \in \partial \mathscr P_{\rho\chi}\cap \partial (-\mathscr P_{\rho\chi})$ as in Remark \ref{rmk: exceptional and full orbital}.   Assume also that $\alpha$ is not a coboundary.  Then $\Lambda_{prox}$ is not contained in a copy of $\RP^d$ (see, e.g., Lemma \ref{lem: compute height}), so the fiber over a forward endpoint of a leaf of $\lambda$ contains an interval with open interior. On the other hand, for dynamical reasons, if $\eta(\gamma)$ is proximal, then the fiber over $\gamma_+ \in \partial^\infty \Omega$ is a point.
\end{remark}

The following lemma gives a condition under which the three invariant closed subsets of the boundary coincide.

\begin{lemma}\label{lem: sufficient to find a ray}
    Suppose, for some $x \in \partial \Omega$, that $Q\inverse(x)\cap \Lambda_{prox}$ is a positive vertical ray.
    Then, for every $y\in \partial \Omega$, the fiber
    \[
        Q^{-1}(y)\cap\partial^\infty C_{min}
    \]
    is empty or a positive ray contained in $\Lambda_{prox}$.
    
    Analogous statements hold if the proximal limit set contains a negative vertical ray or a line.  
\end{lemma}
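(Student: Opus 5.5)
We combine Lemma \ref{lem: rays or lines} with Lemma \ref{lem: proximal rays, proximal endpoints}, and use minimality of the $\eta$-action on $\Lambda_{prox}$ to pass from ``endpoint in $\Lambda_{prox}$'' to ``whole ray in $\Lambda_{prox}$''.

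First we fix the shape of the fibers. Since $\Lambda_{prox}\subset\partial^\infty C_{min}$, the fiber of $\partial^\infty C_{min}$ over $x$ contains the positive ray $Q\inverse(x)\cap\Lambda_{prox}$. This fiber is closed and convex, so it is either a positive ray or the full vertical line. In the line case, Lemma \ref{lem: rays or lines} says every nonempty fiber is a full line. That contradicts $Q\inverse(x)\cap\Lambda_{prox}$ being only a ray, provided we also know that line fibers force $\Lambda_{prox}$ to contain lines. For that we argue as in the ray case below, or more simply note that every point of a line fiber is a limit of points on the segments from the hull construction. To keep things clean, we first rule out the line case via Lemma \ref{lem: rays or lines} together with the symmetric argument. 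Hence every nonempty fiber $Q\inverse(y)\cap\partial^\infty C_{min}$ is a closed positive ray $[p_y,[e_{d+2}])$.

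Next we show the ray over $y$ lies in $\Lambda_{prox}$. By Lemma \ref{lem: proximal rays, proximal endpoints}, the endpoint $p_y$ lies in $\Lambda_{prox}$. The key claim is that the set
\[S \defeq \{ z\in\Lambda_{prox} : [z,[e_{d+2}])\subset\Lambda_{prox}\}\]
is nonempty, closed and $\eta$-invariant.
\begin{itemize}
\item It is nonempty because the bottom endpoint of the ray over $x$ lies in $S$.
\item It is $\eta$-invariant because $\eta(\gamma)$ maps positive vertical rays to positive vertical rays: it fixes $[e_{d+2}]$, preserves the vertical foliation, and preserves its orientation, since $\cag$ is the orientation-preserving subgroup.
\item It is closed because a limit of closed rays $[z_n,[e_{d+2}])$ with $z_n\to z$ in $\partial^\infty\sC\Omega$ converges to $[z,[e_{d+2}])$, and $\Lambda_{prox}$ is closed.
\end{itemize}

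Since $\Lambda_{prox}$ is the minimal closed $\eta$-invariant subset of $\cas$ (the remark following the definition), we get $S=\Lambda_{prox}$. In particular $p_y\in S$, so $[p_y,[e_{d+2}])\subset\Lambda_{prox}$. That is exactly the conclusion.

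The negative ray case is symmetric. For lines we run the same argument with $S$ the set of points whose entire vertical line lies in $\Lambda_{prox}$. Then every point of $\Lambda_{prox}$ has its full line in $\Lambda_{prox}$, so every fiber of $\partial^\infty C_{min}=\partial^\infty\hull\Lambda_{prox}$ is a line contained in $\Lambda_{prox}$.

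The main obstacle is the minimality of $\Lambda_{prox}$ for the action on $\cas$. It is asserted in a remark, but it deserves justification. We would prove it as follows. Any closed invariant nonempty $F\subset\partial^\infty\sC\Omega$ contains a point $f$ off $[e_{d+2}]$. Since $f$ lies off the repelling hyperplane of some proximal $\eta(\gamma)$ (whose repelling hyperplane misses $\overline{\sC\Omega}$ away from $[e_{d+2}]$, as in Lemma \ref{lem: proximal contained in orbital}), we get $\eta(\gamma)^n f\to\eta(\gamma)^+$. Hence every attracting fixed point lies in $F$, and so $\Lambda_{prox}\subset F$. A secondary delicate point is the ray-versus-line exclusion in the first step, which needs care.
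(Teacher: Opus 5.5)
The core of your argument matches the paper's. Every point of $\Lambda_{prox}$ carries its whole positive ray, by dynamics plus closedness, and then Lemma \ref{lem: proximal rays, proximal endpoints} puts the bottom endpoint of each fiber of $\partial^\infty C_{min}$ into $\Lambda_{prox}$. Your set $S$ together with minimality of $\Lambda_{prox}$ replaces the paper's explicit iteration of $R_x$ under proximal elements followed by taking closures.

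There is one local error in your minimality argument. The repelling hyperplane of a proximal $\eta(\gamma)$ contains $e_{d+2}$ and lies over the tangent hyperplane to $\partial\Omega$ at $\gamma^-$. So it contains the whole vertical line $Q\inverse(\gamma^-)\subset\partial^\infty\sC\Omega$, not just $[e_{d+2}]$. Also, as written, a single good $\gamma$ only puts one attracting point into $F$. The fix is to choose, for each proximal $\gamma$, some $f\in F$ with $Q(f)\neq\gamma^-$. This exists because $Q(F)$ is a nonempty $\rho(\Gamma)$-invariant subset of $\partial\Omega$; it is the paper's ``taking a translate of $R_x$.''

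The genuine gap is your first step. You never show that the fibers of $\partial^\infty C_{min}$ are rays rather than full lines, and you need this twice: for the shape of the conclusion, and as the hypothesis of Lemma \ref{lem: proximal rays, proximal endpoints}. Your suggested justifications do not supply it. Knowing that points of a line fiber are limits of hull segments only puts them in $\overline{C_{min}}$, not in $\Lambda_{prox}$. And ``line fibers force $\Lambda_{prox}$ to contain lines'' is exactly the claim at stake.

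The danger is real. If $\Lambda_{prox}$ contained points $z_n$ with $H(z_n)\to-\infty$, the hull segments $[p_x,z_n]$ would converge to the downward ray from $p_x$. The fiber of $\partial^\infty C_{min}$ over $x$ would then be a line, while that of $\Lambda_{prox}$ is only a ray. The paper's proof simply asserts $Q\inverse(x)\cap\Lambda_{prox}=Q\inverse(x)\cap\partial^\infty C_{min}$, so it rests on the same unproved input.

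You can close this with your own tools if you run the $S$-argument first, since it does not use step one:
\begin{enumerate}
\item[(i)] Show $S=\Lambda_{prox}$.
\item[(ii)] The set of $z\in\Lambda_{prox}$ whose entire vertical line lies in $\Lambda_{prox}$ is closed and $\eta$-invariant. If it were nonempty it would equal $\Lambda_{prox}$ and contain $p_x$, contradicting the hypothesis at $x$.
\item[(iii)] Hence $H$ is bounded below on $\Lambda_{prox}$. Suppose $z_n\in\Lambda_{prox}$ with $H(z_n)\to-\infty$ and $Q(z_n)\to y$ (by compactness of $\partial\Omega$). By (i), the point of $Q\inverse(Q(z_n))$ at any prescribed height $t$ lies in $\Lambda_{prox}$ once $n$ is large. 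Closedness then puts all of $Q\inverse(y)$ into $\Lambda_{prox}$, contradicting (ii).
\item[(iv)] Choose $\beta\in(\R^{d+1})^*$ positive on $\overline{\Omega_0}$. For suitable $c>0$, the projectivization of the convex cone $\{(w,s): w\in\R_{>0}\overline{\Omega_0},\ s+c\,\beta(w)\ge 0\}$ contains $\Lambda_{prox}$. It therefore contains $C_{min}$ and its closure in $\cas$, so heights on $\partial^\infty C_{min}$ are bounded below and no fiber is a line.
\end{enumerate}
Now Lemma \ref{lem: rays or lines} makes every nonempty fiber a positive ray, and your final step applies.
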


\begin{proof}
    We prove the claim in the positive case, the other two follow similarly.
    
    Let
    \[
    R_x=[p_x,[e_{d+2}])
    \subset\Lambda_{prox}
    \]
    be the given maximal positive vertical ray over
    $x\in\partial\Omega$.  Because $Q\inverse(x) \cap \Lambda_{prox} = Q\inverse(x) \cap \partial^\infty C_{min}$, Lemma \ref{lem: rays or lines} implies that every non-empty  $Q$-fiber of $\partial C_{min}$ is a ray.

    Consider any proximal element $\gamma$. By taking a translate of $R_x$ if necessary, we may assume that 
    $R_x$
    is not contained in the repelling plane of $\eta(\gamma)$. Then iterates of $R_x$ under positive powers of $\gamma$ accumulate to a positive ray over $\eta(\gamma)^+$ contained in $\Lambda_{prox}$.

    Consider then any proximal point 
    $z\in \partial \mathscr C \Omega.$
    By definition, there is a sequence of proximal elements 
    $\gamma_n$
    whose attracting fixed points accumulate to $z$. The rays over these fixed points accumulate to a ray over $z$, which is necessarily in the proximal limit set. 

    Applying Lemma \ref{lem: proximal rays, proximal endpoints}, to any other non-empty fiber shows that it is contained in $\Lambda_{prox}$.
\end{proof}

\subsection{Finding a vertical ray in the proximal set}
The main result of this section is the following, with hypotheses as in the introductory text to this section.  
\begin{theorem}\label{thm: limit sets coincide}
    For all unexceptional coaffine deformations $\eta$ of $\rho\chi\oplus 1$, we have 
    \[\Lambda_{prox} = \Lambda_{orb} = \partial^\infty C_{min},\] 
    and the orbital limit set does not depend on the choice of base-point $o \in \mathscr C\Omega$. 
\end{theorem}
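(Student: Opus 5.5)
The plan is to reduce everything to finding a single vertical ray or line inside $\Lambda_{prox}$ and then propagate it with Lemma \ref{lem: sufficient to find a ray}. The steps, in order, are: (1) establish that the orbital limit set is independent of the basepoint; (2) show that $\Lambda_{prox}$ contains a vertical ray or line over some point of $\partial\Omega$; (3) conclude the chain of equalities. I treat the three unexceptional regimes separately: $[\alpha]\in\mathscr P_{\rho\chi}$, $[\alpha]\in-\mathscr P_{\rho\chi}$ (symmetric, via $\alpha\mapsto-\alpha$ and the orientation flip of the vertical lines), and $[\alpha]\notin\overline{\mathscr P_{\rho\chi}\cup-\mathscr P_{\rho\chi}}$.

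\emph{Step 1 (basepoint independence).} In the positive cone, Proposition \ref{prop: positive orbital sets coincide} gives $\Lambda_{orb}^o=\Lambda_{orb}^p$ for points on a common vertical line. Combining this with Lemma \ref{lemma: C_min is a nontrivial fibration}, every vertical line meets $C_{min}$. For $o\in C_{min}$, Lemma \ref{lemma: minimal domain} gives $\Lambda_{orb}^o\subset\partial^\infty C_{min}$, and any two points of $C_{min}$ have the same orbital limit set by convexity. Hence $\Lambda_{orb}^o$ is independent of $o\in\sC\Omega$, and $\Lambda_{prox}\subset\Lambda_{orb}\subset\partial^\infty C_{min}$. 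The same height-difference computation (the $e^{-\s}$ contraction) works outside the closed cones, with one change: when the slack stays bounded, the heights of both orbits go to $\pm\infty$ together, or both stay comparable. So Step 1 reduces to the high-slack case plus the convergence of bounded-slack segments to a ray of finite slack.

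\emph{Step 2 (finding a vertical ray or line).} This is the main obstacle. I would first compute, as in Lemma \ref{lem: compute height}, the height of the attracting fixed point $\eta(g)^+$ of an $\eta$-proximal $g$. With $p$ on the axis of $g$, this height is $Z(g)(g^+_0)/(1-e^{-(\lambda_1^\rho-\varphi)(g)})$ up to normalization, i.e., essentially $A(p,gp)(g^+_0)$ divided by one minus the top eigenvalue of $\rho\chi(g)$.

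Fix $\gamma$ not carried by $\uu\lambda$, so that $\gamma$ has positive slack and strict contraction. Let $\beta_m$ follow a leaf of $\lambda$ (or a closed leaf $\delta_i$) for $m$ combinatorial steps, as in Lemmas \ref{lem: main low slack cor}, \ref{lem: low slack minimal} and \ref{lem: zig zag}. Then consider the family $g_{n,m}=\gamma^n\beta_m$. By the Notation Lemma \ref{lemma: notation lemma}, the cocycle splits as a contribution of size $\asymp e^{-n\,\s(\gamma)}$ times the low-slack linear term $\asymp m\varrho$ coming from $\beta_m$, plus a bounded contribution from $\gamma^n$ that converges as $n\to\infty$. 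The eigenline $\eta(g_{n,m})^+$ converges to the vertical line over $\gamma^+$. Its height, up to a bounded additive constant $c$, is $c+\kappa\, m e^{-an}$, where $a>0$ and the sign of $\kappa$ is that of the positivity inequality. Since $\{me^{-an}\}$ is dense in $\R_{\ge0}$, the closure of these fixed points contains the positive ray over $\gamma^+$ starting at height $c$. That ray lies in $\Lambda_{prox}$.

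Outside the closed cones, there are minimal components, or measures in Definitions \ref{def: positive cone and cocylces} and \ref{def: positive for minimal}, with both signs. Running the construction with $\beta_m$ from a positive component and with $\beta'_m$ from a negative one produces both a positive and a negative ray over $\gamma^+$, hence a whole line.

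The delicate points are uniformity of the error terms in $n$ and $m$, which Proposition \ref{prop: projectors and slack} supplies, and, under hypotheses \ref{hyp B}, the fact that the low-slack growth is only linear with a sign controlled by $\mathcal L$. For the latter I would use Corollary \ref{cor: limit points of averages} at a generic point to get a definite nonzero slope.

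\emph{Step 3 (conclusion).} Lemma \ref{lem: sufficient to find a ray} and its analogue for lines then show that every nonempty fiber of $\partial^\infty C_{min}$ is a ray, or a line, contained in $\Lambda_{prox}$. Hence $\partial^\infty C_{min}\subset\Lambda_{prox}$, and the chain of inclusions from Step 1 collapses to equalities.
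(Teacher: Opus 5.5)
Your overall route is the paper's. The ray or line over $\gamma^+$ comes from the attracting points of $\eta(\gamma^n\beta_m)$ with $me^{-n\s(\gamma)}\to t$ (Proposition~\ref{prop: found a ray}). Lemma~\ref{lem: sufficient to find a ray} spreads it to every fiber. The sandwich $\Lambda_{prox}\subset\Lambda^o_{orb}\subset\partial^\infty C_{min}$ then collapses for $o\in C_{min}$. Finally, Proposition~\ref{prop: positive orbital sets coincide} together with Lemma~\ref{lemma: C_min is a nontrivial fibration} moves the basepoint.

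Putting basepoint independence first, however, makes you lean on two things that do not hold up. First, the claim that any two points of $C_{min}$ have the same orbital limit set ``by convexity'' is not justified. It is also unnecessary: after Step~3 the sandwich gives $\Lambda^o_{orb}=\partial^\infty C_{min}$ for each $o\in C_{min}$ separately.

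Second, and more seriously, your Step~1 argument outside the closed cones fails. There, bounded slack does not force the heights to diverge. For instance, under hypotheses~A an unexceptional $\eta$ outside $\overline{\sP_{\rho\chi}\cup-\sP_{\rho\chi}}$ may have a curve $\delta_i$ with $\varrho_i=Z(b_i)((\delta_i^+)_0)=0$; then $\eta(b_i)$ fixes the vertical line over $\delta_i^+$ pointwise. Take $\gamma_n=b_i^n$. The slack stays bounded, but the height difference $\asymp (H(o)-H(p))e^{-\s[o,\gamma_n.o]}$ stays away from $0$. So $\eta(\gamma_n).o$ and $\eta(\gamma_n).p$ converge to distinct points of that line, and your computation yields nothing.

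The fix is not to argue this way at all. In this regime Step~2 gives a full vertical line in $\Lambda_{prox}$. The set of $x\in\partial\Omega$ over which $\Lambda_{prox}$ contains the whole vertical line is closed (as $\Lambda_{prox}$ is), $\rho(\Gamma)$-invariant and non-empty, hence all of $\partial\Omega$ by minimality. So $\Lambda_{prox}=\partial^\infty\sC\Omega$, and Lemma~\ref{lem: proximal contained in orbital} squeezes every $\Lambda^p_{orb}$ to equal it.

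There is also a missing ingredient in Step~2 under hypotheses~B. Your height formula $c+\kappa\, me^{-an}$ treats the slope as a constant, but it is not one. The normalized matrices $e^{-n\s(\gamma)}\eta(\gamma^n\beta^m)$ converge to a rank-one matrix built from $P_\gamma\Pi(y,x)$ and $tL_\nu(y)$, where $y$ is a limit of $\sigma^m x$. Hence the slope $L_\nu(y)(\gamma^+_0)/c(y)$ depends on $y$, and Corollary~\ref{cor: limit points of averages} only identifies it along subsequences with $\sigma^{m_j}x\to y$. Without controlling $y$, you only learn that for each $t$ some limit point lies at a height in an interval $[c+ts_-,\,c+ts_+]$, which does not produce a ray.

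You must choose $m_j$ so that simultaneously $m_je^{-n_j\s(\gamma)}\to t$ and $\sigma^{m_j}x\to y$ for one fixed $y$. The paper does this in Claim~\ref{clm: bounded return times}. It uses that the minimal homeomorphism $\sigma$ of the compact set $X$ has uniformly bounded return times $N_j$ to neighbourhoods $U_j$ shrinking to $y$. It then takes $n_j$ so large that $N_je^{-n_j\s(\gamma)}\to 0$ and picks $m_j$ in a window of length $N_j$ after $\lfloor te^{n_j\s(\gamma)}\rfloor$.
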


The new ingredient for the proof of the Theorem is the following. 

\begin{proposition}\label{prop: found a ray}
    There is a vertical ray or line in $\Lambda_{prox}$  over every proximal limit point for every unexceptional coaffine representation.  
\end{proposition}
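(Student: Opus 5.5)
By Lemma \ref{lem: sufficient to find a ray}, it suffices to exhibit a single vertical ray (or line) contained in $\Lambda_{prox}$ over some point of $\partial\Omega$. The lemma then propagates it to every non-empty fiber, and in particular to every proximal limit point. I will treat the case $\eta\in\sP_{\rho\chi}$ and produce a positive ray; $-\sP_{\rho\chi}$ is symmetric. For $\eta\notin\overline{\sP_{\rho\chi}\cup-\sP_{\rho\chi}}$ the same construction, run along two minimal components of opposite signs, gives both a positive and a negative ray over a common point, hence a line.

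The first step is an explicit formula for the height of an attracting fixed point. For $\eta$-proximal $\gamma$, the point $\eta(\gamma)^+$ projects to $\rho(\gamma)^+$. Writing $\eta(\gamma)$ in block form, the attracting eigenvector is $(v,\, s)$ with $v=\rho(\gamma)^+_0$ and $s\bigl(1-e^{\lambda_1^\rho(\gamma)-\varphi(\gamma)}\bigr)=Z(\gamma)v$. Hence
\[H(\eta(\gamma)^+)=\frac{Z(\gamma)\,\rho(\gamma)^+_0}{e^{\s(\gamma)}-1},\qquad \s(\gamma)\defeq(\lambda_1^\rho-\varphi)(\gamma)>0,\]
where $\s(\gamma)$ is the slack of a fundamental domain of the axis. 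This is the analogue of Lemma \ref{lem: compute height} referred to in \S\ref{sec: ingredients proof}.

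Next, fix $\gamma\in\Gamma$ whose axis is transverse to $\lambda$, so that it has high slack. Fix also a family $\beta_m$ following a leaf of $\lambda$ for $m$ combinatorial steps: $\beta_m=b_i^m$ in the multicurve case, and $\beta_m=b^m(x)$ closed up via Proposition \ref{prop: projectors and slack} in the general case. Consider the elements $\gamma^n\beta_m$.

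By the cocycle relation, $Z(\gamma^n\beta_m)=Z(\gamma^n)\rho\chi(\beta_m)+Z(\beta_m)$. The Low Slack Lemmas \ref{lem: main low slack cor} and \ref{lem: low slack minimal} say that $Z(\beta_m)$ evaluated on vectors away from $\lambda^-$ grows like $\varrho m$, and that $\rho\chi(\beta_m)$ stays bounded. As $n\to\infty$ with $m$ fixed, the attracting point $\rho(\gamma^n\beta_m)^+$ tends to $\gamma^+$. The slack satisfies $\s(\gamma^n\beta_m)\approx n\s(\gamma)+O(1)$, using Lemma \ref{lemma: basic slack estimates} and Lemma \ref{lem: slack continuous finite}. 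The numerator is then roughly $C_\gamma(n)+m\varrho'$, where $C_\gamma(n)$ is the contribution from $Z(\gamma^n)$, which is at most of order $e^{n\s(\gamma)}$ times a bounded factor.

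It follows that the heights are approximately $a+ m\varrho' e^{-n\s(\gamma)}\cdot c$ for a constant $c>0$ and a bounded offset $a$, with the slope $\varrho'$ depending continuously on $m$. By choosing $m\approx t e^{n\s(\gamma)}$, every value in $[a,\infty)$ is approximated, in the spirit of the remark that $\{e^{-n}m\}$ accumulates onto $\R_{\ge0}$. This yields a full positive ray over $\gamma^+$ in $\Lambda_{prox}$.

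The main obstacle is the error control. The increments in height as $m\mapsto m+1$ must be $o(1)$ uniformly, which requires the linear growth of $Z(\beta_m)\rho(\cdot)$ to have bounded increments, not just the asymptotic $\varrho m$. I also need the attracting points of $\gamma^n\beta_m$ to converge to $\gamma^+$ uniformly in $m$ up to $e^{Cn}$. In the multicurve case Lemma \ref{lemma: low slack linear growth (simple)} gives the increment control exactly. In the lamination case I would instead vary $m$ along the $\sigma$-orbit and use the uniform convergence in Proposition \ref{prop: limit points of averages}. The point of unexceptionality is that the linear rate $\varrho$ is strictly nonzero with a definite sign; on $\partial\sP$ it may vanish, and the ray can fail to be produced.
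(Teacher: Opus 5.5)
Your architecture matches the paper's. You compute heights of attracting points of $\eta(\gamma^n\beta_m)$ with $m\approx t e^{n\s(\gamma)}$, obtain a ray over $\gamma^+$, and propagate it with Lemma \ref{lem: sufficient to find a ray}, using an oppositely signed component to get a line. Working with the explicit eigenvector formula instead of the normalized limit of $e^{-n\s(\gamma)}\eta(\gamma^n\beta_m)$ is only cosmetic. Your eigen-equation has a sign slip: it should read $s(e^{\s(\gamma)}-1)=Z(\gamma)v$, which is what your displayed formula actually uses.

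Under hypotheses \ref{hyp A} your argument is sound, and the increment control you worry about is not needed. For each fixed $t>0$, pick $m_n$ with $m_n e^{-n\s(\gamma)}\to t$. Since $\rho\chi(\beta^m)\to P_\beta$ and $Z(\beta^m)/m\to Z(\beta)P_\beta$, the heights converge to $h(\eta(\gamma)^+)+td/c$, and closedness of $\Lambda_{prox}$ does the rest.

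Under hypotheses \ref{hyp B} there is a genuine gap. Both ingredients of the limit depend on where the leaf segment ends. First, $\rho\chi(\beta^m(x))=\Pi(\sigma^m x,x)+O(e^{-am})$, so your constant $c$ is really $c(\sigma^m x)$. Second, $Z(\beta^m)/m$ does not converge; for $x$ generic for an ergodic $\nu$ it only tracks $L_\nu(\sigma^m x)$. So your slope is a function of $\sigma^m x$, not something "depending continuously on $m$", and it can jump as $m\mapsto m+1$. At scale $m\sim te^{n\s(\gamma)}$, consecutive heights can then differ by an amount of order $t$, not $o(1)$.

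Varying $m$ along the $\sigma$-orbit and invoking the uniform approach to the compact set $\mathcal L$ in Proposition \ref{prop: limit points of averages} does not pin down any particular limit point. The missing idea is to control $\sigma^m x$ and $m e^{-n\s(\gamma)}$ simultaneously:
\begin{enumerate}
\item Take $x$ generic for an ergodic $\nu\in\Delta(\sigma)$ and fix a target $y\in X$.
\item By minimality of $\sigma$, obtain uniform bounds $N_j$ on return times to neighborhoods $U_j$ shrinking to $y$.
\item Choose $n_j$ with $N_j e^{-n_j\s(\gamma)}\to 0$.
\item Choose $m_j\in[te^{n_j\s(\gamma)},\,te^{n_j\s(\gamma)}+N_j]$ with $\sigma^{m_j}x\in U_j$.
\end{enumerate}
Then Corollary \ref{cor: limit points of averages} gives $Z(\beta^{m_j})e^{-n_j\s(\gamma)}\to tL_\nu(y)$, and Proposition \ref{prop: projectors and slack} gives $\rho\chi(\beta^{m_j})\to\Pi(y,x)$. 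The heights therefore converge to $h(\eta(\gamma)^+)+t\,L_\nu(y)(\gamma^+_0)/c$, where $c\gamma^+_0=P_\gamma\Pi(y,x)\gamma^+_0$. This sweeps out the ray as $t$ varies.
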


\begin{proof}[Proof of Theorem \ref{thm: limit sets coincide} assuming Proposition \ref{prop: found a ray}]
    Assume that  the Proposition 
    supplies us with a positive ray in $\Lambda_{prox}$.  
    
    We claim that  $ \Lambda_{prox}=\partial^\infty C_{min}$.  Indeed, $\Lambda_{prox}\subset\partial^\infty C_{min}$ holds by definition (Lemma \ref{lemma: minimal domain}), while the other inclusion is a direct consequence of Lemma \ref{lem: sufficient to find a ray}.
    
    For $o\in C_{min}$, we always have
    \[
    \Lambda_{prox}
    \subset\Lambda_{orb}^o
    \subset\partial^\infty C_{min},
    \]
    so all three sets coincide. Finally, applying Proposition  \ref{prop: positive orbital sets coincide} shows the orbital limit sets all coincide, so that the above inclusion holds for any point in $\mathscr C \Omega$, which is what we wanted to show.

    The cases that Proposition \ref{prop: found a ray} supplies us instead with a negative ray or line are treated similarly.    
\end{proof}

The proof of the Proposition \ref{prop: found a ray} occupies the remainder of this section.
The strategy is to compute the heights of attracting fixed points of a sequence of proximal elements of the form $\gamma^n\beta^m$, where $\gamma$ is itself $\eta$-proximal (i.e., its oriented geodesic representative is not contained in $\uu\lambda$) and $\beta^m$ travels along a leaf of $\uu\lambda$ for $m$ combinatorial steps; we make this setup more precise below.  As $n$ gets large, the attracting fixed points become very close to the fiber $q\inverse (\gamma^+)$, where $\gamma^+ \in \partial^\infty \Omega$ is the attractive fixed point for the the $\rho$-action of $\gamma$, while as $m$ gets large, the height of the attractive fixed point goes to $+\infty$, if the cocycle defining $\eta$ is positive on the minimal component of $\uu\lambda$ containing the leaf corresponding to the sequence $(\beta^m)$.  Note that $\beta ^m$ is not necessarily powers of some fixed $\beta $ while $\gamma^n$ is.  

We will see that as $n$ gets large, the height increase contribution coming from $\beta^m$ is exponentially contracted.  Also, however, analyzing the (linear) growth rate of $Z(\beta^m)$ under hypotheses \ref{hyp A} allows us to conclude that $\{\eta(\gamma^n\beta^m)^+: n , m\ge 0\}$ accumulate to an upward facing (positive) ray in $q\inverse(\gamma^+)$, which is then contained in $\Lambda_{prox}$.  Compare this to the fact that $\overline{\{e^{-n}m: n,m>0\}} = \mathbb R_{\ge0}$.  We use a more sophisticated argument under hypotheses \ref{hyp B}, since the bounded gap property does not necessarily hold. 

A similar phenomenon occurs when the cocycle defining $\eta$ is negative on the minimal component of $\uu\lambda$ corresponding to the sequence $(\beta^m)$.

First, we compute the height of the attracting fixed point of a proximal element.
This is not strictly necessary for the proof of Proposition \ref{prop: found a ray}, since we carry out a more sophisticated computation there that implies this formula.  For pedagogical reasons, we include it anyway.
\begin{lemma}\label{lem: compute height}
    Let $Z \in Z^1(\Gamma,(\R^{d+1})_{\rho\chi}^*)$ be the cocycle defining $\eta$, i.e., $Z(\gamma) = A(o,\gamma.o)\rho\chi(\gamma)$.
    Suppose $\gamma \in \Gamma$ is $\eta$-proximal.
    Then the height of the attracting fixed point $\eta(\gamma)^+ \in \partial^\infty \mathscr C\Omega$ admits the formula
    \[h(\eta(\gamma)^+) = \frac{Z(\gamma)(\gamma_0^+)}{e^{\s(\gamma)}-1},\]
    where $\s(\gamma) = \lambda_1^\rho(\gamma)-\varphi(\gamma)$ is the $\tau$-slack of the geodesic representative of $\gamma$ and $(\gamma)_0^+\in \R^{d+1}$ is the unit vector in $\partial\Omega_0$ corresponding to $\gamma^+\in \partial^\infty\Omega$.
\end{lemma}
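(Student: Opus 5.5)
The plan is to find the attracting eigenvector of $\eta(\gamma)$ explicitly and then read off its height. Write $\gamma_0^+\in\partial\Omega_0$ for the unit representative of $\gamma^+$. By Theorem \ref{thm: rhodist}(3), $\rho\chi(\gamma)\gamma_0^+ = e^{\lambda_1^\rho(\gamma)-\varphi(\gamma)}\gamma_0^+ = e^{\s(\gamma)}\gamma_0^+$. Here $\s(\gamma)$ is the slack of a fundamental domain of the axis, namely $d_\rho(p,\gamma p)-(\tau(\gamma p)-\tau(p))=\lambda_1^\rho(\gamma)-\varphi(\gamma)$. Since $\eta(\gamma)$ is proximal and $\gamma$ is not $\eta$-non-proximal, the remark after Theorem \ref{mainthm: ending lamination intro} gives $\lambda_1^\eta(\gamma)=\lambda_1^\rho(\gamma)-\varphi(\gamma)>\max\{0,\lambda_2^\rho(\gamma)-\varphi(\gamma)\}$. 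In particular $\s(\gamma)>0$, so $e^{\s(\gamma)}\neq 1$ and the top eigenvalue of $\eta(\gamma)$ is $e^{\s(\gamma)}$, strictly separated from the eigenvalue $1$ coming from $e_{d+2}$.

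Next, look for an eigenvector of the form $(\gamma_0^+, t)^T$. Applying \eqref{eqn: eta} gives
\[\eta(\gamma)\begin{pmatrix}\gamma_0^+\\ t\end{pmatrix}=\begin{pmatrix}e^{\s(\gamma)}\gamma_0^+\\ Z(\gamma)(\gamma_0^+)+t\end{pmatrix}.\]
For this to be an eigenvector we need $Z(\gamma)(\gamma_0^+)+t=e^{\s(\gamma)}t$, which gives $t=Z(\gamma)(\gamma_0^+)/(e^{\s(\gamma)}-1)$. This vector spans the $e^{\s(\gamma)}$-eigenline. By proximality that eigenline is $\eta(\gamma)^+$, and it also projects under $Q$ to $\gamma^+$.

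Finally, apply the definition of $H$ from \S\ref{subsec: height}. The vector $\gamma_0^+$ lies in $\ker e^{d+2}$ and in $\overline{\Omega_0}$, so the eigenvector is already in the normalized form $v_0+H e_{d+2}$. Hence the height of the point $\eta(\gamma)^+$ is exactly $t$. The quantity written $h(\eta(\gamma)^+)$ in the statement is this $H$-value of a boundary point.

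The only points needing care are the sign convention of $\chi$, so that the eigenvalue is $e^{\lambda_1-\varphi}$ and not its inverse, and checking that the normalization uses the unit vector in the positive component $\overline{\Omega_0}$. Neither is a real obstacle. The main conceptual step is justifying that the top eigenvalue is simple and equal to $e^{\s(\gamma)}$. I would get this from the eigenvalue formula for $\eta$ recorded in the proof of Theorem \ref{thm: failure of anosov along lamination}, together with the assumption that $\gamma$ is $\eta$-proximal.
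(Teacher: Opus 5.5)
Your proposal is correct and is essentially the paper's proof: both apply $\eta(\gamma)$ to $(\gamma_0^+,t)^T$ and solve $Z(\gamma)(\gamma_0^+)+t=e^{\s(\gamma)}t$ for $t$. You also make explicit what the paper only asserts: via $\lambda_2^\eta=\max\{0,\lambda_2^\rho-\varphi\}$ and proximality, $\s(\gamma)>0$, so the denominator is nonzero and $e^{\s(\gamma)}$ is the simple top eigenvalue. One small citation fix: the fact that $\gamma_0^+$ is an eigenvector of $\rho(\gamma)$ with positive eigenvalue $e^{\lambda_1^\rho(\gamma)}$ comes from proximality of $\rho(\gamma)$ and its preservation of the cone over $\Omega$ (as in the proof of Theorem \ref{thm: rhodist}), whereas item (3) itself only gives the translation length.
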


\begin{proof}
    This is a direct computation. For convenience let
    $H= h(\eta(\gamma)^+)$ 
    and
    $v = (\gamma)_0^+$
    for the remainder of the proof.
    The vector
    $(v,H)^t$
    is the attracting fixed point of $\eta(\gamma).$ Since the top eigenvalues of $\chi\rho(\gamma)$ and $\eta(\gamma)$ are both $e^{\s(\gamma)}$, we find that

    \[
    \begin{pmatrix}
        \chi\rho(\gamma) & 0 \\
        Z(\gamma) & 1 
    \end{pmatrix}
    \begin{pmatrix}
        v \\
        H
    \end{pmatrix}
    =
    \begin{pmatrix}
        e^{\s (\gamma)}v \\
        Z(\gamma)v + H
    \end{pmatrix}
    \]
    Following the definition of the height, we have that $H$ satisfies
    \[
    H = \frac{Z(\gamma)v + H}{e^{\s (\gamma)}}
    \]
    and the result follows by solving for $H = h(\eta(\gamma)^+)$.
\end{proof}

\begin{proof}[Proof of Proposition \ref{prop: found a ray}]
    
    As alluded to above, we choose an element $\gamma \in \Gamma$ with $\s(\gamma)>0$.  Since $\eta$ is unexceptional, every minimal component of $\uu\lambda$, has a well defined sign (either `$+$' or `$-$') associated to it; see Lemma \ref{lemma: positivity lemma} or Definition \ref{def: positive for minimal}.
    Let $\uu\mu\subset \uu\lambda$ be such a minimal component, and assume without loss of generality that the sign associated to $\uu\mu$ is `$+$'.
    If $\uu\mu$ is a closed curve, choose $\beta\in \Gamma$ to be the stabilizer of a leaf of $\mu \subset \Omega$ that runs through $B_R(o)$; see \S\ref{subsec: positivity scc}.  If $\uu\mu$ is not a curve, we choose a leaf $\ell_x \in \mu$, which is generic for an invariant ergodic  probability measure $\nu$  that meets a lift $k \subset B_R(o)$ of a train track transversal from \S\ref{subsec: laminations and dynamics}, as in the setup for Lemma \ref{lemma: train track branches from returns}.  We let $\beta^n = \beta^n(x)$ for $n\ge 1$.\footnote{In previous sections, $b$ and $b^n(x)$ referred to elements of $\Gamma$, while $\beta = \rho\chi(b)$, and so on, but this should not cause confusion.}

    Now we approximate the heights of attractors for $\eta(\gamma^n\beta^m)$, where $n, m>0$ and show that they accumulate onto a positive ray over $Q\inverse(\gamma^+)$ (with an explicit lower bound).  The strategy is to compute the asymptotic behavior of heights of attracting eigenlines for $\eta(\gamma^n\beta^m)$.  Instead of working directly with the formula in Lemma \ref{lem: compute height}, it turns out to be more convenient to work directly with projectivizations of the matrices $\eta(\gamma^n\beta^m)$ for well-chosen $n$ and $m$ going to infinity.  
    \medskip
    
    Consider 
    \begin{equation}\label{eqn: projective product ray}
        \frac{\eta(\gamma^n\beta^m)}{e^{n\s(\gamma)}} =
    \begin{pmatrix} 
        e^{-n\s(\gamma)}\rho\chi(\gamma^n )\rho\chi(\beta^m) & \mathbf 0 \\
        e^{-n\s(\gamma)}\left( Z(\gamma^n)\rho\chi(\beta^m) + Z(\beta^m)\right) & {e^{-n\s(\gamma)}}
    \end{pmatrix}
    \end{equation}
    Let $P_\gamma\in \End(\R^{d+1})$ be the rank-one projection corresponding to $\rho\chi(\gamma)$, i.e.,
    \[P_\gamma = \lim_{n \to \infty} e^{-n\s(\gamma)}\rho\chi(\gamma^n) = ( \gamma^-)^* \otimes \gamma^+_0,\]
    using the convention that $(\gamma^-)^*(\gamma^+_0)=1$.
    Then using Lemma \ref{lem: rank one projectors and Anosov} and basic facts about geometric series, we obtain
    \begin{align*}
        e^{-n\s(\gamma)}Z(\gamma^n) &= e^{-n\s(\gamma)} Z(\gamma) \sum_{i=0}^{n-1} \rho\chi(\gamma)^i\\
        & = Z(\gamma) \sum_{i=0}^{n-1}e^{(i-n)\s(\gamma)}P_\gamma +o(1) = \frac{Z(\gamma)P_\gamma}{e^{\s(\gamma)}-1}+o(1).
    \end{align*}

    \begin{proof}[Continuation of the proof under hypotheses \ref{hyp A}]
        Now, let us analyze \eqref{eqn: projective product ray} again.
    Let $P_\beta$ be the rank-one projection corresponding to $\rho\chi(\beta)$, i.e.,
    \[P_\beta = \lim_{n\to \infty} \rho\chi(\beta^n).\]
    With our notation here, Lemma \ref{lemma: low slack linear growth (simple)} gives 
    \[ e^{-n\s(\gamma)}Z(\beta^m) = e^{-n\s(\gamma)}mZ(\beta)P_\beta +o(1) \]
    Thus
    \begin{equation*}
        \frac{\eta(\gamma^n\beta^m)}{e^{n\s(\gamma)}} =
    \begin{pmatrix} 
        P_\gamma P_\beta +o(1) & \mathbf 0 \\
        \frac{Z(\gamma)P_\gamma P_\beta}{e^{\s(\gamma)}-1} + e^{-n\s(\gamma)}mZ(\beta)P_\beta +o(1) & o(1)
    \end{pmatrix}
    \end{equation*}

    Let $t \in \R_{>0}$ be given, and choose $m_n$ so that 
    \[\frac{m_n}{e^{n\s(\gamma)}} \to t, \text{ as } n \to \infty.\]
     Then we have 
    \begin{equation*}
        \lim_{n\to \infty}\frac{\eta(\gamma^n\beta^{m_n})}{e^{n\s(\gamma)}} =
    \begin{pmatrix} 
        P_\gamma P_\beta & \mathbf 0 \\
        \frac{Z(\gamma)P_\gamma P_\beta}{e^{\s(\gamma)}-1} + tZ(\beta)P_\beta  & 0
    \end{pmatrix}.
    \end{equation*}
    Denote by $c>0$ the only non-zero eigenvalue of $P_\gamma P_\beta$, i.e., $c\gamma_0^+ = P_\gamma P_\beta (\gamma_0^+)$.\footnote{This is a kind of cross ratio that calculates the quantity $\lim_{n, m\to \infty} e^{\s(\gamma^n\beta^m) - n\s(\gamma)}$.}
    Let $d = Z(\beta)P_\beta (\gamma_0^+)>0$; this is positive by positivity of $\eta$ on $\beta$.  
    
    An elementary matrix computation then gives that  the matrix just above is rank-one with eigenvalue $c$ and eigenvector
    \[\begin{pmatrix}
        \gamma_0^+ \\ h(\eta(\gamma)^+) + \frac {td}c
    \end{pmatrix}.\]
    The bottom coordinate is the height.
    
    This proves that $(\eta(\gamma^n\beta^{m_n})^+)$ accumulates to the point in $Q\inverse (\gamma^+)$ with height $h(\eta(\gamma)^+) + \frac {td}c$.  Since $t>0$ was arbitrary and $\frac dc>0$, this concludes the proof.
    \end{proof}

    The proof under hypotheses \ref{hyp B} has the same shape, but a slightly more technical argument is needed, because we do not have such good control over $Z(\beta^m)$.  

    \begin{proof}[Continuation of the proof under hypotheses \ref{hyp B}]
    We recall some notation and setup from \S\ref{subsec: laminations and dynamics}.
    We have $x \in X = k\cap \mu$ and the homeomorphism $\sigma: X \to X$ given by the first returns under the geodesic flow and the dynamics of $\rho(\Gamma)$ on $\Omega$.  
    We have chosen $x \in X$ to be a generic point for the $\sigma$-invariant ergodic probability measure $\nu \in \Delta(\sigma)$.  
    Now, choose $y \in X$.  
    By minimality, $\sigma^m(x)$ accumulates to $y$.  We make the following stronger assertion:
    \begin{claim}\label{clm: bounded return times}
        Let $t>0$ be given.  There are sequences $n_j$ and  $m_j$ tending to infinity such that $\sigma^{m_j}x \to y$ as $j \to \infty$ and such that 
        \[\frac{m_j}{e^{n_j\s(\gamma)}} \to t, \text{ as } j \to \infty.\]
    \end{claim}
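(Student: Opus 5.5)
The plan is to choose the return times $m_j$ first and then take $n_j$ to be the integer that best balances the ratio. Concretely, I will pick an increasing sequence $m_j\to\infty$ with $\sigma^{m_j}x\to y$, which minimality of $\sigma$ on $X$ permits, and which is also a property of $\nu$-generic points since $\nu$ has full support on the minimal set. Then I will set
\[ n_j \defeq \left\lfloor \frac{\log(m_j/t)}{\s(\gamma)} \right\rfloor. \]
For this choice,
\[ \frac{m_j}{e^{n_j\s(\gamma)}} \in \bigl[t,\ t e^{\s(\gamma)}\bigr). \]
So the ratio stays bounded but need not converge to $t$. The defect is a multiplicative factor lying in $[1,e^{\s(\gamma)})$.

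To remove this defect, I will use the freedom to choose the $m_j$ inside a dense set of return times rather than an arbitrary one. The goal is to make $\log m_j$ lie in a prescribed residue class modulo $\s(\gamma)$, up to an error that tends to $0$. I will show that for each $\epsilon>0$ and each large $L$, there is a return time $m\in[L,\,L(1+\epsilon)]$ with $d(\sigma^m x,y)<\epsilon$. Once this is in hand, I take $L=t e^{n\s(\gamma)}$ for each $n$ and let $m_n$ be such an $m$. This gives $m_n/e^{n\s(\gamma)}\in[t,t(1+\epsilon)]$, and a diagonal argument over $\epsilon\to0$ yields the sequences claimed.

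The required density of return times comes from uniform recurrence. Since $\sigma:X\to X$ is a minimal homeomorphism of a compact space, the set $\{m:\ d(\sigma^m x,y)<\epsilon\}$ is syndetic, meaning it has bounded gaps $g(\epsilon)$. This is the standard characterization of minimality, obtained by covering $X$ with finitely many preimages $\sigma^{-i}$ of the $\epsilon$-ball about $y$. Any interval $[L,L(1+\epsilon)]$ with $L\epsilon>g(\epsilon)$ therefore contains such an $m$. The step that needs the most care is exactly this syndeticity, since ordinary recurrence alone would not control the gaps. Minimality of $\uu\mu$, and hence of $\sigma$, supplies it directly, so I expect no real obstruction. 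When $\uu\mu$ is a closed curve the claim is vacuous, because $X$ is a single point and every $m$ works.
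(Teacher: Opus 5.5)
Your proposal is correct and is essentially the paper's argument. Both use that minimality of $\sigma$ and compactness of $X$ give a uniform bound on the gaps between visits to a neighborhood of $y$, then take $n$ large enough that this bound is negligible compared with $te^{n\s(\gamma)}$ and choose $m$ in the resulting window (the paper uses the additive window $[\lfloor te^{n_j\s(\gamma)}\rfloor,\ \lfloor te^{n_j\s(\gamma)}\rfloor + N_j]$, you use the multiplicative window $[L, L(1+\epsilon)]$), so your preliminary attempt with $n_j$ determined by $m_j$ is unnecessary but harmless.
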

    \begin{proof}[Proof of Claim \ref{clm: bounded return times}]
        By minimality of $\sigma$ and compactness of $X$, return times to any fixed neighborhood of $y \in X$ are uniformly bounded: for any neighborhood $U$ of $y$, there is an $N$ such that for all $z \in X$, there is a $k \in [0,N]$ such that  $\sigma^kz \in U$.

        Choose a family of neighborhoods $U_j$ shrinking to $y$, and let $N_j$ be the corresponding bounds on return times to $U_j$.  Now choose $n_j$ sufficiently large that $N_je^{-n_j\s(\gamma)}\to 0$, as $j\to \infty$.  
        Choose 
        \[m_j \in [\lfloor te^{n_j\s(\gamma)}\rfloor, \lfloor te^{n_j\s(\gamma)}\rfloor +N_j]\]
        such that $ \sigma^{m_j}x \in U_j$.
        This gives $m_je^{-n_j\s(\gamma)} \to t$ and $\sigma^{m_j}x \to y$.
    \end{proof}

    The Claim and Corollary \ref{cor: limit points of averages} together give that $\frac{Z(\beta^{m_j})}{m_j} =L_\nu(y) +o(1)$.  Thus
    \[\frac{Z(\beta^{m_j})}{e^{n_j\s(\gamma)}} = tL_\nu(y) +o(1).\]
    
    Proposition \ref{prop: projectors and slack} gives 
    \[\rho\chi(\beta^{m_j})  = \Pi(y,x) +o(1).\]

    Combining these ingredients in a similar fashion as before gives 
    \begin{equation*}
        \lim_{j\to \infty} \frac{\eta(\gamma^{n_j}\beta^{m_j})}{e^{n_j\s(\gamma)}} =
    \begin{pmatrix} 
        P_\gamma \Pi(y,x) & \mathbf 0 \\
        \frac{Z(\gamma)P_\gamma \Pi(y,x)}{e^{\s(\gamma)}-1} + tL_\nu(y) & 0
    \end{pmatrix}.
    \end{equation*}
    Thus, with $c$ defined so that  $c\gamma_0^+ = P_\gamma\Pi(y,x)(\gamma_0^+)>0$ and $d = L_\nu(y)(\gamma_0^+)>0$, we arrive at the same conclusion that  $(\eta(\gamma^{n_j}\beta^{m_j})^+)$ accumulates to the point in $Q\inverse (\gamma^+)$ with height $h(\eta(\gamma)^+) + \frac {td}c$.  Since $t>0$ was arbitrary and $\frac dc>0$.
    \end{proof}
    Applying Lemma \ref{lem: sufficient to find a ray} now proves Proposition \ref{prop: found a ray} under the two sets of hypotheses, after repeating to find a negative ray, if necessary.  
\end{proof}

\begin{remark}\label{rmk: characterizing exceptional with a ray or line}
    For exceptional coaffine representations $\eta\in \partial \mathcal A$ where $\uu\lambda$ is a multicurve (hypotheses \ref{hyp A}) , it is not difficult to use the techniques of this subsection to prove the following:
    Let $Z$ be the cocycle defining $\eta$ then \begin{itemize}
        \item  $\Lambda_{prox}$ is bounded below if and only if no curve of $\uu\lambda$ is negative for $Z$.
        \item $\Lambda_{prox}$ is bounded above if and only if no curve of $\uu\lambda$ is positive for $Z$.
    \end{itemize}
   In particular, if every curve $\delta$ of $\uu\lambda$ satisfies $Z(\delta)\delta^+=0$, then $\eta$ is naively convex cocompact (but not strongly convex cocompact).  
\end{remark}

Let $X$ be a compact metric space, and recall that a lower semi-continuous function $f: X\to \R\cup \{\pm \infty\}$  is such that \[\liminf_{y\to x} f(y)\ge f(x).\]
An equivalent characterization of lower semi-continuity is that the epi-graph
\[\{(x,t) \in X\times \R: t\ge f(x) \}\]
is a closed set.

For $\eta$ defined by a cocycle in the positive cone $\mathscr P_{\rho\chi}$, define 
\[\mathcal H : \partial^\infty \Omega\to \R\cup \{\pm\infty\}\]
by 
\[\mathcal H(x) = \inf_{v \in Q\inverse(x)\cap \Lambda_{prox}} h(v),\]
where we recall that the infimum over the empty set is $+\infty$.

\begin{corollary}\label{cor: epigraph}
    The epi-graph of $\mathcal H$ is $\Lambda_{prox}$.  
    Consequently, $\mathcal H$ is lower semi-continuous.  
    Furthermore,
    \[\mathcal H\inverse(+\infty) \supset \lambda^+,\]
    where $\lambda^+$ is the set of forward endpoints of leaves of $\lambda$. 
\end{corollary}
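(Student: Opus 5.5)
The plan is to combine Theorem \ref{thm: limit sets coincide} with the ray structure of $\partial^\infty C_{min}$, and then to use the bounded-height estimates to locate the fibers over $\lambda^+$. Throughout, $\eta$ is defined by a positive (hence unexceptional) cocycle. Theorem \ref{thm: limit sets coincide} gives $\Lambda_{prox}=\partial^\infty C_{min}$. By Lemma \ref{lem: rays or lines}, the non-empty fibers $Q\inverse(x)\cap\Lambda_{prox}$ are all of one type. The proof of Proposition \ref{prop: found a ray} produces a positive ray over $\gamma^+$, since every minimal component has sign $+$. Hence, by Lemma \ref{lem: sufficient to find a ray}, every non-empty fiber is a closed positive ray $[p_x,[e_{d+2}])$.

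For such a fiber, the infimum defining $\mathcal H(x)$ is attained at $p_x$, so the fiber is exactly $\{v\in Q\inverse(x): h(v)\ge \mathcal H(x)\}$. Over points with empty fiber we have $\mathcal H=+\infty$, and the epi-graph over $x$ is empty, which is consistent. Identifying $\partial^\infty\mathscr C\Omega\cong\partial^\infty\Omega\times\R$ via the height $H$, this gives $\Lambda_{prox}=\operatorname{epi-graph}(\mathcal H)$. Since $\Lambda_{prox}$ is closed in $\partial^\infty\mathscr C\Omega$ (its closure is taken in $\cas$), the epi-graph is closed. The cited characterization then gives lower semi-continuity of $\mathcal H$. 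One point needs care here: closedness should be checked in $\partial^\infty\Omega\times\R$, which is exactly the topology induced from $\cas$.

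The main step is the inclusion $\mathcal H\inverse(+\infty)\supset\lambda^+$, which says the fiber over a forward endpoint $x$ of a leaf $\ell\subset\lambda$ is empty. I would argue by contradiction. Suppose $v\in Q\inverse(x)\cap\Lambda_{orb}^o$; we may take any basepoint by Proposition \ref{prop: positive orbital sets coincide}, for instance a point on the equivariant section over $o$. Then there are $\gamma_n$ with $\eta(\gamma_n).o\to v$, so $\rho(\gamma_n).Q(o)\to x$ and the heights stay bounded. Using the section, $\eta(\gamma_n).o$ is the point over $\gamma_n.Q(o)$ at height $h(\gamma_n.Q(o))$ (up to the fixed basepoint normalization). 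So it suffices to show that $h(q_n)\to+\infty$ whenever $q_n\to x$ in $\overline\Omega$ with $q_n$ in a $\Gamma$-orbit.

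To prove this, take rays $r_n$ from $o$ through $q_n$. These converge to the ray $[o,x)$, which is asymptotic to $\ell$ and so has finite slack by Lemma \ref{lem: slack finite and continuous}. By Corollary \ref{cor: finite slack implies infinite height}, or more quantitatively by the two-sided bounds \eqref{eqn: upper and lower height bound} and \eqref{eqn: upper and lower height bound 2}, the lower bound for $h(r_n(T))$ is $O(1)$ plus a sum of positive $L$-segment contributions $M_ie^{-\s[0,q_{i+1}]}$. Because $r_n$ fellow travels $\ell$ for longer and longer with bounded accumulated slack, one $L$-segment has $M_i\to\infty$ at bounded slack, while all other terms stay bounded below. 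I expect the main obstacle to be making this uniform: the tail of $r_n$ after it leaves $\ell$ can add slack $\s[q_{i+1},r_n(T)]$. That slack only appears in the normalization through $\s[0,q_{i+1}]$, however, so the low-slack contribution of the fellow-travelling segment, weighted by $e^{-\s[0,q_{i+1}]}$, still diverges, and $h(q_n)\to\infty$. This contradicts bounded height, so the fiber over $x$ is empty.
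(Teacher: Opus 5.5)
Your derivation of the epi-graph description and of lower semi-continuity matches the paper, which calls it essentially by definition. One small addition: Lemma \ref{lem: sufficient to find a ray} needs the fiber to be a ray rather than a line, and you get this from the Bounded Height Theorems \ref{thm: heights bounded below scc} and \ref{thm: heights bounded below general}.

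For $\mathcal H\inverse(+\infty)\supset\lambda^+$, the paper only cites Corollary \ref{cor: finite slack implies infinite height} for the single ray $[o,x)$. You instead reduce to showing $h(q_n)\to+\infty$ along every orbit sequence $q_n\to x$. That route is legitimate, and under hypotheses \ref{hyp A} your use of \eqref{eqn: upper and lower height bound} does give it.

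Under hypotheses \ref{hyp B}, however, the key step fails as written. The bound \eqref{eqn: upper and lower height bound 2} does \emph{not} bound $h(r_n(T))$ below by $O(1)$ plus $\sum M_i e^{-\s[0,q_{i+1}]}$; its lower bound is only $O(1)$. The reason is that in the general Partition Lemma \ref{lem: partition lemma general} an $L$-segment may hop between leaves. Lemma \ref{lem: zig zag} therefore gives each such segment a normalized lower bound $\varrho N$ that is independent of $M_i$. So ``$M_i\to\infty$ at bounded slack'' yields nothing from the estimate you cite.

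There are two ways to close the gap.
\begin{enumerate}
\item Along your route, you need a single-leaf estimate. Since $r_n$ shadows the one leaf $\ell$ for $N'=NM$ combinatorial steps, rerun the argument of Lemma \ref{lem: low slack general case} with $N'$ in place of $N$; its constant $\varrho'$ does not depend on $N$. Combined with Claim \ref{clm: vector position and size} for the tail vector and the bounded-height estimate for the rest of the ray, this gives a contribution $\succeq N'$ at bounded slack.
\item Alternatively, convexity removes the need for uniformity over off-ray sequences, and this is one way to justify the paper's one-line citation. The lower boundary of $C_{min}$ is an $\eta$-equivariant section. Write it and the section of Lemma \ref{lemma: eta equivariant embedding exists} as $[\Psi(p):a(p)]$ and $[\Psi(p):A(o,p)\Psi(p)]$. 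The difference of the last coordinates is then $\Gamma$-invariant, so the two heights differ by $O(1)$, using $\|\Psi\|\succeq 1$. By Corollary \ref{cor: finite slack implies infinite height}, the lower boundary therefore tends to $+\infty$ along $[o,x)$. Now suppose a point of $\Lambda_{prox}$ lay over $x$ at finite height. Join it to the boundary point over $o$ by a segment in $\overline{C_{min}}$ projecting onto $[o,x]$. This segment has bounded heights, because convex combinations of vectors in $\overline{\Omega_0}$ have norm bounded below. Its points over $[o,x)$ lie in $C_{min}$, which contradicts the lower boundary tending to $+\infty$ there.
\end{enumerate}
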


\begin{proof}
    That the epi-graph of $\mathcal H$ is $\Lambda_{prox}$ is essentially by definition.  
    Since the proximal limit set is closed, $\mathcal H$ is lower semi-continuous.  
    The containment follows from  Corollary \ref{cor: finite slack implies infinite height}.  
\end{proof}

\begin{remark}\label{rmk: infty is more than lambda}
    The preimage of $+\infty$ under $\mathcal H$ is more than $\lambda^+$.  See Proposition \ref{prop: average slack and height}, which gives a necessary condition for a ray to have height tending to infinity.  
    It could be interesting to find the precise condition on rays so that their height is infinite. 
\end{remark}

\section{Measure of the limit set}\label{sec: measure}

In this section, we work with hypotheses \ref{hyp A}.  That is, 
\begin{itemize}
    \item $\rho: \Gamma\to \SL(d+1, \R)$ divides a properly convex domain $\Omega\subset \RP^d$
    \item $\chi = e^{-\varphi}: \Gamma \to \R_{>0}$ has  $\|\varphi\|_\rho=1$  
    \item the corresponding stretch lamination $\uu\lambda$ is a multicurve in $\Omega/\rho$
    \item $\eta: \Gamma \to \GL(d+2,\R)$ is unexceptional, i.e., the cocycle $\alpha \in Z^1(\Gamma, (\R^{d+1})^*_{\rho\chi})$ is not in $\partial \pm \mathscr P_{\rho\chi}$.
\end{itemize}
Recall from Theorem \ref{thm: limit sets coincide} that with $\eta$ described as above, there is an equality \[\Lambda(\eta) \defeq \Lambda_{prox} = \Lambda_{orb}=\partial^\infty C_{min}.\]  
The goal is to prove Theorem \ref{thm: positve lebesgue}, below, which proves that $\Lambda(\eta)$ is large in a dimension and measure theoretic sense.  

\begin{remark}\label{rmk: why multicurve}
    We believe that the results of this section should be true in more generality using the tools that we have developed, namely when $\rho: \pi_1S \to \SL(3,\R)$ is Hitchin and the maximally stretch lamination $\uu\lambda$ is arbitrary.  However, the proof (of Proposition \ref{prop: average slack and height}) would require substantial additional technical work, so we choose not to pursue this here.  Indeed, Theorem \ref{thm: positve lebesgue} holds as soon as the analogous statement of Proposition \ref{prop: average slack and height} does.  
\end{remark}

Since $\partial^\infty\Omega$ is a $C^1$-sphere in $\RP^{d}$, $\partial^\infty \mathscr C\Omega$ is a $C^1$-submanifold of $\RP^{d+1}\setminus [e_{d+2}]$, and the action of $\eta$ is smooth preserving this cone.  There is a well defined $d$-dimensional Lebesgue measure class $[m_d]$ on $\partial^\infty \mathscr C\Omega$.
Clearly, when $\Lambda(\eta) = \partial^\infty \mathscr C\Omega$, then $m_d(\Lambda(\eta))>0$.  Our main result is that this continues to hold for all unexceptional coaffine deformations.

\begin{theorem}\label{thm: positve lebesgue}
    With hypotheses as in the beginning of this section, 
    we have $m_d(\Lambda(\eta))>0$.
\end{theorem}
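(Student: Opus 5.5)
By Theorem \ref{thm: limit sets coincide} and Main Theorem \ref{mainthm: structure of limit sets}, there are only three cases. If $\eta\notin\overline{\sP_{\rho\chi}\cup-\sP_{\rho\chi}}$ then $\Lambda(\eta)=\partial^\infty\sC\Omega$ and there is nothing to prove. The cases $\eta\in\sP_{\rho\chi}$ and $\eta\in-\sP_{\rho\chi}$ are symmetric, so I will treat only $\eta\in\sP_{\rho\chi}$. In that case Corollary \ref{cor: epigraph} writes $\Lambda(\eta)$ as the epi-graph of the lower semi-continuous function $\mathcal H$. Using the trivialization $\partial^\infty\sC\Omega\cong\partial^\infty\Omega\times\R$ given by $H$, the Lebesgue class $[m_d]$ is the product of the Lebesgue class on the $C^1$-sphere $\partial^\infty\Omega$ with Lebesgue measure on $\R$. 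By Fubini it therefore suffices to show that $\mathcal H^{-1}(\R)$ has positive $(d-1)$-dimensional Lebesgue measure in $\partial^\infty\Omega$. The fiber over such a point is a ray $[\mathcal H(x),\infty)$ of infinite length, so positive measure of the base already gives infinite measure upstairs. I will in fact aim to show that $\mathcal H^{-1}(+\infty)$ is Lebesgue-null.

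To detect whether $\mathcal H(x)<\infty$, take the geodesic ray $r_x$ from $o$ to $x$. The proof of Proposition \ref{prop: positive orbital sets coincide} shows that orbit points approaching the fiber over $x$ have heights comparable to $h$ along rays toward $x$. So the first step is a lemma: if $\liminf_{T\to\infty}h(r_x(T))<\infty$, then $\mathcal H(x)<\infty$. To prove it, choose $\gamma_n$ with $\gamma_n.o$ close to $r_x(T_n)$, use the equivariant section from Lemma \ref{lemma: eta equivariant embedding exists}, whose points lie in $C_{min}$ up to a bounded vertical shift, and then apply Lemma \ref{lem: rays or lines} to get a ray above the accumulation point. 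Next I use the upper bound in \eqref{eqn: upper and lower height bound}:
\[h(r(T))\le O(1)+c\sum_{i\ \mathrm{odd}}M_i e^{-\s[o,q_{i+1}]}.\]
This yields a criterion in the spirit of Proposition \ref{prop: average slack and height}. If $h(r_x(T))\to\infty$, then there are infinitely many $L$-segments whose winding numbers $M_i$ are at least exponential in the slack accumulated before them, roughly $M_i\ge e^{\vartheta\, \#\{H\text{-segments before } i\}}$. Since $H$-segments have slack at least linear in their length, this forces the proportion of time that $r_x|_{[0,T]}$ spends in $\mathcal N_{\epsilon_N}(\uu\lambda)$ to tend to $1$ along a subsequence of times $T$.

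The second step, which I expect to be the main obstacle, is to show that Lebesgue-almost every $x\in\partial^\infty\Omega$ fails this criterion. The natural tool is the Sinai--Ruelle--Bowen measure $m^{SRB}$ of the Anosov geodesic flow (Theorem \ref{thm: flow Anosov}). Lebesgue-almost every point of an unstable leaf, equivalently Lebesgue-almost every forward endpoint $x$ seen from $o$, is generic for $m^{SRB}$. Here I need that the map from the unit tangent sphere at $o$ to $\partial^\infty\Omega$ carries Lebesgue to the Lebesgue class; I expect this to need some care, since the boundary is only $C^1$. For a generic $x$ the Birkhoff averages of the indicator of $\mathcal N_{\epsilon_N}(T^1_+\uu\lambda)$ converge to $m^{SRB}(\mathcal N_{\epsilon_N})$, which is less than $1$ and tends to $0$ as $\epsilon_N\to0$, because $m^{SRB}$ has full support and gives zero mass to the closed multicurve. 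This contradicts the requirement that the proportion of time near $\uu\lambda$ tends to $1$. Hence $\limsup_T h(r_x(T))<\infty$, so $\mathcal H(x)<\infty$ for almost every $x$.

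The delicate points are the exact form of the combinatorial criterion and the transfer from SRB-genericity to Lebesgue-genericity on $\partial^\infty\Omega$. If the second is hard to establish directly, I would fall back on the global dilation $I_t=\diag(I_{d+1},e^{-t})$. Using the nesting of $\Lambda$ under the $\eta$-action, this reduces positivity of $m_d(\Lambda)$ to positivity of the measure of $\mathcal H^{-1}(\R)\times[0,\infty)$ alone.
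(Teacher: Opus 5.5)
Your proposal is correct and follows essentially the paper's route. You reduce to showing $\mathcal H^{-1}(+\infty)$ is Lebesgue-null (you by Fubini on the epigraph, the paper by the dilation $I_t$). You then use the upper bound \eqref{eqn: upper and lower height bound} to show that rays toward such points spend asymptotically all their time near $\uu\lambda$ along a subsequence, which is the paper's $\liminf_t \s[o,r(t)]/t=0$; the exponent there should be the total $H$-length before the $i$th $L$-segment, which controls the slack, not the number of $H$-segments. Finally you contradict this with Bowen--Ruelle genericity of Lebesgue-almost every direction.

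Your Step 1 makes explicit the link between $\mathcal H(x)=+\infty$ and $h(r_x(T))\to\infty$, which the paper uses tacitly. The Lebesgue transfer you worry about is routine: the endpoint map $v\mapsto v^+$ is radial projection along projective lines onto the $C^1$ sphere $\partial^\infty\Omega$, hence a $C^1$ submersion. Your dilation ``fallback'' would not help with that step, but it is not needed.
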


Since $\Lambda(\eta)$ is contained in a $d$-dimensional differentiable submanifold of $\RP^{d+1}$, the following is a direct consequence of Theorem \ref{thm: positve lebesgue}.

\begin{corollary}\label{cor: hausdorff dimension}
    For an unexceptional coaffine representation $\eta$, the Hausdorff dimension of $\Lambda(\eta)$ is $d$.  
\end{corollary}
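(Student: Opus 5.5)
The statement to prove is Corollary \ref{cor: hausdorff dimension}. I would deduce it directly from Theorem \ref{thm: positve lebesgue} together with the elementary upper bound coming from the ambient manifold. There are two inequalities.

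For the upper bound, I would observe that $\Lambda(\eta)\subset \partial^\infty \mathscr C\Omega$. By the discussion preceding Theorem \ref{thm: positve lebesgue}, the set $\partial^\infty\mathscr C\Omega\cong \partial^\infty\Omega\times\R$ is a $C^1$ $d$-dimensional submanifold of $\RP^{d+1}\setminus[e_{d+2}]$. A $C^1$ embedded submanifold is locally the image of an open subset of $\R^d$ under a locally bi-Lipschitz chart. Lipschitz maps do not increase Hausdorff dimension, and Hausdorff dimension is countably stable. Covering $\partial^\infty\mathscr C\Omega$ by countably many such charts therefore gives $\dim_H \partial^\infty\mathscr C\Omega\le d$, and hence $\dim_H\Lambda(\eta)\le d$. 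The metric used here is any Riemannian metric on $\RP^{d+1}$ restricted to a compact piece; the dimension does not depend on this choice.

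For the lower bound, Theorem \ref{thm: positve lebesgue} gives $m_d(\Lambda(\eta))>0$, where $m_d$ is in the Lebesgue measure class on the $C^1$ manifold $\partial^\infty\mathscr C\Omega$. I would localize first. By countable additivity, some chart domain $V$ has $m_d(\Lambda(\eta)\cap V)>0$. Pulling back through a bi-Lipschitz chart $\phi$ gives a set $E\subset\R^d$ of positive Lebesgue measure, using that Lebesgue measure class is preserved by $C^1$ diffeomorphisms. On $\R^d$, Lebesgue measure is a constant multiple of $d$-dimensional Hausdorff measure $\mathcal H^d$, so $\mathcal H^d(E)>0$. Bi-Lipschitz maps distort $\mathcal H^d$ by at most bounded factors, so $\mathcal H^d(\Lambda(\eta)\cap V)>0$, which forces $\dim_H\Lambda(\eta)\ge d$.

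Combining the two inequalities gives equality. For unexceptional $\eta$ outside $\overline{\sP_{\rho\chi}\cup-\sP_{\rho\chi}}$, Main Theorem \ref{mainthm: structure of limit sets} gives $\Lambda=\partial^\infty\mathscr C\Omega$, so the conclusion is immediate there; Theorem \ref{thm: positve lebesgue} covers all cases anyway. There is no real obstacle here beyond Theorem \ref{thm: positve lebesgue} itself. The only point needing care is checking that "Lebesgue class on a $C^1$ manifold" and "$\mathcal H^d$ for a metric on $\RP^{d+1}$" are mutually absolutely continuous locally, which is exactly the bi-Lipschitz chart argument above.
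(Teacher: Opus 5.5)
Your proof is correct and follows the paper's route. The paper deduces the corollary in one line from Theorem \ref{thm: positve lebesgue}, using that $\Lambda(\eta)$ lies in the $d$-dimensional $C^1$ submanifold $\partial^\infty\mathscr C\Omega$; you have written out the standard bi-Lipschitz-chart details behind both the upper and the lower bound.
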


The proof of Theorem \ref{thm: positve lebesgue} has two steps.  First, by zooming in on the special point $[e_{d+2}]$ at infinity, we reduce our analysis to  the cone over an invariant subset of $\partial^\infty \Omega$.  Then, using some basic features of the slack functional on paths, we show that this invariant subset is co-null and deduce the theorem.
\medskip

Choose a complementary hyperplane $[\ker e^{d+2}]$ to $[e_{d+2}]$. 
Then the matrix $I_t = \textrm{diag}(1, ..., 1, e^{t})$, fixes both $[e_{d+2}]$ and $[\ker (e^{d+2})]$. Define the family of coaffine representations 
\[\eta_t = I_{-t}\eta I_t\]
with corresponding limit sets $\Lambda(\eta_t) = I_{-t}\Lambda(\eta)$. The choice of complementary subspace $[\ker e^{d+2}]$ is not important.

Since $I_{-t}$ acts smoothly on $\partial^\infty\mathscr C\Omega$ preserving the measure class of $m_d$, 
\begin{equation}\label{eqn: preserves md}
    m_d(\Lambda(\eta))>0 \Leftrightarrow m_d(\Lambda(\eta_t))>0, ~ \forall t \in \R
\end{equation}
Without loss of generality, we assume $\eta$ corresponds to a $1$-form in $\mathscr P_{\rho\chi}$.
Let $\Lambda_+(\eta)$ be the intersection of $\Lambda(\eta)$ with the non-negative closed half-space $H$ with $\partial H =[\ker e^{d+2}]$ in an affine chart containing $\mathscr C\Omega$, and similarly define $\Lambda_+(\eta_t) = I_{-t}\Lambda_+(\eta)$.

\begin{definition}\label{def: invariant set F}
Let $F$ consist of those points  $x \in\partial \Omega \subset \partial H$ for which there exists a point in $\Lambda(\eta)$ over $x$ (necessarily at finite height). 
\end{definition}

It is easy to verify that $F$ is $\left(\rho\chi\oplus 1\right)$-invariant.

\begin{lemma}\label{lem: properties of the union}
    For all $0\le s_1\le s_2$, we have 
    \[\Lambda_+(\eta_{s_1}) \subset \Lambda_+(\eta_{s_2}).\]
    The union
    \[\mathscr CF = \bigcup_{t\ge0} \Lambda_+(\eta_{t})\]
    is invariant under the representation $\rho\chi\oplus 1$ and is the one-sided cone over $F$.     
\end{lemma}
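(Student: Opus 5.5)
The proof is a direct computation with the dilation $I_{-t}$ in the height coordinate, combined with the structure of $\Lambda(\eta)$ as the epi-graph of $\mathcal H$ (Corollary \ref{cor: epigraph}, via Theorem \ref{thm: limit sets coincide}).

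First I would record how $I_{-t}$ acts on heights. A point of $\partial^\infty\mathscr C\Omega$ over $x\in\partial\Omega$ is $[x_0, H]$ with $x_0\in\partial\Omega_0$. Then $I_{-t}[x_0,H]=[x_0,e^{-t}H]$. So $I_{-t}$ preserves each vertical fiber, fixes the zero section $\partial H\cap\partial^\infty\mathscr C\Omega$, and multiplies heights by $e^{-t}$. Consequently
\[\Lambda_+(\eta_t)=\{[x_0,e^{-t}s] : [x_0,s]\in\Lambda(\eta),\ s\ge 0\}.\]

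By Corollary \ref{cor: epigraph}, the nonempty fibers of $\Lambda(\eta)$ are exactly the upward rays $\{s\ge\mathcal H(x)\}$ for $x\in F$, with $\mathcal H(x)<\infty$. Hence the fiber of $\Lambda_+(\eta)$ over $x\in F$ is $[\max(\mathcal H(x),0),\infty)$, and it is empty for $x\notin F$. Applying $I_{-t}$, the fiber of $\Lambda_+(\eta_t)$ over $x\in F$ is $[e^{-t}\max(\mathcal H(x),0),\infty)$. Since $e^{-t}$ is decreasing in $t$ and the lower endpoint is nonnegative, these intervals increase with $t$. This gives the nesting $\Lambda_+(\eta_{s_1})\subset\Lambda_+(\eta_{s_2})$ for $0\le s_1\le s_2$.

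Taking the union over $t\ge0$, the fiber over $x\in F$ becomes $\bigcup_t[e^{-t}c_x,\infty)$ with $c_x=\max(\mathcal H(x),0)$. This union equals $(0,\infty)$ if $c_x>0$ and $[0,\infty)$ if $c_x=0$. Up to the boundary points at height $0$, the union is therefore $F\times[0,\infty)$, the one-sided cone over $F$. I would state the cone as this set, treating the possible omission of the zero section over points with $\mathcal H(x)>0$ as a measure-zero ambiguity, or else take the closure in the definition.

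For invariance, there are two ingredients. First, $F=Q(\Lambda(\eta))$ is $\rho$-invariant because $\Lambda(\eta)$ is $\eta$-invariant and $Q\circ\eta(\gamma)=\rho\chi(\gamma)\circ Q$. Second, $\rho\chi\oplus1$ sends $[x_0,s]$ to $[\rho\chi(\gamma)x_0,s]$, which after normalizing to $\partial\Omega_0$ has height $s/\|\rho\chi(\gamma)x_0\|$, a positive rescaling. Positive rescaling preserves $F\times[0,\infty)$, and with it the open cone $F\times(0,\infty)$.

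The main obstacle I expect is the mismatch at height $0$ just described; otherwise all steps are routine. A more conceptual alternative for invariance is to note that $I_{-t}\eta I_t\to\rho\chi\oplus1$ as $t\to\infty$, since the cotranslation part is scaled by $e^{-t}$. I would only use this as a sanity check.
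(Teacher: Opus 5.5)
Your argument is correct and is the paper's own proof made explicit. The paper only remarks that $I_{-t}$ is a fiberwise homothety contracting toward $\partial H$, tacitly using, as you do via Corollary~\ref{cor: epigraph}, that the nonempty fibers of $\Lambda(\eta)$ are upward rays. Your observation that the union contains the zero section only over $\{\mathcal H\le 0\}$ is a genuine but harmless imprecision in the statement. That set need not even be $\Gamma$-invariant, since $\mathcal H(\rho(\gamma)x)$ has the sign of $Z(\gamma)x_0+\mathcal H(x)$. So proving invariance of $F\times[0,\infty)$ and $F\times(0,\infty)$, as you do, is the right reading.

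Of your two remedies, keep the measure-zero one (or adjoin the zero section fiberwise). Taking the topological closure would not work: $F$ is nonempty and $\Gamma$-invariant, hence dense in $\partial\Omega$. Its closure would then be all of $\partial\Omega\times[0,\infty)$, and the countable-union description used in Lemma~\ref{lem: measure of union} would be lost.
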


 \begin{proof}
 $I_t$ acts preserving $\partial^\infty \mathscr C\Omega$ and restricts to a homothety on each vertical line.  If $t>0$, then points are attracted to $[e_{d+2}]$, while if $t<0$, then points are attracted to the intersection of their vertical line with $\partial H$.  
 The result follows.  
 \end{proof}

Call this one-sided cone $\mathscr C F.$
Let $m_{d-1}$ be the $(d-1)$-dimensional Lebesgue measure class on the $C^1$-sphere $\partial^\infty \Omega$.  
\begin{lemma}\label{lem: measure of union}
    The following are equivalent:
    \begin{enumerate}
        \item $m_d(\Lambda_+(\eta))>0$
        \item $m_d(\mathscr CF)>0$
        \item $m_{d-1}(F)>0$
    \end{enumerate}
\end{lemma}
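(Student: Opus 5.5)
The plan is to prove the cycle of implications (1)$\Rightarrow$(2)$\Rightarrow$(3)$\Rightarrow$(1), using the zooming family $I_{-t}$, the nesting from Lemma \ref{lem: properties of the union}, and Fubini in the product coordinates $\partial^\infty\mathscr C\Omega\cong \partial^\infty\Omega\times\R$. These coordinates come from the height trivialization $H$ of \S\ref{subsec: height}, a $C^1$ chart in which $m_d$ is equivalent to $m_{d-1}\otimes \mathrm{Leb}$ on compact pieces.

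For (1)$\Rightarrow$(2): $\Lambda_+(\eta)=\Lambda_+(\eta_0)\subset \mathscr CF$ by Lemma \ref{lem: properties of the union}, so positivity passes up.

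For (2)$\Rightarrow$(3): $\mathscr CF$ is the one-sided cone over $F$, i.e. in height coordinates it is $F\times[0,\infty)$. Apply Fubini on the sets $F\times[0,n]$; if $m_{d-1}(F)=0$ then each of these is $m_d$-null, hence so is their union.

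For (3)$\Rightarrow$(1), the direction with content, write $\mathscr CF=\bigcup_{n\in\N}\Lambda_+(\eta_n)$ as an increasing countable union. This is legitimate by the monotonicity in Lemma \ref{lem: properties of the union}, since the union over real $t\ge0$ equals the union over integers. If $m_{d-1}(F)>0$, then Fubini gives $m_d(\mathscr CF)>0$, so continuity from below yields some $n$ with $m_d(\Lambda_+(\eta_n))>0$. Since $\Lambda_+(\eta_n)=I_{-n}\Lambda_+(\eta)$ and $I_{-n}$ is a diffeomorphism of $\partial^\infty\mathscr C\Omega$ preserving the Lebesgue class, as in \eqref{eqn: preserves md}, we get $m_d(\Lambda_+(\eta))>0$.

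The main care point is the cone description and measurability. One must check that $F$ is measurable; it is the image under $Q$ of the closed set $\Lambda(\eta)$ restricted to compacta exhausting the finite-height part, hence $\sigma$-compact. One must also check that points over $x\in F$ with negative height get pushed to the zero section as $t$ grows. This is where the plan leans on Lemma \ref{lem: properties of the union}, which I take as given.
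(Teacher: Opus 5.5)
Your proposal is correct and is essentially the paper's proof: Fubini on the cone over $F$ gives (2)$\Leftrightarrow$(3), and the nested countable union $\mathscr CF=\bigcup_{n}\Lambda_+(\eta_n)$ together with the quasi-invariance \eqref{eqn: preserves md} gives (1)$\Leftrightarrow$(2). You arrange this as a cycle, with (1)$\Rightarrow$(2) by plain inclusion, and you add a measurability check that the paper omits. Your worry about negative heights is unnecessary, since $\Lambda_+$ already discards them: in height coordinates $\mathscr CF$ contains $F\times(0,\infty)$ and is contained in $F\times[0,\infty)$, which is all your Fubini steps need.
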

\begin{proof}
    Since $\mathscr CF$ is the cone over $F$, Fubini's Theorem implies the equivalence of items (2) and (3).  
    
    Now we prove the equivalence of items (1) and (2).  By Lemma \ref{lem: properties of the union}, we have 
    \[
    \mathscr CF = \bigcup_{n\in \mathbb N}\Lambda_+(\eta_n).
    \]
    Using \eqref{eqn: preserves md} and the fact, above, that $\mathscr CF$ is a countable union, we conclude that $m_d(\mathscr CF)>0$ if and only if $m_d(\Lambda_+(\eta))>0$.
\end{proof}

Let $E = \partial^\infty \Omega\setminus F$, which is the set of points $r \in \partial^\infty\Omega$ for which $H(r)=+\infty$.

\begin{proposition}\label{prop: average slack and height}
    With hypotheses as in the beginning of this section, namely that $\uu\lambda$ is a multicurve, for every point $r \in E$ corresponding to a geodesic ray $r: [0, \infty) \to \Omega$ for the domain shape metric with $r(0)=o$, we have
    \[\liminf_{t\to \infty}\frac{\s[o,r(t)]}{t}= 0.\]
\end{proposition}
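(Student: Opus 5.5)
We argue by contraposition: assume $\liminf_{t\to\infty}\s[o,r(t)]/t = 2\kappa>0$. Then there is $t_0$ with $\s[o,r(t)]\ge \kappa t$ for all $t\ge t_0$, and we will show that the height of the section along $r$ stays bounded above. Lower semicontinuity and the description of $\Lambda$ then place $r$'s endpoint in $F$, i.e.\ $r\notin E$.

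\textbf{Setup and partition.} Since $\mathcal H(r^+)$ is the infimum of heights over the fiber, it suffices to show that $\limsup_T h(r(T))<\infty$. Orbit points along $r$ accumulate only onto $\Lambda_{orb}=\Lambda$ in the fiber over $r^+$. Indeed, $h(r(T))$ is the height of a point of the equivariant surface lying within bounded distance of $\eta(\gamma_T).o'$, where $\gamma_T.p=r(T)$ with $p\in B_R(o)$. So a bounded subsequence of heights yields a point of $\Lambda_{orb}^{o'}$ over $r^+$ at finite height.

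Fix $N$ and apply the Partition Lemma \ref{lemma: partition lemma multicurve case} to $r$. We then use the upper half of \eqref{eqn: upper and lower height bound} (Remark \ref{rmk: upper height bound}):
\[h(r(T))\le O(1)+\mathrm{const}\sum_{i\ \mathrm{odd},\, i<n_T} M_i\, e^{-\s[o,q_{i+1}]}.\]
It therefore suffices to bound $S\defeq\sum_{i \text{ odd}} M_i e^{-\s[o,q_{i+1}]}$ uniformly in $T$.

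\textbf{Key estimate.} An $L$-segment $[q_i,q_{i+1}]$ wraps $M_i$ times around some $\delta_j$. Its length is therefore $\asymp M_i\,\lambda_1^\rho(b_j)$, so $q_{i+1}=r(t^{i+1})$ with $t^{i+1}\ge c\,M_i$ for a uniform $c>0$. Using $\s[o,q_{i+1}]\ge \kappa t^{i+1}$ once $t^{i+1}\ge t_0$, we get
\[M_i e^{-\s[o,q_{i+1}]}\le c^{-1} t^{i+1}e^{-\kappa t^{i+1}}.\]
The times $t^{i+1}$ over odd $i$ are strictly increasing. Moreover, consecutive odd indices are separated by an $H$-segment of length at least $\ell_0$, so the $t^{i+1}$ are $\ell_0$-separated. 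Hence $\sum t^{i+1}e^{-\kappa t^{i+1}}$ is dominated by a convergent Riemann sum of $\int t e^{-\kappa t}\,dt$, with constants depending only on $\kappa,\ell_0$. Finitely many indices with $t^{i+1}<t_0$ contribute a bounded amount, because each $M_i\le t_0/c$ there. This gives $S<\infty$ uniformly in $T$.

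\textbf{Main obstacle.} The delicate point is justifying the comparison $\|\Psi(r(T))\|\asymp e^{\s[o,r(T)]}$ and the term-by-term upper bound from the proof of Theorem \ref{thm: heights bounded below scc} when the final piece $[q_{n_T-1},r(T)]$ is an incomplete $L$-segment. There I would apply Lemma \ref{lemma: low slack linear growth (simple)} with the partial winding number, which is still $\lesssim T$ and still suppressed by $e^{-\kappa T}$. A second, smaller point is converting bounded height along the ray into a point of $\Lambda$ over $r^+$. For this I would use that $\Lambda=\Lambda_{orb}^{o'}$ for any $o'$ (Theorem \ref{thm: limit sets coincide}) together with the bounded-distance comparison between $r(T)$ and orbit points.
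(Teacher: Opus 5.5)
Your proof is correct, but it is organized differently from the paper's. The paper argues directly. Assuming the height along $r$ is unbounded, it writes $r=H_1L_1H_2L_2\cdots$ and sets $A_n=\sum_{i\le n}M_i$, $B_n=\sum_{i\le n}\ell(H_i)$. It observes that $\liminf_n \s(r_n)/t_n=0$ follows from $\limsup_n A_n/B_n=\infty$. It then uses the upper half of \eqref{eqn: upper and lower height bound}, keeping only the $H$-slack in the exponent, to see that $\sum_i M_ie^{-pB_i}$ is unbounded. From this, Claim \ref{clm: AI claim} derives $\limsup A_n/B_n=\infty$ by contradiction.

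You instead argue contrapositively with the total slack. The bound $\s[o,q_{i+1}]\ge\kappa t^{i+1}$, together with $t^{i+1}\ge\ell(L_i)\gtrsim M_i$, dominates each odd term by $t^{i+1}e^{-\kappa t^{i+1}}$. The $\ell_0$-separation of these times then makes the series converge. This avoids the $A_n/B_n$ bookkeeping and the Claim altogether, and it gives an explicit bound on $\sup_T h(r(T))$ in terms of $\kappa,t_0,\ell_0$. The paper's route, in exchange, shows a little more: along a subsequence the ray spends asymptotically all of its time in $L$-segments. Your handling of an incomplete final $L$-segment through Lemma \ref{lemma: low slack linear growth (simple)}, which holds for every $n$ up to $O(1)$, is fine; the paper passes over this point.

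On your second, smaller point: the paper simply takes for granted that $r^+\in E$ forces the heights along $r$ to be unbounded, so you are not behind it here. Still, the comparison you sketch between the surface point over $r(T)$ and $\eta(\gamma_T).o'$ is not automatic. After applying $\eta(\gamma_T)$, points on different vertical lines can have very different heights: the numerators differ by roughly $Z(\gamma_T)(v-v')$, and $\|Z(\gamma_T)\|$ may be large compared with $\|\rho\chi(\gamma_T)\|$.

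It is cleaner to run your estimate along the segments $[o,\gamma_T.o]$ instead. By hyperbolicity of the Hilbert metric these stay uniformly close to $r$, so Lemma \ref{lem: slack continuous finite} shows that $\s\ge\kappa t$ persists up to an additive constant. Moreover, $h(\gamma_T.o)$ is exactly the height of $\eta(\gamma_T)$ applied to the surface point over $o$ (Lemma \ref{lemma: eta equivariant embedding exists}). Boundedness of these heights then produces a finite-height point of $\Lambda_{orb}=\Lambda$ over $r^+$.
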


Before we give the proof, we give an intuitive explanation.  The proof of Theorem \ref{thm: heights bounded below scc} gives the following behavior of height along a ray: the height goes up at a linear rate when fellow traveling a leaf of $\lambda$ in the positive direction, but the rate is a function of the slack accumulated along the ray.  In fact, this function is exponentially decreasing in the amount of slack accumulated.  
Along ``high slack'' segments of the ray, the height can only change a uniformly bounded amount, also divided by the exponential of slack accumulated.  So the only way for the height to go to infinity is to spend an exponential amount of time near leaves of $\lambda$ compared to the slack that has been accumulated, over and over. 

\begin{proof}
    We apply the Partition Lemma \ref{lemma: partition lemma multicurve case} to a ray $r$ with height going to infinity.  We write the image of $r$ as a concatenation of segments
    \[H_1 L_1 H_2 L_2 \dots,\]
    where each $H_i$ is an `$H$' segment and each $L_i$ is an `$L$' segment (from the Partition Lemma \ref{lemma: partition lemma multicurve case}).  Here we recall the features that we need:  there are positive constants $\epsilon_0$, $\epsilon_1$, and $p$ such that 
    \begin{itemize}
        \item $0 <\epsilon_0 \le p\cdot \ell(H_i) \le \s(H_i)$ holds for all $i$.
        \item $L_i$ is a translate of a segment of the form $[p, b_{j_i}^{M_i}p']$, where $p$ and $p'$ are $\epsilon_1$-close to the axis of $b_{j_i}$, and $\ell(L_i)\asymp M_i$.         
    \end{itemize}
    By definition of slack, $\s(H_i) \le \ell(H_i) + \ell(\overline{H_i}) \le \kappa \ell(H_i)$, where $\kappa$ depends only on $d_\rho$.  By the continuity property of slack (Lemma \ref{lem: slack continuous finite}) and the description of $L_i$ from the second item, $0\le \s(L_i)\le 2\epsilon_1$.
    
    Let $r_i = H_1L_1\dots H_i L_i$ and $t_i = \ell(r_i)$.  
    Now compute 
    \begin{align*}
        \frac{\s(r_n)}{t_n} &= \frac{\sum_{i =1}^n \s(L_i) + \sum_{i=1}^n\s(H_i)}{\sum_{i=1}^n\ell(L_i) + \sum_{i=1}^n\ell(H_i)}\\
        &\le \frac{n\epsilon_1 +\kappa\sum_{i =1}^n\ell(H_i)}{b\sum_{i=1}^nM_i + \sum_{i=1}^n\ell(H_i)}.
    \end{align*}
    For convenience, denote
    \[A_n = \sum_{i =1}^n M_i \text{ and } B_n = \sum_{i=1}^n\ell(H_i).\]
    Then we have 
    \[\frac{\s(r_n)}{t_n} \le \frac{n\epsilon_1/B_n +\kappa}{bA_n/B_n+1}.\]
    Since the $H$-segments all have positive length bounded below, $n\epsilon_1/B_n$ is uniformly bounded.
    Thus
    \[\liminf_{n\to \infty} \frac{\s(r_n)}{t_n} =0 \Leftrightarrow \limsup_{n\to \infty} \frac{A_n}{B_n} = \infty.\]

    In Theorem \ref{thm: heights bounded below scc},  we proved \eqref{eqn: upper and lower height bound} that  
    \[h(r(t_n))
    \le
    O(1)+ const \sum_{i=1 }^{n} M_i e^{-\s(r_{i-1}) - \s(H_{i-1})}
    \]   
    Since the left-hand quantity is unbounded so too is the right-hand quantity. 
    Notice, using additivity of slack and our bounds from above that 
    \[\sum_{i=1 }^{n} M_i e^{-\s(r_{i-1}) - \s(H_{i-1})}\prec \sum_{i =1}^n \frac{M_i}{e^{pB_i}}.\]
    Thus the last quantity
    is not bounded. 
    \begin{claim}\label{clm: AI claim}
        \[\limsup_{n\to \infty}\frac{A_n}{B_n} = \infty\]
    \end{claim}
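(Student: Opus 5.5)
We argue by contradiction, using only the two facts established just before the claim: the $H$-segments have lengths bounded below, so $B_i \ge i\,\ell_0$ with $\ell_0>0$ and $B_i$ is strictly increasing, and the series $\sum_{i} M_i e^{-pB_i}$ is unbounded.

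Suppose the claim fails, so that $\limsup_n A_n/B_n<\infty$. Then there is a constant $K$ with $A_n \le K B_n$ for all $n$. Since $M_i\ge 0$, each term satisfies $M_i \le A_i \le K B_i$.

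We now bound the series termwise:
\[
\sum_{i=1}^n \frac{M_i}{e^{pB_i}} \le K \sum_{i=1}^n B_i e^{-pB_i}.
\]
The function $x\mapsto xe^{-px}$ is bounded on $[0,\infty)$ and decays exponentially. Because $B_i \ge i\ell_0$, for all $i$ large enough that $i\ell_0\ge 1/p$ the function is decreasing on the relevant range, so
\[
B_i e^{-pB_i} \le i\ell_0\, e^{-p\ell_0 i}.
\]
The series $\sum_i i\ell_0\, e^{-p\ell_0 i}$ converges, and the finitely many initial terms are each at most $\sup_{x\ge0} xe^{-px}=1/(ep)$. Hence $\sum_i M_i e^{-pB_i}$ is bounded, which contradicts its unboundedness.

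It follows that $\limsup_n A_n/B_n=\infty$. If one prefers to avoid the monotonicity step, an alternative is summation by parts: write $M_i = A_i - A_{i-1}$ and bound $\sum_i A_i\bigl(e^{-pB_i}-e^{-pB_{i+1}}\bigr)$ using $A_i\le KB_i$. The direct termwise bound above is simpler, though, and only uses the linear lower bound on $B_i$.

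The one point that needs care is the case where the partition has only finitely many $H$-segments, so that the $B_i$ are not defined for all $i$. In that case the ray eventually stays in a single final $L$-segment. That segment then has infinite length and $A_n$ becomes infinite, so $\limsup_n A_n/B_n=\infty$ holds trivially. Alternatively, in that situation $\s(r)<\infty$ directly, which is all that Proposition \ref{prop: average slack and height} needs, since then $\s[o,r(t)]/t\to0$. I would handle this degenerate case separately before running the main estimate.
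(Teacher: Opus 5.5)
Your argument is correct and is essentially the paper's proof: assume $A_n\le CB_n$, so $M_ne^{-pB_n}\le CB_ne^{-pB_n}$, then use the linear lower bound on $B_n$ to dominate this by a convergent series, contradicting the unboundedness of $\sum M_ie^{-pB_i}$. The only differences are minor. The paper bounds $B_ne^{-pB_n}\le e^{-pB_n/2}$ for large $n$ where you use monotonicity of $xe^{-px}$, and you additionally treat the degenerate case of finitely many $H$-segments (where $\s(r)<\infty$ directly), which the paper passes over silently.
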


    \begin{proof}[Proof of the claim]
        Suppose not.  Then there is a $C$ such that $M_n\le A_n\le CB_n$ for all sufficiently large $n$.  Hence also \[M_ne^{-pB_n} \le CB_n e^{-pB_n} \le Ce^{-pB_n/2},\]
        for all $n$ large enough.  
        Since $B_n$ is a sum of $n$ positive numbers admitting a global lower bound,  we know that $\sum B_ne^{-pB_n}$ is bounded by a geometric series, hence converges.  This contradicts the claim above that $\sum M_ie^{-pB_i}$ diverges, proving the claim.
    \end{proof}

    The Proposition is proved.
\end{proof}

Now we can conclude the proof of the main result of this section.
\begin{remark}
    If we knew that the analogue of Proposition \ref{prop: average slack and height} is true for $d =2$ and $\uu\lambda$ is arbitrary, then the following proof would apply without any changes.
\end{remark}

\begin{proof}[Proof of Theorem \ref{thm: positve lebesgue}]
    
    Since $\Lambda_+(\eta)\subset \Lambda(\eta)$, by Lemma \ref{lem: measure of union}, it suffices to prove that $m_{d-1}(E)=0$ (hence that $m_{d-1}(F)>0)$.  By Proposition \ref{prop: average slack and height}, if $r \in E$, then $\liminf_{t\to \infty} \frac{\s[o,r(t)]}{t} =0$.  We will show that this is a measure zero condition.  

    The map \[\pi^+:T^1\Omega \to \partial^\infty\Omega\]
    given by following $v \in T^1\Omega$ forward to $v^+ = \pi^+(v)\in \partial^\infty\Omega$ is a surjective $C^1$-submersion.  
    Thus, any Lebesgue class measure on $T^1\Omega$ pushes forward via $\pi^+$ to a (not necessarily locally finite) measure on $\partial^\infty \Omega$ in the class of Lebesgue.  What is relevant for us is that the pushforward measure has the same null sets as any measure in the Lebesgue class on $\partial^\infty \Omega$. 
    
    Let $W^sE = (\pi^+)\inverse (E)$.  Then $m_{d-1}(E)=0$ if and only if the Lebesgue measure of $W^sE$ on $T^1\Omega$ is also zero. 
    
    Observe that $W^sE$ is $g_t$-invariant, $\rho$-invariant, and is a union of the weakly stable leaves of the geodesic flow whose futures share a common endpoint in $E$.  
    Using the definitions and the continuity property of the slack functional given by Lemma \ref{lem: slack continuous finite}, we therefore have that 
    \[\liminf_{t\to \infty}\frac{\s[v, g_tv]}{t}= 0,\]
    for all $v \in W^sE$.  

    The set $W^sE$ descends to a $\uu g_t$-invariant set $\uu {W^sE}$ on the compact manifold $T^1\Omega/\rho$.  The geodesic flow $\uu g_t$ is $C^{1+\epsilon}$ and Anosov (Theorem \ref{thm: flow Anosov}).  Classical results of Ruelle and Bowen \cite{BowenRuelle} (see also \cite[\S8]{Crampon:Hilbertflow} or \cite[\S2.2]{PotrieSambarino}) give the following:  there is a unique $\uu g_t$ invariant ergodic probability measure $\nu$ with full support on $T^1\Omega/\rho$ such that for Lebesgue almost every point $\uu v \in T^1\Omega/\rho$ and every  continuous function $f : T^1\Omega/\rho \to \R$, we have 
    \[\frac{1}{T}\int_0^Tf(\uu g_t\uu v)~dt \to \int f~d\nu, ~ \text{ as } T\to \infty.\]
    Note that in spite of this, $\nu$ is \emph{not} in the Lebesgue measure class, unless $\Omega$ is an ellipsoid 
    \cite{Benoist_CDI}. 
    \medskip

    From \S\ref{sec: slack}, we see that 
    \[\s[\uu v,\uu g_t \uu v] = \int_0^t 1-L_\delta(\uu g_s \uu v)~ds +O(1).\]
    Also, $1-L_\delta$ is a continuous function on $T^1\Omega/\rho$ that is non-negative and strictly positive away from $T^1_+\uu \lambda$.
    Since $\nu$ has full support and $T^1_+\uu\lambda$ is a proper closed set, we conclude that \[\int 1-L_\delta ~d\nu >0.\]
    So for Lebesgue almost every $ \uu v \in T^1\Omega/\rho$, 
    \[\frac{\s[\uu v, \uu g_T\uu v]}{T} \to \int 1-L_\delta ~d\nu \not=0.\]
    This proves that $\uu {W^sE}$, hence $W^sE$, has Lebesgue measure $0$, and concludes the proof of the theorem.
\end{proof}

\section{Questions}\label{sec: questions}

We collect a few questions here. For each question, at least one of the authors would like to know the answer. Wherever possible, we use only notation defined in the introduction of the paper.
\medskip

It's possible that the tools of \cite{Panetal:I,Panetal:II} may be useful for tackling the following question.  
\begin{question}   
    Does the Hausdorff dimension of the limit set of coaffine representations in and on the boundary of $\mathcal A$ satisfy any (semi-)continuity property?  In particular, if $\eta_n\in \mathcal A$ are coaffine representations converging to some unexceptional $\eta\in \partial \mathcal A$, then is it true that the Hausdorff dimensions of the limit sets converge to that of $\eta$?  
\end{question}

Observe that if $\eta_n$ are representations satisfying hypotheses \ref{hyp A} of the introduction which are unexceptional and converge to a representation with trivial cocycle, then the Hausdorff dimension jumps down by one in the limit.  

In the case $d=2$ and the tension lamination $\uu \lambda$ is a multicurve with only a few components relative to the genus of $S$, we can guarantee by direct computation in $H^1(\Gamma,(\R^{d+1})^*_{\rho\chi})$ that there are positive cocycles. In general, we would like to know more about the cone of positive cocycles.

\begin{question}
    Under what circumstances is the cone $\sP_{\rho\chi}$ non-empty? Under hypotheses \ref{hyp B}, is $\sP_{\rho\chi}$ a polyhedral cone (i.e.\ finite-sided)? Under what circumstances is the cone sharp (contains no non-trivial subspace in its closure)?
\end{question}

We view the coaffine space $\cas \subset \RP^{d+1}$ as the primary geometry in this paper. There is a dual geometry, which is usual affine geometry, $\mathbb A^{d+1}$. The minimal domain $C_{min}$ which we studied is dual to a maximal domain $C_{max}\subset \mathbb A^{d+1}$. It would be interesting to understand the structure of this domain and its boundary.
\begin{question}
     What significance do the tension locus $\lambda$ and the bending lamination $\beta$ have in the affine space? How are representations in $\partial \mathcal A$ characterized geometrically in this setting?
     See the related work of Ablondi \cite{Ablondi}.
\end{question}

A representation $\rho: \Gamma \to\PGL(n,\R)$ of a finitely generated group $\Gamma$ is said to be $1$-divergent if for any sequence $\gamma_n \in \Gamma$ of distinct elements, we have 
\[(\sigma_1 - \sigma_2)(\rho(\gamma_n)) \to \infty , ~ n \to \infty.\]

\begin{question}
    Are the unexceptional coaffine representations on the boundary of the Anosov locus considered in this paper $1$-divergent? 
\end{question}
The exceptional representations under hypotheses \ref{hyp A} are not $1$-divergent.
\medskip

As discussed in the introduction, in the case that $\eta\in \partial \mathcal A$ is positive, the previous work in \cite{BF} gives a bending lamination $\beta$ on the boundary of $C_{min}$ in $\sC \Omega$. Drawing on the analogy to the ending lamination theorem \cite{ELTI, ELTII}, in which the conformal structure at the non-degenerate end together with the ending lamination determine the representation up to conjugacy, we ask the following.

\begin{question}
    When $d=2$ and $\Gamma$ is a surface group, 
    given $\eta\in \sP_{\rho\chi}$, do the tension lamination $\uu\lambda$ and bending lamination $\beta$ determine $\eta$ up to conjugacy?
\end{question}

Corollary \ref{cor: epigraph}, which gives $\lambda^+ \subset \mathcal{H} \inverse( +\infty)$ (when $\eta\in \sP_{\rho\chi}$), leaves a loose end: 
\begin{question}\label{q: Hinverse}
    For $\eta \in \sP_{\rho\chi}$, what is a precise characterization of the points in  $\mathcal H\inverse (+\infty)$?
\end{question}

As discussed in Main Theorem \ref{mainthm: structure of limit sets} and its corollary, we construct examples of representations $\eta$ which have an interesting constellation of properties. In particular, we give discrete and faithful representations $\eta$ such that
\begin{enumerate}
    \item $\eta$ preserves a minimal properly convex domain $C_{min}\subset \RP^{d+1}$,
    \item the action of $\eta$ on $\RP^{d+1}$ admits a global fixed point $p$, and
    \item the point $p \in \partial C_{min}$.
\end{enumerate}  

\begin{question}
    Are there other discrete and faithful representations in the coaffine group that satisfy the properties above and are not in $\overline{\mathcal{A}}$?
\end{question}

We do not have a geometric characterization of the minimal convex domain of discontinuity to distinguish these two types of representation, though their dynamical behavior is different.

\appendix
\section{Proof of Theorem \ref{thm: co-stable norm and optimal best maps}}\label{appendix}

We restate the result here for convenience of the reader.
Recall that $\Omega \subset \RP^d$ is a strictly and properly convex domain, $\rho: \Gamma \to \SL(d+1,\R)$ is an injective representation that divides $\Omega$ without torsion, and $M = \Omega/\rho$ is the corresponding closed projective manifold.

\begin{thmrepeated}
    Let $\varphi\in \partial \mathcal B_\rho \subset H^1(M,\R)$, i.e., $\|\varphi\|_\rho =1$.
    There is a $\varphi$-equivariant $1$-forward Lipschitz map $\tau:\Omega \to \R$ with the following properties.
    \begin{enumerate}

        \item $\Tension (\tau)\subset \Omega$ is a non-empty $\rho$-invariant geodesic lamination, that descends to an oriented chain recurrent geodesic lamination $\Tension(\tau)/\rho\subset M$.
        \item $\tau$ is \textbf{optimal} in the sense that 
        \[\Tension (\tau) = \Tension (\varphi) \defeq \bigcap_{f} \Tension(f),\]
        where the intersection is taken over all $\varphi$-equivariant $1$-forward Lipschitz maps. 
        \item If $\{\gamma_n\}$ is any sequence in $\Gamma$ satisfying 
        \[\lim_{n \to \infty} \frac{\varphi(\gamma_n)}{\lambda_1^\rho(\gamma_n)} =1,\]
        then every accumulation point of $\{\gamma_n\}$ in the measure topology has support that is contained in $\Tension(\varphi)/\rho \subset M$.  
    \end{enumerate}
    In particular $\|\varphi\|_\rho = \min_{f\sim \varphi}\vec \Lip(f) = \vec\Lip(\tau)$.
\end{thmrepeated}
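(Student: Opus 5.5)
The plan is to follow the classical Thurston / Guéritaud--Kassel strategy for best Lipschitz maps, adapted to the asymmetric Finsler metric $d_\rho$ and to real-valued targets, where convexity makes everything cleaner.

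\textbf{Existence and value of the optimal constant.} Lemma \ref{lem: lower bound lip} already gives $\|\varphi\|_\rho\le \min_{f\sim\varphi}\vec\Lip(f)$. For the reverse inequality I would build a $1$-forward Lipschitz $\varphi$-equivariant map directly, by a sup-convolution (McShane) formula:
\[\tau_0(p)\defeq \sup_{\gamma\in\Gamma}\bigl(\varphi(\gamma)-d_\rho(\gamma.o,p)\bigr)\quad\text{or rather}\quad \tau_0(p)=\inf_{\gamma}\bigl(\varphi(\gamma)+d_\rho(\gamma.o,p)\bigr).\]
The infimum form is $1$-forward Lipschitz by the triangle inequality and is $\varphi$-equivariant by reindexing. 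Finiteness (that the infimum is not $-\infty$) is where $\|\varphi\|_\rho\le1$ enters. I will need $\varphi(\gamma)\ge -d_\rho(\gamma.o,o)-C$ uniformly, i.e.\ a bound $-\varphi(\gamma)\le d_\rho(\gamma.o,o)+C$. Since $d_\rho(\gamma.o,o)$ equals $\lambda_1^\rho(\gamma^{-1})$ up to a bounded error along quasi-geodesics (Proposition \ref{prop: singular values and DS metric}), it suffices to show that $\varphi(\gamma)\le\lambda_1^\rho(\gamma)$ for all $\gamma$ upgrades to $\varphi(\gamma)\le d_\rho(o,\gamma.o)+C$ for all $\gamma$. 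For this I would use a closing argument: by the Anosov closing lemma (Theorem \ref{thm: flow Anosov}), any segment $[o,\gamma.o]$ can be modified by a bounded word to give $\gamma'$ with $\lambda_1^\rho(\gamma')=d_\rho(o,\gamma.o)+O(1)$. This gives the result, and hence $\min\vec\Lip=\|\varphi\|_\rho$.

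\textbf{Tension locus is a lamination, and optimality.} For any $1$-forward Lipschitz $f\sim\varphi$, a point $p$ with $\vec\Lip_f(p)=1$ should lie on a directed segment along which $f$ grows at unit rate. In the real-valued setting I would argue via the slack functional: use local maximal stretching plus strict convexity of the unit spheres (Theorem \ref{thm: rhodist}(2)) to show that maximal stretch directions are unique and propagate, as in Guéritaud--Kassel, so that tension points lie on complete directed lines along which $f$ is isometric. Two such lines cannot cross transversely: by strict convexity of the Finsler ball one could shortcut and gain, so the set of them is a lamination. Optimality would follow by taking a countable dense family $f_n$ of $1$-Lipschitz equivariant maps and averaging, $\tau=\sum 2^{-n}f_n$. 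The result is still $1$-forward Lipschitz and $\varphi$-equivariant, and its tension locus is exactly $\bigcap_n\Tension(f_n)$, which equals $\Tension(\varphi)$ by upper semicontinuity. Non-emptiness follows because if $\Tension(\tau)=\emptyset$, compactness of $M$ gives $\vec\Lip_\tau\le1-\epsilon$ everywhere, hence $\varphi(\gamma)\le(1-\epsilon)\lambda_1^\rho(\gamma)$, contradicting $\|\varphi\|_\rho=1$.

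\textbf{Item (3) and chain recurrence.} Item (3) is the measure argument already used in Theorem \ref{thm: failure of anosov along lamination}, applied with $L_\delta$ built from $\tau$, using $\int L_\delta\,d\mu_n=\varphi(\gamma_n)/\lambda_1^\rho(\gamma_n)$ and $L_\delta^{-1}(1)=T^1_+\uu\lambda$. Orientation comes from the direction of increase of $\tau$. For chain recurrence, I would show that if some part of $\uu\lambda$ is not chain recurrent, one can build a Conley-type Lyapunov function and perturb $\tau$ by a small equivariant function that strictly decreases the stretch there. That would contradict optimality. I expect this step, and the equality $d_\rho$-closing estimate in the first paragraph, to be the main obstacles; for chain recurrence I would first try to adapt the Guéritaud--Kassel / FLM perturbation argument verbatim.
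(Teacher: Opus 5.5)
The genuine gap is in your second step. You assert that for \emph{every} $1$-forward Lipschitz $f\sim\varphi$, each point with $\vec\Lip_f(p)=1$ lies on a complete directed line along which $f$ is isometric, and you plan to get this from local stretching and strict convexity. That is false. Take the optimal $\tau$ and a small ball $U$ around a point off $\Tension(\varphi)$, disjoint from its translates, on which $\tau$ is $(1-\delta)$-forward Lipschitz. Take a very short segment $[a,b]\subset U$. McShane-extend (Theorem~\ref{thm: extension}) the data $\tau|_{\partial U}$, $f(a)=\tau(a)$, $f(b)=\tau(a)+d_\rho(a,b)$, and glue equivariantly. The result is a $\varphi$-equivariant $1$-forward Lipschitz map with $[a,b]\subset\Tension(f)$. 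However, the sup formula shows that $f$ is not isometric on any continuation of $[a,b]$ past $b$. So by Lemma~\ref{lem: lines dont cross}, no complete unit-rate line passes through $b$.

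Strict convexity only gives uniqueness of directions (non-crossing). It cannot turn near-unit-rate pairs accumulating at $p$ into a segment \emph{through} $p$. The missing idea is that this step must use optimality. For $p\in\Tension(\varphi)$, normalize $\min\tau(\vec B_r(p))=0$ and $\max\tau(\cev B_r(p))=2m$. If $m<r$, McShane-extend the data ``$\tau$ on $\partial(\vec B_r(p)\cup\cev B_r(p))$, constant $m$ on $B_\epsilon(p)$''. This gives an equivariant $1$-forward Lipschitz $f$ that is constant near $p$, contradicting $p\in\Tension(\varphi)$. Hence $m=r$, which forces unit-rate segments $[x,p]$ and $[p,y]$ with $p\in[x,y]$. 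Non-crossing and closedness then extend these to complete leaves (Lemma~\ref{lem: exists line}). So the lamination property has to be proved for $\Tension(\varphi)=\Tension(\tau)$ after the optimal map is built, not for arbitrary $f$.

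Your averaging step also needs repair. You use a countable \emph{dense} family and ``upper semicontinuity''. But $x\mapsto\vec\Lip_f(x)$ is semicontinuous in $x$, not in $f$, and uniform approximation can raise local constants to $1$. So density does not give $\bigcap_n\Tension(f_n)=\Tension(\varphi)$. Instead, for each $p\notin\Tension(\varphi)$ pick an $f_p$ that is $(1-\delta_p)$-forward Lipschitz near $p$. Pass to a countable subcover of $\Omega\setminus\Tension(\varphi)$ and average those maps (normalized at $o$), as in Lemma~\ref{lem: optimal maps exist}.

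Your existence step is correct and genuinely simpler than the paper's route, which goes through the stable norm, the estimate of Theorem~\ref{thm: doubling}, and a Busemann-type $\liminf$ on the universal abelian cover. The obstacle you foresee there does not exist: by \eqref{eqn: periods are lambda1},
$\varphi(\gamma)+d_\rho(\gamma.o,o)=d_\rho(o,\gamma^{-1}.o)-\varphi(\gamma^{-1})\ge\lambda_1^\rho(\gamma^{-1})-\varphi(\gamma^{-1})\ge0$.
So $\tau_0(o)=0$ and no closing lemma is needed. Your claim that displacement is within $O(1)$ of $\lambda_1^\rho$ is false in general; Proposition~\ref{prop: singular values and DS metric} compares displacement with $\sigma_1$. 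Your compactness proof of non-emptiness is also correct and shorter than the paper's, though the measure computation is still needed for item (3). Chain recurrence via the FLM perturbation argument is exactly what the paper does.
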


This appendix is devoted to giving a (mostly) self-contained proof of this result and its relationship to the (co)stable norm on (co)homology.

\subsection{An extension theorem for directional Lipschitz maps}
The following classical extension theorem is due to McShane \cite{McShane} in the context of real-valued Lipschitz mappings on metric spaces.  The proof adapts easily to this setting. 
\begin{theorem}\label{thm: extension}
    Let $Y\subset X$ be any non-empty subspace and suppose $f: Y \to \R$ is $L$-forward Lipschitz.  There is an $L$-forward Lipschitz extension $\phi: X \to \R$, i.e., $\phi|_Y = f$ and $\phi$ is $L$-forward Lipschitz. 
\end{theorem}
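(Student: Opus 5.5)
The plan is to adapt McShane's infimal-convolution formula to the asymmetric setting. Given $f: Y \to \R$ that is $L$-forward Lipschitz, meaning $f(y)-f(y') \le L d(y',y)$ for all $y,y' \in Y$, define
\[\phi(x) \defeq \inf_{y \in Y} \bigl( f(y) + L d(y,x) \bigr), \qquad x \in X.\]
The order of the arguments in $d(y,x)$ is the essential choice. We take the distance \emph{from} $y$ \emph{to} $x$ so that the triangle inequality for $d$, which holds in the directed form $d(y,x') \le d(y,x) + d(x,x')$, produces exactly the forward direction of the Lipschitz estimate.

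First I will check that $\phi$ is finite everywhere. Since $Y \neq \emptyset$, we have $\phi(x) < \infty$. For a lower bound, fix $y_0 \in Y$. For any $y \in Y$, the forward Lipschitz condition gives $f(y) \ge f(y_0) - L d(y,y_0)$. Combining this with $d(y,y_0) \le d(y,x) + d(x,y_0)$ yields
\[f(y) + L d(y,x) \ge f(y_0) - L d(x,y_0).\]
Taking the infimum over $y$ gives $\phi(x) > -\infty$.

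Next I will check the extension property. For $y' \in Y$, choosing $y = y'$ in the infimum gives $\phi(y') \le f(y') + L d(y',y') = f(y')$. Conversely, for every $y \in Y$ we have $f(y) + L d(y,y') \ge f(y')$, which is precisely the forward Lipschitz hypothesis on $f$. Hence $\phi(y') = f(y')$, so $\phi|_Y = f$.

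Finally, the forward Lipschitz bound. For $x, x' \in X$ and any $y \in Y$, the directed triangle inequality $d(y,x') \le d(y,x) + d(x,x')$ gives
\[\phi(x') \le f(y) + L d(y,x) + L d(x,x').\]
Taking the infimum over $y$ yields $\phi(x') - \phi(x) \le L d(x,x')$, which is the required $L$-forward Lipschitz condition.

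The only genuine obstacle is bookkeeping the argument order in the asymmetric distance. The finiteness step is the one point where a backward distance $d(x,y_0)$ appears, but it enters only as a finite constant, so it causes no difficulty.
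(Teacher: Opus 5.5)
Your proof is correct and is essentially the paper's McShane-type argument, in mirror image. The paper uses $\phi(x)=\sup_{y\in Y}(f(y)-L\,d(x,y))$, which is the smallest $L$-forward Lipschitz extension, while your infimum $\inf_{y\in Y}(f(y)+L\,d(y,x))$ gives the largest; your finiteness check also makes explicit a step the paper leaves implicit.
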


\begin{proof}
    Define 
    \[\phi(x) = \sup_{y\in Y} f(y) - Ld(x,y),\]
    and observe that if $x\in Y$, then for all $y \in Y$
    \[f(y)-f(x)\le Ld(x,y) \Leftrightarrow f(y) - Ld(x,y)\le f(x)\]
    by the $L$-forward Lipschitz property of $f$.  Thus $\phi(x) = f(x)$, when $x \in Y$.
    Now let $x_1,x_2 \in X$ be arbitrary.
    \begin{align*}
        \phi(x_1) - \phi(x_2) &= \sup_{y_1\in Y} \left(f(y_1) - Ld(x_1,y_1)\right) -\sup_{y_2\in Y} (f(y_2) - Ld(x_2,y_2))\\
        &\le \sup_{y\in Y}( f(y) - Ld(x_1,y) - f(y) +Ld(x_2,y)) \le Ld(x_2,x_1),
    \end{align*}
    where the last inequality follows from the triangle inequality
    \[d(x_2,y) \le d(x_2, x_1) +d(x_1,y),\]
    completing the proof.
\end{proof}

\subsection{The stable norm and co-norm}
We now define a function on homology $H_1(M,\Z)$ that induces, via homogeneity and continuity, an (asymmetric) norm on $H_1(M,\R)$.  Later, we relate this asymmetric norm to the function $\varphi\mapsto \|\varphi\|_\rho$ on cohomology by duality.

Recall that abelianization provides a natural surjection $\gamma\in \Gamma \mapsto [\gamma]\in H_1(M,\Z)$.  For $c \in H_1(M,\Z)$, define
\[f(c) = \inf_{\gamma\in \Gamma \text{ and } [\gamma]=c} \lambda_1^\rho(\gamma).\]
By Theorem \ref{thm: rhodist}, $\lambda_1^\rho(\gamma)$ is the length of the geodesic representative of the free homotopy class of $\gamma$ in $M$ for the metric $d_\rho$.
While $f$ need not be symmetric, it is readily seen to have the following properties
\begin{enumerate}
    \item $f(c) \ge 0$ with equality only if $c =0$.
    \item $f(nc) \le nf(c)$
    \item $f(c_1+c_2) \le f(c_1) +f(c_2) +2D_\rho$, where $D_\rho = \max_{p,q \in M} d_\rho(p,q)$ is the diameter of $M$ with the $d_\rho$ metric.
\end{enumerate}

\begin{proposition}
    There is a (not necessarily symmetric) norm $ \|\cdot \|_\rho$ on $H_1(M,\R)$ continuously extending the rule
    \[\|c\|_\rho = \lim_{n\to \infty}\frac{f(nc)}n\]
    where $c$ is an integer point of $H_1(M,\R)$.
\end{proposition}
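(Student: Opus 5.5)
The plan is to follow the standard construction of the Federer--Gromov stable norm, adapted to the asymmetric setting, in four steps.

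First, fix an integer class $c$ and show the limit $\lim_n f(nc)/n$ exists. Property (3) gives $f(ac+bc)\le f(ac)+f(bc)+2D_\rho$, so $g(n)\defeq f(nc)+2D_\rho$ is subadditive in $n\ge 1$. Fekete's lemma then gives $\lim_n g(n)/n=\inf_n g(n)/n$, and this equals $\lim_n f(nc)/n$. Since $f\ge 0$, the limit is finite and non-negative. Call it $\|c\|_\rho$.

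Second, record the properties of $\|\cdot\|_\rho$ on the lattice $H_1(M,\Z)$ (modulo torsion; torsion classes $c$ have $f(nc)$ bounded, hence norm $0$, so the function descends to the image lattice in $H_1(M,\R)$).
\begin{itemize}
\item Positive homogeneity: $\|kc\|_\rho=k\|c\|_\rho$ for $k\in\N$, by passing to the subsequence $nk$.
\item Subadditivity: $\|c_1+c_2\|_\rho\le\|c_1\|_\rho+\|c_2\|_\rho$. Apply (3) to $nc_1,nc_2$, divide by $n$, and note that $2D_\rho/n\to0$.
\end{itemize}
Homogeneity lets us extend to rational points via $\|c/k\|_\rho\defeq\|c\|_\rho/k$, which is well defined. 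Subadditivity and positive homogeneity persist on $H_1(M,\Q)$, so the extension is a sublinear function there.

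Third, extend to $H_1(M,\R)$ by continuity. A sublinear function on $\Q^b$ that is bounded on a rational basis $\pm e_i$ is Lipschitz for the $\ell^1$ norm: $\|x\|_\rho-\|y\|_\rho\le\|x-y\|_\rho\le \max_i\max(\|e_i\|_\rho,\|-e_i\|_\rho)\,|x-y|_1$. It therefore extends uniquely to a continuous, sublinear, positively homogeneous function on $H_1(M,\R)$.

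Fourth, and the main obstacle, prove definiteness: $\|c\|_\rho>0$ for $c\ne0$. The plan is to compare with a symmetric quantity. Choose closed $1$-forms $\omega_1,\dots,\omega_b$ dual to a basis of $H_1(M,\R)$. Evaluating $\omega_i$ on a geodesic representative gives $|\omega_i(\gamma)|\le \|\omega_i\|_\infty'\lambda_1^\rho(\gamma)$, where $\|\cdot\|_\infty'$ is the sup over unit vectors of $|\omega_i|$ for the Finsler norm $F_\rho$. This is finite by compactness, since $F_\rho$ is a continuous norm on each fiber. Hence $f(c)\ge C^{-1}\max_i|\langle\omega_i,c\rangle|$, and the same bound holds for $f(nc)/n$ by linearity of the pairing. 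This bound passes to the limit and then by continuity to all of $H_1(M,\R)$, giving $\|c\|_\rho\ge C^{-1}|c|$ for a genuine norm $|\cdot|$. So $\|\cdot\|_\rho$ is an asymmetric norm. The upper bound (finiteness, and comparability with $|\cdot|$) already follows from the third step.
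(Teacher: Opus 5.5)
Your proposal is correct and follows the paper's proof essentially step by step. Both arguments get existence of the limit from subadditivity (you apply Fekete's lemma to $f(nc)+2D_\rho$, the paper cites items 2 and 3), then homogeneity and subadditivity on integral and then rational classes, the Lipschitz extension via $\|v\|_\rho\le K\|v\|_1$, and definiteness by pairing with closed $1$-forms. Your departures are minor improvements: you explicitly dispose of torsion classes, and you use $|\omega_i|$ for a dual basis to get the quantitative bound $\|c\|_\rho\ge C^{-1}|c|$, whereas the paper proves the one-sided inequality $\langle\omega,v\rangle\le\|v\|_\rho\|\omega\|_\infty$ for all closed $\omega$ and concludes $v=0$ by de Rham duality.
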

\noindent The function $\|\cdot \|_\rho$ on $H_1(M,\R)$ from the proposition is called the \textbf{stable norm} (even when it is not symmetric).
The details of the proof of the proposition can be found, e.g., in \cite[Proposition 2.1]{Bangert:minimalgeodesics},  \cite[\S1.1]{Massart:stablenorm}, or \cite[\S3]{AL:stablenorm}, but we give an outline, since these references explicitly state the result for Riemannian metrics.  Note however that Bangert has studied the stable norm for (not necessarily symmetric) \emph{smooth} Finsler metrics \cite{Bangert:equiv, Bangert:calibrations}. 
\begin{proof}
First, that the limit defining $\|c\|_\rho$ for an integral class $c$ exists (and is equal to $\inf_n\frac{f(nc)}n$) follows from items 2 and 3.
For a positive integer $m$ and integer class $c$, it is clear also that $\|mc\|_\rho = m\|c\|_\rho$.
From item 3, we deduce that 
\[\|c_1+c_2\|_\rho \le \|c_1\|_\rho + \|c_2\|_\rho\]
holds for all $c_1, c_2\in H_1(M,\Z)$.
These properties hold just as well after extending the domain of $\| \cdot \|_\rho$ to the rational points $H_1(M,\Q)$ by homogeneity via division/scaling by \emph{positive} integers. 

To see that $\|\cdot \|_\rho$ extends continuously to $H_1(M,\R)$, choose a basis $\{e_1, ..., e_{b_1}\}$ for $H_1(M,\Z)$.  Given $v \in H_1(M,\Q)$, write $v = \sum a_i e_i$ for rationals $a_i$ and observe 
\[\|v\|_\rho \le \sum|a_i| \cdot \max_{1\le i \le \dim(H_1(M,\mathbb Q))} \{\|e_i\|_\rho, \|-e_i\|_\rho\}=  \|v\|_1K, \]
where $\|v\|_1$ is the $1$-norm associated to the chosen basis, and $K$ is a constant.
From this Lipschitz continuity property, one deduces that $\|\cdot \|_\rho$ extends continuously to a homogeneous function on $H_1(M,\R)$ that satisfies the triangle inequality.  

The only outstanding assertion is that $\|v \|_\rho = 0$ if and only if $v =0$.
\begin{claim}\label{clm:infinity1}
    Let $\omega$ be a smooth closed $1$-form on $M$.  Then 
    \[\langle \omega, v\rangle\le \|v\|_\rho\cdot \|\omega\|_\infty\]
\end{claim}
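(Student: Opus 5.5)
The plan is to prove the inequality first for integral classes and then extend by density and homogeneity. Let $\omega$ be a smooth closed $1$-form on $M$, and recall $\|\omega\|_\infty = \max_{v\in T^1_\rho M}\omega(v)$ as in the proof of Lemma \ref{lem: lower bound lip}.

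Fix an integral class $c\in H_1(M,\Z)$ and $\gamma\in\Gamma$ with $[\gamma]=c$. Let $\underline g$ be the closed $d_\rho$-geodesic representative of the free homotopy class of $\gamma$. By Theorem \ref{thm: rhodist}, its directed length is $\lambda_1^\rho(\gamma)$. Since $\omega$ is closed, $\langle\omega,c\rangle=\int_{\underline g}\omega$. Parametrize $\underline g$ by $d_\rho$-arclength, so that its velocity lies in $T^1_\rho M$. Then integrating pointwise gives
\[\int_{\underline g}\omega\le \|\omega\|_\infty\,\lambda_1^\rho(\gamma).\]
Taking the infimum over all $\gamma$ with $[\gamma]=c$ yields $\langle\omega,c\rangle\le\|\omega\|_\infty f(c)$.

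Apply this to $nc$ and use linearity of the pairing: $n\langle\omega,c\rangle\le \|\omega\|_\infty f(nc)$. Dividing by $n$ and letting $n\to\infty$ gives $\langle\omega,c\rangle\le\|\omega\|_\infty\|c\|_\rho$ for every integral $c$. Both sides are positively homogeneous, so the inequality extends to $H_1(M,\Q)$ by scaling by positive rationals. Both sides are continuous in $v$: the left side is linear, and the right side is continuous by the Lipschitz estimate $\|v\|_\rho\le K\|v\|_1$. Hence the inequality extends to all $v\in H_1(M,\R)$.

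The one point needing care is that the norm is asymmetric. The inequality holds with $\|\omega\|_\infty$ defined via forward unit vectors because the geodesic is traversed in its given direction, so only forward-unit velocities occur. There is no real obstacle beyond this.

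This claim then gives the outstanding assertion that $\|v\|_\rho=0$ only for $v=0$. If $v\ne0$, choose by de Rham duality a closed form $\omega$ with $\langle\omega,v\rangle>0$. The claim then forces $\|v\|_\rho>0$.
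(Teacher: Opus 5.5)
Your proof is correct and follows essentially the same route as the paper. Both arguments bound $\int\omega$ along the closed $d_\rho$-geodesic representing $nc$ by $\|\omega\|_\infty\lambda_1^\rho$, divide by $n$ to get the inequality for integral classes, and then extend by positive homogeneity and continuity of $\|\cdot\|_\rho$. The only difference is that you take the infimum over representatives explicitly, where the paper picks a minimizing sequence $\gamma_n$; the infimum step is valid because $\|\omega\|_\infty\ge 0$.
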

\begin{proof}[Proof of the claim]
    First we show the claim for $c \in H_1(M,\Z)$.
    Let $\gamma_n \subset \Gamma$ be such that $\|c\|_\rho = \inf \frac{\lambda_1^\rho (\gamma_n)}{n}$ and $\gamma_n /n \in [c]$.
    Then 
    \[\langle \omega, c\rangle  = \frac 1n \int_{\gamma_n} \omega \le \|\omega\|_\infty \frac{\lambda_1^\rho(\gamma_n)}n \]
    holds for all $n$.  In particular, we have 
    \[\langle \omega, c\rangle \le \|\omega\|_\infty \cdot \|c\|_\rho. \]
    The claim follows for arbitrary $v \in H^1(M,\R)$ by homogineity and continuity of $\|\cdot \|_\rho$.  
\end{proof}
So, if $v \in H_1(M,\R)$ has $\|v\|_\rho = 0$, then by the claim, $\langle \omega, v\rangle =0$ for all closed one forms $\omega$.  Since closed $1$-forms span the dual space to $H_1(M,\R)$ (de Rham's Theorem), this implies  that $v =0$.
\end{proof}

That the stable norm does not deviate too far from $f$ on $H_1(M,\Z)$ is the content of the next result.
The idea for this theorem goes back to Burago \cite{Burago:periodic}, who proved the result for Riemannian metrics on tori.  
The proof given in Ammann and Löh is for all compact Riemannian manifolds.  A careful reading of their proof shows that it applies in our setting as well, without modifications.  
\begin{theorem}[Propositions 3.3 and 4.5 of \cite{AL:stablenorm}]\label{thm: doubling}
    There is a constant $D$ such that 
     \[f([\gamma]) \le \|[\gamma]\|_\rho +D\]
    holds for all $\gamma \in \Gamma$.
\end{theorem}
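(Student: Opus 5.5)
We must prove Theorem \ref{thm: doubling}: $f([\gamma]) \le \|[\gamma]\|_\rho + D$ uniformly for integral classes. The plan follows Burago's idea as adapted by Ammann--Löh. Write $c=[\gamma]\in H_1(M,\Z)$. Since $f(c)\ge \|c\|_\rho$ always (the limit equals $\inf_n f(nc)/n$ and $f$ is subadditive up to $2D_\rho$), the content is the reverse bound. The key object is a \emph{short} representative of a \emph{multiple} of $c$: fix large $n$ and choose $\gamma_n$ with $[\gamma_n]=nc$ and $\lambda_1^\rho(\gamma_n)\le n\|c\|_\rho + D_0$ for a constant $D_0$. Then we cut the closed geodesic representing $\gamma_n$ (of $d_\rho$-length $\lambda_1^\rho(\gamma_n)$, by Theorem \ref{thm: rhodist}) into pieces. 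We recombine a subset of those pieces into a loop whose class is exactly $c$ and whose length is about $\frac1n$ of the total, up to a bounded error.

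The first step is to obtain $\gamma_n$ with bounded defect. Rather than working with all $n$, we pass to the torsion-free quotient and work in the lattice $L=H_1(M,\Z)/\mathrm{tors}\subset H_1(M,\R)$. Torsion classes only affect $f$ by a bounded amount: add a fixed short loop from a finite list, costing at most $2D_\rho$ plus its length. On $L$ the function $f$ is quasi-subadditive (item 3). We use the standard Burago argument to show that $g(c)=f(c)+2D_\rho$ is subadditive, and that $f$ and $\|\cdot\|_\rho$ agree up to a bounded error. Here is that argument. Take a curve realizing $f(mc)$ for $m=2^k$. Split it at $2^{k}$ points into $2^k$ arcs and close each arc up within $D_\rho$. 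Each closed-up arc is then a loop whose classes sum to $mc$. A pigeonhole or combinatorial lemma in $\Z^{b_1}$ then picks a subcollection of $m/2$ arcs summing to $(m/2)c$ up to a bounded class in a fixed finite set. This yields $f(mc/2)\le \frac12 f(mc)+D'$, with $D'$ depending only on $b_1$ and $D_\rho$, not on $k$. Iterating gives $f(c)\le 2^{-k}f(2^kc) + D'(1+\tfrac12+\cdots)\le 2^{-k}f(2^kc)+2D'$. Letting $k\to\infty$ gives $f(c)\le\|c\|_\rho+2D'$.

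The main obstacle is the combinatorial step: given integer vectors $v_1,\dots,v_m\in\Z^{b}$ with sum $mc$ and each $\|v_i\|$ not controlled, we need to select a subset. The sub-sum must equal $\frac m2 c$ up to a bounded vector, and its total $d_\rho$-length must be at most half the whole plus a constant. Ammann--Löh handle this with the choice of cut points so each arc has bounded length, hence bounded homology class. Then they use a discrete intermediate-value / Steinitz-type rearrangement lemma in $\R^{b_1}$, which gives a contiguous-after-reordering block with the right class up to an error bounded by $b_1$ times the max arc class. To control the length, we take the cheaper of the selected half and its complement, after adjusting the complement's class by the bounded error. I would first check that these lemmas use only the triangle inequality and $\lambda_1^\rho$ being the minimal length in a free homotopy class. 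Symmetry of the metric is never used, because every arc is traversed in its forward direction and closing segments are bounded by the forward diameter $D_\rho$. So the asymmetric domain shape metric introduces no new issue, and the proof transfers verbatim.
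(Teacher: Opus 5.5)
Your architecture is the right one, and it is Burago's scheme, which the paper imports from Ammann--L\"oh without reproducing it. You prove a doubling inequality $f(c)\le \tfrac12 f(2c)+D'$ with $D'$ independent of $c$, then iterate along $c,2c,4c,\dots$. You are also right that only forward lengths and the forward diameter $D_\rho$ enter, so asymmetry of $d_\rho$ is harmless. Your opening choice of $\gamma_n$ with $\lambda_1^\rho(\gamma_n)\le n\|c\|_\rho+D_0$, however, is the theorem itself for the class $nc$, so it cannot serve as an input.

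The genuine gap is in how you produce the half. Each time you close up an arc, or concatenate two pieces that are not consecutive along the original closed geodesic, you pay up to $D_\rho$. So $D'$ stays bounded only if the chosen half consists of a number of consecutive sub-arcs bounded in terms of $b_1$ alone. Your two selections both violate this: choosing $m/2$ of $m$ arcs, or choosing a block that is contiguous only after a Steinitz reordering of arcs of length $\ell_0$. Either way the number of junctions is proportional to $f(2c)/\ell_0$. What you actually obtain is $f(c)\le(\tfrac12+O(D_\rho/\ell_0))f(2c)+O(b_1\ell_0)$. Even optimizing $\ell_0\sim\sqrt{f(2c)}$ this is only $f(c)\le\tfrac12 f(2c)+O(\sqrt{f(2c)})$, which iterates to a defect of order $\sqrt{\|c\|_\rho}$, not a constant.

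The missing ingredient is Burago's splitting lemma, a form of the Hobby--Rice (necklace-splitting) theorem proved via Borsuk--Ulam. For any continuous path $\alpha:[0,1]\to\R^{b}$ there are $k\le b$ cut times $0=t_0<t_1<\dots<t_k<t_{k+1}=1$ with $\sum_{j=0}^{k}(-1)^j\bigl(\alpha(t_{j+1})-\alpha(t_j)\bigr)=0$. Thus the two alternating families of sub-arcs each have displacement exactly half the total.

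Apply this as follows. Take a closed geodesic realizing $f(2c)$ and lift it to the free abelian cover. Map the lift to $H_1(M,\R)\cong\R^{b_1}$ by an equivariant Abel--Jacobi map (integration of a basis of closed $1$-forms), and split that image path.

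One of the two families has length at most $\tfrac12 f(2c)$ and consists of at most $\lceil (b_1+1)/2\rceil$ consecutive sub-arcs. Translate these by deck transformations, then join and close them up with at most $b_1+2$ forward segments of length $\le D_\rho$. The resulting loop has class differing from $c$ by an element of a fixed finite set (a bounded lattice error plus torsion), which a loop of bounded length corrects.

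This yields $f(c)\le\tfrac12 f(2c)+D'$ with $D'$ depending only on $b_1$, $D_\rho$ and a fundamental domain. Your iteration then finishes the proof.
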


For $\varphi \in H^1(M,\R)$, recall the definition of $\|\varphi\|_\rho$ from \eqref{eqn: def costable}.  The following lemma asserts that this quantity is dual to the stable norm.  Its proof is not difficult, and is left as an exercise to the reader.  
\begin{lemma}\label{lem: co-stable is dual}
    We have an equality
    \[\|\varphi\|_\rho = \max_{v\not=0\in H_1(M,\R)} \frac{\varphi(v)}{\|v\|_\rho}.\]
\end{lemma}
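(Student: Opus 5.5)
The equality $\|\varphi\|_\rho = \max_{v\neq 0}\varphi(v)/\|v\|_\rho$ is finite-dimensional convex duality. I would first reduce the left-hand side, a supremum over $\Gamma$, to a supremum over integral homology classes. Since $\varphi$ factors through abelianization, for each $c\in H_1(M,\Z)$ the largest value of $\varphi(\gamma)/\lambda_1^\rho(\gamma)$ over $[\gamma]=c$ is achieved (or approached) by minimizing $\lambda_1^\rho$. This is exactly $\varphi(c)/f(c)$ when $\varphi(c)>0$. Classes with $\varphi(c)\le 0$ do not matter: $\|\varphi\|_\rho\ge 0$ always, since $\varphi(\gamma)$ and $\varphi(\gamma\inverse)$ have opposite signs. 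So $\|\varphi\|_\rho=\sup_{c}\varphi(c)/f(c)$, the supremum being over $c\in H_1(M,\Z)$ with $\varphi(c)>0$ (and the value is $0$ if there are none).

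For the inequality $\le$: for such $c$ we have $f(c)\ge\|c\|_\rho$, because $\|c\|_\rho=\inf_n f(nc)/n$ and the case $n=1$ gives $f(c)$. Hence $\varphi(c)/f(c)\le \varphi(c)/\|c\|_\rho$, which is bounded by the right-hand side. The maximum on the right exists because $\|\cdot\|_\rho$ is a continuous function that is positive on the unit sphere of $H_1(M,\R)$, and $v\mapsto\varphi(v)/\|v\|_\rho$ is positively homogeneous.

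For the inequality $\ge$: by continuity and homogeneity of $\|\cdot\|_\rho$, the ratio $\varphi(v)/\|v\|_\rho$ can be approximated by its values on rational, hence integral, classes $c$. For an integral class, $\varphi(nc)/f(nc)\to\varphi(c)/\|c\|_\rho$ as $n\to\infty$, by the definition $\|c\|_\rho=\lim_n f(nc)/n$. Each $\varphi(nc)/f(nc)$ is at most $\|\varphi\|_\rho$ by the first paragraph, provided $\varphi(c)>0$. If the maximum on the right is $\le 0$, then $\varphi\le 0$ on all of $H_1$; this forces $\varphi=0$ (apply it to $\pm v$), and then both sides are $0$.

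The only mild obstacle is the asymmetry of the norm: one must check that restricting to classes with $\varphi(c)>0$ loses nothing, and that $f(c)\ge\|c\|_\rho$ uses only subadditivity in $n$ (property 2). Theorem \ref{thm: doubling} is not needed here.
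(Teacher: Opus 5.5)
Your proof is correct. The paper gives no proof of this lemma (it is left as an exercise), and your argument fills it in the natural way: reduce $\|\varphi\|_\rho$ to $\sup_c \varphi(c)/f(c)$ over integral classes with $\varphi(c)>0$, use $f(c)\ge\|c\|_\rho=\inf_n f(nc)/n$ for the inequality $\le$, and use density of rational classes together with $f(nc)/n\to\|c\|_\rho$ for the inequality $\ge$. You are also right that Theorem \ref{thm: doubling} plays no role here.
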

\noindent The quantity $\|\cdot \|_\rho$ on $H^1(M,\R)$ is called the \textbf{co-stable norm}.

\subsection{Best forward Lipschitz maps and tension loci}

The feature of the co-stable norm that is most interesting for us is that it identifies the best Lipschitz constant among forward Lipschitz maps satisfying an equivariance condition.  It proves that the set over which the intersection is taken in item 2 of Theorem \ref{thm: co-stable norm and optimal best maps} is non-empty.
The argument given here is adapted from \cite[Proposition 2.2]{BIK:stablenorm}.
\begin{theorem}\label{thm: best Lipschitz achieved}
    Given any $\varphi\in \partial \mathcal B_\rho\subset H^1(M,\R)$, there is a $\varphi$-equivariant 1-forward Lipschitz map $\Omega \to \R$. 
\end{theorem}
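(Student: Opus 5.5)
We build the map by an explicit supremum, in the spirit of the McShane extension (Theorem \ref{thm: extension}), applied to a $\Gamma$-orbit. Fix $o\in\Omega$ and define on the orbit $\Gamma.o$ the function $g(\gamma.o)=\varphi(\gamma)$. This is well defined because $\rho$ is injective and, being torsion-free and acting properly, $\Gamma$ acts freely on $\Omega$. We then set
\[\tau(x)=\sup_{\gamma\in\Gamma}\bigl(\varphi(\gamma)-d_\rho(x,\gamma.o)\bigr).\]
We need three things: that $g$ is $1$-forward Lipschitz on the orbit, that $\tau$ is finite, and that $\tau$ is $\varphi$-equivariant.

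\textbf{Equivariance.} Substituting $\gamma=\eta\gamma'$ and using $d_\rho(\eta.x,\eta\gamma'.o)=d_\rho(x,\gamma'.o)$ gives
\[\tau(\eta.x)=\sup_{\gamma'}\bigl(\varphi(\eta)+\varphi(\gamma')-d_\rho(x,\gamma'.o)\bigr)=\tau(x)+\varphi(\eta).\]

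\textbf{Lipschitz property, given finiteness.} The forward Lipschitz inequality $\tau(x_1)-\tau(x_2)\le d_\rho(x_2,x_1)$ follows exactly as in the proof of Theorem \ref{thm: extension}, using the triangle inequality $d_\rho(x_2,\gamma.o)\le d_\rho(x_2,x_1)+d_\rho(x_1,\gamma.o)$. Note this step does not even need $g$ to be Lipschitz on the orbit. That condition is only needed to get $\tau(o)=0$.

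\textbf{The main obstacle: finiteness.} Everything reduces to showing
\[\sup_\gamma\bigl(\varphi(\gamma)-d_\rho(o,\gamma.o)\bigr)<\infty.\]
This is a uniform bound on the slack-type quantity $\varphi(\gamma)-d_\rho(o,\gamma.o)$, and it is exactly where $\|\varphi\|_\rho\le 1$ must be used together with the stable norm machinery. The plan is as follows.

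1. Let $c=[\gamma]\in H_1(M,\Z)$. Since $\varphi$ factors through homology, Lemma \ref{lem: co-stable is dual} with $\|\varphi\|_\rho=1$ gives $\varphi(\gamma)=\varphi(c)\le\|c\|_\rho$.

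2. Compare $\|c\|_\rho$ with $d_\rho(o,\gamma.o)$. Closing up the segment $[o,\gamma.o]$ gives a loop in $M$ in the free homotopy class of $\gamma$, of length $d_\rho(o,\gamma.o)$. Hence $f(c)\le\lambda_1^\rho(\gamma)\le d_\rho(o,\gamma.o)$, the last inequality by \eqref{eqn: periods are lambda1}.

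3. It remains to bound $\|c\|_\rho\le f(c)+D'$ uniformly, which is the reverse of Theorem \ref{thm: doubling}. Here $\|c\|_\rho=\inf_n f(nc)/n$ by subadditivity, so $\|c\|_\rho\le f(c)$ directly, taking $n=1$.

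Combining the three steps gives $\varphi(\gamma)\le\|c\|_\rho\le f(c)\le d_\rho(o,\gamma.o)$, so the supremum is at most $0$. Taking $\gamma=1$ gives $\tau(o)\ge 0$, so in fact $\tau(o)=0$ and $\tau$ is finite everywhere.

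The same chain of inequalities, applied to $\gamma^{-1}\gamma'$, shows that $g$ is $1$-forward Lipschitz on the orbit, so $\tau$ extends $g$. Theorem \ref{thm: doubling} is therefore not needed here, and the delicate point is only verifying the inequality $f(c)\le\lambda_1^\rho(\gamma)$ with the correct orientation conventions for the asymmetric metric.
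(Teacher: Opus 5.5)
Your proof is correct, and it takes a genuinely different and much more elementary route than the paper.

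\textbf{The paper's route.} It works on the universal abelian cover $\widehat M$ and defines a horofunction-type map $F(p)=\liminf_{\|h\|_\rho\to\infty}\bigl(\widehat d_\rho(o,h.p)-\varphi(h)\bigr)$. Showing $F$ is finite requires Proposition \ref{prop: metric close to stable norm}, hence the Burago/Ammann--L\"oh estimate of Theorem \ref{thm: doubling}. It also uses Lemma \ref{lem: co-stable is dual}, both for the lower bound and to produce classes $h_n=nc+\delta_n$ with $\varphi(h_n)=\|h_n\|_\rho+O(1)$ for the upper bound.

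\textbf{Your route.} You take the McShane extension (Theorem \ref{thm: extension}) of $\gamma.o\mapsto\varphi(\gamma)$ from a single orbit. The only input is $\varphi(\gamma)\le d_\rho(o,\gamma.o)$ for all $\gamma\in\Gamma$.

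Your steps 1--3 collapse. The inequality $\varphi(\gamma)\le\lambda_1^\rho(\gamma)$ is literally the definition \eqref{eqn: def costable} of $\|\varphi\|_\rho\le 1$, so neither the stable norm nor Lemma \ref{lem: co-stable is dual} is needed. Likewise $f(c)\le\lambda_1^\rho(\gamma)$ is trivial, since $f$ is an infimum.

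The step where orientation genuinely matters is $\lambda_1^\rho(\gamma)\le d_\rho(o,\gamma.o)$, with the \emph{forward} displacement. This is \eqref{eqn: periods are lambda1}. If you want it from Theorem \ref{thm: rhodist}(3) alone, take $q$ on the axis of $\gamma$ and use $n\lambda_1^\rho(\gamma)=d_\rho(q,\gamma^n.q)\le d_\rho(q,o)+n\,d_\rho(o,\gamma.o)+d_\rho(o,q)$, then let $n\to\infty$.

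\textbf{What each buys.} Your argument bypasses Theorem \ref{thm: doubling} and Proposition \ref{prop: metric close to stable norm}, which in the paper serve only this proof. It also works verbatim for $\|\varphi\|_\rho\le 1$, whereas the paper's $\liminf$ is $+\infty$ when $\|\varphi\|_\rho<1$. It yields a canonical map as well: the pointwise smallest $\varphi$-equivariant $1$-forward Lipschitz map with $\tau(o)=0$. The paper's construction instead ties best maps to the large-scale geometry of the abelian cover, giving a Busemann-type function in the direction of a class $c$ with $\varphi(c)=\|c\|_\rho$. That follows the stable-norm literature it adapts, but the extra structure is not needed for the statement itself.
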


First, we relate the stable norm to the large scale geometry of a certain covering space of $M$.  Let $\widehat M\to M$ be the cover with deck group $H_1(M,\Z)$ with corresponding metric $\widehat d_\rho$, i.e., $\widehat M$ is the \textbf{universal abelian cover} and the projection is locally isometric.
The main ingredient of the following result is Theorem \ref{thm: doubling}; see also \cite[Proposition 4.7]{AL:stablenorm}.

\begin{proposition}\label{prop: metric close to stable norm}
    There is a constant $C$ such that for any point $o\in \widehat M$, and any $g,h \in H_1(M,\mathbb Z)$, we have 
    \[\left| \widehat d_\rho(g.o, h.o) -\|h-g\|_\rho\right|<C.\]
\end{proposition}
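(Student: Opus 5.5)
The plan is to reduce the statement to comparing word-type lengths in $\Gamma$ with the stable norm, using Theorem \ref{thm: doubling} as the main input. First, since the projection $\widehat M\to M$ is a local isometry and $H_1(M,\Z)$ acts by isometries on $(\widehat M,\widehat d_\rho)$, we have $\widehat d_\rho(g.o,h.o)=\widehat d_\rho(o,(h-g).o)$. It is therefore enough to show $|\widehat d_\rho(o,c.o)-\|c\|_\rho|<C$ for all $c\in H_1(M,\Z)$, with $C$ independent of $c$. Changing $o$ changes the left side by at most $2\diam$ of a fundamental domain, which is bounded. We may therefore fix $o$ to be the image of a basepoint $\tilde o\in\Omega$. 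Note that the statement is asymmetric in $g,h$, and directed distance matches the asymmetric norm.

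For the upper bound, I would pick $\gamma\in\Gamma$ with $[\gamma]=c$ and $\lambda_1^\rho(\gamma)\le f(c)+1$. By Theorem \ref{thm: rhodist}(3), the axis of $\gamma$ has points $p$ with $d_\rho(p,\gamma.p)=\lambda_1^\rho(\gamma)$, and after conjugating $\gamma$ (which does not change $[\gamma]$) such a $p$ lies within $D_\rho$ of $\tilde o$. Projecting the path $\tilde o\to p\to\gamma p\to\gamma\tilde o$ to $\widehat M$ gives a path from $o$ to $c.o$. Its length is at most $\lambda_1^\rho(\gamma)+2D_\rho'$, where $D_\rho'$ bounds both directed distances within a fundamental domain. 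Hence $\widehat d_\rho(o,c.o)\le f(c)+O(1)\le\|c\|_\rho+D+O(1)$, using Theorem \ref{thm: doubling}.

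For the lower bound, I would take a geodesic $\hat\sigma$ from $o$ to $c.o$ in $\widehat M$ and lift it to $\Omega$ as a segment from $\tilde o$ to $\gamma\tilde o$ with $[\gamma]=c$. Then $\widehat d_\rho(o,c.o)=d_\rho(\tilde o,\gamma\tilde o)$, and since this is the length of a closed loop in the class $c$, we get $d_\rho(\tilde o,\gamma\tilde o)\ge\lambda_1^\rho(\gamma)\ge f(c)$ by \eqref{eqn: periods are lambda1}. It remains to know $f(c)\ge\|c\|_\rho$, which follows because $\|c\|_\rho=\inf_n f(nc)/n$ and $f(nc)\le nf(c)$ (property 2). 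Combining the two bounds gives $|\widehat d_\rho(o,c.o)-\|c\|_\rho|\le D+O(1)$.

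The main obstacle I expect is the bookkeeping around asymmetry: the distance from $\tilde o$ to $p$ and the distance from $\gamma p$ back to $\gamma\tilde o$ must be controlled in both directions. This works because $d_\rho$ and its reverse are comparable to the Hilbert metric, so the compact quotient has finite diameter in both directions. I would also check that the wording of Theorem \ref{thm: doubling} is what supplies the nontrivial direction $f\le\|\cdot\|_\rho+D$; the rest is elementary.
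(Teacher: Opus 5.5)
Your proposal is correct and follows essentially the same route as the paper. The lower bound comes from projecting a minimizing path from $g.o$ to $h.o$ to a closed curve in the class $h-g$ and using $\|\cdot\|_\rho\le f$, and the upper bound comes from a closed geodesic (nearly) realizing $f(h-g)$ whose lift passes within bounded distance of the basepoint, combined with Theorem \ref{thm: doubling}. Your reduction to $g=0$ and your explicit control of both directed distances are only slightly more careful bookkeeping of that same argument.
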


\begin{proof}
    There is a directed path joining $g.o$ to $h.o$ of length $\widehat d_\rho(g.o,h.o)$, which projects to a closed curve in $M$ in the homology class of $h-g$, giving an upper bound for $f(h-g)$.  Since $\|h-g\|_\rho\le f(h-g)$ follows from the definitions, we have 
    \[\|h-g\|_\rho \le \widehat d_\rho(g.o,h.o).\]

    Now, let $\gamma$ be a geodesic in $M$ satisfying 
    \[f(h-g) = \lambda_1^\rho(\gamma).\]
    There is a lift $\widehat \gamma \subset \widehat M$ of $\gamma$ that passes within $D_\rho$ of $g.o$.  Moreover, since $[\gamma] \in h-g$, there is a directed segment of length $\lambda_1^\rho(\gamma)$ that starts within $D_\rho$ of $g.o$ and ends within $D_\rho$ of $h.o$.  Thus 
    \[\widehat d_\rho(g.o,h.o)\le 2D_\rho+f(h-g)\le 2D_\rho +D+\|h-g\|_\rho,\]
    by Theorem \ref{thm: doubling}, and the Proposition is proved with $C = 2D_\rho+D$.
\end{proof}

\begin{proof}[Proof of Theorem \ref{thm: best Lipschitz achieved}]
  Choose $o \in \widehat M$, and define a function
\[F(p) = \liminf_{h\in H_1(M,\Z), ~ \|h\|_\rho \to \infty} \widehat d_\rho(o, h.p) - \varphi(h)\]
For every $h$, the map $F_h: p\mapsto  \widehat d_\rho(o, h.p) - \varphi(h)$ is $1$-forward Lipschitz as a consequence of the triangle inequality.
If $F$ is finite, then it is a pointwise limit  of $1$-forward Lipschitz maps, hence itself $1$-forward Lipschitz.

Now we would like to show that $F$ is finite.
Given $p\in \widehat M$, choose $g \in H_1(M,\Z)$ so that $g.o$ is close to $p$.  Then 
\[\max\{\widehat d_\rho ((h+g).o, h.p), \widehat d_\rho (h.p, (h+g).o)\} \le D_\rho.\]
By the triangle inequality, we have 
\[\widehat d_\rho(o,(h+g).o) - \widehat d_\rho(h.p, (h+g).o) \le \widehat d_\rho(o,h.p) \le \widehat d_\rho(o,(h+g).o) + \widehat d_\rho((h+g).o, h.p)\]
or in other words
\[\widehat d_\rho(o,hp) =\widehat d_\rho(o,(h+g).o) +O(1).\]
By Proposition \ref{prop: metric close to stable norm}, we learn that 
\[F_h(p) = \|h+g\|_\rho -\varphi(h) +O(1).\]

In any (asymmetric) normed space, we have 
\[\|h\|\le \|h+g-g\| \le \|h+g\| +\|-g\|,\]
from which we conclude that $\|h+g\| - \|h\| \ge -\|-g\|$.

Since $\|\varphi\|_\rho = 1$, Lemma \ref{lem: co-stable is dual} gives $\varphi(h)\le \|h\|_\rho$, and so 
\begin{align*}
    F_h(p) = \|h+g\|_\rho -\varphi(h) +O(1) &\ge \|h+g\|_\rho - \|h\|_\rho +O(1).\\
    &\ge -\|-g\|_\rho +O(1).
\end{align*}
Since $g$ depended only on $p$, we see that $F_h(p)$ is bounded below, independently of $h$.

Now we'd like to see that $F_{h_n}(p)$ remains bounded above along some sequence $\{h_n\}\subset H_1(M,\Z)$.
For this, find $c \in H_1(M,\R)$ such that $\varphi(c) =\|c\|_\rho = 1$, which is possible according to Lemma \ref{lem: co-stable is dual}.
Since the integer points form a lattice in $H_1(M,\R)$, we can find $\delta_n \in H_1(M,\R)$ satisfying 
\[h_n = nc+\delta_n \in H_1(M,\Z) \text{ and } \max\{\|\delta_n\|, \|-\delta_n\|\}\le K, \]
for some $K$.
In particular, we have $\varphi(h_n) = \|h_n\|+O(1)$.

Then for the same $g$ depending only on $p$, as above, we have
\begin{align*}
    \widehat d_\rho(o, h_np) -\varphi(h_n) &=  \|g+h_n\| -\varphi(h_n)+O(1)\\
    & = \|g+h_n\| - \|h_n\| +O(1)\le \|g\|+O(1).
\end{align*}
Since $F(p) \le \liminf F_{h_n}(p) \le \|g\|+O(1)$, we conclude that $F(p)$ is finite, hence $1$-forward Lipschitz.

That $F$ is $\varphi$-equivariant follows by computation:  for any $g \in H_1(M,\Z)$, 
\begin{align*}
    F(g.p) &= \liminf_{\|h\|\to \infty} \widehat d_\rho (o,(h+g).p) -\varphi(h) \\
    & = \liminf_{\|h\|\to \infty} \widehat d_\rho (o,(h+g).p) -\varphi(h +g) +\varphi(g) \\
    & = \liminf_{\|h+g\|\to \infty} \widehat d_\rho (o,(h+g).p) -\varphi(h +g) +\varphi(g) = F(p)+\varphi(g).
\end{align*}    
This completes the proof of the Theorem.
\end{proof}

Any $\varphi$-equivariant $1$-forward Lipschitz map $F: \widehat M\to \R$ lifts to a locally (hence globally) $1$-forward Lipschitz map $ \Omega \to \R$.  Thus, as an immediate corollary of Theorem \ref{thm: best Lipschitz achieved} and the discussion surrounding \eqref{eqn: norm less than lipschitz}, we obtain:
\begin{corollary}\label{cor: co-norm is smallest Lipschitz}
    $\|\varphi\|_\rho$ is the smallest forward Lipschitz constant of any $\varphi$-equivariant map $\Omega \to \R$.
\end{corollary}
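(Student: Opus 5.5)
The statement to prove is Corollary~\ref{cor: co-norm is smallest Lipschitz}: $\|\varphi\|_\rho$ is the smallest forward Lipschitz constant of a $\varphi$-equivariant map $\Omega\to\R$. The proof has two inequalities.

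\emph{Lower bound.} Lemma~\ref{lem: lower bound lip} already gives this. If $f$ is $\varphi$-equivariant and $L$-forward Lipschitz, then evaluating along the axis of $\rho(\gamma)$ and using item 3 of Theorem~\ref{thm: rhodist} gives $\varphi(\gamma)\le L\lambda_1^\rho(\gamma)$ for every $\gamma$. Hence $\|\varphi\|_\rho\le \vec\Lip(f)$.

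\emph{Upper bound when $\varphi\neq0$.} Put $s=\|\varphi\|_\rho$. Since $\|\cdot\|_\rho$ is the co-norm dual to the stable norm (Lemma~\ref{lem: co-stable is dual}) and the stable norm is a genuine norm, $s>0$.
\begin{enumerate}
\item Normalize: $\psi=\varphi/s$ satisfies $\|\psi\|_\rho=1$ by positive homogeneity of the supremum.
\item Apply Theorem~\ref{thm: best Lipschitz achieved} to $\psi$. This gives a $\psi$-equivariant $1$-forward Lipschitz map $F:\widehat M\to\R$ on the universal abelian cover.
\item Pull $F$ back along the covering $\Omega\to\widehat M$. The covering is a local isometry for $d_\rho$, so $F$ becomes locally $1$-forward Lipschitz on $\Omega$. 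Since $d_\rho$ is a geodesic (path) metric, local forward Lipschitz implies global: subdivide the directed segment $[x,y]$ into short pieces and sum the local inequalities. Equivariance passes through the abelianization $\Gamma\to H_1(M,\Z)$.
\item Then $sF$ is $\varphi$-equivariant and $s$-forward Lipschitz, which gives the upper bound.
\end{enumerate}

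\emph{Case $\varphi=0$.} Constants are $0$-equivariant and $0$-Lipschitz, and the sup defining $\|0\|_\rho$ is $0$.

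\emph{Attainment.} The minimum is attained by the compactness argument in Lemma~\ref{lem: lower bound lip}, or directly by the map $sF$ constructed above.

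The only step needing care is the local-to-global passage in step 3 for an asymmetric metric. The argument is the additivity of directed length along projective segments, which are the $d_\rho$ geodesics by Theorem~\ref{thm: rhodist}. I expect no other obstacle.
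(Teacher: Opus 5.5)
Your proposal is correct and follows the paper's route: the lower bound is Lemma \ref{lem: lower bound lip}, and the upper bound comes from lifting the $1$-forward Lipschitz map of Theorem \ref{thm: best Lipschitz achieved} from $\widehat M$ to $\Omega$. Your rescaling by $s=\|\varphi\|_\rho$ and your treatment of the case $\varphi=0$ make explicit what the paper leaves implicit. As a minor simplification, the local-to-global step is unnecessary: the covering projection $\Omega\to\widehat M$ does not increase directed distances, so $F(\bar y)-F(\bar x)\le \widehat d_\rho(\bar x,\bar y)\le d_\rho(x,y)$ holds directly.
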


\begin{remark}\label{rmk: approximation by smooth maps}
    For a closed $1$-form $\omega$ representing $\varphi$, the function $f_\omega : \Omega \to \R$ had $\vec \Lip (f_\omega)\le \|\omega\|_\infty$.  By a standard smoothing argument, there is a sequence of equivariant smooth functions converging to any best forward Lipschitz map whose forward Lipschitz constants converge, as well.  Thus $\|\varphi\|_\rho = \inf_{\omega \in \varphi} \|\omega\|_\infty$, as well.
\end{remark}

Let $\varphi \in \partial \mathcal B_\rho$, and let $F: \Omega \to \R$ be any $\varphi$-equivariant (best) $1$-forward Lipschitz map.
Our next goal is to show that $\Tension (F)\not=\emptyset$.  
For this, it will be convenient to consider a function  $L_\delta : T^1_\rho M \to \R$.
By abuse of notation, $F: T^1_\rho \Omega \to \R$ is the pullback via the projection $\pi: T^1\Omega \to \Omega$.
Let $\delta>0$ be smaller than the injectivity radius of $M$.  For any $x \in T^1_\rho M$, choose a lift $\tilde x$ to $T_1^\rho \Omega$ of $x$ and define 
\begin{equation}\label{eqn: def L_delta}
    L_\delta(x) = \frac{F(g_\delta \tilde x) - F(\tilde x)}{\delta},\
\end{equation}
where $g_t$ is the geodesic flow.
By $\varphi$-equivariance, this definition does not depend on the choice of $\tilde x$.  
Since $F$ is $1$-forward Lipschitz and the geodesic flow is continuous, $L_\delta$ is continuous and $L_\delta \le 1$.
Indeed, if $L_\delta(x) =1$, then $\pi(g_{[0,\delta]}x)$ is a directed segment in $M$ which is mapped isometrically to $\R$ with the correct orientation; in particular,  $\pi(g_{[0,\delta]}x)$ is contained in $\Tension(F)/\rho$. 

The following is similar to an argument found in \cite[\S3]{FLM:I}.  Its proof yields item 3 of Theorem \ref{thm: co-stable norm and optimal best maps}; see also Theorem \ref{thm: failure of anosov along lamination} and its proof.  
\begin{lemma}\label{lem: Tension is non-empty}
    The set $L_\delta\inverse(1)$ is non-empty and closed.  
\end{lemma}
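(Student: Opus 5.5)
Closedness is immediate: $L_\delta$ is continuous on $T^1_\rho M$, so $L_\delta\inverse(1)$ is closed. The content is non-emptiness, which I will prove by contradiction. Suppose $L_\delta<1$ everywhere. Since $T^1_\rho M$ is compact and $L_\delta$ is continuous, there is $\epsilon>0$ with $L_\delta\le 1-\epsilon$ on all of $T^1_\rho M$.

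The first step is to use the hypothesis $\|\varphi\|_\rho=1$ to produce elements $\gamma_n\in\Gamma$ with $\varphi(\gamma_n)/\lambda_1^\rho(\gamma_n)\to 1$. This follows directly from the definition \eqref{eqn: def costable} as a supremum. (If the supremum were attained, a single $\gamma$ and its powers would suffice.) I may assume $\lambda_1^\rho(\gamma_n)\ge\delta$, since there are finitely many classes of bounded length and none has ratio exactly $1$ with $\lambda_1^\rho$ arbitrarily small. Let $p_n$ lie on the axis of $\gamma_n$, so that by Theorem \ref{thm: rhodist}(3) the directed segment $[p_n,\gamma_n.p_n]$ has length $\lambda_1^\rho(\gamma_n)$. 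By $\varphi$-equivariance of $F$,
\[\varphi(\gamma_n)=F(\gamma_n.p_n)-F(p_n).\]

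The second step is to estimate this increment by integrating $L_\delta$ along the closed orbit. Parametrize the segment by arclength $t\in[0,T_n]$ with $T_n=\lambda_1^\rho(\gamma_n)$, and let $x_t$ be the projected tangent vectors. Averaging the telescoping identity over shifts $s\in[0,\delta]$ gives
\[F(\gamma_n.p_n)-F(p_n)=\frac1\delta\int_0^{\delta}\sum_k \big(F(x_{s+(k+1)\delta})-F(x_{s+k\delta})\big)\,ds.\]
By periodicity, since $x_{t+T_n}$ is the $\gamma_n$-translate, this equals $\int_0^{T_n}L_\delta(x_t)\,dt$ exactly. The clean way to see this is to note that $t\mapsto F(x_t)-\varphi(\gamma_n)t/T_n$ is $T_n$-periodic and to use the identity $\frac1\delta\int_0^{T}\big(G(t+\delta)-G(t)\big)\,dt=\frac1\delta\big(\int_T^{T+\delta}G-\int_0^\delta G\big)$. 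This yields $\varphi(\gamma_n)=T_n\int L_\delta\,d\mu_n$, where $\mu_n$ is the flow-invariant probability measure on the closed orbit. This is the ``exercise'' identity also invoked in the proof of Theorem \ref{thm: failure of anosov along lamination}.

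Combining the two steps gives $\varphi(\gamma_n)\le(1-\epsilon)\lambda_1^\rho(\gamma_n)$ for every $n$, which contradicts $\varphi(\gamma_n)/\lambda_1^\rho(\gamma_n)\to1$. Hence $L_\delta\inverse(1)\neq\emptyset$.

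The main obstacle is the bookkeeping in the averaging identity. Because the metric is asymmetric, one must be careful to flow forward along directed segments. One must also make sure that the boundary terms cancel by equivariance rather than being merely $O(1)$; an $O(1)$ error would not be enough when the $\gamma_n$ are not long. If the exact identity proves awkward, I would instead take powers $\gamma_n^k$ with $k\to\infty$. This makes $T$ large, so an $O(\delta)$ boundary error is negligible, and the ratio $\varphi/\lambda_1^\rho$ is unchanged under powers.
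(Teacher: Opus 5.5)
Your proof is correct and is essentially the paper's argument. Both rest on the exact identity $\int L_\delta\,d\mu_n=\varphi(\gamma_n)/\lambda_1^\rho(\gamma_n)$ for the invariant measures on the closed orbits. The paper then passes to a weak-$*$ limit $\mu$ with $\int L_\delta\,d\mu=1$, which additionally yields item 3 of Theorem \ref{thm: co-stable norm and optimal best maps}, whereas you use the bound $\max L_\delta<1$ on the compact space $T^1_\rho M$. Your periodicity derivation of the identity needs only continuity of $F$, not the paper's absolute-continuity and Fubini computation. It also holds for every period $T_n>0$, so the reduction to $\lambda_1^\rho(\gamma_n)\ge\delta$ is unnecessary.
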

\begin{proof}
    Let $\gamma_n \subset M$ be a sequence of (the unique) geodesic representatives of oriented curves in $M$ for which 
    \[\|\varphi\|_\rho = \lim_{n\to \infty} \frac{\varphi(\gamma_n)}{\lambda_1^\rho(\gamma_n)}=1.\]
    Let $\mu_n$ be the $g_t$-invariant Borel probability measures supported on the canonical lifts of $\gamma_n$ to $T^1_\rho M$. 
    Then 
    \[\int L_\delta ~d\mu_n = \frac{1}{\lambda_1^\rho(\gamma_n)}\int_0^{\lambda_1^\rho(\gamma_n)} L_\delta(\gamma'_n(t))~dt = \frac{1}{\lambda_1^\rho(\gamma_n)}\int_0^{\lambda_1^\rho(\gamma_n)} \frac{F(g_\delta \tilde \gamma_n'(t)) - F(\tilde\gamma_n'(t))}{\delta} ~dt. \]
    
    By compactness of $M$, there is a constant $K$ such that for all $p,q \in \Omega$, $d_\rho(p,q)\le Kd_\rho(q,p)$.
    Thus any $1$-forward Lipschitz function is $K$-Lipschitz for the symmetrized metric.
    For any $x\in T_\rho^1M$, the function $[-T, T] \to \R$ defined by $t\mapsto F(g_tx)$ is therefore $K$-Lipschitz, hence absolutely continuous, differentiable almost everywhere, and the integral of the derivative is the change in function values.  This gives,
    \[ F(g_\delta\tilde \gamma_n'(t))- F(\tilde\gamma_n'(t)) = \int_{0}^\delta \left.\frac{d}{du}\right|_{u=0} F(g_{s+u}\tilde \gamma_n'(t))~ds.\]
    Using the previous line and Fubini, we get
    \[\int L_\delta~d\mu_n = \frac{1}{\lambda_1^\rho(\gamma_n)}\frac{1}{\delta}\int_{0}^\delta\int_0^{\lambda_1^\rho(\gamma_n)} \left.\frac{d}{du}\right|_{u=0}F(g_{s+u}\tilde \gamma_n'(t))~dt ~ds. \]
    But for each fixed $s$, the integral 
    \[\int_0^{\lambda_1^\rho(\gamma_n)} \left.\frac{d}{du}\right|_{u=0}F(g_{s+u}\tilde \gamma_n'(t))~dt = \varphi(\gamma_n), \]
    because $F$ is $\varphi$-equivariant, and the two arguments differ by translation by $\gamma_n$.
    Thus,
    \[\int L_\delta~d\mu_n = \frac{\varphi(\gamma_n)}{\lambda_1^\rho(\gamma_n)}\frac{1}{\delta}\int_{0}^\delta ds = \frac{\varphi(\gamma_n)}{\lambda_1^\rho(\gamma_n)}\to 1.\]
    Thus, for any weak-$*$ accumulation point $\mu$ of the sequence $\mu_n$ in the space of (flow invariant) probability measures on $T^1_\rho M$ and by continuity of $L_\delta$, we have 
    \[\int L_\delta ~d\mu =1.\]
    Since $L_\delta\le1$ and $\mu$ is a probability measure, we conclude that $L_\delta =1$ $\mu$-almost everywhere.  Since the support of $\mu$ is closed, we get $L_\delta =1$ on the support of $\mu$.  This concludes the lemma and also the proof of item 3 of Theorem \ref{thm: co-stable norm and optimal best maps}.
\end{proof}

\subsection{Optimal best Lipschitz maps}

We have shown that for every $\varphi\not=0 \in H^1(M,\R)$, there is a $\varphi$-equivariant $\|\varphi\|$-forward Lipschitz map 
\[ F: \widetilde M \to \R,\]
and that the tension locus of $F$ is non-empty. 
For $\|\varphi\|=1$, the \emph{optimal tension locus} is
\[\Tension (\varphi) : =\bigcap_{F\in \varphi}\Tension (F),\]
where $F\in \varphi$ means that $F$ is a $\varphi$ equivariant  $1$-forward Lipschitz map, as above.

In this section, we adapt arguments from \cite{GK:stretch} and \cite{FLM:II} to prove that the optimal tension locus is highly structured.
\begin{theorem}\label{thm: optimal map}
    Whenever $\varphi \in H^1(\Gamma,\R)$ and $\|\varphi\|_\rho = 1$, then $\Tension (\varphi)$ is a non-empty, oriented, chain recurrent geodesic lamination and there is an \textbf{optimal} $1$-forward Lipschitz $\varphi$-equivariant map 
    $\tau: \Omega \to \R$, i.e.,
    \[\Tension(\tau) = \Tension (\varphi).\]
\end{theorem}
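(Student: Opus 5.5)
The plan is to follow the Guéritaud--Kassel strategy for optimal Lipschitz maps: first construct $\tau$ as a countable convex combination of best maps, then show its tension locus is a lamination.

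\emph{Step 1: build an optimal map.} Take the set $\mathcal F$ of $\varphi$-equivariant $1$-forward Lipschitz maps $\Omega\to\R$, normalized by $F(o)=0$. It is non-empty by Theorem \ref{thm: best Lipschitz achieved}. It is convex, since the forward Lipschitz condition is preserved by convex combinations, and it is compact for uniform convergence on compact sets by Arzel\`a--Ascoli, using the symmetrized bound with constant $K$ as in Lemma \ref{lem: Tension is non-empty}. The key observation is that for $F=\sum_k 2^{-k}F_k$ (which converges by compactness and lies in $\mathcal F$) one has $\Tension(F)\subset\bigcap_k\Tension(F_k)$. Indeed, if $\vec\Lip_F(x)=1$, there are pairs $y_n,z_n\to x$ with $F(z_n)-F(y_n)\ge(1-o(1))d_\rho(y_n,z_n)$, and since each term satisfies $F_k(z_n)-F_k(y_n)\le d_\rho(y_n,z_n)$, every $F_k$ must be nearly isometric on these same pairs. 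I then pass to $T^1M$ via $L_\delta$. Because $\Tension$ is closed, and $M$ (respectively $T^1M$) is second countable, there is a countable family $\{F_k\}$ whose tension loci already cut $\Tension(\varphi)$ out: each point outside $\Tension(\varphi)$ has a neighborhood avoided by some $\Tension(F)$, and Lindelöf gives countably many such $F$. This yields $\tau$ with $\Tension(\tau)=\Tension(\varphi)$, which is item 2.

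\emph{Step 2: lamination structure.} Next I show $\Tension(\tau)$ is a union of complete directed lines, with different lines disjoint. Given $x\in\Tension(\tau)$, the point is to produce a segment through $x$ on which $\tau$ increases at unit speed. I will use the McShane extension (Theorem \ref{thm: extension}) equivariantly, in the style of \cite[\S5]{GK:stretch}. If no directed segment near $x$ is stretched maximally, I modify $\tau$ on a small equivariant neighborhood $\Gamma.B_\epsilon(x)$: restrict $\tau$ to the complement, then extend by the sup formula with constant $1$ but with a slight decrease near $x$. This produces an element of $\mathcal F$ whose local forward Lipschitz constant at $x$ is less than $1$, contradicting optimality. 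The fact that geodesics are unique projective segments (Theorem \ref{thm: rhodist}), together with the strict convexity of the Finsler unit ball, forces the stretched segments through a point to be unique and to concatenate into straight lines. They extend to complete lines, since otherwise the same modification argument applies at an endpoint. Two stretched lines cannot cross transversally: at a crossing point, strict convexity of $F_\rho$ would let a ``shortcut'' path pick up $\tau$-increase exceeding its length. The orientation is the direction of increase of $\tau$, which varies continuously.

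\emph{Step 3: chain recurrence and item 3.} Item 3 was essentially proved in Lemma \ref{lem: Tension is non-empty}: supports of limit measures lie in $L_\delta^{-1}(1)$, which for the optimal $\tau$ is $T^1_+\underline\lambda$. For chain recurrence, I argue by contradiction using Conley's decomposition. If $\underline\lambda$ is not chain recurrent, the chain recurrent part of the flow on $T^1_+\underline\lambda$ is a proper subset, and there is a Lyapunov function that strictly decreases along the remaining leaves. I add a small multiple of a smooth equivariant correction built from it, so that $\tau$ loses tension on the non-recurrent leaves while keeping Lipschitz constant $1$ elsewhere. This contradicts $\Tension(\tau)=\Tension(\varphi)$. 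An alternative route is to show directly that every point of $\Tension(\varphi)$ lies in the support of some limit of measures $\delta_{\gamma_n}$ with $\varphi(\gamma_n)/\lambda_1^\rho(\gamma_n)\to1$.

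I expect Step 2 to be the main obstacle: the local modification argument in the asymmetric, merely $C^1$ Finsler setting, where I must keep the equivariance and the forward Lipschitz bound exact.
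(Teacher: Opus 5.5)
Your Step 1 is the paper's Lemma~\ref{lem: optimal maps exist}, and your pairwise argument that $\Tension(\sum_k 2^{-k}F_k)\subset\bigcap_k\Tension(F_k)$ is a valid substitute for the subadditivity of $\vec\Lip_q$. The overall shape of Step 2 matches Lemmas~\ref{lem: lines dont cross} and~\ref{lem: exists line}. For non-emptiness, cite Lemma~\ref{lem: Tension is non-empty} explicitly.

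\textbf{The gap: chain recurrence (Step 3).} The whole difficulty sits in ``add a small multiple of a smooth equivariant correction \dots\ keeping Lipschitz constant $1$ elsewhere.''
\begin{itemize}
\item For $\tau+\epsilon\tilde V$ to stay $1$-forward Lipschitz you need $\epsilon(\tilde V(z)-\tilde V(y))\le \s[y,z]$ for all nearby pairs. You have no quantitative lower bound on slack near $\lambda$: Lemma~\ref{lem: slack grows} gives a non-explicit $\vartheta$ at a fixed scale, and it degenerates as one approaches $\lambda$.
\item Since $\tau$ is merely Lipschitz, nothing stops it from increasing at rate close to $1$, at points arbitrarily close to $\lambda$, in directions transverse to the leaves. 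In those directions a smooth $\tilde V$ may increase to first order.
\item A Conley--Lyapunov function is constant \emph{on} chain recurrent classes but not near them, so the same problem arises there.
\end{itemize}
So the correction can push the local forward Lipschitz constant above $1$ however small $\epsilon$ is. Making this work would need a regularity statement about $\tau$ near $\lambda$ that you have not established.

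The paper does not go this way. It imports the argument of \cite[Appendix A]{FLM:II}, where competing maps are built with the forward McShane extension (Theorem~\ref{thm: extension}), so the forward Lipschitz bound comes for free.

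Your fallback is false, not just incomplete. Weak-$*$ limits of the $\delta_{\gamma_n}$ are flow-invariant probability measures, so by Poincar\'e recurrence their supports miss wandering leaves. Examples are isolated leaves asymptotic to minimal components, which the paper allows: the leaves $\uu\ell_i$ under hypotheses~\ref{hyp B}, and compare the remark that $\uu\lambda$ need not carry a transverse measure of full support. Only Hausdorff limits of closed curves see such leaves, and that characterization is equivalent to chain recurrence, so it is not free either.

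\textbf{A smaller problem: the modification in Step 2.} As you describe it, it cannot work.
\begin{itemize}
\item The sup formula of Theorem~\ref{thm: extension} gives the \emph{least} $1$-forward Lipschitz extension. Any extension dominates it, so there is no room for ``a slight decrease.''
\item Re-extending from the complement of $B_\epsilon(x)$ is tight along segments toward (near-)maximizers. It therefore makes every point of the ball a tension point, creating tension rather than removing it.
\end{itemize}
The dichotomy that works is the paper's. Set $2m=\max_{\cev B_r(p)}\tau-\min_{\vec B_r(p)}\tau$.
\begin{itemize}
\item If $m<r$: prescribe the constant value $m$ on $B_\epsilon(p)$ and $\tau$ on $\partial(\vec B_r(p)\cup\cev B_r(p))$. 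Verify these data are $1$-forward Lipschitz; the four cases use that $[p,x]$ and $[x,p]$ are the same projective segment. Only then extend and make the result equivariant.
\item If $m=r$: you get stretched segments $[x,p]$ and $[p,y]$ of length $r$, and Lemma~\ref{lem: lines dont cross} puts $p$ in the interior of $[x,y]$.
\end{itemize}
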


Note that there is not a unique optimal best Lipschitz map, as one may make small perturbations away from $\Tension(\varphi)$. 

First, we produce an optimal map by averaging.  The argument comes directly from \cite{GK:stretch} with no substantive  modifications. 
\begin{lemma}\label{lem: optimal maps exist}
    There is an optimal $\varphi$-equivariant $1$-forward Lipschitz map $\tau: \Omega \to \R$.
\end{lemma}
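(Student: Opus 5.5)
We produce $\tau$ by averaging countably many best maps, following the construction of Gu\'eritaud--Kassel. The set $\mathcal F_\varphi$ of $\varphi$-equivariant $1$-forward Lipschitz maps $\Omega\to\R$ is non-empty by Theorem \ref{thm: best Lipschitz achieved}. It is also convex, because forward Lipschitz constants are subadditive under convex combinations: if $f_i\in\mathcal F_\varphi$ and $t_i\ge0$ with $\sum t_i=1$, then $\sum t_if_i(y)-\sum t_if_i(x)\le d_\rho(x,y)$, and equivariance is preserved since $\sum t_i\varphi(\gamma)=\varphi(\gamma)$. After normalizing $f(o)=0$, this set is compact for uniform convergence on compact sets. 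The argument in the proof of Lemma \ref{lem: lower bound lip} gives this, because $1$-forward Lipschitz maps are $K$-Lipschitz for the symmetrized metric, as noted in Lemma \ref{lem: Tension is non-empty}.

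The key observation is that for a convex combination $f=\sum_i t_if_i$ with all $t_i>0$, we have
\[\Tension(f)\subset\bigcap_i\Tension(f_i).\]
To see this, take $x\notin\Tension(f_j)$, so $\vec\Lip_{f_j}(x)<1$. On a small ball $B_\epsilon(x)$, every difference quotient of $f$ is then at most $t_j\vec\Lip(f_j|_{B_\epsilon(x)})+\sum_{i\neq j}t_i\cdot1$. This bound is strictly less than $1$ for small $\epsilon$, so $\vec\Lip_f(x)<1$ and $x\notin\Tension(f)$. The same estimate holds for a countable series $f=\sum_{i\ge1}2^{-i}f_i$. This series converges uniformly on compact sets when each $f_i$ is normalized by $f_i(o)=0$, since the equicontinuity bound gives $|f_i(p)|\le K d_\rho(o,p)$ symmetrized.

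To choose the $f_i$, we use the second countability of $\Omega$. The complement $\Omega\setminus\Tension(\varphi)$ is the union over $f\in\mathcal F_\varphi$ of the open sets $\Omega\setminus\Tension(f)$; these are open because $\Tension(f)$ is closed by upper semicontinuity of $\vec\Lip_f$. By Lindel\"of, countably many of them suffice, say those for $f_1,f_2,\dots$. We set $\tau=\sum 2^{-i}f_i$, and then check three things.
\begin{itemize}
\item The limit $\tau$ is $1$-forward Lipschitz and $\varphi$-equivariant, since both conditions pass to uniformly convergent series with weights summing to $1$.
\item We have $\Tension(\tau)\subset\bigcap_i\Tension(f_i)=\Tension(\varphi)$ by the observation above.
\item The reverse inclusion $\Tension(\varphi)\subset\Tension(\tau)$ holds by the definition of $\Tension(\varphi)$ as an intersection over all of $\mathcal F_\varphi$, which includes $\tau$.
\end{itemize}

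The step most likely to need care is the local estimate for the infinite series near a point $x$. We have uniform control only on each term individually, and we must show that the tail $\sum_{i\neq j}2^{-i}f_i$ still contributes forward difference quotients of at most $\sum_{i\ne j}2^{-i}$ on $B_\epsilon(x)$. This holds pointwise for every pair of points because each $f_i$ is globally $1$-forward Lipschitz, so no uniformity in $i$ is actually required. The remaining issue is the convergence and normalization of the series, which the equicontinuity handles.
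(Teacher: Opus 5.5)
Your proof is correct and takes essentially the same approach as the paper. Both form a countable convex combination $\tau=\sum a_i f_i$ of $\varphi$-equivariant $1$-forward Lipschitz maps whose non-tension sets cover $\Omega\setminus\Tension(\varphi)$ (the paper via an exhaustion by compact sets, you via Lindel\"of) and use the same local difference-quotient estimate to get $\Tension(\tau)\subset\Tension(\varphi)$. Your normalization $f_i(o)=0$, which guarantees the series converges, is a detail the paper leaves implicit, and your compactness remark about $\mathcal F_\varphi$ is never actually used.
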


\begin{proof}
    This is a simpler version of \cite[Lemma 4.13]{GK:stretch}.
    The set $\Tension (\varphi)$ is closed, as it is the intersection of closed sets.
    For every $p \in \Omega\setminus \Tension (\varphi)$, there is a $\varphi$-equivariant $1$-Lipschitz map $f_p$, an open neighborhood $U_p$ of $p$ in $\Omega\setminus \Tension (\varphi)$, and a $\delta_p>0$ such that 
    \[\sup_{q_1, q_2}\frac{f_p(q_2)-f_p(q_1)}{d(q_1, q_2)} = 1-\delta_p<1.\]
    Since $\Omega\setminus \Tension(\varphi)$ can be exhausted by compact sets, there is a countable collection $\{p_i\}$ such that $\cup U_{p_i} = \Omega\setminus \Tension (\varphi)$.  Let $\{a_i\}$ be a set of positive numbers summing to $1$ and define $\tau = \sum a_if_{p_i}$.
    It is not difficult to see that $\tau$ is $\varphi$-equivariant and 
    \[\vec \Lip_q(\tau) \le \sum a_i \vec\Lip(f_{p_i}),\]
    for all $q \in \Omega$.
    So, if $q \in U_{p_j}$, then 
    \[\vec \Lip_p(\tau) \le \sum_{i\not=j} a_i + (1-\delta_{p_j})a_j = 1-a_j\delta_{p_j}<1.\]
    This proves that $\Tension (\tau) \subset \Tension (\varphi)$, with the other inclusion being trivially satisfied.  
\end{proof}
We observe in the following Lemma that points in the taut locus are contained in at most one oriented geodesic.
\begin{lemma}\label{lem: lines dont cross}
    If $p \in \Tension (f)$ and if there are directed line segments $[x,p]$ and $[p,y]$ contained in $ \Tension(f)$, then $p$ is on the directed line segment $[x,y]$.
\end{lemma}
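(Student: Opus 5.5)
The plan is to exploit the triangle inequality for $d_\rho$ together with the fact that $f$ is $1$-forward Lipschitz, and then use unique geodesicity (strict convexity of the unit balls, item 2 of Theorem \ref{thm: rhodist}) to force collinearity. First I would make precise what ``$[x,p]\subset\Tension(f)$'' gives us. I expect that along a directed segment in the tension locus $f$ increases at unit rate in the forward direction, so that
\[f(p)-f(x)=d_\rho(x,p)\quad\text{and}\quad f(y)-f(p)=d_\rho(p,y).\]
Justifying this is a small preliminary step. The local forward Lipschitz constant equals $1$ at each point of the segment, but that alone only bounds the derivative along the segment from above. So I would either use that the segments are taken as \emph{directed} segments on which $f$ is isometric, which is how the tension locus is used throughout (e.g.\ Lemma \ref{lem: slack non-negative} and the discussion of $L_\delta\inverse(1)$), or else derive it by a localization argument from the definition of $\vec\Lip_f$. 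In the second case I would argue by compactness of the segment, covering it by small balls $B_\epsilon$ on which $f$ has forward Lipschitz constant close to $1$ realized in the direction of the segment.

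Granting these two equalities, adding them gives
\[f(y)-f(x)=d_\rho(x,p)+d_\rho(p,y).\]
On the other hand, the $1$-forward Lipschitz property gives $f(y)-f(x)\le d_\rho(x,y)$. Combining these with the triangle inequality $d_\rho(x,y)\le d_\rho(x,p)+d_\rho(p,y)$ forces
\[d_\rho(x,y)=d_\rho(x,p)+d_\rho(p,y).\]
Hence the concatenation $[x,p]*[p,y]$ is a length-minimizing directed path from $x$ to $y$. Since $d_\rho$ is uniquely geodesic with geodesics the projective segments (Theorem \ref{thm: rhodist}), this concatenation must equal $[x,y]$, so $p\in[x,y]$ with the correct direction.

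The main obstacle is the first step: extracting global isometric behaviour of $f$ along $[x,p]$ and $[p,y]$ from the purely local condition $\vec\Lip_f=1$ at points of those segments. If the definitions only give the local statement, I would try the following. Suppose the concatenation is not straight. Then near $p$ one can shortcut by a segment $[x',y']$ with $x'\in[x,p]$ and $y'\in[p,y]$ close to $p$. Strict convexity of the unit sphere of $F_\rho$ gives
\[d_\rho(x',y')<d_\rho(x',p)+d_\rho(p,y')\]
by a definite proportional amount. Meanwhile, unit-rate increase of $f$ along both tense segments (obtained near $p$) would give $f(y')-f(x')$ at least roughly $d_\rho(x',p)+d_\rho(p,y')$, contradicting the $1$-forward Lipschitz bound.
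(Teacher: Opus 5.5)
Your argument is the paper's. Summing the isometric equalities along $[x,p]$ and $[p,y]$ and comparing the result with $f(y)-f(x)\le d_\rho(x,y)$ forces equality in the triangle inequality, and by Theorem \ref{thm: rhodist} (strictness off the projective segment, equivalently unique geodesicity) this puts $p$ on $[x,y]$. Like you, the paper reads the hypothesis as saying that $f$ is an oriented isometry on the two segments, and in its only application (Lemma \ref{lem: exists line}) it proves this isometry before invoking the lemma.

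Drop the localization fallback, though, because it is circular: $\vec\Lip_f=1$ at each point of a segment does not force unit-rate increase along that segment. The constant may be approached in other directions, and for a non-optimal $f$ the set $\Tension(f)$ can even have nonempty interior. So the isometric reading is the one under which the statement holds.
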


\begin{proof}
    Suppose for contradiction that $[x,y]$ were not in the union $[x,p]\cup[p,y]$. Then the three distinct points $x,y,p$ form a non-degenerate triangle in $\Omega$, and as a result $d_\rho(x,y)<d_\rho(x,p)+d_\rho(p,y)$ by Theorem \ref{thm: rhodist} (that the triangle inequality is strict for points not on a line). 
    
    However, since $[x,p]\cup[p,y]\subset \Tension(f)$, $f(y)-f(x)=f(y)-f(p)+f(p)-f(x)= d_\rho(x,p)+d_\rho(p,y)$. This contradicts the assumption that $f$ is $1$-forward Lipschitz.
\end{proof}

\begin{lemma}\label{lem: exists line}
    Suppose $p \in \Tension (\varphi)$, then there is a (non-degenerate) directed line segment $[x,y]\subset \Tension (\varphi)$ containing $p$.
\end{lemma}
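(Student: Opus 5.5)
The plan is to find the tension direction at $p$ by compactness, using that $p$ lies in the tension locus of the optimal map $\tau$ from Lemma \ref{lem: optimal maps exist}, so that $\Tension(\varphi)=\Tension(\tau)$ and $\vec\Lip_\tau(p)=1$. By definition of the local forward Lipschitz constant, there are pairs $q_n\neq q_n'$ with $q_n,q_n'\to p$ and
\[\frac{\tau(q_n')-\tau(q_n)}{d_\rho(q_n,q_n')}\to 1.\]
We pass to a subsequence so that the directions of $[q_n,q_n']$ converge to a unit vector $v\in T^1_p\Omega$. The candidate segment is the directed projective segment through $p$ in direction $v$.

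First, we show that $\tau$ is isometric along a whole segment $[x,y]\ni p$, not just infinitesimally at $p$. Let $c_n$ be the directed line through $[q_n,q_n']$, extended to length $\epsilon$ on either side. Since $\tau$ is $1$-forward Lipschitz, slack is nonnegative and additive (Lemma \ref{lem: slack non-negative}), and $\s[q_n,q_n']=o(d_\rho(q_n,q_n'))$. The problem is that this gives only a tiny ratio, not global information along $c_n$.

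To upgrade it, we use optimality. Suppose no segment through $p$ lies in $\Tension(\tau)$. Following \cite{GK:stretch} and \cite{FLM:II}, we modify $\tau$ near $p$: for each direction $w\in T^1_p\Omega$, the forward geodesic from $p$ in direction $w$ (or backward) leaves the tension locus immediately, so for small $\epsilon$ we have $\s[p-\epsilon w,p+\epsilon w]>0$. By compactness of $T^1_p\Omega$ and continuity of slack (Lemma \ref{lem: slack continuous finite}), there are $\epsilon,\eta>0$ with $\s[a,b]\ge\eta$ for all $a,b$ at distance about $\epsilon$ on opposite sides of a small ball $B$ around $p$. We then redefine $\tau$ on $B$, and equivariantly on its translates, by a McShane-type extension (Theorem \ref{thm: extension}) from $\Omega\setminus B$ with constant $1$. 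The slack gap $\eta$ gives room to replace $\tau$ on a smaller ball by a function with local forward Lipschitz constant $<1$ at $p$, for instance by averaging with a slightly contracted map, while keeping global constant $1$. This produces $f\sim\varphi$ with $p\notin\Tension(f)$, contradicting $p\in\Tension(\varphi)$.

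A more direct route may work instead. Extract the segment from the limiting direction $v$: use strict convexity of the unit balls of $F_\rho$ (Theorem \ref{thm: rhodist}, item 2) and Lemma \ref{lem: lines dont cross} to show that near-isometric short chords at $p$ must align with $v$. Then a chain of overlapping near-isometric chords, with the slack errors summing to $0$ in the limit, produces a nondegenerate segment $[x,y]$ with $\s[x,y]=0$. By Lemma \ref{lem: slack non-negative}, this segment lies in $\Tension(\tau)=\Tension(\varphi)$.

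The main obstacle is the local modification step, building $f$ with $\vec\Lip_f(p)<1$ without raising the global forward Lipschitz constant. Asymmetry of $d_\rho$ requires care in the McShane extension near $\partial B$. I would try first the cone construction $f(q)=\max_{a}\bigl(\tau(a)+d_\rho(a,q)\bigr)$ over $a$ on an inner sphere, combined with $\min$ over outer points, as in \cite[\S5]{GK:stretch}.
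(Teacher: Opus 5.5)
Your optimality route has the same overall strategy as the paper: assume no tension segment passes through $p$, and modify $\tau$ near $p$ to get a $\varphi$-equivariant $1$-forward Lipschitz $f$ with $p\notin\Tension(f)$. But the construction of $f$, which is the entire content of the lemma, is left open, and the tools you suggest for it do not work.

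\begin{itemize}
\item The McShane extension of $\tau|_{\Omega\setminus B}$ into $B$, namely $q\mapsto\sup_{y\notin B}\bigl(\tau(y)-d_\rho(q,y)\bigr)$, increases at unit rate along the segment from $q$ to a maximizing boundary point. So its local forward Lipschitz constant is $1$ at every point of $B$, including $p$.
\item ``Averaging with a slightly contracted map'' presupposes a competitor with $\vec\Lip<1$ at $p$. That is exactly what you are trying to build, and on its own it would already give the contradiction.
\item Your slack gap, obtained by compactness over directions $w$ for the chords $[p-\epsilon w,p+\epsilon w]$, controls only nearly straight chords. A modification near $p$ that matches $\tau$ on the boundary of the modified region needs something else: a positive lower bound on the broken-path slack $d_\rho(x,p)+d_\rho(p,y)-\bigl(\tau(y)-\tau(x)\bigr)$ over \emph{all} pairs $x,y$ on that boundary, exceeding a fixed multiple of the radius of the ball you flatten.
\end{itemize}

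Your ``direct route'' fails for a more basic reason. Near-isometric short chords at $p$ are infinitesimal information, and without optimality they cannot be chained into a segment of definite length. For instance, $f(x_1,x_2)=x_1-x_1^3/3$ on Euclidean $\R^2$ is $1$-Lipschitz with local Lipschitz constant $1$ along $\{x_1=0\}$, yet every nondegenerate segment has positive slack.

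The idea you are missing is the paper's normalization-and-flattening dichotomy. Take $B=\vec B_r(p)\cup\cev B_r(p)$ and normalize $\min\tau(\vec B_r(p))=0$ and $\max\tau(\cev B_r(p))=2m$; the triangle inequality gives $m\le r$.
\begin{itemize}
\item If $m=r$, the extremal points $x\in\partial\vec B_r(p)$ and $y\in\partial\cev B_r(p)$ force $\tau(p)=r$ and make $[x,p]$ and $[p,y]$ isometric. Lemma \ref{lem: lines dont cross} then gives $p\in[x,y]$. The segment comes out directly, with no need for your limiting direction $v$.
\item If $m<r$, choose $0<\epsilon<r-m$. Set $g\equiv m$ on $B_\epsilon(p)$ and $g=\tau$ on $\partial B$, and check by four cases that $g$ is $1$-forward Lipschitz; this uses that $[x,p]$ and $[p,x]$ are the same projective segment. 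Only then apply Theorem \ref{thm: extension} to extend $g$ over $B$, and glue equivariantly. The resulting $f$ is constant near $p$, so $\vec\Lip_f(p)=0$, contradicting $p\in\Tension(\varphi)$.
\end{itemize}
Flattening to the constant $m$ on an inner ball, so that the McShane extension is used only between that ball and $\partial B$, is precisely what makes the local constant at $p$ drop below $1$.
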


\begin{proof}
    The structure of our proof follows \cite[Lemma 5.2]{GK:stretch}.
    By Lemma \ref{lem: optimal maps exist}, there is an optimal $1$-forward Lipschitz $\varphi$-equivariant map $\tau: \Omega \to \R$.
    Let $p \in \Tension (\tau)$.  Let $r$ be smaller than the injectivity radius of $M$ at $p$ (in both directions).  

    Define a closed \textbf{forward $r$-ball}
    \[\vec B_r(p) = \{ x: d(x,p)\le r\},\]
     a closed \textbf{backward $r$-ball}
    \[\cev B_r(p) = \{ x: d(p,x)\le r\},\]
    and let
        \[B = \vec B_r(p)\cup \cev B_r(p).\]
     Without loss of generality, we may assume that  
    \[0 = \min \tau(\vec B_r(p)) \text{ and } 2m= \max \tau(\cev B_r(p)).\]
    Note that $0< \tau(p) <2m\le 2r$, where the last inequality is immediate from the $1$-forward Lipschitz property of $\tau$.

    Our goal is to prove that $m=r$.
    Assuming that this is the case, we first claim that this implies that $\tau(p)=r$.
    Otherwise, without loss of generality $\tau(p)>r$, and there is a point $x\in \partial \vec B_r(p)$ such that $\tau(x)=0$. The pair $(x,p)$ contradicts the fact that $\tau$ is $1$-forward Lipschitz.
    Furthermore, there are a pair of points $(x,y)\in \partial \vec B_r(p)\times \partial \cev B_r(p)$ such that $\tau(x)=0$ and $\tau(y)=2r$. We claim that the segments $[x,p]$ and $[p,y]$ are necessarily isometrically mapped into $\R$ by $\tau$. To see this, consider the derivative $d\tau$, which is an almost everywhere defined function. Let $\gamma$ be a unit-speed parametrization of $[x,p]$ so that
    \[\tau(p) - \tau(x)=\int_{0}^rd\tau(\gamma'(t))dt =r.\]
    Since $\frac{d\tau\circ \gamma}{dt}$ is almost everywhere bounded above by $1$, we conclude that it is almost everywhere equal to $1$ on $\gamma'$. This shows that $\tau$ is an oriented isometry on $[x,p]$ and a similar argument implies the same for $[p,y]$. Lemma \ref{lem: lines dont cross} then shows that $p \in [x,y]$, completing the proof (assuming $m=r$).
    \medskip

    Suppose for the sake of contradiction that $m<r$, and let $0<\epsilon <r-m$. The strategy will be to produce a $1$-forward Lipschitz $\varphi$-equivariant map $f: \Omega \to \R$ that takes a neighborhood of $p$ to a constant, so that $p$ is not in the tension locus of $f$.  Unpacking the definitions, this implies that $p\not\in\Tension(\varphi) = \Tension(\tau)$, supplying a contradiction.  

    Now we construct $f$ in two steps.
    Let $B_\epsilon(p) = \vec B_\epsilon(p) \cap \cev B_\epsilon(p)$ and define
    \begin{align*}
        g: \partial B \cup B_\epsilon(p) \to &\R\\
        x \mapsto & \begin{cases}
            \tau(x), & x \in \partial B\\
            m, & x \in B_\epsilon(p).
        \end{cases}
    \end{align*}
    \begin{claim}
        $g$ is $1$-forward Lipschitz.
    \end{claim}
    \begin{proof}[Proof of the Claim.]
    
    \begin{figure}[h]
    \centering
    \includegraphics[width=.7\linewidth]{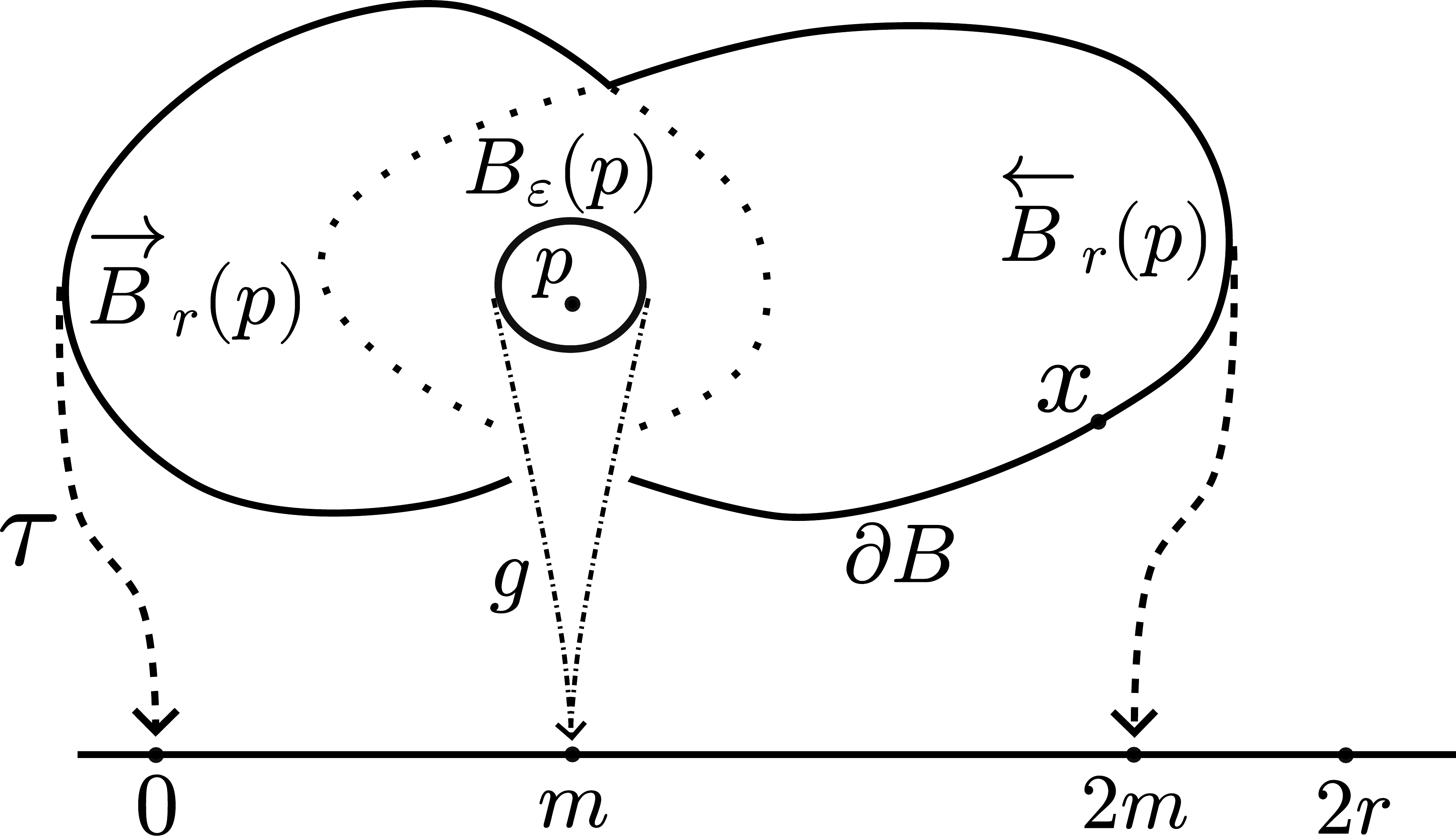}
    \caption{A schematic of the definition of $g$ in terms of $\tau$.}  
    \label{fig: lipschitz balls}
    \end{figure}

    We need to check four pairs of inequalities, but thankfully one inequality from each  pair is trivial.  Let $x \in \partial B$ and $p' \in B_\epsilon(p)$.  There are four cases:
    
    \begin{enumerate}
        \item $x \in \partial \vec B_r(p)$ and $\tau(x)\le m$.  In this case, we have
        \[0\le g(p') - g(x)= m-\tau(x)\le m <r-\epsilon = d(x,p)-\epsilon\le d(x,p'),\]
        and so $g(x)-g(p')\le 0 <d(p',x)$ holds trivially.
        \item $x \in \partial \cev B_r(p)$ and $\tau(x)\le m$.  The line segment $[p,x]$ meets $\partial \vec B_r(p)$ in a unique point $x'$.  Since the line segments for the domain shape metric are projective line segments (Theorem \ref{thm: rhodist}), we know that the geodesic line segments $[p,x]$ and $[x,p]$ coincide.  In particular, we have $d(x,x')+d(x',p) = d(x,p)$.  Now we estimate
        \begin{align*}
            0\le g(p') - g(x) &= g(p') - g(x')+g(x') - g(x) \\
            & \le  m - \tau(x')+d(x,x') \le m +d(x,x')\\
            &  <r-\epsilon +d(x,x')= d(x',p) - \epsilon +d(x,x')\\
            & =d(x,p)-\epsilon \le d(x,p').
        \end{align*}
        \item $x \in \partial \cev B_r(p)$ and $\tau(x)\ge m$.  This is similar to the first case.  Indeed,
        \[0\le g(x) - g(p') = \tau(x)-m\le m<r-\epsilon=d(p,x)-\epsilon\le d(p',x),\]
        while $g(p')-g(x)\le0<d(x,p')$.
        \item $x \in \partial \vec B_r(p)$ and $\tau(x)\ge m$.  This is similar to case 2.  We omit the details.  
    \end{enumerate}
    Other inequalities are trivially satisfied: if $x, y \in \partial B$, then $g(x)-g(y) = \tau(x)-\tau(y) \le d(y,x)$ and if $p', p''\in B_\epsilon(p)$, then $g(p')-g(p'')=0\le d(p'',p')$.
    Thus $g$ is 1-forward Lipschitz.  
    \end{proof}

     By McShane's extension Theorem \ref{thm: extension}, $g$ extends to a $1$-forward Lipschitz map $G: B \to \R$.  Furthermore, since $g$ agrees with $\tau$ on $\partial B$, the map 
    \begin{align*}
        f: \Omega \to &\R \\
        x\mapsto & \begin{cases}
            \tau(x), & x\not\in \Gamma B \\
            G(\gamma\inverse x)+\varphi(\gamma), & x\in \gamma B \text{ for some } \gamma \in \Gamma.
        \end{cases}
    \end{align*}
    is continuous and $\varphi$-equivariant.  Finally, $f$ is $1$-forward Lipschitz, because it is locally $1$-forward Lipschitz.  
    Since $p$ is clearly not in $\Tension(f)$, we have supplied the contradiction that completes the proof of the lemma.
    \end{proof}

\begin{proof}[Proof of Theorem \ref{thm: optimal map}]
    An optimal $1$-forward Lipschitz $\varphi$-equivariant map $\tau: \Omega \to \R$ exists by Lemma \ref{lem: optimal maps exist}.
    The tension locus is a non-empty closed set by upper semi-continuity of the local forward Lipschitz constant and Lemma \ref{lem: Tension is non-empty}.  
    By Lemma \ref{lem: exists line}, every point $p\in\Tension(\tau)$ has a directed line segment whose interior passes through $p$.
    By Lemma \ref{lem: lines dont cross}, no other line segment in $\Tension(\tau)$ passes through $p$.
    This proves that $\Tension(\varphi)=\Tension (\tau)$ is a non-empty geodesic lamination.
    
    The proof of \cite[Theorem 1.12]{FLM:II} found in Appendix A of that paper runs essentially verbatim (using Theorem \ref{thm: extension} in place of McShane's Extension Theorem for Lipschitz maps) to prove that $\Tension(\varphi)$ is chain recurrent in this more general setting. The orientation comes from the orientation of $\R$, completing the proof of the theorem.
\end{proof}

All claims of Theorem \ref{thm: co-stable norm and optimal best maps} have now been addressed.

\bibliography{biblio-2}{}
\bibliographystyle{amsalpha.bst}
\Addresses
\end{document}